\numberwithin{equation}{section}
\DeclareMathOperator{\Var}{Var}
\DeclareMathOperator{\Adm}{Adm}
\newtheorem{thm}{Theorem}[section]
\newtheorem{prop}[thm]{Proposition}
\newtheorem{lem}[thm]{Lemma}
\newtheorem{cor}[thm]{Corollary}
\newtheorem{exa}[thm]{Example}
\theoremstyle{remark}
\newtheorem{rem}[thm]{Remark}
\theoremstyle{definition}
\newtheorem{definition}[thm]{Definition}
\newtheorem{assumption}[thm]{Assumption}
\title{Universality for Lozenge Tiling Local Statistics}
\author{Amol Aggarwal}
\begin{document}

	\begin{abstract} 
		
		In this paper we consider uniformly random lozenge tilings of arbitrary domains approximating (after suitable normalization) a closed, simply-connected subset of $\mathbb{R}^2$ with piecewise smooth, simple boundary. We show that the local statistics of this model around any point in the liquid region of its limit shape are given by the infinite-volume, translation-invariant, extremal Gibbs measure of the appropriate slope, thereby confirming a prediction of Cohn-Kenyon-Propp from 2001 in the case of lozenge tilings. Our proofs proceed by locally coupling a uniformly random lozenge tiling with a model of Bernoulli random walks conditioned to never intersect, whose convergence of local statistics has been recently understood by the work of Gorin-Petrov. Central to implementing this procedure is to establish a \emph{local law} for the random tiling, which states that the associated height function is approximately linear on any mesoscopic scale.

	\end{abstract}

	\maketitle 
	
	\tableofcontents

	\section{Introduction} 
	
	\label{Tilings}

	\subsection{Preface}
	
	\label{Tilings1}
	
	Over the past several decades, a significant amount of effort has been directed towards understanding the effect of boundary conditions on the behavior of two-dimensional statistical mechanical models. A well-studied class of systems in this context consists of dimer (or tiling) models, due to their exact solvability. Indeed, for such models, many quantities of interest (including partition and correlation functions) can be expressed as determinants of an inverse Kasteleyn matrix. One may then hope to asymptotically analyze these determinants in order to either predict or prove precise, universal physical phenomena in the large scale limit; see, for instance, the surveys of Kenyon \cite{OD,PDMB}. 
	
	One of the first such results concerned the \emph{limit shape phenomenon} exhibited by domino tilings, stating that the height function of a uniformly random tiling of a large domain is likely to concentrate (after suitable normalization) around a global limit. This was first established for the Aztec diamond domain by Cohn-Elkies-Propp \cite{LSRT}, who provided an exact form for the limit shape. Later, this result was substantially generalized by Cohn-Kenyon-Propp \cite{VPDT} to domino tilings of essentially arbitrary domains. The latter work expressed the limit shape through a variational principle, namely, as the maximizer of a certain (explicit) concave surface tension functional. In the case of lozenge tilings, this was later rewritten by Kenyon-Okounkov \cite{LSCE} as the solution to a non-linear first order partial differential equation that can in many cases be solved through the method of characteristics. 
	
	A salient feature of these limit shapes is that they are inhomogeneous for generic choices of the boundary, that is, the local densities of tiles can (non-negligibly) differ in different regions of the rescaled domain $\mathfrak{R}$. In fact, for certain boundary data, these local densities can exhibit sharp phase transitions. These are commonly known as \emph{arctic boundaries}, across which the behavior of the tiling changes from asymptotically deterministic (\emph{frozen}) to random (\emph{liquid} or \emph{bulk}). In the context of dimer models, such a phenomenon was first established by Jockusch-Propp-Shor \cite{RTAC} for uniformly random domino tilings of an Aztec diamond, although a similar notion had been studied earlier for Ising crystals (see the books of Cerf \cite{CPM} and Dobrushin-Koteck\'{y}-Shlosman \cite{GLI}). 
	
	Thus a question of interest, originally mentioned by Kasteleyn \cite{SDL} in 1961, is how the boundary data affect the local statistics (joint law of nearly neighboring tiles) of a random tiling. A precise predicted answer to this question was proposed as Conjecture 13.5 of \cite{VPDT}, which suggests that these local statistics should be determined by the gradient of the global limiting height function $\mathcal{H}: \mathfrak{R} \rightarrow \mathbb{R}$ for the tiling model. More specifically, around some point $\mathfrak{v}$ in the liquid region of $\mathfrak{R}$, they should be given by the infinite-volume, translation-invariant, extremal Gibbs measure with slope equal to $\nabla \mathcal{H} (\mathfrak{v})$; the uniqueness of such a measure was established by Sheffield in \cite{RS}. For dimer models, these measures are now well-understood, as the work of Kenyon-Okounkov-Sheffield \cite{DA} expresses them as determinantal point processes with explicit kernels; in the case of lozenge tilings, they are known as certain \emph{extended discrete sine processes}.
	
	For lozenge and domino tilings, this prediction has been shown to hold true for several special choices of the boundary data. These include ($q$-weighted) lozenge tilings of the hexagon \cite{DOA,DBPP,NPOE}, of domains covered by finitely many trapezoids \cite{URB, AR}, of sectors of the plane \cite{AGRSD,SGWM}, and of bounded perturbations of these domains \cite{LLSBHP}; skew plane partitions \cite{RSPPPBW,LRSPBW,CFP,RSPP} with periodic weights \cite{PPTW} and symmetry constraints \cite{FBPA}; and domino tilings of the Aztec diamond \cite{ASD,CP} with periodic weights \cite{ST,TPMVOP} and of certain families of polygonal domains with no frozen regions \cite{CI,DPD,DT}. 
	
	Most of these results (with the exceptions of \cite{URB,LLSBHP}) are based on an asymptotic analysis of the inverse Kasteleyn matrix (or some other, boundary-specific determinantal structure). However, in many situations, the entries of this matrix can be unstable under boundary perturbations. Partly for this reason, results on local statistics had until now remained unavailable for any tiling model under general boundary data. 
	
	In this paper we establish the local statistics prediction for lozenge tilings of arbitrary domains approximating (after suitable normalization) a closed, simply-connected subset of $\mathbb{R}^2$ with piecewise smooth, simple boundary; see \Cref{localconverge} below. This attains the same level of universality for convergence of lozenge tiling local statistics as was shown for the global height profile in \cite{VPDT}. 
	
	Unlike most of the previously mentioned works, our method does not make direct use of a Kasteleyn matrix. Instead, we proceed by locally comparing a uniformly random lozenge tiling of a given domain to an ensemble of Bernoulli random walks conditioned to never intersect. The latter model was introduced by K\"{o}nig-O'Connell-Roch in \cite{NRWTQPE} and can be viewed as a random lozenge tiling, whose boundary conditions are partly free  (but not uniform). In particular, when run under suitable initial data and for a sufficiently long time, its limiting local statistics are also governed by an extended discrete sine process.
		
	The benefit to this random walk model is that its algebraic structure appears to be more amenable to asymptotic analysis than does the Kasteleyn matrix. In particular, the recent work \cite{ULSRW} of Gorin-Petrov provides a general result on the convergence of its local statistics, a special case which can essentially be stated as follows. Suppose that\footnote{Given sequences $(a_1, a_2, \ldots )$ and $(b_1, b_2, \ldots )$ of positive real numbers, we write $a_N \ll b_N$ if $\lim_{N \rightarrow \infty} \frac{a_N}{b_N} = 0$.} $1 \ll U \ll T \ll V$ are integers and that $\textbf{a}$ is an initial sequence of particle locations that is of approximately constant density on any length $U$ subinterval of $[x_0 - V, x_0 + V]$. Then the local statistics of the non-intersecting random walk model, run for time $T$ with initial data $\textbf{a}$, converge around site $x_0$ to an extended discrete sine process. This result will be used extensively in our analysis of lozenge tiling local statistics. 
	
	More specifically, let $\mathscr{M}$ denote a uniformly random tiling of a large domain $R$, and fix some vertex $v_0 = (x_0, y_0) \in R$. Our framework will proceed by coupling $\mathscr{M}$ with a non-intersecting random walk ensemble, in such a way that the two models coincide in a large neighborhood of $v_0$ with high probability. Let us outline how we exhibit such a coupling. In what follows, we let $N$ denote the diameter of $R$; fix an integer $1 \ll T \ll N$; and define the vertex $u_0 = v - (0, T) = (x_0, y_0 - T) \in R$.

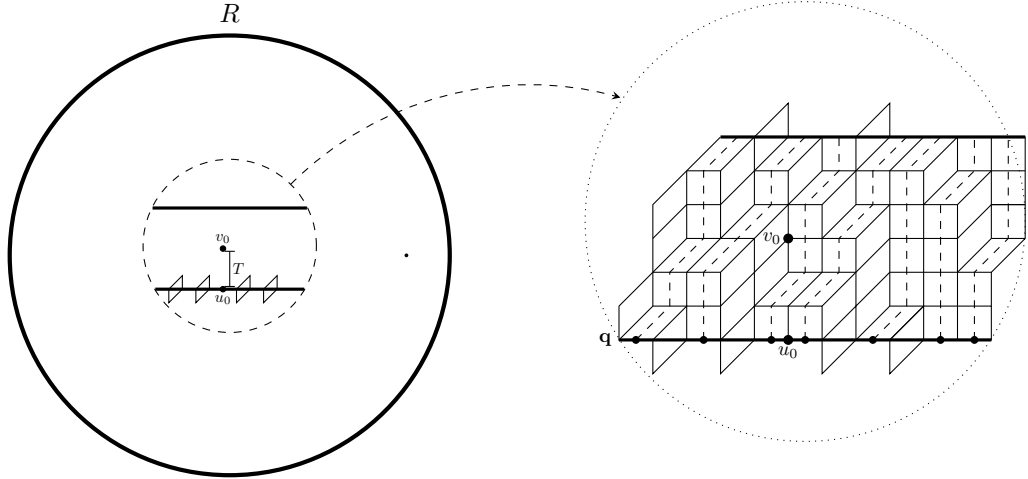
\begin{figure}

	\begin{center}

		\begin{tikzpicture}[
		>=stealth,
		auto,
		style={
			scale = .9
		}
		]

			\draw[black, ultra thick] (0, 0) circle[radius = 3.25];
			
			\draw[] (0, 3.5) circle[radius = 0] node[above = -5]{$R$};

			\path[draw, black] (-.9, -.5) -- (-.7, -.3) -- (-.7, -.5) -- (-.9, -.7) -- (-.9, -.5);
			\path[draw, black] (-.5, -.5) -- (-.3, -.3) -- (-.3, -.5) -- (-.5, -.7) -- (-.5, -.5);
			\path[draw, black] (.1, -.5) -- (.3, -.3) -- (.3, -.5) -- (.1, -.7) -- (.1, -.5);
			\path[draw, black] (.5, -.5) -- (.7, -.3) -- (.7, -.5) -- (.5, -.7) -- (.5, -.5);
			
			\filldraw[fill=black] (2.61, 0) circle [radius = .02];
			
			\filldraw[fill=black] (-.1, .1) circle [radius = .04] node[above, scale = .6]{$v_0$};
			\filldraw[fill=black] (-.1, -.5) circle [radius = .04] node[below, scale = .6]{$u_0$};
			
			\draw[-, black] (.075, .06) -- (-.075, .06);
			\draw[-, black] (0, .06) -- (0, -.46); 
			\draw[-, black] (.075, -.46) -- (-.075, -.46);
			\draw[] (0, -.2) circle[radius = 0] node[right = -1, scale = .6]{$T$};
			
			\draw[-, black, dashed] (0, .14) circle [radius = 1.28];
			\draw[-, black, very thick] (-1.1, -.5) -- (1.1, -.5);
			\draw[-, black, very thick] (-1.14, .7) -- (1.14, .7);
			
			\draw[->, black, dashed] (.9, 1.05) arc (135:74.5:5);

			\draw[-, dotted] (8.5, .5) circle [radius = 3.25];
			
			\filldraw[fill=black] (8.25, .25) circle [radius = .065] node[left, scale = .75]{$v_0$};
			\filldraw[fill=black] (8.25, -1.25) circle [radius = .065] node[below, scale = .75]{$u_0$};
			
			\draw[-, very thick] (5.75, -1.25) node[left, scale = .75]{$\textbf{q}$} -- (11.25, -1.25);
			\draw[-, very thick] (7.25, 1.75) -- (11.75, 1.75);
			
			\path[draw, black] (6.25, -1.25) -- (6.75, -.75) -- (6.75, -1.25) -- (6.25, -1.75) -- (6.25, -1.25);
			\path[draw, black] (7.25, -1.25) -- (7.75, -.75) -- (7.75, -1.25) -- (7.25, -1.75) -- (7.25, -1.25);
			\path[draw, black] (8.75, -1.25) -- (9.25, -.75) -- (9.25, -1.25) -- (8.75, -1.75) -- (8.75, -1.25);
			\path[draw, black] (9.75, -1.25) -- (10.25, -.75) -- (10.25, -1.25) -- (9.75, -1.75) -- (9.75, -1.25);
			\path[draw, black] (6.75, -.75) -- (6.25, -.75) -- (5.75, -1.25) -- (6.25, -1.25);
			\path[draw, black] (6.75, -1.25) -- (7.25, -1.25) -- (7.25, -.75) -- (6.75, -.75);
			\path[draw, black] (7.75, -.75) -- (8.25, -.75) -- (8.25, -1.25) -- (7.75, -1.25);
			\path[draw, black] (8.25, -1.25) -- (8.75, -1.25) -- (8.75, -.75) -- (8.25, -.75);
			\path[draw, black] (9.75, -1.25) -- (9.25, -1.25) -- (9.75, -.75) -- (10.25, -.75);
			\path[draw, black] (10.25, -1.25) -- (9.75, -1.25) -- (10.25, -.75) -- (10.75, -.75);
			\path[draw, black] (10.75, -1.25) -- (11.25, -1.25) -- (11.25, -.75) -- (10.75, -.75) -- (10.75, -1.25);
			
			\path[draw, black, dashed] (6, -1.25) -- (6.5, -.75) -- (6.5, -.25) -- (7, .25) -- (7, .75) -- (7, 1.25) -- (7.5, 1.75);
			\path[draw, black, dashed] (7, -1.25) -- (7, -.75) -- (7, -.25) -- (7.5, .25) -- (8, .75) -- (8, 1.25) -- (8, 1.25) -- (8.5, 1.75);
			\path[draw, black, dashed] (8, -1.25) -- (8, -.75) -- (8.5, -.25) -- (8.5, .25)  -- (8.5, .75) -- (9, 1.25) -- (9, 1.75);
			\path[draw, black, dashed] (8.5, -1.25) -- (8.5, -.75) -- (9, -.25) -- (9, .25) -- (9.5, .75) -- (9.5, 1.25) -- (10, 1.75);
			\path[draw, black, dashed] (9.5, -1.25) -- (10, -.75) -- (10, -.25) -- (10, .25) -- (10, .75) -- (10, 1.25) -- (10.5, 1.75);
			\path[draw, black, dashed] (10.5, -1.25) -- (10.5, -.75) -- (10.5, -.25) -- (10.5, .25) -- (10.5, .75) -- (11, 1.25) -- (11, 1.75);
			\path[draw, black, dashed] (11, -1.25) -- (11, -.75) -- (11, -.25) -- (11.5, .25) -- (11.5, .75) -- (11.5, 1.25) -- (11.5, 1.75);
			
			\path[draw, black] (6.25, -.75) -- (6.25, -.25) -- (6.75, .25) -- (6.75, .75) -- (6.75, 1.25) -- (7.25, 1.75);
			\path[draw, black] (6.75, -.75) -- (6.75, -.25) -- (7.25, .25) -- (7.25, .75) -- (7.25, 1.25) -- (7.75, 1.75);
			\path[draw, black] (7.25, -.75) -- (7.25, -.25) -- (7.75, .25) -- (8.25, .75) -- (8.25, 1.25) -- (8.75, 1.75);
			\path[draw, black] (7.25, .25) -- (7.75, .75) -- (7.75, 1.25) -- (8.25, 1.75);
			\path[draw, black] (7.75, -.75) -- (8.25, -.25) -- (8.25, .25) -- (8.25, .75) -- (8.75, 1.25) -- (8.75, 1.75);
			\path[draw, black] (8.25, -.75) -- (8.75, -.25) -- (8.75, .25) -- (8.75, .75) -- (9.25, 1.25) -- (9.25, 1.75);
			\path[draw, black] (8.75, -.75) -- (9.25, -.25) -- (9.25, .25) -- (9.75, .75) -- (9.75, 1.25) -- (10.25, 1.75);
			\path[draw, black] (8.75, .25) -- (9.25, .75) -- (9.25, 1.25) -- (9.75, 1.75);
			\path[draw, black] (9.75, -.75) -- (9.75, -.25) -- (9.75, .25) -- (9.75, .75);
			\path[draw, black] (10.25, -.75) -- (10.25, 1.25) -- (10.75, 1.75);
			\path[draw, black] (10.25, .75) -- (10.75, 1.25) -- (10.75, 1.75);
			\path[draw, black] (10.75, -.75) -- (10.75, .75) -- (11.25, 1.25) -- (11.25, 1.75);
			\path[draw, black] (10.75, -.25) -- (11.25, .25) -- (11.25, 1.25);
			\path[draw, black] (11.25, -.75) -- (11.25, -.25) -- (11.75, .25) -- (11.75, 1.75) -- (11.5, 1.75);
			\path[draw, black] (6.25, -.25) -- (7.25, -.25);
			\draw[draw, black] (7.25, -.75) -- (8.25, .25) -- (9.25, .25);
			\path[draw, black] (8.25, -.25) -- (9.25, -.25);
			\path[draw, black] (9.75, -.25) -- (11.25, -.25);
			\path[draw, black] (11.75, .75) -- (11.25, .75) -- (10.75, .25) -- (9.75, .25) -- (9.25, -.25) -- (9.25, -.75) -- (9.75, -.25);
			\path[draw, black] (6.75, .25) -- (7.75, .25) -- (7.75, -.75);
			\path[draw, black] (11.25, .25) -- (11.75, .25);
			\path[draw, black] (6.75, .75) -- (7.25, .75) -- (7.75, 1.25) -- (8.25, 1.25);
			\path[draw, black] (7.75, .75) -- (8.75, .75); 
			\path[draw, black] (9.25, .75) -- (10.75, .75); 
			\path[draw, black] (11.25, .75) -- (11.75, .75);
			\path[draw, black] (6.75, 1.25) -- (7.25, 1.25);
			\path[draw, black] (8.75, 1.25) -- (10.25, 1.25);
			\path[draw, black] (10.75, 1.25) -- (11.75, 1.25);  
			\path[draw, black] (8.25, 1.75) -- (8.25, 2.25) -- (7.75, 1.75) -- (7.75, 1.25);
			\path[draw, black] (9.75, 1.75) -- (9.75, 2.25) -- (9.25, 1.75);
			\path[draw, black] (6.75, 1.25) -- (6.25, .75) -- (6.25, -.25) -- (5.75, -.75) -- (5.75, -1.25);
			\path[draw, black] (6.75, .75) -- (6.25, .25); 
			
			\filldraw[fill=black] (6, -1.25) circle [radius = .05];
			\filldraw[fill=black] (7, -1.25) circle [radius = .05];
			\filldraw[fill=black] (8, -1.25) circle [radius = .05];
			\filldraw[fill=black] (8.5, -1.25) circle [radius = .05];
			\filldraw[fill=black] (9.5, -1.25) circle [radius = .05];
			\filldraw[fill=black] (10.5, -1.25) circle [radius = .05];
			\filldraw[fill=black] (11, -1.25) circle [radius = .05];

		\end{tikzpicture}
		
	\end{center}

	\caption{\label{pathslocal} Depicted to the left is the domain $R$ to be tiled; the vertices $v_0 = (x_0, y_0) \in R$ and $u_0 = (x_0, y_0 - T) \in R$; and a neighborhood of $v_0$. Depicted to the right is part of the tiling in this neighborhood and the associated non-intersecting path ensemble $\textbf{Q}$ (dashed).}

\end{figure} 

	First, we interpret $\mathscr{M}$ as an ensemble $\textbf{Q}$ of non-intersecting paths, and let $\textbf{q}$ denote the locations where these paths intersect the horizontal line $\{ y = y_0 - T \}$; see \Cref{pathslocal}. Second, we introduce two particle configurations $\textbf{p}$ and $\textbf{r}$ that coincide with $\textbf{q}$ inside a large neighborhood $u_0$, but are obtained by perturbing $\textbf{q}$ to the left and right, respectively, outside of this neighborhood. Third, we define two non-intersecting random walk ensembles $\textbf{P}$ and $\textbf{R}$ with initial data $\textbf{p}$ and $\textbf{r}$, respectively, and show that there exists a coupling between $(\textbf{P}, \textbf{Q}, \textbf{R})$ such that $\textbf{Q}$ is likely bounded between $\textbf{P}$ and $\textbf{R}$; see \Cref{paths2}. Fourth, we use identities from \cite{ULSRW} to prove that the expected difference between the height functions associated with $\textbf{P}$ and $\textbf{R}$ tends to $0$ in a large neighborhood of $u_0$ (containing $v_0$). Fifth, using the ordering between $(\textbf{P}, \textbf{Q}, \textbf{R})$ and a Markov bound, we conclude that one can couple these three ensembles to coincide near $v_0$ with high probability.

	\begin{figure}

		\begin{center}

			\begin{tikzpicture}[
			>=stealth,
			auto,
			style={
				scale = .75
			}
			]

			\fill[fill=white!75!gray] (6.75, 3.5) -- (9.75, 3.5) -- (9.75, 6.5) -- (6.75, 6.5) -- (6.75, 3.5);
			
			\draw[<->, black, thick] (-1, 3.5) node[left]{$\textbf{p}$} -- (17.5, 3.5);
			\draw[<->, black, thick] (-1, 6.5) -- (17.5, 6.5);
			\draw[] (-1, 5) circle [radius = 0] node[left]{$\textbf{P}$};
			
			\filldraw[fill=black] (8.25, 3.5) circle [radius = .085] node[below = 1, scale = .75]{$u_0$};
			\filldraw[fill=black] (8.25, 5) circle [radius = .085] node[below = 1, scale = .75]{$v_0$};

			\path[draw, black, dashed] (-.5, 3.5) -- (-.5, 4) -- (-.5, 4.5) -- (-.5, 5) -- (-.5, 5.5) -- (-.5, 6) -- (0, 6.5);
			\path[draw, black, dashed] (1.5, 3.5) -- (1.5, 4) -- (1.5, 4.5) -- (2, 5) -- (2.5, 5.5) -- (3, 6) -- (3, 6.5); 
			\path[draw, black, dashed] (2, 3.5) -- (2.5, 4) -- (2.5, 4.5) -- (3, 5) -- (3, 5.5) -- (3.5, 6) -- (4, 6.5);
			\path[draw, black, dashed] (3, 3.5) -- (3.5, 4) -- (4, 4.5) -- (4, 5) -- (4.5, 5.5) -- (4.5, 6) -- (4.5, 6.5);
			\path[draw, black, dashed] (4.5, 3.5) -- (4.5, 4) -- (4.5, 4.5) -- (4.5, 5) -- (5, 5.5) -- (5, 6) -- (5.5, 6.5);
			\path[draw, black, dashed] (5.5, 3.5) -- (5.5, 4) -- (5.5, 4.5) -- (6, 5) -- (6, 5.5) -- (6, 6) -- (6, 6.5);
			\path[draw, black, dashed] (6, 3.5) -- (6.5, 4) -- (6.5, 4.5) -- (7, 5) -- (7, 5.5) -- (7, 6) -- (7.5, 6.5);
			\path[draw, black, dashed] (7, 3.5) -- (7, 4) -- (7, 4.5) -- (7.5, 5) -- (8, 5.5) -- (8, 6) -- (8.5, 6.5);
			\path[draw, black, dashed] (8, 3.5) -- (8, 4) -- (8.5, 4.5) -- (8.5, 5) -- (8.5, 5.5) -- (9, 6) -- (9, 6.5);
			\path[draw, black, dashed] (8.5, 3.5) -- (8.5, 4) -- (9, 4.5) -- (9, 5) -- (9.5, 5.5) -- (9.5, 6) -- (10, 6.5);
			\path[draw, black, dashed] (9.5, 3.5) -- (10, 4) -- (10, 4.5) -- (10, 5) -- (10, 5.5) -- (10, 6) -- (10.5, 6.5);
			\path[draw, black, dashed] (10.5, 3.5) -- (10.5, 4) -- (10.5, 4.5) -- (10.5, 5) -- (10.5, 5.5) -- (11, 6) -- (11, 6.5);
			\path[draw, black, dashed] (11, 3.5) -- (11, 4) -- (11, 4.5) -- (11.5, 5) -- (11.5, 5.5) -- (11.5, 6) -- (12, 6.5);
			\path[draw, black, dashed] (11.5, 3.5) -- (12, 4) -- (12, 4.5) -- (12.5, 5) -- (13, 5.5) -- (13, 6) -- (13, 6.5);
			\path[draw, black, dashed] (12, 3.5) -- (12.5, 4) -- (13, 4.5) -- (13, 5) -- (13.5, 5.5) -- (14, 6) -- (14, 6.5);
			
			\filldraw[fill=black] (-.5, 3.5) circle [radius = .06];
			\filldraw[fill=black] (1.5, 3.5) circle [radius = .06];
			\filldraw[fill=black] (2, 3.5) circle [radius = .06];
			\filldraw[fill=black] (3, 3.5) circle [radius = .06];
			\filldraw[fill=white] (4.5, 3.5) circle [radius = .06];
			\filldraw[fill=white] (5.5, 3.5) circle [radius = .06];
			\filldraw[fill=white] (6, 3.5) circle [radius = .06];
			\filldraw[fill=white] (7, 3.5) circle [radius = .06];
			\filldraw[fill=white] (8, 3.5) circle [radius = .06];
			\filldraw[fill=white] (8.5, 3.5) circle [radius = .06];
			\filldraw[fill=white] (9.5, 3.5) circle [radius = .06];
			\filldraw[fill=white] (10.5, 3.5) circle [radius = .06];
			\filldraw[fill=white] (11, 3.5) circle [radius = .06];
			\filldraw[fill=black] (11.5, 3.5) circle [radius = .06];
			\filldraw[fill=black] (12, 3.5) circle [radius = .06];
			\filldraw[fill=black] (13, 3.5) circle [radius = .06];
			\filldraw[fill=black] (14, 3.5) circle [radius = .06];
			
			\path[draw, black, dashed] (13, 3.5) -- (13, 4) -- (13.5, 4.5) -- (14, 5) -- (14, 5.5) -- (14.5, 6) -- (15, 6.5);
			
			\path[draw, black, dashed] (14, 3.5) -- (14, 4) -- (14.5, 4.5) -- (14.5, 5) -- (14.5, 5.5) -- (15, 6) -- (15.5, 6.5);

			\fill[fill=white!75!gray] (6.75, 0) -- (9.75, 0) -- (9.75, 3) -- (6.75, 3) -- (6.75, 0);
			
			\draw[<->, black, thick] (-1, 0) node[left]{$\textbf{q}$} -- (17.5, 0);
			\draw[<->, black, thick] (-1, 3) -- (17.5, 3);
			\draw[] (-1, 1.5) circle [radius = 0] node[left]{$\textbf{Q}$};
			
			\path[draw, black, dashed] (.5, 0) -- (.5, .5) -- (.5, 1) -- (1, 1.5) -- (1, 2) -- (1, 2.5) -- (1, 3);
			\path[draw, black, dashed] (2, 0) -- (2, .5) -- (2, 1) -- (2.5, 1.5) -- (2.5, 2) -- (3, 2.5) -- (3.5, 3);
			\path[draw, black, dashed] (2.5, 0) -- (2.5, .5) -- (2.5, 1) -- (3, 1.5) -- (3.5, 2) -- (4, 2.5) -- (4.5, 3);
			\path[draw, black, dashed] (3.5, 0) -- (3.5, .5) -- (4, 1) -- (4, 1.5) -- (4.5, 2) -- (5, 2.5) -- (5, 3);
			\path[draw, black, dashed] (4.5, 0) -- (4.5, .5) -- (4.5, 1) -- (5, 1.5) -- (5, 2) -- (5.5, 2.5) -- (5.5, 3);
			\path[draw, black, dashed] (5.5, 0) -- (5.5, .5) -- (6, 1) -- (6, 1.5) -- (6, 2) -- (6.5, 2.5) -- (6.5, 3);
			\path[draw, black, dashed] (6, 0) -- (6.5, .5) -- (6.5, 1) -- (7, 1.5) -- (7, 2) -- (7, 2.5) -- (7.5, 3);
			\path[draw, black, dashed] (7, 0) -- (7, .5) -- (7, 1) -- (7.5, 1.5) -- (8, 2) -- (8, 2.5) -- (8.5, 3);
			\path[draw, black, dashed] (8, 0) -- (8, .5) -- (8.5, 1) -- (8.5, 1.5) -- (8.5, 2) -- (9, 2.5) -- (9, 3);
			\path[draw, black, dashed] (8.5, 0) -- (8.5, .5) -- (9, 1) -- (9, 1.5) -- (9.5, 2) -- (9.5, 2.5) -- (10, 3);
			\path[draw, black, dashed] (9.5, 0) -- (10, .5) -- (10, 1) -- (10, 1.5) -- (10, 2) -- (10, 2.5) -- (10.5, 3);
			\path[draw, black, dashed] (10.5, 0) -- (10.5, .5) -- (10.5, 1) -- (10.5, 1.5) -- (11, 2) -- (11, 2.5) -- (11, 3);
			\path[draw, black, dashed] (11, 0) -- (11, .5) -- (11, 1) -- (11.5, 1.5) -- (11.5, 2) -- (11.5, 2.5) -- (12, 3);
			\path[draw, black, dashed] (12, 0) -- (12.5, .5) -- (12.5, 1) -- (12.5, 1.5) -- (13, 2) -- (13.5, 2.5) -- (13.5, 3);
			\path[draw, black, dashed] (12.5, 0) -- (13, .5) -- (13.5, 1) -- (14, 1.5) -- (14, 2) -- (14.5, 2.5) -- (14.5, 3);
			\path[draw, black, dashed] (13.5, 0) -- (13.5, .5) -- (14, 1) -- (14.5, 1.5) -- (14.5, 2) -- (15, 2.5) -- (15.5, 3);
			\path[draw, black, dashed] (15, 0) -- (15, .5) -- (15, 1) -- (15.5, 1.5) -- (16, 2) -- (16, 2.5) -- (16, 3);

			\filldraw[fill=black] (.5, 0) circle [radius = .06];
			\filldraw[fill=black] (2, 0) circle [radius = .06];
			\filldraw[fill=black] (2.5, 0) circle [radius = .06];
			\filldraw[fill=black] (3.5, 0) circle [radius = .06];
			\filldraw[fill=white] (4.5, 0) circle [radius = .06];
			\filldraw[fill=white] (5.5, 0) circle [radius = .06];
			\filldraw[fill=white] (6, 0) circle [radius = .06];
			\filldraw[fill=white] (7, 0) circle [radius = .06];
			\filldraw[fill=white] (8, 0) circle [radius = .06];
			\filldraw[fill=white] (8.5, 0) circle [radius = .06];
			\filldraw[fill=white] (9.5, 0) circle [radius = .06];
			\filldraw[fill=white] (10.5, 0) circle [radius = .06];
			\filldraw[fill=white] (11, 0) circle [radius = .06];
			\filldraw[fill=black] (12, 0) circle [radius = .06];
			\filldraw[fill=black] (12.5, 0) circle [radius = .06];
			\filldraw[fill=black] (13.5, 0) circle [radius = .06];
			\filldraw[fill=black] (15, 0) circle [radius = .06];
			
			\filldraw[fill=black] (8.25, 0) circle [radius = .085] node[below = 1, scale = .75]{$u_0$};
			\filldraw[fill=black] (8.25, 1.5) circle [radius = .085] node[below = 1, scale = .75]{$v_0$};

			\fill[fill=white!75!gray] (6.75, -.5) -- (9.75, -.5) -- (9.75, -3.5) -- (6.75, -3.5) -- (6.75, -3.5);
			
			\draw[<->, black, thick] (-1, -.5)  -- (17.5, -.5);
			\draw[<->, black, thick] (-1, -3.5) node[left]{$\textbf{r}$} -- (17.5, -3.5);
			\draw[] (-1, -2) circle [radius = 0] node[left]{$\textbf{R}$};
			
			\filldraw[fill=black] (8.25, -3.5) circle [radius = .085] node[below = 1, scale = .75]{$u_0$};
			\filldraw[fill=black] (8.25, -2) circle [radius = .085] node[below = 1, scale = .75]{$v_0$};
			
			\path[draw, black, dashed] (1.5, -3.5) -- (1.5, -3) -- (1.5, -2.5) -- (2, -2) -- (2.5, -1.5) -- (2.5, -1) -- (2.5, -.5);
			\path[draw, black, dashed] (2.5, -3.5) -- (2.5, -3) -- (3, -2.5) -- (3, -2) -- (3.5, -1.5) -- (3.5, -1) -- (4, -.5);
			\path[draw, black, dashed] (3, -3.5) -- (3.5, -3) -- (3.5, -2.5) -- (3.5, -2) -- (4, -1.5) -- (4, -1) -- (4.5, -.5);
			\path[draw, black, dashed] (4, -3.5) -- (4, -3) -- (4, -2.5) -- (4.5, -2) -- (4.5, -1.5) -- (5, -1) -- (5, -.5);
			\path[draw, black, dashed] (4.5, -3.5) -- (5, -3) -- (5, -2.5) -- (5.5, -2) -- (5.5, -1.5) -- (5.5, -1) -- (6, -.5);
			\path[draw, black, dashed] (5.5, -3.5) -- (5.5, -3) -- (6, -2.5) -- (6, -2) -- (6.5, -1.5) -- (6.5, -1) -- (6.5, -.5);
			\path[draw, black, dashed] (6, -3.5) -- (6.5, -3) -- (6.5, -2.5) -- (7, -2) -- (7, -1.5) -- (7, -1) -- (7.5, -.5);
			\path[draw, black, dashed] (7, -3.5) -- (7, -3) -- (7, -2.5) -- (7.5, -2) -- (8, -1.5) -- (8, -1) -- (8.5, -.5);
			\path[draw, black, dashed] (8, -3.5) -- (8, -3) -- (8.5, -2.5) -- (8.5, -2) -- (8.5, -1.5) -- (9, -1) -- (9, -.5);
			\path[draw, black, dashed] (8.5, -3.5) -- (8.5, -3) -- (9, -2.5) -- (9, -2) -- (9.5, -1.5) -- (9.5, -1) -- (10, -.5);
			\path[draw, black, dashed] (9.5, -3.5) -- (10, -3) -- (10, -2.5) -- (10, -2) -- (10, -1.5) -- (10, -1) -- (10.5, -.5);
			\path[draw, black, dashed] (10.5, -3.5) -- (10.5, -3) -- (11, -2.5) -- (11, -2) -- (11.5, -1.5) -- (11.5, -1) -- (11.5, -.5);
			\path[draw, black, dashed] (11, -3.5) -- (11.5, -3) -- (11.5, -2.5) -- (12, -2) -- (12, -1.5) -- (12.5, -1) -- (12.5, -.5);
			\path[draw, black, dashed] (12.5, -3.5) -- (12.5, -3) -- (12.5, -2.5) -- (13, -2) -- (13, -1.5) -- (13.5, -1) -- (14, -.5);
			\path[draw, black, dashed] (13, -3.5) -- (13, -3) -- (13.5, -2.5) -- (14, -2) -- (14, -1.5) -- (14.5, -1) -- (15, -.5);
			\path[draw, black, dashed] (14, -3.5) -- (14, -3) -- (14, -2.5) -- (14.5, -2) -- (15, -1.5) -- (15.5, -1) -- (15.5, -.5);
			\path[draw, black, dashed] (16, -3.5) -- (16, -3) -- (16, -2.5) -- (16.5, -2) -- (16.5, -1.5) -- (17, -1) -- (17, -.5);

			\filldraw[fill=black] (1.5, -3.5) circle [radius = .06];
			\filldraw[fill=black] (2.5, -3.5) circle [radius = .06];
			\filldraw[fill=black] (3, -3.5) circle [radius = .06];
			\filldraw[fill=black] (4, -3.5) circle [radius = .06];
			\filldraw[fill=white] (4.5, -3.5) circle [radius = .06];
			\filldraw[fill=white] (5.5, -3.5) circle [radius = .06];
			\filldraw[fill=white] (6, -3.5) circle [radius = .06];
			\filldraw[fill=white] (7, -3.5) circle [radius = .06];
			\filldraw[fill=white] (8, -3.5) circle [radius = .06];
			\filldraw[fill=white] (8.5, -3.5) circle [radius = .06];
			\filldraw[fill=white] (9.5, -3.5) circle [radius = .06];
			\filldraw[fill=white] (10.5, -3.5) circle [radius = .06];
			\filldraw[fill=white] (11, -3.5) circle [radius = .06];
			\filldraw[fill=black] (12.5, -3.5) circle [radius = .06];
			\filldraw[fill=black] (13, -3.5) circle [radius = .06];
			\filldraw[fill=black] (14, -3.5) circle [radius = .06];
			\filldraw[fill=black] (16, -3.5) circle [radius = .06];

			\end{tikzpicture}
			
		\end{center}

		\caption{\label{paths2} Depicted in the middle is the non-intersecting path ensemble $\textbf{Q}$ with initial data $\textbf{q}$. Depicted above and below are the ensembles $\textbf{P}$ and $\textbf{R}$ whose initial data $\textbf{p}$ and $\textbf{r}$ are obtained by perturbing the particles in $\textbf{q}$ away from $u_0$ to the left and right, respectively (black vertices). Near $u_0$, these initial data coincide with $\textbf{q}$ (white vertices). Each path in $\textbf{Q}$ is between the corresponding ones in $\textbf{P}$ and $\textbf{R}$. In the shaded region, all three path ensembles coincide.  }

	\end{figure}
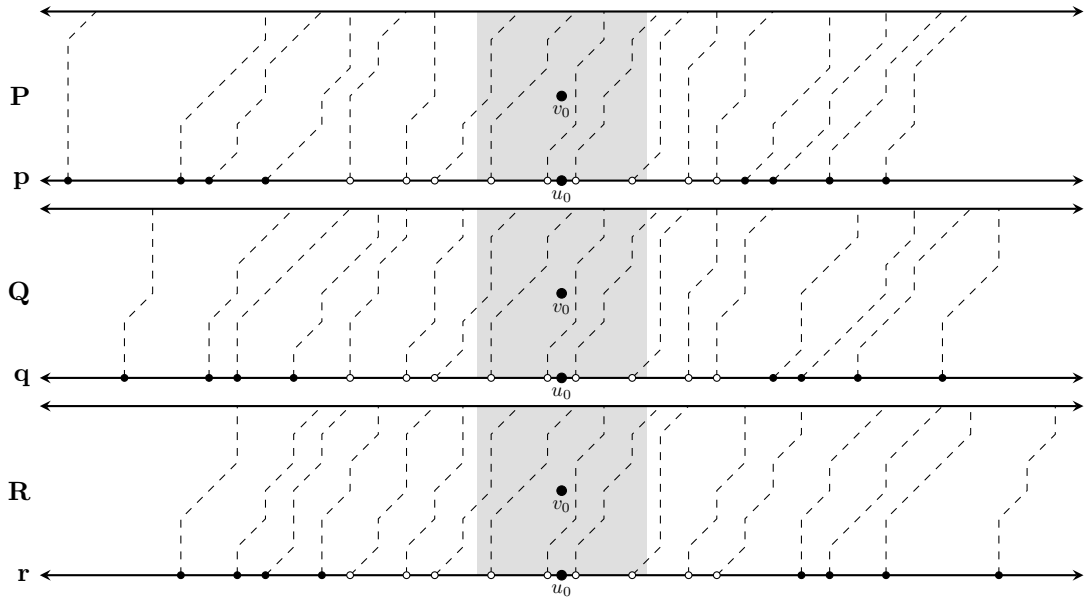

	Next, using the results of \cite{ULSRW}, we show that the local statistics of $\textbf{P}$ converge, after run for time $T \gg 1$, to an extended sine process around site $x_0$. Since $u_0 = (x_0, y_0 - T) = v_0 - (0, T)$, it follows that the local statistics of $\textbf{P}$ converge to this limit around $v_0$. Due to the coupling between $(\textbf{P}, \textbf{Q})$ around $v_0$, we then deduce that the same statement holds for $\mathscr{M}$. 
	
	Fundamental to implementing this procedure will be to first establish a \emph{local law} for uniformly random lozenge tilings; this result is given by \Cref{heightlocal1} below and essentially states the following. For any mesoscopic scale $1 \ll M \ll N = \text{diam} (R)$, the height function associated with $\mathscr{M}$ is approximately linear with slope $\nabla \mathcal{H} (N^{-1} v_0)$ in an $M$-neighborhood of $v_0$. Equivalently, the densities of tile types in $\mathscr{M}$ in any mesoscopic neighborhood of $v_0$ is governed by the (gradient of) the global profile $\mathcal{H}$.  
	
	In the context of the above coupling framework, this local law will serve two primary purposes. To explain the first, recall that the results of \cite{ULSRW} impose the existence of scales $1 \ll U \ll T \ll V$ such that the initial data $\textbf{a}$ for a random walk ensemble is regular on length $U$ subintervals of $[x_0 - V, x_0 + V]$. The local law will enable us to quickly verify this condition for $\textbf{p}$ and $\textbf{r}$, under essentially any choice of $1 \ll U \ll T \ll V$. In particular, we may take these scales to grow quite slowly with respect to $N$, which will be useful since the coupling between $(\textbf{P}, \textbf{Q}, \textbf{R})$ will only hold if $T$ is sufficiently small\footnote{Our choice for the dependence of $T$ on $N$ will not be explicit, but we at least require $T \ll \log \log N$.} compared to $N$. The second purpose of the local law will be to exhibit the previously mentioned coupling between $(\textbf{P}, \textbf{Q}, \textbf{R})$ such that $\textbf{Q}$ is bounded between $\textbf{P}$ and $\textbf{R}$. Indeed, although the results of \cite{ULSRW} can be used to approximate the drifts of the paths in $\textbf{P}$ and $\textbf{R}$, they do not seem to have direct implications about the paths in $\textbf{Q}$ (which are derived from a uniformly random tiling, and not from an ensemble of non-intersecting Bernoulli random walks). Therefore, the local law will be necessary to approximate the drifts of these paths.

	Thus, our task can be described as threefold. 
	
	\begin{enumerate}
		\item \emph{Local Law}: Establish a local law for $\mathscr{M}$. 
		
		\item \emph{Comparison}: Exhibit a coupling between $\mathscr{M}$ and an ensemble $\textbf{P}$ of non-intersecting Bernoulli random walks, such that the two models coincide around $v_0$ with high probability. 
		
		\item \emph{Universality}: Use results of \cite{ULSRW} to show that the local statistics of $\textbf{P}$ around $v_0$ are given by an extended discrete sine process, and conclude that the same holds for $\mathscr{M}$. 
	\end{enumerate}  

	The above outline closely resembles the \emph{three-step strategy} introduced by Erd\H{o}s-Yau for proving bulk universality for local eigenvalue statistics of Hermitian random matrices; see the survey \cite{URM}, the book \cite{DRM}, and references therein. There, they also first establish a local law, which states that the eigenvalue density of an $N \times N$ random matrix $\textbf{H}$ in intervals of length $N^{\delta - 1}$ is governed by the (global) semicircle law. Second, they consider the matrix $\textbf{H}_t = \textbf{H} + t^{1 / 2} \textbf{GUE}$, whose eigenvalues are obtained by running Dyson Brownian motion on the spectrum of $\textbf{H}$ for some short time $t \ll 1$, and show that its local statistics are universal. Third, they compare the local statistics of $\textbf{H}$ and $\textbf{H}_t$ to conclude that those of $\textbf{H}$ are universal.\footnote{The second and third steps of this three-step strategy in fact correspond to the third and second tasks listed above (in the context of lozenge tilings), respectively.} 
	
	From this perspective, our use of the non-intersecting Bernoulli random walk model is analogous to their use of Dyson Brownian motion as the ``source'' of universal local statistics. Moreover, the proofs of the short-time universality results for discrete random walk ensembles in \cite{ULSRW} can be compared to those for Dyson Brownian motion in the original work of Erd\H{o}s-P\'{e}ch\'{e}-Ram\'{i}rez-Schlein-Yau in \cite{UM}.\footnote{Since \cite{UM}, alternative routes to establishing universality for local statistics of Dyson Brownian motion, based on an analysis of the underlying stochastic differential equation, have been developed in \cite{URMLRF,URMFTD,CLSM,FEUM}.} Indeed, both were based on an asymptotic analysis of an explicit correlation kernel, which in the former work was due to Petrov \cite{AR} and in the latter work was due to Br\'{e}zin-Hikami \cite{LSE} and Johansson \cite{ULSDCM}. In fact, the discrete random walk model of \cite{NRWTQPE} can be viewed as a discretization of (the $\beta = 2$ case of) Dyson Brownian motion, as the former exhibits a limit degeneration to the latter.
	
	However, the implementation of the remaining tasks in our outline is considerably different from that in the random matrix three-step strategy. For instance, the comparison step in the latter is typically justified through either a reverse heat flow \cite{UM} or resolvent comparison theorems (originally due to Tao-Vu \cite{RME,ULES} and later developed by Erd\H{o}s-Yau-Yin \cite{BUGM}). Both are heavily based on the fact that the random point configuration of interest is governed by a matrix model. Since this feature is absent for lozenge tilings, we instead implement the comparison through the probabilistic coupling procedure described above. 
	
	Our proof of the lozenge tiling local law is also quite different from that of its random matrix counterpart. The latter was originally established by Erd\H{o}s-Schlein-Yau in \cite{LSLCDRM}, based on a combination of a multiscale analysis with linear algebraic identities satisfied by the random matrix. Local laws have also been proven for the two-dimensional Coulomb gas independently by Bauerschmidt-Bourgade-Nikula-Yau \cite{LTDP} and Lebl\'{e} \cite{LMBG}, and for a class of discrete $\beta$-ensembles by Guionnet-Huang \cite{REUDE}. The proofs of those results are based on the combination of a multiscale analysis with explicit identities for the probability density function of the underlying model. These methods do not seem to be applicable to random lozenge tilings of arbitrary domains, whose associated point process lacks both a matrix model and an explicit form for its probability density function.
	
	Still, a multiscale analysis will be central in our proof of the lozenge tiling local law, which will proceed as follows. Here, we suppose that $v_0 = (0, 0)$ and that the domain $R$ approximates the disk $B_N$ of radius $N$ centered at $v_0$, but impose no assumptions on the associated boundary height function.  
	
	1. We establish two \emph{effective global laws} for the height function $H$ associated with $\mathscr{M}$. These state that, after suitable normalization, $H$ approximates the global profile $\mathcal{H}$ to within an explicit error $\varpi_N$, with high probability. The first global law applies to essentially arbitrary boundary data, with an error of order $\varpi_N \sim (\log N)^{-c}$ for some small $c > 0$. The second has an improved error estimate of $\varpi_N \sim N^{-c}$, but only applies when $\mathcal{H}$ exhibits no frozen facets (is \emph{facetless}). Both laws will be necessary, since the $(\log N)^{-c}$ error appearing in the first would accumulate if applied too many (of order $\log N$) times. To prove the first, we largely proceed as in \cite{VPDT}, with the exception of a (seemingly new) effective variant of Rademacher's theorem quantifying the ``locally linearity'' of an arbitrary Lipschitz function. To prove the second, we closely follow the work \cite{LTGDMLS} of Laslier-Toninelli, by first using monotonicity results to compare the tiling $\mathscr{M}$ to one of a hexagonal domain and then using the integrability of the latter.

	2. We establish deterministic properties for maximizers appearing as global profiles in the variational principle. More specifically, we prove, (i) a stability estimate for how their gradients change upon perturbations of boundary data and, (ii) a condition for the boundary data under which the associated maximizer is facetless. The former will be used to compare two maximizers with approximately equal boundary data, and the latter will be used to verify when the second global law mentioned above is applicable. To prove these properties, we first use the results of De Silva-Savin \cite{MCFARS} to reduce them to estimates on solutions to uniformly elliptic partial differential equations, and then apply known bounds in the latter context.
	
	3. Using the two sets of results described above, we establish a \emph{scale reduction estimate} that essentially states the following. Let $M \gg 1$ denote an integer, and consider a random tiling of a domain approximating a disk $\mathcal{B}_M$ of radius $M$. Suppose that the associated boundary height function gives rise to a facetless global law $G$, within some error $\lambda \ge (\log M)^{-1 - c}$. Restrict the random tiling to a domain approximating the smaller disk $\mathcal{B}_{M / 8}$. Then, with high probability, the boundary height function on this smaller domain gives rise to a facetless global law $F$, within some improved error $\mu \ll \lambda$, and $F$ satisfies $|\nabla F - \nabla G| \lesssim \lambda$ around $N^{-1} v_0 = (0, 0)$; see the left side of \Cref{bm}.

	\begin{figure}

		\begin{center}

			\begin{tikzpicture}[
			>=stealth,
			auto,
			style={
				scale = .4
			}
			]

			\filldraw[white!50!gray] (0, 0) circle [radius = 2.5];
			\draw[] (0, 0) circle[radius = 8];
			\draw[] (0, 0) circle[radius = 4];
			
			\filldraw[fill = black] (0, 0) circle[radius = .08]	 node[below = 1, scale = .75]{$v_0$};
			
			\draw[] (0, 8) circle[radius = 0] node[above]{$\mathcal{B}_M$};
			\draw[] (0, 4) circle[radius = 0] node[above = -1, scale = .85]{$\mathcal{B}_{M / 8}$};
			
			\draw[->, black] (0, 0) -- (-6.93, -4);
			\draw[->, black] (0, 0) -- (4, 0);

			\filldraw[fill = black] (-5.2, -3) circle[radius = 0] node[below = 1, scale = .85]{$M$};
			\filldraw[fill = black] (3, 0) circle[radius = 0] node[below = 1, scale = .78]{$\frac{M}{8}$};
			\filldraw[fill = black] (0, -2.9) circle[radius = 0] node[scale = .7]{$M^{-1} H \approx F \approx G$};

			\filldraw[fill = black] (0, 1) circle[radius = 0] node[scale = .8]{$\nabla F \approx \nabla G$};
			
			\filldraw[white!50!gray] (20, 0) circle [radius = 2.7];
			\draw[] (20, 0) circle[radius = 8];
			\draw[] (20, 0) circle[radius = 5.5];
			\draw[] (20, 0) circle[radius = 3.6];

			\filldraw[fill = black] (20, 0) circle[radius = .08] node[below, scale = .75]{$v_0$};
			
			\draw[] (20, 8) circle[radius = 0] node[above]{$R \approx \mathcal{B}_N$};
			\draw[] (20, 5.5) circle[radius = 0] node[above = -1, scale = .85]{$\mathcal{B}_{N / 8}$};
			\draw[] (20, 3.6) circle[radius = 0] node[above = -1, scale = .75]{$\mathcal{B}_{N / 64}$};

			\draw[] (20, .65) circle[radius = 0] node[scale = .65]{$\nabla F \approx \nabla \mathcal{H} (N^{-1} v_0)$};
			
			\end{tikzpicture}
			
		\end{center}

		\caption{\label{bm} Depicted to the left is a diagram for the scale reduction estimate. In $\mathcal{B}_{M / 8}$, the normalized height function $M^{-1} H$ is approximately equal to $F$ and $G$. In the shaded region, we have that $\nabla F \approx \nabla G$. Depicted to the right is a diagram for the inductive proof of the local law.}

	\end{figure}
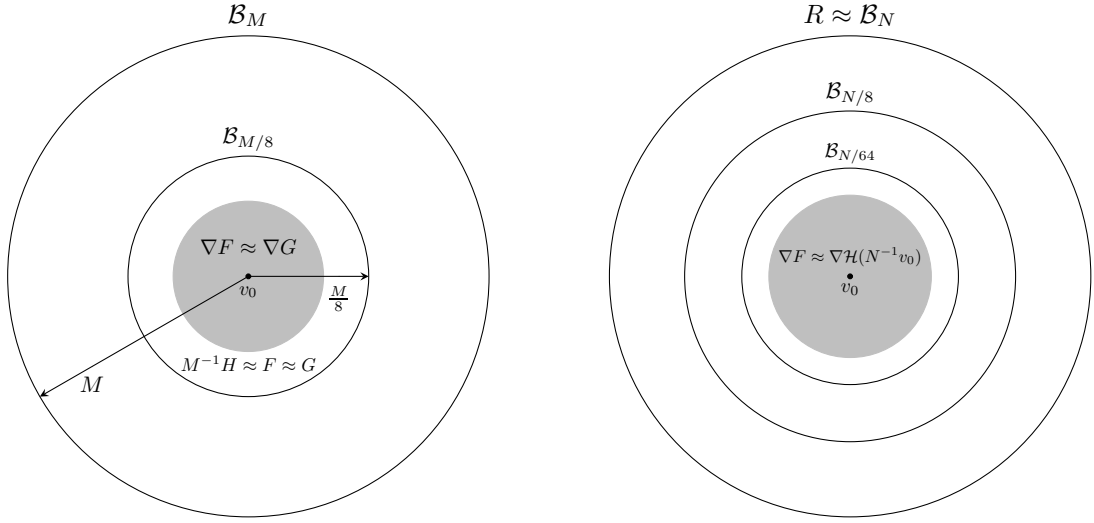

	4. We inductively apply the scale reduction estimate on a decreasing sequence of disks centered at $v_0 = (0, 0)$; see the right side of \Cref{bm}. In particular, after of order $\log N$ such applications, we pass from approximating the height function $H$ by $\mathcal{H}$ on the original domain $R$ of size $N$ to approximating it by some profile $F$ on an arbitrary mesoscopic neighborhood of $v_0$. Since the gradient around $(0, 0)$ of the profile approximating $H$ remains approximately constant after each application of the scale reduction estimate, $F$ will be approximately linear of slope $\nabla \mathcal{H} (0, 0)$. Hence, $H$ will be approximately linear of this slope on any mesoscopic disk centered at $v_0$, thereby establishing the lozenge tiling local law.

	The remainder of this section is organized as follows. In \Cref{TilingsHeight} we define the lozenge tilings and height functions on the triangular lattice; in \Cref{InfiniteMeasures} we recall the definition of a certain extended discrete sine process; in \Cref{ConvergenceStatistics} we state our results on universality for local lozenge tiling statistics; and in \Cref{Outline} we outline the organization for the remainder of the paper. 
	
	It would be of interest to extend the results of this article to dimer models on other lattices. Currently, the most accessible such model appears to be the uniformly random domino tiling on a subdomain of $\mathbb{Z}^2$. Although the framework described above largely applies in this context, it uses two exactly solvable features of lozenge tilings that have not yet been developed for domino tilings. The first is the integrability of the model on a sufficiently broad family of domains so as to encompass all possible second order Taylor expansions of any global height profile.\footnote{This is used in the proof of the second effective global law, with the improved power-law error; the proof of the first effective global law (with logarithmic error) should apply to other models with little modification.} In the lozenge tiling case, these constitute hexagonal domains \cite{LLSBHP,AR,ARTP}; for domino tilings, it seems likely that such a family of domains is provided in the work \cite{ARTR} of Bufetov-Knizel. The second is the convergence of local statistics under general initial data for an ensemble of discrete random walks, conditioned to never intersect, associated with the tiling model. For lozenge tilings, this was provided in \cite{ULSRW}. Given both of these statements for the domino tiling model, it might be possible to apply our methods to analyze its local statistics on arbitrary domains.
	
	Before proceeding, let us introduce some notation that will be used throughout the article. For any integer $d \in \mathbb{Z}_{\ge 1}$ and subset $\mathcal{S} \subseteq \mathbb{R}^d$, we let $\overline{\mathcal{S}}$ and $\partial \mathcal{S} = \partial \overline{\mathcal{S}}$ denote its closure and boundary, respectively. Additionally, for any function $f: \mathcal{S} \rightarrow \mathbb{R}$ and subset $\mathcal{S}' \subseteq \mathcal{S}$, let $f |_{\mathcal{S}'}$ denote the restriction of $f$ to $\mathcal{S}'$. Moreover, for any real number $c \in \mathbb{R}$ and vector $w \in \mathbb{R}^d$, we set $c \mathcal{S} + w = \big\{ cs + w: s \in \mathcal{S} \big\} \subseteq \mathbb{R}^d$. We further let $E^c$ denote the complement of any event $E$.

	\subsection{Lozenge Tilings and Height Functions} 
	
	\label{TilingsHeight} 
	
	In this section we describe the model of interest to us, namely, lozenge tilings on the triangular lattice and their associated height functions. 
	
	To that end, we first define the \emph{triangular lattice} $\mathbb{T}$ to be the graph whose vertex set is $\mathbb{Z}^2$ and whose edge set consists of edges connecting $(x, y), (x', y') \in \mathbb{Z}^2$ if and only if $(x' - x, y' - y) \in \big\{ (1, 0), (-1, 0), (0, 1), (0, -1), (1, 1), (-1, -1) \big\}$. The faces of $\mathbb{T}$ are therefore triangles with vertices of the form $\big\{ (x, y), (x + 1, y), (x + 1, y + 1) \big\}$ or $\big\{ (x, y), (x, y + 1), (x + 1, y + 1) \big\}$. We refer to the left side of \Cref{latticet} for a depiction.\footnote{It is typical to depict lozenge tilings on the hexagonal lattice, in which adjacent edges meet at angles of $\frac{\pi}{3}$ and not $\frac{\pi}{2}$. Our reason for instead depicting them on $\mathbb{Z}^2$ is to simplify notation for the height function when embedding the tiling in $\mathbb{R}^2$.} For any induced subgraph $R \subseteq \mathbb{T}$, we let $\mathbb{V}(R)$ denote its set of vertices and $\mathbb{F}(R)$ denote the union of its faces (viewed as a two-dimensional subset of $\mathbb{R}^2$). We say that $R$ is simply-connected if $\mathbb{F}(R)$ is. A \emph{domain} $R \subseteq \mathbb{T}$ is a simply-connected induced subgraph of $\mathbb{T}$. Its \emph{boundary} $\partial R \subseteq \mathbb{V}(R)$ is the set of vertices $v \in \mathbb{V}(R)$ adjacent to a vertex in $\mathbb{T} \setminus \mathbb{V}(R)$.

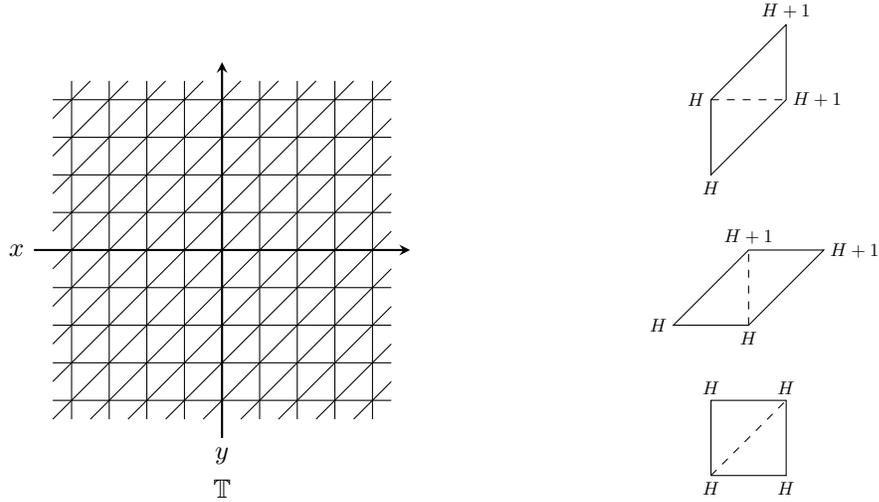
\begin{figure}

	\begin{center}

		\begin{tikzpicture}[
		>=stealth,
		auto,
		style={
			scale = .5
		}
		]
		
		\draw[-, black] (-4.5, -1) -- (4.5, -1); 
		\draw[-, black] (-4.5, -2) -- (4.5, -2); 
		\draw[-, black] (-4.5, -3) -- (4.5, -3); 
		\draw[-, black] (-4.5, -4) -- (4.5, -4); 
		\draw[->, thick, black] (-5, 0) node[left]{$x$} -- (5, 0); 
		\draw[-, black] (-4.5, 1) -- (4.5, 1); 
		\draw[-, black] (-4.5, 2) -- (4.5, 2); 
		\draw[-, black] (-4.5, 3) -- (4.5, 3); 
		\draw[-, black] (-4.5, 4) -- (4.5, 4);

		\draw[-, black] (-1, -4.5) -- (-1, 4.5);
		\draw[-, black] (-2, -4.5) -- (-2, 4.5);
		\draw[-, black] (-3, -4.5) -- (-3, 4.5);
		\draw[-, black] (-4, -4.5) -- (-4, 4.5);
		\draw[->, thick, black] (0, -5) node[below]{$y$} node[below = 12]{$\mathbb{T}$} -- (0, 5);
		\draw[-, black] (1, -4.5) -- (1, 4.5);
		\draw[-, black] (2, -4.5) -- (2, 4.5);
		\draw[-, black] (3, -4.5) -- (3, 4.5);
		\draw[-, black] (4, -4.5) -- (4, 4.5);

		\draw [-, black] (-4.5, -4.5) -- (4.5, 4.5);
		\draw [-, black] (-4.5, -3.5) -- (3.5, 4.5);
		\draw [-, black] (-4.5, -2.5) -- (2.5, 4.5);
		\draw [-, black] (-4.5, -1.5) -- (1.5, 4.5);
		\draw [-, black] (-4.5, -.5) -- (.5, 4.5);
		\draw [-, black] (-4.5, .5) -- (-.5, 4.5);
		\draw [-, black] (-4.5, 1.5) -- (-1.5, 4.5);
		\draw [-, black] (-4.5, 2.5) -- (-2.5, 4.5);
		\draw [-, black] (-4.5, 3.5) -- (-3.5, 4.5);

		\draw [-, black] (-3.5, -4.5) -- (4.5, 3.5);
		\draw [-, black] (-2.5, -4.5) -- (4.5, 2.5);
		\draw [-, black] (-1.5, -4.5) -- (4.5, 1.5);
		\draw [-, black] (-.5, -4.5) -- (4.5, .5);
		\draw [-, black] (.5, -4.5) -- (4.5, -.5);
		\draw [-, black] (1.5, -4.5) -- (4.5, -1.5);
		\draw [-, black] (2.5, -4.5) -- (4.5, -2.5);
		\draw [-, black] (3.5, -4.5) -- (4.5, -3.5);

		\draw[-, black] (15, 6) node[above, scale = .7]{$H + 1$}-- (15, 4) node[right, scale = .7]{$H + 1$} -- (13, 2) node[below, scale = .7]{$H$} -- (13, 4) node[left, scale = .7]{$H$} -- (15, 6); 
		\draw[-, dashed, black] (13, 4) -- (15, 4);

		\draw[-, black] (14, 0) node[above, scale = .7]{$H + 1$} -- (16, 0) node[right, scale = .7]{$H + 1$} -- (14, -2) node[below, scale = .7]{$H$} -- (12, -2) node[left, scale = .7]{$H$}-- (14, 0);
		\draw[-, dashed, black] (14, -2) -- (14, 0);
		
		\draw[-, black] (13, -6) node[below, scale = .7]{$H$} -- (13, -4) node[above, scale = .7]{$H$} -- (15, -4) node[above, scale = .7]{$H$} -- (15, -6) node[below, scale = .7]{$H$} -- (13, -6);
		\draw[-, dashed, black] (13, -6) -- (15, -4);
		
		\end{tikzpicture}
		
	\end{center}

	\caption{\label{latticet} The triangular lattice $\mathbb{T}$ is depicted to the left. The three different types of lozenges (tiles), along with their associated height functions, are depicted to the right.}

\end{figure}

A \emph{dimer covering} of a domain $R \subseteq \mathbb{T}$ is defined to be a perfect matching on the dual graph of $R$. A pair of adjacent triangular faces in any such matching forms a parallelogram, which we will also refer to as a \emph{lozenge} or \emph{tile}. Lozenges can be oriented in one of three ways; see the right side of \Cref{latticet} for a depiction. We refer to the topmost lozenge there (that is, one with vertices of the form $\big\{ (x, y), (x, y + 1), (x + 1, y + 2), (x + 1, y + 1) \big\}$) as a \emph{type $1$} lozenge. Similarly, we refer to the middle (with vertices of the form $\big\{ (x, y), (x + 1, y), (x + 2, y + 1), (x + 1, y + 1) \big\}$) and bottom (vertices of the form $\big\{ (x, y), (x, y + 1), (x + 1, y + 1), (x + 1, y) \big\}$) ones there as \emph{type $2$} and \emph{type $3$} lozenges, respectively. 

A dimer covering of $R$ can equivalently be interpreted as a tiling of $R$ by lozenges of types $1$, $2$, and $3$. Therefore, we will also refer to a dimer covering of $R$ as a \emph{(lozenge) tiling}. Denote by $\mathfrak{E} (R)$ the set of such tilings of $R$; if $\mathfrak{E} (R)$ is nonempty, then we call $R$ \emph{tileable}. 

\begin{exa}
	
	\label{domainabc}
	
	For any $A, B, C \in \mathbb{Z}_{\ge 1}$, the hexagon with vertices $\big\{ (0, 0), (A, 0), (0, B), (C, B + C), (A + C, B + C), (A + C, C) \big\}$ is an example of a tileable domain. We will refer to this domain as a \emph{$A \times B \times C$ hexagon} (although its sidelengths are in fact $A$, $B$, and $C \sqrt{2}$). See the right side of \Cref{tilinghexagon} for an example.  
	
\end{exa}

Following Section 3 of \cite{VPDT}, a \emph{free (lozenge) tiling} of a (not necessarily tileable) domain $R$ is defined to be a lozenge tiling of a possibly larger domain $R'$ containing $R$, each tile of which intersects $R$ in at least one face. Stated equivalently, it is a tiling of $R$, in which lozenges are permitted to extend past $\partial R$ and include at most one face of $\mathbb{T} \setminus R$.

Free lozenge tilings admit alternative, equivalent descriptions in terms of height functions. More specifically, a \emph{height function} $H: \mathbb{V}(R) \rightarrow \mathbb{Z}$ is a function on the vertices of $\mathbb{V} (R)$ that satisfies 
\begin{flalign*}
f(v) - f (u) \in \{ 0, 1 \}, \quad \text{whenever $u = (x, y)$ and $v \in \big\{ (x + 1, y), (x, y + 1), (x + 1, y + 1) \big\}$},
\end{flalign*}

\noindent for some $(x, y) \in \mathbb{Z}^2$. We refer to the restriction $h = H|_{\partial R}$ as a \emph{boundary height function}.

For a fixed vertex $v \in \mathbb{V}(R)$ and integer $m \in \mathbb{Z}$, one can associate with any free tiling of $R$ a height function $H: \mathbb{V}(R) \rightarrow \mathbb{Z}$ as follows. First, set $H (v) = m$, and then define $H$ at the remaining vertices of $R$ in such a way that the height functions along the four vertices of any lozenge in the tiling are of the form depicted on the right side of \Cref{latticet}. In particular, we require that $H (x + 1, y) - H (x, y) = 1$ if and only if $(x, y)$ and $(x + 1, y)$ are vertices of the same type $1$ lozenge, and that $H (x, y + 1) - H (x, y) = 1$ if and only if $(x, y)$ and $(x, y + 1)$ are vertices of the same type $2$ lozenge. Since $R$ is simply-connected, a height function on $R$ is uniquely determined by these conditions (and the value of $H(v) = m$). 

Furthermore, it can be quickly verified that any height function $H: \mathbb{V}(R) \rightarrow \mathbb{Z}$ gives rise to a (unique) free tiling of $R$. We refer to the right side of \Cref{tilinghexagon} for an example; as depicted there, we can also view a lozenge tiling of $R$ as a packing of $\mathbb{F}(R)$ by boxes of the type shown on the left side of \Cref{tilinghexagon}. In this case, the value $H (u)$ of the height function associated with this tiling at some vertex $u \in \mathbb{V}(R)$ denotes the height of the stack of boxes at $u$. Observe in particular that, if there exists a tiling $\mathscr{M} \in \mathfrak{E} (R)$ associated with some height function $H$, then the boundary height function $h = H |_{\partial R}$ is independent of $\mathscr{M}$ and is uniquely determined by $R$ (except for a global shift, which was above fixed by the value of $H(v) = m$).

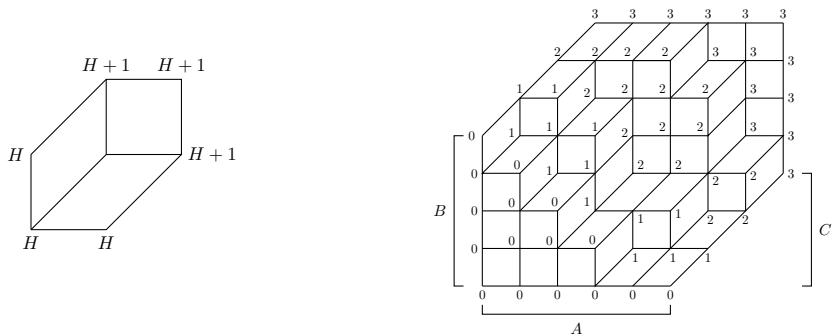
\begin{figure}

	\begin{center}

		\begin{tikzpicture}[
		>=stealth,
		auto,
		style={
			scale = .5
		}
		]

		\draw[-, black] (-12, 1.5) node[below, scale = .7]{$H$} -- (-10, 1.5) node[below, scale = .7]{$H$} -- (-8, 3.5) node[right, scale = .7]{$H + 1$} -- (-8, 5.5) node[above, scale = .7]{$H + 1$} -- (-10, 5.5) node[above, scale = .7]{$H + 1$} -- (-12, 3.5) node[left, scale = .7]{$H$} -- (-12, 1.5);
		\draw[-, black] (-12, 1.5) -- (-10, 3.5) -- (-10, 5.5);
		\draw[-, black] (-10, 3.5) -- (-8, 3.5);

		\draw[-, black] (0, 0) -- (5, 0);
		\draw[-, black] (0, 0) -- (0, 4);
		\draw[-, black] (5, 0) -- (8, 3);
		\draw[-, black] (0, 4) -- (3, 7);
		\draw[-, black] (8, 3) -- (8, 7); 
		\draw[-, black] (8, 7) -- (3, 7);

		\draw[-, black] (0, -.5) -- (0, -.75) -- (5, -.75) -- (5, -.5);
		\draw[-, black] (-.5, 0) -- (-.75, 0) -- (-.75, 4) -- (-.5, 4); 
		\draw[-, black] (8.5, 0) -- (8.75, 0) -- (8.75, 3) -- (8.5, 3); 
		
		\draw[] (2.5, -1.125) circle[radius = 0] node[scale = .6]{$A$};	
		\draw[] (-1.125, 2) circle[radius = 0] node[scale = .6]{$B$};
		\draw[] (9.125, 1.5) circle[radius = 0] node[scale = .6]{$C$};

		\draw[-, black] (1, 0) -- (1, 3) -- (0, 3) -- (1, 4) -- (1, 5) -- (2, 5) -- (4, 7);
		\draw[-, black] (0, 2) -- (2, 2) -- (2, 0);
		\draw[-, black] (0, 1) -- (3, 1) -- (3, 0) -- (4, 1) -- (6, 1);
		\draw[-, black] (2, 6) -- (5, 6) -- (6, 7) -- (6, 6) -- (8, 6);
		\draw[-, black] (4, 6) -- (5, 7);
		\draw[-, black] (1, 4) -- (2, 4) -- (2, 5); 
		\draw[-, black] (2, 4) -- (3, 5) -- (3, 6);
		\draw[-, black] (1, 3) -- (2, 4);
		\draw[-, black] (2, 2) -- (3, 3) -- (3, 4) -- (2, 4) -- (2, 3) -- (1, 2);
		\draw[-, black] (2, 3) -- (3, 3) -- (3, 2) -- (4, 3) -- (4, 6) -- (5, 6) -- (5, 3) -- (4, 3);
		\draw[-, black] (3, 5) -- (5, 5) -- (6, 6);
		\draw[-, black] (7, 7) -- (7, 4) -- (8, 4) -- (7, 3) -- (7, 2) -- (6, 2) -- (4, 0);
		\draw[-, black] (8, 5) -- (7, 5) -- (6, 4) -- (6, 5) -- (7, 6);
		\draw[-, black] (3, 3) -- (4, 4) -- (6, 4);
		\draw[-, black] (3, 4) -- (4, 5);
		\draw[-, black] (6, 5) -- (5, 5);
		\draw[-, black] (7, 4) -- (6, 3) -- (7, 3);
		\draw[-, black] (2, 1) -- (3, 2) -- (5, 2) -- (6, 3);
		\draw[-, black] (3, 1) -- (5, 3) -- (6, 3) -- (6, 4); 
		\draw[-, black] (4, 1) -- (4, 2); 
		\draw[-, black] (5, 1) -- (5, 2); 
		\draw[-, black] (6, 2) -- (6, 3);

		\draw[] (0, 0) circle [radius = 0] node[below, scale = .5]{$0$};
		\draw[] (1, 0) circle [radius = 0] node[below, scale = .5]{$0$};
		\draw[] (2, 0) circle [radius = 0] node[below, scale = .5]{$0$};
		\draw[] (3, 0) circle [radius = 0] node[below, scale = .5]{$0$};
		\draw[] (4, 0) circle [radius = 0] node[below, scale = .5]{$0$};
		\draw[] (5, 0) circle [radius = 0] node[below, scale = .5]{$0$};

		\draw[] (0, 1) circle [radius = 0] node[left, scale = .5]{$0$};
		\draw[] (1, 1) circle [radius = 0] node[above = 3, left = 0, scale = .5]{$0$};
		\draw[] (2, 1) circle [radius = 0] node[above = 3, left = 0, scale = .5]{$0$};
		\draw[] (3, 1) circle [radius = 0] node[left = 1, above = 0, scale = .5]{$0$};
		\draw[] (4, 1) circle [radius = 0] node[right = 1, below = 0, scale = .5]{$1$};
		\draw[] (5, 1) circle [radius = 0] node[right = 1, below = 0, scale = .5]{$1$};
		\draw[] (6, 1) circle [radius = 0] node[below, scale = .5]{$1$};

		\draw[] (0, 2) circle [radius = 0] node[left, scale = .5]{$0$};
		\draw[] (1, 2) circle [radius = 0] node[above = 3, left = 0, scale = .5]{$0$};
		\draw[] (2, 2) circle [radius = 0] node[left = 1, above = 0, scale = .5]{$0$};
		\draw[] (3, 2) circle [radius = 0] node[left = 3, above = 0, scale = .5]{$1$};
		\draw[] (4, 2) circle [radius = 0] node[below = 3, right = 0, scale = .5]{$1$};
		\draw[] (5, 2) circle [radius = 0] node[below = 1, right = 0, scale = .5]{$1$};
		\draw[] (6, 2) circle [radius = 0] node[right = 1, below = 0, scale = .5]{$2$};
		\draw[] (7, 2) circle [radius = 0] node[below, scale = .5]{$2$};

		\draw[] (0, 3) circle [radius = 0] node[left, scale = .5]{$0$};
		\draw[] (1, 3) circle [radius = 0] node[left = 1, above = 0, scale = .5]{$0$};
		\draw[] (2, 3) circle [radius = 0] node[above = 1, left = 0, scale = .5]{$1$};
		\draw[] (3, 3) circle [radius = 0] node[left = 3, above = 0, scale = .5]{$1$};
		\draw[] (4, 3) circle [radius = 0] node[right = 3, above = 0, scale = .5]{$2$};
		\draw[] (5, 3) circle [radius = 0] node[right = 3, above = 0, scale = .5]{$2$};
		\draw[] (6, 3) circle [radius = 0] node[below = 3, right = 0, scale = .5]{$2$};
		\draw[] (7, 3) circle [radius = 0] node[below = 1, right = 0, scale = .5]{$2$};
		\draw[] (8, 3) circle [radius = 0] node[right, scale = .5]{$3$};
		
		\draw[] (0, 4) circle [radius = 0] node[left = 1, scale = .5]{$0$};
		\draw[] (1, 4) circle [radius = 0] node[above = 1, left = 0, scale = .5]{$1$};
		\draw[] (2, 4) circle [radius = 0] node[left = 3, above = 0, scale = .5]{$1$};
		\draw[] (3, 4) circle [radius = 0] node[above, scale = .5]{$1$};
		\draw[] (4, 4) circle [radius = 0] node[above = 1, left = 0, scale = .5]{$2$};
		\draw[] (5, 4) circle [radius = 0] node[left = 3, above = 0, scale = .5]{$2$};
		\draw[] (6, 4) circle [radius = 0] node[left = 3, above = 0, scale = .5]{$2$};
		\draw[] (7, 4) circle [radius = 0] node[right = 3, above = 0, scale = .5]{$3$};
		\draw[] (8, 4) circle [radius = 0] node[right, scale = .5]{$3$};

		\draw[] (1, 5) circle [radius = 0] node[above, scale = .5]{$1$};
		\draw[] (2, 5) circle [radius = 0] node[left = 1, above = 0, scale = .5]{$1$};
		\draw[] (3, 5) circle [radius = 0] node[above = 2, left = 0, scale = .5]{$2$};
		\draw[] (4, 5) circle [radius = 0] node[left = 3, above = 0, scale = .5]{$2$};
		\draw[] (5, 5) circle [radius = 0] node[left = 3, above = 0, scale = .5]{$2$};
		\draw[] (6, 5) circle [radius = 0] node[left = 1, above = 0, scale = .5]{$2$};
		\draw[] (7, 5) circle [radius = 0] node[right = 3, above = 0, scale = .5]{$3$};
		\draw[] (8, 5) circle [radius = 0] node[right, scale = .5]{$3$};

		\draw[] (2, 6) circle [radius = 0] node[above, scale = .5]{$2$};
		\draw[] (3, 6) circle [radius = 0] node[above, scale = .5]{$2$};
		\draw[] (4, 6) circle [radius = 0] node[left = 2, above = 0, scale = .5]{$2$};
		\draw[] (5, 6) circle [radius = 0] node[left = 2, above = 0, scale = .5]{$2$};
		\draw[] (6, 6) circle [radius = 0] node[right = 3, above = 0, scale = .5]{$3$};
		\draw[] (7, 6) circle [radius = 0] node[right = 3, above = 0, scale = .5]{$3$};
		\draw[] (8, 6) circle [radius = 0] node[right, scale = .5]{$3$};

		\draw[] (3, 7) circle [radius = 0] node[above, scale = .5]{$3$};
		\draw[] (4, 7) circle [radius = 0] node[above, scale = .5]{$3$};
		\draw[] (5, 7) circle [radius = 0] node[above, scale = .5]{$3$};
		\draw[] (6, 7) circle [radius = 0] node[above, scale = .5]{$3$};
		\draw[] (7, 7) circle [radius = 0] node[above, scale = .5]{$3$};
		\draw[] (8, 7) circle [radius = 0] node[above, scale = .5]{$3$};

		\end{tikzpicture}
		
	\end{center}

	\caption{\label{tilinghexagon} Depicted to the right is a lozenge tiling of the $A \times B \times C$ hexagon, with $(A, B, C) = (5, 4, 3)$. One may view this tiling as a packing of boxes (of the type depicted on the left) into a large corner, which gives rise to a height function (shown on the right). }

\end{figure}

\subsection{Infinite-Volume Gibbs Measures}

\label{InfiniteMeasures}

Although in this paper we will be primarily interested in uniformly random lozenge tilings of finite domains, certain infinite-volume measures on $\mathbb{T}$ will arise as limit points of local statistics for our model. Therefore, in this section we recall a two-parameter family of such measures from \cite{OD,LSLD,CFP,RS}. 

To that end, let $\mathfrak{P} (R)$ denote the space of probability measures on the set $\mathfrak{E} (R)$ of all lozenge tilings of some domain $R \subseteq \mathbb{T}$. We say that $\mu \in \mathfrak{P} (\mathbb{T})$ is a \emph{Gibbs measure} if it satisfies the following property for any finite domain $R \subset \mathbb{T}$. The probability under $\mu$ of selecting any $\mathscr{M} \in \mathfrak{E} (\mathbb{T})$, conditional on the restriction of $\mathscr{M}$ to $\mathbb{T} \setminus R$, is uniform.

Moreover, for any $w \in \mathbb{Z}^2$, define the translation map $\mathfrak{S}_w: \mathbb{Z}^2 \rightarrow \mathbb{Z}^2$ by setting $\mathfrak{S}_w (v) = v - w$ for any $v \in \mathbb{Z}^2$. Then $\mathfrak{S}_w$ induces an operator on $\mathfrak{P} (\mathbb{T})$ that translates a tiling by $-w$; we also refer it to by $\mathfrak{S}_w$. A measure $\mu \in \mathfrak{P} (\mathbb{T})$ is called \emph{translation-invariant} if $\mathfrak{S}_w \mu = \mu$, for any $w \in \mathbb{Z}^2$.  We further call a translation-invariant measure $\mu \in \mathfrak{P} (\mathbb{T})$ \emph{extremal} if, for any $p \in (0, 1)$ and translation-invariant measures $\mu_1, \mu_2 \in \mathfrak{P} (\mathbb{T})$ such that $\mu = p \mu_1 + (1 - p) \mu_2$, we have $\mu_1 = \mu = \mu_2$. 
	
Let $\mu \in \mathfrak{P} (\mathbb{T})$ be a translation-invariant measure, and let $H$ denote the height function associated with a randomly sampled lozenge tiling under $\mu$, such that $H(0, 0) = 0$. Setting $s = \mathbb{E} \big[ H (1, 0) - H (0, 0) \big]$ and $t = \mathbb{E} \big[ H(0, 1) - H (0, 0) \big]$, the \emph{slope} of $\mu$ is defined to be the pair $(s, t)$. 

Observe that, if we denote 
\begin{flalign}
\label{t}
\mathcal{T} = \{ (s, t) \in \mathbb{R}_{> 0}^2: s + t < 1 \},
\end{flalign}

\noindent and its closure by $\overline{\mathcal{T}}$, then we must have $(s, t) \in \overline{\mathcal{T}}$. Indeed, $s$, $t$, and $1 - s - t$ denote the probabilities of a given lozenge in a random tiling (under $\mu$) being of types $1$, $2$, and $3$, respectively. 

It was shown as Theorem 9.1.1 of \cite{RS} that, for any $(s, t) \in \overline{\mathcal{T}}$, there exists a unique translation-invariant, extremal Gibbs measure $\mu = \mu_{s, t} \in \mathfrak{P} (\mathbb{T})$ of slope $(s, t)$; such a measure is known to arise as a limit point of lozenge tilings of a large torus, conditional on its height function having average slope $(s, t)$ (see Section 5.21 of \cite{OD} or Section 4 of \cite{LSLD}). An explicit form for the correlation functions for these measures was provided in Proposition 8.5 of \cite{VPDT} and Section 6.23 of \cite{OD}, based on methods introduced earlier in \cite{SDL}. To explain this further, we require the following definition. 

\begin{definition} 
	
\label{xm} 

Given a domain $R \subseteq \mathbb{T}$ and a tiling $\mathscr{M} \in \mathfrak{E} (R)$, let $\mathscr{X} = \mathscr{X} (\mathscr{M})$ denote the set of all $(x, y) \in \mathbb{Z}^2$ such that $\big( x + \frac{1}{2}, y \big)$ is the center of some type $1$ lozenge in $\mathscr{M}$.  

\end{definition}

\begin{figure}

	\begin{center}

		\begin{tikzpicture}[
		>=stealth,
		auto,
		style={
			scale = .5
		}
		]

		\draw[-, black] (0, 0) -- (5, 0);
		\draw[-, black] (0, 0) -- (0, 4);
		\draw[-, black] (5, 0) -- (8, 3);
		\draw[-, black] (0, 4) -- (3, 7);
		\draw[-, black] (8, 3) -- (8, 7); 
		\draw[-, black] (8, 7) -- (3, 7);

		\draw[-, black] (1, 0) -- (1, 3) -- (0, 3) -- (1, 4) -- (1, 5) -- (2, 5) -- (4, 7);
		\draw[-, black] (0, 2) -- (2, 2) -- (2, 0);
		\draw[-, black] (0, 1) -- (3, 1) -- (3, 0) -- (4, 1) -- (6, 1);
		\draw[-, black] (2, 6) -- (5, 6) -- (6, 7) -- (6, 6) -- (8, 6);
		\draw[-, black] (4, 6) -- (5, 7);
		\draw[-, black] (1, 4) -- (2, 4) -- (2, 5); 
		\draw[-, black] (2, 4) -- (3, 5) -- (3, 6);
		\draw[-, black] (1, 3) -- (2, 4);
		\draw[-, black] (2, 2) -- (3, 3) -- (3, 4) -- (2, 4) -- (2, 3) -- (1, 2);
		\draw[-, black] (2, 3) -- (3, 3) -- (3, 2) -- (4, 3) -- (4, 6) -- (5, 6) -- (5, 3) -- (4, 3);
		\draw[-, black] (3, 5) -- (5, 5) -- (6, 6);
		\draw[-, black] (7, 7) -- (7, 4) -- (8, 4) -- (7, 3) -- (7, 2) -- (6, 2) -- (4, 0);
		\draw[-, black] (8, 5) -- (7, 5) -- (6, 4) -- (6, 5) -- (7, 6);
		\draw[-, black] (3, 3) -- (4, 4) -- (6, 4);
		\draw[-, black] (3, 4) -- (4, 5);
		\draw[-, black] (6, 5) -- (5, 5);
		\draw[-, black] (7, 4) -- (6, 3) -- (7, 3);
		\draw[-, black] (2, 1) -- (3, 2) -- (5, 2) -- (6, 3);
		\draw[-, black] (3, 1) -- (5, 3) -- (6, 3) -- (6, 4); 
		\draw[-, black] (4, 1) -- (4, 2); 
		\draw[-, black] (5, 1) -- (5, 2); 
		\draw[-, black] (6, 2) -- (6, 3);

		\filldraw[fill=black] (0, 4) circle [radius = .1];
		\filldraw[fill=black] (1, 3) circle [radius = .1];
		\filldraw[fill=black] (2, 2) circle [radius = .1];
		\filldraw[fill=black] (2, 5) circle [radius = .1];
		\filldraw[fill=black] (3, 1) circle [radius = .1];
		\filldraw[fill=black] (3, 3) circle [radius = .1];	
		\filldraw[fill=black] (3, 4) circle [radius = .1];
		\filldraw[fill=black] (5, 2) circle [radius = .1];
		\filldraw[fill=black] (5, 6) circle [radius = .1];
		\filldraw[fill=black] (6, 4) circle [radius = .1];
		\filldraw[fill=black] (6, 5) circle [radius = .1];
		\filldraw[fill=black] (7, 3) circle [radius = .1];

		\filldraw[fill=white] (.5, 4) circle [radius = .1];
		\filldraw[fill=white] (1.5, 3) circle [radius = .1];
		\filldraw[fill=white] (2.5, 2) circle [radius = .1];
		\filldraw[fill=white] (2.5, 5) circle [radius = .1];
		\filldraw[fill=white] (3.5, 1) circle [radius = .1];
		\filldraw[fill=white] (3.5, 3) circle [radius = .1];	
		\filldraw[fill=white] (3.5, 4) circle [radius = .1];
		\filldraw[fill=white] (5.5, 2) circle [radius = .1];
		\filldraw[fill=white] (5.5, 6) circle [radius = .1];
		\filldraw[fill=white] (6.5, 4) circle [radius = .1];
		\filldraw[fill=white] (6.5, 5) circle [radius = .1];
		\filldraw[fill=white] (7.5, 3) circle [radius = .1];

		\draw[<->, thick] (14, 4) -- (22, 4);
		\draw[<->, thick] (16, 0) -- (16, 8);
		\filldraw[fill = black] (17.5, 6.598) circle [radius = .1] node[above]{$\xi$};
		\filldraw[fill = black] (17.5, 1.402) circle [radius = .1] node[below]{$\overline{\xi}$};
		\filldraw[fill = black] (16, 4) circle [radius = .1] node[below = 5, right = 0]{$0$};
		\filldraw[fill = black] (18, 4) circle [radius = .1] node[below]{$1$};

		\draw[->,black,dashed] (17.5, 1.402) arc (259.1:102:2.646);
		\draw[->,black] (17.5, 1.402) arc (210:151:5.196);

		\end{tikzpicture}
		
	\end{center}

	\caption{\label{xmcontours} Depicted to the left is a tiling $\mathscr{M}$; the black and white vertices shown there are the elements of $\mathscr{X} (\mathscr{M})$ and $\mathscr{X} (\mathscr{M}) + \big( \frac{1}{2}, 0 \big)$, respectively. Depicted to the right are two possibilities for the contour for $z$ in \Cref{kernellimitdefinition}; the dashed curve corresponds to the case $y_1 < y_2$ and the solid one to the case $y_1 \ge y_2$.}

\end{figure}
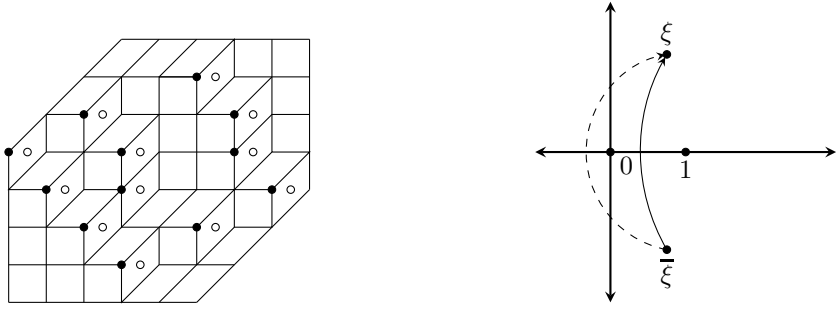

We refer to the left side of \Cref{xmcontours} for a depiction. It is quickly verified that the tiling $\mathscr{M} \in \mathfrak{E}(R)$ is determined by the location set $\mathscr{X} (\mathscr{M})$ of its type $1$ lozenges (unless both $R = \mathbb{T}$ and $\mathscr{X} (\mathscr{M})$ is empty, which will not be relevant for this work). As explained in Sections 3.1.7 and 3.1.9 of \cite{CFP}, if a random tiling $\mathscr{M} \in \mathfrak{E} (\mathbb{T})$ is sampled under the translation-invariant, extremal Gibbs measure $\mu_{s, t}$ of some slope $(s, t) \in \mathcal{T}$, then the law of $\mathscr{X} = \mathscr{X} (\mathscr{M})$ can be expressed as a determinantal point process. Its correlation kernel is an extended discrete sine kernel, which was introduced as Definition 4 of \cite{CFP} and is given as follows. In the below, $\mathbb{H} = \{ z \in \mathbb{C}: \Im z > 0 \} \subset \mathbb{C}$ denotes the upper-half plane.

\begin{definition}[{\cite[Definition 4]{CFP}}]
	
	\label{kernellimitdefinition} 
	
	Fix $\xi \in \mathbb{H}$. For any $x_1, x_2, y_1, y_2 \in \mathbb{Z}$, the \emph{extended discrete sine kernel} (also called the \emph{incomplete beta kernel}) $\mathcal{K}_{\xi} (x_1, y_1; x_2, y_2)$ is defined by	
	\begin{flalign*}
	\mathcal{K}_{\xi} (x_1, y_1; x_2, y_2) = \displaystyle\frac{1}{2 \pi \textbf{i}} \displaystyle\int_{\overline{\xi}}^{\xi} (1 - z)^{y_1 - y_2} z^{x_2 - x_1 - 1} dz,
	\end{flalign*}
	
	\noindent where $z$ is integrated along an arc of a ellipse from $\overline{\xi}$ to $\xi$, which intersects $\mathbb{R}$ in the interval $(0, 1)$ if $y_1 \ge y_2$ and which intersects $\mathbb{R}$ in the interval $(-\infty, 0)$ if $y_1 < y_2$. 
	
\end{definition}	

We refer to the right side of \Cref{xmcontours} for a depiction of the possible contours for $z$ in \Cref{kernellimitdefinition}. For any $\xi \in \mathbb{H}$, Theorem 2 of \cite{CFP} indicates that there exists a measure $\mu = \mu_{\xi} \in \mathfrak{P} (\mathbb{T})$ such that 
\begin{flalign}
\label{muxi}
\mathbb{P} \Bigg[ \bigcap_{k = 1}^m \big\{ (x_k, y_k) \in \mathscr{X} (\mathscr{M}) \big\} \Bigg] = \det \big[  \mathcal{K}_{\xi} (x_i, y_i; x_j, y_j) \big]_{1 \le i, j \le m},
\end{flalign}

\noindent where $\mathscr{M} \in \mathfrak{E} (\mathbb{T})$ is sampled under $\mu_{\xi}$, and $m \in \mathbb{Z}_{\ge 1}$ and $(x_1, y_1), (x_2, y_2), \ldots , (x_m, y_m) \in \mathbb{Z}^2$ are arbitrary. The measure $\mu_{\xi}$ is called the \emph{extended discrete sine process} with \emph{complex slope} $\xi$. If $(s, t) \in \mathcal{T}$ and $\xi = e^{\pi \textbf{i} s} \frac{\sin (\pi t)}{\sin (\pi - \pi s - \pi t)}$, then it is further known (see equation (36) of \cite{CFP} and Section 6.23 of \cite{OD}) that $\mu_{\xi} = \mu_{s, t}$.

\subsection{Results} 

\label{ConvergenceStatistics} 

In this article we will be interested in understanding the convergence of local statistics of uniformly random lozenge tilings of large domains. More specifically, we will consider a uniformly random tiling of some large tileable domain $R_N$ that approximates (after normalization by $\frac{1}{N}$) some subset $\mathfrak{R} \subset \mathbb{R}^2$ and whose boundary height function approximates (again, after normalization by $\frac{1}{N}$) some function $\mathfrak{h}: \partial \mathfrak{R} \rightarrow \mathbb{R}$. Then, we will understand how the local statistics of this random tiling behave around some $v_N \in \mathbb{V}(R_N)$, as $N$ tends to $\infty$. As \Cref{localconverge} below indicates, they will converge to one of the translation-invariant, extremal Gibbs measures $\mu_{s, t}$ from \Cref{InfiniteMeasures}. 

The determination of this slope $(s, t)$ will require some additional notation. To that end, recall the set $\mathcal{T}$ from \eqref{t} and its closure $\overline{\mathcal{T}}$. Define the \emph{Lobachevsky function} $L: \mathbb{R}_{> 0} \rightarrow \mathbb{R}$ and \emph{surface tension} $\sigma: \overline{\mathcal{T}} \rightarrow \mathbb{R}$ by, for any $x \in \mathbb{R}_{> 0}$ and $(s, t) \in \overline{\mathcal{T}}$, setting
\begin{flalign}
\label{lsigma} 
L(x)  = - \displaystyle\int_0^x \log | 2 \sin z| dz; \qquad	\sigma (s, t) = \displaystyle\frac{1}{\pi} \Big( L(\pi s) + L (\pi t) + L \big( \pi (1 - s - t) \big) \Big).
\end{flalign}

Now, fix a bounded, open, nonempty set $\mathfrak{R} \subset \mathbb{R}$ with boundary $\partial \mathfrak{R}$. Let $\Adm (\mathfrak{R})$ denote the set of Lipschitz functions $F: \overline{\mathfrak{R}} \rightarrow \mathbb{R}$ such that $\nabla F (z) \in \overline{\mathcal{T}}$ for almost every $z \in \mathfrak{R}$; for any function $f: \partial \mathfrak{R} \rightarrow \mathbb{R}$, set $\Adm (\mathfrak{R}; f) = \big\{ F \in \Adm (\mathfrak{R}): F |_{\partial \mathfrak{R}} = f \big\}$. We say that $f$ \emph{admits an admissible extension to $\mathfrak{R}$} if $\Adm (\mathfrak{R}; f)$ is not empty. For any $F \in \Adm (\mathfrak{R})$, define the \emph{entropy functional} 
\begin{flalign*}
\mathcal{E} (F) = \displaystyle\int_{\mathfrak{R}} \sigma \big( \nabla F (z) \big) dz.
\end{flalign*}

Given some $\mathfrak{h}: \partial \mathfrak{R} \rightarrow \mathbb{R}$ admitting an admissible extension to $\mathfrak{R}$, we call a function $\mathcal{H} \in \Adm (\mathfrak{R}; \mathfrak{h})$ a \emph{maximizer of $\mathcal{E}$ on $\mathfrak{R}$ with boundary data $\mathfrak{h}$} if $\mathcal{E} (\mathcal{H}) \ge \mathcal{E} (\mathcal{G})$ for any $\mathcal{G} \in \Adm (\mathfrak{R}; \mathfrak{h})$. 

\begin{rem} 
	
	\label{hemaximum} 
	
	It is known (see Proposition 4.5 of \cite{MCFARS}) that there exists a unique maximizer $\mathcal{H}: \overline{\mathfrak{R}} \rightarrow \mathbb{R}$ of $\mathcal{E}$ on $\mathfrak{R}$ with given boundary data $\mathfrak{h}$.

\end{rem}

Under the notation above, the limit shape for the height function associated with a random lozenge tiling of $R_N$ with boundary height function $h_N$ (which, as mentioned previously, converge to $\mathfrak{R}$ and $\mathfrak{h}$, respectively, after suitable normalization) was shown as Theorem 1.1 of \cite{VPDT} to converge in probability to this maximizer $\mathcal{H}$, as $N$ tends to $\infty$; this result is restated as \Cref{hnh} below. Such a phenomenon is known as a \emph{variational principle} for tilings. The pair $(s, t)$ described previously indicating the slope for the limiting local statistics of this tiling around $v_N$ will then given by $\nabla \mathcal{H} (\mathfrak{v})$, if $\lim_{N \rightarrow \infty} N^{-1} v_N = \mathfrak{v} \in \mathfrak{R}$. 

Let us now explain the sense in which $N^{-1} R_N$ its normalized height function $N^{-1} h_N$ ``converge'' to $\mathfrak{R}$ and $\mathfrak{h}$, respectively. To that end, for any subset $\mathcal{S} \subseteq \mathbb{R}^2$ and points $x_1, x_2 \in \mathcal{S}$, let $d_{\mathcal{S}} (x_1, x_2) = \inf_{\gamma} |\gamma|$, where $\gamma \subseteq \mathcal{S}$ is taken over all paths in $\mathcal{S}$ connecting $x_1$ and $x_2$, and $|\gamma|$ denotes the length of $\gamma$. Next, we say that a sequence of subsets \emph{$\mathfrak{R}_1, \mathfrak{R}_2, \ldots \subset \mathbb{R}^2$ converges to $\mathfrak{R} \subset \mathbb{R}^2$}, and write $\lim_{N \rightarrow \infty} \mathfrak{R}_N = \mathfrak{R}$, if for any $\delta > 0$ there exists an integer $N_0 = N_0 (\delta) > 1$ such that the following two properties hold whenever $N > N_0$. First, for any $x \in \mathfrak{R}$, there exist $x_N \in \mathfrak{R}_N$ and $x' \in \mathfrak{R} \cap \mathfrak{R}_N$ such that 
\begin{flalign} 
\label{dxxnrrn}
\displaystyle\max \big\{ d_{\mathfrak{R}} (x, x'), d_{\mathfrak{R}_N} (x_N, x') \big\} < \delta.
\end{flalign} 

\noindent Second, for any $x_N \in \mathfrak{R}_N$, there exist $x \in \mathfrak{R}$ and $x' \in \mathfrak{R} \cap \mathfrak{R}_N$ such that \eqref{dxxnrrn} again holds.\footnote{In \cite{VPDT,DA,RS}, variational principles for tiling models were established when the domains $\mathfrak{R}_N$ ``approximate $\mathfrak{R}$ from the inside.'' This constitutes the specialization of the above definition when $\mathfrak{R}_N \subseteq \mathfrak{R}$, in which case one may take $x' = x_N$ in \eqref{dxxnrrn}. Although our results can be phrased under this more restrictive notion of domain convergence, one might also be interested in the case when $\mathfrak{R}$ does not contain the $\mathfrak{R}_N$. Since admissible and height functions are $1$-Lipschitz on the metric spaces $(\mathfrak{R}, d_{\mathfrak{R}})$ and $(\mathfrak{R}_N, d_{\mathfrak{R}_N})$, respectively, it can quickly be seen that the results from \cite{VPDT,DA,RS} also apply to the slightly more general notion of domain convergence described above.} 

In this case, we moreover say that a sequence of functions \emph{$\mathfrak{h}_N: \partial \mathfrak{R}_N \rightarrow \mathbb{R}$ converges to $\mathfrak{h}: \partial \mathfrak{R} \rightarrow \mathbb{R}$}, and write $\lim_{N \rightarrow \infty} \mathfrak{h}_N = \mathfrak{h}$, if for every real number $\delta > 0$ there exists an integer $N_0 = N_0 (\delta) > 1$ such that the following two properties hold whenever $N > N_0$. First, for each $x \in \partial \mathfrak{R}$, there exist $x_N \in \partial \mathfrak{R}_N$ and $x' \in  \mathfrak{R} \cap \mathfrak{R}_N$ such that \eqref{dxxnrrn} and $\big| \mathfrak{h}_N (x_N) - \mathfrak{h} (x) \big| < \delta$ both hold. Second, for each $x_N \in \partial \mathfrak{R}_N$, there exist $x \in \partial \mathfrak{R}$ and $x' \in \mathfrak{R} \cap \mathfrak{R}_N$ such that the same inequalities are satisfied.

	We can now state the following theorem indicating that the local statistics of a uniformly random lozenge tiling of $R_N$ around $v_N$ converge as $N$ tends to $\infty$ to the unique translation-invariant, extremal Gibbs measure of slope $\nabla \mathcal{H} (\mathfrak{v})$. In the below, we recall the shift operator $\mathfrak{S}_w$ and measure $\mu_{s, t}$ from \Cref{InfiniteMeasures}. 
	
	\begin{thm} 
		
		\label{localconverge} 
		
		Let $\mathfrak{R} \subset \mathbb{R}^2$ denote an open, simply-connected, bounded domain whose boundary is a piecewise smooth, simple curve. Let $\mathfrak{h}: \partial \mathfrak{R} \rightarrow \mathbb{R}$ denote a function admitting an admissible extension to $\mathfrak{R}$, and let $\mathcal{H} \in \Adm (\mathfrak{R}; \mathfrak{h})$ denote the maximizer of $\mathcal{E}$ on $\mathfrak{R}$ with boundary data $\mathfrak{h}$. Further let $\mathfrak{v} \in \mathfrak{R}$ denote a point in the interior of $\mathfrak{R}$ such that $\nabla \mathcal{H} (\mathfrak{v}) = (s, t) \in \mathcal{T}$. 
		
		Let $R_1, R_2, \ldots \subset \mathbb{T}$ denote simply-connected, tileable domains with boundary height functions $h_1, h_2, \ldots $, respectively. Define $\mathfrak{h}_N: \partial (N^{-1} R_N) \rightarrow \mathbb{R}$ by setting $\mathfrak{h}_N (N^{-1} u) = N^{-1} h_N (u)$ for each $u \in \partial R_N$, and suppose that $\lim_{N \rightarrow \infty} N^{-1} R_N = \mathfrak{R}$ and $\lim_{N \rightarrow \infty} \mathfrak{h}_N = \mathfrak{h}$. For each $N \in \mathbb{Z}_{\ge 1}$, let $\mu_N \in \mathfrak{P} (R_N)$ denote the uniform measure on $\mathfrak{E} (R_N)$ and $v_N \in \mathbb{V}(R_N)$ denote a vertex such that $\lim_{N \rightarrow \infty} N^{-1} v_N = \mathfrak{v}$. Then, $\lim_{N \rightarrow \infty} \mathfrak{S}_{v_N} \mu_N = \mu_{s, t}$ in distribution.
		
	\end{thm}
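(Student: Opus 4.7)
The plan is to follow the three-task strategy sketched in the introduction: establish a mesoscopic local law for $\mathscr{M}$, construct a coupling between $\mathscr{M}$ and an ensemble of non-intersecting Bernoulli random walks, and then invoke the Gorin--Petrov universality theorem of \cite{ULSRW}. I would begin with the local law, which is the technical heart of the argument. Fix any scale $1 \ll M \ll N$; the goal is to show that, with high probability, the height function $H$ of $\mathscr{M}$ is, after normalization, approximately linear of slope $\nabla\mathcal{H}(\mathfrak{v})$ on an $M$-neighborhood of $v_N$. I would prove this by inducting on a geometric sequence of disks of radii $N, N/8, N/64, \ldots$ centered at $v_N$. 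The base case is an effective global version of the variational principle of \cite{VPDT}: for arbitrary boundary data one obtains a concentration bound of the form $|H - \mathcal{H}| \lesssim (\log N)^{-c}$ via a quantitative Rademacher-type lemma, while under a facetless hypothesis a much stronger $N^{-c}$ error is available by a comparison-with-hexagon argument in the spirit of Laslier--Toninelli. The induction step is a scale-reduction estimate which, given a facetless global law at scale $M$ with error $\lambda$, produces a facetless law at scale $M/8$ with strictly smaller error $\mu \ll \lambda$ and with $|\nabla F - \nabla G| \lesssim \lambda$ near the center. Stability and facetlessness properties of maximizers of $\mathcal{E}$ are needed to close this induction, and I would extract them from the regularity theory of \cite{MCFARS} after rewriting the Euler--Lagrange equation for $\mathcal{H}$ as a uniformly elliptic PDE.

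With the local law in hand, the second step is to realize $\mathscr{M}$ as a non-intersecting path ensemble $\textbf{Q}$ and build the sandwich depicted in \Cref{paths2}. Choose $1 \ll T \ll \log\log N$, set $u_N = v_N - (0, T)$, and let $\textbf{q}$ be the intersection of $\textbf{Q}$ with the horizontal line $\{y = y_N - T\}$; then perturb $\textbf{q}$ outside a neighborhood of $u_N$ to the left and right to obtain $\textbf{p}$ and $\textbf{r}$, and run K\"{o}nig--O'Connell non-intersecting Bernoulli walks $\textbf{P}$ and $\textbf{R}$ from these initial conditions. The local law implies that $\textbf{q}$ has density close to $1 - s - t$ on every scale between some $U$ and $V$ with $U \ll T \ll V$, so \cite{ULSRW} controls the drifts of the paths in $\textbf{P}$ and $\textbf{R}$. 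Combined with monotonicity of non-intersecting path ensembles in their initial data, this lets me construct a coupling under which $\textbf{P} \le \textbf{Q} \le \textbf{R}$ with high probability.

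The third step is a direct application of \cite{ULSRW}. The density regularity of $\textbf{p}$ and $\textbf{r}$ on scales $U \ll T \ll V$ places them in the hypothesis of the Gorin--Petrov convergence theorem, so the local statistics of $\textbf{P}$ (and $\textbf{R}$) around $v_N = u_N + (0,T)$ converge to the extended discrete sine process with complex slope determined by the local density of $\textbf{q}$ --- which by the local law equals $\nabla \mathcal{H}(\mathfrak{v})$ up to vanishing error. Identities from \cite{ULSRW} also yield that the expected height-function gap between $\textbf{P}$ and $\textbf{R}$ tends to $0$ on a large neighborhood of $u_N$ containing $v_N$. Combining this with a Markov bound and the sandwich $\textbf{P} \le \textbf{Q} \le \textbf{R}$ forces the three ensembles to coincide near $v_N$ with probability tending to $1$, and translating back through the path-ensemble correspondence gives $\mathfrak{S}_{v_N} \mu_N \to \mu_{s,t}$.

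The main obstacle is undoubtedly the local law, and within it the iterative scale reduction. A bare $(\log N)^{-c}$ global error is far too weak to survive $\log N$ iterations, so the delicate point is to bootstrap at each scale to a facetless global law of error $N^{-c}$, verify that the maximizer on the new smaller disk is itself facetless (so that the improved bound applies again), and control the compounding of gradient errors so that the slope at the center stays $\nabla \mathcal{H}(\mathfrak{v})$ throughout the entire chain. Managing these PDE-level estimates for maximizers of $\mathcal{E}$ under perturbations of boundary data, in concert with the probabilistic concentration machinery and the integrability input of the comparison hexagon at every scale, is the most demanding component of the argument.
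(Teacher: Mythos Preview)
Your proposal is correct and follows essentially the same three-step architecture as the paper: the multiscale local law via scale reduction on a geometric sequence of disks (with the two effective global laws and the De Silva--Savin/elliptic regularity input), the sandwich coupling $\textbf{P}\le\textbf{Q}\le\textbf{R}$ built from left/right perturbations of $\textbf{q}$, and the Gorin--Petrov kernel asymptotics combined with a Markov argument on the expected height gap. One small slip: the density of paths along the horizontal slice is $1-s$ (paths occupy the complement of type~$1$ lozenges), not $1-s-t$; this is the $\rho$ that feeds into the hypotheses of \cite{ULSRW}.
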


	\subsection{Outline} 
	
	\label{Outline} 
	
	The remainder of this paper is organized as follows. We begin in \Cref{TilingsFunction} with some preliminary definitions and results concerning uniformly random tilings, maximizers of $\mathcal{E}$, and non-intersecting Bernoulli random walk ensembles. We then implement the three tasks listed in \Cref{Tilings1}. However, in order to most directly explain how these statements together imply \Cref{localconverge}, the exposition will proceed in reverse order. 
	
	So, in \Cref{ConvergenceProof} we first state the local law for lozenge tilings (\Cref{heightlocal1}) and the local coupling between a uniformly random lozenge tiling and ensembles of Bernoulli random walks conditioned to never intersect (\Cref{plpprcouple}). Then, we establish \Cref{localconverge} assuming these two results. In \Cref{ProofHHlHr} we prove the coupling statement (still assuming the local law), following the outline provided in \Cref{Tilings1}. 
	
	Next, in \Cref{Local} we state two effective global laws for the height function of a random lozenge tiling. The first (\Cref{heightapproximate}) applies to arbitrary boundary data and provides an error estimate of $(\log N)^{-c}$; the second (\Cref{estimateboundaryheight}) only applies to boundary data giving rise to a global profile exhibiting no frozen facets but has an improved error estimate of order $N^{-c}$. We also state a condition (\Cref{euv1v2estimategradient}) for when the boundary data of a maximizer of $\mathcal{E}$ gives rise to a facetless global profile. Assuming these results, we then establish a scale reduction estimate (\Cref{estimaten2n}) indicating that estimates for height functions associated with lozenge tilings are retained (and sometimes improved) when one decreases the size of the domain. 
	
	Next, in \Cref{EstimateHLocalProof} we  establish \Cref{heightlocal1} (the local law) through an inductive application of \Cref{estimaten2n} (the scale reduction estimate) on a decreasing sequence of domains. In \Cref{GlobalLaw} we establish the first effective global law applicable to general boundary data, and in \Cref{GlobalEstimate2} we establish its more precise variant that only applies to facetless global profiles. 
	 
	 In \Cref{ProofEstimateu} and \Cref{ProofGradientEstimateu} we establish properties about maximizers of $\mathcal{E}$. More specifically, in the former we prove a stability result (\Cref{perturbationboundary}) for the gradients of these maximizers under boundary perturbations, and in the latter we establish \Cref{euv1v2estimategradient} (the condition on which boundary data give rise to facetless global profiles). We conclude in \Cref{ProofstPn} by establishing an effective estimate (\Cref{numbersigma}) for the number of lozenge tilings of a torus with a given approximate slope, which is used in the proof of \Cref{heightapproximate} in \Cref{GlobalLaw}.

	\subsection*{Acknowledgments}

	The author heartily thanks Alexei Borodin for many stimulating conversations and valuable encouragements. The author also would like to express his profound gratitude to Tristan Collins, Vadim Gorin, Jiaoyang Huang, Benjamin Landon, Nicolai Reshetikhin, Chenglong Yu, Boyu Zhang, and Jonathan Zhu for helpful discussions. The author further thanks the anonymous referees for their useful comments on an earlier draft of this paper. This work was partially supported by the NSF Graduate Research Fellowship under grant numbers DGE1144152 and DMS-1664619.

	\section{Miscellaneous Preliminaries} 	
	
	\label{TilingsFunction} 
	
	In this section we introduce some notation and collect several preliminary results that will be useful for the proof of \Cref{localconverge}. In particular, in \Cref{EstimateGlobal} we recall the global law, a monotone coupling, and a concentration estimate for random lozenge tilings. Then, in \Cref{PropertyH} we state several properties concerning maximizers of $\mathcal{E}$, and in \Cref{NonIntersectingCorrelation} we recall the definition of and a concentration estimate for a certain model of random non-intersecting path ensembles.

		\subsection{A Global Law and Monotonicity for Tilings} 
	
	\label{EstimateGlobal} 
	
	In this section we recall three results concerning random tilings from \cite{LSRT,VPDT}. The first is a variational principle (limit shape result) for the height function associated with a uniformly random tiling of a given domain. It was established as Theorem 1.1 of \cite{VPDT} (see also Theorem 9 of \cite{OD}) and can be stated as follows.

	\begin{prop}[{\cite[Theorem 1.1]{VPDT}}]
		
		\label{hnh}
		
		Adopt the notation of \Cref{localconverge}, and let $H_N$ denote the height function associated with a uniformly random lozenge tiling of $R_N$ (with boundary height function $h_N$). Then, for any $\varpi > 0$, we have that 
		\begin{flalign*} 
		\displaystyle\lim_{N \rightarrow \infty} \mathbb{P} \left[ \displaystyle\max_{v \in \mathbb{V}(R_N) \cap N \mathfrak{R}} \big| N^{-1} H_N (v) - \mathcal{H} (N^{-1} v) \big| > \varpi \right] = 0.
		\end{flalign*}
		
	\end{prop}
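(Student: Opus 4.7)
The plan is to follow the strategy of Cohn-Kenyon-Propp, which is a large-deviations argument driven by an entropy count on a torus. The organizing idea is that if one could show that the logarithm of the number of tilings of $R_N$ whose normalized height function lies within $\varpi$ of an admissible profile $F$ is, to leading order, $N^2 \mathcal{E}(F)$, then strict concavity of $\sigma$ combined with uniqueness of the maximizer $\mathcal{H}$ forces concentration at $\mathcal{H}$.

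The first step is a torus counting estimate: on an $L \times L$ torus $\mathbb{T}_L$ with fixed integer homological periods, the logarithm of the number of lozenge tilings whose average height-function slope is $(s,t) \in \mathcal{T}$ equals $L^2 \sigma(s,t) + o(L^2)$ as $L \to \infty$, where $\sigma$ is the surface tension from \eqref{lsigma}. (This is precisely the statement that the author later isolates as \Cref{numbersigma}, proved in \Cref{ProofstPn}; one obtains it either via Kasteleyn determinants and Fourier analysis on the torus, or via the integral representation of the surface tension.) The second step is a patching/gluing lemma: any admissible $F \in \Adm(\mathfrak{R};\mathfrak{h})$ can be $\delta$-approximated uniformly by a piecewise-linear function that is linear with some slope $(s_i, t_i) \in \overline{\mathcal{T}}$ on each cell of a grid of mesoscopic side length $M$, $1 \ll M \ll N$. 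Since lozenge height functions are $1$-Lipschitz (with respect to the three coordinate directions), any tiling whose normalized height function is within $\varpi$ of $F$ restricts on most cells to an approximately linear height function of the correct slope. Patching together the independent torus counts on these cells, while absorbing the boundary cost between cells into the $o(N^2)$ error, shows that the logarithm of the number of such tilings is at most $N^2 \mathcal{E}(F) + o(N^2)$. A matching lower bound for $F = \mathcal{H}$ is obtained by constructing an explicit tiling profile by concatenating typical torus tilings of the correct slopes, then interpolating along cell boundaries.

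Given these bounds, one argues as follows. By uniqueness of the maximizer (\Cref{hemaximum}) and strict concavity of $\sigma$ on $\mathcal{T}$, for any $\varpi > 0$ there exists $\kappa = \kappa(\varpi) > 0$ such that $\mathcal{E}(F) \le \mathcal{E}(\mathcal{H}) - \kappa$ for every $F \in \Adm(\mathfrak{R};\mathfrak{h})$ satisfying $\sup_{z \in \overline{\mathfrak{R}}} |F(z) - \mathcal{H}(z)| \ge \varpi / 2$. Covering the compact set of admissible functions deviating by at least $\varpi$ from $\mathcal{H}$ by finitely many $C^0$-balls of radius $\varpi / 4$ (using Arzel\`a-Ascoli on the $1$-Lipschitz class), the upper-bound count from the patching step yields that the number of tilings whose normalized height function lies in the bad set is at most $\exp(N^2 \mathcal{E}(\mathcal{H}) - \kappa N^2 / 2)$, while the number of all tilings of $R_N$ is at least $\exp(N^2 \mathcal{E}(\mathcal{H}) - o(N^2))$. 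Dividing gives exponential decay of the probability in $N^2$, and in particular the vanishing limit claimed.

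The main technical obstacle is the patching step: one must handle the boundary interaction between mesoscopic cells so that the entropy does add up correctly, and one must verify that the hypothesis $\lim_{N \to \infty} N^{-1} R_N = \mathfrak{R}$ and $\lim_{N \to \infty} \mathfrak{h}_N = \mathfrak{h}$ (which allows $R_N$ to lie neither inside nor outside $N\mathfrak{R}$) is strong enough to ensure that admissible extensions of the true boundary height functions $h_N$ exist and approximate extensions of $\mathfrak{h}$ uniformly. This uses that admissible functions are $1$-Lipschitz with respect to $d_{\mathfrak{R}_N}$, so the convergence of $\mathfrak{h}_N$ propagates to uniform closeness of the full admissible extensions near $\partial \mathfrak{R}$. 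With these inputs in hand, the convergence $\lim_N \mathfrak{S}_{v_N} \mu_N$ at the global (macroscopic) level reduces to the deterministic variational principle and a standard large-deviation concentration.
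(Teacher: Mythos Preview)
The paper does not give its own proof of \Cref{hnh}; it simply cites Theorem~1.1 of \cite{VPDT} and uses the result as a black box. Your outline is a faithful sketch of the Cohn--Kenyon--Propp argument (torus entropy count, piecewise-linear patching, large-deviation comparison via strict concavity of $\sigma$), so in that sense you have reproduced the intended proof. For context, the paper does later prove an \emph{effective} version of this statement (\Cref{heightapproximate}, in \Cref{GlobalLaw}) with an explicit $(\log N)^{-c}$ error, and that proof follows exactly the strategy you describe: the torus count is \Cref{numbersigma}, the patching uses the triangulation of \Cref{twodimensionalapproximatelinear} together with \Cref{numbersigmat}, and the concentration step is \Cref{nearmaximum1}. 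The only substantive difference between your sketch and what the paper actually carries out is the effectivization of each step, which requires the quantitative Rademacher-type result \Cref{twodimensionalapproximatelinear} in place of the compactness/Arzel\`a--Ascoli argument you invoke.
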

	
	As \Cref{hnh} applies to general boundary data, it will be useful for the proof of \Cref{localconverge} in \Cref{Convergence}. However, observe that it does not indicate how $\varpi$ can be chosen to depend on $N$. More quantitative results of this type will be given by \Cref{heightapproximate} and \Cref{estimateboundaryheight} below.
	
	The second result of this section provides a monotone (order-preserving) coupling between two different uniformly random (free) lozenge tilings. To describe it further, we first require the following definition for the set of height functions on a domain with a given boundary height function.
	
	\begin{definition} 
		
		\label{gh} 
		
		For any finite domain $R \subset \mathbb{T}$ and boundary height function $h: \partial R \rightarrow \mathbb{Z}$, let $\mathfrak{G} (h) = \mathfrak{G} (h; R)$ denote the set of all height functions $H: \mathbb{V} (R) \rightarrow \mathbb{Z}$ for which $H |_{\partial R} = h$. We will often implicitly extend $H$ by linearity to a function on the union of the faces of $R$, sometimes writing $H:  \mathbb{F} (R) \rightarrow \mathbb{R}$ or $H : R \rightarrow \mathbb{R}$.
		
	\end{definition} 
	
	As explained in \Cref{TilingsHeight}, if $R$ is tileable and $h$ denotes a boundary height function associated with some tiling of $R$, then there is a bijection between $\mathfrak{G} (h)$ and the set $\mathfrak{E} (R)$ of all tilings of $R$. If $R$ is not tileable, then there is a bijection between $\mathfrak{G} (h)$ and the set of all free tilings of $R$ with boundary height function given by $h$. 
	
	Now we can state the following result, which appears as Lemma 18 of \cite{LSRT} and provides a coupling between uniformly random height functions on a domain, in such a way that their ordering on the domain's boundary is preserved in the domain's interior.

	\begin{lem}[{\cite[Lemma 18]{LSRT}}]
		
		\label{monotoneheightcouple}
		
		Fix a finite domain $R \subset \mathbb{T}$ and two boundary height functions $h_1, h_2: \partial R \rightarrow \mathbb{Z}$ such that $h_1 (u) \le h_2 (u)$, for each $u \in \partial R$. Let $H_1, H_2: \mathbb{V}(R) \rightarrow \mathbb{Z}$ denote uniformly random elements of $\mathfrak{G} (h_1), \mathfrak{G} (h_2)$, respectively. Then it is possible to couple $H_1$ and $H_2$ on a common probability space such that $H_1 (v) \le H_2 (v)$ holds almost surely, for each $v \in \mathbb{V}(R)$. 
		
	\end{lem}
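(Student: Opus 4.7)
The plan is to construct a Glauber-type single-vertex Markov chain on $\mathfrak{G}(h)$ with uniform stationary distribution, and to couple two such chains driven by shared update randomness so that pointwise ordering of the height functions is preserved at every time step. Define the chain as follows: at each discrete step, sample a uniformly random interior vertex $v \in R \setminus \partial R$ and an independent sign $\xi \in \{+1, -1\}$ with equal probability; propose the update $H(v) \mapsto H(v) + \xi$, and accept it if the resulting function still lies in $\mathfrak{G}(h)$, otherwise leave $H$ unchanged. The transition kernel is symmetric, so the uniform measure on $\mathfrak{G}(h)$ is stationary; irreducibility is the classical fact that any two elements of $\mathfrak{G}(h)$ are joined by a sequence of admissible single-vertex $\pm 1$ flips (``rhombus flips'' in the tiling picture), provable by induction on $\sum_{v \in R} |H(v) - H'(v)|$ by flipping a vertex at which this difference attains a local extremum.

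To build the coupling, observe that $\mathfrak{G}(h)$ is closed under pointwise $\max$ and $\min$, so it is a finite distributive lattice with a unique minimal element $H_{\min}(h)$ and maximal element $H_{\max}(h)$. The hypothesis $h_1 \le h_2$ on $\partial R$ then yields $H_{\min}(h_1) \le H_{\max}(h_2)$ pointwise on $R$. Now run two coupled copies of the chain, $(H_1^{(n)})_{n \ge 0}$ and $(H_2^{(n)})_{n \ge 0}$, with these as initial conditions and driven by the same sequence $(v_n, \xi_n)$. The key monotonicity claim is that the inequality $H_1^{(n)} \le H_2^{(n)}$ is preserved for all $n$. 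When $H_1^{(n)}(v_n) < H_2^{(n)}(v_n)$ this is immediate, since values at $v_n$ change by at most $1$. When $H_1^{(n)}(v_n) = H_2^{(n)}(v_n) = c$, a short case analysis against the explicit slope constraints $H(\cdot) - H(v_n) \in \{0, 1\}$ (for upper neighbors) and $H(v_n) - H(\cdot) \in \{0, 1\}$ (for lower neighbors) shows that if $H_1^{(n)}$ accepts a $+1$ update at $v_n$, then $H_2^{(n)}$ must accept it as well: the alternative would force $H_2^{(n)}(u) < H_1^{(n)}(u)$ at some lower neighbor $u$ of $v_n$, contradicting the ordering hypothesis. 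The argument for $\xi_n = -1$ is symmetric, and in the remaining cases (both accept, neither accepts, or only $H_2^{(n)}$ accepts a $+1$) the ordering after the update is evident.

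Finally, since the state space is finite and the chain is irreducible and aperiodic, the marginal law of $H_i^{(n)}$ converges in total variation to the uniform distribution on $\mathfrak{G}(h_i)$ as $n \to \infty$. The joint distributions of $(H_1^{(n)}, H_2^{(n)})$ lie in the compact space of probability measures on the finite set $\mathfrak{G}(h_1) \times \mathfrak{G}(h_2)$, so one may extract a weak limit point; this limit has the desired uniform marginals and, since pointwise ordering is a closed condition on the finite product space, is supported on $\{(H_1, H_2) : H_1 \le H_2\}$. This yields the required coupling. I expect the main obstacle to be the monotonicity case analysis under the shared update: it is conceptually routine but must be verified carefully against the specific up/down neighbor structure inherent to lozenge tiling height functions, since a naive coupling at tied vertices need not preserve order unless the asymmetric roles of the three upper and three lower neighbors are accounted for.
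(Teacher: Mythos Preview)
Your proof is correct. The paper does not give its own proof of this lemma; it is stated as a citation of \cite[Lemma~18]{LSRT}, and your Glauber-dynamics monotone coupling argument is precisely the approach used in that reference, so there is nothing to compare.
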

	
	The third result of this section is a concentration estimate for a uniformly random height function on a domain; it appears as Proposition 22 of \cite{LSRT} and is given as follows. 
	
	\begin{lem}[{\cite[Proposition 22]{LSRT}}]
		
		\label{hvuexpectation}
		
		Fix a finite domain $R \subset \mathbb{T}$ and a boundary height function $h: \partial R \rightarrow \mathbb{R}$; let $H: \mathbb{V}(R) \rightarrow \mathbb{Z}$ denote a uniformly random element of $\mathfrak{G} (h)$. Further fix a real number $M > 0$ and a vertex $v \in \mathbb{V}(R)$ such that there exists a path in $R$ from $v$ to $\partial R$ of length at most $M$. Then, for any $r \in \mathbb{R}_{> 0}$, we have that
		\begin{flalign}
		\label{hxyh00m}
		\mathbb{P} \bigg[ \Big| H(v) - \mathbb{E} \big[ H (v) \big] \Big| > r M^{1 / 2} \bigg] \le 2 \exp \bigg( -\frac{r^2}{32} \bigg).
		\end{flalign}
	\end{lem}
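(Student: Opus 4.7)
The plan is to establish the concentration bound via Azuma--Hoeffding, applied to a Doob martingale obtained by revealing the height function along a short path from $v$ to $\partial R$. By hypothesis there exists a path $v = v_0, v_1, \ldots, v_k$ in $R$ with $v_k \in \partial R$ and $k \le M$. The definition of a height function forces $|H(v_i) - H(v_{i-1})| \le 1$ across each edge of $\mathbb{T}$, and the telescoping identity $H(v) - h(v_k) = \sum_{i=1}^{k}\bigl(H(v_{i-1}) - H(v_i)\bigr)$ writes $H(v)$ as a deterministic boundary value plus a sum of $k \le M$ correlated increments in $\{-1, 0, 1\}$.

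I would next introduce the reverse-time Doob martingale generated by revealing the heights along the path from the boundary inward: set $\mathcal{F}_j = \sigma\bigl(H(v_j), H(v_{j+1}), \ldots, H(v_k)\bigr)$ and $M_j = \mathbb{E}[H(v) \mid \mathcal{F}_j]$ for $j = k, k-1, \ldots, 0$. Since $H(v_k) = h(v_k)$ is deterministic, $M_k = \mathbb{E}[H(v)]$, and since $\mathcal{F}_0$ contains $H(v_0) = H(v)$, we have $M_0 = H(v)$. Azuma--Hoeffding applied to this martingale, together with a uniform bound $|M_{j-1} - M_j| \le C$ on its increments, would give $\mathbb{P}\bigl[|M_0 - M_k| > t\bigr] \le 2\exp\bigl(-t^2/(2 C^2 k)\bigr)$; substituting $t = r M^{1/2}$ with $k \le M$ yields \eqref{hxyh00m}, the constant $32 = 2 C^2$ corresponding to $C = 4$.

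The heart of the argument is the uniform increment bound $|M_{j-1} - M_j| \le C$. Conditionally on $\mathcal{F}_j$, the next revealed height $H(v_{j-1})$ takes at most three values $\{H(v_j) - 1, H(v_j), H(v_j) + 1\}$. I would interpret the conditioning on the fixed heights at $\{v_j, \ldots, v_k\}$ as augmenting the boundary of $R$ to include these vertices with their revealed heights, so that the conditional law is a uniform measure on tilings of the carved-out region with this augmented boundary data. Applying \Cref{monotoneheightcouple} then yields a monotone coupling across the different possible pins at $v_{j-1}$, and a quantitative form of this coupling (exploiting that the excess between two pins is at most $2$ and propagates by at most a constant to $v$) gives the claimed bound.

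The main obstacle will be rigorously justifying this increment bound: \Cref{monotoneheightcouple} as stated applies to modifications of the boundary condition, not to conditioning on interior height values, so one must identify the conditional measure on tilings compatible with revealed heights at $\{v_j, \ldots, v_k\}$ with a uniform measure on tilings of a domain with augmented boundary data. Once this identification is made and the propagation estimate for how a bounded shift in the pin value at $v_{j-1}$ influences $\mathbb{E}[H(v)]$ is extracted, the remainder of the argument is a routine application of Azuma--Hoeffding.
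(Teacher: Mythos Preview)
Your proposal is correct and matches the approach of the cited Proposition~22 of \cite{LSRT}, which the paper invokes without supplying its own proof; the paper's argument for the analogous \Cref{walksexpectationnear} confirms that the Doob-martingale-along-a-path plus Azuma--Hoeffding strategy is exactly what is intended. Your identification of the one technical point---extending \Cref{monotoneheightcouple} from boundary data to conditioning on interior heights---is precisely right, and the standard resolution (the monotone coupling via local moves respects any fixed-height constraints, so the conditional laws given the revealed path values inherit the ordering and the increment bound follows) handles it.
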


	\subsection{Maximizers of \texorpdfstring{$\mathcal{E}$}{}}
	
	\label{PropertyH}

	In this section we state several properties and bounds satisfied by maximizers of $\mathcal{E}$; throughout this section we recall the notation of \Cref{ConvergenceStatistics}. The first is a (likely known) comparison principle stating that, if two maximizers of $\mathcal{E}$ on some domain $\mathfrak{R}$ are ordered on the boundary $\partial \mathfrak{R}$, then they are also ordered on the interior of $\mathfrak{R}$. 
	
	\begin{lem}
		
		\label{h1h2compare} 
		
		Fix some bounded, open, connected subset $\mathfrak{R} \subset \mathbb{R}^2$ with Lipschitz boundary, and two functions $\mathfrak{h}_1, \mathfrak{h}_2: \partial \mathfrak{R} \rightarrow \mathbb{R}$ admitting admissible extensions to $\mathfrak{R}$. For each $i \in \{ 1, 2 \}$, let $\mathcal{H}_i \in \Adm (\mathfrak{R}; \mathfrak{h}_i)$ denote the maximizer of $\mathcal{E}$ on $\mathfrak{R}$ with boundary data $\mathfrak{h}_i$. If $\mathfrak{h}_1 (z) \le \mathfrak{h}_2 (z)$ for each $z \in \partial \mathfrak{R}$, then $\mathcal{H}_1 (z) \le \mathcal{H}_2 (z)$ for each $z \in \mathfrak{R}$. 
		
	\end{lem}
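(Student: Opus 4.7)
The plan is to use the standard maximum-principle argument for maximizers of a (strictly) concave functional, based on rearranging the two candidates via pointwise min and max. Set $\mathcal{H}_\wedge = \min(\mathcal{H}_1, \mathcal{H}_2)$ and $\mathcal{H}_\vee = \max(\mathcal{H}_1, \mathcal{H}_2)$. Both are Lipschitz (as the min/max of Lipschitz functions), and since almost every point $z \in \mathfrak{R}$ lies in one of the three sets $A = \{ \mathcal{H}_1 < \mathcal{H}_2 \}$, $B = \{ \mathcal{H}_1 > \mathcal{H}_2 \}$, $C = \{ \mathcal{H}_1 = \mathcal{H}_2 \}$, one has that at almost every $z \in A \cup B$ the pair $(\nabla \mathcal{H}_\wedge, \nabla \mathcal{H}_\vee)$ coincides with some permutation of $(\nabla \mathcal{H}_1, \nabla \mathcal{H}_2)$. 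On $C$, Rademacher's theorem applied to the Lipschitz function $\mathcal{H}_1 - \mathcal{H}_2$ gives $\nabla \mathcal{H}_1 = \nabla \mathcal{H}_2$ almost everywhere on $C$, so the gradients of $\mathcal{H}_\wedge$ and $\mathcal{H}_\vee$ there equal that common value as well. In particular $\nabla \mathcal{H}_\wedge, \nabla \mathcal{H}_\vee \in \overline{\mathcal{T}}$ almost everywhere, so $\mathcal{H}_\wedge, \mathcal{H}_\vee \in \Adm(\mathfrak{R})$.

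Next I would identify the boundary values. Using $\mathfrak{h}_1 \le \mathfrak{h}_2$ on $\partial \mathfrak{R}$, one has $\mathcal{H}_\wedge|_{\partial \mathfrak{R}} = \mathfrak{h}_1$ and $\mathcal{H}_\vee|_{\partial \mathfrak{R}} = \mathfrak{h}_2$, so $\mathcal{H}_\wedge \in \Adm(\mathfrak{R}; \mathfrak{h}_1)$ and $\mathcal{H}_\vee \in \Adm(\mathfrak{R}; \mathfrak{h}_2)$. By the maximizing properties of $\mathcal{H}_1$ and $\mathcal{H}_2$ (together with uniqueness from \Cref{hemaximum}),
\begin{equation*}
\mathcal{E}(\mathcal{H}_1) \ge \mathcal{E}(\mathcal{H}_\wedge), \qquad \mathcal{E}(\mathcal{H}_2) \ge \mathcal{E}(\mathcal{H}_\vee).
\end{equation*}
On the other hand, the pointwise gradient-swap observation from the previous paragraph yields the exact identity
\begin{equation*}
\mathcal{E}(\mathcal{H}_\wedge) + \mathcal{E}(\mathcal{H}_\vee) = \mathcal{E}(\mathcal{H}_1) + \mathcal{E}(\mathcal{H}_2),
\end{equation*}
because the integrand $\sigma(\nabla \cdot)$ has the same value at almost every point whether one reads it off the original pair or the sorted pair. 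Combining these displays forces both of the above inequalities to be equalities, and the uniqueness clause of \Cref{hemaximum} then gives $\mathcal{H}_\wedge = \mathcal{H}_1$ and $\mathcal{H}_\vee = \mathcal{H}_2$ on $\overline{\mathfrak{R}}$, which is exactly the asserted pointwise inequality $\mathcal{H}_1 \le \mathcal{H}_2$.

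The only subtle step is the gradient-matching at points where $\mathcal{H}_1 = \mathcal{H}_2$; this is handled purely by Rademacher (a.e.\ differentiability plus the fact that a Lipschitz function has zero gradient a.e.\ on its zero set), so no additional regularity of the maximizers is required. I do not expect to need the De Silva--Savin theory or any PDE input here — it is cheaper than \Cref{perturbationboundary} and relies only on the variational characterization of $\mathcal{H}_1, \mathcal{H}_2$ together with the abstract fact that the admissible set $\Adm(\mathfrak{R}; \mathfrak{h})$ is stable under the lattice operations $\min$ and $\max$ whenever the two boundary functions are ordered.
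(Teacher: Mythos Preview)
Your proof is correct but takes a different route from the paper's. The paper defines the open set $\mathfrak{U}=\{z\in\mathfrak{R}:\mathcal{H}_1(z)>\mathcal{H}_2(z)\}$, observes that $\mathcal{H}_1=\mathcal{H}_2$ on $\partial\mathfrak{U}$, and then uses the (standard) fact that the restriction of a maximizer of $\mathcal{E}$ to any open subdomain is again the maximizer on that subdomain with the induced boundary data; uniqueness on $\mathfrak{U}$ then forces $\mathfrak{U}=\emptyset$. Your argument instead works globally via the lattice operations $\min$ and $\max$ together with the additivity identity $\mathcal{E}(\mathcal{H}_\wedge)+\mathcal{E}(\mathcal{H}_\vee)=\mathcal{E}(\mathcal{H}_1)+\mathcal{E}(\mathcal{H}_2)$ and the a.e.\ gradient analysis on the coincidence set. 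Both approaches rely only on the variational characterization plus uniqueness from \Cref{hemaximum}, and neither needs any PDE regularity. The paper's version is slightly shorter because it bypasses the level-set gradient step, while yours avoids invoking the locality property of maximizers (which the paper asserts without proof); your lattice argument is also the form that generalizes more directly to the random-surface setting of \cite{RS}.
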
 
	
	\begin{proof}
		
		Define the subset $\mathfrak{U} = \big\{ z \in \mathfrak{R}: \mathcal{H}_1 (z) > \mathcal{H}_2 (z) \big\} \subseteq \mathfrak{R}$, which is open since both $\mathcal{H}_i$ are continuous. Furthermore, since $\mathfrak{h}_1 \le \mathfrak{h}_2$, the continuity of the $\mathcal{H}_i$ implies that $\mathcal{H}_1 (z) = \mathcal{H}_2 (z)$, for each $z \in \partial \mathfrak{U}$. Next define the functions $\mathcal{H}_3 : \overline{\mathfrak{R}} \rightarrow \mathbb{R}$ and $\mathcal{H}_4 : \overline{\mathfrak{R}} \rightarrow \mathbb{R}$ by setting 
		\begin{flalign*} 
			\mathcal{H}_3 (z) = \mathcal{H}_1 (z), \quad \text{for $z \notin \mathfrak{U}$, and} \quad \mathcal{H}_3 (z) = \mathcal{H}_2 (z), \quad \text{for $z \in \mathfrak{U}$}; \\
			\mathcal{H}_4 (z) = \mathcal{H}_2 (z), \quad \text{for $z \notin \mathfrak{U}$, and} \quad \mathcal{H}_4 (z) = \mathcal{H}_1 (z), \quad \text{for $z \in \mathfrak{U}$}.
		\end{flalign*} 
	
		\noindent Then, $\mathcal{H}_3$ and $\mathcal{H}_4$ are $1$-Lipschitz, since both $\mathcal{H}_1 (z)$ and $\mathcal{H}_2 (z)$ are. In particular, $\nabla \mathcal{H}_3 (z)$ and $\nabla \mathcal{H}_4 (z)$ exist almost everywhere (with respect to Lebesgue measure), and so $\nabla \mathcal{H}_3 (z) \in \overline{\mathcal{T}}$ and $\nabla \mathcal{H}_4 (z) \in \overline{\mathcal{T}}$ hold for almost every $z \in \mathfrak{R}$, as the same holds for $\mathcal{H}_1$ and $\mathcal{H}_2$. 
		
		We claim that $\mathcal{H}_3$ and $\mathcal{H}_4$ are maximizers for $\mathcal{E}$ on $\mathfrak{R}$. Indeed,  we have  
		\begin{flalign*}
			\mathcal{E}(\mathcal{H}_3) + \mathcal{E}(\mathcal{H}_4) & = \displaystyle\int_{\mathfrak{R} \setminus \mathfrak{U}} \sigma \big( \nabla \mathcal{H}_3 (z) \big) dz + \displaystyle\int_{\mathfrak{U}} \sigma \big( \mathcal{H}_3 (z) \big) dz  + \displaystyle\int_{\mathfrak{R} \setminus \mathfrak{U}} \sigma \big( \nabla \mathcal{H}_4 (z) \big) dz + \displaystyle\int_{\mathfrak{U}} \sigma \big(\nabla \mathcal{H}_4 (z) \big) dz \\
			& = \displaystyle\int_{\mathfrak{R} \setminus \mathfrak{U}} \sigma \big( \nabla \mathcal{H}_1 (z) \big) dz + \displaystyle\int_{\mathfrak{U}} \sigma \big( \mathcal{H}_2 (z) \big) dz  + \displaystyle\int_{\mathfrak{R} \setminus \mathfrak{U}} \sigma \big( \nabla \mathcal{H}_2 (z) \big) dz + \displaystyle\int_{\mathfrak{U}} \sigma \big(\nabla \mathcal{H}_1 (z) \big) dz \\
			& = \mathcal{E} (\mathcal{H}_1) + \mathcal{E} (\mathcal{H}_2),
		\end{flalign*}

		\noindent which implies that $\mathcal{H}_3$ and $\mathcal{H}_4$ are maximizers for $\mathcal{E}$, since $\mathcal{H}_1$ and $\mathcal{H}_2$ are (and $\mathcal{H}_1 |_{\partial \mathfrak{R}} = \mathcal{H}_3 |_{\partial \mathfrak{R}}$ and $\mathcal{H}_2 |_{\partial \mathfrak{R}} = \mathcal{H}_4 |_{\partial \mathfrak{R}}$). Since $\mathcal{H}_1$ and $\mathcal{H}_3$ have the same boundary data on $\mathfrak{R}$, it follows from \Cref{hemaximum} that $\mathcal{H}_1 = \mathcal{H}_3$. This implies that $\mathfrak{U}$ must be empty, and hence $\mathcal{H}_1 (z) \le \mathcal{H}_2 (z)$ for all $z \in \mathfrak{R}$.
	\end{proof} 

	\begin{rem}
		
		\label{h1h2gamma} 
		
		If we adopt the notation of \Cref{h1h2compare} but omit the assumption $\mathfrak{h}_1 \le \mathfrak{h}_2$, then we have that $\sup_{z \in \mathfrak{R}} \big| \mathcal{H}_1 (z) - \mathcal{H}_2 (z) \big| \le \sup_{z \in \partial \mathfrak{R}} \big| \mathfrak{h}_1 (z) - \mathfrak{h}_2 (z) \big|$. Indeed, denoting the latter quantity by $\gamma$, this follows by applying \Cref{h1h2compare} to the pairs $(\mathfrak{h}_1, \mathfrak{h}_2 + \gamma)$ and $(\mathfrak{h}_1 + \gamma, \mathfrak{h}_2)$. 
	\end{rem}

	Next, recall $\mathcal{T}$ from \eqref{t}. For any $\varepsilon \in \big( 0, \frac{1}{4} \big)$, we define the set
	\begin{flalign}
	\label{tset2} 
	\mathcal{T}_{\varepsilon} = \Big\{ (s, t) \in \mathcal{T}: d \big( (s, t), \partial \mathcal{T} \big) > \varepsilon \Big\} \subset \mathcal{T},
	\end{flalign}
	
	\noindent where $d (z, \mathcal{S}) = \inf_{w \in \mathcal{S}} |z - w|$ denotes the distance from a point $z \in \mathbb{R}^2$ to a set $\mathcal{S} \subseteq \mathbb{R}^2$.

	\begin{rem} 
		
		\label{concavesigmat}
		
		For any fixed $\varepsilon \in \big( 0, \frac{1}{4} \big)$, it was shown as Theorem 10.1 of \cite{VPDT} that the function $\sigma$ from \eqref{lsigma} is uniformly concave and smooth on the set $\mathcal{T}_{\varepsilon}$. Moreover, on $\overline{\mathcal{T}}$, the function $\sigma$ is (weakly) concave and uniformly H\"{o}lder continuous with any fixed exponent $\alpha \in (0, 1)$. 
		
	\end{rem} 
	
	Before proceeding, it will be useful to recall the elliptic partial differential equation satisfied by maximizers of $\mathcal{E}$.

	\begin{rem} 
		
		\label{haijequations} 
		
		Suppose that $\mathfrak{R}$ is an open set with smooth boundary and there exists a real number $\varepsilon \in \big( 0, \frac{1}{4} \big)$ such that $\nabla \mathcal{H} (z) \in \mathcal{T}_{\varepsilon}$, for each $z \in \mathfrak{R}$. Then it is known (see Theorem 11.9 and Theorem 15.19 of \cite{EDSO}) that all second derivatives of $\mathcal{H}$ are continuous in the interior of $\mathfrak{R}$, and that $\mathcal{H}$ solves \emph{Euler-Lagrange equations} for $\sigma$. Abbreviating $\partial_z = \frac{\partial}{\partial z}$ for any variable $z$, these are given by 
		\begin{flalign*}
		\partial_x \Big( \partial_s \sigma \big( \nabla F (z) \big) \Big) + \partial_y \Big( \partial_t \sigma \big( \nabla F (z) \big) \Big) = 0,
		\end{flalign*}
		
		\noindent which is equivalent to
		\begin{flalign}
		\label{aijh} 
		\displaystyle\sum_{j, k \in \{ x, y \}} \mathfrak{a}_{jk} \big( \nabla F (z) \big) \partial_j \partial_k F (z) = 0,
		\end{flalign}
		
		\noindent where 
		\begin{flalign}
		\label{aijst}
		\begin{aligned}
		\mathfrak{a}_{xx} (s, t) = \displaystyle\frac{1}{\tan \big( \pi (1 - s - t) \big)} & + \displaystyle\frac{1}{\tan (\pi s)}; \qquad \mathfrak{a}_{yy} (s, t) = \displaystyle\frac{1}{\tan \big( \pi (1 - s - t) \big)} + \displaystyle\frac{1}{\tan (\pi t)}; \\
		&  \mathfrak{a}_{xy} (s, t)  = \displaystyle\frac{1}{\tan \big( \pi (1 - s - t) \big)} =  \mathfrak{a}_{yx} (s, t).
		\end{aligned} 
		\end{flalign} 
		
		 Additionally, any solution $F$ to \eqref{aijh} on $\mathfrak{R}$ with $F |_{\partial \mathfrak{R}} = \mathfrak{h}$, such that $\nabla F (z) \in \mathcal{T}$ for each $z \in \mathfrak{R}$, is a local maximizer of $\mathcal{E}$ on $\mathfrak{R}$ with boundary data $\mathfrak{h}$. Since $\sigma$ is concave, $F$ is in fact a global maximizer of $\mathcal{E}$ and so $F = \mathcal{H}$ by the uniqueness of such maximizers (recall \Cref{hemaximum}). 
		
	\end{rem}

	\begin{rem}
		
		\label{derivativehcontinuouss}
		
		Although \Cref{haijequations} indicates that $\mathcal{H}$ satisfies an elliptic partial differential equation when $\nabla \mathcal{H} (z)$ is bounded away from $\partial \mathcal{T}$, it does not directly provide information about the set $\mathcal{S} = \big\{ z \in \mathfrak{R}: \nabla \mathcal{H} (z) \in \mathcal{T} \big\}$ on which this occurs. Theorem 4.1 of \cite{MCFARS} implies that $\mathcal{S}$ is open and that $\nabla \mathcal{H}$ is continuous on $\mathcal{S}$, if $\mathfrak{R}$ has Lipschitz boundary.
		
	\end{rem}

	Let us next state the second result, which bounds the $\mathcal{C}^2$-norm (defined in \Cref{LawGlobal2}) of a maximizer of $\mathcal{E}$ whose gradient is everywhere uniformly in the interior of $\mathcal{T}$; its proof, which follows quickly from \Cref{haijequations} and known estimates on uniformly elliptic partial differential equations, will be given in \Cref{EstimateEquation} below. In what follows, for any $z \in \mathbb{R}^2$ and $r \in \mathbb{R}_{> 0}$, we define the open disks 
	\begin{flalign}
	\label{brzdefinition} 
	\mathcal{B}_r (z) = \big\{ w \in \mathbb{R}^2: |w - z| < r \big\} \subset \mathbb{R}^2; \qquad \mathcal{B}_r = \mathcal{B}_r (0, 0); \qquad \mathcal{B} = \mathcal{B}_1 = \mathcal{B}_1 (0, 0).	
	\end{flalign}

	\begin{lem}
		
		\label{derivativeshestimate}
		
		For any fixed $\varepsilon \in \big( 0, \frac{1}{4} \big)$, there exists a constant $C = C (\varepsilon) > 1$ such that the following holds. Let $\mathfrak{h}: \partial \mathcal{B} \rightarrow \mathbb{R}$ denote a function admitting an admissible extension to $\mathcal{B}$, and let $\mathcal{H} \in \Adm (\mathcal{B}; \mathfrak{h})$ denote the maximizer of $\mathcal{E}$ on $\mathcal{B}$ with boundary data $\mathfrak{h}$. If $\nabla \mathcal{H} (z) \in \mathcal{T}_{\varepsilon}$ for each $z \in \mathcal{B}$, then $\| \mathcal{H} - \mathcal{H} (0, 0) \|_{\mathcal{C}^2 (\overline{\mathcal{B}}_{1 / 2})} \le C$.
		
	\end{lem}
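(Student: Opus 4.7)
The plan is to exploit the Euler--Lagrange formulation from \Cref{haijequations} and reduce the statement to a standard regularity result for uniformly elliptic equations, with constants depending only on $\varepsilon$. First, since $\nabla \mathcal{H}(z) \in \overline{\mathcal{T}} \subset [0,1]^2$, the function $\mathcal{H}$ is $\sqrt{2}$-Lipschitz, so in particular $\|\mathcal{H} - \mathcal{H}(0,0)\|_{L^{\infty}(\mathcal{B})} \le C_0$ for some universal $C_0$. By \Cref{haijequations}, on $\mathcal{B}$ the function $\mathcal{H}$ solves the quasilinear equation
\begin{equation*}
\sum_{j,k \in \{x,y\}} \mathfrak{a}_{jk}\bigl(\nabla \mathcal{H}(z)\bigr)\, \partial_j \partial_k \mathcal{H}(z) = 0,
\end{equation*}
where the coefficients $\mathfrak{a}_{jk}$ are given by \eqref{aijst}. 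Because $\nabla \mathcal{H}(z) \in \mathcal{T}_{\varepsilon}$ uniformly, the arguments stay in a compact subset of the open simplex $\mathcal{T}$, so each $\mathfrak{a}_{jk}$ is a smooth bounded function of $\nabla \mathcal{H}$ with bounds depending only on $\varepsilon$.

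Next, I would verify that the coefficient matrix $[\mathfrak{a}_{jk}(\nabla \mathcal{H})]$ is uniformly elliptic with ellipticity constants depending only on $\varepsilon$. This follows from \Cref{concavesigmat}: the matrix $[\mathfrak{a}_{jk}]$ is (up to sign conventions) $-\mathrm{Hess}\, \sigma$, and $\sigma$ is uniformly concave and smooth on $\mathcal{T}_{\varepsilon}$, so its Hessian has eigenvalues bounded above and bounded away from zero uniformly on $\mathcal{T}_{\varepsilon}$. This places $\mathcal{H}$ in the setting of linear uniformly elliptic non-divergence form equations (treating the coefficients as a priori unknown but bounded functions of $z$).

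With this framework in place, the regularity follows from the standard two-step bootstrap. Step one: the Krylov--Safonov interior Hölder estimate (see, e.g., the exposition in \cite{EDSO}) applied to each partial derivative $v = \partial_i \mathcal{H}$, which satisfies a linear uniformly elliptic equation obtained by differentiating \eqref{aijh}. Since $\|v\|_{L^{\infty}} \le 1$, this gives $\|\mathcal{H}\|_{\mathcal{C}^{1,\alpha}(\overline{\mathcal{B}}_{3/4})} \le C_1(\varepsilon)$ for some $\alpha = \alpha(\varepsilon) \in (0,1)$. Step two: now that $\nabla \mathcal{H}$ is $\mathcal{C}^{\alpha}$, the composition $\mathfrak{a}_{jk}(\nabla \mathcal{H}(\cdot))$ is itself $\mathcal{C}^{\alpha}$ on $\overline{\mathcal{B}}_{3/4}$, and the interior Schauder estimate then yields $\|\mathcal{H} - \mathcal{H}(0,0)\|_{\mathcal{C}^{2,\alpha}(\overline{\mathcal{B}}_{1/2})} \le C(\varepsilon)$, which is stronger than the claimed $\mathcal{C}^2$ bound.

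The main obstacle is not conceptual but careful bookkeeping: one needs to ensure the Krylov--Safonov and Schauder constants depend only on $\varepsilon$ (through the ellipticity and smoothness of $\sigma$ on $\mathcal{T}_{\varepsilon}$) and on the universal $L^{\infty}$ bound coming from the Lipschitz property of $\mathcal{H}$. No information about the boundary data $\mathfrak{h}$ is needed beyond what is encoded in the hypothesis $\nabla \mathcal{H}(z) \in \mathcal{T}_{\varepsilon}$, which keeps all relevant nonlinearities uniformly away from the degenerate boundary of $\overline{\mathcal{T}}$.
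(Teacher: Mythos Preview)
Your strategy---reduce to the uniformly elliptic PDE \eqref{aijh} with constants depending only on $\varepsilon$, obtain a $\mathcal{C}^{1,\alpha}$ bound, then bootstrap via interior Schauder---is exactly the paper's. However, your Step one as written has a gap. If you differentiate the non-divergence equation \eqref{aijh} in $x_i$, the resulting equation for $v=\partial_i\mathcal{H}$ is
\[
\sum_{j,k}\mathfrak{a}_{jk}(\nabla\mathcal{H})\,\partial_j\partial_k v \;=\; -\sum_{j,k,l}\bigl(\partial_l\mathfrak{a}_{jk}\bigr)(\nabla\mathcal{H})\,\partial_l v\,\partial_j\partial_k\mathcal{H},
\]
whose right-hand side involves the second derivatives $\partial_j\partial_k\mathcal{H}$, which are precisely what you are trying to bound. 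Krylov--Safonov then gives a constant depending on $\|D^2\mathcal{H}\|_{L^\infty}$, which is circular.

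There are two clean fixes. One is to differentiate the \emph{divergence-form} Euler--Lagrange equation from \Cref{haijequations}, $\partial_x\bigl(\partial_s\sigma(\nabla\mathcal{H})\bigr)+\partial_y\bigl(\partial_t\sigma(\nabla\mathcal{H})\bigr)=0$; then $v=\partial_i\mathcal{H}$ weakly solves $\operatorname{div}\bigl(A(\nabla\mathcal{H})\nabla v\bigr)=0$ with $A=-\mathrm{Hess}\,\sigma$ bounded and uniformly elliptic, so De Giorgi--Nash (not Krylov--Safonov) gives $v\in\mathcal{C}^\alpha$ with constants depending only on $\varepsilon$. The paper instead takes the second route: freeze $\widetilde a_{jk}(z)=\mathfrak{a}_{jk}(\nabla\mathcal{H}(z))$, view $\mathcal{H}$ as solving the \emph{linear} non-divergence equation $\sum\widetilde a_{jk}\partial_j\partial_k\mathcal{H}=0$ with merely bounded measurable coefficients, and invoke the two-dimensional gradient H\"older estimate (Theorem~12.4 of \cite{EDSO}) to get $[\mathcal{H}]_{1,\alpha}^{(0)}\le C\|\mathcal{H}\|_0$ directly, without differentiating. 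After that, your Step two (Schauder) proceeds exactly as you describe. The paper also packages this as a general lemma (\Cref{aijgradientfestimate}) by first extending the $\mathfrak{a}_{jk}$ smoothly and elliptically from $\mathcal{T}_{\varepsilon/2}$ to all of $\mathbb{R}^2$, a minor technical step you omit.
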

	
	\begin{rem} 
		
	\label{estimatehrho} 
	
	Although \Cref{derivativeshestimate} was only stated on $\mathcal{B}$, an analog of it also holds on $\mathcal{B}_{\rho}$ for any $\rho \in \mathbb{R}_{> 0}$ (where the associated constant $C$ will now depend on both $\varepsilon$ and $\rho$). Indeed this follows from the scale-invariance of the variational principle, which states that if we define $\mathcal{H}^{(\rho)}: \mathcal{B}_{\rho} \rightarrow \mathbb{R}$ by setting $\mathcal{H}^{(\rho)} (z) = \rho^{-1} \mathcal{H} (\rho^{-1} z)$ for each $z \in \mathcal{B}_{\rho}$, then $\mathcal{H}^{(\rho)}$ is a maximizer of $\mathcal{E}$ on $\mathcal{B}_{\rho}$. 
	
	\end{rem} 
	
	Observe that \Cref{derivativeshestimate} is an \emph{interior estimate} on the $\mathcal{C}^2$-norm of $\mathcal{H}$, since it does not provide bounds close to the boundary $\partial \mathcal{B}$ of the domain. A \emph{global estimate} of the latter type is false without further regularity assumptions on the boundary data $\mathfrak{h}$. 
	
	Before stating the next result, we require some notation on (nearly) linear functions. 
	
	\begin{definition} 
		
	\label{linearst} 
	
	Fix a subset $U \subseteq \mathbb{R}^2$ and pair $(s, t) \in \mathbb{R}^2$. We say that a function $\Lambda: U \rightarrow \mathbb{R}$ is \emph{linear of slope $(s, t)$} if $\Lambda (z) - \Lambda (z') = (s, t) \cdot (z - z')$, for any $z, z' \in U$. Furthermore, for any $\delta > 0$, we say that a function $f: U \rightarrow \mathbb{R}$ is \emph{$\delta$-nearly linear of slope $(s, t)$ (on $U$)} if there exists a linear function $\Lambda: U \rightarrow \mathbb{R}$ of slope $(s, t)$ such that $\sup_{z \in U} \big| f(z) - \Lambda (z) \big| < \delta$.

	\end{definition} 
	
	Now we have the following proposition that provides an interior estimate for the effect of a $\mathcal{C}^0$ boundary perturbation on the gradient of a nearly linear maximizer of $\mathcal{E}$ (and also provides a $\mathcal{C}^2$ estimate for such maximizers). Its proof will be provided in \Cref{BoundaryPerturbationVariational}, where we will first use results from \cite{MCFARS} to reduce it to an estimate on uniformly elliptic partial differential equations, and then apply known bounds (Schauder and interpolation estimates) in the latter context.

	\begin{prop}
		
		\label{perturbationboundary}
		
		For any fixed real number $\varepsilon \in \big( 0, \frac{1}{4} \big)$, there exist constants $\delta = \delta (\varepsilon) > 0$ and $C = C (\varepsilon) > 1$ such that the following holds. Let $\mathfrak{h}_1, \mathfrak{h}_2: \partial \mathcal{B} \rightarrow \mathbb{R}$ denote two functions admitting admissible extensions to $\mathcal{B}$ and, for each $i \in \{ 1, 2 \}$, let $\mathcal{H}_i \in \Adm (\mathcal{B}; \mathfrak{h}_i)$ denote the maximizer of $\mathcal{E}$ on $\mathcal{B}$ with boundary data $\mathfrak{h}_i$. If there exists a pair $(s, t) \in \mathcal{T}_{\varepsilon}$ such that $\mathfrak{h}_i$ is $\delta$-nearly linear of slope $(s, t)$ on $\partial \mathcal{B}$, for each $i \in \{ 1, 2 \}$, then 
		\begin{flalign}
		\label{perturbationboundaryestimate} 
		\displaystyle\sup_{z \in \mathcal{B}_{1 / 4}} \big| \nabla \mathcal{H}_1 (z) - \nabla \mathcal{H}_2 (z) \big| \le C \displaystyle\max_{z \in \partial \mathcal{B}} \big| \mathfrak{h}_1 (z) - \mathfrak{h}_2 (z) \big|; \qquad \displaystyle\max_{i \in \{ 1, 2 \}} \big\| \mathcal{H}_i - \mathcal{H}_i (0, 0) \big\|_{\mathcal{C}^2 (\overline{\mathcal{B}}_{1 / 4})} \le C.
		\end{flalign}

	\end{prop}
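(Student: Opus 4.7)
The plan is to reduce the proposition to interior estimates for a uniformly elliptic quasilinear PDE satisfied by $\mathcal{H}_1$ and $\mathcal{H}_2$. Concretely, I would first show that on an interior subdomain both maximizers have gradients bounded away from $\partial \mathcal{T}$, so that they solve the Euler--Lagrange equation \eqref{aijh} with smooth, uniformly elliptic coefficients; then standard interior Schauder estimates yield the $\mathcal{C}^2$ bound, and applying them to the linearized equation satisfied by $w := \mathcal{H}_1 - \mathcal{H}_2$ yields the gradient-stability bound.

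Fix linear functions $\Lambda_1, \Lambda_2$, both of slope $(s,t)$, witnessing the $\delta$-near-linearity of $\mathfrak{h}_1$ and $\mathfrak{h}_2$, respectively. Each $\Lambda_i$ solves \eqref{aijh} trivially and hence is the unique maximizer of $\mathcal{E}$ on $\mathcal{B}$ with boundary data $\Lambda_i|_{\partial\mathcal{B}}$ by \Cref{hemaximum}; applying \Cref{h1h2gamma} to the pair $(\mathcal{H}_i, \Lambda_i)$ yields $\sup_{\overline{\mathcal{B}}} |\mathcal{H}_i - \Lambda_i| \leq \delta$. The crucial step is to upgrade this $L^\infty$-closeness to gradient closeness on $\overline{\mathcal{B}}_{1/2}$: for $\delta = \delta(\varepsilon)$ sufficiently small, $\nabla \mathcal{H}_i(z) \in \mathcal{T}_{\varepsilon/2}$ for every $z \in \overline{\mathcal{B}}_{1/2}$ and each $i \in \{1,2\}$. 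This is where \cite{MCFARS} enters; I would argue by compactness and contradiction. If the conclusion fails, there exist (for one of the indices $i$) sequences $\delta_n \to 0$, slopes $(s_n, t_n) \in \mathcal{T}_\varepsilon$, witnesses $\Lambda^{(n)}$, and points $z_n \in \overline{\mathcal{B}}_{1/2}$ with $\nabla \mathcal{H}_i^{(n)}(z_n) \notin \mathcal{T}_{\varepsilon/2}$. Since the $\mathcal{H}_i^{(n)}$ are $1$-Lipschitz, after translating by constants and passing to subsequences they converge uniformly on $\overline{\mathcal{B}}$ to a linear function $\Lambda^\infty$ of slope $(s_\infty, t_\infty) \in \overline{\mathcal{T}_\varepsilon} \subset \mathcal{T}$, and $z_n \to z_\infty \in \overline{\mathcal{B}}_{1/2}$. \Cref{derivativehcontinuouss} (Theorem 4.1 of \cite{MCFARS}) ensures that the liquid region of each $\mathcal{H}_i^{(n)}$ is open with continuous gradient therein; combined with interior Schauder estimates for the uniformly elliptic equation \eqref{aijh} applied on a ball $U \subset \mathcal{B}$ centered at $z_\infty$, one obtains that eventually $\nabla \mathcal{H}_i^{(n)}$ lies in $\mathcal{T}_{\varepsilon/4}$ throughout $U$ and $\nabla \mathcal{H}_i^{(n)}(z_n) \to (s_\infty, t_\infty) \in \mathcal{T}_\varepsilon$, a contradiction.

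Once the liquid regime on $\mathcal{B}_{1/2}$ is established, \Cref{haijequations} implies each $\mathcal{H}_i$ solves \eqref{aijh} there; since the coefficients $\mathfrak{a}_{jk}$ from \eqref{aijst} are smooth and uniformly elliptic on $\mathcal{T}_{\varepsilon/2}$, interior Schauder estimates, together with the uniform Lipschitz bound $\|\nabla \mathcal{H}_i\|_{L^\infty} \leq 1$, give $\|\mathcal{H}_i - \mathcal{H}_i(0,0)\|_{\mathcal{C}^2(\overline{\mathcal{B}}_{1/4})} \leq C(\varepsilon)$, which is the second estimate in \eqref{perturbationboundaryestimate}. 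For the first, set $w = \mathcal{H}_1 - \mathcal{H}_2$ and subtract \eqref{aijh} for $\mathcal{H}_1$ and $\mathcal{H}_2$, applying the mean-value theorem in the $\nabla F$ argument of $\mathfrak{a}_{jk}$: this yields a linear, second-order, uniformly elliptic equation for $w$ on $\mathcal{B}_{1/2}$, whose coefficients are controlled in $\mathcal{C}^\alpha$ by the just-obtained $\mathcal{C}^2$ bounds on the $\mathcal{H}_i$ and smoothness of $\sigma$. Interior Schauder estimates for such linear equations give $\|\nabla w\|_{L^\infty(\overline{\mathcal{B}}_{1/4})} \leq C \|w\|_{L^\infty(\mathcal{B}_{1/2})}$, and \Cref{h1h2gamma} applied to $(\mathcal{H}_1, \mathcal{H}_2)$ bounds the right-hand side by $C \sup_{z \in \partial \mathcal{B}} |\mathfrak{h}_1(z) - \mathfrak{h}_2(z)|$, completing the proof.

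The principal obstacle is the compactness step in the second paragraph: upgrading $L^\infty$-closeness of $\mathcal{H}_i$ to a linear function to $\mathcal{C}^1$-closeness on an interior ball. A priori we know only that $\nabla \mathcal{H}_i \in \overline{\mathcal{T}}$, and the Euler--Lagrange equation \eqref{aijh} degenerates on $\partial \mathcal{T}$ (since the coefficients $\mathfrak{a}_{jk}$ involve $1/\tan$-type terms that blow up there), so the standard elliptic toolkit is unavailable until one first excludes near-frozen behavior in the interior. The De Silva--Savin regularity theory \cite{MCFARS} supplies exactly this---openness of the liquid region and continuity of $\nabla \mathcal{H}_i$ there---which the compactness argument leverages to rule out the gradient escaping $\mathcal{T}_\varepsilon$ on $\mathcal{B}_{1/2}$. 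Once this reduction is complete, the remaining estimates are entirely standard.
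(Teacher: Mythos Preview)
Your overall architecture is correct and matches the paper's: reduce to the uniformly elliptic regime via De Silva--Savin, deduce the $\mathcal{C}^2$ bound from \Cref{derivativeshestimate}, then linearize to obtain gradient stability. The paper proceeds identically, packaging your Step~4 as a separate \Cref{ellipticperturbation} that combines Schauder with an interpolation bootstrap.

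The genuine gap is in your compactness argument for the crucial step. You invoke interior Schauder estimates for \eqref{aijh} on a ball $U$ around $z_\infty$ to force $\nabla\mathcal{H}_i^{(n)}$ into $\mathcal{T}_{\varepsilon/4}$, but Schauder requires the coefficients $\mathfrak{a}_{jk}\big(\nabla\mathcal{H}_i^{(n)}\big)$ to already be uniformly elliptic on $U$, which is precisely the conclusion in question. The qualitative Theorem~4.1 of \cite{MCFARS} (stated here as \Cref{derivativehcontinuouss}) gives openness of the liquid region and continuity of $\nabla\mathcal{H}_i^{(n)}$ there \emph{for each fixed $n$}, with no uniformity across the sequence; it does not tell you that $z_n$ sits at positive distance from the frozen boundary of $\mathcal{H}_i^{(n)}$, nor does it supply the $C^1$-compactness needed to pass gradients to the limit. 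The quantitative statement you actually need---that a $\delta$-nearly linear maximizer of slope $(s,t)\in\mathcal{T}_\varepsilon$ has gradient within $\varepsilon$ of $(s,t)$ on $\mathcal{B}_{1/2}$---is Theorem~4.3 of \cite{MCFARS}, which the paper cites directly as \Cref{interiorfunctional}. That result is proved by an improvement-of-flatness iteration and does not follow from Theorem~4.1 by soft compactness, so your argument as written is circular at this point.

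Once you replace the compactness step by a citation of Theorem~4.3 of \cite{MCFARS}, your Steps~3 and~4 go through. One minor point on Step~4: the first-order coefficients $b_j$ in the linearized equation involve second derivatives of $\mathcal{H}_2$ and must be $C^\alpha$ for Schauder to apply, so you want a $\mathcal{C}^{2,\alpha}$ (not merely $\mathcal{C}^2$) bound on the $\mathcal{H}_i$; this comes for free from the Schauder iteration you already cite in Step~3.
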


	\subsection{Non-Intersecting Path Ensembles}
	
	\label{NonIntersectingCorrelation}
	
	In this section we recall the definition and some properties of a certain model of random non-intersecting paths. To that end, we begin with the following definition. In the below, for any $k \in \mathbb{Z}_{\ge 1}$ and two $k$-tuples $\textbf{a} = \big( a (1), a (2), \ldots , a (k) \big) \in \mathbb{R}^k$ and $\textbf{b} = \big( b (1), b (2), \ldots , b (k) \big) \in \mathbb{R}^k$, we say that $\textbf{a} < \textbf{b}$ if $a (j) < b (j)$ for each $j \in [1, k]$. We additionally define the \emph{Weyl chamber} 
	\begin{flalign*} 
	\mathbb{W}_k = \big\{ (i_1, i_2, \ldots , i_k) \in \mathbb{Z}^k: i_1 < i_2 < \cdots < i_k \big\}; \qquad \mathbb{W} = \bigcup_{j = 1}^{\infty} \mathbb{W}_j. 
	\end{flalign*}

	\begin{definition} 
		
		\label{pmpn} 
		
		Fix an integer $t \ge 0$. A \emph{path} of \emph{length} $t$ is a sequence $\textbf{q} = \big( q (0), q (1), \ldots , q (t) \big) \subset \mathbb{Z}$ such that $q (s) - q (s - 1) \in \{ 0, 1 \}$, for each $s \in [1, t]$. A \emph{path ensemble} is a family $\textbf{Q} = \big( \textbf{q}_{-m}, \textbf{q}_{1 - m}, \ldots , \textbf{q}_n \big) \subset \mathbb{Z}^{t + 1}$ of sequences, each $\textbf{q}_k = \big( q_k (0), q_k (1), \ldots , q_k (t) \big) \subset \mathbb{Z}$ of which is a path of length $t$. The ensemble $\textbf{Q}$ is called \emph{non-intersecting} if $\textbf{q}_k < \textbf{q}_{k + 1}$, for each $k \in [-m, n - 1]$. Setting $\textbf{q} (s) = \big( q_{-m} (s), q_{1 - m} (s), \ldots , q_n (s) \big)$ for every $s \in [0, t]$, the path ensemble $\textbf{Q}$ is non-intersecting if and only if each $\textbf{q} (s) \in \mathbb{W}$. 
		
	\end{definition} 
	
	By viewing $s$ as a time index, $\textbf{q}_k = \big\{ q_k (s) \big\}_s$ describes the space-time trajectory of path $k$ in the ensemble $\textbf{Q}$, which may either not move or jump to the right after each time step. Moreover, $\textbf{q} (s) = \big\{ q_k (s) \big\}_k$ denotes the set of positions of the paths in $\textbf{Q}$ at time $s$. 
	
	It will be useful to embed $\textbf{Q}$ in the discrete upper-half plane $\mathbb{Z} \times \mathbb{Z}_{\ge 0}$ by first placing a vertex at $\big( q_k (s), s \big)$ for each $(k, s) \in [-m, n] \times [0, t]$, and then by connecting $\big( q_k (s), s \big)$ and $\big( q_k (s + 1), s + 1 \big)$ by an edge for each $(k, s) \in [-m, n] \times [0, t - 1]$. This produces a family of non-intersecting paths on $\mathbb{Z} \times \mathbb{Z}_{\ge 0}$, as shown on the left side of \Cref{pathsfigure}.

	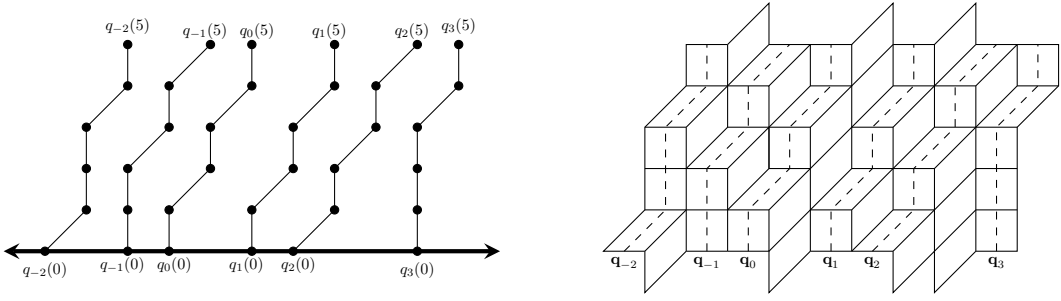
\begin{figure}

		\begin{center}

			\begin{tikzpicture}[
			>=stealth,
			auto,
			style={
				scale = .55
			}
			]
			
			\draw[<->, black, ultra thick] (-.5, 0) -- (11.5, 0);	
			
			\draw[-, black] (.5, 0) -- (1.5, 1) -- (1.5, 2) -- (1.5, 3) -- (2.5, 4) -- (2.5, 5); 
			\draw[-, black] (2.5, 0) -- (2.5, 1) -- (2.5, 2) -- (3.5, 3) -- (3.5, 4) -- (4.5, 5); 
			\draw[-, black] (3.5, 0) -- (3.5, 1) -- (4.5, 2) -- (4.5, 3) -- (5.5, 4) -- (5.5, 5);
			\draw[-, black] (5.5, 0) -- (5.5, 1) -- (6.5, 2) -- (6.5, 3) -- (7.5, 4) -- (7.5, 5); 
			\draw[-, black] (6.5, 0) -- (7.5, 1) -- (7.5, 2) -- (8.5, 3) -- (8.5, 4) -- (9.5, 5);
			\draw[-, black] (9.5, 0) -- (9.5, 1) -- (9.5, 2) -- (9.5, 3) -- (10.5, 4) -- (10.5, 5);
			
			\filldraw[fill=black] (.5, 0) circle [radius = .1] node[below = 2, scale = .6]{$q_{-2} (0)$};	
			\filldraw[fill=black] (2.5, 0) circle [radius = .1] node[left = 2, below, scale = .6]{$q_{-1} (0)$};	
			\filldraw[fill=black] (3.5, 0) circle [radius = .1] node[right = 2, below, scale = .6]{$q_0 (0)$};	
			\filldraw[fill=black] (5.5, 0) circle [radius = .1] node[left = 2, below, scale = .6]{$q_1 (0)$};	
			\filldraw[fill=black] (6.5, 0) circle [radius = .1] node[right = 2, below, scale = .6]{$q_2 (0)$};	
			\filldraw[fill=black] (9.5, 0) circle [radius = .1] node[below = 2, scale = .6]{$q_3 (0)$};
			
			\filldraw[fill=black] (1.5, 1) circle [radius = .1];
			\filldraw[fill=black] (2.5, 1) circle [radius = .1];
			\filldraw[fill=black] (3.5, 1) circle [radius = .1];
			\filldraw[fill=black] (5.5, 1) circle [radius = .1];
			\filldraw[fill=black] (7.5, 1) circle [radius = .1];
			\filldraw[fill=black] (9.5, 1) circle [radius = .1];	
			
			\filldraw[fill=black] (1.5, 2) circle [radius = .1];
			\filldraw[fill=black] (2.5, 2) circle [radius = .1];
			\filldraw[fill=black] (4.5, 2) circle [radius = .1];
			\filldraw[fill=black] (6.5, 2) circle [radius = .1];
			\filldraw[fill=black] (7.5, 2) circle [radius = .1];
			\filldraw[fill=black] (9.5, 2) circle [radius = .1];
			
			\filldraw[fill=black] (1.5, 3) circle [radius = .1];
			\filldraw[fill=black] (3.5, 3) circle [radius = .1];
			\filldraw[fill=black] (4.5, 3) circle [radius = .1];
			\filldraw[fill=black] (6.5, 3) circle [radius = .1];
			\filldraw[fill=black] (8.5, 3) circle [radius = .1];
			\filldraw[fill=black] (9.5, 3) circle [radius = .1];
			
			\filldraw[fill=black] (2.5, 4) circle [radius = .1];
			\filldraw[fill=black] (3.5, 4) circle [radius = .1];
			\filldraw[fill=black] (5.5, 4) circle [radius = .1];
			\filldraw[fill=black] (7.5, 4) circle [radius = .1];
			\filldraw[fill=black] (8.5, 4) circle [radius = .1];
			\filldraw[fill=black] (10.5, 4) circle [radius = .1];
			
			\filldraw[fill=black] (2.5, 5) circle [radius = .1] node[above = 2, scale = .6]{$q_{-2} (5)$};
			\filldraw[fill=black] (4.5, 5) circle [radius = .1] node[left = 2, above, scale = .6]{$q_{-1} (5)$};
			\filldraw[fill=black] (5.5, 5) circle [radius = .1] node[right = 2, above, scale = .6]{$q_0 (5)$};
			\filldraw[fill=black] (7.5, 5) circle [radius = .1] node[left = 2, above, scale = .6]{$q_1 (5)$};
			\filldraw[fill=black] (9.5, 5) circle [radius = .1] node[left = 2, above, scale = .6]{$q_2 (5)$};
			\filldraw[fill=black] (10.5, 5) circle [radius = .1] node[above = 2, scale = .6]{$q_3 (5)$};

			\draw[-, black, dashed] (16.5, 0) node[below = 0, scale = .6]{$\textbf{q}_{-1}$} -- (16.5, 1) -- (16.5, 2) -- (17.5, 3) -- (17.5, 4) -- (18.5, 5); 
			\draw[-, black, dashed] (17.5, 0) node[below = 0, scale = .6]{$\textbf{q}_0$} -- (17.5, 1) -- (18.5, 2) -- (18.5, 3) -- (19.5, 4) -- (19.5, 5);
			\draw[-, black, dashed] (19.5, 0) node[below, scale = .6]{$\textbf{q}_1$} -- (19.5, 1) -- (20.5, 2) -- (20.5, 3) -- (21.5, 4) -- (21.5, 5); 
			\draw[-, black, dashed] (20.5, 0) node[below, scale = .6]{$\textbf{q}_2$} -- (21.5, 1) -- (21.5, 2) -- (22.5, 3) -- (22.5, 4) -- (23.5, 5);

			\draw[-, black, dashed] (14.5, 0) node[below, scale = .6]{$\textbf{q}_{-2}$} -- (15.5, 1) -- (15.5, 2) -- (15.5, 3) -- (16.5, 4) -- (16.5, 5); 
			\draw[-, black] (16, 0) -- (16, 1) -- (16, 2) -- (17, 3) -- (17, 4) -- (18, 5); 
			\draw[-, black] (17, 0) -- (17, 1) -- (18, 2) -- (18, 3) -- (19, 4) -- (19, 5);
			\draw[-, black] (19, 0) -- (19, 1) -- (20, 2) -- (20, 3) -- (21, 4) -- (21, 5); 
			\draw[-, black] (20, 0) -- (21, 1) -- (21, 2) -- (22, 3) -- (22, 4) -- (23, 5);
			\draw[-, black] (23, 0) -- (23, 1) -- (23, 2) -- (23, 3) -- (24, 4) -- (24, 5);
			\draw[-, black, dashed] (23.5, 0) node[below, scale = .6]{$\textbf{q}_3$} -- (23.5, 1) -- (23.5, 2) -- (23.5, 3) -- (24.5, 4) -- (24.5, 5);
			
			\draw[-, black] (15, 0) -- (14, 0) -- (15, 1) -- (15, 2) -- (15, 3) -- (16, 4) -- (16, 5) -- (17, 5);
			\draw[-, black] (15, 1) -- (16, 1); 
			\draw[-, black] (15, 2) -- (16, 2); 
			\draw[-, black] (15, 3) -- (16, 3); 
			\draw[-, black] (16, 4) -- (17, 4); 
			\draw[-, black] (16, 3) -- (17, 4);
			\draw[-, black] (17, 5) -- (17, 4);
			\draw[-, black] (16, 0) -- (17, 0);
			\draw[-, black] (16, 1) -- (17, 1) -- (17, 2) -- (16, 2);
			\draw[-, black] (17, 2) -- (18, 3) -- (18, 2);
			\draw[-, black] (18, 3) -- (18, 4) -- (17, 4);
			\draw[-, black] (18, 5) -- (19, 5) -- (18, 4);
			\draw[-, black] (19, 5) -- (20, 5) -- (20, 4) -- (19, 4);
			\draw[-, black] (20, 4) -- (19, 3) -- (19, 2) -- (18, 1) -- (18, 0) -- (17, 0); 
			\draw[-, black] (17, 1) -- (18, 1);
			\draw[-, black] (18, 2) -- (19, 2);
			\draw[-, black] (18, 3) -- (19, 3);
			\draw[-, black] (18, 0) -- (19, 1) -- (19, 2) -- (20, 3) -- (20, 4) -- (21, 5);
			\draw[-, black] (20, 0) -- (20, 1) -- (21, 2) -- (21, 3) -- (22, 4) -- (22, 5) -- (21, 5);
			\draw[-, black] (21, 0) -- (22, 1) -- (22, 2) -- (23, 3) -- (23, 4) -- (24, 5) -- (23, 5);
			\draw[-, black] (19, 0) -- (21, 0);
			\draw[-, black] (19, 1) -- (20, 1);
			\draw[-, black] (20, 2) -- (22, 2);
			\draw[-, black] (20, 3) -- (21, 3);
			\draw[-, black] (21, 4) -- (23, 4);
			\draw[-, black] (22, 3) -- (23, 3);
			\draw[-, black] (23, 2) -- (22, 1) -- (22, 0) -- (23, 1);
			\draw[-, black] (15, 0) -- (15, -1) -- (16, 0); 
			\draw[-, black] (18, 0) -- (18, -1) -- (19, 0); 
			\draw[-, black] (21, 0) -- (21, -1) -- (22, 0) -- (22, -1) -- (23, 0);
			\draw[-, black] (17, 5) -- (18, 6) -- (18, 5);  
			\draw[-, black] (20, 5) -- (21, 6) -- (21, 5);  
			\draw[-, black] (22, 5) -- (23, 6) -- (23, 5);  
			\draw[-, black] (21, 1) -- (22, 1);
			\draw[-, black] (17, 3) -- (18, 3);
			\draw[-, black] (23, 0) -- (24, 0) -- (24, 1) -- (24, 2) -- (24, 3) -- (25, 4) -- (25, 5) -- (24, 5);
			\draw[-, black] (23, 1) -- (24, 1); 
			\draw[-, black] (23, 2) -- (24, 2); 
			\draw[-, black] (23, 3) -- (24, 3); 
			\draw[-, black] (24, 4) -- (25, 4);
			\draw[-, black] (15, 0) -- (16, 1); 
			\draw[-, black] (16, 2) -- (16, 3);
			
			\end{tikzpicture}
			
		\end{center}

		\caption{\label{pathsfigure} Depicted to the left is an ensemble $\textbf{Q} = \big( \textbf{q}_{-2}, \textbf{q}_{-1}, \textbf{q}_0, \textbf{q}_1, \textbf{q}_2, \textbf{q}_3 \big)$ consisting of six non-intersecting paths. Depicted to the right is (part of) the associated free lozenge tiling of $\Delta = \Delta^5$. }

	\end{figure} 
	
	It will also be useful to introduce notation for the sets of non-intersecting path ensembles that start and end at given locations; this is given by the following definition. 
	
	\begin{definition} 
		
		\label{fgwp0pt}
		
		For any $t \in \mathbb{Z}_{\ge 0}$, let $\mathfrak{W}^t$ denote the set of all non-intersecting path ensembles $\textbf{Q}$ whose paths are all of length $t$. Next, fix $m, n \in \mathbb{Z}$ with $-m \le n$ and two sequences $\textbf{a} = \big( a_{-m}, a_{1 - m}, \ldots , a_n \big) \in \mathbb{W}$ and $\textbf{b} = \big( b_{-m}, b_{1 - m}, \ldots , b_n \big) \in \mathbb{W}$. We say that a non-intersecting path ensemble $\textbf{Q} = \big( \textbf{q}_{-m}, \textbf{q}_{1 - m}, \ldots , \textbf{q}_n \big) \in \mathfrak{W}^t$ has \emph{initial data} $\textbf{a}$ if $q_k (0) = a_k$, for each $k \in [-m, n]$. Similarly, it has \emph{ending data} $\textbf{b}$ if $q_k (t) = b_k$, for each $k \in [-m, n]$. 
		
		Let $\mathfrak{W}_{\textbf{a}}^t \subseteq \mathfrak{W}^t$ denote the set of all non-intersecting path ensembles $\textbf{Q} \in \mathfrak{W}^t$ with initial data $\textbf{a}$. Additionally, let $\mathfrak{W}_{\textbf{a}; \textbf{b}}^t \subseteq \mathfrak{W}_{\textbf{a}}^t$ denote the set of all non-intersecting path ensembles $\textbf{Q} \in \mathfrak{W}^t$ that have initial data $\textbf{a}$ and ending data $\textbf{b}$. 
	\end{definition}

	Fix $t \in \mathbb{Z}_{\ge 1}$, and define the domain $\Delta = \Delta^t = \mathbb{Z} \times \{ 0, 1, \ldots , t \} \subset \mathbb{Z}^2$. Recalling the notation from \Cref{TilingsHeight} on free tilings, let us associate with any non-intersecting path ensemble $\textbf{Q} = \big( \textbf{q}_{-m}, \textbf{q}_{1 - m}, \ldots , \textbf{q}_n \big) \in \mathfrak{W}^t$ a free lozenge tiling $\mathscr{M} = \mathscr{M} (\textbf{Q})$ of $\Delta$. To that end, define the set 
	\begin{flalign}
	\label{xset}
	\mathscr{X} (\textbf{Q}) = \Big\{ (x, s) \in \Delta^t: x \notin \big\{ q_{-m} (s), q_{1 - m} (s), \ldots , q_n (s) \big\} \Big\} \subset \Delta^t,
	\end{flalign}  
	
	\noindent and associate with $\textbf{Q}$ the unique free tiling $\mathscr{M}$ of $\Delta$ whose type $1$ lozenges are centered at the vertices of $\mathscr{X} (\textbf{Q}) + \big( \frac{1}{2}, 0 \big)$. Stated alternatively, $\mathscr{M}$ determined by setting $\mathscr{X} (\mathscr{M}) = \mathscr{X} (\textbf{Q})$, where we recall the set $\mathscr{X} (\mathscr{M})$ from \Cref{xm}. See the right side of \Cref{pathsfigure} for a depiction (where there we have shifted the non-intersecting paths to the right by $\frac{1}{2}$, to make them more visible). 
	
	Under this correspondence, lozenges of types $2$ and type $3$ in $\mathscr{M}$ correspond to space-time locations where a path jumps to the right or does not move, respectively; these constitute the tiles on the right side of \Cref{pathsfigure} through which a path passes. In this way, any free tiling $\mathscr{M}$ of $\Delta$ is associated with a unique non-intersecting path ensemble $\textbf{Q} = \textbf{Q} (\mathscr{M}) \in \mathfrak{W}^t$ (possibly with infinitely many paths). Thus, any height function $H: \Delta \rightarrow \mathbb{Z}$ gives rise to a unique such ensemble. 
	
	\begin{rem} 
		
		\label{heightpaths}
		
		It is quickly verified that, for any $(x_1, s), (x_2, s) \in \Delta$ such that $x_1 \le x_2$, the number of paths in $\textbf{Q}$ that intersect the interval $\big[ \big( x_1 - \frac{1}{2}, s \big), \big( x_2 - \frac{1}{2}, s \big) \big]$ is equal to $x_2 - x_1 - \big( H(x_2, s) - H(x_1, s) \big)$. Moreover, for any $(x, s_1), (x, s_2) \in \Delta$ such that $s_1 \le s_2$, the number of paths in $\textbf{Q}$ that intersect the interval $\big[ \big( x - \frac{1}{2}, s_1 \big), \big( x - \frac{1}{2}, s_2 \big) \big]$ is equal to $H (x, s_2) - H(x, s_1)$. 
		
	\end{rem} 
	
	Next, let us recall a model for random non-intersecting path ensembles that was introduced in Section 3.1 of \cite{NRWTQPE}. Here, we set $|\textbf{p}| = \sum_{j = -m}^n p_j$ for any $\textbf{p} = \big( p_{-m}, p_{1 - m}, \ldots , p_n \big) \in \mathbb{R}^{m + n + 1}$. 
	
	\begin{definition}[{\cite[Section 3.1]{NRWTQPE}}] 
		
		\label{betaprobability}
		
		Fix a real number $\beta \in (0, 1)$; integers $t, m, n$ with $t \ge 0$ and $-m \le n$; and a sequence $\textbf{a} = \big( a_{-m}, a_{1 - m}, \ldots , a_n \big) \in \mathbb{W}$. Define the probability measure $\mathbb{P} = \mathbb{P}_{\beta; \textbf{a}} = \mathbb{P}_{\beta; \textbf{a}; t}$ on $\mathfrak{W}_{\textbf{a}}^t$ as follows. For any non-intersecting path ensemble $\textbf{Q} = \big( \textbf{q}_{-m}, \textbf{q}_{1 - m}, \ldots , \textbf{q}_n \big) \in \mathfrak{W}_{\textbf{a}}^t$ with initial data $\textbf{q} (0) = \textbf{a}$, set
		\begin{flalign}
		\label{pprobability} 
		\mathbb{P} [\textbf{Q}] = \beta^{|\textbf{q} (t)| - |\textbf{a}|} (1 - \beta)^{(m + n + 1) t - |\textbf{q} (t)| + |\textbf{a}|} \displaystyle\prod_{-m \le j < k \le n} \displaystyle\frac{q_k (t) - q_j (t)}{a_k - a_j}.
		\end{flalign} 
		
		\noindent As in \Cref{pmpn}, here we have denoted $\textbf{q}_k = \big( q_k (0), q_k (1), \ldots , q_k (t) \big)$ for each $k \in [-m, n]$, and $\textbf{q} (s) = \big( q_{-m} (s), q_{1 - m} (s), \ldots , q_n (s) \big)$ for each $s \in [0, t]$. 
		
	\end{definition}

		As indicated in Proposition 3.1 of \cite{NRWTQPE}, the measure $\mathbb{P}_{\beta; \textbf{a}}$ is the probability distribution for a collection of $m + n + 1$ mutually independent random walks starting at positions $(a_{-m}, a_{1 - m}, \ldots , a_n)$, which at each time either do not move or jump to the right with probabilities $1 - \beta$ and $\beta$, respectively, conditioned to never intersect.

	\begin{rem}
		
		\label{prprobability} 
		
		Fix sequences $\textbf{a} = \big( a_{-m}, a_{1 - m}, \ldots , a_n \big) \in \mathbb{W}$ and $\textbf{b} = \big( b_{-m}, b_{1 - m}, \ldots , b_n \big) \in \mathbb{W}$, and let $\textbf{Q} \in \mathfrak{W}_{\textbf{a}}^t$ denote a random non-intersecting path ensemble sampled from the measure $\mathbb{P}_{\beta; \textbf{a}}$, conditioned to have ending data $\textbf{b}$. Then it quickly follows from the explicit form \eqref{pprobability} for $\mathbb{P}$ that the conditional law of $\textbf{Q}$ is uniform on $\mathfrak{W}_{\textbf{a}; \textbf{b}}^t$. 
		
	\end{rem}
	
	\begin{rem}
		
		\label{qtprocess} 
		
		Using the explicit form \eqref{pprobability} for $\mathbb{P}$, it can be shown that the family $\big\{ \textbf{q} (s) \big\}_s$ of sequences forms a Markov process with time parameter $s$. 
		
	\end{rem}

	Next we have the following lemma, which is a concentration estimate for the height function associated with a non-intersecting random path ensemble sampled from the measure $\mathbb{P}_{\beta; \textbf{a}}$. Its proof is similar to that of Proposition 22 of \cite{LSRT} (restated as \Cref{hvuexpectation} above), but we include it below. 
	
	\begin{lem}
		
		\label{walksexpectationnear} 
		
		Fix an integer $t \ge 0$; a real number $\beta \in (0, 1)$; and an integer sequence $\textbf{\emph{a}} \in \mathbb{W}$. Let $\textbf{\emph{Q}} \in \mathfrak{W}_{\textbf{\emph{a}}}^t$ denote a random non-intersecting path ensemble sampled under the measure $\mathbb{P}_{\beta; \textbf{\emph{a}}}$, and let $H: \mathbb{Z} \times \{ 0, 1, \ldots , t \} \rightarrow \mathbb{Z}$ denote the unique height function associated with $\textbf{\emph{Q}}$ such that $H(0, 0) = 0$. Then, for any real number $A > 1$ and vertex $v \in \mathbb{Z} \times \{ 0, 1, \ldots , t \}$, we have that
		\begin{flalign*} 
		\mathbb{P} \bigg[ \Big| H (v) - \mathbb{E} \big[ H(v) \big] \Big| > A \bigg] \le 2 e^{-A^2 / 2 t}.
		\end{flalign*}

	\end{lem}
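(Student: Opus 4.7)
The strategy is to express $H(v) - \mathbb{E} \big[ H(v) \big]$ as the terminal value of a Doob martingale with respect to the time filtration, and apply the Azuma--Hoeffding inequality. For each $s \in \{0, 1, \ldots, t\}$, set $\mathcal{F}_s = \sigma \big( \textbf{q}(0), \textbf{q}(1), \ldots, \textbf{q}(s) \big)$ and $M_s = \mathbb{E} \big[ H(v) \, \big| \, \mathcal{F}_s \big]$. Since $\textbf{q}(0) = \textbf{a}$ is deterministic, $M_0 = \mathbb{E}[H(v)]$; and since the free tiling $\mathscr{M}(\textbf{Q})$, and thus the height function $H$, is determined by the sequence $\big( \textbf{q}(0), \ldots, \textbf{q}(t) \big)$, we have $M_t = H(v)$. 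Hence $(M_s)_{0 \le s \le t}$ is a length-$t$ martingale interpolating between $\mathbb{E}[H(v)]$ and $H(v)$.

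The heart of the proof is the bounded-increment estimate $|M_s - M_{s-1}| \le 1$ almost surely, for each $s \in \{1, \ldots, t\}$. By the Markov property (\Cref{qtprocess}), there exists a (random) $\mathcal{F}_{s-1}$-measurable function $g_s$ such that $M_s = g_s(\textbf{q}(s))$ and $M_{s-1} = \mathbb{E}[g_s(\textbf{q}(s)) \mid \textbf{q}(s-1)]$. Given $\textbf{q}(s-1) = \gamma$, the admissible values of $\textbf{q}(s)$ lie in the coordinatewise interval $F_\gamma = \{ \alpha \in \mathbb{W} : \alpha_k - \gamma_k \in \{0, 1\} \text{ for each } k \}$, whose extremal elements are $\gamma$ (no path jumps) and $\gamma + \mathbf{1}$ (every path jumps). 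A shift coupling---running both continuations with the same Bernoulli noise after time $s$---relates the two extremal continuations by a rigid translation by one unit; the path-count interpretation of height increments from \Cref{heightpaths} then reduces the difference $g_s(\gamma + \mathbf{1}) - g_s(\gamma)$ to a single $\{0, 1\}$-valued path-presence indicator at lateral coordinate $x - 1$ and time $s_0$, where $v = (x, s_0)$. A monotone coupling of non-intersecting Bernoulli walks---analogous to \Cref{monotoneheightcouple}, derivable through \Cref{prprobability} (which identifies the law conditioned on its ending data with the uniform measure on the corresponding non-intersecting ensembles)---extends this bound to any two elements of $F_\gamma$, yielding $|g_s(\alpha) - g_s(\alpha')| \le 1$ and hence $|M_s - M_{s-1}| \le 1$.

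With the bounded-difference estimate in hand, the Azuma--Hoeffding inequality applied to $(M_s)_{0 \le s \le t}$ yields
\begin{equation*}
\mathbb{P} \Big[ \big| H(v) - \mathbb{E}[H(v)] \big| > A \Big] = \mathbb{P} \Big[ \big| M_t - M_0 \big| > A \Big] \le 2 \exp \Big( -\frac{A^2}{2 t} \Big),
\end{equation*}
which is the stated estimate.

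The main obstacle is the pointwise bound $|M_s - M_{s-1}| \le 1$. A naive coordinate-wise estimate would incur a factor growing with the number $m + n + 1$ of paths and thereby miss the sharp constant in the exponent. Exploiting the shift coupling between the extremal configurations $\gamma$ and $\gamma + \mathbf{1}$---together with the telescoping cancellation provided by \Cref{heightpaths}, which collapses the height difference at $v$ into a single path-presence indicator---is what pins the constant at $1$.
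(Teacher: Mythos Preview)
Your proof is correct and follows essentially the same approach as the paper: form the Doob martingale $M_s = \mathbb{E}[H(v)\mid \mathcal{F}_s]$ with respect to the time filtration, establish the bounded-increment property $|M_s - M_{s-1}| \le 1$, and apply Azuma--Hoeffding. The paper's proof simply asserts the bounded-increment bound without argument (citing similarity to Proposition~22 of \cite{LSRT}), whereas you supply a justification via the shift coupling between the extremal configurations $\gamma$ and $\gamma+\mathbf{1}$ together with a monotone coupling to interpolate; this is a reasonable way to fill in that step, though the monotone-coupling for $\mathbb{P}_{\beta;\alpha}$ with ordered initial data that you invoke through \Cref{prprobability} and \Cref{monotoneheightcouple} still requires a stochastic-ordering argument for the endpoint marginals that you leave implicit.
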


	\begin{proof} 
		For each integer $s \in [0, t]$, let $\mathcal{F}_s$ denote the $\sigma$-algebra generated by the random variables $\big\{ H (u): u \in \mathbb{Z} \times \{ 0, 1, \ldots , s \} \big\}$. Furthermore, for each vertex $v \in \mathbb{Z} \times \{ 0, 1, \ldots , t \}$ and integer $s \in [0, t]$, define the random variable $H^{(s)} (v) = \mathbb{E} \big[ H (v) | \mathcal{F}_s \big]$. Then, as $s$ ranges over $[0, t]$, the sequence $\big\{ H^{(s)} (v) \big\}$ forms a martingale with the property that $\big| H^{(s + 1)} (v) - H^{(s)} (v) \big| \le 1$ for each $v \in \mathbb{Z} \times \{ 0, 1, \ldots , t \}$ and $s \in [0, t - 1] \cap \mathbb{Z}$. Therefore, the Azuma-Hoeffding inequality yields 
		\begin{flalign*} 
		\mathbb{P} \bigg[ \Big| H (v) - \mathbb{E} \big[ H (v) \big] \Big| > A \bigg] = \mathbb{P} \bigg[ \Big| H^{(t)} (v) - H^{(0)} (v) \Big| > A \bigg] \le 2 e^{-A^2 / 2t},
		\end{flalign*}
		
		\noindent for any vertex $v \in \mathbb{Z} \times \{ 0, 1, \ldots , t \}$. 	
	\end{proof}

	\section{Convergence of Local Statistics} 
	
	\label{ConvergenceProof}

	In this section we establish \Cref{localconverge} assuming two results (namely, a local law and a local coupling statement, given by \Cref{heightlocal1} and \Cref{plpprcouple} below, respectively) that will be proven later in this paper. This will essentially proceed by coupling a random tiling $\mathscr{M}_N$ of the domain $R_N$ locally around the vertex $v_N$ with a random non-intersecting path ensemble sampled according to the measure $\mathbb{P}_{\beta; \textbf{a}}$ from \Cref{betaprobability} (for a suitable choice of $\beta \in (0, 1)$ and $\textbf{a} \in \mathbb{W}$). The benefit of the latter model is that it is more amenable to methods of exact solvability, as the results of \cite{ULSRW} show that it is a determinantal point process with an explicit kernel whose asymptotics can be analyzed for general $\beta$ and $\textbf{a}$. 
	
	Thus, we begin in \Cref{ProbabilityP} by recalling these results of \cite{ULSRW}. Then, in \Cref{RegularLocal}, we state a local law for uniformly random lozenge tilings with nearly arbitrary boundary data, which implies that the limit shape result \Cref{hnh} holds on scales much smaller than $N$. Next, in \Cref{EnsemblesCompare} we define two sequences $\textbf{p}, \textbf{r} \in \mathbb{W}$ and state a result indicating that $\mathscr{M}_N$ can be locally coupled with random non-intersecting path ensembles with initial data $\textbf{p}$ and $\textbf{r}$. In \Cref{DEstimate} we use the local law \Cref{heightlocal1} to establish certain estimates on $\textbf{p}$ and $\textbf{r}$ that will be necessary to apply the results from \cite{ULSRW}. We then conclude with the proof of \Cref{localconverge} in \Cref{Convergence}.

	\subsection{The Correlation Kernel of \texorpdfstring{$\mathbb{P}_{\beta; \textbf{a}}$}{}}
	
	\label{ProbabilityP}
	
	In this section we recall results from \cite{ULSRW} realizing the random walk model $\mathbb{P}_{\beta; \textbf{a}}$ from \Cref{betaprobability} as a determinantal point process. To that end, for $T \in \mathbb{Z}_{\ge 0}$ and a non-intersecting path ensemble $\textbf{Q} \in \mathfrak{W}^T$, we recall the set $\mathscr{X} (\textbf{Q})$ from \eqref{xset}. The following lemma, which follows from Theorem 2.1 of \cite{ULSRW} and Kerov's complementation principle for determinantal point processes (see Proposition A.8 of \cite{AMSG} or Remark 2.4 of \cite{ULSRW}), indicates that $\mathscr{X} (\textbf{Q})$ is a determinantal point process and evaluates its kernel, if $\textbf{Q}$ is distributed according to $\mathbb{P}_{\beta; \textbf{a}}$. It can be viewed as a discrete analog of the Br\'{e}zin-Hikami identity (see Section 4 of \cite{LSE} or Section 2 of \cite{ULSDCM}) that realizes the $\beta = 2$ case of Dyson Brownian motion as a determinantal point process with an explicit kernel. 
	
	In the below, we recall the \emph{Pochhammer symbol} $(a)_k = \prod_{j = 0}^{k - 1} (a + j)$, for any $a \in \mathbb{C}$ and $k \in \mathbb{Z}_{\ge 0}$.

	\begin{lem}[{\cite[Theorem 2.1]{ULSRW}}]
		
		\label{determinantnonintersecting} 
		
		Fix integers $T \ge 0$ and $-m \le n$; a real number $\beta \in (0, 1)$; and an integer sequence $\textbf{\emph{a}} = \big( a_{-m}, a_{1 - m}, \ldots , a_n \big) \in \mathbb{W}$. Let $\textbf{\emph{Q}} \in \mathfrak{W}_{\textbf{\emph{a}}}^T$ denote a random non-intersecting path ensemble sampled from the measure $\mathbb{P}_{\beta; \textbf{\emph{a}}}$, and recall the set $\mathscr{X} (\textbf{\emph{Q}})$ from \eqref{xset}. Then for any integers $k \ge 0$; $t_1, t_2, \ldots , t_k \in \{ 0, 1, \ldots , T \}$; and $x_1, x_2, \ldots , x_k \in \mathbb{Z}$, we have that 
		\begin{flalign*}
		\mathbb{P} \Bigg[ \bigcap_{j = 1}^k \big\{ (x_j, t_j) \in \mathscr{X} (\textbf{\emph{Q}}) \big\} \Bigg] = \det \big[ K (x_i, t_i; x_j, t_j) \big]_{1 \le i, j \le k},
		\end{flalign*}
		
		\noindent where for any $x, y \in \mathbb{Z}$ and $t, s \in \mathbb{Z}_{\ge 0}$ the kernel $K (x, t; y, s) = K_{\beta; \textbf{\emph{a}}} (x, t; y, s)$ is given by
		\begin{flalign}
		\label{kxtys}
		\begin{aligned}
		K (x, t; y, s) & =  \textbf{\emph{1}}_{x = y} \textbf{\emph{1}}_{t = s} - \textbf{\emph{1}}_{x \ge y} \textbf{\emph{1}}_{t > s} (-1)^{x - y + 1} \binom{t - s}{x - y}  \\
		& \qquad - \displaystyle\frac{t!}{(s - 1)!} \displaystyle\frac{1}{(2 \pi \textbf{\emph{i}})^2} \displaystyle\oint \displaystyle\oint \displaystyle\frac{(z - y + 1)_{s - 1}}{(w - x)_{t + 1}} \left( \displaystyle\frac{1 - \beta}{\beta} \right)^{w - s} \displaystyle\prod_{j = -m}^n \displaystyle\frac{z - a_j}{w - a_j} \\
		& \qquad \qquad \qquad \qquad \qquad \qquad \times \displaystyle\frac{\sin (\pi w)}{\sin (\pi z)} \displaystyle\frac{1}{w - z}  dw dz.
		\end{aligned}
		\end{flalign}
		
		\noindent Here, the contour for $z$ is a line from $y - s + \frac{1}{2} - \textbf{\emph{i}} \infty$ to $y - s + \frac{1}{2} + \textbf{\emph{i}} \infty$, and the contour for $w$ is positively oriented and encloses the elements in the intersection $\{ x - t, x - t + 1, \ldots , x \} \cap \{ a_{-m}, a_{1 - m}, \ldots , a_n \}$, but does not intersect the contour for $z$. 
		
	\end{lem}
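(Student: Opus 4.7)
The plan is to derive \Cref{determinantnonintersecting} as a direct consequence of the two inputs indicated in the hypothesis, namely Theorem 2.1 of \cite{ULSRW} and Kerov's complementation principle for determinantal point processes.

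First, I would invoke Theorem 2.1 of \cite{ULSRW}, which identifies the \emph{particle} configuration $\mathscr{Y}(\textbf{Q}) = \big\{(q_k(s), s) : -m \le k \le n,\ 0 \le s \le T\big\}$ under $\mathbb{P}_{\beta;\textbf{a}}$ as a determinantal point process on the strip $\mathbb{Z} \times \{0, 1, \ldots, T\}$, with an explicit correlation kernel $\widetilde K(x, t; y, s)$ given by a double contour integral of essentially the same shape as the last summand of \eqref{kxtys}. Within that kernel, the product $\prod_{j = -m}^n (z - a_j)/(w - a_j)$ encodes the initial data $\textbf{a}$, while the factor $\big((1 - \beta)/\beta\big)^{w - s}$ encodes the Bernoulli jump weights.

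Second, I would observe that the set $\mathscr{X}(\textbf{Q})$, as defined in \eqref{xset}, is precisely the complement of $\mathscr{Y}(\textbf{Q})$ inside the strip $\mathbb{Z} \times \{0, 1, \ldots, T\}$. Applying Kerov's complementation principle in the form of Proposition A.8 of \cite{AMSG} (or Remark 2.4 of \cite{ULSRW}), one immediately concludes that $\mathscr{X}(\textbf{Q})$ is itself a determinantal point process, and that its correlation kernel is given by $\textbf{1}_{x = y} \textbf{1}_{t = s} - \widetilde K(x, t; y, s)$. The identity term here accounts for the first summand of \eqref{kxtys}, and the bulk of $-\widetilde K$ accounts for the double contour integral appearing as the third summand.

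The main obstacle, which is more bookkeeping than genuine mathematical difficulty, is aligning the second summand $\textbf{1}_{x \ge y} \textbf{1}_{t > s} (-1)^{x - y + 1} \binom{t - s}{x - y}$. This contribution must appear either as a residue picked up when the $w$-contour of \cite{ULSRW} is deformed into the positively oriented contour specified beneath \eqref{kxtys} (enclosing only the intersection $\{x - t, \ldots, x\} \cap \{a_{-m}, \ldots, a_n\}$ and disjoint from the $z$-contour), or as a separate "free propagator" summand already present in the \cite{ULSRW} kernel that survives complementation. Either way, one must carry out a careful contour deformation and evaluate residues at the integers in $\{x - t, \ldots, x\}$ using identities among $\sin(\pi w)$, the Pochhammer symbol $(w - x)_{t + 1}$, and binomial coefficients to arrive at the explicit form $(-1)^{x - y + 1}\binom{t - s}{x - y}$. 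Once this deformation is justified and signs are tracked, the identification with \eqref{kxtys} is complete.
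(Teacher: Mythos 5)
Your proposal matches the paper's own (one-line) justification, which is simply to combine Theorem 2.1 of \cite{ULSRW} with Kerov's complementation principle for determinantal point processes; you have identified exactly the two ingredients the paper cites. The only clarification is that the binomial-coefficient summand is already present as a separate free-propagator term in the correlation kernel of Theorem 2.1 of \cite{ULSRW}, so no additional contour deformation is needed: complementation then directly produces \eqref{kxtys}.
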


	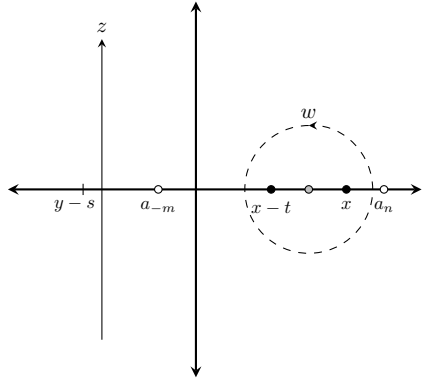
\begin{figure}

		\begin{center}

			\begin{tikzpicture}[
			>=stealth,
			auto,
			style={
				scale = .5
			}
			]

			\draw[<->, thick] (-5, 0) -- (6, 0);
			\draw[<->, thick] (0, -5) -- (0, 5);
			
			\filldraw[fill=white] (5, 0) circle [radius = .1] node[below = 2, scale = .7]{$a_n$};
			\filldraw[fill = black] (4, 0) circle [radius = .1] node[below = 2, scale = .7]{$x$};
			\filldraw[fill = gray!50!white] (3, 0) circle [radius = .1];
			\filldraw[fill = black] (2, 0) circle [radius = .1] node[below = 2, scale = .7]{$x - t$};
			\filldraw[fill=white] (-1, 0) circle [radius = .1] node[below = 2, scale = .7]{$a_{-m}$};

			\draw[->, black] (-2.5, -4) -- (-2.5, 4) node[above, scale = .8]{$z$};
			
			\draw[->, dashed] (3, 1.7) arc(90:450:1.7) node[above, scale = .8]{$w$};

			\draw[-] (-3, -.15) -- (-3, .15) node[below = 8, left = -7, scale = .7]{$y - s$};

			\end{tikzpicture}
			
		\end{center}

		\caption{\label{kcontours} Depicted above are the contours for $w$ (dashed) and $z$ (solid) from \eqref{kxtys}. The black vertices are the elements of $[x - t, x] \cap \textbf{a}$; the gray ones are those of $[x - t, x] \setminus \textbf{a}$, and the white ones are those of $\textbf{a} \setminus [x - t, x]$.}

	\end{figure}

	We refer to \Cref{kcontours} for a depiction of the contours in \Cref{determinantnonintersecting}; if $x - t \le y - s < x$, then the contour for $w$ might instead be a union of two disjoint circles. The independence of the right side of \eqref{kxtys} on $T$ is a consequence of the fact that $\textbf{Q}$ is a Markov process (recall \Cref{qtprocess}). 
	
	The next result we require from \cite{ULSRW} provides large-time asymptotics for the kernel $K (x, t; y, s)$ from \eqref{kxtys}. To that end, we first recall the following definition from equation (2.5) of \cite{ULSRW}. 
	
	\begin{definition}[{\cite[Equation (2.5)]{ULSRW}}]
		
		\label{dqut1t2}
		
		Fix an integer sequence $\textbf{a} = (a_{-m}, a_{1 - m}, \ldots , a_n) \in \mathbb{W}$. For any real numbers $0 < X \le Y$ (we allow $Y = \infty$), set 
		\begin{flalign*}
		\mathfrak{D} (\textbf{a}; X, Y) = \left| \displaystyle\sum_{|a_k| \in [X, Y]} \displaystyle\frac{1}{a_k} \right|. 
		\end{flalign*}

	\end{definition}

	Let us next describe a condition, due to Theorem 2.12 of \cite{ULSRW}, under which the kernel $K_{\beta; \textbf{a}} (x, t + T; y, s + T)$ from \eqref{kxtys} converges to the extended discrete sine kernel $\mathcal{K}_{\xi} (x, t; y, s)$ from \Cref{kernellimitdefinition}, as $T$ tends to $\infty$. Before providing its precise statement, let us take a moment to outline it.
	
	Let $\textbf{a} \in \mathbb{W}_N$, with $T \ll N$, and assume that $\textbf{a}$ satisfies the following two conditions. First, $\mathfrak{D} (\textbf{a}; X, \infty) \ll 1$ whenever $T \lesssim X \lesssim N$. Second, there exist scales $1 \ll U \ll T \ll V \ll N$ (which can all be $N$-dependent) and some $\rho \in (0, 1)$ such that $\textbf{a}$ approximates the Lebesgue measure with density $\rho$ on subintervals of $[-V, V]$ of length $U$. Then, for any $\beta \in (0, 1)$ and bounded $x, y, t, s \in \mathbb{Z}$, $K_{\beta; \textbf{a}} (x, t + T; y, s + T)$ converges to $\mathcal{K}_{\xi} (x, t; y, s)$ for some explicit $\xi = \xi (\beta, \rho) \in \mathbb{H}$, as $T$ tends to $\infty$. 
	
	It will be useful for this rate of convergence, given by $\vartheta_N$ below, to be uniform in various parameters. So, in what follows, $B$ will bound $|x|, |y|, |s|, |t|$ from above; $c$ will bound the distance from $\beta$ and $\rho$ to $\{ 0, 1 \}$ from below; and $\kappa_N$ will dictate the separation of scales $1 \ll U \ll T \ll V \ll N$, the rate of convergence of $\textbf{a}$ to (a multiple of) Lebesgue measure on $[-V, V]$, and the bound on $\mathfrak{D} (\textbf{a}; X, \infty)$. 
	
	\begin{lem}[{\cite[Theorem 2.12]{ULSRW}}]
		
		\label{limitdeterminantnonintersecting}
		
		For any fixed real numbers $0 < c < \frac{1}{2} < B$ and sequence $\kappa = (\kappa_1, \kappa_2, \ldots ) \subset \mathbb{R}_{> 0}$ of real numbers tending to $0$, there exists a sequence $\vartheta = \vartheta (c, B, \kappa)= (\vartheta_1, \vartheta_2, \ldots ) \subset \mathbb{R}_{> 0}$ of real numbers tending to $0$ such that the following holds. Let $U, T, V, N \in \mathbb{Z}$ denote integers such that $\kappa_N^{-1} < U < \kappa_N T < \kappa_N^2 V < \kappa_N^3 N$. Further fix $\rho \in (c, 1 - c)$ and $\textbf{\emph{a}} \in \mathbb{W}_N$ satisfying the following two assumptions. 
		
		\begin{enumerate}
			
			\item For each real number $X \in [\kappa_N T, \kappa_N^{-1} N]$, we have that $\mathfrak{D} (\textbf{\emph{a}}; X; \infty) < \kappa_N$. 
			
			\item For any interval $I \subset [-V, V]$ of length $U$, we have that $(\rho - \kappa_N) U \le |\textbf{\emph{a}} \cap I| \le (\rho + \kappa_N) U$.
			
		\end{enumerate}
		
		\noindent Additionally let $\beta \in (c, 1 - c)$ be a real number, and set $\xi = \beta (1 - \beta)^{-1} e^{\pi \textbf{\emph{i}} (1 - \rho)}$. Then for any integers $t, s \in [0, B]$ and $x, y \in [-B, B]$, we have that $\big| K_{\beta; \textbf{\emph{a}}} (x, t + T; y, s + T) - \mathcal{K}_{\xi} (x, t; y, s) \big| < \vartheta_N$.
		
	\end{lem}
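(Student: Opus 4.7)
The plan is to prove this by a steepest-descent analysis of the double contour integral formula \eqref{kxtys} for $K_{\beta;\textbf{a}}$, using $T$ as the large parameter. After shifting $t \mapsto t+T$ and $s \mapsto s+T$, the Pochhammer ratio and the factor $((1-\beta)/\beta)^{w-s-T}$ combine with $\prod_j(z-a_j)/(w-a_j)$ to produce, up to a bounded prefactor involving $\sin(\pi w)/\sin(\pi z)$, $(w-z)^{-1}$, and $O(1)$ quantities depending only on the bounded variables $x,y,t,s$, an integrand of the form $\exp\bigl(T(S(w)-S(z))\bigr)$. Here the effective action $S(\zeta)$ is built from $\zeta\log\bigl((1-\beta)/\beta\bigr)$ together with a sum $T^{-1}\sum_j\log(\zeta-a_j)$ over $a_j$ plus a corresponding integer sum arising from the Pochhammer denominator. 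Under hypothesis (ii), for $\zeta$ in a suitable region the sum $T^{-1}\sum_{a_j\in[-V,V]}\log(\zeta-a_j)$ is well-approximated by $\rho\int_{-V}^{V}\log(\zeta-u)\,du/T$, while hypothesis (i) controls the tail $|a_j|\gtrsim \kappa_N T$ through the bound on $\mathfrak{D}(\textbf{a};X,\infty)$.

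The saddle-point equation $S'(\zeta)=0$ then reduces, after computing the elementary logarithmic integral, to an explicit algebraic condition whose complex-conjugate solutions $\zeta_c,\overline{\zeta_c}\in\mathbb{H}\cup\overline{\mathbb{H}}$ satisfy $\beta(1-\beta)^{-1}\cdot(\text{ratio of linear factors})^{1-\rho}=1$; a direct calculation shows this is equivalent to the complex slope $\xi=\beta(1-\beta)^{-1}e^{\pi\textbf{i}(1-\rho)}$ appearing in the statement. I would then deform the $w$-contour (initially enclosing $[x-t-T,x]\cap\textbf{a}$) and the $z$-contour (initially the vertical line through $y-s+\tfrac{1}{2}$) so that both pass through $\zeta_c$ and $\overline{\zeta_c}$ along their steepest-descent paths. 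Away from a shrinking neighborhood of the saddles, $\mathrm{Re}\,T(S(w)-S(z))$ is sufficiently negative that the contribution is negligible, while near each saddle the Gaussian approximation, combined with the collapse of the bounded prefactor to its value at $\zeta_c$, produces precisely the one-dimensional integrand of $\mathcal{K}_\xi(x,t;y,s)$ from \Cref{kernellimitdefinition}, with the correct arc from $\overline{\xi}$ to $\xi$.

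The main obstacle is making the estimates uniform in $(\beta,\rho,\textbf{a})$ with an effective error $\vartheta_N\to 0$ depending only on $c,B,\kappa$, and this splits into three technical parts. First, one must quantify the replacement of $T^{-1}\sum_{a_j\in[-V,V]}\log(\zeta-a_j)$ by its continuous counterpart, which is where the separation of scales $U\ll T\ll V$ and the density condition (ii) enter: discrepancies at scale $U$ contribute only $O(U/T)$-type errors to $S$ on the relevant part of the contour. Second, the tail outside $[-V,V]$ must be shown to perturb the saddle location and value of $S$ by at most $o(1)$, which reduces to bounds like $\sum_{|a_j|\in[X,Y]}(\zeta-a_j)^{-1}$, controlled by $\mathfrak{D}(\textbf{a};X,\infty)<\kappa_N$ combined with summation by parts. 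Third, one needs uniform lower bounds on $|S''(\zeta_c)|$ and on the descent rate along the deformed contours; these follow from continuity in $(\beta,\rho)$ on the compact set $(c,1-c)^2$ once the limiting (Lebesgue-density) action is shown to have a non-degenerate saddle, which is a direct computation. Assembling these three estimates and choosing $\vartheta_N$ as the maximum of the resulting error terms yields the sequence claimed in the lemma.
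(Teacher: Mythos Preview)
The paper does not prove this lemma at all: it is quoted verbatim as Theorem~2.12 of \cite{ULSRW}, and the only added content is the remark immediately following it, which observes that the uniformity of the error in $(c,B,\kappa)$, while not explicitly stated in \cite{ULSRW}, can be extracted from Theorem~2.7 and the proofs of Theorem~2.12 and Lemma~6.9 there. So there is nothing in the present paper to compare your argument against.

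That said, your outline is essentially the strategy carried out in \cite{ULSRW}: a steepest-descent analysis of the double contour integral \eqref{kxtys}, with the saddle equation producing the complex slope $\xi$, the density hypothesis (ii) handling the bulk approximation of $\sum_j \log(\zeta-a_j)$, and the $\mathfrak{D}$-hypothesis (i) controlling the tails. Your identification of the three technical ingredients needed for uniformity (bulk replacement at scale $U$, tail control via $\mathfrak{D}$, and non-degeneracy of the saddle uniformly on the compact parameter set) is accurate. If your goal is to reproduce the cited result rather than to fill a gap in this paper, your plan is sound, though turning it into a rigorous proof requires substantial work on the contour deformation---in particular, one must handle the residues picked up when the $w$- and $z$-contours are moved through one another (this is what produces the indicator terms in \eqref{kxtys} and the two cases $y_1\ge y_2$ versus $y_1<y_2$ in \Cref{kernellimitdefinition}), and your sketch does not mention this step.
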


	\begin{rem} 
		
		The uniformity of the error $|K_{\beta; \textbf{a}} - \mathcal{K}_{\xi}| < \vartheta_N$ in $(c, B, \kappa)$ was not explicitly stated in Theorem 2.12 of \cite{ULSRW} but can be quickly derived from Theorem 2.7 of \cite{ULSRW} and the proofs of Theorem 2.12 and Lemma 6.9 there. 
		
	\end{rem}

	One might view \Cref{limitdeterminantnonintersecting} as a discrete (and slightly more precise) analog of the $\beta = 2$ special case of Theorem 2.4 of \cite{CLSM} and Theorem 2.2 of \cite{FEUM} on local statistics of Dyson Brownian motion.

	\subsection{The Local Law} 
	
	\label{RegularLocal}

	In this section we state a local law for uniformly random height functions (equivalently, free tilings), given by \Cref{heightlocal1}, which will be established in \Cref{EstimateHLocalProof} below. In what follows, we will assume that the domain of the tiling approximates a disk (in the sense of \Cref{regularestimate} below), but make no restrictions on the boundary height function. This will suffice for the purposes of establishing \Cref{localconverge}, through a suitable restriction of the tiling there.
	
	In what follows, we recall the notation on disks from \eqref{brzdefinition}. 
	
	\begin{assumption}
	
	\label{regularestimate} 
	
	Suppose a real number $\varepsilon \in \big( 0, \frac{1}{4} \big)$ is given. Let $N \in \mathbb{Z}_{\ge 1}$ denote an integer, and define the simply-connected induced subgraph $R = \mathcal{B}_N \cap \mathbb{T} \subset \mathbb{T}$. Further let $h: \partial R \rightarrow \mathbb{Z}$ denote a boundary height function on $\partial R$. Let $H: \mathbb{V}(R) \rightarrow \mathbb{Z}$ denote a uniformly random element of $\mathfrak{G} (h)$ (recall \Cref{gh}); also let $\mathscr{M} = \mathscr{M} (H)$ denote the associated free tiling of $R$. Extend $H$ to $\overline{\mathcal{B}}_N$ by linearity on $\mathbb{F}(R)$ and arbitrarily on $\overline{\mathcal{B}}_N \setminus \mathbb{F}(R)$, in such a way that it is $1$-Lipschitz on $\overline{\mathcal{B}}_N$.

	Set $\mathfrak{R} = \mathcal{B} = \mathcal{B}_1$, and define $\mathfrak{h}: \partial \mathfrak{R} \rightarrow \mathbb{R}$ by setting $\mathfrak{h} (z) = N^{-1} H (Nz)$ for each $z \in \partial \mathfrak{R}$. Let $\mathcal{H} \in \Adm (\mathfrak{R}; \mathfrak{h})$ denote the maximizer of $\mathcal{E}$ on $\mathfrak{R}$ with boundary data $\mathfrak{h}$; also let $v_0 \in \mathbb{V}(R)$ denote some vertex such that $\mathcal{B}_{\varepsilon N} (v_0) \cap \mathbb{T} \subset \mathbb{F}(R)$ and $\nabla \mathcal{H} (z) \in \mathcal{T}_{\varepsilon}$ (recall \eqref{tset2}) for each $z \in \mathcal{B}_{\varepsilon} (N^{-1} v_0)$. 
	\end{assumption} 
	
	Now let us proceed to describe the local law for lozenge tilings. To that end, under the notation of \Cref{regularestimate}, first observe that \Cref{hnh} and a Taylor expansion together suggest that the random height function $H$ is with high probability approximately linear in an $\omega N$-neighborhood of $v_0$, with slope $\nabla \mathcal{H} (N^{-1} v_0)$, if $\omega > 0$ is small but independent of $N$. A \emph{local law} for tilings would indicate that $H$ is with high probability approximately linear in an $M$-neighborhood of $v_0$, with the same slope $\nabla \mathcal{H} (N^{-1} v_0)$, for (essentially) any scale $1 \ll M \ll N$. The following theorem states that this holds; its proof will be provided in \Cref{LocalProof} below.

	\begin{thm}
		
		\label{heightlocal1}

		For any fixed real numbers $\varepsilon \in \big( 0, \frac{1}{4} \big)$ and $D > 0$, there exist constants $C_1 = C_1 (\varepsilon) > 1$ and $C_2 = C_2 (\varepsilon, D) > 1$ such that the following holds. Adopt the notation of \Cref{regularestimate}, set $c = \frac{1}{20000}$, and let $M \in \mathbb{Z}$ satisfy 
		\begin{flalign*} 
		C_1 \le M \le N \exp \big( -5 (\log \log N)^2 \big).
		\end{flalign*} 
		
		\noindent Then, there exists a random (dependent on $H$) pair $(s, t) = \big( s (v_0; M), t (v_0; M) \big) \in \mathcal{T}_{\varepsilon / 2}$ and an event $\Gamma = \Gamma (v_0; M)$	 such that
		\begin{flalign}
		\label{gammahnv0estimate} 
		\mathbb{P} [\Gamma] \le C_2 M^{-D}; \qquad \big| (s, t) - \nabla \mathcal{H} (N^{-1} v_0) \big| \textbf{\emph{1}}_{\Gamma^c} < (\log M)^{-c},
		\end{flalign}
		
		\noindent and 
		\begin{flalign}
		\label{mhuhv0estimate} 
		\displaystyle\sup_{u \in \mathcal{B}_M (v_0)} \Big|  M^{-1} \big( H (u) - H (v_0) \big) - M^{-1} (u - v_0) \cdot (s, t)   \Big| \textbf{\emph{1}}_{\Gamma^c} < (\log M)^{-1 - c}. 
		\end{flalign}
		
	\end{thm}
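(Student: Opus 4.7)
The plan is to follow the multiscale strategy outlined in \Cref{Tilings1}: I will iterate the scale reduction estimate (\Cref{estimaten2n}) on a geometrically shrinking sequence of disks centered at $v_0$, carrying the global profile $\mathcal{H}$ on $R$ down to a nearly linear approximation on $\mathcal{B}_M(v_0)$, see the right side of \Cref{bm}.

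First, I would seed the iteration by applying the general-boundary effective global law (\Cref{heightapproximate}) to the uniformly random tiling on $R = \mathcal{B}_N \cap \mathbb{T}$. This yields that on the contracted disk $\mathcal{B}_{N/2}(v_0)$, the normalized height $N^{-1} H$ approximates $\mathcal{H}$ within some initial error $\lambda_0 = (\log N)^{-c_0}$, with exceptional probability at most $O(N^{-D_0})$. Combined with the hypothesis $\nabla \mathcal{H}(N^{-1} v_0) \in \mathcal{T}_\varepsilon$, the continuity of $\nabla \mathcal{H}$ on the non-frozen set (\Cref{derivativehcontinuouss}), and the facetlessness criterion (\Cref{euv1v2estimategradient}), I check that $\mathcal{H}$ is facetless on some $\mathcal{B}_{M_0}(v_0)$ with $M_0 \asymp N$; this provides a valid base case for the induction.

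Next, I would iterate the scale reduction estimate. Suppose at stage $k$, with $M_k = 8^{-k} M_0$, a facetless maximizer $G_k$ on $\mathcal{B}_{M_k}(v_0)$ has been constructed whose boundary data is within $\lambda_k$ of $M_k^{-1} H$, with $|\nabla G_k(N^{-1} v_0) - \nabla \mathcal{H}(N^{-1} v_0)| \le \eta_k$ where $\eta_k \le C \sum_{j < k} \lambda_j$. Then \Cref{estimaten2n} produces, on $\mathcal{B}_{M_{k+1}}(v_0)$, a new facetless maximizer $G_{k+1}$ with boundary error $\lambda_{k+1} \ll \lambda_k$ (the rapid contraction comes from the sharper $N^{-c}$ facetless global law \Cref{estimateboundaryheight} driving the estimate), eventually saturating at a floor of order $(\log M_k)^{-1-c}$. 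The gradient stability bound from \Cref{perturbationboundary} gives $|\nabla G_{k+1}(N^{-1} v_0) - \nabla G_k(N^{-1} v_0)| \le C\lambda_k$, so the telescoping sum $\eta_k \le C \sum_{j < k} \lambda_j = O(\lambda_0) = O((\log N)^{-c_0})$ is dominated by the first term and stays far below $(\log M)^{-c}$. After $k_* = O(\log(N/M))$ iterations I reach scale $M_{k_*} \asymp M$; setting $(s,t) = \nabla G_{k_*}(N^{-1} v_0)$, the slope bound in \eqref{gammahnv0estimate} follows from $\eta_{k_*} \ll (\log M)^{-c}$, while \eqref{mhuhv0estimate} follows by combining the saturated $\lambda_{k_*} \lesssim (\log M)^{-1-c}$ with the bounded $\mathcal{C}^2$-norm of $G_{k_*}$ (\Cref{derivativeshestimate}), which controls its deviation from its tangent plane at $N^{-1} v_0$ on $\mathcal{B}_M(v_0)$.

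The principal obstacle is maintaining facetlessness across $O(\log N)$ stages: this is essential so that \Cref{estimateboundaryheight} can be invoked at every iteration in place of \Cref{heightapproximate}, since the latter's $(\log N)^{-c}$ error would accumulate catastrophically over $\Omega(\log N)$ applications. Facetlessness is preserved provided $\eta_k$ never exceeds $\varepsilon/2$, which holds because $\sum_k \lambda_k = O(\lambda_0)$ is small once $\lambda_0$ is taken small enough (any threshold $M \ge C_1(\varepsilon)$ suffices). The upper bound $M \le N\exp(-5(\log \log N)^2)$ is exactly the window permitting enough iterations for the error to contract from $(\log N)^{-c_0}$ down to the $(\log M)^{-1-c}$ floor, while the union bound over all stages, each of exceptional probability $O(M_k^{-D'})$ for suitably large $D' = D'(D)$, remains comfortably below $C_2 M^{-D}$.
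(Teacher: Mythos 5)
Your high-level plan matches the paper's: seed with \Cref{heightapproximate}, iterate the scale-reduction estimate \Cref{estimaten2n} on disks $\mathcal{B}_{N_k}(v_0)$ shrinking by factors of $8$, and track a chain of nearly linear, facetless maximizers $G_k$ whose errors $\lambda_k$ contract geometrically towards a floor of order $(\log N_k)^{-1-\alpha}$, with the key observation that the $(\log N)^{-c}$ error of \Cref{heightapproximate} must give way to the sharper facetless law to prevent catastrophic accumulation over $\Omega(\log N)$ steps. This is exactly the architecture of \Cref{LocalEstimate}--\Cref{LocalProof}. However, there are two genuine gaps.

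The serious one is in your derivation of \eqref{mhuhv0estimate}. You claim the $\mathcal{C}^2$-bound from \Cref{derivativeshestimate} "controls the deviation of $G_{k_*}$ from its tangent plane at $N^{-1}v_0$ on $\mathcal{B}_M(v_0)$." But $G_{k_*}$ lives on $\mathcal{B}$ after rescaling $\mathcal{B}_{M_{k_*}}(v_0)$ by $M_{k_*}^{-1}$ with $M_{k_*}\asymp M$, so $\mathcal{B}_M(v_0)$ rescales to a disk of radius of order $1$; there, $\|G_{k_*}\|_{\mathcal{C}^2}=O(1)$ only gives a tangent-plane deviation of order $1$, not $(\log M)^{-1-c}$. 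What is actually needed is $\sup_{z\in\mathcal{B}_{1/8}}|\nabla G_{k-1}(z)-(s,t)|\lesssim(\log M)^{-1-c}$, a near-constancy of the gradient across an order-one disk that a $\mathcal{C}^2$-bound at the current scale cannot supply. The paper establishes this as part (2) of \Cref{estimatesktk} by a backward-chaining argument: for $\ell\approx(\log\log N_k)^2$, restrict $G_{k-\ell}$ (defined on a disk $8^{\ell}$ times larger) to the concentric subdisk of radius $8^{1-\ell}$, where the $\mathcal{C}^2$-bound yields gradient oscillation $\lesssim 8^{-\ell}\log N_{k-\ell}\ll(\log N_k)^{-1-c_0}$, and then telescope $|\nabla G_{j+1}-\nabla G_j|\lesssim\lambda_j$ forward $\ell$ steps at a total additional cost $\lesssim\ell(\log N_k)^{-1-\alpha}$. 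The hypothesis $M\le N\exp(-5(\log\log N)^2)$ exists precisely to guarantee $k\ge 2(\log\log N)^2$ so this look-back stays inside the chain — not, as you suggest, merely to permit the $\lambda_k$'s to reach their floor. Without this mechanism your proposal only gives an $O(1)$ bound in \eqref{mhuhv0estimate}.

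A secondary gap is the base case. \Cref{estimaten2n} requires the boundary data $g$ to be $\varsigma^{10}$-nearly linear. Restricting $\mathcal{H}$ to $\partial\mathcal{B}_{M_0}(v_0)$ with $M_0\asymp N$ gives deviation from linear of order $M_0/N=O(1)$, which is far from $\varsigma^{10}$-nearly linear. The paper (\Cref{nlambdar}, \Cref{g1gamma0}) therefore contracts the first scale all the way to $N_1\asymp\varsigma_0^{21}N$ and takes $G_1$ to be the \emph{exact} tangent plane of $\mathcal{H}$ at $N^{-1}v_0$ — which is trivially linear — while absorbing both the \Cref{heightapproximate} error and the $O(\varsigma_0^{42})$ curvature of $\mathcal{H}$ over $\mathcal{B}_{N_1}(v_0)$ into $\lambda_1=\varsigma_0^{20}$. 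Your proposal leaves this initial normalization unresolved; as written, with $M_0\asymp N$, the first application of \Cref{estimaten2n} is not licensed.
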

	
	If $M \gg 1$, then the first bound in \eqref{gammahnv0estimate} shows that $\Gamma^c$ is an event of high probability.\footnote{However, this is no longer true if $M$ is of constant order, which is consistent with the non-degeneracy of the local statistics around $v_0$ in the large $N$ limit.} Moreover, \eqref{mhuhv0estimate} and the second estimate in \eqref{gammahnv0estimate} together yield 
	\begin{flalign}
	\label{mhuhv0estimate2} 
	\displaystyle\sup_{u \in \mathcal{B}_M (v_0)} \Big|  M^{-1} \big( H (u) - H (v_0) \big) - M^{-1} (u - v_0) \cdot \nabla \mathcal{H} (N^{-1} v_0)   \Big| \textbf{1}_{\Gamma^c} < 2 (\log M)^{- c},
	\end{flalign}
	
	\noindent which indeed indicates that $H$ is approximately linear on $\mathcal{B}_M (v_0)$, with slope $\nabla \mathcal{H} (N^{-1} v_0)$. However, the error in this linear approximation is of order $(\log M)^{-c}$, while the bound \eqref{mhuhv0estimate} shows that there exists a linear approximation (of possibly slightly different slope) with the improved error of $(\log M)^{-1- c}$. This improved bound will be useful for estimating the quantity $\mathfrak{D}$ in \Cref{DEstimate}.

	\subsection{Coupling Tilings With Non-Intersecting Random Path Ensembles}
	
	\label{EnsemblesCompare}

	In this section we state a result, given by \Cref{plpprcouple} below, that locally couples the restriction (denoted by $\textbf{Q}$) of a uniformly random free lozenge tiling $\mathscr{M}$ for a domain $R$ to a neighborhood of some vertex $u_0 \in \mathbb{V}(R)$, with two random non-intersecting path ensembles (denoted by $\textbf{P}$ and $\textbf{R}$) under suitably chosen initial data. To that end, we must first explain how to select these initial data, which will be perturbations of a sequence $\textbf{q} = \textbf{q} (0) \in \mathbb{W}$ given by the following definition. In the below, we will assume for notational convenience that the vertex $u_0$ around which we will eventually couple is equal to $(0, 0)$; this restriction can be removed by translating $R$. 
	
	\begin{definition} 
		
		\label{qunlambda}
		
		Suppose an integer $\ell \in \mathbb{Z}_{> 1}$; a finite domain $R \subseteq \mathbb{T}$ with $[- 8 \ell^4, 8 \ell^4] \times [0, \ell] \subset \mathbb{F}(R)$; and a free tiling $\mathscr{M}$ of $R$ are given. Recalling the set $\mathscr{X} (\mathscr{M})$ from \Cref{xm}, define the integer sequence $\textbf{q} = \textbf{q} (0) = \textbf{q}^{(\mathscr{M}; \ell)} (0)$ by 
		\begin{flalign*}
		\textbf{q} = \big\{ x \in [ - 2 \ell^4, 2 \ell^4] \cap \mathbb{Z}: (x, 0) \notin \mathscr{X} (\mathscr{M}) \big\}. 
		\end{flalign*}
		
		\noindent Stated alternatively, $\textbf{q} = \textbf{q} (0)$ denotes the set of positions $(x, 0) \in [-2 \ell^4, 2 \ell^4] \times \{ 0 \}$ along the $x$-axis such that $\big( x + \frac{1}{2}, 0 \big)$ is on an edge of a type $2$ or type $3$ lozenge in $\mathscr{M}$. After interpreting $\mathscr{M}$ as a non-intersecting path ensemble (through the discussion in \Cref{NonIntersectingCorrelation}), these correspond to the locations where paths of $\mathscr{M}$ intersect the interval $[-2 \ell^4, 2 \ell^4] \times \{ 0 \}$ of the $x$-axis; see \Cref{pathslocal}.
		
		Label the elements of $\textbf{q} (0) = \big( q_{-m} (0), q_{1 - m} (0), \ldots , q_n (0) \big) \in \mathbb{W}$ so that $q_0 (0) \le 0 < q_1 (0)$, and abbreviate $q_j = q_j (0)$ for each $j \in [-m, n]$. Viewing $\mathscr{M}$ as a path ensemble as above, define the non-intersecting path ensemble $\textbf{Q} = \textbf{Q}^{(\mathscr{M}; \ell)} = \big( \textbf{q}_{-m}, \textbf{q}_{1 - m}, \ldots , \textbf{q}_n \big) \in \mathfrak{W}_{\textbf{q}}^{\ell}$ to be the one obtained by restricting the $m + n + 1$ paths in $\mathscr{M}$ passing through $\textbf{q} (0)$ to the strip $\Delta^{\ell} = \mathbb{Z} \times \{ 0, 1, \ldots , \ell \}$. In particular, the initial data for $\textbf{Q}$ is $\textbf{q} = \textbf{q} (0)$. Set $\textbf{q}_j = \big( q_j (0), q_j (1), \ldots , q_j (\ell) \big)$, for each $j \in [-m, n]$, and $\textbf{q} (s) = \big( q_{-m} (s), q_{1 - m} (s) \ldots , q_n (s) \big)$, for each $s \in [0, \ell]$. 
		
	\end{definition}

	\begin{rem} 
		
		\label{hqh}
		
		Let $H_{\textbf{Q}}$ denote the height function associated with $\textbf{Q}$ (as explained in and above \Cref{heightpaths}), such that $H_{\textbf{Q}} (0, 0) = 0$. Then, for any height function $H: \mathbb{V}(R) \rightarrow \mathbb{Z}$ associated with $\mathscr{M}$, we have that $H_{\textbf{Q}} (u) = H(u) - H (0, 0)$ for each vertex $u \in [-2 \ell^4, 2 \ell^4] \times [0, \ell]$.
		
	\end{rem} 

	To continue, it will be useful to define a subset of the Weyl chamber consisting of sequences without long consecutive subsequences or large gaps. 
	
	\begin{definition} 
	 
	\label{zl} 
	
	For any $k \in \mathbb{Z}_{> 1}$, let $\mathcal{Z} (k) \subseteq \mathbb{W}$ denote the set of $\textbf{a} = (a_{-m}, a_{1 - m}, \ldots , a_n) \in \mathbb{W}$ such that $a_{j + k - 1} \ge a_j + k$ for each $j \in [-m, n - k + 1]$ and such that $a_{j + 1} - a_j < k$ for each $j \in [-m, n - 1]$. Equivalently, $\mathcal{Z} (k)$ is the set of $\textbf{a} \in \mathbb{W}$ that do not contain any subsequence of $k$ consecutive integers or any gap of size at least $k$. 
	
	\end{definition}

	Next, we introduce two perturbations $\textbf{p}$ and $\textbf{r}$ of the sequence $\textbf{q}$ from \Cref{qunlambda} that will serve as initial data for the random non-intersecting path ensembles to be coupled with $\mathscr{M}$. Assuming that $\textbf{q} \in \mathcal{Z} (\ell)$, these sequences $\textbf{p}$ and $\textbf{r}$ will coincide with $\textbf{q}$ on the subinterval $[-2 \ell, 2 \ell] \subset [-2 \ell^4, 2 \ell^4]$ and will be determined outside of this subinterval by (mildly) shifting $\textbf{q}$ to the left and right, respectively.

	\begin{definition} 
		
		\label{lrqdefinition}
		
		Adopt the notation and assumptions of \Cref{qunlambda}. If $\textbf{q} \in \mathcal{Z} (\ell)$, then define the sequence $\textbf{p} = \textbf{p} (0) = \textbf{p}^{(\textbf{q}; \ell)} (0) = \big( p_{-m} (0), p_{1 - m} (0), \ldots , p_n (0) \big) \in \mathbb{W}$ as follows. In the below, we abbreviate $p_j = p_j (0)$ for each $j \in [-m, n]$. 
		
		\begin{enumerate} 
			
			\item For each $j \in [-m, n]$ such that $-2 \ell \le q_j \le 2 \ell$, set $p_j = q_j$. 
			
			\item For each $i \in [1, \ell^3)$ and $j \in [-m, n]$ such that $-2 (i + 1) \ell \le q_j < - 2 i \ell$, set $p_j = q_j - i$. 
			
			\item For each $i \in [1, \ell^3)$, let $j (i) \in [1, n]$ denote the smallest integer such that $2 i \ell < q_{j (i)} \le 2 (i + 1) \ell$ and $q_{j (i)} - q_{j (i) - 1} > 1$; such an index $j(i)$ is guaranteed to exist, since $\textbf{q} \in \mathcal{Z} (\ell)$. Also denote $j (\ell^3) = \infty$, and set $p_j = q_j - i$ whenever $j(i) \le j < j (i + 1)$ (and $p_j = q_j$ when $2 \ell < q_j < q_{j(1)}$).

		\end{enumerate} 
		
		\noindent If $\textbf{q} \in \mathcal{Z} (\ell)$, then similarly define $\textbf{r} = \textbf{r} (0) = \textbf{r}^{(\textbf{q}; \ell)} (0) = \big(r_{-m} (0), r_{1 - m} (0), \ldots , r_n (0) \big) \in \mathbb{W}$ as follows. In the below we again abbreviate $r_j = r_j (0)$ for each $j \in [-m, n]$. 
		
		\begin{enumerate} 
			
			\item For each $j \in [-m, n]$ such that $-2 \ell \le q_j \le 2 \ell$, set $r_j = q_j$. 
			
			\item For each $i \in [1, \ell^3)$, let $j (i) \in [-m, 0]$ denote the largest integer such that $- 2 (i + 1) \ell \le q_{j(i)} < - 2 i \ell$ and $q_{j(i) + 1} - q_{j(i)} > 1$; also let $j (0) = -\infty$. Then, set $r_j = q_j + i$ whenever $j(i - 1) < j \le j (i)$ (and $r_j = q_j$ when $q_{j(1)} < q_j < -2 \ell$).
			
			\item For each $i \in [1, \ell^3)$ and $j \in [-m, n]$ such that $2 i \ell < q_j \le 2 (i + 1) \ell$, set $r_j = q_j + i$. 
		\end{enumerate}
	
		\noindent If instead $\textbf{q} \notin \mathcal{Z} (\ell)$, then define $\textbf{p}, \textbf{r} \in \mathbb{W}$ arbitrarily. 
		
		\end{definition} 
			
		Before proceeding, let us briefly take a moment to explain our choices in \Cref{lrqdefinition}. We will eventually wish to couple $\textbf{Q}$ with (suitably chosen) random non-intersecting path ensembles with initial data $\textbf{p}$ and $\textbf{r}$ around $(0, 0)$; see \Cref{plpprcouple} below. This is our reason for imposing that $\textbf{p}$, $\textbf{q}$, and $\textbf{r}$ coincide on $[-2 \ell, 2 \ell]$. 
		
		It will also be useful to further impose that $p_j + \ell \le q_j - \ell < q_j + \ell < r_j - \ell$ when $|j|$ is sufficiently large, as this will guarantee that the $j$-th path in $\textbf{Q}$ is between the $j$-th path in any ensembles in $\mathfrak{W}_{\textbf{p}}^{\ell}$ and $\mathfrak{W}_{\textbf{r}}^{\ell}$; the latter will in turn be useful to produce couplings using the monotonicity result \Cref{monotoneheightcouple}. In order to ensure this while altering the local density of $\textbf{q}$ as little as possible, we stipulated that $p_j = q_j - i$ whenever $- 2 (i + 1) \ell \le q_j < - 2i \ell$ (and similarly for $r_j$). 
			
		However, if we had replaced the third part in the definition of $\textbf{p}$ with the analogous $p_j = q_j - i$ whenever $2 (i + 1) \ell \le q_j < 2 i \ell$, then we might have $p_k = p_{k + 1}$ for some $k \in [-m, n]$ (for example, if $q_k = 2 i \ell$ and $q_{k + 1} = 2 i \ell + 1$). In this case, $\textbf{p} \notin \mathbb{W}$ and would therefore not serve as suitable initial data for a non-intersecting path ensemble. Thus, we should only allow $q_{k + 1} - p_{k + 1} > q_k - p_k$ if $q_{k + 1} > q_k + 1$. This is the reason for introducing the indices $j(i)$ in \Cref{lrqdefinition}, whose existence is guaranteed by imposing that $\textbf{q} \in \mathcal{Z} (\ell)$ (which occurs with high probability, as we will show in \Cref{ait1t2b2}).

	Given \Cref{lrqdefinition}, we next require notation on random non-intersecting path ensembles with initial data $\textbf{p}$ and $\textbf{r}$. 
	
	\begin{definition}
		
	\label{pensemblerdefinition} 
	
	Suppose real numbers $\beta_1, \beta_2 \in (0, 1)$ are given, and adopt the notation and assumptions of \Cref{lrqdefinition}. Let $\textbf{P} = (\textbf{p}_{-m}, \textbf{p}_{1 - m}, \ldots , \textbf{p}_n) \in \mathfrak{W}_{\textbf{p}}^{\ell}$ denote a random non-intersecting path ensemble sampled from the measure $\mathbb{P}_{\beta_1; \textbf{p}}$ from \Cref{betaprobability}. Similarly, let $\textbf{R} = (\textbf{r}_{-m}, \textbf{r}_{1 - m}, \ldots , \textbf{r}_n) \in \mathfrak{W}_{\textbf{r}}^{\ell}$ denote a random ensemble sampled from $\mathbb{P}_{\beta_2; \textbf{r}}$.
	
	For each $j \in [-m, n]$, set $\textbf{p}_j = \big( p_j (0), p_j (1), \ldots , p_j (\ell) \big) \in \mathbb{Z}^{\ell + 1}$ and, for each $s \in [0, \ell]$, set $\textbf{p} (s) = \big( p_{-m} (s), p_{1 - m} (s), \ldots , p_n (s) \big) \in \mathbb{W}$. Define $\textbf{r}_j$ and $\textbf{r} (s)$ similarly. 
	
	\end{definition}

	Now we state the following proposition, which will be established in \Cref{ProofPCouple} below. It exhibits nearly equal $\beta_1, \beta_2 \in (0, 1)$ such that, under the notation of \Cref{regularestimate}, \Cref{qunlambda}, and \Cref{pensemblerdefinition},  it is possible to couple $\textbf{P}$, $\textbf{Q}$, and $\textbf{R}$ so that they coincide on a large region around $(0, 0)$, off of a small probability event. In the below, the ``near equality'' of $\beta_1$ and $\beta_2$ will be quantified by $\delta_N$; the ``small probability'' by $\varsigma_N$; and the ``large region'' by $A_N$. We also recall the notation on disks from \eqref{brzdefinition} and say that $\textbf{P} |_S = \textbf{Q} |_S = \textbf{R} |_S$ for any subset $S \subset \mathbb{Z}^2$ if $p_j = q_j = r_j$ whenever either $(p_j, j) \in S$, $(q_j, j) \in S$, or $(r_j, j) \in S$.

	\begin{prop}
		
		\label{plpprcouple}
		
		For any fixed real number $\varepsilon \in \big( 0, \frac{1}{4} \big)$, there exist two sequences $\delta = \delta (\varepsilon) = (\delta_1, \delta_2, \ldots ) \subset \mathbb{R}_{> 0}$ and $\varsigma = \varsigma (\varepsilon) = (\varsigma_1, \varsigma_2, \ldots ) \subset \mathbb{R}_{> 0}$ of real numbers tending to $0$, and a sequence $A = A(\varepsilon) = (A_1, A_2, \ldots ) \subset \mathbb{Z}_{\ge 1}$ of integers tending to $\infty$, such that the following holds. Adopt the notation of \Cref{regularestimate}, suppose that $v_0 \in \mathcal{B}_{\varepsilon N / 2}$, let $\ell = \lfloor N^{1 / 9} \rfloor$, and set 
		\begin{flalign}
		\label{stbeta12}
		(s, t) = \nabla \mathcal{H} (0, 0); \qquad \beta = \displaystyle\frac{\sin (\pi t)}{\sin (\pi t) + \sin \big( \pi (1 - s - t) \big)}; \qquad \beta_1 = \beta - \delta_N; \qquad \beta_2 = \beta + \delta_N. 
		\end{flalign} 
		
		\noindent Further adopt the notation of \Cref{qunlambda} and \Cref{pensemblerdefinition}, and set $\mathscr{A} = [-A_N, A_N] \times [0, A_N] \subset \mathbb{Z}^2$. Then, there exists a mutual coupling of $\textbf{\emph{P}}$, $\textbf{\emph{Q}}$, and $\textbf{\emph{R}}$ on a common probability space such that $\mathbb{P} \big[ \textbf{\emph{P}} |_{\mathscr{A}} = \textbf{\emph{Q}} |_{\mathscr{A}} = \textbf{\emph{R}} |_{\mathscr{A}} \big] \ge 1 - \varsigma_N$.
		
	\end{prop}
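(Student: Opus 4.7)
The plan is to carry out the coupling in three steps: construct a monotone coupling of $(\textbf{P}, \textbf{Q}, \textbf{R})$ with pointwise ordered paths; apply the local law \Cref{heightlocal1} both to control the drift of paths in $\textbf{Q}$ (needed for the monotone coupling) and to verify the regularity hypotheses of \Cref{limitdeterminantnonintersecting} for the perturbations $\textbf{p}, \textbf{r}$; and apply the latter together with the determinantal identity \Cref{determinantnonintersecting} to bound the expected height-function discrepancy of $\textbf{P}$ and $\textbf{R}$ on $\mathscr{A}$, concluding via a Markov estimate.

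For the monotone coupling, first apply \Cref{heightlocal1} at scale $M = \ell$ to deduce $\textbf{q} \in \mathcal{Z}(\ell)$ with high probability, so that $\textbf{p}, \textbf{r}$ from \Cref{lrqdefinition} are well-defined, and to verify that in a mesoscopic neighborhood of the origin the effective drift of paths in $\textbf{Q}$ (which is not itself an $\mathbb{P}_{\beta;\textbf{a}}$ ensemble but arises from the uniform tiling) is squeezed between those of $\textbf{P}$ and $\textbf{R}$. On a large finite box $[-L, L] \times [0, \ell]$, use monotonicity of $\mathbb{P}_{\beta;\textbf{a}}$ in $(\beta, \textbf{a})$ together with \Cref{monotoneheightcouple}, which applies since the conditional measures of $\textbf{P}, \textbf{Q}, \textbf{R}$ given their respective boundaries on the box are uniform (by \Cref{prprobability}), to construct a coupling of $(\textbf{P}, \textbf{Q}, \textbf{R})$ in which $p_j(s) \leq q_j(s) \leq r_j(s)$ for all $(j, s)$; passing $L \to \infty$ gives the coupling on the full strip.

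To verify the regularity of $\textbf{p}, \textbf{r}$ for \Cref{limitdeterminantnonintersecting}, apply \Cref{heightlocal1} at dyadic scales $M$ satisfying $\ell \ll M \ll N$. The near-linearity estimate of the local law, combined with the bijection between height functions and path ensembles (\Cref{heightpaths}), translates to the statement that the count of $\textbf{q}(0)$ in any interval $I \subset [-V, V]$ of length $U$ equals $(1 - s)|I|$ up to error of order $|I|(\log |I|)^{-c}$; here $U$ and $V$ may be chosen as slowly growing sequences of $N$ respecting the scale separation $\kappa_N^{-1} < U < \kappa_N \ell < \kappa_N^2 V < \kappa_N^3 N$ required by \Cref{limitdeterminantnonintersecting}, verifying hypothesis (2). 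Hypothesis (1), bounding the tail sum $\mathfrak{D}(\textbf{p}; X, \infty)$, follows from a telescoping summation of the local law over dyadic annuli around the origin. Since $\textbf{p}, \textbf{r}$ differ from $\textbf{q}$ by shifts of magnitude at most $\ell^3 \ll U$, these estimates transfer to them unchanged.

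Finally, invoke \Cref{limitdeterminantnonintersecting} for the data $(\beta_1, \textbf{p})$ and $(\beta_2, \textbf{r})$: since $\rho = 1 - s$ and the proposition's choice of $\beta$ make the complex slope $\xi = \beta(1-\beta)^{-1} e^{\pi \textbf{i}(1 - \rho)}$ coincide for both, both kernels $K_{\beta_1;\textbf{p}}$ and $K_{\beta_2;\textbf{r}}$ converge uniformly on $[-A_N, A_N] \times [0, A_N]$ to the same limit $\mathcal{K}_\xi$ at rate $\vartheta_N$. Combined with \Cref{determinantnonintersecting} and the telescoping identity $\mathbb{E}[H(x_2, s) - H(x_1, s)] = \sum_{x' = x_1}^{x_2 - 1} K(x', s; x', s)$ (with an analogous two-time expression for vertical increments), this yields $\mathbb{E}[H_\textbf{R}(u) - H_\textbf{P}(u)] = O(A_N(\vartheta_N + \delta_N))$ uniformly for $u \in \mathscr{A}$. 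Since $H_\textbf{R} - H_\textbf{P}$ is a non-negative integer under the coupling, Markov's inequality and a union bound over $\mathscr{A}$ (of cardinality $O(A_N^2)$) yield $\mathbb{P}[\textbf{P}|_{\mathscr{A}} \neq \textbf{R}|_{\mathscr{A}}] \leq C A_N^3(\vartheta_N + \delta_N)$; choosing $A_N \to \infty$ slowly enough gives the claimed $\varsigma_N \to 0$, and the sandwich $\textbf{P} \preceq \textbf{Q} \preceq \textbf{R}$ forces $\textbf{Q}|_{\mathscr{A}}$ to coincide as well. The main obstacle will be transferring the $(\log M)^{-c}$ precision of \Cref{heightlocal1} through the delicate scale separations required by \Cref{limitdeterminantnonintersecting}, particularly the telescoping control of $\mathfrak{D}$.
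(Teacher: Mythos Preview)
Your overall architecture---ordered coupling, regularity verification via the local law, expectation bound, Markov---matches the paper's. The monotone coupling step is essentially right in spirit, though the paper does not invoke any abstract ``monotonicity of $\mathbb{P}_{\beta;\textbf{a}}$ in $(\beta,\textbf{a})$''; instead it establishes the ordering $H_{\textbf{P}}\le H_{\textbf{Q}}\le H_{\textbf{R}}$ on the boundary of a finite box $[-\ell^3,\ell^3]\times[0,\ell]$ by combining the local law (for the drift of $\textbf{Q}$) with \Cref{limitdeterminantnonintersecting} (for the drifts of $\textbf{P},\textbf{R}$), and then applies \Cref{monotoneheightcouple} once on that box.

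The genuine gap is in your third step. \Cref{limitdeterminantnonintersecting} controls $K_{\beta;\textbf{a}}(x,t+T;y,s+T)$ only when the time shift $T$ satisfies the scale separation $\kappa_N^{-1}<U<\kappa_N T$; in particular $T$ must tend to infinity. It says nothing about the kernel at times in $[0,A_N]$ when those times are small. But the expected height difference you need,
\[
\mathbb{E}\big[H_{\textbf{R}}(x,y)-H_{\textbf{P}}(x,y)\big]=\sum_{T=0}^{y-1}\Big(\mathbb{P}\big[(x,T)\in\mathscr{Y}(\textbf{R})\big]-\mathbb{P}\big[(x,T)\in\mathscr{Y}(\textbf{P})\big]\Big),
\]
involves kernel values at \emph{every} time $T\in[0,y-1]$, including $T=0,1,\ldots$ where \Cref{limitdeterminantnonintersecting} is inapplicable (indeed at $T=0$ the kernel is deterministic and equals $\textbf{1}_{x\notin\textbf{a}}$, certainly not $\mathcal{K}_\xi$). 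Also note that since $\beta_1\neq\beta_2$, the two limiting slopes $\xi_i=\beta_i(1-\beta_i)^{-1}e^{\pi\textbf{i}s}$ do not literally coincide; they only agree up to $O(\delta_N)$.

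The paper fills this gap with a separate, direct perturbative estimate (\Cref{continuityk}): by manipulating the contour integral \eqref{kxtys} itself, it shows $\big|K_{\beta;\textbf{a}}(x,t;y,s)-K_{\beta';\textbf{a}'}(x,t;y,s)\big|\le (CB)^{CB}\big((\log\ell)^{-c/CB}+\delta^{1/CB}\big)$ uniformly for \emph{all} $|x|,|y|,t,s\le B$, using only that $\textbf{a}=\textbf{a}'$ on $[-\ell,\ell]$, that $|\beta-\beta'|\le\delta$, and the $\mathfrak{D}$-bounds. This estimate, not \Cref{limitdeterminantnonintersecting}, is what drives the expectation bound (\Cref{pqpqinitialexpectation}); the resulting error then forces $A_N$ to grow extremely slowly (of order $\log\log\log N$), but that suffices.
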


	We refer to \Cref{paths2} for a depiction.

	\subsection{Bounding \texorpdfstring{$\mathfrak{D}$}{}}
	
	\label{DEstimate} 
	
	To establish \Cref{localconverge}, we wish to show that the local statistics of $\textbf{Q}$ (from \Cref{qunlambda}) are governed by the discrete sine kernel, as $N$ tends to $\infty$. Due to the coupling provided by \Cref{plpprcouple}, it suffices to instead establish that those of $\textbf{P}$ (or $\textbf{R}$) are. To do this, we will apply \Cref{limitdeterminantnonintersecting} to the pairs $(\textbf{a}, \beta) \in \big\{ (\textbf{p}, \beta_1), (\textbf{r}, \beta_2) \big\}$, to which end we must confirm that $\textbf{p}$ and $\textbf{r}$ satisfy the two assumptions listed in that lemma. In this section we implement this task, by establishing \Cref{ait1t2b2} below (which will also be used to prove \Cref{plpprcouple} in \Cref{ProofHHlHr}). 
	
	To verify the first assumption stated in \Cref{limitdeterminantnonintersecting}, we begin with the following lemma that provides a general condition on an integer sequence $\textbf{w} \in \mathbb{W}$ under which $\mathfrak{D} (\textbf{w}; X, Y)$ is small; this will later be applied when $\textbf{w}$ is (possibly a shift of) $\textbf{p}$ or $\textbf{r}$. In the below, we recall from \Cref{NonIntersectingCorrelation} that associated with any non-intersecting path ensemble is a tiling and therefore a height function, which is unique except for a global shift. Thus, we may associate with any integer sequence $\textbf{w} = (w_{-m}, w_{1 - m}, \ldots , w_n) \in \mathbb{W}$ a height function $h: \mathbb{Z} \rightarrow \mathbb{Z}$, which is determined by imposing that $h(0) = 0$ and
	\begin{flalign}
	\label{hxhyw} 
	h(y) - h(x) = y - x - \big| \textbf{w} \cap [x, y) \big|, \qquad \text{for any $x, y \in \mathbb{Z}$ such that $x \le y$}.
	\end{flalign}
	
	\noindent By \Cref{heightpaths}, this is the restriction to the $x$-axis of a height function for the tiling associated with any path ensemble with initial data $\textbf{w}$. 
	
	\begin{lem} 
		
		\label{at1t2b} 
		
		Fix real numbers $\varepsilon, c \in \big( 0, \frac{1}{2} \big)$, $C > 1$, and $2 \le X < Y$, as well as an integer sequence $\textbf{\emph{w}} = (w_{-m}, w_{1 - m}, \ldots , w_n) \in \mathbb{W}$. Let $h_{\textbf{\emph{w}}}: \mathbb{Z} \rightarrow \mathbb{Z}$ denote the height function associated with $\textbf{\emph{w}}$ satisfying \eqref{hxhyw} and $h(0) = 0$. Suppose that, for each $k \in \mathbb{Z}_{\ge 1}$ with $X \le 2^k \le Y$, there exists some $\rho_k \in (\varepsilon, 1 - \varepsilon)$ such that 
		\begin{flalign}
		\label{hwyhwx} 
		\big| h_{\textbf{\emph{w}}} (y) - h_{\textbf{\emph{w}}} (x) - \rho_k (y - x) \big| < C k^{-1 - c} 2^k,
		\end{flalign} 
		
		\noindent for any $x, y \in [-2^{k + 1}, 2^{k + 1}] \cap [-Y, Y] \cap \mathbb{Z}$. Then, $\mathfrak{D} (\textbf{\emph{w}}; X, Y) < 16 C c^{-1} \varepsilon^{-1} (\log X)^{-c}$.

	\end{lem}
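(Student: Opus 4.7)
The plan is to partition the sum $\mathfrak{D}(\textbf{w}; X, Y) = \bigl|\sum_{|w_j| \in [X, Y]} w_j^{-1}\bigr|$ along dyadic annuli $[-2^{k+1}, -2^k] \cup [2^k, 2^{k+1})$ and exploit the fact that the hypothesis \eqref{hwyhwx} forces $\textbf{w}$ to have the \emph{same} local density $1 - \rho_k$ on both sides of the origin within each annulus. Accordingly, the leading-order positive and negative contributions $(1-\rho_k)\int z^{-1}\, dz$ cancel identically, and the entire bound reduces to controlling the symmetric deviation of the counting measure of $\textbf{w}$ from this uniform density.

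Concretely, for each admissible $k$ with $X \le 2^k \le Y$, set
\[S_k := \sum_{w_j \in [2^k, 2^{k+1})} \frac{1}{w_j} + \sum_{w_j \in (-2^{k+1}, -2^k]} \frac{1}{w_j}, \quad G_k(z) := \bigl|\textbf{w} \cap [2^k, z)\bigr| - \bigl|\textbf{w} \cap (-z, -2^k]\bigr|.\]
Applying \eqref{hwyhwx} to the two pairs $(x,y) = (2^k, z)$ and $(x,y) = (-z, -2^k)$ and subtracting, the common $\rho_k (z - 2^k)$-term cancels and one obtains $|G_k(z)| \le 2 C k^{-1-c} 2^k$ uniformly in $z \in [2^k, 2^{k+1}]$. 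Writing $S_k = \int_{2^k}^{2^{k+1}} z^{-1}\, dG_k(z)$ as a Stieltjes integral and integrating by parts gives
\[S_k = \frac{G_k(2^{k+1})}{2^{k+1}} + \int_{2^k}^{2^{k+1}} \frac{G_k(z)}{z^{2}}\, dz,\]
each term of which is of order $C k^{-1-c}$ via the uniform estimate on $G_k$ together with $\int_{2^k}^{2^{k+1}} z^{-2}\, dz = 2^{-k-1}$.

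Setting $k_0 := \lceil \log_2 X \rceil$ and summing the geometric series $\sum_{k \ge k_0} |S_k| \le C' \sum_{k \ge k_0} k^{-1-c} \le C' c^{-1} k_0^{-c}$ then recovers the stated bound of order $C c^{-1} (\log X)^{-c}$; the additional $\varepsilon^{-1}$ in the claimed constant enters only as a safe multiplicative factor when tracking the constants produced by the boundary contributions.

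The main obstacle is the technical bookkeeping at the two truncated annuli near $X$ and $Y$: these do not form complete dyadic blocks, so one must verify that the symmetric cancellation survives on the partial intervals $[X, 2^{k_0})$ and $(2^{k_1}, Y]$ with $k_1 := \lfloor \log_2 Y \rfloor$. This is addressed by applying \eqref{hwyhwx} to the pairs $(X, 2^{k_0})$ and $(-2^{k_0}, -X)$ (and analogously near $Y$), which yields the same signed-count control and a boundary contribution of order $Ck_0^{-1-c}$ by the identical integration-by-parts argument on the truncated interval. Once these boundary terms are in place the argument is essentially algebraic, and the assumption $\rho_k \in (\varepsilon, 1 - \varepsilon)$ plays no role in the heart of the cancellation beyond ensuring that a density $\rho_k$ is well-defined on each scale.
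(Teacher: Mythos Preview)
Your approach is correct and takes a genuinely different route from the paper's. The paper argues by \emph{index matching}: it enumerates $\textbf{w}\cap[U,V]=(y_1,\dots,y_b)$ and $\textbf{w}\cap[-V,-U]=(-x_a,\dots,-x_1)$, then uses the implicit relations $x_j = U - h_{\textbf{w}}(-x_j)+h_{\textbf{w}}(-U)+j$ and $y_j = U + h_{\textbf{w}}(y_j)-h_{\textbf{w}}(U)+j$ together with \eqref{hwyhwx} to deduce $x_{i-\mathscr{K}}\le y_i\le x_{i+\mathscr{K}}$ with $\mathscr{K}=\lceil C\varepsilon^{-1}k^{-1-c}2^{k+1}\rceil$. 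The sum $\sum(y_i^{-1}-x_{i-\mathscr{K}}^{-1})$ then telescopes up to $\mathscr{K}$ leftover terms, each of size $\le 2^{-k}$, yielding $4C\varepsilon^{-1}k^{-1-c}$ per block. The density bound $\rho_k\in(\varepsilon,1-\varepsilon)$ is genuinely used here, since converting a height-function discrepancy of size $Ck^{-1-c}2^k$ into an index shift costs a factor of the reciprocal density, whence the $\varepsilon^{-1}$.

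Your integration-by-parts argument with the signed counting function $G_k$ sidesteps this entirely: because you subtract the two applications of \eqref{hwyhwx} at the level of the height function rather than at the level of indices, the $\rho_k$-terms cancel exactly and no density lower bound is needed. This yields $|S_k|\le 2Ck^{-1-c}$ directly, so your final constant is sharper by the factor $\varepsilon^{-1}$, and you are right that in the stated bound this factor is slack. Your treatment of the truncated annuli near $X$ and $Y$ is also adequate: the same $G_k$-bound holds on $[X,2^{k_0})$ and $(2^{k_1},Y]$ since the relevant endpoints lie in $[-2^{k+1},2^{k+1}]\cap[-Y,Y]$, and the boundary contribution is $O(Ck_0^{-1-c})$, absorbed in the sum. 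The only cosmetic point is that $G_k$ is integer-valued and piecewise constant, so the $\pm 1$ endpoint ambiguities in $|\textbf{w}\cap(-z,-2^k]|$ versus $|\textbf{w}\cap[-z,-2^k)|$ should be acknowledged; they are dominated by $Ck^{-1-c}2^k\ge C$ and do not affect the outcome.
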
 
	
	\begin{proof} 
		
		Fix $k \in \mathbb{Z}_{\ge 1}$ with $X \le 2^k \le Y$, and set $U, V \in \mathbb{Z}$ by $U = \max \{ 2^{k-1}, X \}$ and $V = \min \{ 2^{k + 1}, Y \}$; let us estimate
		\begin{flalign*}
		\mathfrak{D} (\textbf{w}; U, V) = \left| \displaystyle\sum_{-V \le w_j \le -U} \displaystyle\frac{1}{w_j} + \displaystyle\sum_{U \le w_j \le V} \displaystyle\frac{1}{w_j} \right|.
		\end{flalign*}
		
		To that end, set $\textbf{w} \cap [-V, -U] = (-x_a, -x_{a - 1}, \ldots , -x_1) \in \mathbb{W}_a$ and $\textbf{w} \cap [U, V] = (y_1, y_2, \ldots , y_b) \in \mathbb{W}_b$, for some $a, b \in \mathbb{Z}_{\ge 0}$. For notational convenience, also set $x_i = \infty$ if $i > a$ and $y_i = \infty$ if $i > b$. Then observe from \eqref{hxhyw} that $x_j = U - h_{\textbf{w}} (-x_j) + h_{\textbf{w}} (-U) + j$ for any $j \in [1, a]$, and $y_j = U + h_{\textbf{w}} (y_j) - h_{\textbf{w}} (U) + j$ for any $j \in [1, b]$. 
		
		Denote $\mathscr{K} = \lceil C \varepsilon^{-1} k^{-1 -c} 2^{k + 1} \rceil$. Then since $\rho_k \in (\varepsilon, 1 - \varepsilon)$ and $\big| h_{\textbf{w}} (y) - h_{\textbf{w}} (x) - \rho_k (y - x) \big| < \frac{\varepsilon \mathscr{K}}{2}$ for any integers $x, y \in [-V, V]$ (by \eqref{hwyhwx}), the above two identities for $x_j$ and $y_j$ imply that $x_{i - \mathscr{K}} \le y_i \le x_{i + \mathscr{K}}$ for each $i \in \mathbb{Z}_{> \mathscr{K}}$. Thus, 
		\begin{flalign}
		\label{vqju1}
		\displaystyle\sum_{-V \le w_j \le -U} \displaystyle\frac{1}{w_j} + \displaystyle\sum_{U \le w_j \le V} \displaystyle\frac{1}{w_j} = \displaystyle\sum_{i = \mathscr{K} + 1}^{\infty} \left( \displaystyle\frac{1}{y_i} - \displaystyle\frac{1}{x_{i - \mathscr{K}}}  \right) + \displaystyle\sum_{i = 1}^{\mathscr{K}} \displaystyle\frac{1}{y_i} \le \mathscr{K} 2^{1-k} \le \displaystyle\frac{8C}{\varepsilon k^{1 + c}},  
		\end{flalign}
		
		\noindent where to deduce the second statement in \eqref{vqju1} we used the bound $y_i \ge 2^k$. By similar reasoning,
		\begin{flalign}
		\label{vqju2}
		\displaystyle\sum_{-V \le w_j \le -U} \displaystyle\frac{1}{w_j} + \displaystyle\sum_{U \le w_j \le V} \displaystyle\frac{1}{w_j} \ge - \displaystyle\frac{8C}{\varepsilon k^{1 + c}}.
		\end{flalign}
		
		\noindent Now let $k_0 \in \mathbb{Z}_{\ge 1}$ denote the minimal integer such that $2^{k_0} > \frac{X}{2}$. Then, summing \eqref{vqju1} and \eqref{vqju2} over $k$ yields 	
		\begin{flalign*}
		\mathfrak{D} (\textbf{w}; X, Y) = \left| \displaystyle\sum_{X \le |w_j| \le Y} \displaystyle\frac{1}{w_j} \right| \le 4 \varepsilon^{-1} C \displaystyle\sum_{k = k_0}^{\infty} k^{- 1 - c} \le \displaystyle\frac{16 C}{\varepsilon c k_0^c} \le \displaystyle\frac{32 C}{c \varepsilon (\log X)^c}, 
		\end{flalign*}
		
		\noindent from which we deduce the lemma. 
	\end{proof}

	Using \Cref{at1t2b} and the local law \Cref{heightlocal1}, we can now establish the following corollary that verifies with high probability that $\textbf{w} \in \mathcal{Z} (\ell)$ (recall \Cref{zl}) and that it satisfies the two assumptions listed in \Cref{limitdeterminantnonintersecting}, if $\textbf{w}$ is a shift of one of the sequences $\textbf{p}$ or $\textbf{r}$ from \Cref{plpprcouple} (it will be useful to allow such shifts for the proof of \Cref{x1x2x} in \Cref{ProofCoupling}). The third condition stated in this corollary does not appear as an assumption in \Cref{limitdeterminantnonintersecting} but will be useful for \Cref{continuityk} below. In what follows we recall for any $k \in \mathbb{Z}$ and $\textbf{a} = (a_{-m}, a_{1 - m}, \ldots , a_n) \in \mathbb{Z}^{m + n + 1}$ that $\textbf{a} + k = (a_{-m} + k, a_{1 - m} + k, \ldots , a_n + k)$.

		\begin{cor}
			
			\label{ait1t2b2}

			Fix real numbers $\varepsilon \in \big( 0, \frac{1}{4} \big)$ and $D > 0$; adopt the notation of \Cref{regularestimate}; suppose that $v_0 \in \mathcal{B}_{\varepsilon N / 2}$; and set $\ell = \lfloor N^{1 / 9} \rfloor$. Further adopt the notation of \Cref{qunlambda} and \Cref{lrqdefinition}; set $(s, t) = \nabla \mathcal{H} (0, 0) \in \mathcal{T}_{\varepsilon}$; and fix a real number $A \in [2, \ell]$. 
			
			Then, there exist a constant $C = C(\varepsilon, D) > 1$ and an event $\Gamma = \Gamma_A$ with $\mathbb{P} [\Gamma] \le C A^{-D}$ such that the following holds. Set $c = \frac{1}{20000}$; let $k \in [-2 A^3, 2 A^3]\cap \mathbb{Z}$; and let $\textbf{\emph{w}} = (w_{-m}, w_{1 - m}, \ldots , w_n) \in \mathbb{W}_{m + n + 1}$ denote either the sequence $\textbf{\emph{p}} + k$ or $\textbf{\emph{r}} + k$. Then, on the event $\Gamma^c$, we have $\textbf{\emph{q}} \in \mathcal{Z} (\ell)$ and the following three bounds. 
			
			\begin{enumerate}  
				
				\item For any real number $X \ge A^{1 / 2}$, we have that $\mathfrak{D} (\textbf{\emph{w}}; X, \infty) < 384 c^{-1} \varepsilon^{-1} (\log X)^{-c}$.
				
				\item For each interval $I \subset [-2 A^3, 2 A^3]$ of length $\lfloor A^{1 / 2} \rfloor$, we have that 
				\begin{flalign*}
				\big( 1 - s - 5 (\log A)^{-c} \big) A^{1 / 2} \le \big| \textbf{\emph{w}} \cap I \big| \le (1 - s + 5 (\log A)^{-c} \big) A^{1 / 2}.
				\end{flalign*} 
				
				\item Assume $k = 0$. Then for each integer $j \in [A^{1 / 2}, n]$, we have that $\big( 1 + \frac{\varepsilon}{6} \big) j \le w_j \le \frac{6j}{\varepsilon}$ and, for each integer $j \in [-m, -A^{1 / 2}]$, we have that $\frac{6j}{\varepsilon} \le w_j \le \big( 1 + \frac{\varepsilon}{6} \big) j$. 
				
			\end{enumerate} 
			
		\end{cor}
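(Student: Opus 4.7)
The proof combines the local law \Cref{heightlocal1}, applied at a grid of centers along the $x$-axis and at a dyadic family of scales, with a direct combinatorial analysis of the piecewise shifts from \Cref{lrqdefinition}. Throughout, I may assume $A$ exceeds a sufficiently large constant depending on $(\varepsilon, D)$; otherwise the bound $\mathbb{P}[\Gamma] \le C A^{-D}$ is trivial upon taking $\Gamma = \Omega$ and $C$ large. Set $B = \lfloor A^{1/2} \rfloor$ and $J = \lceil 4 A^{5/2} \rceil$. Since $A \le \ell = \lfloor N^{1/9} \rfloor$, the grid points $v_0^{(j)} := (jB, 0)$ with $|j| \le J$ lie in $\mathcal{B}_{4 N^{1/3}}$, well within $\mathcal{B}_{\varepsilon N/4}$; the $\mathcal{C}^2$ smoothness of $\mathcal{H}$ near the origin (from \Cref{perturbationboundary}, rescaled as in \Cref{estimatehrho}) ensures that $\nabla\mathcal{H}$ stays in $\mathcal{T}_{\varepsilon/2}$ on a small neighborhood of $(0,0)$, so each $v_0^{(j)}$ satisfies the hypotheses of \Cref{regularestimate} with effective parameter $\varepsilon/4$.

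I apply \Cref{heightlocal1}, with its exponent chosen as $2D + 6$, at each $v_0^{(j)}$ at scale $M = B$, and at the origin at each dyadic scale $M = 2^{k+2}$ with $B \le 2^k \le 8\ell^4$. A union bound produces an event $\Gamma$ of probability at most $C A^{-D}$ outside which all of these approximations hold simultaneously. By \Cref{hqh} and \Cref{heightpaths}, $|\textbf{q} \cap [a, b)| = (b - a) - (H(b, 0) - H(a, 0))$, so the local-law estimate transfers to a density estimate for $\textbf{q}$: for any interval $I = [a, b) \subset [-2 A^3, 2 A^3]$ of length $A^{1/2}$, choosing a grid center $v_0^{(j^\ast)}$ with $I \subset \mathcal{B}_B(v_0^{(j^\ast)})$ yields $\bigl| |\textbf{q} \cap I| - (1 - s) A^{1/2} \bigr| < 5 A^{1/2} (\log A)^{-c}$, where the slope drift $|s_{j^\ast} - s|$ is controlled by the continuity of $\nabla\mathcal{H}$ together with \eqref{gammahnv0estimate}. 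The same reasoning at scale $\ell$ shows $\textbf{q} \in \mathcal{Z}(\ell)$, since a density in $(\varepsilon, 1-\varepsilon)$ precludes both $\ell$-gaps and runs of $\ell$ consecutive integers.

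To transfer from $\textbf{q}$ to $\textbf{w} = \textbf{p} + k$ or $\textbf{w} = \textbf{r} + k$, I use the structure of \Cref{lrqdefinition}: the maps $\textbf{q} \mapsto \textbf{p}$ and $\textbf{q} \mapsto \textbf{r}$ are piecewise-constant translations, whose pieces are intervals of length at least $2\ell$ and whose shifts are bounded by $|q|/(2\ell) + 1$. Since any interval $I$ of length $A^{1/2} \le \ell$ meets at most one piece boundary, $|\textbf{p} \cap I|$ equals a sum of $|\textbf{q} \cap I'_\alpha|$ over at most two disjoint intervals $I'_\alpha \subset [-4 A^3, 4 A^3]$ with $\sum_\alpha |I'_\alpha| = A^{1/2}$; applying the $\textbf{q}$-density estimate to each $I'_\alpha$ (and handling $|I'_\alpha| < C_1$ trivially via $|\textbf{q} \cap I'_\alpha| \le |I'_\alpha|$) establishes bound (2) for $\textbf{w}$. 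The shift by $k$ merely translates the counting interval into $[-4 A^3, 4 A^3]$, which is the reason the union bound extends to scale $\sim 8\ell^4$.

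For bound (1), I invoke \Cref{at1t2b} with $Y = 3 \ell^4$ (valid since $\textbf{w} \subset [-3 \ell^4, 3 \ell^4]$ and hence $\mathfrak{D}(\textbf{w}; X, \infty) = \mathfrak{D}(\textbf{w}; X, Y)$); the density estimates just established verify \eqref{hwyhwx} with $\rho_k = s$ and an absolute constant, giving the desired bound with numerical constant $\le 384 c^{-1} \varepsilon^{-1}$ after adjusting $\varepsilon$ down by a fixed factor. For item (3), the density estimate implies that $w_j$ is within $O(A^{1/2} (\log A)^{-c})$ of $j/(1 - s)$; since $(s, t) \in \mathcal{T}_\varepsilon$ forces $1 - s \in (2\varepsilon, 1 - \varepsilon)$, elementary algebra yields $(1 + \varepsilon/6) j \le w_j \le 6 j / \varepsilon$ for $A$ sufficiently large, the extra shifts in $\textbf{p}, \textbf{r}$ being absorbed since they are bounded by $|q_j|/(2\ell) + 1$. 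The main obstacle is the bookkeeping in the transfer step, which exploits the specific piecewise structure of \Cref{lrqdefinition} and uses $\textbf{q} \in \mathcal{Z}(\ell)$ to ensure the bucket indices $j(i)$ are well-defined; once this is in place, the remainder reduces to a careful union bound over scales and translates of the local law.
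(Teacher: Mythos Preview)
Your overall strategy parallels the paper's, but there is a gap in your verification of bound (1). You assert that ``the density estimates just established verify \eqref{hwyhwx} with $\rho_k = s$ and an absolute constant.'' This fails. The local law \Cref{heightlocal1} gives the error $(\log M)^{-1-c}$ in \eqref{mhuhv0estimate} only for the \emph{random, scale-dependent} slope $\big(s(v;M), t(v;M)\big)$; once you replace that by the fixed global slope $(s,t)$, the second estimate in \eqref{gammahnv0estimate} degrades the error to $(\log M)^{-c}$, as in \eqref{mhuhv0estimate2}. Hence with $\rho_k = s$ the left side of \eqref{hwyhwx} is bounded only by a constant times $k^{-c}\,2^k$, not $k^{-1-c}\,2^k$, and \Cref{at1t2b} cannot be applied with a bounded constant $C$. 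The whole point of the improved exponent in \eqref{mhuhv0estimate} (flagged explicitly after \eqref{mhuhv0estimate2}) is to make this step go through.

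A related issue is that your union of local-law applications is too sparse even if you switch to scale-dependent $\rho_k$. To verify \eqref{hwyhwx} for $\textbf{w} = \textbf{p} + k$ at dyadic level $2^j$ you need approximate linearity of $h_{\textbf{p}}$ on an interval of length $\sim 2^{j+1}$ centered near $-k$. When $|k| \sim A^3$ and $A^{1/2} \le 2^j \ll A^3$, neither your origin-centered dyadic applications (wrong center) nor your grid applications at the single scale $B$ (wrong scale) cover this. The paper resolves both problems at once by applying \Cref{heightlocal1} at \emph{every} center $v \in \mathcal{B}_{2A^3} \cap \mathbb{T}$ and \emph{every} scale $M \in [A^{1/2}, 6\ell^4]$, yielding the estimate \eqref{hlvu0estimate} with the center-and-scale-dependent slope $s(\widetilde{u}; M)$ and error $2(\log M)^{-1-c}$; that is exactly the input \Cref{at1t2b} requires. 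Your treatment of bounds (2), (3), and of $\textbf{q} \in \mathcal{Z}(\ell)$ is essentially correct and matches the paper.
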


	\begin{proof}
		
		Let us begin by defining the event $\Gamma$. To that end, fix a vertex $v \in \mathcal{B}_{\varepsilon N / 2} \cap \mathbb{T} \subset \mathbb{F}(R) \cap \mathcal{B}_{\varepsilon N / 2} (v_0)$ and an integer $M \in \big[ A^{1 / 2}, 6 \ell^4]$. Then \Cref{heightlocal1} (applied with the $v_0$ and $\varepsilon$ there equal to $v$ and $\frac{\varepsilon}{2}$ here, respectively) yields a constant $C = C (\varepsilon, D) > 1$; an event $\Gamma (v; M)$; and a pair $\big( s(v; M), t(v; M) \big) \in \mathcal{T}_{\varepsilon / 4}$ such that we have the three estimates
		\begin{flalign}
		\label{gammau0mstst}
		\begin{aligned}
		& \quad \mathbb{P} \big[ \Gamma (v; M) \big] \le C M^{-15 D}; \qquad \Big| \big( s (v; M), t (v; M) \big) - \nabla \mathcal{H} (N^{-1} v) \Big| \textbf{1}_{\Gamma (v; M)^c} < (\log M)^{-c}; \\
		& \displaystyle\sup_{u \in \mathcal{B}_M (v)} \Big| M^{-1} \big( H(u) - H(v) \big) - M^{-1} (u - v) \cdot \big( s(v; M), t (v; M) \big) \Big| \textbf{1}_{\Gamma (v; M)^c} < (\log M)^{-1 - c}.
		\end{aligned}
		\end{flalign}
		
		\noindent Then, set 
		\begin{flalign*} 
		\Gamma = \Gamma_A = \bigcup_{M = \lfloor A^{1 / 2} \rfloor}^{6 \ell^4} \bigcup_{v \in \mathcal{B}_{2 A^3} \cap \mathbb{T}} \Gamma (v; M), 
		\end{flalign*} 
	
		\noindent so that the first estimate in \eqref{gammau0mstst} and a union bound (using the fact that $|\mathcal{B}_{2A^3} \cap \mathbb{T} | \le 10 (2A^3)^2 = 40A^6$) together imply that 
		\begin{flalign*} 
		 \mathbb{P} [\Gamma] \le 40 A^6 \cdot C \displaystyle\sum_{M = A^{1/2}}^{\infty} M^{-15D} \le 40C A^6 \cdot A^{(1-15D)/2} \le 40 C A^{-D}.
		\end{flalign*}

		Next let us verify that $\textbf{q} \in \mathcal{Z} (\ell)$ holds on $\Gamma^c$. To that end, the third bound in \eqref{gammau0mstst} yields 
		\begin{flalign}
		\label{hvhu0}
		\displaystyle\sup_{u \in \mathcal{B}_M (v)} \Big| M^{-1} \big( H(u) - H(v) \big) - M^{-1} (u - v) \cdot \big( s(v; M), t (v; M) \big) \Big| \textbf{1}_{\Gamma^c} < (\log M)^{-1 - c},
		\end{flalign}
		
		\noindent for each $M \in \big[ A^{1 / 2}, 6 \ell^4 \big]$ and $v \in \mathcal{B}_{2 A^3} \cap \mathbb{T}$. In particular, since $\big( s(v; M), t (v; M) \big) \in \mathcal{T}_{\varepsilon / 4}$, we have that $s (v; M) \in \big( \frac{\varepsilon}{4}, 1 - \frac{\varepsilon}{4} \big)$. So, \eqref{hvhu0} and the fact that $A \le \ell$ together imply that, on $\Gamma^c$, 
		\begin{flalign*} 
		\displaystyle\frac{\varepsilon}{8} < \frac{\varepsilon}{4} - (\log \ell)^{-1 - c} < \ell^{-1} \big( H ( j + \ell, 0) - H(j, 0) \big) < 1 - \frac{\varepsilon}{4} + (\log \ell)^{-1 - c} < 1 - \displaystyle\frac{\varepsilon}{8},
		\end{flalign*}
		
		\noindent for each $j \in [-2 \ell^4, 2\ell^4]$, if $\ell$ is sufficiently large. By \eqref{hxhyw}, this implies that $q_{j + \ell - 1} - q_j \ge \ell$ and $q_{j + 1} - q_j < \ell$ for each $j$, and so $\textbf{q} \in \mathcal{Z} (\ell)$ on $\Gamma^c$ for sufficiently large $\ell$. 
		
		Thus it remains to verify that, on $\Gamma^c$, the sequence $\textbf{w}$ satisfies the three bounds listed in the corollary. Let us fix some $k \in [- 2 A^3, 2 A^3]$ and suppose that $\textbf{w} = \textbf{p} + k$, as the alternative case $\textbf{w} = \textbf{r} + k$ is entirely analogous. We begin by establishing the first bound in the corollary, to which end it suffices to show that $\mathfrak{D} (\textbf{w}; X, Y) < 384 c^{-1} \varepsilon^{-1} (\log X)^{-c}$ whenever $A^{1 / 2} \le X \le Y \le 2 \ell^4 + 3 \ell^3$, since $|w_{-m}|, |w_n| \le 2 \ell^4 + 3 \ell^3$. To do this, we apply \Cref{at1t2b} and must therefore verify the approximate linearity bound \eqref{hwyhwx}. 
		
		Now, fix some $x \in [-2 A^3 - k, 2 A^3 - k]$ and let $h_{\textbf{p}}$ denote the height function associated with $\textbf{p}$ as defined in \eqref{hxhyw}. Since $\textbf{q} \in \mathcal{Z} (\ell)$, it follows from \Cref{lrqdefinition} that for any interval $I \subset [- 2\ell^4 - 3 \ell^3, 2\ell^4 + 3 \ell^3]$ there exists a translation $\widetilde{I}$ of $I$ by at most $\ell^3$ such that $\big| |\textbf{q} \cap \widetilde{I}| - |\textbf{p} \cap I| \big| \le \ell^{-1} |I| + 1$. Therefore, by \eqref{hxhyw}; \eqref{hvhu0}; \Cref{hqh}; and the fact that $\ell^{-1} |I| \le 5 I^{3 / 4}$ whenever $|I| \le 2 \ell^4 + 3 \ell^3 \le 5 \ell^4$, there exists for any $M \in [A^{1 / 2}, 2 \ell^4 + 3 \ell^3]$ some integer $\widetilde{x} = \widetilde{x} (x; M) \in [-6 \ell^4, 6 \ell^4]$ such that the following holds. Denoting $\widetilde{u} = (\widetilde{x}, 0) \in \mathbb{V}(R)$, we have for sufficiently large $A$ that
		\begin{flalign}
		\label{hlvu0estimate} 
		\begin{aligned}
		\displaystyle\sup_{y \in [-M, M]} \Big| M^{-1} \big( h_{\textbf{p}} (x + y + k) - & h_{\textbf{p}} (x + k) \big) - M^{-1} y s(\widetilde{u}; M) \Big| \textbf{1}_{\Gamma^c} \\
		& \le (\log M)^{-1 - c} + 6 M^{-1 / 4} < 2 (\log M)^{-1 - c}.
		\end{aligned} 
		\end{flalign}
		
		Thus, \Cref{at1t2b}, \eqref{hlvu0estimate}, and the fact that $s (u_1; M) \in \big( \frac{\varepsilon}{4}, 1 - \frac{\varepsilon}{4} \big)$ together yield $\mathfrak{D} (\textbf{w}; X, Y) < 384 c^{-1} \varepsilon^{-1} (\log X)^{-c}$ whenever $A^{1 / 2} \le X \le Y \le 2 \ell^4 + 3 \ell^3$. As mentioned previously, it follows that $\textbf{w}$ satisfies the first bound listed in the corollary. The second bound there follows from \eqref{hxhyw}; \eqref{hlvu0estimate}; the second statement of \eqref{gammau0mstst}; the second bound in \eqref{perturbationboundaryestimate} (with \Cref{estimatehrho}) on $\mathcal{B}_{\varepsilon} (N^{-1} v_0)$; and the fact that $|N^{-1} v - N^{-1} v_0| \le 2 N^{-1} A^3 < 2 N^{-2 / 3}$ (which holds since $A \le \ell \le N^{1 / 9}$). 
		
		To verify the third, observe that \eqref{hlvu0estimate} and $\big( s (\widetilde{u}; M), t (\widetilde{u}; M) \big) \in \mathcal{T}_{\varepsilon / 4}$ together imply on $\Gamma^c$ that
		\begin{flalign*}
		\displaystyle\frac{\varepsilon M}{5} \le h_{\textbf{p}} (M) \le \left( 1 - \displaystyle\frac{\varepsilon}{5} \right)M; \qquad \left( \displaystyle\frac{\varepsilon}{5} - 1 \right) M \le h_{\textbf{p}} (-M) \le - \displaystyle\frac{\varepsilon M}{5},
		\end{flalign*}
		
		\noindent for each integer $M \in [A^{1 / 2}, 2 \ell^4 + 3 \ell^3]$ and sufficiently large $A$. Thus, for any positive integers $M_1, M_2 \ge A^{1 / 2}$ with $M_1 + M_2 \le 2 \ell^4 + 3 \ell^3$, we have on $\Gamma^c$ that  
		\begin{flalign*}
		\displaystyle\frac{\varepsilon (M_1 + M_2 + 1)}{6} \le \big| \textbf{p} \cap [-M_1, M_2] \big| \le \left(1 - \displaystyle\frac{\varepsilon}{6} \right) (M_1 + M_2 + 1),
		\end{flalign*}
		
		\noindent for sufficiently large $A$, which by \eqref{hxhyw} quickly implies that $\textbf{p}$ satisfies the third estimate listed in the corollary. 
	\end{proof}

	\subsection{Proof of \Cref{localconverge}}

	\label{Convergence} 
	
	In this section we establish \Cref{localconverge}. To that end, we begin with the following theorem that establishes the convergence of local statistics of uniformly random tilings of large domains subject to \Cref{regularestimate}. In what follows, we recall the extended discrete sine kernel $\mathcal{K}_{\xi}$ from \Cref{kernellimitdefinition} and the set $\mathscr{X} (\mathscr{M})$ associated with a tiling $\mathscr{M}$ from \Cref{xm}.

	\begin{thm} 	
		
		\label{localconverge2}
		
		For any fixed real number $\varepsilon \in \big( 0, \frac{1}{4} \big)$ and integers $k, B \ge 1$, there exists a sequence $\varpi = \varpi (\varepsilon, k, B) = (\varpi_1, \varpi_2, \ldots ) \subset \mathbb{R}_{> 0}$ of real numbers tending to $0$ such that the following holds. Adopt the notation of \Cref{regularestimate}; set $v_0 = (x, y) \in \mathbb{V}(R)$; and define $\nabla \mathcal{H} (N^{-1} v_0) = (s, t) \in \mathcal{T}_{\varepsilon}$ and $\xi = e^{\pi \textbf{\emph{i}} s} \frac{\sin (\pi t)}{\sin (\pi - \pi s - \pi t)} \in \mathbb{H}$. Further fix integer sequences $\textbf{\emph{x}} = (x_1, x_2, \ldots , x_k) \in \mathbb{Z}^k$ and $\textbf{\emph{y}} = (y_1, y_2, \ldots , y_k) \in \mathbb{Z}^k$ such that $|x_j|, |y_j| \le B$ for each $j \in [1, k]$. Then, letting $\textbf{\emph{K}} = \textbf{\emph{K}}_{\textbf{\emph{x}}, \textbf{\emph{y}}} = \textbf{\emph{K}}_{\textbf{\emph{x}}, \textbf{\emph{y}}; \xi}$ denote the $k \times k$ matrix whose $(i, j)$ entry is equal to $\mathcal{K}_{\xi} (x_i, y_i; x_j, y_j)$, we have that
		\begin{flalign}
		\label{probabilityxmdeltak}
		\Bigg| \mathbb{P} \bigg[ \bigcap_{j = 1}^k \big( (x + x_j, y + y_j) \in \mathscr{X} (\mathscr{M}) \big)\bigg] - \det \textbf{\emph{K}} \Bigg| < \varpi_N. 
		\end{flalign}

	\end{thm}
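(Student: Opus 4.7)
The strategy is to combine the local coupling \Cref{plpprcouple}, the determinantal formula \Cref{determinantnonintersecting}, and the kernel asymptotic \Cref{limitdeterminantnonintersecting} of \cite{ULSRW}. First I would shift coordinates so that the vertex $u_0 = v_0 - (0, T_N) = (x, y - T_N)$ lands at the origin, for a scale $T_N$ tending to infinity slowly with $N$. In the new coordinates, $v_0$ sits at $(0, T_N)$ and the points $(x + x_j, y + y_j)$ from \eqref{probabilityxmdeltak} become $(x_j, T_N + y_j)$. Provided $T_N + B \le A_N$, these all lie inside the coupling region $\mathscr{A} = [-A_N, A_N] \times [0, A_N]$ of \Cref{plpprcouple}.

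Second, I would apply \Cref{plpprcouple} (with the ``$v_0$'' there equal to the shifted $(0, T_N) \in \mathcal{B}_{\varepsilon N / 2}$) to obtain a coupling of $\textbf{Q} = \textbf{Q}^{(\mathscr{M}; \ell)}$ with an ensemble $\textbf{P} \in \mathfrak{W}_{\textbf{p}}^\ell$ sampled from $\mathbb{P}_{\beta_1; \textbf{p}}$ such that $\textbf{P}|_{\mathscr{A}} = \textbf{Q}|_{\mathscr{A}}$ off an event of probability at most $\varsigma_N$. Because a type $1$ lozenge at position $(x', y')$ corresponds precisely to $(x', y') \in \mathscr{X}(\textbf{Q}) = \mathscr{X}(\mathscr{M})$ by \eqref{xset}, the indicator of $\bigcap_{j=1}^k \{(x + x_j, y + y_j) \in \mathscr{X}(\mathscr{M})\}$ coincides on this coupling event with that of $\bigcap_{j=1}^k \{(x_j, T_N + y_j) \in \mathscr{X}(\textbf{P})\}$; hence the two probabilities differ by at most $\varsigma_N$.

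Third, I would apply \Cref{determinantnonintersecting} to $\textbf{P}$, expressing the latter probability as $\det \bigl[ K_{\beta_1; \textbf{p}}(x_i, T_N + y_i; x_j, T_N + y_j) \bigr]_{1 \le i,j \le k}$, and then invoke \Cref{limitdeterminantnonintersecting} (with the ``$T$'' there equal to $T_N$ and the ``$t, s$'' there equal to $y_i, y_j$) to conclude entrywise convergence to $\mathcal{K}_\xi(x_i, y_i; x_j, y_j)$. The hypotheses of \Cref{limitdeterminantnonintersecting} on $\textbf{p}$ — a uniform bound on $\mathfrak{D}(\textbf{p}; X, \infty)$ for $X \in [\kappa_N T_N, \kappa_N^{-1} N]$, and a mesoscopic equidistribution on length-$U$ subintervals of $[-V, V]$ with density $\rho = 1 - s$ — are supplied by parts (1) and (2) of \Cref{ait1t2b2} applied with $A = A_N$, provided I pick the scales to obey $\kappa_N^{-1} < U < \kappa_N T_N < \kappa_N^2 V < \kappa_N^3 N$, $V \le 2 A_N^3$, and $U \ge A_N^{1/2}$. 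The density $\rho = 1 - s$ arises because paths sit on type $2$ and type $3$ lozenge positions, whose joint density is $t + (1 - s - t) = 1 - s$; a consistent joint choice of all five sequences exists since we are free to take $A_N$ and $T_N$ to grow sufficiently slowly.

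Finally, I would verify that $\xi = \beta_1(1 - \beta_1)^{-1} e^{\pi \mathbf{i}(1 - \rho)}$ matches the complex slope in the theorem: since $\delta_N \to 0$, one has $\beta_1 \to \beta = \sin(\pi t)/(\sin(\pi t) + \sin(\pi(1 - s - t)))$, so $\beta (1 - \beta)^{-1} = \sin(\pi t)/\sin(\pi(1 - s - t))$, which together with $1 - \rho = s$ gives $\xi = e^{\pi \mathbf{i} s} \sin(\pi t)/\sin(\pi(1 - s - t))$ as required. The slope $(s, t)$ used in \Cref{plpprcouple} is $\nabla \mathcal{H}(N^{-1} u_0)$, which agrees with $\nabla \mathcal{H}(N^{-1} v_0)$ in the limit by continuity of $\nabla \mathcal{H}$ (\Cref{derivativehcontinuouss}) and $|u_0 - v_0| = T_N = o(N)$. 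The main obstacle will be the careful coordination of the multiple sequences $A_N, T_N, U, V, \kappa_N, \delta_N, \varsigma_N$ — each quoted result supplies its own hidden rate tending to zero, and a mutually compatible joint selection must be made. No step requires any asymptotic analysis beyond what \cite{ULSRW} has already done; the work is purely in packaging the local comparison to the non-intersecting walk model.
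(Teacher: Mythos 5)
Your proposal follows essentially the same route as the paper's proof: shift so $u_0 = v_0 - (0,T_N)$ is at the origin, apply \Cref{plpprcouple} with $T_N \approx A_N/2$ to couple $\mathscr{M}$ with $\textbf{P}$ in the window $\mathscr{A}$, express the correlation probability for $\textbf{P}$ determinantally via \Cref{determinantnonintersecting}, invoke \Cref{limitdeterminantnonintersecting} with the hypotheses on $\textbf{p}$ supplied by \Cref{ait1t2b2}, identify $\rho = 1 - s$ and match the complex slope $\xi$, and finally reconcile the gradient at $u_0$ with that at $v_0$. The only small wrinkle is your final step: you appeal to ``continuity of $\nabla\mathcal{H}$'' via \Cref{derivativehcontinuouss} to conclude $\nabla\mathcal{H}(N^{-1}u_0) \to \nabla\mathcal{H}(N^{-1}v_0)$, but since $v_0$ itself varies with $N$ a mere continuity statement does not immediately deliver a rate uniform in $N$; the paper instead uses the quantitative interior $\mathcal{C}^2$ estimate \Cref{derivativeshestimate} (valid since $\nabla\mathcal{H} \in \mathcal{T}_\varepsilon$ near $N^{-1}v_0$) to get $|\nabla\mathcal{H}(0,0) - \nabla\mathcal{H}(N^{-1}v_0)| \lesssim A_N N^{-1} \to 0$, which is the clean way to package this. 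With that substitution your outline is correct and matches the paper.
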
 
	
	\begin{proof}
		
		We will proceed by first locally (around $v_0$) coupling the non-intersecting path ensemble associated with $\mathscr{M}$ with suitably chosen random path ensembles sampled according to the measure $\mathbb{P}$ from \Cref{betaprobability}, and then by using the results from \Cref{ProbabilityP} to analyze the latter ensembles. To that end, let $\delta = \delta (\varepsilon) = (\delta_1, \delta_2, \ldots ) \subset \mathbb{R}_{> 0}$ and $\varsigma = \varsigma (\varepsilon) = (\varsigma_1, \varsigma_2, \ldots ) \subset \mathbb{R}_{> 0}$ denote the sequences of real numbers tending to $0$ and $A = A (\varepsilon) = (A_1, A_2, \ldots ) \subset \mathbb{Z}_{\ge 1}$ denote the sequence of integers tending to $\infty$ provided by \Cref{plpprcouple}; after decreasing the elements of $A$ if necessary (in such a way that they still tend to $\infty$), we may assume that $A_N < N^{1 / 20}$. Then set $T = T_N = \big\lfloor \frac{A_N}{2} \big\rfloor$ and define $u_0 = \big( x, y - T \big) \in \mathbb{Z}^2$, so that $\mathcal{B}_{\varepsilon N / 2} (u_0) \subset \mathcal{B}_{\varepsilon N} (v_0)$. We may further assume after shifting $R$ if necessary that $u_0 = (0, 0)$, and let us denote
		\begin{flalign*}
		 \nabla \mathcal{H} (0, 0) = (s_0, t_0) \in \mathcal{T}_{\varepsilon}; \qquad  \beta_0 = \frac{\sin (\pi t_0)}{\sin (\pi t_0) + \sin \big( \pi (1 - s_0 - t_0) \big)}; \qquad \ell = \ell_N = \lfloor N^{1 / 9} \rfloor. 
		\end{flalign*} 
		
		Now, recall the sequence $\textbf{q} \in \mathbb{W}$ and non-intersecting path ensemble $\textbf{Q} \in \mathfrak{W}_{\textbf{q}}^{\ell}$ from \Cref{qunlambda}, as well as the initial data $\textbf{p}, \textbf{r} \in \mathbb{W}$ from \Cref{lrqdefinition}. Further set $\beta_1 = \beta_0 - \delta_N$ and $\beta_2 = \beta_0 + \delta_N$, and recall the random non-intersecting path ensembles $\textbf{P}, \textbf{R} \in \mathfrak{W}^{\ell}$ from \Cref{pensemblerdefinition}. Additionally defining the domain $\mathscr{A} = [-A_N, A_N] \times [0, A_N] \subset \mathbb{Z}^2$, it follows from \Cref{plpprcouple} (where the $(s, t)$ and $\beta$ there are equal to $(s_0, t_0)$ and $\beta_0$ here, respectively) that there exists a coupling between $\textbf{P}$, $\textbf{Q}$, and $\textbf{R}$ for which $\mathbb{P} \big[ \textbf{P} |_{\mathscr{A}} = \textbf{Q} |_{\mathscr{A}} = \textbf{R} |_{\mathscr{A}} \big] \ge 1 - \varsigma_N$ (in what follows, $\textbf{R}$ will not be used, but $\textbf{P}$ will be).
		
		Letting $\mathscr{M}_{\textbf{P}} = \mathscr{M} (\textbf{P})$ denote the tiling associated with $\textbf{P}$ (as in \Cref{NonIntersectingCorrelation}), we therefore obtain 
		\begin{flalign}
		\label{xmxmpk} 
		\begin{aligned} 
		\Bigg| \mathbb{P} \bigg[ \bigcap_{j = 1}^k \big\{ (x + x_j, y + y_j) \in \mathscr{X} (\mathscr{M}) \big\} \bigg] - \det \textbf{K} \Bigg| & \le \bigg| \mathbb{P} \Big[ \bigcap_{j = 1}^k \big\{ ( x_j, y_j + T)   \in \mathscr{X} (\mathscr{M}_{\textbf{P}}) \big\} \Big] - \det \textbf{K} \bigg| + \varsigma_N \\
		& = \Big| \det \big[ K_{\beta_1; \textbf{p}} ( x_i, y_i + T; x_j, y_j + T) \big]  - \det \textbf{K} \Big| + \varsigma_N,
		\end{aligned} 
		\end{flalign} 
		
		\noindent where to obtain the last equality in \eqref{xmxmpk} we recalled the kernel $K$ from \eqref{kxtys} and used \Cref{determinantnonintersecting}. 
		
		To estimate the right side of \eqref{xmxmpk}, let $\xi_1 = \beta_1 (1 - \beta_1)^{-1} e^{\pi \textbf{i} s_0}$ and $\textbf{K}_1$ denote the $k \times k$ matrix whose $(i, j)$ entry is $\mathcal{K}_{\xi_1} (x_i, y_i; x_j, y_j)$. We will first use \Cref{limitdeterminantnonintersecting} to establish the convergence of $\det \big[ K_{\beta_1; \textbf{p}} ( x_i, y_i + T; x_j, y_j + T) \big]$ to $\det \textbf{K}_1$, and then show that $\det \textbf{K}_1$ converges to $\det \textbf{K}$. 
		
		To implement the former task, we must verify that $\textbf{p}$ satisfies the two assumptions listed in \Cref{limitdeterminantnonintersecting}. To that end we apply \Cref{ait1t2b2}, where the $D$, $A$, $(s, t)$, and $\textbf{w}$ there are equal to $1$, $A_N$, $(s_0, t_0)$, and $\textbf{p}$ here, respectively. This yields a constant $C_1 = C_1 (\varepsilon) > 1$ and an event $\Gamma$ with $\mathbb{P} [\Gamma] \le C_1 A_N^{-1}$, such that on $\Gamma^c$ we have $\textbf{q} \in \mathcal{Z} (\ell)$ (recall \Cref{zl}) and that $\textbf{p}$ satisfies the following two properties. First, denoting $c = \frac{1}{20000}$, we have that $\mathfrak{D} (\textbf{p}; X, \infty) < 384 c^{-1} \varepsilon^{-1} (\log X)^{-c}$ for any real number $X \ge A_N^{1 / 2}$. Second, for any interval $I \subset [-A_N^3, A_N^3]$ of length $\lfloor A_N^{1 / 2} \rfloor$, we have that $\big( 1 - s_0 - 5 (\log A_N)^{-c} \big) A_N^{1 / 2} \le |\textbf{p} \cap I| \le \big( 1 - s_0 + 5 (\log A_N)^{-c} \big) A_N^{1 / 2}$. 
		
		We now apply \Cref{limitdeterminantnonintersecting}, with the $c$ there equal to $\frac{\varepsilon}{3}$ here; the $\beta$ there equal to $\beta_1$ here; the $\rho$ there equal to $1 - s_0$ here; the $\kappa_N$ there equal to $(\log \log A_N)^{-1}$ here; the $U$ there equal to $\lfloor A_N^{1 / 2} \rfloor$; the $V$ there equal to $A_N^3$ here; the $\textbf{a}$ there equal to $\textbf{p}$ here; and the $N$ there equal to $m + n + 1 \in [4 \ell^3, 4 \ell^4 + 1]$ here. On $\Gamma^c$, it follows that $\textbf{p}$ satisfies the two assumptions listed in that lemma, which therefore yields a sequence $\vartheta = \vartheta (\varepsilon, B) = (\vartheta_1, \vartheta_2, \ldots ) \subset \mathbb{R}_{> 0}$ of real numbers tending to $0$ such that 
		\begin{flalign*} 
		\big| K_{\beta_1; \textbf{p}} (x_i, y_i + T; x_j, y_j + T) - \mathcal{K}_{\xi_1} (x_i, y_i; x_j, y_j) \big| \textbf{1}_{\Gamma^c} < \vartheta_N, 
		\end{flalign*} 
		
		\noindent for any $i, j \in [1, k]$. Due to the uniform boundedness of the $\big| \mathcal{K}_{\xi_1} (x_i, y_i; x_j, y_j) \big|$ over the parameters $|x_i|, |y_i|, |x_j|, |y_j| \le B$, it follows after increasing the elements of $\vartheta$ if necessary (in such a way that they still tend to $0$) that
		\begin{flalign}
		\label{klbetalpsi}
		\Big| \det \big[ K_{\beta_1; \textbf{p}} (x_i, y_i + T; x_j, y_j + T) \big] - \det \textbf{K}_1 \Big| \textbf{1}_{\Gamma^c} < \vartheta_N. 
		\end{flalign} 
		
		\noindent Since $\det \big[ K_{\beta_1; \textbf{p}} (x_i, y_i + T; x_j, y_j + T) \big], \det \textbf{K}_1 \in [0, 1]$ (as both are probabilities associated with a determinantal point process), \eqref{xmxmpk}, \eqref{klbetalpsi}, and the fact that $\mathbb{P} [\Gamma] \le C_1 A_N^{-1}$ together yield  
		\begin{flalign}
		\label{xmxmpk1} 
		\Bigg| \mathbb{P} \bigg[ \bigcap_{j = 1}^k \big\{ (x + x_j, y + y_j) \in \mathscr{X} (\mathscr{M}) \big\} \bigg] - \det \textbf{K}\Bigg| & \le | \det \textbf{K} - \det \textbf{K}_1 | + \vartheta_N + \varsigma_N + C_1 A_N^{-1}.
		\end{flalign} 
		
		It therefore remains to bound $| \det \textbf{K} - \det \textbf{K}_1|$, to which end one must bound $|\xi - \xi_1|$. To do this, recall from \Cref{regularestimate} that $\nabla \mathcal{H} (z) \in \mathcal{T}_{\varepsilon}$ for each $z \in \mathcal{B}_{\varepsilon / 2} \subset \mathcal{B}_{\varepsilon} (N^{-1} v_0)$. Since $\mathcal{H}$ is a maximizer of $\mathcal{E}$ on $\mathcal{B}_{\varepsilon / 2}$, it follows from \Cref{derivativeshestimate} (and \Cref{estimatehrho}) that there exists a constant $C_2 = C_2 (\varepsilon) > 1$ such that $\big| \nabla \mathcal{H} (0, 0) - \nabla \mathcal{H} (N^{-1} v_0) \big| < C_2 N^{-1} |v_0| \le C_2 A_N N^{-1}$, and so $|s - s_0| + |t - t_0| \le 2 C_2 A_N N^{-1} < C_2 N^{-1 / 2}$. 
		
		Due to the uniform continuity of $\xi$ in $(s, t)$ over $(s, t) \in \mathcal{T}_{\varepsilon}$; the uniform boundedness and continuity of $\mathcal{K}_{\xi} (x_i, y_i; x_j, y_j)$ over $|x_i|, |y_i|, |x_j|, |y_j| \le B$ and $\xi$ in compact subsets of $\mathbb{H}$; and the uniform continuity of $\det \textbf{K}$ in the $\mathcal{K}_{\xi} (x_i, y_i; x_j; y_j)$, we deduce the existence of a sequence $\varpi = \varpi (\varepsilon, B) = (\varpi_1, \varpi_2, \ldots ) \subset \mathbb{R}_{> 0}$ of real numbers tending to $0$ such that $| \det \textbf{K} - \det \textbf{K}_1| < \frac{\varpi_N}{2}$. Combining this bound with \eqref{xmxmpk1} and decreasing the elements of $\varpi$ if necessary (in such a way that they still tend to $0$) yields \eqref{probabilityxmdeltak}.
	\end{proof}	
	
	Now we can establish \Cref{localconverge}.

	\begin{proof}[Proof of \Cref{localconverge}]
		
		Through a suitable shift, we may assume that $\mathfrak{v} = (0, 0)$. Moreover, throughout this proof, let $\mathscr{M}_N \in \mathfrak{E} (R_N)$ denote a uniformly random tiling of $R_N$, and abbreviate the set $\mathscr{X}_N = \mathscr{X} (\mathscr{M}_N)$ from \Cref{xm}. Further fix integers $1 \le k \le B$ and two $k$-tuples $\textbf{x} = (x_1, x_2, \ldots , x_k) \in \mathbb{Z}^k$ and $\textbf{y} = (y_1, y_2, \ldots , y_k) \in \mathbb{Z}^k$, such that $|x_j|, |y_j| < B$ for each $j \in [1, k]$. Additionally denote $\xi = e^{\pi \textbf{i} s} \frac{\sin (\pi t)}{\sin (\pi - \pi s - \pi t)}$, and let $\textbf{K}$ denote the $k \times k$ matrix whose $(i, j)$ entry is equal to the extended discrete sine kernel $\mathcal{K}_{\xi} (x_i, y_i; x_j, y_j)$ from \Cref{kernellimitdefinition}. For arbitrary such $k, B, \textbf{x}, \textbf{y}$, it suffices to show that 
		\begin{flalign}
		\label{mnxjyjvkappak}
		\displaystyle\lim_{N \rightarrow \infty} \Bigg| \mathbb{P} \bigg[ \bigcap_{j = 1}^k \big\{ v_N + (x_j, y_j) \in \mathscr{X}_N \big\} \bigg] - \det \textbf{K} \Bigg| = 0. 
		\end{flalign}
		
		The difference between \eqref{probabilityxmdeltak} and \eqref{mnxjyjvkappak} is twofold. First, the former only applies to domains approximating disks in the sense of \Cref{regularestimate}, while the $R_N$ are not restricted to satisfy this assumption. Second, the latter states convergence of local statistics to a Gibbs measure of slope $\nabla \mathcal{H} (\mathfrak{v}) = (s, t)$, while the former would to one of slope $\nabla \mathcal{H}_N (N^{-1} v_N)$, where $\mathcal{H}_N$ is the maximizer of $\mathcal{E}$ with boundary data $\mathfrak{h}_N$ (instead of $\mathfrak{h}$). To remedy the first point, we restrict $\mathscr{M}_N$ to a free tiling of some sufficiently large disk. To remedy the second, we show that $\lim_{N \rightarrow \infty} \nabla \mathcal{H}_N (N^{-1} v_N) = \mathcal{H} (\mathfrak{v})$. 
		
		We begin with the former. To that end, first recall from \Cref{derivativehcontinuouss} that $\nabla \mathcal{H}$ is continuous on the open set $\mathcal{S} = \big\{ z \in \mathfrak{R}: \nabla \mathcal{H} (z) \in \mathcal{T} \big\} \subseteq \mathfrak{R}$. Therefore, since $\nabla \mathcal{H} (\mathfrak{v}) \in \mathcal{T}$, there exists some $\varepsilon = \varepsilon (\mathcal{H}) > 0$ such that $\mathcal{B}_{\varepsilon} (\mathfrak{v}) \subset \mathfrak{R} \cap \bigcap_{N = 1}^{\infty} N^{-1} R_N$ and $\nabla \mathcal{H} (z) \in \mathcal{T}_{\varepsilon}$ for each $z \in \mathcal{B}_{\varepsilon} (\mathfrak{v})$.
			
		Now, let $H_N$ denote the height function on $R_N$ associated with $\mathscr{M}_N$. Then \Cref{hnh} yields the existence of a sequence $\varpi = \varpi \big( \mathfrak{R}, \mathfrak{h}, \{ R_N \}, \{ h_N \} \big) = (\varpi_1, \varpi_2, \ldots ) \subset \mathbb{R}_{> 0}$ of real numbers tending to $0$ such that, if we define the event 
		\begin{flalign*}
		\Omega_N = \Big\{ \displaystyle\sup_{u \in \mathbb{V}(R_N) \cap N \mathfrak{R}} \big| N^{-1} H_N (u) - \mathcal{H} (N^{-1} u) \big| > \varpi_N \Big\},
		\end{flalign*} 
		
		\noindent for each $N \in \mathbb{Z}_{\ge 1}$, then $\lim_{N \rightarrow \infty} \mathbb{P} [\Omega_N] = 0$. 
		
		Thus, let us restrict to the event $\Omega_N^c$ and further restrict $\mathscr{M}_N$ to a free tiling $\widetilde{\mathscr{M}}_N$ of $\mathcal{B}_{\lfloor \varepsilon N \rfloor} (N \mathfrak{v}) \cap \mathbb{T} = \mathcal{B}_{\lfloor \varepsilon N \rfloor} \cap \mathbb{T}$. Let $\widetilde{R}_N \subset R_N$ denote the domain tiled by this free tiling; let $\widetilde{h}_N: \partial \widetilde{R}_N \rightarrow \mathbb{Z}$ denote the associated boundary height function; and define $\widetilde{\mathfrak{h}}_N:  \partial (N^{-1} \widetilde{R}_N) \rightarrow \mathbb{R}$ by setting $\widetilde{\mathfrak{h}}_N (N^{-1} u) = N^{-1} \widetilde{h}_N (u)$ for each $u \in \partial \widetilde{R}_N$. Further setting $\widetilde{\mathfrak{R}} = \mathcal{B}_{\varepsilon}$; $\widetilde{\mathfrak{h}} = \mathcal{H} |_{\partial \widetilde{\mathfrak{R}}}$; and $\widetilde{\mathcal{H}} = \mathcal{H} |_{\widetilde{\mathfrak{R}}}$, it follows that $(\widetilde{\mathfrak{R}}, \widetilde{\mathfrak{h}}, \widetilde{\mathcal{H}}, \mathfrak{v}, \widetilde{R}_N, \widetilde{h}_N,\widetilde{\mathfrak{h}}_N, v_N)$ satisfy the conditions of \Cref{localconverge} on $\Omega_N^c$. Moreover, since a uniformly random tiling of any domain satisfies the Gibbs property (recall \Cref{InfiniteMeasures}), the law of the tiling $\widetilde{\mathscr{M}}_N$ is uniform on $\mathfrak{E} (\widetilde{R}_N)$. In particular, it suffices to establish this theorem assuming that $\mathfrak{R} = \widetilde{\mathfrak{R}}$ and $R_N = \widetilde{R}_N$, and thus we replace $(\widetilde{\mathfrak{R}}, \widetilde{\mathfrak{h}}, \widetilde{\mathcal{H}}, \widetilde{R}_N, \widetilde{h}_N, \widetilde{\mathfrak{h}}_N)$ with $(\mathfrak{R}, \mathfrak{h}, \mathcal{H}, R_N, h_N, \mathfrak{h}_N)$ to simplify notation. So, after a suitable shift, we may assume that we are in the setting of \Cref{regularestimate} (whose $N$ is equal to $\lfloor \varepsilon N \rfloor$ here). 
		
		In this case, $\mathfrak{R} = \mathcal{B}_{\varepsilon}$. As in \Cref{regularestimate}, extend $H_N$ to $\overline{\mathcal{B}}_{\varepsilon N}$ by linearity on $\mathbb{F} (R_N)$ and arbitrarily on $\overline{\mathcal{B}}_{\varepsilon N} \setminus \mathbb{F}(R_N)$, in such a way that it is $1$-Lipschitz on $\overline{\mathcal{B}}_{\varepsilon N}$. Also define $\mathfrak{g}_N: \partial \mathfrak{R} \rightarrow \mathbb{R}$ by setting $\mathfrak{g}_N (z) = N^{-1} H_N (Nz)$ for any $z \in \partial \mathfrak{R}$, and let $\mathcal{H}_N \in \Adm (\mathfrak{R}; \mathfrak{g}_N)$ denote the maximizer of $\mathcal{E}$ on $\mathfrak{R}$ with boundary data $\mathfrak{g}_N$. Denoting $(s_N, t_N) = \nabla \mathcal{H}_N (N^{-1} v_N)$, let us show that $(s_N, t_N) \approx (s, t) = \mathcal{H} (\mathfrak{v}) = \nabla \mathcal{H} (0, 0)$, to which end we apply \Cref{perturbationboundary}. 
		
		Recalling the constants $\delta = \delta (\varepsilon) \in (0, 1)$ and $C = C (\varepsilon) > 1$ from that proposition, the continuity of $\nabla \mathcal{H}$ on $\mathcal{S}$ yields a constant $\omega = \omega (\mathcal{H}) \in \big( 0, \frac{\varepsilon \delta}{4} \big)$ such that 
		\begin{flalign}
		\label{hzst} 
		\displaystyle\sup_{z \in \mathcal{B}_{\omega}} \big| \nabla \mathcal{H} (z) - (s, t) \big| < \frac{\delta}{4}.
		\end{flalign}
		
		\noindent Furthermore, since $\mathcal{H}$ and the $\mathcal{H}_N$ are admissible; since $\mathfrak{R} = \mathcal{B}_{\varepsilon}$; and since we have $\lim_{N \rightarrow \infty} \mathfrak{g}_N = \lim_{N\rightarrow \infty} \mathfrak{h}_N = \mathfrak{h}$, it follows from \Cref{h1h2gamma} that, for any fixed $\vartheta \in (0, 1)$,  
		\begin{flalign} 
		\label{hnzhzdelta} 
		\sup_{z \in \mathcal{B}_{\omega}} \big| \mathcal{H}_N (z) - \mathcal{H} (z) \big| \le \sup_{z \in \partial \mathcal{B}_{\varepsilon}} \big| \mathcal{H}_N (z) - \mathcal{H} (z) \big| \le \frac{\delta \omega \vartheta}{4}, 
		\end{flalign} 
		
		\noindent for sufficiently large $N$. Thus, \eqref{hzst} and \eqref{hnzhzdelta} together imply (by the mean value theorem) the existence of a linear function $\Lambda: \overline{\mathcal{B}}_{\omega} (\mathfrak{v}) \rightarrow \mathbb{R}$ of slope $(s, t)$ such that $\sup_{z \in \mathcal{B}_{\omega}} \big| \mathcal{H} (z) - \Lambda (z)\big| < \frac{\delta \omega}{4}$ and $\sup_{z \in \mathcal{B}_{\omega}} \big| \mathcal{H}_N (z) - \Lambda (z)\big| < \frac{\delta\omega}{2}$, for sufficiently large $N$. Hence, $\mathcal{H}_N$ and $\mathcal{H}$ are $\frac{\delta}{2}$-nearly linear of slope $(s, t)$ on $\mathcal{B}_{\omega}$, and so (by \Cref{estimatehrho}) the conditions of \Cref{perturbationboundary} apply. Then the first bound in \eqref{perturbationboundaryestimate}, \eqref{hnzhzdelta}, the continuity of $\nabla \mathcal{H}$ on $\mathcal{S}$, and the fact that $\lim_{N \rightarrow \infty} N^{-1} v_N = \mathfrak{v} = (0, 0)$ together imply for sufficiently large $N$ that 
		\begin{flalign*}
		\big| (s_N, t_N) - (s, t) \big| & \le \big| \nabla \mathcal{H}_N (N^{-1} v_N) - \nabla \mathcal{H} (N^{-1} v_N) \big| + \big| \nabla \mathcal{H} (N^{-1} v_N) - \nabla \mathcal{H} (\mathfrak{v}) \big| \\
		& \le C \displaystyle\sup_{z \in \mathcal{B}_{\omega}} \big| \mathcal{H}_N (z) - \mathcal{H} (z) \big| + \vartheta < 2 C \vartheta.
		\end{flalign*}
		
		\noindent Since $\vartheta$ was arbitrary, this implies that $\lim_{N \rightarrow \infty} (s_N, t_N) = (s, t)$. 
		
		Now, let $\xi_N = e^{\pi \textbf{i} s_N} \frac{\sin (\pi t_N)}{\sin (\pi - \pi s_N - \pi t_N)}$ for each $N \in \mathbb{Z}_{> 1}$, and define the $k \times k$ matrix $\textbf{K}_N$ whose $(i, j)$ entry is equal to $\mathcal{K}_{\xi_N} (x_i, y_i; x_j, y_j)$. Since we have assumed that we are in the setting of \Cref{regularestimate}, it then follows from \Cref{localconverge2} that 
		\begin{flalign*} 
		\displaystyle\lim_{N \rightarrow \infty} \Bigg| \mathbb{P} \bigg[ \bigcap_{j = 1}^k \big\{ v_N + (x_j, y_j) \in \mathscr{M}_N \big\} \bigg] - \det \textbf{K}_N \Bigg| = 0,	
		\end{flalign*} 
		
		\noindent which yields \eqref{mnxjyjvkappak}, in view of the facts that $\lim_{N \rightarrow \infty} \xi_N = \xi$; that $\mathcal{K}_{\xi} (x_i, y_i; x_j, y_j)$ is continuous in $\xi \in \mathbb{H}$ when $|x_i|, |y_i|, |x_j|, |y_j|$ is uniformly bounded; and that $\det \textbf{K}$ is continuous in $\mathcal{K}_{\xi}$.  
	\end{proof}

	\section{Coupling Random Non-Intersecting Path Ensembles}
	
	\label{ProofHHlHr}
	
	In this section we establish \Cref{plpprcouple}. To do this, we first exhibit a mutual coupling between $\textbf{P}$, $\textbf{Q}$, and $\textbf{R}$ such that their height functions are ordered on $[-\ell^3, \ell^3] \times [0, \ell]$ (stated alternatively, each path of $\textbf{Q}$ in $[-\ell^3, \ell^3] \times [0, \ell]$ lies between the corresponding paths in $\textbf{P}$ and $\textbf{R}$) with high probability. Then, we show that the expected difference between the height functions associated with $\textbf{P}$ and $\textbf{R}$ tends to $0$ on $\mathscr{A}$, as $N$ tends to $\infty$. Together with a Markov estimate, these two facts will yield a coupling between $\textbf{P}$, $\textbf{Q}$, and $\textbf{R}$, such that they likely coincide on $\mathscr{A}$. 
	
	We implement the former task in \Cref{ProofCoupling}. To establish the latter fact, we first require an estimate bounding the effect on the kernel $K_{\beta; \textbf{a}}$ from \eqref{kxtys} resulting from a perturbation of $\beta$ and $\textbf{a}$; this will be provided in \Cref{EstimateKernel}. We then bound the expected difference between the height functions associated with $\textbf{P}$ and $\textbf{R}$ and conclude the proof of \Cref{plpprcouple} in \Cref{ProofPCouple}.

	\subsection{An Ordered Coupling} 
	
	\label{ProofCoupling}
	
	Before establishing \Cref{plpprcouple}, which exhibits an exact coupling between $(\textbf{P}, \textbf{Q}, \textbf{R})$ around $(0, 0)$, we will in this section establish as \Cref{x1x2x} the existence of a coupling between $(\textbf{P}, \textbf{Q}, \textbf{R})$ that ensures with high probability that they are ordered, in the following sense. For any integer $t \ge 1$ and non-intersecting path ensemble $\textbf{E} \in \mathfrak{W}^t$, let $H_{\textbf{E}}: \mathbb{Z} \times \{ 0, 1, \ldots , t \}$ denote the height function associated with $\textbf{E}$ (as explained in and above \Cref{heightpaths}), such that $H_{\textbf{E}} (0, 0) = 0$. 
	
	The following proposition then indicates that it is possible to select $\delta = (\delta_1, \delta_2, \ldots )$ from \Cref{plpprcouple} in such a way that there exists a coupling between $(\textbf{P}, \textbf{Q}, \textbf{R})$ so that $H_{\textbf{P}} \le H_{\textbf{Q}} \le H_{\textbf{R}}$ occurs on $[-\ell^3, \ell^3] \times [0, \ell]$ with high probability. In the below, for any set $\mathcal{S}$, subset $\mathcal{S}' \subseteq \mathcal{S}$, and functions $F, G: \mathcal{S} \rightarrow \mathbb{Z}$, we say $F |_{\mathcal{S}'} \le G |_{\mathcal{S}'}$ if $F(z) \le G(z)$ for each $z \in \mathcal{S}'$.

	\begin{prop}
		
		\label{x1x2x}

		For any fixed real numbers $\varepsilon \in \big( 0, \frac{1}{4} \big)$ and $D > 0$, there exist a constant $C = C (\varepsilon, D) > 1$ and a sequence $\delta = \delta (\varepsilon) = (\delta_1, \delta_2, \ldots ) \subset \mathbb{R}_{> 0}$ of real numbers tending to $0$ such that the following holds. Adopt the notation of \Cref{regularestimate}; suppose that $v_0 \in \mathcal{B}_{\varepsilon N / 2}$; let $\ell = \lfloor N^{1 / 9} \rfloor$; and set $\big( (s, t), \beta, \beta_1, \beta_2 \big)$ as in \eqref{stbeta12}. Further adopt the notation of \Cref{qunlambda} and \Cref{pensemblerdefinition}, and set $\mathscr{S} = [-\ell^3, \ell^3] \times [0, \ell] \subset \mathbb{Z}^2$. Then, there exists a mutual coupling of $\textbf{\emph{P}}$, $\textbf{\emph{Q}}$, and $\textbf{\emph{R}}$ on a common probability space under which $\mathbb{P} \big[ H_{\textbf{\emph{P}}} |_{\mathscr{S}} \le H_{\textbf{\emph{Q}}} |_{\mathscr{S}} \le H_{\textbf{\emph{R}}} |_{\mathscr{S}} \big] \ge 1 - C N^{-D}$.
		
	\end{prop}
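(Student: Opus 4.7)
The plan is to apply the monotone coupling lemma \Cref{monotoneheightcouple} on a rectangular sub-domain $\mathcal{D}$ of $\mathbb{T}$ with $\mathcal{D} \supset \mathscr{S}$. Choose $L \in [\ell^3, 2\ell^3]$ and take $\mathcal{D}$ to be the sub-domain corresponding to $[-L, L] \times [0, \ell]$. I will show that, with probability at least $1 - CN^{-D}$, the boundary ordering $H_{\textbf{P}}(v) \le H_{\textbf{Q}}(v) \le H_{\textbf{R}}(v)$ holds for every $v \in \partial \mathcal{D}$. Granted this, conditioning on the three (ordered) boundary height functions makes each of the three interior height functions into a uniform random height function on $\mathcal{D}$ with the specified boundary data: for $\textbf{Q}$ this uses uniformity of $\mathscr{M}$ on $\mathfrak{E}(R_N)$, and for $\textbf{P}, \textbf{R}$ it uses \Cref{prprobability} after additionally conditioning on the paths along the two vertical sides of $\mathcal{D}$ (the weight in \eqref{pprobability} depends only on the endpoint data). \Cref{monotoneheightcouple} then extends the boundary ordering to all of $\mathcal{D} \supset \mathscr{S}$.

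The bottom edge $y = 0$ is handled deterministically: on the high-probability event $\textbf{q} \in \mathcal{Z}(\ell)$ furnished by \Cref{ait1t2b2}, the construction in \Cref{lrqdefinition} yields $p_j \le q_j \le r_j$ for every $j$. Since $\textbf{p}, \textbf{q}, \textbf{r}$ coincide on $[-2\ell, 2\ell]$ while the shifts defining $\textbf{p}$ and $\textbf{r}$ act outward, a short case analysis based on \eqref{hxhyw} converts this coordinate-wise ordering into $H_{\textbf{P}}(\cdot, 0) \le H_{\textbf{Q}}(\cdot, 0) \le H_{\textbf{R}}(\cdot, 0)$. The top and lateral edges are handled by comparing expected height increments to fluctuations. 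For the top edge, at each vertex $v = (x, \ell)$ with $|x| \le L$, apply the local law \Cref{heightlocal1} at the base point $v_0 = (x, 0)$ with scale $M = \ell$; this yields $\bigl| H_{\textbf{Q}}(v) - H_{\textbf{Q}}(x, 0) - t \ell \bigr| < \ell (\log \ell)^{-1-c}$ outside an event of probability $O(\ell^{-D})$. For $\textbf{P}$, the determinantal structure \Cref{determinantnonintersecting} together with the density and regularity of $\textbf{p}$ from \Cref{ait1t2b2} (and the sine-kernel convergence \Cref{limitdeterminantnonintersecting}) yields $\mathbb{E}[H_{\textbf{P}}(v)] = H_{\textbf{P}}(x, 0) + t_1 \ell + o(\delta_N \ell)$, where $t_1$ is the vertical slope associated to complex slope $\xi_1 = \beta_1(1 - \beta_1)^{-1} e^{\pi \textbf{i} s}$; defining $t_2$ analogously for $\textbf{R}$ gives the same statement with $t_2 > t > t_1$ and $t_2 - t, t - t_1 \asymp \delta_N$. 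Combined with the fluctuation bound $|H_{\textbf{P}}(v) - \mathbb{E}[H_{\textbf{P}}(v)]| = O(\sqrt{\ell \log N})$ from \Cref{walksexpectationnear} (and analogously for $\textbf{R}$), the gap between $H_{\textbf{Q}}(v)$ and $H_{\textbf{P}}(v)$ is of order $\delta_N \ell$ up to lower-order corrections. The lateral edges $x = \pm L$ are easier: the deterministic bottom offset $H_{\textbf{Q}}(\pm L, 0) - H_{\textbf{P}}(\pm L, 0)$ is of order $\ell^2$ by the shift structure of \Cref{lrqdefinition} (particles of $\textbf{q}$ in an interval of length $L/(2\ell) \asymp \ell^2$ near $\pm L$ get pushed across the line), which dominates the $O(\sqrt{\ell \log N})$ fluctuation coming from vertical path crossings.

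Choosing $\delta_N = (\log \ell)^{-c/2}$ makes $\delta_N \ell$ dominate both the local-law error $\ell(\log \ell)^{-1-c}$ and the fluctuation $\sqrt{\ell \log N}$ (using $\ell = \lfloor N^{1/9} \rfloor$). Union-bounding the failure events over the $O(\ell^3)$ boundary vertices of $\partial \mathcal{D}$, after taking the $D$ input in \Cref{heightlocal1} and \Cref{ait1t2b2} to be sufficiently large and the parameter $A$ in \Cref{walksexpectationnear} to be $\sqrt{2D \ell \log \ell}$, gives total failure probability $\le CN^{-D}$. The principal obstacle is establishing the expected-drift statement $\mathbb{E}[H_{\textbf{P}}(v)] = H_{\textbf{P}}(x, 0) + t_1 \ell + o(\delta_N \ell)$ for the non-intersecting walk model with generic initial data $\textbf{p}$: one must track the error $\vartheta_N$ in \Cref{limitdeterminantnonintersecting} across the $\ell$ time steps and convert pointwise kernel convergence into an expected-height statement via the first-order correlation $K(x, s; x, s)$, and this sharply constrains $\delta_N$ from above. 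A secondary subtlety is that the local law controls $H_{\textbf{Q}}$ only up to additive error $\ell (\log \ell)^{-1-c}$, which pins $\delta_N$ from below and leaves essentially no flexibility in its choice.
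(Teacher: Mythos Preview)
Your architecture is exactly the paper's: apply the monotone coupling on a strip containing $\mathscr{S}$, verify the boundary ordering deterministically on the bottom via \Cref{lrqdefinition}, on the sides via the $\Theta(\ell^2)$ deterministic offset, and probabilistically on the top by comparing drifts. Two points need correction.

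First, the local-law bound you quote for $\textbf{Q}$ is off. The estimate $(\log M)^{-1-c}$ in \eqref{mhuhv0estimate} is relative to the \emph{random} slope $\big(s(v_0;M), t(v_0;M)\big)$, which by \eqref{gammahnv0estimate} is only $(\log M)^{-c}$ close to the true $\nabla\mathcal{H}(N^{-1}v_0)$. So $\big|H_{\textbf{Q}}(x,\ell) - H_{\textbf{Q}}(x,0) - t\ell\big|$ is controlled only to order $\ell(\log\ell)^{-c}$, not $\ell(\log\ell)^{-1-c}$; this is precisely the $(\log N)^{-c_0}$ threshold the paper uses in defining its event $\Omega_{\textbf{Q}}$.

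Second, and more seriously, your choice $\delta_N = (\log\ell)^{-c/2}$ does not close the argument. To get $\mathbb{E}\big[H_{\textbf{P}}(x,\ell)\big] = H_{\textbf{P}}(x,0) + t_1\ell + o(\delta_N\ell)$ you need the type-2 lozenge density error (the $\gamma_N$ ultimately coming from the $\vartheta_N$ in \Cref{limitdeterminantnonintersecting}) to be $o\big((\log\ell)^{-c/2}\big)$, but that lemma provides no quantitative rate whatsoever---you correctly flag this as the ``principal obstacle'' but do not resolve it. The paper circumvents this in \Cref{betalbetar} by refusing to pin $\delta_N$ at a power of $\log\ell$: it sets the target drift separation to $2(\log\log N)^{-1}$, which any unquantified $\gamma_N \to 0$ eventually falls below, and then takes $\delta_N$ just large enough to realize that separation. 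The resulting gap $\ell(\log\log N)^{-1}$ still dominates both the corrected local-law error $\ell(\log N)^{-c_0}$ for $\textbf{Q}$ and the $O(\ell^{1/2})$ fluctuation from \Cref{walksexpectationnear}, with an additional $2\ell^{3/4}$ budget spent on the time window $[0,\ell^{3/4}]$ where \Cref{limitdeterminantnonintersecting} is not yet applicable.
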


	In order to establish \Cref{x1x2x}, we will apply \Cref{monotoneheightcouple}, to which end we must exhibit a coupling under which $H_{\textbf{P}} |_{\partial \mathscr{S}} \le H_{\textbf{Q}} |_{\partial \mathscr{S}} \le H_{\textbf{R}} |_{\partial \mathscr{S}}$ occurs with high probability. On the event $\big\{ q \in \mathcal{Z} (\ell) \big\}$ (recall \Cref{zl}), \Cref{lrqdefinition} will be seen to imply that $H_{\textbf{P}} \le H_{\textbf{Q}} \le H_{\textbf{R}}$ holds on the bottom, left, and right sides of $\partial \mathscr{S}$. 
	
	To establish this bound on the top side of $\partial \mathscr{S}$, we will approximate the drifts of the paths in $\textbf{P}$ and $\textbf{R}$. To do this, recall from above \Cref{heightpaths} in \Cref{NonIntersectingCorrelation} that the space-time locations of the right jumps performed by paths in a non-intersecting path ensemble $\textbf{E}$ coincide with the locations of type $2$ lozenges in the associated tiling $\mathscr{M} = \mathscr{M} (\textbf{E})$. The following definition, which is analogous to \Cref{xm} for $\mathscr{X} (\mathscr{M})$, provides notation for the set of such locations.  
	
	\begin{definition}
		
	\label{ym} 
	
	For any domain $R \subseteq \mathbb{T}$ and tiling $\mathscr{M} \in \mathfrak{E} (R)$ of $R$, let $\mathscr{Y} (\mathscr{M})$ denote the set of vertices $(x, y) \in \mathbb{V}(R)$ such that $\big( x, y + \frac{1}{2} \big)$ is the center of a type $2$ lozenge in $\mathscr{M}$. If $\mathscr{M} = \mathscr{M} (\textbf{E})$ is the tiling associated with a non-intersecting path ensemble $\textbf{E}$, then we abbreviate $\mathscr{Y} (\textbf{E}) = \mathscr{Y} \big( \mathscr{M} (\textbf{E}) \big)$.

	\end{definition}
	
	After suitable averaging over $(x, y)$, one might view $\mathbb{P} \big[ (x, y) \in \mathscr{Y} (\textbf{E}) \big]$ as prescribing the typical number of rightwards jumps in $\textbf{E}$ around $(x, y)$. When $\textbf{E} = \textbf{Q}$, the local law \Cref{heightlocal1} implies that this quantity (after averaging around $(0, 0)$) is approximately equal to $t$. Now we can establish the following proposition, which indicates the existence of a sequence $\delta$ in \Cref{x1x2x} such that $\mathbb{P} \big[ (x, y) \in \mathscr{Y} (\textbf{P}) \big]$ is slightly smaller than $t$ and $\mathbb{P} \big[ (x, y) \in \mathscr{Y} (\textbf{R}) \big]$ is slightly larger than $t$.

	\begin{prop}
		
		\label{betalbetar} 
		
		For any fixed real number $\varepsilon \in \big( 0, \frac{1}{4} \big)$, there exists a sequence $\delta = \delta (\varepsilon) = (\delta_1, \delta_2, \ldots ) \subset \mathbb{R}_{> 0}$ of real numbers tending to $0$ such that the following holds. Let $\ell = \lfloor N^{1 / 9} \rfloor$; adopt the notation of \Cref{regularestimate}; and suppose that $v_0 \in \mathcal{B}_{\varepsilon N / 2}$. Further set $\big( (s, t), \beta, \beta_1, \beta_2 \big)$ as in \eqref{stbeta12}; adopt the notation of \Cref{qunlambda} and \Cref{pensemblerdefinition}; and recall the event $\Gamma = \Gamma_{\ell}$ from \Cref{ait1t2b2}. Then, after restricting to the event $\Gamma^c$, we have  
		\begin{flalign}
		\label{sdeltanx1x2}
		\begin{aligned}
		\mathbb{P} \big[ (k, T) \in \mathscr{Y} (\textbf{\emph{P}}) \big] \le t - 2 (\log \log N)^{-1}; \qquad \mathbb{P} \big[ (k, T) \in \mathscr{Y} (\textbf{\emph{R}}) \big] \ge t + 2 (\log \log N)^{-1},
		\end{aligned}
		\end{flalign}
		
		\noindent for any integers $k \in [-\ell^3, \ell^3]$ and $T \in [\ell^{3 / 4}, \ell]$. 
	\end{prop}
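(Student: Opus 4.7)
The plan is to express $\mathbb{P}[(k,T)\in\mathscr{Y}(\textbf{P})]$ as a polynomial in the correlation kernel $K_{\beta_1;\textbf{p}}$ of \Cref{determinantnonintersecting}, approximate this kernel by the extended discrete sine kernel using \Cref{limitdeterminantnonintersecting}, and identify the resulting limit with the $t$-coordinate of the slope of the corresponding translation-invariant Gibbs measure. A short analysis of the non-intersecting path structure shows that $\{(k,T)\in\mathscr{Y}(\textbf{P})\}$ coincides with the three-point event $\{(k-1,T)\notin\mathscr{X}(\textbf{P}),\ (k,T)\in\mathscr{X}(\textbf{P}),\ (k,T+1)\notin\mathscr{X}(\textbf{P})\}$: since paths move right by at most one per step, the only way to have a path at $k$ at time $T+1$ under these conditions is a right jump from $k-1$ to $k$. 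Inclusion-exclusion combined with \Cref{determinantnonintersecting} then writes this probability as a fixed polynomial in the entries of the $3\times 3$ matrix $\bigl(K_{\beta_1;\textbf{p}}(u,t;v,s)\bigr)$ indexed by the three points $(k-1,T)$, $(k,T)$, $(k,T+1)$. Because the correlation functions of $\mathbb{P}_{\beta_1;\textbf{a}}$ are invariant under simultaneous integer shifts of $\textbf{a}$ and all observation points, I may replace $\textbf{p}$ by $\widetilde{\textbf{p}}:=\textbf{p}-k$ and each observation point by its image under $(-k,0)$, leaving only bounded offsets from the origin.

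Next, I apply \Cref{ait1t2b2} with $A=\ell$ (legitimate since $|k|\le\ell^3\le 2A^3$): on the event $\Gamma_\ell^c$, this yields $\mathfrak{D}(\widetilde{\textbf{p}};X,\infty)\le 384 c^{-1}\varepsilon^{-1}(\log X)^{-c}$ for every $X\ge\ell^{1/2}$, and $\bigl||\widetilde{\textbf{p}}\cap I|-(1-s)|I|\bigr|\le 5(\log\ell)^{-c}|I|$ for every interval $I\subset[-2\ell^3,2\ell^3]$ of length $\lfloor\ell^{1/2}\rfloor$. Setting $U=\lfloor\ell^{1/2}\rfloor$, $V=2\ell^3$, $N'$ equal to the length of $\widetilde{\textbf{p}}$ (which is of order $\ell^4$), and $\kappa_N:=C(\varepsilon)(\log\ell)^{-c}$ with $C(\varepsilon)$ sufficiently large, a direct check shows that $\kappa_N^{-1}<U<\kappa_N T<\kappa_N^2 V<\kappa_N^3 N'$ holds uniformly for $T\in[\ell^{3/4},\ell]$ and all large $N$, and that both hypotheses of \Cref{limitdeterminantnonintersecting} are verified with $\rho=1-s$. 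The lemma then supplies a sequence $\vartheta_N\to 0$ (depending only on $\varepsilon$ and an absolute bound $B$ on the offsets) such that
\[
\bigl|K_{\beta_1;\widetilde{\textbf{p}}}(x,T+a;y,T+b)-\mathcal{K}_{\xi_1}(x,a;y,b)\bigr|<\vartheta_N\quad\text{for every }|x|,|y|,|a|,|b|\le 2,
\]
where $\xi_1=\beta_1(1-\beta_1)^{-1}e^{\pi\mathbf{i} s}$. Substituting into the polynomial from the preceding paragraph yields $\bigl|\mathbb{P}[(k,T)\in\mathscr{Y}(\textbf{P})]-t_1\bigr|\le C_1\vartheta_N$ on $\Gamma_\ell^c$, where $t_1$ denotes the same polynomial evaluated on $\mathcal{K}_{\xi_1}$.

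The limit $t_1$ admits a direct interpretation: by translation invariance of $\mu_{\xi_1}$ and the defining identity $\mathbb{E}_{\mu_{s,t'}}[H(0,1)-H(0,0)]=t'$, it coincides with the $t$-coordinate of the slope of $\mu_{\xi_1}=\mu_{s,t_1}$, so $t_1\in(0,1-s)$ is the unique solution of $\beta_1/(1-\beta_1)=\sin(\pi t_1)/\sin(\pi(1-s-t_1))$. With $\beta$ as in \eqref{stbeta12} one has $t_1=t$ exactly when $\beta_1=\beta$, and a trigonometric computation yields $(d\beta_1/dt_1)|_{t_1=t}=\pi(1-\beta)^2\sin(\pi(1-s))/\sin^2(\pi(1-s-t))\ge c'(\varepsilon)>0$ uniformly for $(s,t)\in\mathcal{T}_\varepsilon$; hence $t_1\le t-c'(\varepsilon)\delta_N/2$ whenever $\delta_N$ is sufficiently small. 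Choosing $\delta_N:=\max\bigl\{10 C_1 c'(\varepsilon)^{-1}\vartheta_N,\,20\,c'(\varepsilon)^{-1}(\log\log N)^{-1}\bigr\}$ (which tends to $0$ since both $\vartheta_N\to 0$ and $\log\log N\to\infty$) secures both $c'(\varepsilon)\delta_N/2\ge 10(\log\log N)^{-1}$ and $C_1\vartheta_N\le c'(\varepsilon)\delta_N/10$ for large $N$, whence $\mathbb{P}[(k,T)\in\mathscr{Y}(\textbf{P})]\le t_1+C_1\vartheta_N\le t-2(\log\log N)^{-1}$ on $\Gamma_\ell^c$. The lower bound for $\textbf{R}$ follows by the identical argument with $\beta_2=\beta+\delta_N$ producing $t_2\ge t+c'(\varepsilon)\delta_N/2$. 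The principal difficulty is the mismatch between the single scale $\ell^{1/2}$ at which \Cref{ait1t2b2} controls the density of $\widetilde{\textbf{p}}$ and the nested scales demanded by \Cref{limitdeterminantnonintersecting}, which forces $\kappa_N\asymp(\log\ell)^{-c}$ and leaves $\vartheta_N\to 0$ only ineffective, limiting the achievable gap to the $(\log\log N)^{-1}$ appearing in the statement.
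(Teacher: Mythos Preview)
Your overall strategy matches the paper's: shift by $-k$, verify the hypotheses of \Cref{limitdeterminantnonintersecting} via \Cref{ait1t2b2}, obtain kernel convergence, identify the limit of $\mathbb{P}[(k,T)\in\mathscr{Y}]$ through the slope of the limiting Gibbs measure $\mu_{\xi_1}$, and then choose $\delta_N$ so that the slope shift exceeds the error. The parameter choices and the derivative computation are fine.

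There is, however, a genuine gap. The claimed identity
\[
\{(k,T)\in\mathscr{Y}\}=\{(k-1,T)\notin\mathscr{X},\ (k,T)\in\mathscr{X},\ (k,T+1)\notin\mathscr{X}\}
\]
is false: your argument only gives the inclusion $\supseteq$. For the other direction, take two adjacent paths at positions $k-1$ and $k$ at time $T$, both jumping right. Then $(k,T)\in\mathscr{Y}$ (the path at $k-1$ does jump to $k$), yet $(k,T)\notin\mathscr{X}$ since a path occupies $k$ at time $T$. So your three-point event is a \emph{strict} subset of $\{(k,T)\in\mathscr{Y}\}$, and its probability under $\mu_{\xi_1}$ is not the slope coordinate $t_1$ but rather $t_1$ minus the probability of such a double jump. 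Worse, $\{(k,T)\in\mathscr{Y}\}$ is not any finite cylinder event in $\mathscr{X}$: whether the path at $k-1$ jumps is determined recursively by whether the path at $k$ (if present) jumps, which depends on the path at $k+1$, and so on to the right until one first meets a site of $\mathscr{X}$.

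The fix---which is what the paper does, without writing an explicit formula---is to argue that $\{(k,T)\in\mathscr{Y}\}$ is a local observable of the tiling and hence of $\mathscr{X}$; uniform convergence of all bounded-window correlation functions (from \Cref{determinantnonintersecting} and the kernel bound of \Cref{limitdeterminantnonintersecting}) then forces $\mathbb{P}[(k,T)\in\mathscr{Y}(\textbf{P})]\to\mathbb{P}_{\mu_{\xi_1}}[(0,0)\in\mathscr{Y}]=t_1$. If you want to stay closer to your polynomial approach, truncate the recursion: the probability that it runs past $M$ steps is at most the probability that $(k+j,T)\notin\mathscr{X}$ for all $0\le j<M$, which is $\le(1-s+o(1))^M$ under both $\mathbb{P}_{\beta_1;\widetilde{\textbf p}}$ and $\mu_{\xi_1}$. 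The $M$-truncated event \emph{is} a fixed polynomial in kernel entries at $O(M)$ sites, hence converges by your argument; sending $M\to\infty$ slowly with $N$ recovers the conclusion.
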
 
	
	\begin{proof}
		
		The bounds in \eqref{sdeltanx1x2} will follow from a suitable application of \Cref{determinantnonintersecting} and \Cref{limitdeterminantnonintersecting}. To that end, first recall the kernel $K_{\textbf{a}; \beta}$ from \eqref{kxtys}; fix integers $k \in [-\ell^3, \ell^3]$; and $T \in [\ell^{3 / 4}, \ell]$; and let $\textbf{w}$ denote either the sequence $\textbf{p} - k$ or $\textbf{r} - k$. 
		
		Let us verify that $\textbf{w}$ satisfies the two assumptions listed in \Cref{limitdeterminantnonintersecting} on $\Gamma^c$. To do this, apply \Cref{ait1t2b2} (where the $A$ there is equal to $\ell$ here), which implies upon restriction to $\Gamma^c$ that $\textbf{w}$ satisfies the following two properties. First, denoting $c = \frac{1}{20000}$, we have that $\mathfrak{D} (\textbf{w}; X, \infty) < 384 c^{-1} \varepsilon^{-1} (\log X)^{-c}$ for any real number $X \ge \ell^{1 / 2}$. Second, for any interval $I \subset [-2 \ell^3, 2 \ell^3]$ of length $\lfloor \ell^{1 / 2} \rfloor$, we have that $\big( 1 - s - 5 (\log \ell)^{-c} \big) \ell^{1 / 2} \le |\textbf{w} \cap I| \le \big( 1 - s + 5 (\log \ell)^{-c} \big) \ell^{1 / 2}$. 
		
		Next, fix an arbitrary real number $\mathfrak{b} \in \big( \frac{\varepsilon}{4}, 1 - \frac{\varepsilon}{4} \big)$, and set $\zeta = \zeta_{\mathfrak{b}} = \mathfrak{b} (1 - \mathfrak{b})^{-1} e^{\pi \textbf{i} s}$. Then apply \Cref{limitdeterminantnonintersecting}, with the $c$ there equal to $\frac{\varepsilon}{4}$ here; the $\beta$ there equal to $\mathfrak{b}$ here; the $\rho$ there equal to $1 - s$ here; the $\kappa_N$ there equal to $(\log \log \ell)^{-1}$ here; the $U$ there equal to $\lfloor \ell^{1 / 2} \rfloor$ here; the $V$ there equal to $\ell^3$ here; the $\textbf{a}$ there equal to $\textbf{w}$ here; and the $N$ there equal to $m + n + 1$ here. As shown above, $\textbf{w}$ satisfies the two assumptions listed in that lemma, so there exist sequences $\vartheta = \vartheta (\varepsilon) = (\vartheta_1, \vartheta_2, \ldots ) \subset \mathbb{R}_{> 0}$ of real numbers tending to $0$ and $B = B (\varepsilon) = (B_1, B_2, \ldots ) \subset \mathbb{Z}_{\ge 1}$ of integers tending to $\infty$ such that 
		\begin{flalign}
		\label{kx1tt1x2t2} 
		\big| K_{\mathfrak{b}; \textbf{w}} (x_1, T + y_1; x_2, T + y_2) - \mathcal{K}_{\zeta} (x_1, y_1; x_2, y_2) \big| < \vartheta_N. 
		\end{flalign} 
		
		\noindent for any $x_1, x_2, y_1, y_2 \in \mathbb{Z}$ satisfying $|x_1|, |y_1|, |x_2|, |y_2| \le B_N$. 
		
		Now let $t_{\mathfrak{b}} \in (0, 1)$ be such that $\mathfrak{b} = \sin (\pi t_{\mathfrak{b}}) \big(\sin (\pi t_{\mathfrak{b}}) + \sin (\pi - \pi s - \pi t_{\mathfrak{b}}) \big)^{-1}$, and let $\textbf{W} \in \mathfrak{W}_{\textbf{w}}^{\ell}$ denote a random non-intersecting path ensemble sampled from the measure $\mathbb{P}_{\mathfrak{b}; \textbf{w}}$ from \Cref{betaprobability}. To establish \eqref{sdeltanx1x2}, we use the fact that $\mathcal{K}_{\zeta}$ is the kernel for the translation-invariant Gibbs measure $\mu_{\zeta} \in \mathfrak{P} (\mathbb{T})$, defined by \eqref{muxi}, with slope $(s, t_{\mathfrak{b}})$ (recall \Cref{InfiniteMeasures}). In particular, if $\mathscr{M}_{\zeta} \in \mathfrak{E} (\mathbb{T})$ is randomly sampled under $\mu_{\zeta}$, then $\mathbb{P} \big[ (x, y) \in \mathscr{Y} (\mathscr{M}_{\zeta}) \big] = t_{\mathfrak{b}}$ for any $(x, y) \in \mathbb{Z}^2$.  
		
		This fact, \Cref{determinantnonintersecting}, \eqref{kx1tt1x2t2}, and the limits $\lim_{N \rightarrow \infty} \vartheta_N = 0$ and $\lim_{N \rightarrow \infty} B_N = \infty$ together yield a sequence $\gamma = \gamma (\varepsilon) = (\gamma_1, \gamma_2, \ldots ) \subset \mathbb{R}_{> 0}$ of real numbers tending to $0$ such that 
		\begin{flalign}
		\label{xtx2estimate} 
		\Big| \mathbb{P} \big[ (0, T) \in \mathscr{Y} (\textbf{W}) \big] - t_{\mathfrak{b}} \Big| = \Big| \mathbb{P} \big[ (0, T) \in \mathscr{Y} (\textbf{W}) \big] - \mathbb{P} \big[ (0, T) \in \mathscr{Y} (\mathscr{M}_{\zeta}) \big] \Big| <  \gamma_N.	
		\end{flalign}
		
		Let $\delta = \delta (\varepsilon) = (\delta_1, \delta_2, \ldots ) \subset \mathbb{R}_{> 0}$ denote a sequence of real numbers tending to $0$ with the following property. If we denote $\beta_1 = \beta - \delta_N$ and $\beta_2 = \beta + \delta_N$, then 
		\begin{flalign*} 
		t_{\beta_1} < t - 2 (\log \log N)^{-1} - \gamma_N; \qquad t_{\beta_2} > t + 2 (\log \log N)^{-1} + \gamma_N.
		\end{flalign*}  
		
		\noindent The existence of this sequence $\delta$ and the fact that it only depends on $\varepsilon$ follow from the definition \eqref{stbeta12} of $\beta$ and the fact that $(s, t) \in \mathcal{T}_{\varepsilon}$, respectively. 
		 
		 Now set $\mathfrak{b} \in \big\{ \beta_1, \beta_2 \big\}$ in \eqref{xtx2estimate} depending on whether $\textbf{w} \in \{ \textbf{p} - k, \textbf{r} - k \}$, respectively. Since $\mathbb{P} \big[ (0, T) \in \mathscr{Y} (\textbf{W}) \big]$ is equal to $\mathbb{P} \big[ (k, T) \in \mathscr{Y} (\textbf{P}) \big]$ or $\mathbb{P} \big[ (k, T) \in \mathscr{Y} (\textbf{R}) \big]$ depending on whether $\textbf{w} = \textbf{p} - k$ or $\textbf{w} = \textbf{r} - k$, respectively, \eqref{xtx2estimate} yields \eqref{sdeltanx1x2}. 
	\end{proof} 
	
	Now we can establish \Cref{x1x2x}. 	
	
	\begin{proof}[Proof of \Cref{x1x2x}]

		Throughout this proof we adopt the notation of \Cref{betalbetar}. As mentioned above, we will establish this proposition using \Cref{monotoneheightcouple}, to which end we must exhibit a coupling between $(\textbf{P}, \textbf{Q}, \textbf{R})$ such that $H_{\textbf{P}} |_{\partial \mathscr{S}} \le H_{\textbf{Q}} |_{\partial \mathscr{S}} \le H_{\textbf{R}} |_{\partial \mathscr{S}}$ with high probability. To do this, recall that the event $\Gamma = \Gamma_{\ell}$ satisfies $\mathbb{P} [\Gamma] \le C \ell^{-10 D} \le C N^{-D}$ for some constant $C = C(\varepsilon, D) > 1$, since $\ell = \lfloor N^{1 / 9} \rfloor$. Let us condition on $\textbf{q}$ and further restrict to the event $\Gamma^c$, so that $\textbf{q} \in \mathcal{Z} (\ell)$ (recall \Cref{zl} and \Cref{ait1t2b2}). 
		
		Then \Cref{lrqdefinition} implies that $p_j \le q_j \le r_j$ for each $j \in [-m, n]$. Thus, since $\textbf{p} = \textbf{q} = \textbf{r}$ on $[- 2 \ell, 2 \ell]$, it follows from \Cref{heightpaths} (or \eqref{hxhyw}) that $H_{\textbf{P}} (x, 0) \le H_{\textbf{Q}} (x, 0) \le H_{\textbf{R}} (x, 0)$, for each $x \in [-\ell^3, \ell^3]$. This verifies $H_{\textbf{P}} \le H_{\textbf{Q}} \le H_{\textbf{R}}$ on the bottom side of $\partial \mathscr{S}$. 
		
		Let us next address the left side of $\partial \mathscr{S}$. To that end, \Cref{lrqdefinition} implies that $p_j \le q_j - \frac{\ell^2}{4}$ and $r_j \ge q_j + \frac{\ell^2}{4}$ when $q_j \le - \frac{\ell^3}{2}$. Moreover, the second part of \Cref{ait1t2b2} and the fact that $(s, t) \in \mathcal{T}_{\varepsilon}$ together imply for $N$ sufficiently large that $\frac{\varepsilon |I|}{2} \le |\textbf{p} \cap I|, |\textbf{q} \cap I|, |\textbf{r} \cap I| \le \big( 1 - \frac{\varepsilon}{2} \big) |I|$, for any interval $I \subseteq [-\ell^3, \ell^3]$ of length at least $\ell^{1 / 2}$. Together with \eqref{hxhyw}, these two facts imply that $H_{\textbf{P}} (-\ell^3, 0) \le H_{\textbf{Q}} (-\ell^3, 0) - \frac{\varepsilon \ell^2}{8} \le H_{\textbf{Q}} (-\ell^3, 0) - 2 \ell$ and $H_{\textbf{Q}} (-\ell^3, 0) + 2 \ell \le H_{\textbf{Q}} (-\ell^3, 0) + \frac{\varepsilon \ell^2}{8} \le H_{\textbf{R}} (-\ell^3, 0)$. Since $H_{\textbf{P}}$, $H_{\textbf{Q}}$, and $H_{\textbf{R}}$ are all $1$-Lipschitz, it follows that $H_{\textbf{P}} \le H_{\textbf{Q}} \le H_{\textbf{R}}$ on $\{ - \ell^3 \} \times [0, \ell] \subset \partial \mathscr{S}$; this addresses the left side of $\partial \mathscr{S}$. By similar reasoning, we have the same bound on $\{ \ell^3 \} \times [0, \ell]$, which addresses the right side of $\partial \mathscr{S}$.  
		
		To establish this bound with high probability on the top side of the boundary $[-\ell^3, \ell^3] \times \{ \ell \} \subset \partial \mathscr{S}$, we first use \Cref{betalbetar} to bound the expectation $\mathbb{E} \big[ H_{\textbf{W}} (k, \ell) - H_{\textbf{W}} (k, 0) \big]$ for each $\textbf{W} \in \{ \textbf{P}, \textbf{R} \}$ and $k \in [-\ell^3, \ell^3]$, and then we apply the concentration estimate \Cref{walksexpectationnear}. When $\textbf{W} = \textbf{P}$, \Cref{heightpaths}, \Cref{betalbetar}, and the fact that $H_{\textbf{P}}$ is $1$-Lipschitz together imply that 
		\begin{flalign}
		\label{hkphpk0}
		\mathbb{E} \big[ H_{\textbf{P}} (k, \ell) - H_{\textbf{P}} (k, 0) \big] \le 2 \ell^{3 / 4} + \ell \displaystyle\max_{T \in [\ell^{3 / 4}, \ell]} \mathbb{P} \big[ (k, T) \in \mathscr{Y} (\textbf{P}) \big] \le 2 \ell^{3 / 4} + \big( t - 2 (\log \log N)^{-1} \big) \ell, 
		\end{flalign}
		
		\noindent on $\Gamma^c$, for each $k \in [-\ell^3, \ell^3]$. Therefore, denoting the event 
		\begin{flalign*}
		\Omega_{\textbf{P}} = \Bigg\{ \displaystyle\max_{x \in [-\ell^3, \ell^3]} \big( H_{\textbf{P}} (x, \ell) - H_{\textbf{P}} (x, 0) \big) > \ell \bigg( t - \displaystyle\frac{1}{\log \log N} \bigg) \Bigg\} \cup \Gamma,
		\end{flalign*}
		
		\noindent we deduce from \Cref{walksexpectationnear}, \Cref{ait1t2b2}, \eqref{hkphpk0}, and the fact that $\ell = \lfloor N^{1 / 9} \rfloor$ that $\mathbb{P} [\Omega_{\textbf{P}}] \le C N^{-D}$, after increasing $C$ if necessary. Similarly, set $c_0 = \frac{1}{25000}$, and define the events
		\begin{flalign*}
		& \Omega_{\textbf{Q}} = \Bigg\{ \displaystyle\max_{x \in [-\ell^3, \ell^3]} \big| H_{\textbf{Q}} (x, \ell) - H_{\textbf{Q}} (x, 0) - t \ell \big| > \displaystyle\frac{\ell}{(\log N)^{c_0}	} \Bigg\} \cup \Gamma; \\
		& \Omega_{\textbf{R}} = \Bigg\{ \displaystyle\max_{x \in [-\ell^3, \ell^3]} \big( H_{\textbf{R}} (x, \ell) - H_{\textbf{R}} (x, 0) \big) < \ell \bigg( t + \displaystyle\frac{1}{\log \log N} \bigg) \Bigg\} \cup \Gamma.
		\end{flalign*}
		
		Implementing analogous reasoning as above, we obtain $\mathbb{P} [\Omega_{\textbf{R}}] \le C N^{-D}$. Furthermore, the local law \Cref{heightlocal1}, the fact that $\ell = \lfloor N^{1 / 9} \rfloor$, and a union bound together yield $\mathbb{P} [\Omega_{\textbf{Q}}] \le C N^{-D}$, after again increasing $C$ if necessary. Setting $\Omega = \Omega_{\textbf{P}} \cup \Omega_{\textbf{Q}} \cup \Omega_{\textbf{R}}$, it then follows that $\mathbb{P} [\Omega] \le 3 C N^{-D}$. 
		
		Now, recall from \Cref{lrqdefinition} that on $\Gamma^c$ we have $H_{\textbf{P}} (x, 0) \le H_{\textbf{Q}} (x, 0) \le H_{\textbf{R}} (x, 0)$, for each $x \in [-\ell^3, \ell^3]$. Hence, it follows that $H_{\textbf{P}} (x, \ell) \le H_{\textbf{Q}} (x, \ell) \le H_{\textbf{R}} (x, \ell)$ for each $x \in [-\ell^3, \ell^3]$ on $\Omega^c$, which in view of the above implies that $H_{\textbf{P}} |_{\partial \mathscr{S}} \le H_{\textbf{Q}} |_{\partial \mathscr{S}} \le H_{\textbf{R}} |_{\partial \mathscr{S}}$ on $\Omega^c$. Thus, \Cref{monotoneheightcouple} and \Cref{prprobability} (the latter of which quickly implies that $H_{\textbf{W}} |_{\mathscr{S}}$, conditional on $H_{\textbf{W}} |_{\partial \mathscr{S}}$, is uniformly distributed on $\mathfrak{G} \big( H_{\textbf{W}} |_{\partial \mathscr{S}} \big)$, for each $\textbf{W} \in \{ \textbf{P}, \textbf{R} \}$) yields a mutual coupling between $(\textbf{P}, \textbf{Q}, \textbf{R})$ with $\mathbb{P} \big[ H_{\textbf{P}} |_{\mathscr{S}} \le H_{\textbf{Q}} |_{\mathscr{S}} \le H_{\textbf{R}} |_{\mathscr{S}} \big] \ge 1 - \mathbb{P} [\Omega] \ge 1 - 3 C N^{-D}$. 
	\end{proof}

	\subsection{A Perturbative Kernel Estimate}
	
	\label{EstimateKernel}
	
	We would next like to refine the coupling from \Cref{x1x2x} to ensure with high probability that $H_{\textbf{P}} = H_{\textbf{Q}} = H_{\textbf{R}}$ on some large neighborhood $\mathscr{A}$ of $(0, 0)$. To do this, we will bound the expected difference $\mathbb{E} \big[ H_{\textbf{R}} (x, y) - H_{\textbf{P}} (x, y) \big]$ for each $(x, y)$ near $(0, 0)$. Since $\textbf{P}$ and $\textbf{R}$ are sampled according to $\mathbb{P}_{\beta_1; \textbf{p}}$ and $\mathbb{P}_{\beta_2; \textbf{r}}$, respectively (recall \Cref{betaprobability}), \Cref{determinantnonintersecting} indicates that $\mathbb{E} \big[ H_{\textbf{R}} (x, y) - H_{\textbf{P}} (x, y) \big]$ can be expressed through the correlation kernel $K_{\beta; \textbf{a}} (x, t; y, s)$ from \eqref{kxtys}. Thus, to bound this expected difference, we will require a bound on $|K_{\beta_2; \textbf{r}} - K_{\beta_1; \textbf{p}}|$. 
	
	In this section we establish the following proposition, which provides such an estimate on $|K_{\beta; \textbf{a}} - K_{\beta'; \textbf{a}'}|$ in the more general setting when $|\beta - \beta'|$ is small and $\textbf{a} = \textbf{a}'$ in a neighborhood of $0$. In what follows, the parameter $\ell$ will dictate the size of this neighborhood, and we will also impose further assumptions (the second and third one below) on $\textbf{a}$ and $\textbf{a}'$ that essentially bounds the associated quantities $\mathfrak{D}$ (recall \Cref{dqut1t2}) and the global densities of $\textbf{a}$ and $\textbf{a}'$.
	
	\begin{prop}
		
		\label{continuityk} 
		
		For any fixed real number $\varepsilon \in \big( 0, \frac{1}{4} \big)$ there exists a constant $C = C(\varepsilon) > 1$ such that the following holds. Fix real numbers $c, \delta, B, \beta, \beta' \in \mathbb{R}_{> 0}$ and an integer $\ell > C$ satisfying 
		\begin{flalign*} 
		c, \delta \in (0, 1); \qquad B \in (1, \ell^{1 / 2}); \qquad \beta, \beta' \in (\varepsilon, 1 - \varepsilon); \qquad |\beta - \beta'| \le \delta.
		\end{flalign*} 
		
		\noindent Further let $\textbf{\emph{a}} = (a_{-m}, a_{1 - m}, \ldots , a_n) \in \mathbb{W}$ and $\textbf{\emph{a}}' = (a_{-m}', a_{1 - m}', \ldots , a_n') \in \mathbb{W}$ denote two integer sequences with $a_0 \le 0 < a_1$ and $a_0' \le 0 \le a_1'$, satisfying the following three assumptions. 
		
		\begin{enumerate}
			
			\item We have that $\textbf{\emph{a}} \cap [-\ell, \ell] = \textbf{\emph{a}}' \cap [-\ell, \ell]$. 
			
			\item For any $v \in \mathbb{Z}$ and $X, Y \in \mathbb{R}$ such that $|v| \le 3 B \le X \le Y \le \max \{ - a_{-m}, a_n \}$, we have that 
			\begin{flalign*} 
			\displaystyle\max \big\{ \mathfrak{D} (\textbf{\emph{a}} - v; X, Y), \mathfrak{D} (\textbf{\emph{a}}' - v; X, Y) \big\} < (\log X)^{-c}.
			\end{flalign*} 
			
			\item For each integer $j \in [B, n]$, we have that $a_j, a_j' \in \big( (1 + \varepsilon) j, \varepsilon^{-1} j \big)$ and, for each integer $j \in [-m, -B]$, we have that $a_j, a_j' \in \big( \varepsilon^{-1} j, (1 + \varepsilon) j \big)$.
		\end{enumerate} 
		
		\noindent If $x, y \in \mathbb{Z}$ and $t, s \in \mathbb{Z}_{\ge 0}$ satisfy $|x|, |y|, t, s \le B$, then we have that 
		\begin{flalign}
		\label{kxtys12} 
		\begin{aligned} 
		& \displaystyle\max \Big\{ \big| K_{\beta; \textbf{\emph{a}}} (x, t; y, s) \big|, \big| K_{\beta'; \textbf{\emph{a}}'} (x, t; y, s) \big| \Big\} < (CB)^{CB}; \\
		& \big| K_{\beta; \textbf{\emph{a}}} (x, t; y, s) - K_{\beta'; \textbf{\emph{a}}'} (x, t; y, s) \big| < (CB)^{CB} \big( (\log \ell)^{-c / CB} + \delta^{1 / CB} \big).
		\end{aligned} 
		\end{flalign}

	\end{prop}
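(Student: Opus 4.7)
The plan is to exploit the explicit double-contour integral \eqref{kxtys} defining $K_{\beta;\textbf{a}}$, separating the dependence on $\beta$ from that on $\textbf{a}$ and estimating both perturbatively. Only three factors in the integrand depend on $(\beta, \textbf{a})$: the product $\Pi(z,w) = \prod_j (z-a_j)/(w-a_j)$, the exponential $((1-\beta)/\beta)^{w-s}$, and (through them) the placement of the $w$-contour. The remaining factors $(z-y+1)_{s-1}/(w-x)_{t+1}$ and $\sin(\pi w)/(\sin(\pi z)(w-z))$ are universal. I would start by deforming the $w$- and $z$-contours in \Cref{determinantnonintersecting} to contours of diameter $O(B)$ that keep $|w|,|z| \le CB$ while remaining at distance $\ge \tfrac{1}{4}$ from $\mathbb{Z}$ and at distance $\ge 1$ from each other. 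Since $\{x-t,\ldots,x\}\cap\textbf{a} = \{x-t,\ldots,x\}\cap\textbf{a}'$ (by assumption 1, using $|x|,t \le B \le \ell^{1/2} \ll \ell$), the same contour works for both kernels.

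Next I would bound the individual factors of the integrand on these contours: the Pochhammer ratio is trivially $\le (CB)^{CB}$; the factor $((1-\beta)/\beta)^{w-s}$ is bounded by $\varepsilon^{-CB}$; and the trigonometric factor is $O(1)$. The delicate quantity is $|\Pi(z,w)|$. Writing $\log \Pi(z,w) = \sum_j \log(1+(z-w)/(w-a_j))$ and splitting the sum into $|a_j|\le 3B$ and $|a_j|>3B$: the near part is bounded crudely by $O(B\log B)$ using assumption 3 to count $a_j$'s and to ensure the contour stays away from them, while the far part is bounded by Taylor expansion, giving $|\log \Pi| \le |z-w|\,\mathfrak{D}(\textbf{a};3B,\infty) + O(B^2 \sum_{|a_j|>3B} a_j^{-2})$, which by assumption 2 and a standard tail estimate is $O(B)$. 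Exponentiating and integrating yields the first bound in \eqref{kxtys12}.

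For the difference, I would separately bound the $\beta$-change and the $\textbf{a}$-change of the integrand. For the $\beta$-change, $|((1-\beta)/\beta)^{w-s}-((1-\beta')/\beta')^{w-s}| \le CB\,\varepsilon^{-CB}\delta$ by a direct derivative estimate. For the $\textbf{a}$-change, I would use assumption 1 to cancel all terms with $|a_j| \le \ell$, leaving
\begin{equation*}
\log\Pi(z,w)-\log\Pi'(z,w) = \sum_{|a_j|>\ell}\log\tfrac{z-a_j}{w-a_j} - \sum_{|a_j'|>\ell}\log\tfrac{z-a_j'}{w-a_j'}.
\end{equation*}
Since $|a_j|,|a_j'|\ge \ell \gg B$, Taylor expansion is valid and the leading terms reduce, after shifting to origin $v \in \{0\}$, to differences of $\mathfrak{D}(\textbf{a};\ell,\infty)$-type quantities, both bounded by $(\log \ell)^{-c}$. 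Higher-order terms contribute $O(B^2/\ell)$, negligible. Then $|e^{\log\Pi}-e^{\log\Pi'}| \le |\log\Pi-\log\Pi'|\max(|\Pi|,|\Pi'|) \le (CB)^{CB}(\log\ell)^{-c}$, and combining with the $\beta$-change via the triangle inequality and multiplying by the remaining universal factors (and integrating over contours of length $O(B)$) yields a bound $(CB)^{CB}((\log\ell)^{-c}+\delta)$, from which \eqref{kxtys12} follows (the $1/CB$ in the stated exponent is a convenient, weaker form: for $(\log\ell)^{-c}$ or $\delta$ less than $(CB)^{-C^2B^2}$, the stronger form immediately gives the stated one, and otherwise the stated bound is vacuous).

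The main obstacle is the combined control of $\Pi(z,w)$: the product has factors ranging over a potentially very large index set, each individually of size $O(1)$, and the naive bound is exponential in the number of $a_j$'s. The nontrivial input is that assumption 2 controls the \emph{signed} sum $\sum_{X\le|a_j|\le Y}a_j^{-1}$, not the sum of absolute values, so the Taylor expansion must be organized so that the first-order terms (which give the large cancellation) are captured by $\mathfrak{D}$, while higher-order terms are controlled absolutely. Getting this split robust under the contour deformation — i.e., uniformly for $(z,w)$ with $|z|,|w|\le CB$ and for shifts $|v|\le 3B$ — is the technical heart of the argument.
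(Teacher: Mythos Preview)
Your overall strategy is right and matches the paper's: control the product $\Pi(z,w)$ by splitting into near and far indices, use Taylor expansion plus the $\mathfrak{D}$ hypothesis for the far part, and treat the $\beta$-dependence separately. However, there is a genuine gap concerning the $z$-contour.

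The $z$-contour in \eqref{kxtys} is the \emph{infinite} vertical line $\{\Re z = y - s + \tfrac12\}$; it cannot be deformed to a bounded contour of diameter $O(B)$. Your bound $|\log\Pi(z,w)| = O(B)$ is only valid for $|z| = O(B)$, and the integrand does not have compact $z$-support. The product $\prod_j (z - a_j)$ has degree $m+n+1$, which in the application is of order $\ell^4$, so naive polynomial growth would force the effective support of the $z$-integral out to $|\Im z| \sim \ell^4 \log \ell$, far beyond $O(B)$. What makes the integral tractable is a more refined growth estimate: using assumption 3 (the two-sided density bound $(1+\varepsilon)|j| \le |a_j| \le \varepsilon^{-1}|j|$), one shows
\[
\log \Bigl| \prod_{|a_j| > M} \frac{z - a_j}{\Re z - a_j} \Bigr| \;\le\; \frac{\pi |\Im z|}{1+\varepsilon},
\]
so that the exponential growth of $\Pi$ in $|\Im z|$ has rate strictly less than $\pi$. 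This is then beaten by the decay $|\sin(\pi w)/\sin(\pi z)| \lesssim e^{5\pi B - \pi |\Im z|}$ (your claim that this factor is ``$O(1)$'' is wrong in both directions: it is exponentially large in $B$ and exponentially small in $|\Im z|$), yielding a net integrable bound of the form $(CB)^{CB}(|\Im z|+1)^{CB} e^{-\varepsilon |\Im z|}$ and hence the first estimate in \eqref{kxtys12}.

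This same issue affects the difference bound. Your perturbation estimate $|\log\Pi - \log\Pi'| \lesssim B (\log\ell)^{-c} + B^2/\ell$ relies on Taylor expansion with error $O(|z|^2/\ell)$; for $|\Im z|$ comparable to $\ell^{1/2}$ this error is $O(1)$ and the perturbation argument breaks down. The paper handles this by introducing a cutoff $r$ in $|\Im z|$: for $|\Im z| \le r$ the perturbation argument applies (with errors growing polynomially in $r$), while for $|\Im z| > r$ one uses the pointwise decay established above. Optimizing over $r$ yields the stated bound; the exponents $-c/CB$ and $1/CB$ are artifacts of this optimization, not merely a convenient weakening of a stronger bound that your argument would give directly.
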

	
	\begin{proof}
		
		Throughout this proof, we assume that $\ell > C > 400 \varepsilon^{-4}$. For any $w, z \in \mathbb{C}$, denote the integrand appearing in the definition \eqref{kxtys} of $K(x, t; y, s)$ by 
		\begin{flalign*}
		\mathcal{I} (\textbf{a}; \beta) = \mathcal{I}(\textbf{a}; \beta; w; z) = \displaystyle\frac{(z - y + 1)_{s - 1}}{(w - x)_{t + 1}} \displaystyle\frac{1}{w - z} \displaystyle\frac{\sin (\pi w)}{\sin (\pi z)} \left( \displaystyle\frac{1 - \beta}{\beta} \right)^{w - s} \displaystyle\prod_{j = -m}^n \displaystyle\frac{z - a_j}{w - a_j},
		\end{flalign*}
		
		\noindent and define $\mathcal{I} (\textbf{a}'; \beta') = \mathcal{I}(\textbf{a}'; \beta'; w; z)$ similarly. It is quickly verified that we may select contours for $w$ and $z$ in \eqref{kxtys} such that the bounds  
		\begin{flalign}
		\label{wz} 
		|w| \le [-5B, 5B]; \qquad d (w, \mathbb{Z}) \ge \displaystyle\frac{1}{4}; \qquad |\Re z| \le 3B; \qquad |w - z| \ge \frac{1}{16}; \qquad \Re z - \frac{1}{2} \in \mathbb{Z},
		\end{flalign}
		
		\noindent all hold, and such that the length of the $w$-contour is at most $40B$; here, we recall that $d (w, \mathbb{Z}) = \min_{k \in \mathbb{Z}} |w - k|$. Thus, we begin by estimating $\big| \mathcal{I} (\textbf{a}; \beta) \big|$ and $\big| \mathcal{I} (\textbf{a}'; \beta') \big|$ assuming \eqref{wz}. 
		
		Let $U, V \in \mathbb{Z}$ satisfy $-20 \varepsilon^{-1} B \le -U \le V \le 20 \varepsilon^{-1} B$. Then, under \eqref{wz}, we have the six bounds 
		\begin{flalign}
		\label{wzxystestimates} 
		\begin{aligned} 
		& \left| \displaystyle\frac{1}{w - z} \right| \le 16; \qquad \left| \displaystyle\prod_{j = -U}^{V} \displaystyle\frac{z - a_j}{w - a_j}\right| \le (24 \varepsilon^{-2} B)^{50 B / \varepsilon} \big( |\Im z| + 1 \big)^{50B / \varepsilon}; \qquad \left| \displaystyle\frac{\beta}{1 - \beta} \right|^{w - s} \le \varepsilon^{-6B}; \\
		& \left| \displaystyle\frac{(z - y + 1)_{s - 1} }{(w - x)_{t + 1}} \right| \le 16 \big( 5B + |\Im z| \big)^B;  \qquad \left| \displaystyle\frac{\sin (\pi w) }{\sin (\pi z)} \right| \le 2 e^{5 \pi B -\pi |\Im z|}; \qquad  \left| \displaystyle\prod_{j = -m}^n \displaystyle\frac{1}{w - a_j}\right| \le 16.
		\end{aligned} 
		\end{flalign}
		
		\noindent  Here, $[-U, V] \subseteq [- 20 \varepsilon^{-1} B, 20 \varepsilon^{-1} B] \subseteq [- \varepsilon \ell, \varepsilon \ell] \subseteq [-m, n]$ due to the third property satisfied by $(\textbf{a}, \textbf{a}')$ and the facts that $B \le \ell^{1 / 2}$ and $\ell > 400 \varepsilon^{-4}$; thus, the quantity on the left side of the second inequality in \eqref{wzxystestimates} is defined. That bound then holds since $\prod_{j = -U}^V \big| w - a_j \big|^{-1} \le 16$ (as $d (w, \mathbb{Z}) \ge \frac{1}{4}$ and the elements of $\textbf{a}$ are mutually distinct) and $\prod_{j = -U}^V \big| z - a_j \big| \le \big( |\Im z| + |\Re z| + 20 \varepsilon^{-2} B \big)^{41B / \varepsilon} \le (24 \varepsilon^{-2} B)^{50B / \varepsilon} \big( |\Im z| + 1 \big)^{50 B / \varepsilon}$, where we have used the facts that $|\Re z| \le 3B$ and $|a_j| \in [-20 \varepsilon^{-2} B, 20 \varepsilon^{-2} B]$ for each $j \in [-U, V]$. Moreover, the first inequality in \eqref{wzxystestimates} holds since $|w - z| \ge \frac{1}{16}$; the third holds since $\varepsilon \le \frac{\beta}{1 - \beta} \le \varepsilon^{-1}$ and $|w - s| \le 6B$; the fourth since $\big| (w - x)_{t + 1} \big| \ge \frac{1}{16}$ and $(z - y + 1)_{s - 1} \le \big( |z| + |y| + B \big)^B \le \big( 5B + |\Im z| \big)^B$; the fifth since $\Re z - \frac{1}{2} \in \mathbb{Z}$ and $|w| \le 5B$; and the sixth since $d (w, \mathbb{Z}) \ge \frac{1}{4}$. 
		
		Now, in order to bound $\big| \mathcal{I} (\textbf{a}; \beta) \big|$ (the analogous estimate on $\big| \mathcal{I} (\textbf{a}'; \beta') \big|$ is very similar), we will first replace the terms $z - a_j$ appearing there with $\Re z - a_j$ for $|j|$ sufficiently large. To that end, fix $U, V \in \mathbb{Z}$ and $M \in \mathbb{R}_{> 0}$ such that $20 \varepsilon^{-1} B \le M \le 20 \varepsilon^{-2} B$; such that $-20 \varepsilon^{-1} B \le -U \le V \le 20 \varepsilon^{-1} B$; and such that $a_{-U - 1} < -M < a_{-U} \le a_V \le M < a_{V + 1}$. The third condition satisfied by $\textbf{a}$ and $\textbf{a}'$ guarantees that such $U, V, M$ exist. Denoting $b_j = a_j - \Re z$ for each $j \in [-m, n]$, we then have that 
		\begin{flalign*}
		\log \Bigg| \displaystyle\prod_{j = -m}^{-U - 1} \displaystyle\frac{a_j - z}{a_j - \Re z} \Bigg| +  & \log \Bigg| \displaystyle\prod_{j = V + 1}^n \displaystyle\frac{a_j - z}{a_j - \Re z} \Bigg| \\
		&  = \displaystyle\frac{1}{2} \displaystyle\sum_{j = -m}^{-U - 1} \log \bigg( \displaystyle\frac{(\Im z)^2 + b_j^2}{b_j^2} \bigg) + \displaystyle\frac{1}{2} \displaystyle\sum_{j = V + 1}^n \log \bigg( \displaystyle\frac{(\Im z)^2 + b_j^2}{b_j^2} \bigg). 
		\end{flalign*} 
		
		\noindent Recalling that $|\Re z| \le 3B$; that $a_{-U - 1} < -M \le -20 \varepsilon^{-1} B$; that $a_{V + 1} > M \ge 20 \varepsilon^{-1} B$; and that $(1 + \varepsilon) |j| \le |a_j| \le \varepsilon^{-1} |j|$ for each $j \in [-m, n]$, it follows upon setting $\zeta = (1 + \varepsilon)^{-1} |\Im z|$ that 
		\begin{flalign}
		\label{qjzqjz2} 
		\log \Bigg| \displaystyle\prod_{j = -m}^{-U} \displaystyle\frac{a_j - z}{a_j - \Re z} \Bigg| +  \log \Bigg| \displaystyle\prod_{j = V}^n \displaystyle\frac{a_j - z}{a_j - \Re z} \Bigg| & \le \displaystyle\sum_{j = B}^{\infty} \log \bigg( \displaystyle\frac{\zeta^2 + j^2}{j^2}  \Bigg) \le \zeta \displaystyle\int_0^{\infty} \log \left( \displaystyle\frac{1}{x^2} + 1 \right) dx = \pi \zeta,	
		\end{flalign}
		
		\noindent where we have used the fact that $\log \big( 1 + x^{-2} \big)$ is decreasing in $x$. 
		
		Next, since $|w - \Re z| \le 8B$ and $|a_j - w| \ge |a_j| - 5B \ge 15B$ for $j \le -U$ or $j \ge V$, we have from a Taylor expansion that
		\begin{flalign*}
		\Bigg| \log \displaystyle\prod_{j = -m}^{-U} & \bigg( \displaystyle\frac{a_j - w}{a_j - \Re z} \bigg) + \log \displaystyle\prod_{j = V}^n \bigg( \displaystyle\frac{a_j - w}{a_j - \Re z} \bigg) - (\Re z - w) \displaystyle\sum_{j = -m}^{-U} \displaystyle\frac{1}{a_j - \Re z} - (\Re z - w) \displaystyle\sum_{j = V}^n \displaystyle\frac{1}{a_j - \Re z} \Bigg| \\
		& \le 3 \big| w - \Re z \big|^2 \left( \displaystyle\sum_{j = -m}^{-U} \displaystyle\frac{1}{|a_j - \Re z|^2} + \displaystyle\sum_{j = V}^n \displaystyle\frac{1}{|a_j - \Re z|^2} \right) \le 192 B^2 \displaystyle\sum_{j = 15B}^{\infty} \displaystyle\frac{1}{j^2} \le 14 B.
		\end{flalign*} 
		
		\noindent Moreover, since $\Re z \le 3B$, the second condition imposed on $\textbf{a}$ and $\textbf{a}'$ implies that 
		\begin{flalign*}
		\Bigg| (\Re z - w) \displaystyle\sum_{j = -m}^{-U} \displaystyle\frac{1}{a_j - \Re z} + (\Re z - w) \displaystyle\sum_{j = V}^n \displaystyle\frac{1}{a_j - \Re z} \Bigg| = |w - \Re z| \mathfrak{D} (\textbf{a} - \Re z; M, Y_0) < 8 B (\log M)^{-c},
		\end{flalign*}
		
		\noindent where $Y_0 = \max \{ -a_{-m}, a_n \}$ and we have used the fact that $|w - \Re z| \le 8B$. Thus, 
		\begin{flalign}
		\label{qjwqjz} 
		\Bigg| \log \displaystyle\prod_{j = -m}^{-U} \bigg( \displaystyle\frac{a_j - \Re z}{a_j - w} \bigg) + \log \displaystyle\prod_{j = V}^n \bigg( \displaystyle\frac{a_j - \Re z}{a_j - w} \bigg) \Bigg| & \le 22 B.
		\end{flalign}
		
		\noindent Combining \eqref{qjzqjz2}, \eqref{qjwqjz}, the fact that $1 - \frac{1}{1 + \varepsilon} \ge \frac{\varepsilon}{2}$, the six bounds listed in \eqref{wzxystestimates}, and applying the entirely analogous reasoning for $\textbf{a}'$, we deduce that
		\begin{flalign}
		\label{estimatei1}  
		\max \Big\{ \big| \mathcal{I} (\textbf{a}; \beta) \big|, \big| \mathcal{I} (\textbf{a}'; \beta') \big| \Big\} \le \varepsilon^{-450 B / \varepsilon} B^{60 B / \varepsilon} \big( |\Im z| + 1 \big)^{60 B / \varepsilon} e^{- \varepsilon |\Im z|}.
		\end{flalign}
		
		\noindent Since the contours in \eqref{kxtys} can be taken such that $w$ and $z$ satisfy \eqref{wz} everywhere on these contours and such that the contour for $w$ in \eqref{kxtys} is of length at most $40 B$, the first bound in \eqref{kxtys12} follows from integrating \eqref{estimatei1} over $w$ and $z$ (and the fact that $\binom{t - s}{x - y}, \frac{t!}{(s - 1)!} \le B^B$).  
		
		To establish the second bound in \eqref{kxtys12}, fix some $0 < r < \frac{\varepsilon \ell}{10}$ (to be specified later) and observe by integrating \eqref{estimatei1} over $w$ and $z$ such that $|\Im z| > r$ and setting $\xi = |\Im z|$ that 
		\begin{flalign}
		\label{kqbeta2} 
		\begin{aligned} 
		\big| K_{\textbf{a}; \beta} (x, t; y, s) - K_{\textbf{a}'; \beta'} (x, t; y, s) \big| & \le \varepsilon^{-500 B / \varepsilon} B^{70 B / \varepsilon} \displaystyle\int_r^{\infty}  \big( \xi + 1 \big)^{60 B / \varepsilon} e^{- \varepsilon \xi} d \xi \\
		& \qquad + 80 r B^{B + 1} \displaystyle\sup_{w, z} \big| \mathcal{I} (\textbf{a}; \beta; w; z) - \mathcal{I} (\textbf{a}'; \beta'; w; z) \big|,
		\end{aligned} 
		\end{flalign}
		
		\noindent where $w, z \in \mathbb{C}$ are taken over all pairs of complex numbers satisfying \eqref{wz}. 
		
		To estimate the second term on the right side of \eqref{kqbeta2}, let us fix $U_0, V_0 \in \mathbb{Z}$ and $M_0 \in \mathbb{R}$ such that $\varepsilon \ell \le M_0 \le \ell$; such that $U_0, V_0 \in [\varepsilon \ell, \ell]$; and such that $a_{-U_0 - 1} < -M_0 \le a_{-U_0} \le a_{V_0} \le M_0 < a_{V_0 + 1}$. Then, set 
		\begin{flalign*}
		\mathcal{J} (\textbf{a}; \beta) = \mathcal{J} (\textbf{a}; \beta; w; z) = \displaystyle\frac{(z - y + 1)_{s - 1}}{(w - x)_{t + 1}} \displaystyle\frac{1}{w - z} \displaystyle\frac{\sin (\pi w)}{\sin (\pi z)} \displaystyle\prod_{j = -U_0}^{V_0} \displaystyle\frac{z - a_j}{w - a_j},
		\end{flalign*}
		
		\noindent and so the first assumption on $\textbf{a}$ and $\textbf{a}'$ implies that 
		\begin{flalign}
		\label{jiidentity} 
		\begin{aligned} 
		& \mathcal{I} (\textbf{a}; \beta) = \mathcal{J} (\textbf{a}; \beta) \left( \displaystyle\frac{1 - \beta}{\beta} \right)^{w - s}  \displaystyle\prod_{j = V_0 + 1}^n \displaystyle\frac{z - a_j}{w - a_j} \displaystyle\prod_{j = -m}^{-U_0 - 1} \displaystyle\frac{z - a_j}{w - a_j}; \\
		& \mathcal{I} (\textbf{a}'; \beta') = \mathcal{J} (\textbf{a}; \beta) \left( \displaystyle\frac{1 - \beta'}{\beta'} \right)^{w - s}  \displaystyle\prod_{j = V_0 + 1}^n \displaystyle\frac{z - a_j'}{w - a_j'} \displaystyle\prod_{j = -m}^{-U_0 - 1} \displaystyle\frac{z - a_j'}{w - a_j'}.
		\end{aligned} 
		\end{flalign} 
		
		\noindent Then, through entirely analogous reasoning as used to deduce \eqref{estimatei1}, we have that 
		\begin{flalign}
		\label{jiestimate} 
		\big| \mathcal{J} (\textbf{a}; \beta) \big| \le \varepsilon^{-450 B / \varepsilon} B^{60 B / \varepsilon} (r + 1)^{60 B / \varepsilon}.
		\end{flalign}
		
		\noindent Now let us estimate the difference between the terms in $\mathcal{I} (\textbf{a}; \beta)$ and $\mathcal{I} (\textbf{a}'; \beta)$ not contained in $\mathcal{J} (\textbf{a}; \beta) = \mathcal{J} (\textbf{a}'; \beta)$. To do this first observe that since $|\beta - \beta'| \le \delta$; $\beta, \beta' \in (\varepsilon, 1 - \varepsilon)$; and $|w - s| \le |w| + s \le 6B$, we have 
		\begin{flalign}
		\label{beta1beta2estimate} 
		\Bigg| \bigg( \displaystyle\frac{1 - \beta}{\beta} \bigg)^{w - s} - \bigg( \displaystyle\frac{1 - \beta'}{\beta'} \bigg)^{w - s} \Bigg| \le  \varepsilon^{-2} \big( |w| + s \big) \delta\displaystyle\max_{v \in [\beta, \beta']} \left| \displaystyle\frac{1 - v}{v}\right|^{\Re (w - s)} \le 6B \varepsilon^{-6B - 2} \delta.
		\end{flalign}
		
		\noindent Next, we bound the quantity 
		\begin{flalign*} 
		\left| \log \displaystyle\prod_{j = -m}^{-U_0 - 1} \displaystyle\frac{a_j - z}{a_j - w} + \log \displaystyle\prod_{j = V_0 + 1}^n \displaystyle\frac{a_j - z}{a_j - w} \right|.
		\end{flalign*} 
		
		\noindent To that end, a Taylor expansion and the fact that $M_0 \ge \varepsilon \ell \ge 2 (3B + r) \ge 2|z|$ together imply 
		\begin{flalign*} 
		& \left| \log \displaystyle\prod_{j = -m}^{-U_0 - 1} \displaystyle\frac{a_j - z}{a_j} + \log \displaystyle\prod_{j = V_0 + 1}^n \displaystyle\frac{a_j - z}{a_j} \right| \\
		 & \qquad \qquad \le |z| \left| \displaystyle\sum_{j = -m}^{-U_0 - 1} \displaystyle\frac{1}{a_j} + \displaystyle\sum_{j = V_0 + 1}^n \displaystyle\frac{1}{a_j} \right| + 2 |z|^2 \left( \displaystyle\sum_{j = -m}^{-U_0 - 1} \displaystyle\frac{1}{a_j^2} + \displaystyle\sum_{j = V_0 + 1}^n \displaystyle\frac{1}{a_j^2} \right) \\
		& \qquad \qquad \le |z| \mathfrak{D} (\textbf{a}; M_0, Y_0) + 4 |z|^2 \displaystyle\sum_{j = M_0 + 1}^{\infty} \displaystyle\frac{1}{j^2} \le |z| ( \log M_0)^{-c} + 4 \varepsilon^{-1} |z|^2 \ell^{-1},
		\end{flalign*} 
		
		\noindent where we recall that $Y_0 = \max \{ - a_{-m}, a_n \}$; the analogous bound holds if $z$ is replaced by $w$, $a_j$ by $a_j'$, or both. Summing over all four such replacements and using the fact that $M_0 \ge  \varepsilon \ell \ge \ell^{1 / 2}$ yields 
		\begin{flalign}
		\label{qjzqjw} 
		\begin{aligned} 
		& \left| \log \displaystyle\prod_{j = -m}^{-U_0} \displaystyle\frac{a_j - z}{a_j - w} + \log \displaystyle\prod_{j = V_0}^n \displaystyle\frac{a_j - z}{a_j - w} \right| \le 4 |z| (\log \ell)^{-c} + 8 \varepsilon^{-1} |z|^2 \ell^{-1}; \\
		& \left| \log \displaystyle\prod_{j = -m}^{-U_0} \displaystyle\frac{a_j' - z}{a_j' - w} + \log \displaystyle\prod_{j = V_0}^n \displaystyle\frac{a_j' - z}{a_j' - w} \right| \le 4 |z| (\log \ell)^{-c} + 8 \varepsilon^{-1} |z|^2 \ell^{-1}.
		\end{aligned} 
		\end{flalign} 
		
		\noindent Let us assume that $r$, $\ell$, and $z$ satisfy $4 |z| (\log \ell)^{-c} + 8 \varepsilon^{-1} |z|^2 \ell^{-1} \le \frac{1}{2}$ (in addition to the previous constraint $r < \frac{\varepsilon \ell}{10}$). Then, the fact that $e^x \le 1 + 2x$ for $x \le 1$, \eqref{jiidentity}, \eqref{jiestimate}, \eqref{beta1beta2estimate}, \eqref{qjzqjw}, and the third estimate in \eqref{wzxystestimates} together yield
		\begin{flalign*}	
		\big| \mathcal{I} (\textbf{a}; \beta) - \mathcal{I} (\textbf{a}'; \beta') \big| \le \varepsilon^{-500 B / \varepsilon} B^{60 B / \varepsilon} (r + 1)^{60 B / \varepsilon} \big( |z| (\log \ell)^{-c} + |z|^2 \ell^{-1} + \delta), 
		\end{flalign*}
		
		\noindent which upon insertion into \eqref{kqbeta2} (using the fact that $|z| \le 3B + |\Im z|$) implies the existence of a constant $C_0 = C_0 (\varepsilon) > 1$ such that 
		\begin{flalign}
		\label{kqestimate3}
		\begin{aligned} 
		\big| K_{\textbf{a}; \beta} (x, t; y, s) - K_{\textbf{a}'; \beta'} (x, t; y, s) \big| & \le  \varepsilon^{-500 B / \varepsilon} B^{130 B / \varepsilon} \Bigg( \displaystyle\int_r^{\infty} (\xi + 1)^{60 B / \varepsilon} e^{- \varepsilon \xi} d \xi \\
		& \quad + (r + 1)^{70 B / \varepsilon} \big( (\log \ell)^{-c} + \ell^{-1} + \delta) \Bigg) \\
		& \le (C_0 B)^{C_0 B} \Big( e^{-\varepsilon r / 2} + (r + 1)^{C_0 B} \big( (\log \ell)^{-c} + \delta \big) \Big).
		\end{aligned} 
		\end{flalign}
		
		\noindent We now deduce the proposition by taking $r = \big( (\log \ell)^{-c} + \delta \big)^{-1 / 2 C_0 B}$ in \eqref{kqestimate3}. 
	\end{proof}

	\subsection{Proof of \Cref{plpprcouple}}
	
	\label{ProofPCouple} 
	
	In this section we establish \Cref{plpprcouple}. To that end, we begin with the following lemma, which under the notation of \Cref{x1x2x} bounds the expected difference $\mathbb{E} \big[ H_{\textbf{R}} (x, y) - H_{\textbf{P}} (x, y) \big]$ if $(x, y)$ in some large neighborhood of $(0, 0)$.

	\begin{prop} 
		
		\label{pqpqinitialexpectation} 
		
		Adopting the notation of \Cref{x1x2x}, there exists a sequence $A = A (\varepsilon) = (A_1, A_2, \ldots ) \subset \mathbb{Z}_{\ge 1}$ of integers tending to $\infty$ such that
		\begin{flalign}
		\label{expectationhrhp} 
		\Big| \mathbb{E} \big[ H_{\textbf{\emph{R}}} (x, y) \big] - \mathbb{E} \big[ H_{\textbf{\emph{P}}} (x, y) \big] \Big| \le A_N^{-5}, \qquad \text{for each $(x, y) \in [-A_N, A_N] \times [0, A_N] \cap \mathbb{Z}^2$.}
		\end{flalign}

	\end{prop}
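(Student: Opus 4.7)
The plan is to express $\mathbb{E}[H_{\textbf{R}}(x, y) - H_{\textbf{P}}(x, y)]$ as a short sum of differences of local correlation functions of the determinantal processes $\mathscr{X}(\textbf{P})$ and $\mathscr{X}(\textbf{R})$, and then to control those differences using the perturbative kernel estimate \Cref{continuityk}.

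From \Cref{heightpaths} one derives both the horizontal formula $H_{\textbf{E}}(x, y) - H_{\textbf{E}}(0, y) = \sum_{z = 0}^{x - 1} \textbf{1}[(z, y) \in \mathscr{X}(\textbf{E})]$ (for $x \ge 0$, with a sign flip otherwise), and the vertical formula
\begin{flalign*}
H_{\textbf{E}}(0, y) - H_{\textbf{E}}(0, 0) = \sum_{s = 0}^{y - 1}\textbf{1}\big[(-1, s) \in \mathscr{Y}(\textbf{E})\big],
\end{flalign*}
interpreting a type-$2$ lozenge centered at $(-1, s + \tfrac{1}{2})$ as the unique rightward path-jump from $-1$ to $0$ between times $s$ and $s + 1$. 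Because \Cref{lrqdefinition} guarantees $\textbf{p} \cap [-2\ell, 2\ell] = \textbf{r} \cap [-2\ell, 2\ell]$, the deterministic contributions at $y = 0$ cancel for $|x| \le A_N \ll \ell$, and we arrive at
\begin{flalign*}
\mathbb{E}\big[H_{\textbf{R}}(x, y) - H_{\textbf{P}}(x, y)\big] = \sum_{z = 0}^{x - 1}\Delta \mathbb{P}\big[(z, y) \in \mathscr{X}\big] + \sum_{s = 0}^{y - 1}\Delta \mathbb{P}\big[(-1, s) \in \mathscr{Y}\big],
\end{flalign*}
where $\Delta \mathbb{P}[\cdot \in \mathcal{S}] := \mathbb{P}[\cdot \in \mathcal{S}(\textbf{R})] - \mathbb{P}[\cdot \in \mathcal{S}(\textbf{P})]$. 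Each of the $\le 2 A_N$ summands is a bounded-degree polynomial in entries of the kernel $K_{\beta_i; \textbf{a}_i}$ evaluated at sites within $[-A_N, A_N]^2$: the $\mathscr{X}$-terms are diagonal entries by \Cref{determinantnonintersecting}, while the $\mathscr{Y}$-terms can be written in such form via Kerov's complementation principle applied to $\mathscr{X}$ in a bounded spacetime neighborhood of $(-1, s)$.

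Next condition on $\textbf{q}$ (hence on $\textbf{p}, \textbf{r}$) and restrict to the high-probability event $\Gamma_A^c$ of \Cref{ait1t2b2}, taken with $A = A_N^2$. On this event the three bounds of \Cref{ait1t2b2} precisely verify the three hypotheses of \Cref{continuityk} for $\textbf{p}$ and $\textbf{r}$ (with $c = \tfrac{1}{20000}$ replaced by a slightly smaller constant to absorb the prefactor $384 c^{-1}\varepsilon^{-1}$, and $B = A_N$). Applying \Cref{continuityk} with $\delta = 2 \delta_N$ produces a constant $C_0 = C_0(\varepsilon) > 1$ and the uniform estimate
\begin{flalign*}
\big|K_{\beta_1; \textbf{p}}\big| \vee \Xi_N^{-1}\big|K_{\beta_2; \textbf{r}} - K_{\beta_1; \textbf{p}}\big| \le (C_0 A_N)^{C_0 A_N}, \qquad \Xi_N := (\log \ell)^{-c / (C_0 A_N)} + (2 \delta_N)^{1 / (C_0 A_N)},
\end{flalign*}
on kernel arguments in $[-A_N, A_N]^2$. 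Expanding each polynomial $\mathscr{Y}$-expression and summing the $O(A_N)$ contributions bounds the total expectation, on $\Gamma_A^c$, by $A_N \cdot (C_0 A_N)^{3 C_0 A_N} \Xi_N$.

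To conclude, I will choose $A = (A_1, A_2, \ldots )$ tending to infinity slowly enough (for instance $A_N = \lfloor (\log \log \log N)^{1/2} \rfloor$, with $\delta_N$ decaying no faster than $(\log \log N)^{-1}$, which is permitted by \Cref{betalbetar}) so that the preceding bound is $\le A_N^{-5}$ for large $N$. The contribution from the bad event $\Gamma_A$ is controlled by the Lipschitz bound $|H_{\textbf{E}}(x, y)| \le |x| + y \le 2 A_N$ combined with $\mathbb{P}[\Gamma_A] \le C \ell^{-D}$ for any fixed $D$, and so is negligible. The main obstacle is the balancing act in choosing $A_N$: it must grow fast enough for $A_N^{-5}$ to provide meaningful decay, yet slowly enough that the prefactor $(C_0 A_N)^{C_0 A_N}$ in \Cref{continuityk} is absorbed by both $(\log \ell)^{-c / (C_0 A_N)} \asymp \exp(-c \log \log N / (C_0 A_N))$ and $\delta_N^{1 / (C_0 A_N)}$. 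Since $\ell$ grows polynomially in $N$ and $\delta_N$ remains at our disposal within the constraints of \Cref{betalbetar}, such a choice exists.
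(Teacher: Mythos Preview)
Your strategy is the paper's: express $\mathbb{E}[H_{\textbf{R}} - H_{\textbf{P}}]$ as $O(A_N)$ differences of local correlation functions, verify the hypotheses of \Cref{continuityk} via \Cref{ait1t2b2}, and balance against a slowly growing $A_N$. The paper uses only the vertical identity $H(x,y) - H(x,0) = \sum_{T} \textbf{1}[(x,T) \in \mathscr{Y}]$ together with $H_{\textbf{P}}(x,0) = H_{\textbf{R}}(x,0)$, and then, instead of controlling each $\mathscr{Y}$-probability directly, bounds the total-variation distance between $\mathscr{X}(\textbf{P})|_{\mathscr{B}}$ and $\mathscr{X}(\textbf{R})|_{\mathscr{B}}$ by summing determinant differences over all subsets of $\mathscr{B} = [-B,B] \times [0,B]$; since $\mathscr{X}$ locally determines the tiling (hence $\mathscr{Y}$), the TV bound transfers. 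That route costs an extra factor $2^{O(B^2)}$, giving prefactor $(CB)^{CB^3}$ rather than your $(CB)^{O(CB)}$, and the paper's final choice is $A_N = \min\{\log\log\log N, \log\log\delta_N^{-1}\}$.

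Two small corrections to your write-up. First, the $\mathscr{Y}$-expression is justified not by Kerov complementation (which yields the determinantal structure of $\mathscr{X}^c$, not of $\mathscr{Y}$) but by exactly this local-determination argument: $\textbf{1}[(x,s) \in \mathscr{Y}]$ is an inclusion--exclusion polynomial in the indicators $\textbf{1}[(\cdot,\cdot)\in\mathscr{X}]$ at $O(1)$ nearby sites, hence its expectation is a bounded-degree polynomial in kernel entries. Second, your suggested $A_N = \lfloor(\log\log\log N)^{1/2}\rfloor$ does not quite balance against $\delta_N \sim (\log\log N)^{-1}$, because the prefactor $(C_0 A_N)^{3C_0 A_N}$ carries a $\log A_N$ in its exponent while $\delta_N^{1/(C_0 A_N)}$ only contributes $\exp(-(\log\log\log N)^{1/2}/C_0)$; a slightly smaller power of $\log\log\log N$, or the $\delta_N$-adaptive choice the paper makes, fixes this.
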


	\begin{proof}
		
		Condition on $\textbf{q}$, and recall the event $\Gamma = \Gamma_{\ell}$ from \Cref{ait1t2b2}. Since that corollary yields a constant $C_0 = C_0 (\varepsilon) > 1$ for which $\mathbb{P} [\Gamma] \le C_0 \ell^{-5}$, and since $\big| H_{\textbf{W}} (x, y) \big| \le 2 A_N$ for each $\textbf{W} \in \{ \textbf{P}, \textbf{R} \}$ and $(x, y) \in [-A_N, A_N] \times [0, A_N]$, to establish \eqref{expectationhrhp} we may restrict to the event $\Gamma^c$. 
		
		Then $\textbf{q} \in \mathcal{Z} (\ell)$ (recall \Cref{zl}), so \Cref{lrqdefinition} implies that $H_{\textbf{P}} (x, 0) = H_{\textbf{R}} (x, 0)$ for each $x \in [-\ell, \ell]$. Therefore, recalling the set $\mathscr{Y} (\textbf{E})$ from \Cref{ym}, we have by \Cref{heightpaths} that 
		\begin{flalign}
		\label{hrhpy}
		\begin{aligned} 
		\mathbb{E} \big[ H_{\textbf{R}} (x, y) \big] - \mathbb{E} \big[ H_{\textbf{P}} (x, y) \big] & = \Big( \mathbb{E} \big[ H_{\textbf{R}} (x, y) \big] - \mathbb{E} \big[ H_{\textbf{R}} (x, 0) \big] \Big) - \Big( \mathbb{E} \big[ H_{\textbf{P}} (x, y) \big] - \mathbb{E} \big[ H_{\textbf{P}} (x, 0) \big] \Big) \\
		& = \displaystyle\sum_{T = 0}^y \Big( \mathbb{P} \big[ (x, T) \in \mathscr{Y} (\textbf{R}) \big] - \mathbb{P} \big[ (x, T) \in \mathscr{Y} (\textbf{P}) \big] \Big),
		\end{aligned} 
		\end{flalign}
		
		\noindent for any integers $x \in [-\ell, \ell]$ and $y \in [0, \ell]$. 
		
		To bound the right side of \eqref{hrhpy}, recall the set $\mathscr{X}$ from \Cref{xm}. Since $\mathscr{X}$ determines the tiling $\mathscr{M} |_{\mathscr{A}}$, it also determines $\mathscr{Y}$, and so it suffices to bound the total variation distance between the random variables $\mathscr{X} (\textbf{R})$ and $\mathscr{X} (\textbf{P})$ (after restricting to a neighborhood of $(0, 0)$). To that end, recall from \Cref{determinantnonintersecting} that the correlation functions of $\mathscr{X}$ are governed by the kernel $K$ from \eqref{kxtys}. So we begin by bounding $\big| K_{\beta_2; \textbf{r}} (x, t; y, s) - K_{\beta_1; \textbf{p}} (x, t; y, s) \big|$. 
		
		To do this, fix a real number $B \in (1, \ell^{1 / 2})$ and let $\Gamma_B$ denote the event from \Cref{ait1t2b2}, which satisfies $\mathbb{P} [\Gamma_B] \le C_0 B^{-5}$. Then apply \Cref{continuityk}, with the $\varepsilon$ there equal to $\frac{\varepsilon}{6}$ here; the $c$ there equal to $\frac{1}{25000}$ here; the $B$ there equal to $A_N$ here; the $(\textbf{a}, \textbf{a}')$ there equal to $(\textbf{p}, \textbf{r})$ here; and the $(\beta, \beta')$ there equal to $(\beta_1, \beta_2)$ here. Observe that $\textbf{p}$ and $\textbf{r}$ satisfy the three assumptions listed in that proposition, on the event $\Gamma_B^c$. Indeed, \Cref{lrqdefinition} implies that they satisfy the first assumption listed there on $\Gamma_B^c$, since then $\textbf{p}, \textbf{r} \in \mathcal{Z} (B) \subset \mathcal{Z} (\ell)$ (recall \Cref{zl}). That they also satisfy the second and third follows from the first and third parts of \Cref{ait1t2b2}, respectively. 
		
		Then, on $\Gamma_B^c$ this proposition yields a constant $C = C (\varepsilon) > 1$ such that 
		\begin{flalign}
		\label{kbeta1beta2pr}
		\begin{aligned}
		& \max \Big\{ \big| K_{\beta_1; \textbf{p}} (x, t; y, s) \big|, \big| K_{\beta_2; \textbf{r}} (x, t; y, s) \big| \Big\} \le (C B)^{CB}; \\
		& \big| K_{\beta_1; \textbf{p}} (x, t; y, s) - K_{\beta_2; \textbf{r}} (x, t; y, s) \big| \le (CB)^{CB} \big( (\log N)^{-1 / CB} + \delta_N^{1 / CB} \big),
		\end{aligned}
		\end{flalign}
		
		\noindent for any $x, y \in \mathbb{Z}$ and $t, s \in \mathbb{Z}_{\ge 0}$ with $|x|, |y|, t, s \le B$; here, we have used the fact that $\ell = \lfloor N^{1 / 9} \rfloor$ (and have recalled the notation $\delta_N$ from the statement of \Cref{x1x2x}).
		
		Next, for any sets $\textbf{x} = (x_1, x_2, \ldots , x_k) \subset \mathbb{Z}$ and $\textbf{y} = (y_1, y_2, \ldots,  y_k) \subset \mathbb{Z}_{\ge 0}$, let $\textbf{K}_{\textbf{x}, \textbf{y}}$ denote the $k \times k$ matrix whose $(i, j)$ entry is equal to $K_{\beta_1; \textbf{p}} (x_i, y_i; x_j, y_j)$. Similarly, let $\textbf{K}_{\textbf{x}, \textbf{y}}'$ denote the $k \times k$ matrix whose $(i, j)$ entry is equal to $K_{\beta_2; \textbf{r}} (x_i, y_i; x_j, y_j)$. Then, setting $\mathscr{B} = [-B, B] \times [0, B] \subset \mathbb{Z}^2$, it follows from \eqref{kbeta1beta2pr} that
		\begin{flalign}
		\label{determinantkestimate} 
		\displaystyle\max_{(x_j, y_j) \in \mathscr{B}} |\det \textbf{K}_{\textbf{x}, \textbf{y}} - \det \textbf{K}_{\textbf{x}, \textbf{y}}'| \le (12 CB)^{18 CB^3} \big( (\log N)^{-1 / CB} + \delta_N^{1 / CB} \big),
		\end{flalign} 
		
		\noindent for sufficiently large $N$ and on $\Gamma_B^c$. Summing \eqref{determinantkestimate} over all (at most $2^{6B^2}$) pairs of sequences $\textbf{x} = (x_1, x_2, \ldots , x_k) \in [-B, B]$ and $\textbf{y} = (y_1, y_2, \ldots , y_k) \subseteq [0, B]$ such that $(x_i, y_i) \ne (x_j, y_j)$ for $i \ne j$, and using the fact that $\mathbb{P} [\Gamma_B] \le C_1 B^{-5}$, it follows from \Cref{determinantnonintersecting} that the total variation distance between $\mathscr{X} \big( \textbf{P} |_{\mathscr{B}} \big)$ and $\mathscr{X} \big( \textbf{R} |_{\mathscr{B}} \big)$ is at most $C_1 B^{-5} + (24 CB)^{18CB^3} \big( (\log N)^{-1 / CB} + \delta_N^{1 / CB} \big)$.
		
		Since $\mathscr{X}$ determines $\mathscr{Y}$, the same estimate holds for the total variation distance between $\mathscr{Y} \big( \textbf{P} |_{\mathscr{B}} \big)$ and $\mathscr{Y} \big( \textbf{R} |_{\mathscr{B}} \big)$. So, \eqref{hrhpy} implies whenever $(x, y) \in \mathscr{B}$ that 
		\begin{flalign*}
		\mathbb{E} \big[ H_{\textbf{R}} (x, y) \big] - \mathbb{E} \big[ H_{\textbf{P}} (x, y) \big] \le 2 C_1 B^{-4} + (24 CB)^{19 CB^3} \big( (\log N)^{-1 / CB} + \delta_N^{1 / CB} \big),
		\end{flalign*}
		
		\noindent due to the deterministic bound $\big| H_{\textbf{P}} (x, y) \big|, \big| H_{\textbf{R}} (x, y) \big| \le 2B$. Then, we deduce the proposition upon setting $A_N = B = \min \{ \log \log \log N, \log \log \delta_N^{-1} \}$.
	\end{proof}

	Now we can establish \Cref{plpprcouple}.

	\begin{proof}[Proof of \Cref{plpprcouple}]
		
		By \Cref{x1x2x}, there exists a constant $C_1 = C_1 (\varepsilon) > 1$ and a mutual coupling between $(\textbf{P}, \textbf{Q}, \textbf{R})$ under which $\mathbb{P} \big[ \Omega_1 \big] \le C_1 N^{-2}$, where we have denoted the event $\Omega_1^c = \big\{ H_{\textbf{P}} |_{\mathscr{S}} \le H_{\textbf{Q}} |_{\mathscr{S}} \le H_{\textbf{R}} |_{\mathscr{S}} \big\}$ and recalled that $\mathscr{S} = [-\ell^3, \ell^3] \times [0, \ell]$. Additionally, \Cref{pqpqinitialexpectation} yields a sequence $A = (A_1, A_2, \ldots ) \subset \mathbb{Z}_{\ge 1}$ of integers tending to $\infty$ such that $\mathbb{E} \big[ H_{\textbf{R}} (x, y) \big] - \mathbb{E} \big[ H_{\textbf{P}} (x, y) \big] \le A_N^{-5}$, for each $(x, y) \in \mathscr{A} = [-A_N, A_N] \times [0, A_N]$. Denoting $\varsigma_N = 48 A_N^{-3}$ for each $N \in \mathbb{Z}_{\ge 1}$, it follows that $\varsigma = \varsigma (\varepsilon) = (\varsigma_1, \varsigma_2, \ldots ) \subset \mathbb{R}_{> 0}$ is a sequence of real numbers tending to $0$ such that $\mathbb{E} \big[ H_{\textbf{R}} (x, y) \big] - \mathbb{E} \big[ H_{\textbf{P}} (x, y) \big] \le \frac{\varsigma_N}{48 A_N^2}$, for each $(x, y) \in \mathscr{A}$. We may assume that $A_N < N^{1 / 20}$ and that $\varsigma_N > N^{-1 / 20}$. 
		
		Then, since $\mathbb{P}[\Omega_1] \le C_1 N^{-2}$ and since the $1$-Lipschitz properties of $H_{\textbf{P}}$ and $H_{\textbf{R}}$ imply the deterministic estimate $\big| H_{\textbf{R}} (x, y) - H_{\textbf{P}} (x, y) \big| \le 4A_N$ for each $(x, y) \in \mathscr{A}$, it follows that 
		\begin{flalign*} 
		0 & \le \mathbb{E} \big[ H_{\textbf{R}} (x, y) | \Omega_1^c \big] - \mathbb{E} \big[ H_{\textbf{P}} (x, y) | \Omega_1^c \big] \\
		& = \Big( \mathbb{E} \big[ H_{\textbf{R}} (x, y) \big] - \mathbb{E} \big[ H_{\textbf{P}} (x, y) \big] \Big) \mathbb{P}[\Omega_1^c]^{-1} - \bigg( \mathbb{E} \Big[ \big( H_{\textbf{R}} (x, y) - H_{\textbf{P}} (x, y) \big) \textbf{1}_{\Omega_1} \Big] \bigg)  \mathbb{P} [\Omega_1^c]^{-1} \\
		& \le \displaystyle\frac{\varsigma_N}{24 A_N^2} + 8 C_1 A_N N^{-2} \le \displaystyle\frac{\varsigma_N}{12 A_N^2},
		\end{flalign*} 
		
		\noindent for each $(x, y) \in \mathscr{A}$ and sufficiently large $N$. Therefore, since $|\mathscr{A}| \le 6 A_N^2$, we deduce that 
		\begin{flalign*} 
		\displaystyle\sum_{(x, y) \in \mathscr{A}} \mathbb{E} \big[ H_{\textbf{R}} (x, y) - H_{\textbf{P}} (x, y) | \Omega_1^c \big] \le \displaystyle\frac{\varsigma_N}{2}.
		\end{flalign*} 
		
		Thus, a Markov estimate and the fact that $H_{\textbf{P}} |_{\mathscr{A}}  \le H_{\textbf{R}} |_{\mathscr{A}}$ on $\Omega_1^c$ together yield an event $\Omega_2 \subseteq \Omega_1^c$ with $\mathbb{P} [\Omega_2] \le \frac{\varsigma_N}{2}$, such that $H_{\textbf{P}} |_{\mathscr{A}} = H_{\textbf{R}} |_{\mathscr{A}}$ on $\Omega_2^c \cap \Omega_1^c$. Letting $\Omega = \Omega_1 \cup \Omega_2$ then implies $\mathbb{P} [\Omega] \le \frac{\varsigma_N}{2} + C_1 N^{-2} \le \varsigma_N$ for sufficiently large $N$. Furthermore, since $H_{\textbf{P}} |_{\mathscr{A}} \le H_{\textbf{Q}} |_{\mathscr{A}} \le H_{\textbf{R}} |_{\mathscr{A}}$ on $\Omega_1^c$, we must have $H_{\textbf{P}} |_{\mathscr{A}} = H_{\textbf{Q}} |_{\mathscr{A}} = H_{\textbf{R}} |_{\mathscr{A}}$ on $\Omega^c$. Hence $ \mathbb{P} \big[ H_{\textbf{P}} |_{\mathscr{A}} = H_{\textbf{Q}} |_{\mathscr{A}} = H_{\textbf{R}} |_{\mathscr{A}} \big] \ge \mathbb{P} [\Omega^c] \ge 1 - \varsigma_N$, and so $\mathbb{P} \big[ \textbf{P} |_{\mathscr{A}} = \textbf{Q} |_{\mathscr{A}} = \textbf{R} |_{\mathscr{A}} \big] \ge 1 - \varsigma_N$. 
	\end{proof}

	\section{A Scale Reduction Estimate} 
	
	\label{Local}
	
	By the content of \Cref{ConvergenceProof} and \Cref{ProofHHlHr}, in order to establish \Cref{localconverge} it suffices to prove the local law \Cref{heightlocal1}. This will proceed by induction on (the logarithm of) the scale $M$. In particular, after establishing an initial estimate that consists of verifying \eqref{gammahnv0estimate} and \eqref{mhuhv0estimate} when $M \sim \frac{N}{(\log N)^c}$ is sufficiently large, we will show that if the bounds \eqref{gammahnv0estimate} and \eqref{mhuhv0estimate} hold on some scale $M$ then they also hold (with a mild error) on the smaller scale $\frac{M}{8}$. In this section we establish this type of ``scale reduction estimate,'' given by \Cref{estimaten2n} below; using this result, we will later prove \Cref{heightlocal1} in \Cref{EstimateHLocalProof}. 
	
	We begin in \Cref{LawGlobal2} by stating two global laws for lozenge tilings, with effective error rates, and a H\"{o}lder estimate satisfied by maximizers of $\mathcal{E}$ with suitable boundary data, which will be established in \Cref{GlobalLaw}, \Cref{GlobalEstimate2}, and \Cref{ProofGradientEstimateu} below. Next, we state the scale reduction estimate \Cref{estimaten2n} in \Cref{Estimaten9n}; there, we also provide some preliminary definitions and results that will be used in its proof. We then prove \Cref{estimaten2n} in \Cref{Estimaten9nProof}.

	\subsection{Effective Global Laws and a \texorpdfstring{$\mathcal{C}^{2, \alpha}$}{} Estimate} 
	
	\label{LawGlobal2}

	Recall that \Cref{hnh} provided a global law for height functions associated with uniformly random lozenge tilings. For the proof of \Cref{estimaten2n}, it will be useful to have an effective version of that result, which makes the dependence of the error $\varpi$ on $N$ there explicit. In this section we state two different such effective global laws, given by \Cref{heightapproximate} and \Cref{estimateboundaryheight} below, which will be established in \Cref{GlobalLaw} and \Cref{GlobalEstimate2}, respectively. 
	
	The former (\Cref{heightapproximate}) holds in greater generality, in that it applies to arbitrary boundary data subject to \Cref{regularestimate}, while the latter (\Cref{estimateboundaryheight}) only applies to boundary data whose associated limit shape does not exhibit frozen facets. However, the error is smaller in \Cref{estimateboundaryheight}, which will make it useful in the proof of \Cref{estimaten2n}. Therefore, we will also require a condition under which the assumptions in \Cref{estimateboundaryheight} can be verified; this will be given by \Cref{euv1v2estimategradient}, which provides gradient and H\"{o}lder estimates for maximizers of $\mathcal{E}$ under suitable boundary data. 
	
	The first of these two effective global laws is given by the following result, whose proof will be provided in \Cref{ProofApproximateGlobal} below. In what follows, we adopt the notation and assumptions of \Cref{regularestimate}, so that $\nabla \mathcal{H} (N^{-1} v_0) \in \mathcal{T}_{\varepsilon}$, meaning that $N^{-1} v_0$ is in the liquid region for $\mathcal{H}$.

	\begin{thm}
		
		\label{heightapproximate} 	
		
		For any fixed $\varepsilon \in \big( 0, \frac{1}{4} \big)$, there exists a constant $C = C(\varepsilon) > 1$ such that the following holds. Adopting the notation and assumptions of \Cref{regularestimate}, we have that 
		\begin{flalign*}
		\mathbb{P} \Bigg[ \displaystyle\max_{u \in \mathcal{B}_{\varepsilon N} (v_0)} \bigg|  N^{-1} \big( H (u) - H (v_0) \big) - \Big( \mathcal{H} \big( N^{-1} u \big) - \mathcal{H} \big( N^{-1} v_0 \big) \Big)  \bigg| > (\log N)^{- 1/ 400} \Bigg] < C e^{-N}. 
		\end{flalign*}	
		
	\end{thm}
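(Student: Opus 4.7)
The plan is to follow the entropy-counting strategy of \cite{VPDT} but implemented with quantitative estimates throughout, just as indicated in the outline of \Cref{Tilings1}. By \Cref{hvuexpectation}, the random height function $H$ concentrates around its mean $\mathbb{E}[H]$ at every vertex, so it suffices to show that $N^{-1}\mathbb{E}[H(u)]$ approximates $\mathcal{H}(N^{-1}u)$ (up to a global constant absorbed into $H(v_0)$) with $C^0$-error at most $\tfrac{1}{2}(\log N)^{-1/400}$ on $\mathcal{B}_{\varepsilon N}(v_0)$. The deviation would then be upgraded to a uniform estimate in $u$ via \Cref{hvuexpectation} and a union bound over the $O(N^2)$ vertices, which is painless given the $e^{-N}$ failure probability claimed.

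To obtain the mean bound, I would argue by contradiction via a partition-function comparison. Fix a mesoscopic scale $\ell = (\log N)^{a}$ for some small $a>0$ to be tuned, and tile $\mathcal{B}_{\varepsilon N}(v_0)$ by squares of side length~$\ell$. For each admissible profile $F: \overline{\mathfrak R} \to \mathbb R$, let $\mathcal N(F;\ell,\eta)$ denote the number of tilings whose normalized height function is within $\eta$ of $F$ in each square $Q$ of the mesh. The torus counting estimate \Cref{numbersigma} together with a subadditive patching across squares gives
\[
  \log \mathcal N(F;\ell,\eta) \;\le\; N^2 \mathcal{E}(F) + o(N^2),
\]
as soon as $\eta \to 0$ and $\ell/N \to 0$ sufficiently slowly, with the error controlled by boundary mismatches between adjacent squares. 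The key analytic new input is an \emph{effective Rademacher theorem}: every $1$-Lipschitz function (in particular $H/N$ or $N^{-1}\mathbb{E}[H]$) agrees, outside a controlled exceptional set, with a piecewise linear function of slope $(s_Q,t_Q)\in\overline{\mathcal T}$ on each $Q$, up to an error that decays with $\ell$. This allows one to replace the full $C^0$ proximity condition by the discrete data $\{(s_Q,t_Q)\}_Q$, of which there are at most $\exp\bigl(C(N/\ell)^2\log(N/\ell)\bigr)$ possibilities.

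The comparison step then goes as follows. The total number of tilings of $R_N$ is at least $\exp\bigl(N^2\mathcal{E}(\mathcal{H})-o(N^2)\bigr)$ by applying \Cref{numbersigma} at $F=\mathcal{H}$. On the other hand, for any admissible $F$ with $\|F-\mathcal{H}\|_{C^0(\mathcal{B}_\varepsilon(N^{-1}v_0))}\ge \delta := (\log N)^{-1/400}$, the uniform concavity of $\sigma$ on $\mathcal{T}_\varepsilon$ (\Cref{concavesigmat}) combined with the $C^2$ bound on $\mathcal{H}$ near $v_0$ from \Cref{derivativeshestimate} yields a quantitative entropy gap $\mathcal{E}(\mathcal{H})-\mathcal{E}(F)\ge c\delta^2$, via a Poincaré-type integration of the strict concavity against $\nabla(F-\mathcal{H})$. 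Multiplying by $N^2$ and union-bounding over all slope profiles produces a probability bound
\[
  \exp\!\bigl(-cN^2\delta^2 + C(N/\ell)^2\log N + o(N^2)\bigr),
\]
which is $\le Ce^{-N}$ once $\ell$ is chosen so that the middle term is $O(N)$ and the exponent $\tfrac{1}{400}$ guarantees $N^2\delta^2 \gg N$.

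The hard part will be executing the effective Rademacher theorem with an error small enough to beat the entropy gap $c\delta^2 = c(\log N)^{-1/200}$ while still permitting $\ell$ to grow only polylogarithmically (so that the torus counting \Cref{numbersigma} has access to a genuinely mesoscopic scale where its error is $o(N^2)$). Balancing these competing constraints — the Rademacher error in $\ell$, the counting error from \Cref{numbersigma}, the boundary-mismatch terms between adjacent squares, and the combinatorial union bound over slope profiles — is what ultimately forces the exponent $\tfrac{1}{400}$ in the claimed rate $(\log N)^{-1/400}$.
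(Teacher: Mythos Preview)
Your strategy is essentially the paper's, but there is a genuine gap at the heart of your error budget: the entropy-gap estimate $\mathcal{E}(\mathcal{H}) - \mathcal{E}(\mathcal{F}) \ge c\delta^2$ is not justified, and it is exactly this exponent that determines the rate $(\log N)^{-1/400}$. Uniform concavity of $\sigma$ holds only on $\mathcal{T}_\varepsilon$, so the strict concavity inequality yields control of $\int_{\mathcal{B}_\varepsilon(N^{-1}v_0)} |\nabla(\mathcal{F}-\mathcal{H})|^2$ but says nothing outside that ball. Since $\mathcal{F}-\mathcal{H}$ vanishes only on $\partial\mathfrak{R}$, not on $\partial\mathcal{B}_\varepsilon(N^{-1}v_0)$, there is no Poincar\'e inequality available on the small ball, and an $L^2$ gradient bound there does not convert to a $C^0$ oscillation bound with exponent $1/2$; in two dimensions the Sobolev embedding is critical. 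What the paper actually proves (\Cref{nearmaximum1}) is that an entropy gap of size $\delta$ forces the oscillation of $\mathcal{F}-\mathcal{H}$ on $\mathcal{B}_\varepsilon(N^{-1}v_0)$ to be at most $C\delta^{1/6}$, via an explicit path-averaging argument; contrapositively, $C^0$ oscillation $\ge\delta_0$ forces entropy gap $\ge c\delta_0^6$. The requirement $\delta_0^6 \gg (\text{counting error})$ is what pins the exponent to $1/400$ (since the counting error from \Cref{numbersigmat} and the effective Rademacher theorem is of order $(\log N)^{-1/63}$, and $6/400 < 1/63$).

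Two secondary points. First, your displayed balance $-cN^2\delta^2 + C(N/\ell)^2\log N + o(N^2)$ misidentifies the competition: with $\ell = (\log N)^a$ the middle term is $N^2(\log N)^{1-2a}$, never $O(N)$, and more importantly the ``$o(N^2)$'' is not genuinely lower order --- it hides the Rademacher and triangle-counting errors, which are $N^2(\log N)^{-c'}$ and are in fact the dominant obstruction (the paper uses triangle side $\sim N^{1/2}$, not polylogarithmic, and tracks these errors explicitly). Second, the reduction to the mean via \Cref{hvuexpectation} is a detour: you still need the full entropy-counting argument to control $\mathbb{E}[H]$, so the concentration step buys nothing. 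The paper proceeds directly by bounding the number of height functions in the bad set $\mathfrak{V}$ against the total number of tilings.
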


	Before proceeding, let us briefly explain an issue that will arise if one attempts to use only \Cref{heightapproximate} in the proof of \Cref{heightlocal1}. As mentioned previously, that proof will be by induction, reducing the scale $M$ there by factors of $8$ at a time. In particular, to pass from the initial scale of $\frac{N}{(\log N)^c}$ to significantly smaller scales requires of order $\log N$ reductions. The error incurred by each reduction will approximately be given by that of the global law and so, if this error is $(\log N)^{-1 / 400}$ as in \Cref{heightapproximate}, then the total error accumulated will diverge with $N$. 
	
	Thus, although \Cref{heightapproximate} will be used in the proof of \Cref{heightlocal1}, we will also require a variant of that global law with a smaller error estimate, namely one of order $(\log N)^{-1 - c}$ for some $c > 0$. Unfortunately, for general boundary conditions, we do not know how to show such a result. Although one can likely increase the exponent $\frac{1}{400}$ in \Cref{heightapproximate} to some extent, a limitation in the proof of that theorem to directly improving the exponent to exceed $1$ appears in the effective variant of Rademacher's theorem, given by \Cref{twodimensionalapproximatelinear} below. The error there is of order $(\log N)^{-1 / 10}$, and it is not certain to us whether the bound still holds with error $(\log N)^{-1}$. 
	
	However, for certain families of boundaries, a more precise variant of the global limit shape theorem was implicitly established in \cite{LTGDMLS}. In particular, it was essentially shown there that if the normalized boundary height function $\mathfrak{h}$ ``approximately'' gives rise to a global profile exhibiting no frozen facets (that is, its gradient is everywhere in the interior of $\mathcal{T}$), then the limit shape result can be established with a smaller error estimate. Here, the term ``approximately'' refers to the existence of ``barrier functions'' $g_1, g_2: \partial \mathfrak{R} \rightarrow \mathbb{R}$ whose associated maximizers on $\mathfrak{R}$ exhibit no frozen facets, such that $g_1 \le \mathfrak{h} \le g_2$ and $|g_2 - g_1|$ is small. 
	
	To state this result, we first require certain families of (semi)norms on function spaces. To that end, for any integer $d > 0$ and subset $\mathfrak{R} \subseteq \mathbb{R}^d$, let $\mathcal{C} (\mathfrak{R})$ denote the set of real-valued continuous functions on $\mathfrak{R}$. Furthermore, for any $d$-tuple $(\gamma_1, \gamma_2, \ldots , \gamma_d) \in \mathbb{Z}_{\ge 0}^d$, define $|\gamma| = \sum_{i = 1}^d \gamma_i$ and $\partial_{\gamma} = \prod_{i = 1}^d (\partial_i)^{\gamma_i}$, where we have abbreviated $\partial_i = \frac{\partial}{\partial x_i}$ for each $i \in [1, d]$. 
	
	If $\mathfrak{R}$ is open, then for each $k \in \mathbb{Z}_{\ge 1}$ let $\mathcal{C}^k (\mathfrak{R})$ denote the set of $f \in \mathcal{C} (\mathfrak{R})$ such that $\partial_{\gamma} f \in \mathcal{C} (\mathfrak{R})$ for each $\gamma \in \mathbb{Z}_{\ge 0}^d$ with $|\gamma| \le k$. Further let $\mathcal{C}^k (\overline{\mathfrak{R}})$ denote the set of functions $f \in \mathcal{C}^k (\mathfrak{R})$ such that $\partial_{\gamma} f \in \mathcal{C} (\mathfrak{R})$ extends continuously to $\overline{\mathfrak{R}}$, for each $\gamma \in \mathbb{Z}_{\ge 0}^d$ with $|\gamma| \le k$. 
	
	For any $f \in \mathcal{C} (\mathfrak{R})$, $\alpha \in (0, 1]$, and $k \in \mathbb{Z}_{\ge 0}$, additionally define 
	\begin{flalign*}
	& \| f \|_0 = \| f \|_{0; \mathfrak{R}} = \displaystyle\sup_{z \in \mathfrak{R}} \big| f(z) \big|; \qquad \qquad \qquad \qquad [f]_{\alpha} = [f]_{\alpha; \mathfrak{R}} = \displaystyle\sup_{\substack{y, z \in \mathfrak{R} \\ y \ne z}} \displaystyle\frac{\big| f(y) - f(z) \big|}{|y - z|^{\alpha}} \\
	& [f]_k = [f]_{k, 0} = [f]_{k, 0; \mathfrak{R}} = \displaystyle\max_{\substack{\gamma \in \mathbb{Z}_{\ge 0}^d \\ |\gamma| = k}} \| \partial_{\gamma} f \|_0; \qquad \quad [f]_{k, \alpha} = [f]_{k, \alpha; \mathfrak{R}} = \displaystyle\max_{\substack{\gamma \in \mathbb{Z}_{\ge 0}^d \\ |\gamma| = k}} [ \partial_{\gamma} f ]_{\alpha}.
	\end{flalign*} 
	
	Define the H\"{o}lder space $\mathcal{C}^{k, \alpha} (\mathfrak{R})$ to be the set of $f \in \mathcal{C}^k (\mathfrak{R})$ such that $[f]_{k, \alpha; \mathfrak{R}_0} < \infty$, for any compact subset $\mathfrak{R}_0 \subset \mathfrak{R}$. Also define $\mathcal{C}^{k, \alpha} (\overline{\mathfrak{R}}) = \big\{ f \in \mathcal{C}^k (\mathfrak{R}): [f]_{k, \alpha; \mathfrak{R}} < \infty \big\}$. By restriction, we have that $\mathcal{C}^{k, \alpha} (\mathfrak{R}) \subseteq \mathcal{C}^{k, \alpha} (U)$ for any subset $U \subseteq \mathfrak{R}$. Let us define norms on these spaces by setting 
	\begin{flalign*}
	\| f \|_{\mathcal{C}^k (\overline{\mathfrak{R}})} = \| f \|_k = \displaystyle\sum_{j = 0}^k [f]_{j; 0; \mathfrak{R}}; \qquad \| f \|_{\mathcal{C}^{k, \alpha} (\overline{\mathfrak{R}})} = \| f \|_k + [f]_{k, \alpha}. 
	\end{flalign*}
	
	Now we can state the second effective limit shape theorem; its proof will be given in \Cref{EstimateHjk} below. As mentioned above, this result imposes the existence of barrier functions $g_1, g_2$ that bound the boundary height function of a (free) tiling from above and below, respectively, whose associated maximizers of $\mathcal{E}$ exhibit no frozen facets. It additionally assumes global $\mathcal{C}^2$ bounds on these maximizers of order slightly smaller than $\log N$, as well as $\mathcal{C}^{2, \alpha}$ bounds whose exponent $\alpha$ appears in the error estimate. 
	
	In what follows, we restrict to the case when $\mathfrak{R} = \mathcal{B} = \mathcal{B}_1 (0, 0)$ (recall the notation on disks from \eqref{brzdefinition}), as this will suffice for our purposes. We also recall the sets $\mathcal{T}_{\varepsilon}$ and $\mathfrak{G} (h)$ from \eqref{tset2} and \Cref{gh}, respectively.

	\begin{thm}
		
		\label{estimateboundaryheight}

		For fixed real numbers $\varepsilon \in \big( 0, \frac{1}{4} \big)$; $\alpha, \nu \in (0, 1)$; and $D > 0$, there exists a constant $C = C (\varepsilon, \alpha, \nu, D) > 1$ such that the following holds. Fix an integer $N \in \mathbb{Z}_{\ge 1}$; set $R = \mathcal{B}_N \cap \mathbb{T}$; and let $h: \partial R \rightarrow \mathbb{Z}$ denote a boundary height function on $\partial R$. 
		
		Suppose that there exist two functions $g_1, g_2: \partial \mathcal{B} \rightarrow \mathbb{R}$ admitting admissible extensions to $\mathcal{B}$ and satisfying the following three assumptions. In the below, $G_1, G_2 \in \Adm (\mathcal{B})$ denote the maximizers of $\mathcal{E}$ on $\mathcal{B}$ with boundary data $g_1, g_2$, respectively. 
		
		\begin{enumerate}
			
			\item For any $z \in \partial \mathcal{B}$ and $v \in \partial R$ with $|v - N^{-1} z| \le 4 N^{-1}$, we have $g_1 (z) \le N^{-1} h (v) \le g_2 (z)$.
			
			\item For each $z \in \mathcal{B}$ and $i \in \{ 1, 2 \}$, we have $\nabla G_i (z) \in \mathcal{T}_{\varepsilon}$.
			
			\item For each $i \in \{ 1, 2 \}$, we have $\| G_i - G_i (0, 0) \|_{\mathcal{C}^2 (\overline{\mathcal{B}})} \le (\log N)^{1 - \nu}$ and $\| G_i - G_i (0, 0) \|_{\mathcal{C}^{2, \alpha} (\overline{\mathcal{B}})} \le (\log N)^{10}$. 
			
		\end{enumerate} 
		
		\noindent If $H: \mathbb{V}(R) \rightarrow \mathbb{Z}$ denotes a uniformly random element of $\mathfrak{G} (h)$, then 
		\begin{flalign}
		\label{nhg1g2v}
		\begin{aligned}
		& \displaystyle\max_{v \in \mathbb{V}(R)} \mathbb{P} \big[ N^{-1} H (v) < G_1 (N^{-1} v) - 2 N^{-\alpha/5} \big] < C N^{-D}; \\
		& \displaystyle\max_{v \in \mathbb{V}(R)} \mathbb{P} \big[ N^{-1} H (v) > G_2 (N^{-1} v) + 2 N^{-\alpha/5} \big] < C N^{-D}. 
		\end{aligned} 
		\end{flalign}	
		
	\end{thm}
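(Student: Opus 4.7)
The plan follows the Laslier--Toninelli strategy: sandwich $H$ between monotonically coupled tilings built from the barrier data $g_1, g_2$, and then compare these sandwiching tilings, on mesoscopic scales, to uniform tilings of integrable hexagonal domains. Since $g_1 \le N^{-1} h \le g_2$ holds within distance $4 N^{-1}$ on the boundary, one can define $h^\pm : \partial R \to \mathbb{Z}$ by suitable discretizations of $N g_1, N g_2$ so that $h^- \le h \le h^+$ on $\partial R$, and \Cref{monotoneheightcouple} produces a coupling of uniform elements $H^- \in \mathfrak{G}(h^-)$, $H \in \mathfrak{G}(h)$, $H^+ \in \mathfrak{G}(h^+)$ with $H^- \le H \le H^+$ pointwise. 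Hence it suffices to establish the upper bound of \eqref{nhg1g2v} for $H^+$ against $G := G_2$; the lower bound is symmetric. Below, write $H$ for $H^+$ and $G$ for $G_2$.

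The next step is a mesoscopic Taylor linearization of $G$. Fix a mesoscopic scale $M = \lfloor N^\theta \rfloor$ with $\theta \in (0,1)$ to be optimized, and partition $R$ into overlapping subdomains $D_j \subset \mathbb{T}$ of diameter $\asymp M$ centered at points $u_j \in R$. On each $D_j$, the $\mathcal{C}^{2,\alpha}$ bound on $G$ gives
\begin{equation*}
\bigl| N G(N^{-1} v) - L_j(v) \bigr| \le C (\log N)^{10}\, M \cdot (M/N)^{1+\alpha}, \qquad v \in D_j,
\end{equation*}
where $L_j$ is the affine function with $L_j(u_j) = N G(N^{-1} u_j)$ and slope $\nabla G(N^{-1} u_j) \in \mathcal{T}_\varepsilon$. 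Enclose each $D_j$ in a hexagonal domain $\mathcal{H}_j \subset \mathbb{T}$ whose side ratios are tuned so that $D_j$ lies well inside the liquid region of $\mathcal{H}_j$ and the limit shape of a uniform tiling of $\mathcal{H}_j$ agrees with $L_j$ on $D_j$ up to order $O(M)$; such hexagons exist because $\nabla G(N^{-1} u_j) \in \mathcal{T}_\varepsilon$ uniformly and the arctic ellipse of $\mathcal{H}_j$ leaves room for any interior slope in $\mathcal{T}_\varepsilon$. Replacing $H|_{\partial \mathcal{H}_j}$ by the canonical boundary data of $\mathcal{H}_j$ plus a shift $N \rho$ absorbing the above Taylor error, \Cref{monotoneheightcouple} yields $H(v) \le H_j^\dagger(v) + N \rho$ on $D_j$, where $H_j^\dagger$ is a uniform random tiling of $\mathcal{H}_j$.

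The integrability of the hexagon $\mathcal{H}_j$---its non-intersecting path representation produces a determinantal correlation kernel of Hahn type via \Cref{NonIntersectingCorrelation} and \Cref{determinantnonintersecting}---allows one to show, following \cite{LTGDMLS}, that $\mathbb{E}[H_j^\dagger(v)]$ approximates the global limit shape of $\mathcal{H}_j$, and hence $L_j(v)$, to within $O(\log M)$ uniformly for $v \in D_j$. Combining with the concentration estimate \Cref{hvuexpectation} applied with $r = C(D \log N)^{1/2}$ gives fluctuations of order $M^{1/2}(\log N)^{1/2}$ with probability at least $1 - N^{-3D}$ per vertex, and a union bound over the $O(N^2)$ vertices and $O(N^2/M^2)$ hexagons yields, with probability $\ge 1 - N^{-D}$,
\begin{equation*}
\sup_{v \in R}\bigl( N^{-1} H(v) - G(N^{-1} v) \bigr) \le C(M/N)^{1+\alpha}(\log N)^{10} + C (M/N)^{1/2}(\log N)^{1/2}.
\end{equation*}
Choosing $\theta$ to balance the two terms gives, for $\alpha \in (0,1)$, a total error bounded by $N^{-\alpha/5}$ after absorbing polylogarithmic factors into the spare factor of $2$ present in \eqref{nhg1g2v}. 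The principal obstacle lies in the integrability input: controlling $\mathbb{E}[H_j^\dagger] - L_j$ uniformly at the mesoscopic scale $M$ inside the liquid region of $\mathcal{H}_j$, which requires the quantitative hexagon limit-shape theorem obtained by Laslier--Toninelli via asymptotics of the Hahn-ensemble kernel; matching the error rates through the overlapping partition without letting the Taylor and determinantal errors accumulate is the main technical delicacy.
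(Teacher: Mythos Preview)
The overall strategy---sandwich via monotone coupling, then locally compare with hexagons---matches the paper's approach (following Laslier--Toninelli). But your execution has a structural gap that prevents the argument from closing. Your comparison on a box $D_j$ requires $H|_{\partial \mathcal{H}_j}$ to lie below the shifted hexagon boundary data; for boxes deep inside $R$, however, the only a~priori bound on $H$ is the trivial Lipschitz one, $N^{-1}H \le G + O(1)$, so the shift $N\rho$ you need is of order $N$ rather than of order $M$, and the resulting inequality is vacuous. The paper handles this by an iterative ``peeling'' scheme: one sets $\omega = N^{-\alpha/5}$, $\Phi_j = G + 3 - j\omega$, starts from the deterministic bound $N^{-1}H \le \Phi_0$, and shows that $N^{-1}H \le \Phi_{j-1}$ implies $N^{-1}H \le \Phi_j$ with high probability, for $J \sim 3\omega^{-1}$ steps. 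Each such step is itself subdivided via an auxiliary strictly convex penalty $\psi(x,y) = \Upsilon - e^{\zeta x} - e^{\zeta y}$ with $\zeta = (\log N)^{1-\nu/2}$, producing intermediate barriers $\varphi_{j;k} = G - (j+1)\omega + (1-k/N)\omega\psi + 3\rho + 3$; the convexity of $\psi$ is precisely what makes the local hexagon comparison go in the right direction on the boundary of a small disk.

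A second, related issue is that you match only the gradient of $G$ to the hexagon, approximating by an affine $L_j$. The hexagon limit shape $\mathfrak{H}_{a,b,c}$ is curved in its liquid region, so there is an uncontrolled second-order mismatch. The paper instead invokes the diffeomorphism $\Psi$ of \Cref{abcderivative} to match \emph{both} the gradient \emph{and} the Hessian of the barrier $\varphi$ (not of $G$ directly---the $\kappa\omega\psi$ term contributes the dominant, controllable curvature) to those of some $\mathfrak{H}_{a,b,c}$ at a point $z_0$; combined with the $\mathcal{C}^{2,\alpha}$ hypothesis, this gives a comparison on a disk of radius $\rho = N^{-1/4}$ with Taylor remainder $\sim \rho^{2+\alpha}(\log N)^{10} \ll \omega^2\rho^2$. (Your Taylor bound is also miscomputed: for affine $L_j$ the remainder is second-order, governed by $\|G\|_{\mathcal{C}^2} \le (\log N)^{1-\nu}$, not by the $(2,\alpha)$-seminorm.) Finally, the integrability input is \Cref{estimatedomainabc} (Petrov's hexagon estimate with error $N^{\delta-1}$), not the kernel of \Cref{determinantnonintersecting}, which pertains to the free-endpoint Bernoulli walk model $\mathbb{P}_{\beta;\mathbf{a}}$ rather than the fixed-boundary hexagon.
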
 

	\begin{rem} 
		
	\label{g1g2lambda} 

	If $|g_1 - g_2| < \lambda$ then \Cref{h1h2gamma} and \Cref{estimateboundaryheight} together imply that \Cref{hnh} holds, with $\varpi = \lambda + 2 N^{-\alpha / 5}$. In particular, if $\lambda \ll (\log N)^{-1 - c}$, then $\varpi \ll (\log N)^{-1 - c}$. 
	
	\end{rem}

	The special case of \Cref{estimateboundaryheight} when $\| G_i \|_{\mathcal{C}^2 (\overline{\mathcal{B}})}$ is uniformly bounded in $N$ can be deduced from the proof of equation (44) of \cite{LTGDMLS}. However, since the setting in \cite{LTGDMLS} is a bit different from the one here, and since we also must address the situation when $\| G_i \|_{\mathcal{C}^2 (\overline{\mathcal{B}})}$ is not uniformly bounded, we will provide the proof of \Cref{estimateboundaryheight} in \Cref{GlobalEstimate2}, closely following the framework of \cite{LTGDMLS}. 
	
	Given \Cref{estimateboundaryheight}, it will be useful to understand under what conditions will a maximizer of $\mathcal{E}$ on a disk (which we will take to be $\mathcal{B}_{1 / 8}$, instead of $\mathcal{B}$, since this will be the setting for applications) satisfy the second and third assumptions listed there. Such an assumption is provided by the following proposition, whose proof will be given in \Cref{ProofGradientEstimateu} below. This condition essentially imposes for some small $\lambda > 0$ that the boundary data $\mathfrak{h}: \partial \mathcal{B}_{1 / 8} \rightarrow \mathbb{R}$ is the restriction to $\partial \mathcal{B}_{1 / 8}$ of some $\varphi \in \mathcal{C}^2 (\overline{\mathcal{B}}_{1 / 8})$ with $\| \varphi \|_{\mathcal{C}^2 (\overline{\mathcal{B}}_{1 / 8})} \ll \lambda^{-1}$, and that there exists a nearly linear boundary function $g$ on the larger domain $\partial \mathcal{B}$ whose associated maximizer is close to (within $\lambda$ of) $\mathfrak{h}$ on $\partial \mathcal{B}_{1 / 8}$. We refer to the left side of \Cref{ghfigure} for a depiction. 
	
	In the below, we recall the notation on nearly linear functions from \Cref{linearst}.

	\begin{prop}

		\label{euv1v2estimategradient} 
		
		For any fixed real numbers $\varepsilon, \theta \in \big( 0, \frac{1}{2} \big)$ and $\alpha \in \big( 0, \frac{\theta}{4} \big]$, there exists a constant $\delta = \delta (\varepsilon, \theta) \in (0, 1)$ such that the following holds. Let $\mathfrak{h}: \partial \mathcal{B}_{1 / 8} \rightarrow \mathbb{R}$ denote a function admitting an admissible extension to $\mathcal{B}_{1 / 8}$; let $\mathcal{H} \in \Adm (\mathcal{B}_{1 / 8}; \mathfrak{h})$ denote the maximizer of $\mathcal{E}$ on $\mathcal{B}_{1 / 8}$ with boundary data $\mathfrak{h}$; and let $\lambda \in (0, \delta)$ be a real number.
		
		 Suppose that there exists a function $g: \partial \mathcal{B} \rightarrow \mathbb{R}$ admitting an admissible extension to $\mathcal{B}$; a pair $(s, t) \in \mathcal{T}_{\varepsilon}$; and a function $\varphi \in \mathcal{C}^2 (\overline{\mathcal{B}}_{1 / 8})$ satisfying the following three assumptions. In the below, $G \in \Adm (\mathcal{B}; g)$ denotes the maximizer of $\mathcal{E}$ on $\mathcal{B}$ with boundary data $g$. 
		
		\begin{enumerate} 
			
			\item We have $\sup_{z \in \partial \mathcal{B}_{1 / 8}} \big| G (z) - \mathfrak{h} (z) \big| \le \lambda$. 
			
			\item The function $g$ is $\lambda^{2 \theta}$-nearly linear of slope $(s, t)$ on $\partial \mathcal{B}$. 
			
			\item We have $\| \varphi - \varphi (0, 0) \|_{\mathcal{C}^2 (\overline{\mathcal{B}}_{1 / 8})} \le \lambda^{2 \theta - 1}$ and $\varphi |_{\partial \mathcal{B}_{1 / 8}} = \mathfrak{h}$.
			
		\end{enumerate}
		
		\noindent Then, denoting $\mu = \lambda^{1 + \theta / 8}$, we have that
		\begin{flalign}
		\label{derivativehestimates122} 
		\begin{aligned}
		& \displaystyle\sup_{z \in \mathcal{B}_{1 / 8}} \big| \nabla \mathcal{H} (z) - (s, t) \big| \le \lambda^{15 \theta / 16}; \qquad \big\| \mathcal{H} - \mathcal{H} (0, 0) \big\|_{\mathcal{C}^{2, \alpha} (\overline{\mathcal{B}}_{1 / 8 - \mu})} \le \lambda^{\theta / 2 - 1}.
		\end{aligned}
		\end{flalign}
		
	\end{prop}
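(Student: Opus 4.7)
The plan is to combine a boundary comparison argument with interior regularity theory, much in the spirit of the proof of \Cref{perturbationboundary}. First, since $g$ is $\lambda^{2\theta}$-nearly linear of slope $(s,t)$, choose a linear function $\Lambda$ of slope $(s,t)$ with $\sup_{\partial\mathcal{B}}|g-\Lambda|\le \lambda^{2\theta}$. Because $\sigma$ is concave (\Cref{concavesigmat}), Jensen's inequality shows that $\Lambda$ is itself the maximizer of $\mathcal{E}$ on $\mathcal{B}$ with boundary data $\Lambda|_{\partial\mathcal{B}}$, so \Cref{h1h2gamma} yields $|G-\Lambda|\le \lambda^{2\theta}$ on $\overline{\mathcal{B}}$. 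Combined with assumption (1), this gives $\sup_{\partial\mathcal{B}_{1/8}}|\mathfrak{h}-\Lambda|\le \lambda+\lambda^{2\theta}\le 2\lambda^{2\theta}$ (valid for $\theta<\tfrac{1}{2}$ and $\lambda$ small), and a second application of \Cref{h1h2gamma} on $\mathcal{B}_{1/8}$ produces the $\mathcal{C}^0$ closeness $|\mathcal{H}-\Lambda|\le 2\lambda^{2\theta}$ on $\overline{\mathcal{B}}_{1/8}$.

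Second, I would promote this to the $\mathcal{C}^1$ bound \eqref{derivativehestimates122} by a localized use of \Cref{perturbationboundary}. For any $z_0\in\mathcal{B}_{1/8}$ with $d=d(z_0,\partial\mathcal{B}_{1/8})$, rescaling through \Cref{estimatehrho} applied on $\mathcal{B}_d(z_0)$ and comparing $\mathcal{H}$ with $\Lambda$ — whose boundary data on $\partial\mathcal{B}_d(z_0)$ differ by at most $2\lambda^{2\theta}$ and are both $2\lambda^{2\theta}d^{-1}$-nearly linear of slope $(s,t)$ — yields
\[
|\nabla\mathcal{H}(z_0)-(s,t)|\le C(\varepsilon)\,d^{-1}\lambda^{2\theta}.
\]
This is at most $\lambda^{15\theta/16}$ provided $d\gtrsim \lambda^{17\theta/16}$; since $17\theta/16<1+\theta/8$, this already covers interior points. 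For $z_0$ closer than $\lambda^{17\theta/16}$ to the boundary, I would use the $\mathcal{C}^2$ extension $\varphi$: once the interior bound puts $\nabla\mathcal{H}\in\mathcal{T}_{\varepsilon/2}$, the Euler–Lagrange equation from \Cref{haijequations} becomes uniformly elliptic, and boundary Schauder estimates applied to $\mathcal{H}-\Lambda$ (which solves a linear uniformly elliptic equation with coefficients $\mathfrak{a}_{jk}(\nabla\mathcal{H})$, boundary values of order $\lambda^{2\theta}$, and $\mathcal{C}^2$ extension $\varphi-\Lambda$ of norm at most $\lambda^{2\theta-1}$) propagate the gradient control up to $\partial\mathcal{B}_{1/8}$. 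The exponent $15\theta/16$ arises from balancing the small $\mathcal{C}^0$ boundary deviation against the large but controlled $\mathcal{C}^2$ extension. For the $\mathcal{C}^{2,\alpha}$ bound on $\mathcal{B}_{1/8-\mu}$, the preceding gradient estimate ensures uniform ellipticity on $\mathcal{B}_{1/8}$, so I would apply the interior $\mathcal{C}^{2,\alpha}$ Schauder estimate (Theorems 11.9 and 15.19 of \cite{EDSO}, already invoked in \Cref{haijequations}) on sub-balls of radius $\mu/2$ centered in $\mathcal{B}_{1/8-\mu}$, carefully tracking the dependence of the resulting bound on $\mu=\lambda^{1+\theta/8}$ and on $\alpha\le\theta/4$ to match the stated exponent $\lambda^{\theta/2-1}$.

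The hardest step will be the boundary half of paragraph two: transferring the interior gradient estimate up to $\partial\mathcal{B}_{1/8}$ requires a delicate Schauder analysis in which the small $\mathcal{C}^0$ separation of $\mathcal{H}$ from $\Lambda$ must be played off against the large (order $\lambda^{2\theta-1}$) $\mathcal{C}^2$ norm of $\varphi-\Lambda$, and the gain up to the boundary has to be precisely of order $\lambda^{15\theta/16}$ rather than the naive $\lambda^{2\theta}$ or the interpolation value $\lambda^{2\theta-1/2}$. The specific choices of $\mu=\lambda^{1+\theta/8}$ and $\alpha\le\theta/4$ are dictated by this optimization, and the fact that both the gradient bound on all of $\mathcal{B}_{1/8}$ and the $\mathcal{C}^{2,\alpha}$ bound on the slightly smaller disk $\mathcal{B}_{1/8-\mu}$ have to be extracted simultaneously from the same Schauder-interpolation framework is what forces these particular exponents.
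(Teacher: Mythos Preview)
Your interior argument (rescaling and applying \Cref{perturbationboundary} on balls $\mathcal{B}_d(z_0)$ to obtain $|\nabla\mathcal{H}(z_0)-(s,t)|\le C\lambda^{2\theta}/d$) is sound, and your plan for the $\mathcal{C}^{2,\alpha}$ bound via rescaled interior Schauder is essentially what the paper does. The gap is in the boundary half of the gradient estimate, and it is a circularity rather than a mere delicacy. You write that ``once the interior bound puts $\nabla\mathcal{H}\in\mathcal{T}_{\varepsilon/2}$, the Euler--Lagrange equation from \Cref{haijequations} becomes uniformly elliptic, and boundary Schauder estimates applied to $\mathcal{H}-\Lambda$ \dots\ propagate the gradient control up to $\partial\mathcal{B}_{1/8}$.'' But the interior bound only places $\nabla\mathcal{H}$ in $\mathcal{T}_{\varepsilon/2}$ on $\mathcal{B}_{1/8-C\lambda^{17\theta/16}}$. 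In the remaining annulus you have no a priori information that $\nabla\mathcal{H}\in\mathcal{T}$ at all: $\mathcal{H}$ could in principle have frozen facets touching $\partial\mathcal{B}_{1/8}$, in which case \Cref{haijequations} gives you no equation there and the coefficients $\mathfrak{a}_{jk}(\nabla\mathcal{H})$ are undefined (they blow up at $\partial\mathcal{T}$). Even granting $\nabla\mathcal{H}\in\mathcal{T}_{\varepsilon/2}$ everywhere, boundary Schauder requires the coefficients $\mathfrak{a}_{jk}(\nabla\mathcal{H})$ to be H\"older up to the boundary, i.e.\ $\nabla\mathcal{H}\in\mathcal{C}^{0,\alpha}(\overline{\mathcal{B}}_{1/8})$ --- again part of what you are trying to prove.

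The paper breaks this circle by not working with $\mathcal{H}$ directly. It introduces auxiliary smooth coefficients $a_{jk}$ that agree with $\mathfrak{a}_{jk}$ on a small ball $\mathcal{B}_\varpi(s,t)\subset\mathcal{T}$ but are globally uniformly elliptic and $L^\infty$-close to the constants $A_{jk}=\mathfrak{a}_{jk}(s,t)$; solves $\sum a_{jk}(\nabla F)\,\partial_j\partial_k F=0$ on $\mathcal{B}_{1/8}$ with boundary data $\mathfrak{h}$; and applies a \emph{global} boundary Cordes--Nirenberg estimate (\Cref{uestimategradient}, which needs only $L^\infty$-near-constant coefficients, not H\"older ones, and gives H\"older exponent $\alpha=1-\theta/20$ close to $1$) using the $\mathcal{C}^2$ bound on $\varphi$ to get $[F]_{1,\alpha;\mathcal{B}_{1/8}}\le C\lambda^{2\theta-1}$. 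This global H\"older control, combined with $|F-G|\le\lambda$ on $\mathcal{B}_{1/8}$ from the comparison principle, then forces $\nabla F$ into $\mathcal{B}_\varpi(s,t)$ everywhere via an integration/contradiction argument in the spirit of \Cref{fjderivativesestimate}. Once that is known, $F$ actually solves the genuine Euler--Lagrange equation and is identified with $\mathcal{H}$ a posteriori. The auxiliary-equation device together with the Cordes--Nirenberg (rather than Schauder) input are the missing ingredients in your outline.
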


	\begin{figure}

		\begin{center}

			\begin{tikzpicture}[
			>=stealth,
			auto,
			style={
				scale = .4
			}
			]

			\filldraw[fill = white!75!gray] (0, 0) circle[radius = 3.6];
			\filldraw[fill = white!75!gray] (21.5, 1.5) circle[radius = 2.5];
			
			\draw[black, thick] (0, 0) circle[radius = 8];
			\draw[black, thick ] (0, 0) circle[radius = 3.6];

			\draw[] (0, 8) circle[radius = 0] node[above]{$\mathcal{B}$};
			\draw[] (0, 3.6) circle[radius = 0] node[above = -2, scale = .8]{$\mathcal{B}_{1 / 8}$};
			
			\draw[] (0, -8) circle[radius = 0] node[below, scale = .9]{$g$};
			\draw[] (0, -4.1) circle[radius = 0] node[scale = .8]{$\varphi |_{\partial \mathcal{B}_{1 / 8}} = \mathfrak{h} \approx G|_{\partial \mathcal{B}_{1 / 8}}$};
			\draw[] (5.125, 5.125) circle[radius = 0] node[below, scale = .9]{$G$};
			\draw[] (2.25, 2.25) circle[radius = 0] node[below, scale = .9]{$\mathcal{H}$};
			\draw[] (0, 0) circle[radius = 0] node[scale = .9]{$\nabla \mathcal{H} \approx (s, t)$};

			\draw[black, thick] (20, 0) circle[radius = 8];
			\draw[black, thick, dotted] (20, 0) circle[radius = 5.5];
			\draw[black] (21.5, 1.5) circle[radius = 2.5];
			
			\draw[] (20, 8) circle[radius = 0] node[above]{$\mathcal{B}$};
			\draw[] (20, 5.5) circle[radius = 0] node[above = -2, scale = .9]{$\mathcal{B}_{1 / 5}$};
			
			\filldraw[fill = black] (21.5, 1.5) circle[radius = .1] node[below = 1, scale = .75]{$z_0$};
			\filldraw[fill = black] (20.212, 2.25) circle[radius = .1] node[below = 1, scale = .75]{$z$};
			\draw[->, dashed, black] (21.5, 1.5) -- (23.3, 3.3);
			\draw[<->, black] (20.282, 2.18) -- (21.43, 1.57);

			\filldraw[fill = black] (22.9, 1.9) circle[radius = 0] node[scale = .75]{$\frac{8 \lambda}{\varsigma^{14}}$};

			\end{tikzpicture}
			
		\end{center}

		\caption{\label{ghfigure} Depicted to the left is the setting for \Cref{euv1v2estimategradient}. Depicted to the right is the setting for \Cref{omegadefinition} where, on the complement of $\Gamma_{z_0; z}$, we require that $\Phi$ be approximately linear on the shaded disk. }

	\end{figure}
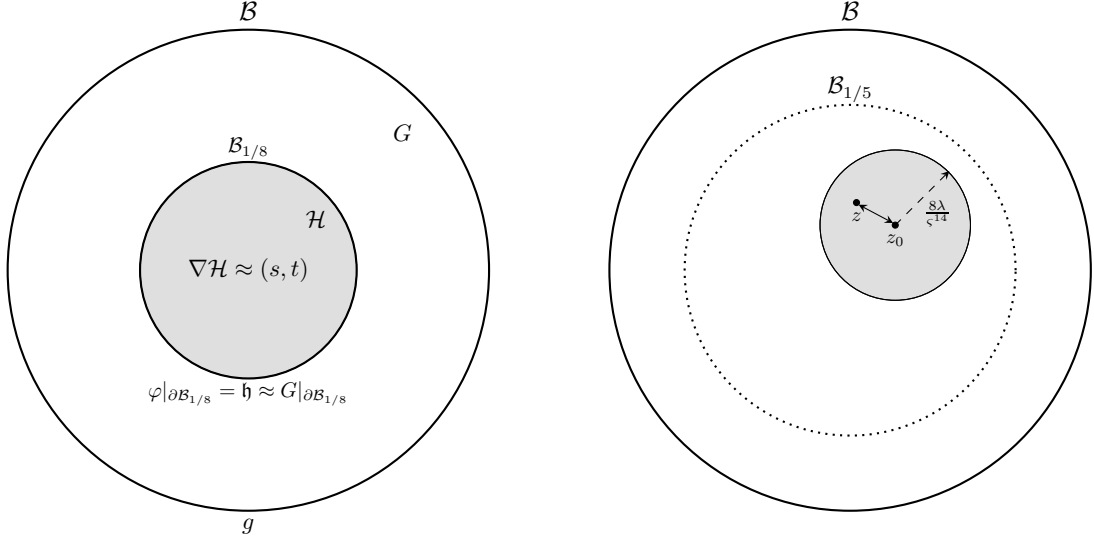

	Observe that the first bound in \eqref{derivativehestimates122} is a global gradient estimate,\footnote{The exponent $\frac{15 \theta}{16}$ there can be replaced by any constant less than $\theta$.} as it applies on all of the domain $\mathcal{B}_{1 / 8}$ of $\mathcal{H}$. Moreover, since $(s, t) \in \mathcal{T}_{\varepsilon}$, it implies for sufficiently small $\delta$ that $\nabla \mathcal{H} (z) \in \mathcal{T}_{\varepsilon / 2}$ for each $z \in \mathcal{B}_{1 / 8}$, and so $\mathcal{H}$ exhibits no frozen facets. The second bound in \eqref{derivativehestimates122} is an interior estimate, and is therefore slightly different from what is imposed in the third assumption of \Cref{estimateboundaryheight}. However, it is ``nearly'' a global estimate, since it applies quite close (of distance $\mu \ll \lambda$) to the boundary, and will therefore eventually suffice for our purposes. 
	
	Additionally, recall that \Cref{estimateboundaryheight} requires that the $\mathcal{C}^2$-norms of the $G_1$ and $G_2$ there are bounded by $(\log N)^{1 - c}$. Since the second estimate in \eqref{derivativehestimates122} bounds this norm by $\lambda^{\theta / 2 - 1}$, we may take $\lambda = (\log N)^{-1 - c}$ (for some small constant $c = c (\theta) > 0$) while still ensuring that $\| \mathcal{H} - \mathcal{H} (0, 0) \|_{\mathcal{C}^2 (\overline{\mathcal{B}}_{1 / 8 - \mu})} < (\log N)^{1 - c}$. This fact will later be useful for showing that the scale reduction estimate \Cref{estimaten2n} below remains valid when its error $\lambda$ is of order $(\log N)^{-1 - c}$.

	\subsection{An Improved Estimate Upon Scale Reduction} 
	
	\label{Estimaten9n}
	
	In this section we state the scale reduction estimate, which is given by \Cref{estimaten2n} and will be established in \Cref{Estimaten9nProof} below. However, before doing so, let us introduce notation for when \Cref{estimateboundaryheight} is applicable and yields a global law whose error is of order $\lambda$.
	
	\begin{definition} 
		
	\label{lambdabounded} 
	
	Suppose we are given an integer $N \in \mathbb{Z}_{\ge 1}$ and four real numbers $\varepsilon \in \big( 0, \frac{1}{4} \big)$, $\rho \in \mathbb{R}_{> 0}$, and $\varsigma, \alpha \in (0, 1)$. Define the simply-connected induced subgraph $R = \mathcal{B}_{\rho N} \cap \mathbb{T} \subset \mathbb{T}$, and further suppose that a boundary height function $h: \partial R \rightarrow \mathbb{Z}$ and a function $g: \partial \mathcal{B}_{\rho} \rightarrow \mathbb{R}$ admitting an admissible extension to $\mathcal{B}_{\rho}$ are also given. For any real number $\lambda \in (0, \rho)$, we say that the parameters $(N; \varepsilon; \rho; \varsigma; \alpha; h; g)$ are \emph{$\lambda$-confined} if they satisfy the following three properties. In the below, $G \in \Adm (\mathcal{B}_{\rho}; g)$ denotes the maximizer of $\mathcal{E}$ on $\mathcal{B}_{\rho}$ with boundary data $g$. 
	
	\begin{enumerate}

		\item For each point $z \in \partial \mathcal{B}_{\rho}$ and vertex $v \in \partial R$ such that $|z - N^{-1} v| \le 4 N^{-1}$, we have that $\big| g (z) - N^{-1} h(v) \big| \le \lambda$.
		
		\item For each $z \in \mathcal{B}_{\rho}$, we have that $\nabla G (z) \in \mathcal{T}_{\varepsilon / 2}$. 
		
		\item We have that $\big\| G - G (0, 0) \big\|_{\mathcal{C}^{2, \alpha} (\overline{\mathcal{B}}_{\rho - \lambda})} < \varsigma \log N$.

	\end{enumerate} 
	
	\end{definition}

	Observe in particular that, if $\varsigma \le (\log N)^{- \nu}$ for some $\nu > 0$, then \Cref{estimateboundaryheight} applies on the slightly smaller domain $\mathcal{B}_{\rho - \lambda}$, when the parameters $(N; \varepsilon; \rho; \varsigma; \alpha; h; g)$ are $\lambda$-confined. By the $1$-Lipschitz property of height functions and \Cref{g1g2lambda}, this can be seen to imply that a uniformly random height function $H: \mathbb{V}(R) \rightarrow \mathbb{Z}$ of $\mathfrak{G} (h)$ (recall \Cref{gh}) likely approximates the maximizer $\mathcal{H}: \mathcal{B}_{\rho} \rightarrow \mathbb{R}$ of $\mathcal{E}$ on $\mathcal{B}_{\rho}$ with boundary data $N^{-1} h$ (within an error of $3 \lambda + 2 N^{-\alpha/5}$). 
	
	Thus, the scale reduction estimate, which we recall should provide bounds for $H$ on $\mathcal{B}_{N / 8}$ given analogous ones on $\mathcal{B}_N$, will essentially state the following. If $(N; \varepsilon; 1; \varsigma; \alpha; h; g)$ are $\lambda$-confined, then with high probability there exists $f: \partial \mathcal{B}_{1 / 8} \rightarrow \mathbb{R}$, which is in a sense ``close to'' the restriction of the maximizer $G \in \Adm (\mathcal{B}; g)$ of $\mathcal{E}$ to $\partial \mathcal{B}_{1 / 8}$, such that $\big( N; \varepsilon; \frac{1}{8}; \kappa; \alpha; H|_{\partial (\mathcal{B}_{N / 8} \cap \mathbb{T})}; f \big)$ are $\mu$-confined, for some $\mu \le \frac{\lambda}{8}$ and $\kappa \le 8^{-1 - \alpha} \varsigma$; these altered values of $\mu$ and $\vartheta$ are to account for scaling by $\frac{1}{8}$. 
	
	The following proposition provides such a statement precisely, under the additional assumption that $g$ is nearly linear (recall \Cref{linearst}). Its proof will be given in \Cref{Estimaten9nProof}.

	\begin{prop}
		
		\label{estimaten2n}
		
		For any fixed real numbers $\varepsilon \in \big( 0, \frac{1}{4} \big)$ and $D > 0$, there exist constants $C_1 = C_1 (\varepsilon) > 1$ and $C_2 = C_2 (\varepsilon, D) > 1$ such that the following holds. Let $N$ be a positive integer; define the domain $R = \mathcal{B}_N \cap \mathbb{T}$; let $h: \partial R \rightarrow \mathbb{Z}$ denote a boundary height function on $R$; and let $H: \mathbb{V}(R) \rightarrow \mathbb{Z}$ denote a uniformly random element of $\mathfrak{G} (h)$. Further fix parameters 
		\begin{flalign*}
		\alpha = \frac{1}{17500}; \qquad \varsigma = (\log N)^{-\alpha}; \qquad \lambda \in \big[  (\log N)^{-1 - \alpha}, \varsigma^{15} \big].
		\end{flalign*}
		
		\noindent Suppose that there exist a pair $(s, t) \in \mathcal{T}_{\varepsilon}$ and a function $g: \partial \mathcal{B} \rightarrow \mathbb{R}$ admitting an admissible extension to $\mathcal{B}$ that satisfy the following two assumptions. 
		
		\begin{enumerate} 
			
			\item The function $g$ is $\varsigma^{10}$-nearly linear with slope $(s, t)$ on $\partial \mathcal{B}$. 
			
			\item The parameters $(N; \varepsilon; 1; \varsigma; \alpha; h; g)$ are $\lambda$-confined. 
			
		\end{enumerate}  
		
		\noindent Let $G \in \Adm (\mathcal{B}; g)$ denote the maximizer of $\mathcal{E}$ on $\mathcal{B}$ with boundary data $g$. Then, there exists an event $\Gamma$ with $\mathbb{P} [\Gamma] \le C_2 N^{-D}$ such that 
		\begin{flalign} 
		\label{g1nvlambda} 
		\displaystyle\max_{u \in \mathbb{V}(R)} \big| G (N^{-1} u) -  N^{-1} H(u) \big| \textbf{\emph{1}}_{\Gamma^c} \le 6 \lambda.
		\end{flalign} 
		
		\noindent Moreover, on $\Gamma^c$, there exists a random (dependent on $H$) function $f: \partial \mathcal{B}_{1 / 8} \rightarrow \mathbb{R}$ admitting an admissible extension to $\mathcal{B}_{1 / 8}$ and satisfying the following two properties. Here, we set $\mu = \varsigma^{\alpha} \lambda = (\log N)^{-\alpha^2} \lambda$ and let $F \in \Adm (\mathcal{B}_{1 / 8}; f)$ denote the maximizer of $\mathcal{E}$ on $\mathcal{B}_{1 / 8}$ with boundary data $f$. 
		
		\begin{enumerate} 
			
			\item We have the bounds 
			\begin{flalign*} 
			\displaystyle\sup_{z \in \mathcal{B}_{1 / 8}} \big| F (z) - G (z) \big| \le 7 \lambda; \qquad \displaystyle\sup_{z \in \mathcal{B}_{1 / 32}} \big| \nabla F (z) - \nabla G (z)  \big| < C_1 \lambda.
			\end{flalign*}

			\item The parameters $\big( N; \varepsilon; \frac{1}{8}; \frac{\varsigma}{64}; \alpha; H |_{\partial (\mathcal{B}_{N / 8} \cap \mathbb{T})}; f \big)$ are $\mu$-confined.

		\end{enumerate}	
	\end{prop}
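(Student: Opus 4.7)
The plan is threefold: (a) establish the global bound \eqref{g1nvlambda} by applying the effective facetless global law \Cref{estimateboundaryheight} on the disk $\mathcal{B}$; (b) on the resulting high-probability event, construct $f$ as a small modification of $G|_{\partial \mathcal{B}_{1/8}}$ that matches the random boundary values $N^{-1}H|_{\partial(\mathcal{B}_{N/8} \cap \mathbb{T})}$ within the refined tolerance $\mu$; and (c) verify that the associated maximizer $F$ satisfies the $\mu$-confinement properties and the comparison estimates in items (1) and (2), via \Cref{euv1v2estimategradient}, \Cref{perturbationboundary}, and \Cref{h1h2gamma}. For step (a), the $\lambda$-confined hypothesis supplies barrier functions $g - \lambda$ and $g + \lambda$; their translated maximizers $G \pm \lambda$ satisfy the $\mathcal{C}^2$-hypothesis of \Cref{estimateboundaryheight} with $\nu = \alpha$, since $\varsigma \log N = (\log N)^{1-\alpha}$ and the interior $\mathcal{C}^{2,\alpha}$-bound on $\mathcal{B}_{1-\lambda}$ can be transferred to $\overline{\mathcal{B}}$ by applying that theorem on a slightly shrunken disk. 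Taking $D + 2$ in place of $D$ and union-bounding over the $O(N^2)$ vertices of $R$ produces an event $\Gamma$ with $\mathbb{P}[\Gamma] \le C_2 N^{-D}$ on whose complement $|N^{-1}H(u) - G(N^{-1}u)| \le \lambda + 2N^{-\alpha/5}$ for all $u \in R$, using $\lambda \ge (\log N)^{-1-\alpha} \gg N^{-\alpha/5}$; a 1-Lipschitz interpolation then yields \eqref{g1nvlambda} with constant $6\lambda$.

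For steps (b) and (c), write $\delta(z) = N^{-1}H(Nz) - G(z)$ (using the 1-Lipschitz extension of $H$), so that $\|\delta\|_\infty \le 6\lambda$ on $\partial \mathcal{B}_{1/8}$ and $\delta$ is $2$-Lipschitz, and set $f = G|_{\partial \mathcal{B}_{1/8}} + \delta_\eta$ where $\delta_\eta$ is a convolutional smoothing of $\delta$ along $\partial \mathcal{B}_{1/8}$ at scale $\eta$, together with the extension $\varphi = G + \tilde\delta_\eta$ on $\mathcal{B}_{1/8}$, where $\tilde\delta_\eta$ is obtained from $\delta_\eta$ by a standard $\mathcal{C}^2$-extension operator (e.g.\ harmonic extension with Schauder control) preserving $\mathcal{C}^2$-norms. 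Choose $\theta$ as the largest value compatible with $\varsigma^{10} \le \lambda^{2\theta}$ given $\lambda \ge (\log N)^{-1-\alpha}$, and apply \Cref{euv1v2estimategradient} with the $\lambda$ there equal to $7\lambda$ here: the conclusions $|\nabla F - (s,t)| \le (7\lambda)^{15\theta/16}$ and $\|F - F(0,0)\|_{\mathcal{C}^{2,\alpha}(\overline{\mathcal{B}}_{1/8 - \mu})} \le (7\lambda)^{\theta/2 - 1}$ respectively yield $\nabla F \in \mathcal{T}_{\varepsilon/2}$ (condition (2) of $\mu$-confinement) and, after a quick exponent check for $\alpha = 1/17500$, the bound $\|F - F(0,0)\|_{\mathcal{C}^{2,\alpha}(\overline{\mathcal{B}}_{1/8 - \mu})} \le (\varsigma/64) \log N$ (condition (3)). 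The estimate $\|F - G\|_\infty \le 7\lambda$ is then \Cref{h1h2gamma}, while $|\nabla F - \nabla G| \le C_1 \lambda$ on $\mathcal{B}_{1/32}$ follows from \Cref{perturbationboundary} applied (after rescaling $\mathcal{B}_{1/8} \to \mathcal{B}$) to the pair of boundary data $f$ and $G|_{\partial \mathcal{B}_{1/8}}$, both of which are nearly linear of slope $\nabla G(0,0)$ by the $\varsigma^{10}$-near-linearity of $g$ combined with the interior regularity of $G$ near the origin.

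The main obstacle is condition (1) of $\mu$-confinement, namely $|f(z) - N^{-1}H(v)| \le \mu$ at every $v \in \partial(\mathcal{B}_{N/8} \cap \mathbb{T})$. Naively requiring the smoothing $\delta_\eta$ to be pointwise within $\mu$ of $\delta$ forces $\eta \lesssim \mu$, which makes $\|\delta_\eta\|_{\mathcal{C}^2} \gtrsim \eta^{-1} \gtrsim \mu^{-1}$; but \Cref{euv1v2estimategradient} permits at most $\|\varphi\|_{\mathcal{C}^2} \le \lambda^{2\theta - 1}$, which for our parameters is of order $\lambda^{-1} \ll \mu^{-1}$. These constraints are incompatible under a purely deterministic Lipschitz treatment of $\delta$, so one must invoke the Azuma--Hoeffding concentration \Cref{hvuexpectation}: each vertex $v \in \partial(\mathcal{B}_{N/8} \cap \mathbb{T})$ lies at graph distance $\Theta(N)$ from $\partial R$, so that $|N^{-1}H(v) - N^{-1}\mathbb{E}[H(v)]| \le N^{-1/2}(\log N)^2 \ll \mu$ holds uniformly on a further high-probability event. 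One therefore chooses the smoothing target to be the much more regular deterministic function $N^{-1}\mathbb{E}[H]$ rather than $N^{-1}H$ itself, and concentration closes the residual gap. This delicate three-way balance --- between the smoothing scale, the probabilistic concentration, and the $\mathcal{C}^2$-admissibility of $\varphi$ --- is precisely why the improvement factor $\mu/\lambda = \varsigma^\alpha$ is only barely subpolynomial in $\log N$: just enough for the $\log N$ inductive iterations performed in \Cref{EstimateHLocalProof} to yield a cumulative error that still remains bounded.
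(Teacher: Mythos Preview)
Your overall architecture---global law for \eqref{g1nvlambda}, then mollification of the random boundary data on $\partial\mathcal{B}_{1/8}$, then \Cref{euv1v2estimategradient} and \Cref{perturbationboundary} for the confinement and comparison estimates---matches the paper's. The gap is in your resolution of the $\mathcal{C}^2$-tension. You correctly identify that a naive mollification of a Lipschitz $\delta$ at scale $\eta$ gives $\|\delta_\eta\|_{\mathcal{C}^2}\gtrsim\eta^{-1}$, incompatible with the requirement $\|\varphi\|_{\mathcal{C}^2}\le\lambda^{2\theta-1}$ when $\eta\lesssim\mu$. But replacing $N^{-1}H$ by $N^{-1}\mathbb{E}[H]$ does not help: the expectation of a $1$-Lipschitz height function is still only $1$-Lipschitz, with no better a priori second-order regularity. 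Your assertion that $N^{-1}\mathbb{E}[H]$ is ``much more regular'' is unsubstantiated, and the bound you would obtain is still $\|\delta_\eta\|_{\mathcal{C}^2}\gtrsim\mu^{-1}=\varsigma^{-\alpha}\lambda^{-1}$, which exceeds $\lambda^{2\theta-1}$ for any admissible choice of $\theta$.

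The paper's mechanism is different: it enlarges the bad event $\Gamma$ to include, beyond the global-law failure $\Gamma_0$, a family of \emph{local} events $\Gamma_{z_0;z}$ on which $\Phi=N^{-1}H(N\cdot)$ fails to be approximately linear of slope $\nabla G(z_0)$ on balls of radius $\varsigma\lambda$, with error $\varsigma^{14}\lambda$. The probability of each $\Gamma_{z_0;z}$ is controlled not by Azuma--Hoeffding but by a \emph{second} application of a global law---namely \Cref{heightapproximate}, the $(\log N)^{-c}$-error law valid for arbitrary boundary data---on disks of the much smaller radius $8\varsigma^{-14}\lambda N$ centered at $Nz_0$, followed by a Taylor expansion of the resulting maximizer against $G$. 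On $\Gamma^c$ one then convolves $\Phi$ itself (not $\mathbb{E}[\Phi]$) at scale $\vartheta=\varsigma\lambda$; the approximate linearity on that scale improves the second-derivative bound from the naive $\vartheta^{-1}$ to $(\varsigma^{14}\lambda)\vartheta^{-2}=\varsigma^{12}\lambda^{-1}$, which is what makes the $\mathcal{C}^2$-hypothesis of \Cref{euv1v2estimategradient} go through with $\theta=\tfrac{1}{3750}$. This two-scale use of the two global laws---the facetless one \Cref{estimateboundaryheight} at scale $N$ and the general one \Cref{heightapproximate} at scale $\varsigma^{-14}\lambda N$---is the missing idea in your proposal.
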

	
	We refer to the left side of \Cref{bm} for a depiction, where the $M$ there is $N$ here. 

	\begin{rem} 
	
	One would like to inductively use \Cref{estimaten2n} by applying it; then scaling by $\frac{1}{8}$ (that is, replacing $\frac{N}{8}$ with $N$); and repeating. Assuming that such a procedure can be implemented (as will be shown to be the case in \Cref{EstimateHLocalProof}), observe that, since $\mu = (\log N)^{-\alpha^2} \lambda \ll \lambda$, after $\alpha^{-2}$ applications of \Cref{estimaten2n}, $\lambda$ would decrease from an initial value of $(\log N)^{-15 \alpha} \gg (\log N)^{-1 / 400}$ to its minimum possible value of $(\log N)^{-1 - \alpha}$. Then, \eqref{g1nvlambda} would imply that $H$ satisfies a global law with error $5 (\log N)^{-1 - \alpha}$, which (as mentioned after \Cref{heightapproximate}) will be useful for establishing the local law \Cref{heightlocal1}. Thus, \Cref{estimaten2n} also serves as a way of decreasing the error in the global law from $(\log N)^{-c}$ to below $(\log N)^{-1}$. We will explain this in more detail in \Cref{EstimateHLocalProof}.
	
	\end{rem}
	
	Now, let us define the event $\Gamma$ from \Cref{estimaten2n}; it will essentially be the one on which either \eqref{g1nvlambda} does not hold or $H$ is not locally approximately linear. In what follows, we extend $H$ to $\overline{\mathcal{B}}_N$ by linearity on $\mathbb{F}(R)$ and arbitrarily on $\overline{\mathcal{B}}_N \setminus \mathbb{F}(R)$, in such a way that it is $1$-Lipschitz on $\overline{\mathcal{B}}_N$. 
	
	\begin{definition}
		
		\label{omegadefinition} 
		
		Adopt the notation and assumptions of \Cref{estimaten2n}. Define $\Phi: \overline{\mathcal{B}} \rightarrow \mathbb{R}$ by setting $\Phi (z) = N^{-1} H (Nz)$, for each $z \in \overline{\mathcal{B}}$. Then define the event
		\begin{flalign}
		\label{fg1g2lambda} 
		\Gamma_0 = \bigg\{ \displaystyle\sup_{z \in \overline{\mathcal{B}}} \big| \Phi(z) - G(z) \big| > 6 \lambda \bigg\}.
		\end{flalign}
		
		\noindent Moreover, for any $z_0, z \in \mathcal{B}$ such that $\mathcal{B}_{8 \lambda / \varsigma^{14}} (z_0) \subset \mathcal{B}_{1 / 5}$ and $|z - z_0| \le \varsigma \lambda$, define the event
		\begin{flalign*}
		\Gamma_{z_0; z} = \Big\{ \big| \Phi(z) - \Phi(z_0) - (z - z_0) \cdot \nabla G (z_0) \big| > \varsigma^{14} \lambda \Big\}.
		\end{flalign*}
		
			\noindent Set $\Gamma = \Gamma_0 \cup \bigcup \Gamma_{z_0; z}$, where the union is over all $z_0, z \in \mathcal{B}$ with $\mathcal{B}_{8 \lambda / \varsigma^{14}} (z_0) \subset \mathcal{B}_{1 / 5}$ and $z \in \mathcal{B}_{\varsigma \lambda} (z_0)$.
		
	\end{definition}

	We refer to the right side of \Cref{ghfigure} for a depiction. Observe that \eqref{g1nvlambda} holds on $\Gamma^c$, due to \eqref{fg1g2lambda}; we will bound $\mathbb{P} [\Gamma]$ in \Cref{flinearlambda} below. Next, let us describe the function $f$ from \Cref{estimaten2n}; it will be determined by first convolving $\Phi$ with a suitable smooth function $\psi$, supported on scale $\varsigma \lambda$, and then restricting to $\mathcal{B}_{1/8}$ (we will later see that these choices make $f$ of distance $\varsigma \lambda \ll \lambda$ from $\Phi$, so it is closer to $\Phi$ than $g$ is assumed to be, while still having some regularity). This is made precise through the following definition.

	\begin{definition}
		
		\label{psif1f2} 
		
		Adopt the notation and assumptions of \Cref{estimaten2n}. Set $\vartheta = \varsigma \lambda$, and let $\psi: \mathbb{R}^2 \rightarrow \mathbb{R}_{\ge 0}$ denote a smooth, nonnegative function supported on $\overline{\mathcal{B}}_{\vartheta}$, such that 
		\begin{flalign}
		\label{psiestimates} 
		\displaystyle\int_{\mathbb{R}^2} \psi (w) dw = 1; \qquad |\psi|_{\mathcal{C}^0 (\mathbb{R}^2)} \le 10 \vartheta^{-2}; \qquad |\psi|_{\mathcal{C}^1 (\mathbb{R}^2)} \le 20 \vartheta^{-3}; \qquad |\psi|_{\mathcal{C}^2 (\mathbb{R}^2)} \le 60 \vartheta^{-4}.
		\end{flalign}
		
		\noindent Further define the function $\varphi: \mathcal{B}_{1 / 6} \rightarrow \mathbb{R}$ by setting 
		\begin{flalign}
		\label{functions12z}
		& \varphi (z) = \displaystyle\int_{\mathbb{R}^2} \Phi (z - w) \psi (w) dw, \qquad \text{for each $z \in \mathcal{B}_{1 / 6}$.}
		\end{flalign}
		
		\noindent  Denote $f = \varphi |_{\partial \mathcal{B}_{1 / 8}}$, and let $F \in \Adm (\mathcal{B}_{1 / 8}; f)$ denote the maximizer of $\mathcal{E}$ on $\mathcal{B}_{1 / 8}$ with boundary data $f$. 
		
	\end{definition}
	
	We will verify that $f$ satisfies the two properties stated in \Cref{estimaten2n} in \Cref{Estimaten9nProof}. We conclude this section with the following lemma that estimates $\mathbb{P} [\Gamma]$. The bound on $\mathbb{P} [\Gamma_0]$ will follow from \Cref{estimateboundaryheight}, and the bound on each $\mathbb{P} [\Gamma_{z_0; z}]$ will follow from first applying of \Cref{heightapproximate} on disks of radius $8 \varsigma^{-14} \lambda N \ll N$ centered at $N z_0$ and then using a Taylor expansion.

	\begin{lem} 
		
		\label{flinearlambda} 
		
		For any $D > 0$, there exists a constant $C = C (\varepsilon, D) > 1$ such that $\mathbb{P} [\Gamma] < C N^{-D}$. 
		
	\end{lem}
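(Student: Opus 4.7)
We handle $\Gamma_{0}$ and the collection of local events $\Gamma_{z_{0};z}$ separately. For $\Gamma_{0}$, my plan is to invoke \Cref{estimateboundaryheight} with constant-shift barriers $g_{1} = g - \lambda$ and $g_{2} = g + \lambda$, whose unique maximizers are the shifts $G_{1} = G - \lambda$ and $G_{2} = G + \lambda$. Its first hypothesis follows from the first clause of the $\lambda$-confined condition, while $\nabla G_{i} = \nabla G \in \mathcal{T}_{\varepsilon/2}$ and $\|G_{i} - G_{i}(0,0)\|_{\mathcal{C}^{2,\alpha}(\overline{\mathcal{B}}_{1-\lambda})} \le (\log N)^{1-\alpha}$ supply its second and third hypotheses with $\nu = \alpha$ on the slightly shrunken disk; a $1$-Lipschitz interpolation between $\partial \mathcal{B}_{1-\lambda}$ and $\partial \mathcal{B}$ repairs the $O(\lambda)$ mismatch of domains. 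Since $2N^{-\alpha/5} \ll \lambda$ in our regime, the conclusion of \Cref{estimateboundaryheight} delivers $|\Phi(N^{-1}v) - G(N^{-1}v)| \le 2\lambda + o(\lambda)$ with probability at least $1 - CN^{-D-2}$ at each of the $O(N^{2})$ lattice vertices $v \in R$; a union bound followed by a joint $1$-Lipschitz interpolation then gives $\mathbb{P}[\Gamma_{0}] \le CN^{-D}$.

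For the local events, the decisive observation is that \Cref{derivativeshestimate} upgrades the $\lambda$-confined Hölder bound of order $\log N$ to an absolute interior estimate $\|G - G(0,0)\|_{\mathcal{C}^{2}(\overline{\mathcal{B}}_{1/2})} \le C(\varepsilon)$, available because $z_{0} \in \mathcal{B}_{1/5}$ lies in the deep interior and $\nabla G \in \mathcal{T}_{\varepsilon/2}$ by the $\lambda$-confined condition. Fix $z_{0}$ with $\mathcal{B}_{R}(z_{0}) \subset \mathcal{B}_{1/5}$, where $R = 8\lambda/\varsigma^{14}$, and let $\mathcal{H}_{*}^{R}$ denote the (random) maximizer of $\mathcal{E}$ on $\mathcal{B}_{R}(z_{0})$ with boundary data $\Phi|_{\partial \mathcal{B}_{R}(z_{0})}$. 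On $\Gamma_{0}^{c}$, \Cref{h1h2gamma} gives $\|\mathcal{H}_{*}^{R} - G\|_{L^{\infty}(\mathcal{B}_{R}(z_{0}))} \le 6\lambda$, and rescaling $\mathcal{B}_{R}(z_{0})$ to the unit disk produces
\[ \widehat{G}(w) := R^{-1}\bigl(G(z_{0} + Rw) - G(z_{0})\bigr), \qquad \widehat{\mathcal{H}}_{*}(w) := R^{-1}\bigl(\mathcal{H}_{*}^{R}(z_{0} + Rw) - \mathcal{H}_{*}^{R}(z_{0})\bigr), \]
both of whose boundary restrictions are $O(\varsigma)$-nearly linear of common slope $\nabla G(z_{0}) \in \mathcal{T}_{\varepsilon/2}$ (the former by the interior bound, since $\|\widehat{G}\|_{\mathcal{C}^{2}(\overline{\mathcal{B}})} \le CR \le 8C\varsigma$, and the latter by $\|\widehat{G} - \widehat{\mathcal{H}}_{*}\|_{L^{\infty}(\mathcal{B})} \le (3/4)\varsigma^{14}$). \Cref{perturbationboundary} is therefore applicable and yields both $\|\mathcal{H}_{*}^{R}\|_{\mathcal{C}^{2}(\mathcal{B}_{R/4}(z_{0}))} \le C/R$ and the crucial gradient comparison $|\nabla \mathcal{H}_{*}^{R}(z_{0}) - \nabla G(z_{0})| \le C\varsigma^{14}$.

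These deterministic consequences of $\Gamma_{0}^{c}$ place the rescaled sub-problem in the regime of \Cref{regularestimate}, so that conditioning on the boundary of $\mathcal{B}_{RN}(Nz_{0}) \cap \mathbb{T}$ (where the restriction of $H$ is uniform by the Gibbs property) and invoking \Cref{heightapproximate} on the rescaled sub-disk yields, with probability at least $1 - Ce^{-RN}$,
\[ |\Phi(z) - \Phi(z_{0}) - (\mathcal{H}_{*}^{R}(z) - \mathcal{H}_{*}^{R}(z_{0}))| \le R(\log(RN))^{-1/400} \]
uniformly over $z \in \mathcal{B}_{\varepsilon R/3}(z_{0})$. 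Taylor-expanding $\mathcal{H}_{*}^{R}$ at $z_{0}$ contributes a quadratic remainder $(C/R)(\varsigma\lambda)^{2} = O(\varsigma^{16}\lambda)$ and a linear discrepancy $|z - z_{0}| \cdot C\varsigma^{14} = O(\varsigma^{15}\lambda)$, both comfortably below $\varsigma^{14}\lambda$; the arithmetic inequality $28\alpha - 1/400 < 0$ ensured by $\alpha = 1/17500$ then makes $R(\log(RN))^{-1/400} = 8\lambda(\log N)^{14\alpha - 1/400}(1 + o(1)) \le \varsigma^{14}\lambda/2$ for $N$ large, so the three contributions together give $\Gamma_{z_{0};z}^{c}$. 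Discretizing the relevant pairs $(z_{0}, z)$ to lattice spacing $\varsigma^{15}\lambda/100$ yields only polylogarithmically many configurations to consider (the joint $1$-Lipschitz property of $\Phi$ and $G$ together with the interior $\mathcal{C}^{1}$ bound on $G$ suffice to interpolate), and a union bound against the superpolynomial tail $e^{-RN} \le e^{-N/\log N}$ closes the estimate $\mathbb{P}[\Gamma \setminus \Gamma_{0}] \le CN^{-D}$. The crux of the argument is the bridge from the rough $\mathcal{C}^{2,\alpha}$ data in the $\lambda$-confined hypothesis to the near-linearity required by \Cref{perturbationboundary}, which is established by the interior estimate of \Cref{derivativeshestimate}.
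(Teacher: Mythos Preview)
Your proposal is correct and follows essentially the same strategy as the paper's proof: bound $\mathbb{P}[\Gamma_0]$ via \Cref{estimateboundaryheight} on the slightly shrunken disk $\mathcal{B}_{(1-\lambda)N}$ with barriers $G\pm O(\lambda)$, and handle each $\Gamma_{z_0;z}$ by conditioning on the boundary of $\mathcal{B}_{RN}(Nz_0)$ with $R=8\lambda\varsigma^{-14}$, comparing the rescaled local maximizer to $G$ via \Cref{perturbationboundary}, and then invoking \Cref{heightapproximate} plus a Taylor expansion. The only cosmetic difference is that you obtain the interior $\mathcal{C}^2$ bound on $G$ from \Cref{derivativeshestimate} (using $\nabla G\in\mathcal{T}_{\varepsilon/2}$ on all of $\mathcal{B}$ from $\lambda$-confinement), whereas the paper obtains it from the second estimate of \Cref{perturbationboundary} (using the $\varsigma^{10}$-near-linearity of $g$); both routes yield the same absolute bound and the rest of the argument is identical, including the arithmetic check $28\alpha<1/400$ and the polylogarithmic discretization for the union bound.
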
 
	
	\begin{proof}
		
		By a union bound and the fact that $\Phi$ is $1$-Lipschitz, it suffices to show the existence of a constant $C = C (\varepsilon, D) > 1$ such that 
		\begin{flalign}
		\label{omega0omegaz0z}
		\mathbb{P} [\Gamma_0] \le C N^{-D}; \qquad \mathbb{P} \Big[ \big| \Phi(z) - \Phi(z_0) - (z - z_0) \cdot \nabla G (z_0) \big|  \textbf{1}_{\Gamma_0^c} > C \varsigma^{15} \lambda \Big] \le C e^{-\lambda N},
		\end{flalign}
		
		\noindent for any fixed $z_0, z \in \mathcal{B}$ such that $\mathcal{B}_{8 \lambda / \varsigma^{14}} (z_0) \subset \mathcal{B}_{1 / 5}$ and $z \in \mathcal{B}_{\varsigma \lambda} (z_0)$. Here, we have implicitly used the fact that it suffices to take a union bound of \eqref{omega0omegaz0z} over $(z_0, z)$ among vertices in the mesh $N^{-10} \cdot (\mathcal{B}_{1/5} \cap \mathbb{T})$ satisfying the above conditions. Indeed, for any pair $(z_0, z)$ satisfying $\mathcal{B}_{8\lambda / \varsigma^{14}} (z_0) \subset \mathcal{B}_{1/5}$ and $z \in \mathcal{B}_{\varsigma \lambda} (z_0)$ that lies outside of this mesh, let $(z_0', z')$ denote the pair inside of this mesh closest to it. Then, assuming \eqref{omega0omegaz0z} for $(z_0', z')$, we have  
		\begin{flalign*}
			\big| \Phi(z) - \Phi(z_0) - (z - z_0) \cdot \nabla G (z_0) \big| & \le \big| \Phi (z') - \Phi (z_0') - (z' - z_0') \cdot \nabla G(z_0') \big| + \big| \Phi (z) - \Phi (z') \big| \\
			& \qquad + \big| \Phi (z_0) - \Phi (z_0') \big| + \big( |z - z'| + |z_0 - z_0'| \big) \big| \nabla G(z_0') \big| \\
			& \qquad + |z - z_0| \cdot \big| \nabla G (z_0) - \nabla G(z_0') \big| \\
			& \le C \varsigma^{15} \lambda + 4 N^{-10} + |z_0 - z_0'| \cdot \| G - G(0, 0) \|_{\mathcal{C}^2 (\mathcal{B}_{1/5})} \\
			& \le C \varsigma^{15} \lambda + 4 N^{-10} + N^{-10} \cdot \varsigma \log N \le \varsigma^{14} \lambda,
		\end{flalign*}  
	
		\noindent where the second bound holds since $\Phi$ is $1$-Lipschitz and $|z - z_0| \le \varsigma \lambda \le 1$, and the third holds from the third part of \Cref{lambdabounded}.
		
		To establish the first bound in \eqref{omega0omegaz0z}, we apply the $1$-Lipschitz properties of $H$ and $G$; \Cref{estimateboundaryheight} on the domain $\mathcal{B}_{(1 - \lambda) N} \cap \mathbb{T}$, where the functions $(g_1, g_2)$ there are $\big(G |_{\partial \mathcal{B}_{1 - \lambda}} - 3 \lambda, G |_{\partial \mathcal{B}_{1 - \lambda}} + 3 \lambda \big)$ here (and whose assumptions are satisfied by the $\lambda$-confinement of $(N; \varepsilon; 1; \varsigma; \alpha; h; g)$ and the Lipschitz properties of $F$ and $G$); and a union bound to deduce that
		\begin{flalign*}
		& \mathbb{P} \Bigg[ \bigcup_{z \in \mathcal{B}_{1 - \lambda}} \big\{ \Phi(z) < G (z) - 3 \lambda - 2 N^{-\alpha/5} \big\} \cup \bigcup_{z \in \mathcal{B}_{1 - \lambda}} \big\{ \Phi(z) > G (z) + 3 \lambda + 2 N^{-\alpha/5} \big\} \Bigg] < C_1 N^{-D},
		\end{flalign*}
		
		\noindent for some constant $C_1 = C_1 (\varepsilon, D) > 1$. This estimate; the $1$-Lipschitz properties of $\Phi$ and $G$ on $\mathcal{B}_1$; the fact that $2 N^{-\alpha/5} < \lambda$ for sufficiently large $N$; and \eqref{fg1g2lambda} together imply the first bound of \eqref{omega0omegaz0z}. 
		
		We now establish the second bound in \eqref{omega0omegaz0z}. Fix $z_0, z \in \mathcal{B}$ with $\mathcal{B}_{8 \lambda / \varsigma^{14}} (z_0) \subset \mathcal{B}_{1 / 5}$ and $z \in \mathcal{B}_{\varsigma \lambda} (z_0)$; for notational convenience, let us assume that $z_0 = (0, 0)$. Further denote $r = \varsigma^{-14} \lambda$; abbreviate $\mathfrak{U} = \mathcal{B}_{8r} = \mathcal{B}_{8r} (z_0)$; and condition on $\Phi |_{\mathcal{B} \setminus \mathfrak{U}}$ (or, equivalently, on $H |_{\mathcal{B}_N \setminus N \mathfrak{U}}$). Then associated with $H |_{N \mathfrak{U}}$ is a free tiling of $N \mathfrak{U} \cap \mathbb{T}$; denoting $\mathscr{A} = N \mathfrak{U} \cap \mathbb{T}$, the law of $H |_{\mathscr{A}}$ is then given by that of a uniformly random element of $\mathfrak{G} ( H|_{\partial \mathscr{A}})$ by the Gibbs property (recall \Cref{InfiniteMeasures}). 
		
		Next, we will rescale by $r^{-1}$ and use the global law \Cref{heightapproximate} to approximate $(rN)^{-1} H |_{\mathscr{A}}$ by a maximizer of $\mathcal{E}$ on $\mathcal{B}_8 = r^{-1} \mathfrak{U}$. Since $r \gg \varsigma \lambda$, a Taylor expansion will then show that $\Phi$ is likely almost linear (with error of order $r^{-1} \varsigma \lambda = \varsigma^{15}$) on $\mathcal{B}_{\varsigma \lambda}$.
		
		To implement this, define the function $\chi: \partial \mathcal{B}_8 \rightarrow \mathbb{R}$ by setting $\chi (z) = (rN)^{-1} H (rNz) = r^{-1} \Phi (rz)$, for each $z \in \partial \mathcal{B}_8$. Then let $\Xi \in \Adm (\mathcal{B}_8; \chi)$ denote the maximizer of $\mathcal{E}$ on $\mathcal{B}_8$ with boundary data $\chi$. In order to apply \Cref{heightapproximate} on $\mathscr{A}$ (with the $\varepsilon$, $\mathfrak{h}$, and $\mathcal{H}$ there equal to $\frac{\varepsilon}{4}$, $\chi$, and $\Xi$ here, respectively), we must first verify that $\nabla \Xi (z) \in \partial \mathcal{T}_{\varepsilon / 4}$ for $|z| < \frac{\varepsilon}{4}$, as imposed in \Cref{regularestimate}. 
		
		To that end, define $G^{(r)}: \mathcal{B}_8 \rightarrow \mathbb{R}$ by setting $G^{(r)} (z) = r^{-1} G (rz)$ for each $z \in \mathcal{B}_8$, and observe (by \Cref{estimatehrho}) that $G^{(r)}$ is a maximizer of $\mathcal{E}$ on $\mathcal{B}_8$. We will first bound $\big| \Xi - G^{(r)} \big|$ on $\mathcal{B}_8$ and then use \Cref{perturbationboundary} to compare $\nabla \Xi$ to $\nabla G$. Since $\nabla G^{(r)} (z) = \nabla G (rz) \in \mathcal{T}_{\varepsilon / 2}$, this will show $\nabla \Xi (z) \in \mathcal{T}_{\varepsilon / 4}$ for $z \in \mathcal{B}$. 
		
		To do this, observe that \eqref{fg1g2lambda} yields $\sup_{z \in \mathcal{B}} \big| \Phi(z) - G (z) \big| \textbf{1}_{\Gamma_0^c} \le 6 \lambda$, and therefore 
		\begin{flalign}
		\label{xizgrzestimateb}
		\sup_{z \in \mathcal{B}_8} \big| \Xi (z) - G^{(r)} (z) \big| \textbf{1}_{\Gamma_0^c} = \sup_{z \in \partial \mathcal{B}_8} \big| \chi (z) - G^{(r)} (z) \big| \textbf{1}_{\Gamma_0^c} \le 6 r^{-1} \lambda = 6 \varsigma^{14},
		\end{flalign}
		
		\noindent where to derive the first equality we used \Cref{h1h2gamma}. 
		
		In order to apply \Cref{perturbationboundary}, we must next show that $\Xi$ and $G^{(r)}$ are both nearly linear. To that end, first observe that since $g$ is $\varsigma^{10}$-nearly linear of slope $(s, t)$ on $\mathcal{B}$, there exists a linear function $\Lambda: \overline{\mathcal{B}} \rightarrow \mathbb{R}$ of slope $(s, t)$ on $\mathcal{B}$ such that $\sup_{z \in \mathcal{B}} \big| g(z) - \Lambda (z) \big| < \varsigma^{10}$. Then, since $(s, t) \in \mathcal{T}_{\varepsilon}$ and $\mathfrak{U} \subset \mathcal{B}_{1 / 4}$, we deduce from \Cref{perturbationboundary} (applied with the $(\mathfrak{h}_1, \mathfrak{h}_2)$ there equal to the $(g, \Lambda |_{\partial \mathcal{B}})$ here) that there exists a constant $C_2 = C_2 (\varepsilon) > 1$ such that $\sup_{z \in \mathfrak{U}} \big| \nabla G (z) - (s, t) \big| < C_2 \varsigma^{10} < C_2 \varsigma$ and $\| G - G (0, 0) \|_{\mathcal{C}^2 (\overline{\mathfrak{U}})} \le C_2$. Next, recall the constants $\delta = \delta \big( \frac{\varepsilon}{4} \big) > 0$ and $C_3 = C_3 \big(\frac{\varepsilon}{4} \big) > 1$ from \Cref{perturbationboundary}, and assume that $N$ is sufficiently large so that $64 C_2 r + 6 C_3 \varsigma^{14} < \min \big\{ \frac{\delta}{2}, \frac{\varepsilon}{4} \big\}$. 
		
		Now denote $(s_0, t_0) = \nabla G (z_0) = \nabla G (0, 0) \in \mathcal{T}_{\varepsilon / 2}$. From a Taylor expansion and the bound $\| G \|_{\mathcal{C}^2 (\overline{\mathfrak{U}})} \le C_2$, we obtain
		\begin{flalign*}
		\displaystyle\sup_{z \in \mathcal{B}_8} \big| G^{(r)} (z) - G^{(r)} (0, 0) - z \cdot (s_0, t_0) \big| & = r^{-1} \displaystyle\sup_{z \in \mathfrak{U}} \big| G (z) - G (0, 0) - z \cdot (s_0, t_0) \big| \\
		& \le C_2 r^{-1} \displaystyle\sup_{z \in \mathfrak{U}} |z - z_0|^2 = 64 C_2 r,
		\end{flalign*}
		
		\noindent where we have used the fact that $\nabla G^{(r)} (0, 0) = \nabla G (0, 0) = (s_0, t_0)$. It follows that $G^{(r)}$ is $\frac{\delta}{2}$-nearly linear of slope $(s_0, t_0)$ on $\mathcal{B}_8$, which together with \eqref{xizgrzestimateb} implies that $\Xi$ is $\delta$-nearly linear with slope $(s_0, t_0)$ on $\mathcal{B}_8$. Therefore, \Cref{perturbationboundary} (with \Cref{estimatehrho}) applies and by \eqref{xizgrzestimateb} yields
		\begin{flalign}
		\label{xigirgradient}
		 \displaystyle\sup_{z \in \mathcal{B}} \big| \nabla G^{(r)} (z) - \nabla \Xi (z) \big| \textbf{1}_{\Gamma_0^c} \le C_3 \varsigma^{14}; \qquad  \| \Xi - \Xi (0, 0) \|_{\mathcal{C}^2 (\overline{\mathcal{B}})} \textbf{1}_{\Gamma_0^c}  \le C_3.
		\end{flalign}
		
		Since $\nabla G^{(r)} (z) \in \mathcal{T}_{\varepsilon / 2}$ for each $z \in \mathcal{B}_2$ (due to the $\lambda$-confinement of $(N; \varepsilon; 1; \varsigma; \alpha; h; g)$), it follows from the first bound in \eqref{xigirgradient} that $\nabla \Xi (z) \in \mathcal{T}_{\varepsilon / 4}$ for each $z \in \mathcal{B}_{\varepsilon / 4} \subset \mathcal{B}$ and $N$ sufficiently large. This implies that \Cref{regularestimate} is satisfied, where the $N$ there is $\lfloor rN \rfloor$ here; the $\varepsilon$ there is $\frac{\varepsilon}{4}$ here; the $R$ there is $\mathscr{A}$ here; the $h$ there is $H |_{\partial \mathscr{A}}$ here; the $\mathcal{H}$ there is $\Xi$ here; and the $v_0$ there is $N z_0 = (0, 0)$ here. Thus \Cref{heightapproximate}, the fact that $\varsigma^{35} = (\log N)^{-1 / 500} > \big( \log \lfloor rN \rfloor \big)^{-1 / 400}$, and the fact that $H$ and $\Xi$ are $1$-Lipschitz together yield a constant $C_4 = C_4 (\varepsilon) > 1$ such that 
		\begin{flalign}
		\label{hnzhnz0xirestimate} 
		\mathbb{P} \Bigg[ \displaystyle\sup_{z \in \mathcal{B}_{\varepsilon r / 4}} \Big| r^{-1} N^{-1} \big( H (Nz) - H (0, 0) \big) - \big( \Xi (r^{-1} z) - \Xi (0, 0) \big) \Big| \textbf{1}_{\Gamma_0^c}  > \varsigma^{35} \Bigg] < C_4 e^{- rN}. 
		\end{flalign}
		
		\noindent Moreover, a Taylor expansion and the second bound in \eqref{xigirgradient} together imply 
		\begin{flalign}
		\label{xirestimate}
		\big| \Xi ( r^{-1} z) - \Xi (0, 0) - r^{-1} z \cdot \nabla \Xi (0, 0) \big| \textbf{1}_{\Gamma_0^c} \le C_3 r^{-2} |z|^2 \le C_3 \varsigma^{30},
		\end{flalign}
		
		\noindent whenever $|z - z_0| = |z| \le \varsigma \lambda = \varsigma^{15} r$. Combining the first statement of \eqref{xigirgradient}, \eqref{hnzhnz0xirestimate}, \eqref{xirestimate}, and the facts that $r = \varsigma^{-14} \lambda$ and $\nabla G^{(r)} (0, 0) = \nabla G (0, 0)$ imply the existence of a constant $C_5 = C_5 (\varepsilon) > 1$ such that 
		\begin{flalign}
		\label{nhnznz0g1}
		\mathbb{P} \Bigg[ \displaystyle\sup_{z \in \mathcal{B}_{\varsigma \lambda}} \Big| N^{-1} \big( H (N z) - H (0, 0) \big) - z \cdot \nabla G (0, 0) \Big| \textbf{1}_{\Gamma_0^c} > C_5 \varsigma^{29} r \Bigg] < C_4 e^{- rN},
		\end{flalign}
		
		\noindent whenever $|z| \le \varsigma \lambda = \varsigma^{15} r$. So, we deduce the second bound in \eqref{omega0omegaz0z} from \eqref{nhnznz0g1} and the facts that $\Phi (z) = N^{-1} H (Nz)$ and $r = \varsigma^{-14} \lambda$.
	\end{proof}

	\subsection{Proof of \Cref{estimaten2n}}
	
	\label{Estimaten9nProof}
	
	In this section we establish \Cref{estimaten2n}; throughout this section, we adopt the notation of \Cref{Estimaten9n}. It suffices to show that $f$ satisfies the two properties listed in \Cref{estimaten2n} on the event $\Gamma^c$. To that end, we begin with the following lemma, whose second statement verifies that it satisfies the first one there.

	\begin{lem}
		
		\label{gradientfigiomega} 
		
		There exists a constant $C = C(\varepsilon) > 1$ such that the following two statements hold for sufficiently large $N$. First, we have that
		\begin{flalign}
		\label{gigradientst}
		\displaystyle\sup_{z \in \mathcal{B}_{1 / 4}} \big| \nabla G (z) - (s, t) \big| < C \varsigma. 
		\end{flalign}
		
		\noindent Second, we have that
		\begin{flalign}
		\label{figigradientst}
		\displaystyle\sup_{z \in \mathcal{B}_{1 / 8}} \big| G (z) - F (z) \big| \textbf{\emph{1}}_{\Gamma^c} \le 7 \lambda; \qquad \displaystyle\sup_{z \in \mathcal{B}_{1 / 32}} \big| \nabla F (z) - \nabla G (z) \big| \textbf{\emph{1}}_{\Gamma^c}  < C \lambda.
		\end{flalign}
		
	\end{lem}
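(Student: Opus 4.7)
The plan is to handle the three estimates in sequence, each reducing to an appropriate application of the stability result \Cref{perturbationboundary} together with the mollification definition of $f$.

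For the bound \eqref{gigradientst}, since $g$ is $\varsigma^{10}$-nearly linear of slope $(s,t) \in \mathcal{T}_{\varepsilon}$, there exists a linear function $\Lambda : \overline{\mathcal{B}} \to \mathbb{R}$ of slope $(s,t)$ with $\sup_{\partial \mathcal{B}} |g - \Lambda| < \varsigma^{10}$. The maximizer of $\mathcal{E}$ on $\mathcal{B}$ with boundary data $\Lambda|_{\partial \mathcal{B}}$ is $\Lambda$ itself (being linear, it solves \eqref{aijh} trivially, and the uniqueness in \Cref{hemaximum} identifies it as the maximizer). Both boundary data $g$ and $\Lambda|_{\partial \mathcal{B}}$ are $\varsigma^{10}$-nearly linear of slope $(s,t) \in \mathcal{T}_{\varepsilon}$, so for $N$ sufficiently large (so that $\varsigma^{10} < \delta(\varepsilon)$), \Cref{perturbationboundary} applies and yields
\[
\sup_{z \in \mathcal{B}_{1/4}} |\nabla G(z) - (s,t)| \le C \sup_{z \in \partial \mathcal{B}} |g(z) - \Lambda(z)| \le C\varsigma^{10} < C\varsigma,
\]
which gives \eqref{gigradientst}.

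For the first inequality in \eqref{figigradientst}, by the comparison principle (\Cref{h1h2gamma}) it suffices to bound $\sup_{\partial \mathcal{B}_{1/8}} |G - f|$, and since $f = \varphi|_{\partial \mathcal{B}_{1/8}}$ we write for $z \in \partial \mathcal{B}_{1/8}$
\[
f(z) - G(z) = \int_{\mathbb{R}^2} \bigl( \Phi(z-w) - G(z-w) \bigr) \psi(w)\, dw + \int_{\mathbb{R}^2} \bigl( G(z-w) - G(z) \bigr) \psi(w)\, dw.
\]
On $\Gamma^c \subseteq \Gamma_0^c$, the definition \eqref{fg1g2lambda} gives $|\Phi(z-w) - G(z-w)| \le 6\lambda$ uniformly, while the $1$-Lipschitz property of $G$ together with $\operatorname{supp} \psi \subseteq \overline{\mathcal{B}}_{\vartheta}$ yields $|G(z-w) - G(z)| \le |w| \le \vartheta = \varsigma \lambda$. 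Since $\psi$ integrates to $1$, we obtain $|f(z) - G(z)| \le 6\lambda + \varsigma\lambda \le 7\lambda$ for $N$ large enough, proving the first bound.

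For the gradient estimate, we apply \Cref{perturbationboundary} to $F$ and $G$ on the disk $\mathcal{B}_{1/8}$, using the scale-invariance noted in \Cref{estimatehrho}. The boundary data $G|_{\partial \mathcal{B}_{1/8}}$ is nearly linear of slope $(s,t)$ with error controlled by \eqref{gigradientst} (via a Taylor expansion, since $\nabla G$ lies within $C\varsigma$ of $(s,t)$ on $\mathcal{B}_{1/4}$), and by the first inequality of \eqref{figigradientst} the boundary data $f$ differs from $G|_{\partial \mathcal{B}_{1/8}}$ by at most $7\lambda$, hence is also nearly linear of slope $(s,t)$. For $N$ sufficiently large, both error magnitudes are below the threshold $\delta(\varepsilon)$, so the proposition applies and yields a constant $C' = C'(\varepsilon) > 1$ with
\[
\sup_{z \in \mathcal{B}_{1/32}} |\nabla F(z) - \nabla G(z)| \mathbf{1}_{\Gamma^c} \le C' \sup_{z \in \partial \mathcal{B}_{1/8}} |f(z) - G(z)| \mathbf{1}_{\Gamma^c} \le 7 C' \lambda,
\]
establishing the second bound in \eqref{figigradientst} after adjusting $C$.

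The only mild subtlety is verifying that the near-linearity thresholds imposed by \Cref{perturbationboundary} are satisfied; this is where we use that $\varsigma, \lambda \to 0$ as $N \to \infty$, so all smallness conditions hold for $N$ large enough.
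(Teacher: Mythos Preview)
Your proof is correct and follows essentially the same approach as the paper: both arguments apply \Cref{perturbationboundary} with $(g,\Lambda|_{\partial\mathcal{B}})$ for \eqref{gigradientst}, bound $|f-G|$ on $\partial\mathcal{B}_{1/8}$ via the mollification definition and $\Gamma_0^c$, then invoke \Cref{h1h2gamma} and a rescaled \Cref{perturbationboundary} on $\mathcal{B}_{1/8}$ for the gradient comparison. The only cosmetic difference is that the paper establishes near-linearity of $G$ on $\mathcal{B}_{1/8}$ directly from \Cref{h1h2gamma} applied to $(G,\Lambda)$ (giving a $\varsigma^{10}$ error), whereas you deduce it from \eqref{gigradientst} via integration (giving $C\varsigma$); both suffice.
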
 
	
	\begin{proof}
		
		To establish \eqref{gigradientst}, recall the constant $\delta = \delta (\varepsilon)$ from \Cref{perturbationboundary} and assume that $N$ is sufficiently large so that $7 \lambda + \varsigma < \delta$. The first property imposed on $g$ in \Cref{estimaten2n} yields a linear function $\Lambda: \overline{\mathcal{B}} \rightarrow \mathbb{R}$ of slope $(s, t)$ such that $\sup_{z \in \partial \mathcal{B}} \big| g (z) - \Lambda (z) \big| < \varsigma^{10} < \varsigma < \delta$. Then applying \Cref{perturbationboundary} (and in particular the first bound in \eqref{perturbationboundaryestimate}), with the $(\mathfrak{h}_1, \mathfrak{h}_2)$ there equal to $(g, \Lambda |_{\partial \mathcal{B}})$ here, we obtain \eqref{gigradientst}.
		
		Next, to establish \eqref{figigradientst}, we restrict to $\Gamma^c$ and first bound $\big| \varphi (z) - G (z) \big| \textbf{1}_{\Gamma^c} $ for $z \in \partial \mathcal{B}_{1 / 8}$. The definition \eqref{functions12z} of $\varphi$ and the facts that $H$ is $1$-Lipschitz; that $\int_{\mathbb{R}^2} \psi (w) dw = 1$; and that $\psi$ is nonnegative and supported on $\overline{\mathcal{B}}_{\vartheta}$ together imply that $\sup_{z \in \partial \mathcal{B}_{1 / 8}} \big| \varphi (z) - \Phi(z) \big| \le \vartheta < \lambda$. Moreover, \eqref{fg1g2lambda} yields $\sup_{z \in \mathcal{B}} \big| \Phi (z) - G (z) \big| \textbf{1}_{\Gamma^c} \le 6 \lambda$, and so $\sup_{z \in \partial \mathcal{B}_{1 / 8}} \big| \varphi (z) - G (z) \big| \textbf{1}_{\Gamma^c} \le 7 \lambda$. Hence, the first estimate in \eqref{figigradientst} follows from \Cref{h1h2gamma} and the fact that $f = \varphi |_{\partial \mathcal{B}_{1 / 8}}$.
		
		Now let us establish the second statement of \eqref{figigradientst}. To do this first observe that, by applying \Cref{h1h2gamma} with the $(\mathcal{H}_1, \mathcal{H}_2)$ there equal to $(G, \Lambda)$ here, we obtain that $G$ is $\varsigma^{10}$-nearly linear on $\mathcal{B}$. This and the bounds $\sup_{z \in \partial \mathcal{B}_{1 / 8}} \big| F (z) - G (z) \big| \textbf{1}_{\Gamma^c} \le 7 \lambda$ and $7 \lambda + \varsigma < \delta$ together imply that $F$ is $\delta$-nearly linear on $\mathcal{B}_{1 / 8}$. Therefore, we may apply \Cref{perturbationboundary} (and \Cref{estimatehrho}) to the pair $(F, G)$, and so the first bound in \eqref{perturbationboundaryestimate} and the first bound in \eqref{figigradientst} together yield 	  
		\begin{flalign*}
		\displaystyle\sup_{z \in \mathcal{B}_{1 / 32}} \big| \nabla F (z) - \nabla G (z) \big| \textbf{1}_{\Gamma^c}  <  C \displaystyle\sup_{z \in \partial \mathcal{B}_{1 / 8}} \big| F (z) - G (z) \big| \textbf{1}_{\Gamma^c} \le 7 C \lambda,
		\end{flalign*}
		
		\noindent for some constant $C = C(\varepsilon) > 1$. This yields the second estimate in \eqref{figigradientst}. 
	\end{proof}

	Next, we must verify that the parameters $\big( N; \varepsilon; \frac{1}{8}; \frac{\varsigma}{64}; \alpha; H|_{\partial (\mathcal{B}_{N / 8} \cap \mathbb{T})}; f \big)$ satisfy the three conditions of being $\mu$-confined listed in \Cref{lambdabounded}. To establish the first, observe for sufficiently large $N$ and any $z \in \partial \mathcal{B}_{1 / 8}$ that we (deterministically) have that 
	\begin{flalign}
	\label{f1f2h}
	\displaystyle\sup_{z \in \partial \mathcal{B}_{1 / 8}} \big| N^{-1} H (Nz) - f(z) \big| = \displaystyle\sup_{z \in \partial \mathcal{B}_{1 / 8}} \big| \Phi (z) - \varphi (z) \big| \le  \mu - 4 N^{-1},
	\end{flalign}
	
	\noindent which follows from the definitions \eqref{functions12z} of $\varphi$ and the facts that $\int_{\mathbb{R}^2} \psi (w) dw = 1$; that $\psi$ is nonnegative and supported on $\overline{\mathcal{B}}_{\vartheta}$; that $H$ is $1$-Lipschitz; and that $\vartheta = \varsigma \lambda <  \mu - 4 N^{-1}$ (since $\mu = \varsigma^{\alpha} \lambda$). Again since $H$ is $1$-Lipschitz, this verifies the first property in \Cref{lambdabounded} (whose $g$ and $\lambda$ there are $f$ and $\mu$ here, respectively). 
		
	That $f$ satisfies the second and third conditions in \Cref{lambdabounded} will follow from a suitable application of \Cref{euv1v2estimategradient}, to which end it suffices to verify the conditions of that proposition (whose $\mathfrak{h}$ and $\mathcal{H}$ are $f$ and $F$ here, respectively). The following lemma will be used to verify the third one there. 
	
	Before stating it, let us mention that by the $1$-Lipschitz property of $\Phi$ and rescaling by $\vartheta^{-1}$, one can quickly deduce that the second derivatives of $\varphi$ are bounded by $\vartheta^{-1}$. However, since $\vartheta^{-1} \gg \lambda^{- 1}$, this would not verify the third assumption in \Cref{euv1v2estimategradient}. The below lemma will improve this $\vartheta^{-1}$ estimate by powers of $\varsigma$, through use of the fact that we are restricting to the complements of the events $\Gamma_{z_0; z}$ (on which $\Phi$ is not nearly linear) from \Cref{omegadefinition}.

	\begin{lem} 
		
		\label{estimatealpha21mu} 
		
		For $N$ sufficiently large, we have that $\big\| \varphi - \varphi (0, 0) \big\|_{\mathcal{C}^2 (\overline{\mathcal{B}}_{1 / 6})} \textbf{\emph{1}}_{\Gamma^c} < \varsigma^{11} \lambda^{-1}$.
		
	\end{lem}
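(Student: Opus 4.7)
The plan is to exploit the mollification structure $\varphi = \Phi * \psi$ together with the approximate local linearity of $\Phi$ encoded in $\Gamma^c$. The $\mathcal{C}^0$ and $\mathcal{C}^1$ contributions to the norm will come essentially for free. Since $\Phi$ is $1$-Lipschitz and $\psi$ is a nonnegative function with $\int\psi=1$, the convolution $\varphi$ is $1$-Lipschitz on $\mathcal{B}_{1/6}$; thus $[\varphi-\varphi(0,0)]_0 \le 1/6$ and $[\varphi]_1 = \|\nabla\varphi\|_0 \le 1$. The main work is bounding $[\varphi]_2$, and this is where the cancellation from the events $\Gamma_{z_0;z}^c$ enters.

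Differentiating under the integral and moving derivatives onto $\psi$ gives $\nabla^2 \varphi(z) = \int_{\mathbb{R}^2} \Phi(z-w)\nabla^2\psi(w)\,dw$. Because $\psi$ is compactly supported, two integrations by parts yield $\int \nabla^2\psi(w)\,dw = 0$ and $\int w_i \nabla^2\psi(w)\,dw = 0$ for each $i$. Consequently, for any constant $c$ and any vector $v \in \mathbb{R}^2$, the affine function $w \mapsto c + v\cdot w$ integrates against $\nabla^2\psi$ to zero, so
\begin{equation*}
\nabla^2\varphi(z) \;=\; \int_{\mathbb{R}^2}\bigl[\Phi(z-w) - \Phi(z) + w\cdot\nabla G(z)\bigr]\,\nabla^2\psi(w)\,dw.
\end{equation*}

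Now fix $z \in \mathcal{B}_{1/6}$. Since $\lambda \le \varsigma^{15}$, one has $8\lambda/\varsigma^{14} \le 8\varsigma$, which is smaller than $1/30$ for $N$ large, so $\mathcal{B}_{8\lambda/\varsigma^{14}}(z)\subset \mathcal{B}_{1/5}$. For any $w$ in the support of $\psi$ we have $|w|\le \vartheta = \varsigma\lambda$, so the event $\Gamma_{z;\,z-w}^c$ is defined, and on $\Gamma^c$ it yields $|\Phi(z-w) - \Phi(z) + w\cdot \nabla G(z)| \le \varsigma^{14}\lambda$. Combined with $|\psi|_{\mathcal{C}^2}\le 60\vartheta^{-4}$ from \eqref{psiestimates} and the fact that the support of $\psi$ has area at most $\pi\vartheta^2$, this gives
\begin{equation*}
\|\nabla^2\varphi\|_0\,\textbf{1}_{\Gamma^c} \;\le\; \varsigma^{14}\lambda \cdot 60\vartheta^{-4} \cdot \pi\vartheta^2 \;=\; \frac{60\pi\,\varsigma^{12}}{\lambda}.
\end{equation*}

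Summing the three contributions, $\|\varphi-\varphi(0,0)\|_{\mathcal{C}^2(\overline{\mathcal{B}}_{1/6})}\,\textbf{1}_{\Gamma^c} \le 1/6 + 1 + 60\pi\varsigma^{12}/\lambda$. For $N$ large, $60\pi\varsigma < 1/2$, so the last term is strictly less than $\varsigma^{11}/(2\lambda)$; and since $\lambda \le \varsigma^{15}$, the ratio $\varsigma^{11}/\lambda \ge \varsigma^{-4}$ is arbitrarily large, in particular exceeding $7/3$, so the constant part $7/6$ is less than $\varsigma^{11}/(2\lambda)$. The sum is therefore strictly less than $\varsigma^{11}/\lambda$, as claimed. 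There is no real obstacle here beyond ensuring that the mollifier scale $\vartheta = \varsigma\lambda$ matches precisely the scale on which the Taylor-type bound in $\Gamma_{z_0;z}^c$ is valid, so that the gain from the cancellation (a factor $\varsigma^{14}$) survives the two factors of $\vartheta^{-1}$ lost to the second derivative of $\psi$.
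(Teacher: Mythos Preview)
Your proof is correct and follows essentially the same approach as the paper: both arguments bound the second derivatives of $\varphi$ by subtracting from $\Phi$ the affine approximation $\Phi(z_0) + (z-z_0)\cdot\nabla G(z_0)$ guaranteed by the events $\Gamma_{z_0;z}^c$, so that the integrand is $O(\varsigma^{14}\lambda)$ rather than $O(\vartheta)$. The paper packages this subtraction by first rescaling (introducing $\Phi^{(\vartheta)}$, $\psi^{(\vartheta)}$, $\varphi^{(\vartheta)}$) and then observing $|\partial_i\partial_j\varphi(z_0)| = \vartheta^{-1}|\partial_i\partial_j\varphi^{(\vartheta)}(\vartheta^{-1}z_0)|$, whereas you keep the original scale and make the cancellation explicit via $\int \nabla^2\psi = 0$ and $\int w_i\,\nabla^2\psi = 0$; the resulting bounds ($240\varsigma^{12}/\lambda$ in the paper, $60\pi\varsigma^{12}/\lambda$ in your version) are the same up to constants.
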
 
	
	\begin{proof}
		
		To estimate $\big\| \varphi - \varphi (0, 0) \big\|_{\mathcal{C}^2 (\overline{\mathcal{B}}_{1 / 6})}$, we must bound $\big\| \varphi - \varphi (0, 0) \big\|_{\mathcal{C}^1 (\overline{\mathcal{B}}_{1 / 6})}$ and $\big| \partial_i \partial_j \varphi (z_0) \big|$ for any  $i, j \in \{ x, y \}$ and $z_0 \in \mathcal{B}_{1 / 6}$. To do the former, observe that the definition \eqref{functions12z} of $\varphi$; the fact that $\Phi$ is $1$-Lipschitz; and the fact that the function $\psi$ from \Cref{psif1f2} is positive, supported on $\overline{\mathcal{B}}_{\vartheta}$, and satisfies $\int_{\mathbb{R}^2} \psi (w) dw = 1$ together imply for sufficiently large $N$ that
		\begin{flalign}
		\label{derivativefunctionestimate1}
		\begin{aligned}
		\big\| \varphi - \varphi (0, 0) \big\|_{\mathcal{C}^1 (\overline{\mathcal{B}}_{1 / 6})} & = \displaystyle\sup_{z \in \mathcal{B}_{1 / 6}} \big| \nabla \varphi (z) \big| + \displaystyle\sup_{z \in \mathcal{B}_{1 / 6}} \big| \varphi (z) - \varphi (0, 0) \big|  \\
		& \le  \displaystyle\sup_{z \in \mathcal{B}_{1 / 5}} \big| \nabla \Phi(z) \big| + \displaystyle\sup_{z \in \mathcal{B}_{1 / 6}} \big| \Phi (z) - \Phi (0, 0) \big| + 2 \vartheta \le 2.
		\end{aligned}
		\end{flalign}
		
		Thus, it remains to estimate the second derivatives of $\varphi$. To that end, it will be useful to rescale by $\vartheta^{-1}$. So, define $\psi^{(\vartheta)}: \mathbb{R}^2 \rightarrow \mathbb{R}^2$ by setting $\psi^{(\vartheta)} (z) = \vartheta^2 \psi (\vartheta z)$, for each $z \in \mathbb{R}^2$. Further fix $z_0 \in \mathcal{B}_{1 / 6}$, and define $\Phi^{(\vartheta)}: \mathcal{B}_{1 / 2 \vartheta} \rightarrow \mathbb{R}$ and $\varphi^{(\vartheta)}: \mathcal{B}_{1 / 6 \vartheta} \rightarrow \mathbb{R}$ by setting 
		\begin{flalign*}
		\Phi^{(\vartheta)} (z) = \vartheta^{-1} \big( \Phi( \vartheta z) - \Phi(z_0) - (\vartheta z - z_0) \cdot \nabla G (z_0) \big); \qquad \varphi^{(\vartheta)} (z) = \int_{\mathbb{R}^2} \Phi^{(\vartheta)} (z - w) \psi^{(\vartheta)} (w) dw,
		\end{flalign*}
		
		\noindent for each $z \in \mathcal{B}_{1 / 2 \vartheta}$ and each $z \in \mathcal{B}_{1 / 6 \vartheta}$, respectively. We then have for any $i, j \in \{ x, y \}$ that $\big| \partial_i \partial_j \varphi (z_0) \big| = \vartheta^{-1} \big| \partial_i \partial_j \varphi^{(\vartheta)} (\vartheta^{-1} z_0) \big|$, and so it suffices to estimate the latter quantity. 
		
		In view of the fourth statement of \eqref{psiestimates}, the fact that $\partial_i \partial_j \psi^{(\vartheta)} (\vartheta^{-1} z) = \vartheta^4 \partial_i \partial_j \psi (z)$ for any $z \in \mathbb{R}^2$, and the fact that $\psi^{(\vartheta)}$ is supported on $\overline{\mathcal{B}}$, we have that 
		\begin{flalign}
		\label{derivativefunctionestimate2} 
		\begin{aligned} 
		\big| \partial_i \partial_j \varphi (z_0) \big| \textbf{1}_{\Gamma^c} = \vartheta^{-1} \big| \partial_i \partial_j \varphi^{(\vartheta)} (\vartheta^{-1} z_0) \big| \textbf{1}_{\Gamma^c} & = \vartheta^{-1} \textbf{1}_{\Gamma^c} \left| \displaystyle\int_{\mathbb{R}^2} \Phi^{(\vartheta)} (\vartheta^{-1} z_0 - w) \partial_i \partial_j \psi^{(\vartheta)} (w) dw \right| \\
		& \le 60 \vartheta^{-1} \textbf{1}_{\Gamma^c} \displaystyle\int_{\mathcal{B}} \big| \Phi^{(\vartheta)} (\vartheta^{-1} z_0 - w) \big| dw \le 240 \vartheta^{-2} \varsigma^{14} \lambda.
		\end{aligned} 
		\end{flalign}
		
		\noindent To deduce the last estimate of \eqref{derivativefunctionestimate2} we used the fact that 
		\begin{flalign*} 
			\big| \Phi^{(\vartheta)} (\vartheta^{-1} z_0 - w) \big| \textbf{1}_{\Gamma^c} \le  \vartheta^{-1} \big( \Phi (z_0 - \vartheta w) - \Phi (z_0) + \vartheta w \cdot \nabla G (z_0) \big) \textbf{1}_{\cap_{z \in \mathcal{B}_{\vartheta} (z_0)} \Gamma_{z_0; z}^c} \le \vartheta^{-1} \varsigma^{14} \lambda,
		\end{flalign*} 
	
		\noindent whenever $w \in \mathcal{B}$, where the first bound holds since $\Gamma^c \subseteq \bigcap_{z \in \mathcal{B}_{\vartheta} (z_0)} \Gamma_{z_0; z}^c$ and the second holds from \Cref{omegadefinition} for $\Gamma_{z_0; z}$. We now obtain the lemma from \eqref{derivativefunctionestimate1} and \eqref{derivativefunctionestimate2} (applied to each $z_0 \in \mathcal{B}_{1 / 6}$), since $\vartheta = \varsigma \lambda$. 
	\end{proof}

	Now we can establish \Cref{estimaten2n}. 
	
	\begin{proof}[Proof of \Cref{estimaten2n}] 
		
		By \Cref{flinearlambda}, there exists a constant $C_2 = C_2 (\varepsilon, D) > 1$ such that $\mathbb{P}[\Gamma] \le C_2 N^{-D}$. Furthermore, the definition \eqref{fg1g2lambda} of $\Gamma_0 \subseteq \Gamma$ implies that \eqref{g1nvlambda} holds on $\Gamma^c$. Therefore, it suffices to show that $f$ from \Cref{psif1f2} satisfies the two properties listed in \Cref{estimaten2n}. The first is verified by \eqref{figigradientst}. 
		
		It remains to show that the parameters $\big( N; \varepsilon; \frac{1}{8}; \frac{\varsigma}{64}; \alpha; H |_{\partial (\mathcal{B}_{N / 8} \cap \mathbb{T})}; f \big)$ are $\mu$-confined, to which end we must verify that they satisfy the three properties listed in \Cref{lambdabounded}, after restricting to the event $\Gamma^c$. The estimate \eqref{f1f2h} and the $1$-Lipschitz property of $\Phi$ together imply that they satisfy the first one listed there. 
		 
		 To verify that they also satisfy the second and third, we apply \Cref{euv1v2estimategradient}, with the $(\varepsilon, \theta, \alpha)$ there equal to $\big( \varepsilon, \frac{1}{3750}, \frac{1}{17500} \big)$ here; the $(\mathfrak{h}, \mathcal{H})$ there equal to $(f, F)$ here; the $\lambda$ there equal to $7 \lambda$ here; and the $\big( g, (s, t), \varphi \big)$ there equal to $\big( g, (s, t), \varphi \big)$ here. Let us verify that these parameters satisfy the three assumptions listed in \Cref{euv1v2estimategradient}. 
		 
		 The first bound in \eqref{figigradientst} implies that $\sup_{z \in \partial \mathcal{B}_{1 / 8}} \big| f (z) - G(z) \big| \le 7 \lambda$ on $\Gamma^c$; this verifies the first condition in \Cref{euv1v2estimategradient}. The second follows from the fact that $g$ is assumed to be $\varsigma^{10}$-nearly linear with slope $(s, t) \in \mathcal{T}_{\varepsilon}$ on $\partial \mathcal{B}$ and that $\varsigma^{10} = (\log N)^{-1/1750} < (\log N)^{-1 / 1800} \le (7 \lambda)^{2 \theta}$ (since $\lambda \ge (\log N)^{-1 - \alpha}$). Moreover, the third follows from the fact that $f = \varphi |_{\partial \mathcal{B}_{1 / 8}}$ (recall \Cref{psif1f2}), \Cref{estimatealpha21mu}, and the fact that $\varsigma^{11} \lambda^{-1} \le \varsigma \lambda^{2 \theta - 1} \le (7 \lambda)^{2 \theta - 1}$.
		
		Thus, \Cref{euv1v2estimategradient} applies, where the $\mu$ there is equal to $(7 \lambda)^{1 + \theta / 8} \le \varsigma^{\alpha} \lambda = \mu$ here (the first inequality holds since $\lambda \le \varsigma^{15}$). Then combining the first estimate in \eqref{derivativehestimates122} with the fact that $(s, t) \in \mathcal{T}_{\varepsilon}$ yields $\nabla F (z) \in \mathcal{T}_{\varepsilon / 2}$ for each $z \in \mathcal{B}_{1 / 8}$ and $N$ sufficiently large. This verifies that $F$ satisfies the second property of \Cref{lambdabounded}.
		
		 Moreover, combining the second estimate in \eqref{derivativehestimates122} with the bound $\lambda \ge (\log N)^{-1 - \alpha} = \varsigma (\log N)^{-1}$ implies for sufficiently large $N$ that $64 \| F \|_{\mathcal{C}^{2, \alpha} (\overline{\mathcal{B}}_{1 / 8} - \mu)} \le 64 (7 \lambda)^{\theta / 2 - 1} < (\log N)^{1 - \theta / 2} \varsigma^{-1} \le \varsigma \log N$ (since $(\log N)^{-\theta / 2} = (\log N)^{-1 / 7500} < (\log N)^{-1 / 8750} = \varsigma^2$); hence, $F$ satisfies the third condition in \Cref{lambdabounded}. This verifies that the parameters $\big( N; \varepsilon; \frac{1}{8}; \frac{\varsigma}{64}; \alpha; H |_{\partial (\mathcal{B}_{N / 8} \cap \mathbb{T})}; f \big)$ are $\mu$-confined and therefore establishes the proposition. 
	\end{proof}

	\section{Proof of the Local Law}
	
	\label{EstimateHLocalProof}
	
	In this section we establish the local law \Cref{heightlocal1}. As mentioned previously, this will proceed by induction on (the logarithm of) the scale $M$, using \Cref{estimaten2n} to ensure that estimates on some scale $M$ are essentially retained on the smaller scale $\frac{M}{8}$. We begin in \Cref{LocalEstimate} by introducing notation for this multi-scale analysis, and then we establish \Cref{heightlocal1} in \Cref{LocalProof}.

	\subsection{Domains, Events, and Height Functions}
	
	\label{LocalEstimate}

	In this section we introduce several events and functions that will be used in the proof of \Cref{heightlocal1}; see \Cref{rkrk1figure} for a depiction. We begin with the following definition for the sequence of domains on which we will apply \Cref{estimaten2n}. Throughout this section we adopt the notation (and in particular recall $N$, $v_0$, $\varepsilon$, and $\mathcal{H}$) of \Cref{regularestimate} and also recall the notation on disks from \eqref{brzdefinition}.
	
	\begin{definition}
		
		\label{nlambdar}
		
		Fix $\alpha = \frac{1}{17500}$. Let $\varsigma_0 = (\log N)^{-\alpha}$, and define the integers $N_1, K \in \mathbb{Z}_{\ge 1}$ by setting $N_1 = 8^K$ to be the unique integral power of $8$ in the interval $[\varsigma_0^{21} N, 8 \varsigma_0^{21} N)$. For each $k \in [1, K] \cap \mathbb{Z}$, further define 
		\begin{flalign*} 
		N_k = 8^{1 - k} N_1; \qquad \varsigma_k = (\log N_k)^{-\alpha}; \qquad R_k = \mathcal{B}_{N_k} (v_0) \cap \mathbb{T}. 
		\end{flalign*}
	\end{definition} 

	The next definition introduces the error parameter $\lambda_k$ that we will use when we apply \Cref{estimaten2n} on the domain $R_k$. 

	\begin{definition} 
		
	\label{lambdak} 
	
	 For each $k \in [1, K] \cap \mathbb{Z}$, define $\lambda_k \in \mathbb{R}_{> 0}$ inductively, by setting 
	 \begin{flalign*} 
	 \lambda_1 = \varsigma_0^{20}, \qquad \text{and} \qquad \lambda_k = \max \big\{ 8 \varsigma_{k - 1}^{\alpha} \lambda_{k - 1}, (\log N_k)^{-1 - \alpha} \big\}, \quad \text{for $k > 1$}.
	 \end{flalign*}  
	 
	 \noindent Further let $C_3 > 1$ denote a uniform (independent of $N$ and $k$) constant such that 
	 \begin{flalign}
	 \label{kestimatelambda}
	 \lambda_k \le C_3 \varsigma_0^{20}, \quad \text{if $1 \le k < 3 (\log \log N)^2$}; \qquad \qquad \lambda_k \le C_3 (\log N_k)^{-1 - \alpha}, \quad \text{if $2^{35} \le k \le K$}. 
	 \end{flalign}

	\end{definition} 
	
	The constant $C_3$ from \Cref{lambdak} is quickly verified to exist, since $\varsigma_k = (\log N_k)^{-\alpha}$ and $\alpha = \frac{1}{17500} > 2^{-15}$. 
	
	Next, we will inductively apply \Cref{estimaten2n} on the domains $R_k$ from \Cref{nlambdar} to obtain sequences of functions $\{ G_k \}, \{ F_k \}$ and events $\{ \Gamma_k \}, \{ \Omega_k \}$. The following definition provides some of the initial elements (namely, $G_1$ and $\Omega_0$) of these sequences. 

	\begin{definition}
		
	\label{g1gamma0} 
	
	Define the (linear) function $G_1 \in \Adm (\mathcal{B})$ by setting 
		\begin{flalign}
		\label{g1kzg2kz}
		\begin{aligned}
		& G_1 (z) = z \cdot \nabla \mathcal{H} (N^{-1} v_0) + N_1^{-1} \mathbb{E} \big[ H (v_0) \big], \qquad \text{for each $z \in \mathcal{B}$}.
		\end{aligned}
		\end{flalign}
		
		\item Additionally, define the event 
		\begin{flalign*}
		\Omega_0 = \bigg\{ \displaystyle\max_{u \in \mathbb{V}(R_1)} \Big| N_1^{-1} H(u) - G_1 \big( N_1^{-1} (u - v_0) \big) \Big| > \displaystyle\frac{\lambda_1}{2} \bigg\}. 
		\end{flalign*}
	
\end{definition}

	For $k \ge 1$, the event $\Omega_k$ will be the one on which the assumptions of \Cref{estimaten2n} are not satisfied on $R_k$ (assuming $G_k$ is given). This is made precise through the following definition. In what follows we recall $\lambda$-confinement from \Cref{lambdabounded}.

	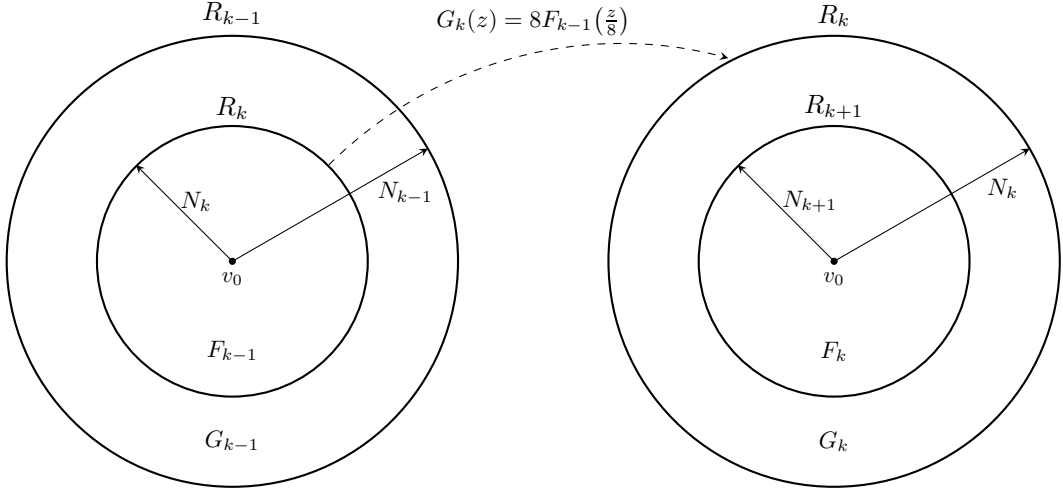
\begin{figure}

		\begin{center}

			\begin{tikzpicture}[
			>=stealth,
			auto,
			style={
				scale = .4
			}
			]

			\draw[thick] (0, 0) circle[radius = 7.5];
			\draw[thick] (0, 0) circle[radius = 4.5];
			
			\filldraw[fill = black] (0, 0) circle[radius = .1] node[below = 1, scale = .85]{$v_0$};
			
			\draw[] (0, 7.5) circle[radius = 0] node[above]{$R_{k - 1}$};
			\draw[] (0, 4.5) circle[radius = 0] node[above = -1]{$R_k$};
			\draw[] (-2, 2) circle[radius = 0] node[right, scale = .9]{$N_k$};
			\draw[] (5.75, 2.28) circle[radius = 0] node[scale = .9]{$N_{k - 1}$};
			\draw[] (0, -6) circle[radius = 0] node[scale = .9]{$G_{k - 1}$};
			\draw[] (0, -3) circle[radius = 0] node[scale = .9]{$F_{k - 1}$};
			
			\draw[->] (0, 0) -- (-3.2, 3.2);
			\draw[->] (0, 0) -- (6.5, 3.75);

			\draw[thick] (20, 0) circle[radius = 7.5];
			\draw[thick] (20, 0) circle[radius = 4.5];
			
			\draw[->, black, dashed] (3.2, 3.2) arc (135:75:13.75);
			\draw[] (10, 8) circle[radius = 0] node[scale = .9]{$G_k (z) = 8 F_{k - 1} \big( \frac{z}{8} \big)$};
			
			\draw[->] (20, 0) -- (16.8, 3.2);
			\draw[->] (20, 0) -- (26.5, 3.75);
			
			\draw[] (20, 7.5) circle[radius = 0] node[above]{$R_k$};
			\draw[] (20, 4.5) circle[radius = 0] node[above = -1]{$R_{k + 1}$};
			\draw[] (18, 2) circle[radius = 0] node[right, scale = .9]{$N_{k + 1}$};
			\draw[] (25.6, 2.43) circle[radius = 0] node[scale = .9]{$N_k$};
			\draw[] (20, -6) circle[radius = 0] node[scale = .9]{$G_k$};
			\draw[] (20, -3) circle[radius = 0] node[scale = .9]{$F_k$};
			
			\filldraw[fill = black] (20, 0) circle[radius = .1] node[below = 1, scale = .85]{$v_0$};

			\end{tikzpicture}
			
		\end{center}

		\caption{\label{rkrk1figure} Depicted above is a diagram for some of the parameters in \Cref{LocalEstimate}.}

	\end{figure}

	\begin{definition} 
		
	\label{omegakdefinition} 
	
	Fix $k \in [1, K] \cap \mathbb{Z}$, and suppose $G_k \in \Adm (\mathcal{B})$ is given. Set $(s_k, t_k) = \nabla G_k (0, 0) \in \overline{\mathcal{T}}$; let $g_k = G_k |_{\partial \mathcal{B}}$; and define the event $\Omega_k$ on which at least one of the following two assumptions is not satisfied.
	
	\begin{enumerate}

		\item \label{2g1g2} The function $G_k$ is $\varsigma_k^{10}$-nearly linear of slope $(s_k, t_k)$ on $\mathcal{B}$. 
		
		\item \label{3g1g2} The parameters $\big( N_k; \frac{\varepsilon}{2}; 1; \varsigma_k; \alpha; H |_{\partial R_k}; g_k \big)$ are $\lambda_k$-confined. 
		
	\end{enumerate}

	\end{definition} 
	
	Observe that, unlike in \Cref{estimaten2n}, \Cref{omegakdefinition} does not impose that $(s_k, t_k) \in \mathcal{T}_{\varepsilon / 2}$. This will be established separately, as a consequence of \Cref{estimatesktk} below. We also did not impose that $G_k$ be a maximizer of $\mathcal{E}$, although this will eventually follow from its definition (see \Cref{gke}).

	The next definition essentially sets $\Gamma_k$ to be the event $\Gamma$ from \Cref{estimaten2n} applied on $R_k$, and also introduces the event $\Gamma = \Gamma (v_0; M)$ from \Cref{heightlocal1} when $M \in \{ N_1, N_2, \ldots , N_k \}$. 

	\begin{definition} 
		
	\label{gammakdefinition}
	
	Fix $k \in [1, K] \cap \mathbb{Z}$; suppose $G_k \in \Adm (\mathcal{B})$ is given; and let $C_0 = C_0 (\varepsilon) > 1$ denote the constant $C_1 (\varepsilon)$ from \Cref{estimaten2n}. Then let $\Gamma_k$ denote the event on which at least one of the following two conditions is not satisfied. 
	 
	 \begin{enumerate} 
	 	
	 	\item The event $\bigcap_{j = 0}^k \Omega_j^c$ holds, and we have the bound
		\begin{flalign}
		\label{g1knkvlambdak} 
		\displaystyle\max_{u \in \mathbb{V}(R_k)} \Big| N_k^{-1} H(u) - G_k \big( N_k^{-1} (u - v_0) \big)  \Big| \le 6 \lambda_k.
		\end{flalign}
	
		\item There exists a function $f_k: \partial \mathcal{B}_{1 / 8} \rightarrow \mathbb{R}$ admitting an admissible extension to $\mathcal{B}_{1 / 8}$ satisfying the following two properties. In the below, we let $F_k \in \Adm (\mathcal{B}_{1 / 8}; f_k)$ denote the maximizer of $\mathcal{E}$ on $\mathcal{B}_{1 / 8}$ with boundary data $f_k$. 
	
	\begin{enumerate}

		\item \label{3f1f2} We have the bounds 
		\begin{flalign}
		\label{ikgikfestimate}
		\displaystyle\sup_{z \in \mathcal{B}_{1 / 8}} \big| F_k (z) - G_k (z) \big| \le 7 \lambda_k; \qquad \displaystyle\sup_{z \in \mathcal{B}_{1 / 32}} \big| \nabla F_k (z) - \nabla G_k (z)  \big| < C_0 \lambda_k. 
		\end{flalign}

		\item The parameters $\big( N_k; \frac{\varepsilon}{2}; \frac{1}{8}; \frac{\varsigma}{64}; \alpha; H|_{\partial R_{k + 1}}; f_k \big)$ are $\frac{\lambda_{k + 1}}{8}$-confined. 
	
	\end{enumerate}	
	
	\end{enumerate}
	
	\noindent For each $k \in [1, K] \cap \mathbb{Z}$, define the event $\Gamma (v_0; N_k) = \bigcup_{j = 1}^k \Gamma_j$. Further set $\Gamma (v_0; N_0) = \Omega_0$. 
	
	\end{definition}

	The previous definitions have assumed that $G_k$ are given, and so we must now explain how to set them; this will be done recursively (recall $G_1$ is provided by \eqref{g1kzg2kz}). Assuming that $G_{k - 1}$ is given, one first defines the function $F_k$ through \Cref{gammakdefinition} and then defines $G_{k + 1}$ by rescaling $F_k$.

	\begin{definition}
		
		\label{g1kg2k}
		
		Fix $k \in [2, K] \cap \mathbb{Z}$, and suppose $G_{k - 1}$ is given. On $\Gamma (v_0; N_{k - 1})^c$, set $F_{k - 1} \in \Adm (\mathcal{B}_{1 / 8})$ as in part 2 of \Cref{gammakdefinition}; on the complementary event $\Gamma (v_0; N_{k - 1})$, set $F_{k - 1} \in \Adm (\mathcal{B}_{1 / 8})$ arbitrarily. Then define $G_k \in \Adm (\mathcal{B})$ by setting $G_k (z) = 8 F_{k - 1} \big( \frac{z}{8} \big)$, for each $z \in \mathcal{B}$. 
		
	\end{definition} 
	
	\begin{rem}
		
	\label{gke}
	
	Observe by induction and the scale-invariance of the variational principle (recall \Cref{estimatehrho}) that, on $\Gamma (v_0; N_{k - 1})^c$, $G_k$ is a maximizer of $\mathcal{E}$ on $\mathcal{B}$ with boundary data $g_k = G_k |_{\partial \mathcal{B}}$. 
	\end{rem}

	Under this notation, we will establish \Cref{heightlocal1} in \Cref{LocalProof} below. We conclude this section with the following lemma that bounds $\mathbb{P} [\Omega_0]$.

	\begin{lem}
		
		\label{probabilityevent0}
		
		There exists a constant $C = C(\varepsilon) > 1$ such that $\mathbb{P} [\Omega_0] < C \exp (-N^{1 / 2})$. 
	\end{lem}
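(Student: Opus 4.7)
The plan is to write $N_1^{-1} H(u) - G_1(N_1^{-1}(u - v_0))$, for $u \in R_1$, as a sum of three errors that can be controlled separately: a fluctuation term $N_1^{-1}(H(v_0) - \mathbb{E}[H(v_0)])$; a rescaled global-law increment comparing $N^{-1}(H(u) - H(v_0))$ with $\mathcal{H}(N^{-1}u) - \mathcal{H}(N^{-1}v_0)$; and a deterministic Taylor remainder for $\mathcal{H}$ at the point $N^{-1} v_0$. Each piece will be shown to be at most $\varsigma_0^{20}/4$ outside a small-probability event, so that their sum is at most $\lambda_1/2$ outside an event of probability at most $C\exp(-N^{1/2})$ by a union bound.

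The exponentially small probability comes from the concentration estimate. Applying \Cref{hvuexpectation} to $v_0$ with $M = 2N$ (any crude upper bound on the distance from $v_0$ to $\partial R$) and $r$ of order $N^{1/4}$ gives $|H(v_0) - \mathbb{E}[H(v_0)]| \le C N^{3/4}$ outside an event of probability at most $2\exp(-cN^{1/2})$. Since $N_1 \ge \varsigma_0^{21} N$, dividing by $N_1$ yields a fluctuation of size $O(\varsigma_0^{-21} N^{-1/4}) = O((\log N)^{21\alpha} N^{-1/4})$, which is $o(\varsigma_0^{20})$ for $N$ large.

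For the global-law error I use that $R_1 \subseteq \mathcal{B}_{\varepsilon N}(v_0)$, which holds since $N_1/N \le 8\varsigma_0^{21} \ll \varepsilon$, and invoke \Cref{heightapproximate} to obtain, outside an event of probability at most $Ce^{-N}$, the uniform bound $|N^{-1}(H(u) - H(v_0)) - (\mathcal{H}(N^{-1}u) - \mathcal{H}(N^{-1}v_0))| \le (\log N)^{-1/400}$ on $R_1$. Rescaling by $N/N_1 = O(\varsigma_0^{-21})$ produces an error of order $(\log N)^{21\alpha - 1/400}$, which is strictly smaller than $\varsigma_0^{20}/4 = (\log N)^{-20\alpha}/4$ because $41\alpha = 41/17500 < 1/400$. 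For the Taylor piece, the assumption $\nabla\mathcal{H}(z) \in \mathcal{T}_\varepsilon$ on $\mathcal{B}_\varepsilon(N^{-1} v_0)$ together with \Cref{derivativeshestimate} and the scaling recorded in \Cref{estimatehrho} yields a uniform bound on $\|\mathcal{H} - \mathcal{H}(N^{-1} v_0)\|_{\mathcal{C}^2(\overline{\mathcal{B}}_{\varepsilon/2}(N^{-1} v_0))}$; Taylor expansion then produces a deterministic error of order $(N_1/N)^2 = O(\varsigma_0^{42})$, which after rescaling by $N/N_1$ becomes $O(\varsigma_0^{21})$ and is therefore far smaller than $\varsigma_0^{20}$.

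Summing the three estimates and using that the constant term in $G_1$ from \Cref{g1gamma0} is chosen precisely to be compatible with these reductions completes the proof. I do not expect any serious obstruction; the argument is a short combination of known tools, and the only numerical calibration — the inequality $41\alpha < 1/400$ — holds comfortably for $\alpha = 1/17500$, which is exactly why the exponent $20$ in $\lambda_1 = \varsigma_0^{20}$ can simultaneously absorb both the Taylor remainder and the rescaled global-law error.
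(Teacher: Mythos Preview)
Your proposal is correct and follows essentially the same approach as the paper's proof: the paper decomposes into the identical three pieces (concentration of $H(v_0)$ via \Cref{hvuexpectation}, the global-law increment via \Cref{heightapproximate}, and the deterministic Taylor remainder via \Cref{derivativeshestimate}), and then combines them using $\lambda_1 = \varsigma_0^{20}$ and $N_1 \asymp \varsigma_0^{21} N$. The only cosmetic difference is that the paper takes a slightly larger deviation $N^{5/6}$ in the concentration step (yielding probability $\exp(-N^{2/3}/128)$ rather than your $\exp(-cN^{1/2})$), which overshoots what the lemma requires but changes nothing in substance.
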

	
	\begin{proof} 
		
		Observe that since $R_1 = \mathcal{B}_{N_1} (v_0) \cap \mathbb{T}$, since $\varsigma_0^{42} < (\log N)^{-1 / 410}$, and since $\nabla \mathcal{H} (z) \in \mathcal{T}_{\varepsilon}$ for each $z \in \mathcal{B}_{\varepsilon} (N^{-1} v_0)$ (recall \Cref{regularestimate}), we have from \Cref{heightapproximate} that there exists a constant $C_4 = C_4 (\varepsilon) > 1$ such that 
		\begin{flalign}
		\label{huhwestimateprobability}
		\mathbb{P} \left[ \displaystyle\max_{u \in \mathbb{V}(R_1)} \Big| N^{-1} \big( H(u) - H(v_0) \big) - \big( \mathcal{H} (N^{-1} u) - \mathcal{H} (N^{-1} v_0) \big) \Big| > \varsigma_0^{42} \right] < C_4 e^{-N}. 
		\end{flalign}
		 
		\noindent Next, since the diameter of $R$ is at most $2N$, \Cref{hvuexpectation} implies that 
		\begin{flalign} 
		\label{hv0expectationhv0} 
		\mathbb{P} \bigg[ \Big| H(v_0) - \mathbb{E} \big[ H(v_0) \big] \Big| > N^{5 / 6} \bigg] \le 2 \exp \left( - \displaystyle\frac{N^{2 / 3}}{128} \right).
		\end{flalign} 
		
		\noindent Moreover, \Cref{derivativeshestimate}, the fact that $\nabla \mathcal{H} (z) \in \mathcal{T}_{\varepsilon}$ for each $z \in \mathcal{B}_{\varepsilon} (N^{-1} v_0)$, and the fact that $\frac{N_1}{N} \in [\varsigma_0^{21}, 8 \varsigma_0^{21}]$ together yield a constant $C_5 = C_5 (\varepsilon) > 1$ such that 
		\begin{flalign}
		\label{huhwestimategradient}
		\displaystyle\max_{u \in \mathbb{V}(R_1)} \Big| \big( \mathcal{H} (N^{-1} u) - \mathcal{H} (N^{-1} v_0) \big) - N^{-1} (u - v_0) \cdot \nabla \mathcal{H} (N^{-1} v_0) \Big| \le C_5 \varsigma_0^{42}.
		\end{flalign}
		
		\noindent Now the lemma follows from \eqref{huhwestimateprobability}, \eqref{huhwestimategradient}, \eqref{hv0expectationhv0}, the fact that $\lambda_1 = \varsigma_0^{20}$, the fact that $\frac{N_1}{N} \in [\varsigma_0^{21}, 8 \varsigma_0^{21}]$, and the explicit form \eqref{g1kzg2kz} for $G_1$. 
	\end{proof}

	\subsection{Proof of \Cref{heightlocal1}} 
	
	\label{LocalProof}
	
	In this section we establish \Cref{heightlocal1}; throughout this section, we recall the notation introduced in \Cref{LocalEstimate}. We begin by verifying that the gradients  $(s_k, t_k) = \nabla G_k (0, 0)$ are in $\mathcal{T}_{\varepsilon / 2}$. The following lemma indicates that this is the case by showing that their distance to $\nabla \mathcal{H} (N^{-1} v_0) \in \mathcal{T}_{\varepsilon}$ is small, upon restricting to the event $\Gamma (v_0; N_{k - 1})^c$. It further shows off of this event that the gradient of $G_{k - 1}$ is approximately constant (with error $(\log N_k)^{-1 - c_0}$, where $c_0 = \frac{1}{18000}$) on $\mathcal{B}_{1 / 4}$, for $k$ sufficiently large. In what follows, the phase, ``sufficiently large $N$,'' means that there exists a constant $C = C(\varepsilon) > 1$ such that $N > C$ (that is, ``sufficiently large'' only depends on $\varepsilon$).    
	
	\begin{lem}
		
		\label{estimatesktk}
		
		Setting $c_0 = \frac{1}{18000}$, the following two statements hold.
		
		\begin{enumerate}
			
			\item For any $k \in [1, K] \cap \mathbb{Z}$ with $N_k$ sufficiently large, we have that 
			\begin{flalign}
			\label{sktk1} 
			\big| (s_k, t_k) - \nabla \mathcal{H} (N^{-1} v_0) \big| \textbf{\emph{1}}_{\Gamma (v_0; N_{k - 1})^c} < (\log N_k)^{-c_0}.
			\end{flalign} 
			
			\item For any $k \in [1, K] \cap \mathbb{Z}$ with $k \ge 2 (\log \log N)^2$, we have for $N$ and $N_k$ sufficiently large that   
			\begin{flalign}
			\label{sktkg1k1} 
			\displaystyle\sup_{z \in \mathcal{B}_{1 / 4}} \big| \nabla G_{k - 1} (z) - (s_k, t_k) \big| \textbf{\emph{1}}_{\Gamma (v_0; N_{k - 1})^c} \le (\log N_k)^{-1 - c_0}. 
			\end{flalign}
			
		\end{enumerate} 
		
	\end{lem}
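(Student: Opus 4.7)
The plan is to prove both statements by induction on $k$, using the recursive identity $G_k(z) = 8 F_{k-1}(z/8)$ from \Cref{g1kg2k}, which gives $\nabla G_k(z) = \nabla F_{k-1}(z/8)$ for $z \in \mathcal{B}$ and in particular $(s_k, t_k) = \nabla G_k(0,0) = \nabla F_{k-1}(0,0)$. On the event $\Gamma(v_0; N_{k-1})^c$, the component $\Gamma_{k-1}^c$ ensures the gradient comparison \eqref{ikgikfestimate} holds at step $k-1$; evaluating it at the origin and using $\nabla G_{k-1}(0,0) = (s_{k-1}, t_{k-1})$ produces the slope recursion $|(s_k, t_k) - (s_{k-1}, t_{k-1})| \le C_0 \lambda_{k-1}$.

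For \eqref{sktk1}, I would iterate this slope recursion from $k$ down to the base case $k=1$, where $(s_1, t_1) = \nabla \mathcal{H}(N^{-1} v_0)$ by \eqref{g1kzg2kz}. This yields the telescoping bound
\begin{flalign*}
\big| (s_k, t_k) - \nabla \mathcal{H}(N^{-1} v_0) \big| \le C_0 \sum_{j = 1}^{k-1} \lambda_j.
\end{flalign*}
To bound the right-hand side, note that for $N$ large enough, $3(\log \log N)^2 > 2^{35}$, so \eqref{kestimatelambda} covers every $j \in [1, k-1]$ in one of its two regimes. The indices $j < 3(\log \log N)^2$ contribute at most $3 C_3 (\log \log N)^2 (\log N)^{-20 \alpha}$, which is negligible. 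For $j \ge 2^{35}$, writing $\log N_j = (K-j+1)\log 8$ with $K = \log_8 N_1$ and substituting $m = K-j+1$, the remainder is comparable to $\int_{K-k+2}^\infty (m \log 8)^{-1-\alpha}\,dm = O((\log N_k)^{-\alpha})$. Since $\alpha = 1/17500 > c_0 = 1/18000$, the sum is dominated by $(\log N_k)^{-c_0}$ for $N_k$ sufficiently large.

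For \eqref{sktkg1k1}, I would decompose
\begin{flalign*}
\nabla G_{k-1}(z) - (s_k, t_k) = \big[ \nabla G_{k-1}(z) - (s_{k-1}, t_{k-1}) \big] + \big[ (s_{k-1}, t_{k-1}) - (s_k, t_k) \big].
\end{flalign*}
The second bracket is at most $C_0 \lambda_{k-1}$ by the slope recursion, and since $k \ge 2 (\log \log N)^2 > 2^{35}$ for large $N$, \eqref{kestimatelambda} bounds this by $C_0 C_3 (\log N_{k-1})^{-1-\alpha} < \tfrac{1}{2}(\log N_k)^{-1-c_0}$ for $N_k$ large, because $\alpha > c_0$ and $\log N_{k-1} \sim \log N_k$. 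For the first bracket, I would exploit that on $\Omega_{k-1}^c \supseteq \Gamma(v_0; N_{k-1})^c$ the function $G_{k-1}$ is $\varsigma_{k-1}^{10}$-nearly linear of slope $(s_{k-1}, t_{k-1})$ on $\mathcal{B}$ and also $\lambda_{k-1}$-confined, giving $\| G_{k-1} - G_{k-1}(0,0) \|_{\mathcal{C}^{2, \alpha}(\overline{\mathcal{B}}_{1 - \lambda_{k-1}})} < \varsigma_{k-1} \log N_{k-1}$. Applying \Cref{perturbationboundary} with $\mathfrak{h}_2$ equal to the linear function of slope $(s_{k-1}, t_{k-1})$ (whose maximizer is itself linear) then yields an interior gradient comparison on $\mathcal{B}_{1/4}$. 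To upgrade this from the a priori $O(\varsigma_{k-1}^{10})$ to the target $(\log N_k)^{-1-c_0}$, I would then telescope $\nabla G_{k-1}(z) = \nabla F_{k-2}(z/8) = \nabla G_{k-2}(z/8) + O(\lambda_{k-2}) = \cdots$ through \eqref{ikgikfestimate} at each intermediate step, combining this with the inductive hypothesis applied at the zoomed-in disk $\mathcal{B}_{1/32}$ (rather than $\mathcal{B}_{1/4}$) so that the variation closes against the final-scale quantities $\lambda_{k-2} + \lambda_{k-1}$ instead of the full sum $\sum_j \lambda_j$.

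The main obstacle is this last step: the deterministic near-linearity tolerance $\varsigma_{k-1}^{10} \sim (\log N_{k-1})^{-10\alpha}$ provided by $\Omega_{k-1}^c$ is much weaker than the target $(\log N_k)^{-1-c_0}$, so the sharpening must come entirely from the iterative structure rather than from \Cref{perturbationboundary} alone. The careful bookkeeping needed is to verify that, after $k - 1$ telescoping steps, the accumulated constants do not exceed the narrow margin $(\log N_k)^{-1-c_0} - (\log N_{k-1})^{-1-c_0} \sim (\log N_k)^{-2-c_0}$ between successive scales; this forces the iteration to be closed on $\mathcal{B}_{1/32}$ (where \eqref{ikgikfestimate} applies directly) rather than on $\mathcal{B}_{1/4}$, and it is the reason \eqref{sktkg1k1} is restricted to $k \ge 2(\log \log N)^2$ (so that all the $\lambda_j$ involved sit in the small regime $\lambda_j \le C_3 (\log N_j)^{-1-\alpha}$ of \eqref{kestimatelambda}).
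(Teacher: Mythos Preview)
Your argument for Part 1 is correct and matches the paper's proof: both telescope $(s_k,t_k)-(s_1,t_1)$ via $(s_{j+1},t_{j+1})=\nabla F_j(0,0)$ and the second bound in \eqref{ikgikfestimate}, then control $\sum_{j=1}^{k-1}\lambda_j$ using the two regimes of \eqref{kestimatelambda}.

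Your argument for Part 2 has a genuine gap: the one-step induction you describe cannot close. You correctly compute the inductive margin to be
\[
(\log N_k)^{-1-c_0}-(\log N_{k-1})^{-1-c_0}\;\sim\;(1+c_0)(\log 8)\,(\log N_k)^{-2-c_0},
\]
but the error you must absorb at each step is $C_0(\lambda_{k-2}+\lambda_{k-1})$, and by the very definition in \Cref{lambdak} one has $\lambda_{k-1}\ge(\log N_{k-1})^{-1-\alpha}$. Since $\alpha=\tfrac{1}{17500}$, this error is of order $(\log N_k)^{-1-\alpha}$, which is \emph{much larger} than $(\log N_k)^{-2-c_0}$. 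So the recursion $|\nabla G_{k-1}(z)-(s_k,t_k)|\le (\log N_{k-1})^{-1-c_0}+C_0\lambda_{k-2}+C_0\lambda_{k-1}$ does not stay below $(\log N_k)^{-1-c_0}$, and no amount of restricting from $\mathcal{B}_{1/4}$ to $\mathcal{B}_{1/32}$ fixes this; the problem is the size of $\lambda_{k-1}$, not the domain.

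The paper's device is different and does not use induction on \eqref{sktkg1k1} at all. It telescopes back a \emph{fixed} number $\ell=\lceil(\log\log N_k)^2\rceil$ of steps, obtaining
\[
\sup_{z\in\mathcal{B}_{1/4}}\big|\nabla G_{k-1}(z)-\nabla G_{k-\ell}(8^{1-\ell}z)\big|\le C_0\sum_{j=1}^{\ell-1}\lambda_{k-j-1}\le C_0C_3\,\ell\,(\log N_k)^{-1-\alpha},
\]
which is still $o\big((\log N_k)^{-1-c_0}\big)$ because $\ell$ grows only polylogarithmically. The remaining term $\nabla G_{k-\ell}(8^{1-\ell}z)-(s_{k-\ell},t_{k-\ell})$ is then bounded not via near-linearity or \Cref{perturbationboundary}, but by the $\mathcal{C}^2$ estimate from confinement, $\|G_{k-\ell}-G_{k-\ell}(0,0)\|_{\mathcal{C}^2(\overline{\mathcal{B}}_{1/2})}\le\log N_{k-\ell}$, combined with the fact that the evaluation point $8^{1-\ell}z$ lies in a ball of radius $8^{1-\ell}$: this gives a bound $8^{1-\ell}\log N_{k-\ell}\le(\log N_k)^{-2}$ since $\ell\ge(\log\log N_k)^2$. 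The exponential shrinkage $8^{-\ell}$ of the domain, not an inductive hypothesis, is what furnishes the sharp bound; this is the idea your proposal is missing.
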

	
	\begin{proof}
		
		Observe from \Cref{g1kg2k} that $\nabla G_{j + 1} (z) = \nabla F_j \big( \frac{z}{8} \big)$, for each $j \in [1, K - 1]$ and $z \in \mathcal{B}$. Together with the fact that $\nabla G_1 = \nabla \mathcal{H} (N^{-1} v_0)$ (recall \eqref{g1kzg2kz}), this yields
		\begin{flalign}
		\label{sktkestimate} 
		\begin{aligned}
		\big| \nabla G_k (z) - \nabla \mathcal{H} (N^{-1} v_0) \big| & \le \displaystyle\sum_{j = 1}^{k - 1} \big| \nabla G_{j + 1} (8^{j - k + 1} z) - \nabla G_j (8^{j - k} z) \big| \\
		& = \displaystyle\sum_{j = 1}^{k - 1} \big| \nabla F_j (8^{j - k} z) - \nabla G_j (8^{j - k} z) \big|, \qquad \quad \text{for each $z \in \mathcal{B}$}. 
		\end{aligned}
		\end{flalign}
		
		\noindent Furthermore, the second statement of \eqref{ikgikfestimate} gives $\big| \nabla G_j (0, 0) - \nabla F_j (0, 0) \big| \textbf{1}_{\Gamma (v_0; N_{k - 1})^c} \le C_0 \lambda_j$, for each $j \in [1, k - 1]$. Together with \eqref{kestimatelambda}, \eqref{sktkestimate}, and the fact that $(s_k, t_k) = \nabla G_k (0, 0)$, this yields
		\begin{flalign*}
		\big| (s_k, t_k) - \nabla \mathcal{H} (N^{-1} v_0) \big| \textbf{1}_{\Gamma (v_0; N_{k - 1})^c} \le C_0 \displaystyle\sum_{j = 1}^{k - 1} \lambda_j & \le 2^{35} C_0 C_3 \varsigma_0^{20} + C_0 C_3 \displaystyle\sum_{j = 1}^{k - 1} (\log N_j)^{-1 - \alpha} \\
		& < 2^{35} C_0 C_3 \varsigma_0^{20} + C_0 C_3 \alpha^{-1} (\log N_k)^{-\alpha} \\
		& < (\log N_k)^{-c_0},
		\end{flalign*}
		
		\noindent for sufficiently large $N_k$, which verifies \eqref{sktk1}.
		
		Now set $\ell = \ell_k = \big\lceil (\log \log N_k)^2 \big\rceil$ and assume that $k \ge 2 (\log \log N)^2$. Let us first approximate $\nabla G_{k - 1} (z) \approx \nabla G_{k - \ell} (8^{1 - \ell} z) \approx \nabla G_{k - \ell} (0, 0)$ for each $z \in \mathcal{B}_{1 / 4}$. To that end, observe that from the fact that $\nabla G_{k - j} (z) = \nabla F_{k - j - 1} \big( \frac{z}{8} \big)$ for each $j \in [1, k - 1]$ and $z \in \mathcal{B}$; the second bound in \eqref{ikgikfestimate}; \eqref{kestimatelambda}; and the fact that $k - \ell - 1 > 2^{35}$ for sufficiently large $N$ that
		\begin{flalign}
		\label{gk1gk}
		\begin{aligned}
		\displaystyle\sup_{z \in \mathcal{B}_{1 / 4}} & \big| \nabla G_{k - 1} (z) - \nabla G_{k - \ell} (8^{1 - \ell} z) \big|  \textbf{1}_{\Gamma (v_0; N_{k - 1})^c} \\
		& \le \displaystyle\sum_{j = 1}^{\ell - 1} \displaystyle\sup_{z \in \mathcal{B}_{1 / 4}} \big| \nabla G_{k - j} (8^{1 - j} z) - \nabla G_{k - j - 1} ( 8^{-j} z) \big|  \textbf{1}_{\Gamma (v_0; N_{k - 1})^c} \\
		& = \displaystyle\sum_{j = 1}^{\ell - 1} \displaystyle\sup_{z \in \mathcal{B}_{1 / 4}} \big| \nabla F_{k - j - 1} (8^{- j} z) - \nabla G_{k - j - 1} ( 8^{-j} z) \big|  \textbf{1}_{\Gamma (v_0; N_{k - 1})^c} \\
		& = \displaystyle\sum_{j = 1}^{\ell - 1} \displaystyle\sup_{z \in \mathcal{B}_{1 / 32}} \big| \nabla F_{k - j - 1} (8^{1 - j} z) - \nabla G_{k - j - 1} (8^{1 - j} z) \big|  \textbf{1}_{\Gamma (v_0; N_{k - 1})^c} \\
		& \le C_0 \displaystyle\sum_{j = 1}^{\ell - 1} \lambda_{k - j - 1} \le C_0 C_3 \displaystyle\sum_{j = 1}^{\ell - 1} (\log N_{k - j - 1})^{-1 - \alpha} \le C_0 C_3 \ell (\log N_k)^{-1 - \alpha}. 
		\end{aligned} 
		\end{flalign}
	
		\noindent Moreover, since the parameters $\big( N_{k - \ell}; \frac{\varepsilon}{2}; 1; \varsigma_{k - \ell}; \alpha; H |_{\partial R_{k - \ell}}; g_{k - \ell} \big)$ are $\lambda_k$-confined on $\Gamma (v_0; N_{k - 1})^c$, we have from the third property listed in \Cref{lambdabounded} that $\| G_{k - \ell} - G_{k - \ell} (0, 0) \|_{\mathcal{C}^2 (\overline{\mathcal{B}}_{1 / 2})} \le \log N_{k- \ell}$. Therefore, since $\ell \ge (\log \log N_k)^2$, we have for sufficiently large $N$ that
		\begin{flalign*}
		\displaystyle\sup_{z \in \mathcal{B}_{1 / 4}} & \big| \nabla G_{k - \ell} (8^{1 - \ell} z) - (s_{k - \ell}, t_{k - \ell}) \big| \textbf{1}_{\Gamma (v_0; N_{k - 1})^c} \\
		& \le \displaystyle\sup_{z \in \mathcal{B}_{8^{1 - \ell}}} \big| \nabla G_{k - \ell} (z) - \nabla G_{k - \ell} (0, 0) \big| \textbf{1}_{\Gamma (v_0; N_{k - 1})^c} \\
			& \le 8^{1 - \ell} \| G_{k - \ell} - G_{k - \ell} (0, 0) \|_{\mathcal{C}^2 (\overline{\mathcal{B}}_{1 / 2})} \textbf{1}_{\Gamma (v_0; N_{k - 1})^c} \le 8^{1 - \ell} \log N_{k - \ell} \le (\log N_k)^{-2},
		\end{flalign*}
		
		\noindent which together with \eqref{gk1gk} implies 
		\begin{flalign}
		\label{gk1zsktk}
		\begin{aligned}
		\displaystyle\sup_{z \in \mathcal{B}_{1 / 4}} \big| \nabla G_{k - 1} (z) - (s_{k - \ell}, t_{k - \ell}) \big|  \textbf{1}_{\Gamma (v_0; N_{k - 1})^c} & \le 2 C_0 C_3 \ell (\log N_k)^{-1 - \alpha}.
		\end{aligned} 
		\end{flalign}
		
		\noindent Applying \eqref{gk1zsktk} twice (once with general $z \in \mathcal{B}_{1 / 4}$ and once with $z = (0, 0)$); using the fact that $(s_k, t_k) = \nabla G_k (0, 0) = \nabla F_{k - 1} (0, 0)$; and applying the second bounds in both \eqref{ikgikfestimate} and \eqref{kestimatelambda} (with the fact that $k - 1 \ge 2^{35}$), we deduce  
		\begin{flalign*} 
		& \displaystyle\sup_{z \in \mathcal{B}_{1 / 4}} \big| \nabla G_{k - 1} (z) - (s_k, t_k) \big| \textbf{1}_{\Gamma (v_0; N_{k - 1})^c} \\
		& \quad \le \displaystyle\sup_{z \in \mathcal{B}_{1 / 4}} \big| \nabla G_{k - 1} (z) - (s_{k - \ell}, t_{k - \ell}) \big| \textbf{1}_{\Gamma (v_0; N_{k - 1})^c} + \big| (s_{k - 1}, t_{k - 1}) - (s_{k - \ell}, t_{k - \ell}) \big| \textbf{1}_{\Gamma (v_0; N_{k - 1})^c} \\
		& \qquad \quad 	+ \big| (s_k, t_k) - (s_{k - 1}, t_{k - 1}) \big| \textbf{1}_{\Gamma (v_0; N_{k - 1})^c} \\
		& \quad \le 4 C_0 C_3 \ell (\log N_k)^{-1 - \alpha} + \big| \nabla F_{k - 1} (0, 0) - \nabla G_{k - 1} (0, 0) \big| \textbf{1}_{\Gamma (v_0; N_{k - 1})^c} \\
		& \quad \le 4 C_0 C_3 \ell (\log N_k)^{-1 - \alpha} + C_0 \lambda_{k - 1} \le (\log N_k)^{-1 - c_0}, 
		\end{flalign*} 
		
		\noindent for sufficiently large $N$ and $N_k$. This yields \eqref{sktkg1k1}. 	
	\end{proof}

	Next, we require the following proposition indicating that $\Omega_k \subseteq \Gamma (v_0; N_{k - 1})$. This essentially implies that the function $G_k$ is ``well-behaved'' if the functions $F_j$ are for each $j \in [1, k - 1]$, which will be useful for applying \Cref{estimaten2n} on $R_k$ (after having applied it on all previous $R_j$). 
	
	\begin{prop} 
		
		\label{omegakgammakprobability}
		
		For sufficiently large $N_k$, we have that $\Omega_k \subseteq \Gamma (v_0; N_{k - 1})$.

	\end{prop}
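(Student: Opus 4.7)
The plan is to work on $\Gamma(v_0; N_{k-1})^c$ and directly verify the two defining conditions of $\Omega_k^c$: (i) $G_k$ is $\varsigma_k^{10}$-nearly linear of slope $(s_k, t_k)$ on $\mathcal{B}$, and (ii) the parameters $(N_k; \varepsilon/2; 1; \varsigma_k; \alpha; H|_{\partial R_k}; g_k)$ are $\lambda_k$-confined. On $\Gamma(v_0; N_{k-1})^c$ we may invoke part (2) of \Cref{gammakdefinition} at level $k-1$, which supplies both the $\mathcal{C}^0$ and $\nabla$-comparisons between $F_{k-1}$ and $G_{k-1}$ in \eqref{ikgikfestimate}, as well as $\lambda_k/8$-confinement for the level $k-1$ output parameters. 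The entire argument then reduces to translating these statements through the rescaling relation $G_k(z) = 8 F_{k-1}(z/8)$.

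For (ii) I verify the three properties of \Cref{lambdabounded} individually. The boundary comparison follows from $|g_k(z) - N_k^{-1}H(v)| = 8\,|f_{k-1}(z/8) - N_{k-1}^{-1}H(v)| \le 8\cdot(\lambda_k/8) = \lambda_k$, using $N_{k-1} = 8N_k$. The gradient condition is immediate: $\nabla G_k(z) = \nabla F_{k-1}(z/8) \in \mathcal{T}_{\varepsilon/2}$ since $z/8 \in \mathcal{B}_{1/8}$. For the $\mathcal{C}^{2,\alpha}$ bound, the scaling $[G_k]_2 = \tfrac18 [F_{k-1}]_2$ and $[G_k]_{2,\alpha} = 8^{-1-\alpha}[F_{k-1}]_{2,\alpha}$, combined with the Lipschitz control $[G_k]_0, [G_k]_1 = O(1)$, produces $\|G_k - G_k(0,0)\|_{\mathcal{C}^{2,\alpha}(\overline{\mathcal{B}}_{1-\lambda_k})} \le \tfrac{\varsigma_{k-1}}{8}\log N_{k-1} + O(1)$, which is bounded by $\varsigma_k \log N_k$ for sufficiently large $N_k$, since $\varsigma_k\log N_k /(\varsigma_{k-1}\log N_{k-1}) = (\log N_k/\log N_{k-1})^{1-\alpha}$ approaches $1$.

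For (i) I combine the near-linearity of $G_{k-1}$ on $\mathcal{B}$ with slope $(s_{k-1}, t_{k-1})$ (given in $\Omega_{k-1}^c$) and the estimate $|F_{k-1}-G_{k-1}| \le 7\lambda_{k-1}$ on $\mathcal{B}_{1/8}$ from \eqref{ikgikfestimate}. Changing the slope from $(s_{k-1}, t_{k-1})$ to $(s_k, t_k) = \nabla F_{k-1}(0,0)$ adds at most $C_0\lambda_{k-1}|z|$ by the second bound in \eqref{ikgikfestimate}. Rescaling a near-linearity error on $\mathcal{B}_{1/8}$ up to $\mathcal{B}$ via $G_k(z) = 8 F_{k-1}(z/8)$ multiplies the error by $8$. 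To avoid an unrecoverable factor-of-$8$ loss when $\varsigma_k^{10}$ is only marginally larger than $\varsigma_{k-1}^{10}$, I apply \Cref{estimatesktk}(2) for $k \ge 2(\log\log N)^2$: this gives $\sup_{\mathcal{B}_{1/4}} |\nabla G_{k-1}(z) - (s_k, t_k)| \le (\log N_k)^{-1-c_0}$. Integrating and combining with $|F_{k-1}-G_{k-1}| \le 7\lambda_{k-1}$ produces near-linearity of $F_{k-1}$ on $\mathcal{B}_{1/8}$ with error $\lesssim (\log N_k)^{-1-c_0} + \lambda_{k-1}$ directly at slope $(s_k, t_k)$; rescaling to $G_k$ yields an error dominated by $(\log N_k)^{-1-c_0}$, which is much smaller than $\varsigma_k^{10} = (\log N_k)^{-1/1750}$ since $c_0 = 1/18000$ and $\lambda_{k-1} \le C_3 (\log N_{k-1})^{-1-\alpha}$. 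For the range $k < 2(\log\log N)^2$, a short separate induction, using $G_1$ linear and the smallness $\lambda_j \le C_3\varsigma_0^{20}$, together with the bounded iteration count in this window, confirms the accumulated near-linearity error still lies below $\varsigma_k^{10}$ for large $N$.

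The main obstacle is precisely the factor-of-$8$ amplification of the near-linearity error under rescaling $\mathcal{B}_{1/8} \to \mathcal{B}$, which would otherwise defeat a naive inductive propagation of $\varsigma_k^{10}$; the resolution uses \Cref{estimatesktk}(2) as an external input providing a direct slope estimate at scale $k$, sidestepping the inductive chain precisely in the regime where it would blow up.
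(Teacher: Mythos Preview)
Your overall structure matches the paper's proof: restrict to $\Gamma(v_0;N_{k-1})^c$, verify $\lambda_k$-confinement via rescaling, then verify near-linearity with a case split at $k \sim (\log\log N)^2$ using \Cref{estimatesktk}(2) for large $k$. The confinement argument and the large-$k$ near-linearity argument are essentially identical to the paper's.

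There is, however, a genuine gap in your small-$k$ case. You write that ``a short separate induction, using $G_1$ linear and the smallness $\lambda_j \le C_3\varsigma_0^{20}$, together with the bounded iteration count in this window'' suffices. But the window $k < 2(\log\log N)^2$ is \emph{not} bounded, and a direct induction on the near-linearity error produces the recursion $e_k \le 8 e_{k-1} + O(\lambda_{k-1})$, hence $e_k \lesssim 8^k \varsigma_0^{20}$. Since $8^{2(\log\log N)^2} = \exp\big(2(\log 8)(\log\log N)^2\big)$ grows faster than any power of $\log N$, this overwhelms $\varsigma_0^{20} = (\log N)^{-20\alpha}$ and cannot be kept below $\varsigma_k^{10}$. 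This is precisely the factor-of-$8$ obstruction you correctly identify; you resolve it for large $k$ but not for small $k$.

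The paper's fix for small $k$ is to bound the \emph{gradient} deviation directly by telescoping:
\[
\sup_{z\in\mathcal{B}_{1/4}}\big|\nabla G_{k-1}(z)-\nabla\mathcal{H}(N^{-1}v_0)\big|
\;\le\; \sum_{j=1}^{k-2}\sup_{\mathcal{B}_{1/32}}\big|\nabla F_j-\nabla G_j\big|
\;\le\; C_0\sum_{j=1}^{k-2}\lambda_j
\;\lesssim\; (\log\log N)^2\,\varsigma_0^{20}
\;<\;\varsigma_0^{15},
\]
which carries no $8^j$ weights because the gradients are scale-invariant under $G_{j+1}(z)=8F_j(z/8)$. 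One then integrates this gradient bound over $\mathcal{B}_{1/8}$ (picking up a factor $\le 1/8$) and rescales to $\mathcal{B}$ (picking up a factor $8$), so the two cancel exactly. Combining with $|F_{k-1}-G_{k-1}|\le 7\lambda_{k-1}$ and $|(s_k,t_k)-\nabla G_{k-1}(0,0)|\le C_0\lambda_{k-1}$ gives the required $\varsigma_k^{10}$-near-linearity. Your large-$k$ argument already uses exactly this integrate-then-rescale mechanism; you just need to supply the analogous gradient bound for small $k$ via the telescoping above, rather than invoking an induction on the near-linearity error itself.
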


	\begin{proof}
		
		Since $G_1$ is linear of slope $\nabla G_1 = \nabla \mathcal{H} (N^{-1} v_0) \in \mathcal{T}_{\varepsilon}$, one can quickly verify using \Cref{g1gamma0} and \Cref{omegakdefinition} that $\Omega_1 \subseteq \Omega_0 = \Gamma (v_0; N_0)$. So, in what follows, let us assume that $k > 1$ and restrict to the event $\Gamma (v_0; N_{k - 1})^c$. To show that $\Omega_k^c$ holds, we must verify that $g_k$ and $G_k$ satisfy the two properties listed in \Cref{omegakdefinition}. 
		
		Let us first show that the parameters $\big(N_k; \frac{\varepsilon}{2}; 1; \varsigma_k; \alpha; H|_{\partial R_k}; g_k \big)$ are $\lambda_k$-confined; this will follow from rescaling. Indeed, recall from the second part of the second condition of \Cref{gammakdefinition} that the parameters $\big( N_{k - 1}; \frac{\varepsilon}{2}; \frac{1}{8}; \frac{\varsigma_{k - 1}}{64}; \alpha; H |_{\partial R_k}; f_{k - 1} \big)$ are $\frac{\lambda_k}{8}$-confined on the event $\Gamma (v_0; N_{k - 1})^c$. From this and the facts that $N_{k - 1} = 8 N_k$ and $G_k (z) = 8 F_{k - 1} \big( \frac{z}{8} \big)$ for each $z \in \mathcal{B}$, we deduce that $\big| g_k (z) - N_k^{-1} H(v) \big| = 8 \big| f_{k - 1} \big( \frac{z}{8} \big) - N_{k - 1}^{-1} H (v) \big| \le \lambda_k$ whenever $|z - N_k^{-1} v| = 8 \big| \frac{z}{8}  - N_{k - 1}^{-1} v \big| \le 32 N_{k - 1}^{-1} = 4 N_k^{-1}$. Hence, $\big(N_k; \frac{\varepsilon}{2}; 1; \varsigma_k; \alpha; H|_{\partial R_k}; g_k \big)$ satisfy the first condition listed in \Cref{lambdabounded}. 
		
		They also satisfy the second, since $\nabla G_k (z) =  \nabla F_{k - 1} \big( \frac{z}{8} \big) \in \mathcal{T}_{\varepsilon / 4}$ for any $z \in \mathcal{B}$. That they satisfy the third for $N_k$ sufficiently large follows from the estimate 
		\begin{flalign*} 
		\big\| G_k - G_k (0, 0) \big\|_{\mathcal{C}^{2, \alpha} (\overline{\mathcal{B}}_{1 - \lambda_k})} \le 8^{-1 - \alpha} \big\| F_{k - 1} - F_{k - 1} (0, 0) \big\|_{\mathcal{C}^{2, \alpha} (\overline{\mathcal{B}}_{1 / 8 - \lambda_k / 8})} & \le 8^{-3 - \alpha} \varsigma_{k - 1} \log N_{k - 1} \\
		& \le \varsigma_k \log N_k.
		\end{flalign*} 
		
		\noindent Hence, the parameters $\big(N_k; \frac{\varepsilon}{2}; 1; \varsigma_k; \alpha; H|_{\partial R_k}; g_k \big)$ are $\lambda_k$-confined, and so to show that $\Omega_k^c$ holds it remains to verify that $G_k$ is $\varsigma_k^{10}$-nearly linear with slope $(s_k, t_k) = \nabla G_k (0, 0) = \nabla F_{k - 1} (0, 0)$ on $\mathcal{B}$. 
		
		To do this, first define the linear function $\Psi: \mathcal{B}_{1 / 8} \rightarrow \mathbb{R}$ by setting $\Psi (z) = z \cdot (s_k, t_k) + G_{k - 1} (0, 0)$ for each $z \in \mathcal{B}_{1 / 8}$. Then define $\Lambda_k: \mathcal{B} \rightarrow \mathbb{R}$ by setting $\Lambda_k (z) = 8 \Psi \big( \frac{z}{8} \big)$, for each $z \in \mathcal{B}_{1 / 8}$. The first statement of \eqref{ikgikfestimate} and the identity $G_{k - 1} (0, 0) = \Psi (0, 0)$ then give 
		\begin{flalign}
		\label{gikl}
		\begin{aligned}
		 \displaystyle\sup_{z \in \mathcal{B}} \big| G_k (z) - \Lambda_k (z) \big| = 8 \displaystyle\sup_{z \in \mathcal{B}_{1 / 8}} \big| F_{k - 1} (z) - \Psi (z) \big|  &  \le 8 \displaystyle\sup_{z \in \mathcal{B}_{1 / 8}} \big| G_{k - 1} (z) -  \Psi (z) \big| + 7 \lambda_{k - 1} \\
		& \le \displaystyle\sup_{z \in \mathcal{B}_{1 / 8}} \big| \nabla G_{k - 1} (z) - (s_k, t_k) \big| + 7 \lambda_{k - 1}.
		\end{aligned}	
		\end{flalign}
		
		Denoting $\ell = 2 \big\lceil (\log \log N_k)^2 \big\rceil$, we will bound the right side of \eqref{gikl} separately in the cases when $2 \le k \le \ell$ and $\ell < k \le K$. In the latter case, it follows from \eqref{kestimatelambda}, \eqref{sktkg1k1}, and \eqref{gikl} that
		\begin{flalign*} 
		\sup_{z \in \mathcal{B}} \big| G_k (z) - \Lambda_k (z) \big| \le (\log N_k)^{-1 - c_0} + 7 \lambda_{k - 1} \le \varsigma_k^{10},
		\end{flalign*} 
		
		\noindent for $N_k$ sufficiently large. Hence, $G_k$ is $\varsigma_k^{10}$-nearly linear $(s_k, t_k)$ when $k > \ell$. 
		
		So let us assume instead that $2 \le k \le \ell$, in which case it follows for sufficiently large $N$ from \eqref{kestimatelambda}, \eqref{sktkestimate} (with the $k$ there equal to $k - 1$ here), and the second bound in \eqref{ikgikfestimate} that
		\begin{flalign}
		\label{gjk1hnv0} 
		\begin{aligned} 
		\displaystyle\sup_{z \in \mathcal{B}_{1 / 4}} \big| \nabla G_{k - 1} (z) - \nabla \mathcal{H} (N^{-1} v_0) \big| & \le \displaystyle\sum_{j = 1}^{k - 2} \displaystyle\sup_{z \in \mathcal{B}_{1 / 4}} \big| \nabla G_j (8^{j - k + 1} z) - \nabla F_j ( 8^{j - k + 1} z) \big| \\ 
		& = \displaystyle\sum_{j = 1}^{k - 2} \displaystyle\sup_{z \in \mathcal{B}_{1 / 32}} \big| \nabla G_j (8^{j - k + 2} z) - \nabla F_j ( 8^{j - k + 2} z) \big| \\
		& \le C_0 \displaystyle\sum_{j = 1}^{\ell} \lambda_j \le C_0 C_3 \ell \varsigma_0^{20} < \varsigma_0^{15}.
		\end{aligned} 
		\end{flalign}
		
		\noindent From two applications of \eqref{gjk1hnv0} (one with general $z \in \mathcal{B}_{1 / 8}$ and one with $z = (0, 0)$), \eqref{kestimatelambda}, the fact that $(s_k, t_k) = \nabla G_k (0, 0) = \nabla F_{k - 1} (0, 0)$, and the second bound in \eqref{ikgikfestimate}, we deduce that
		\begin{flalign*}
		\displaystyle\sup_{z \in \mathcal{B}_{1 / 8}} \big| \nabla G_{k - 1} (z) - (s_k, t_k) \big| & \le \displaystyle\sup_{z \in \mathcal{B}_{1 / 8}} \big| \nabla G_{k - 1} (z) - \nabla \mathcal{H} (N^{-1} v_0) \big| + \big| \nabla G_{k - 1} (0, 0) - \nabla \mathcal{H} (N^{-1} v_0) \big| \\
		& \qquad  + \big| \nabla G_{k - 1} (0, 0) - (s_k, t_k) \big| \\
		& \le 2 \varsigma_0^{15} + \big| \nabla G_{k - 1} (0, 0) - \nabla F_{k - 1} (0, 0) \big| \le 2 \varsigma_0^{15} + C_0 \lambda_{k - 1}, 
		\end{flalign*}
		
		\noindent  for $N$ sufficiently large. Since \eqref{kestimatelambda} implies $2 \varsigma_0^{15} + (C_0 + 7) \lambda_{k - 1} < \varsigma_k^{10}$ for $N$ sufficiently large, we deduce from \eqref{gikl} that $G_k$ is $\varsigma_k^{10}$-nearly linear when $k \le \ell$. Thus, $\Omega_k^c$ holds on $\Gamma (v_0; N_{k - 1})^c$.
	\end{proof} 
	
	Using \Cref{estimaten2n}, we can establish the following corollary that bounds the probability of $\Gamma_k \cap \Gamma (v_0; N_{k - 1})^c$, namely, the event on which $F_k$ is not well-behaved given that the $\{ F_j \}_{j < k}$ are.

	\begin{cor}
		
		\label{probabilitygammakgammav0nk1} 
		
		For any fixed real number $D > 0$, there exists a constant $C = C(\varepsilon, D) > 1$ such that $\mathbb{P} \big[ \Gamma_k \cap \Gamma (v_0; N_{k - 1})^c \big] < C N_k^{-D}$ for any $k \in [1, K] \cap \mathbb{Z}$. 
	\end{cor}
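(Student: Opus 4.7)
The proof will apply the scale reduction estimate \Cref{estimaten2n} on the domain $R_k$ with boundary function $g_k$, after verifying its hypotheses on the event $\Gamma(v_0; N_{k-1})^c$. First I show that on this event every $\Omega_j^c$ with $j \in [0, k]$ holds: by \Cref{omegakgammakprobability}, $\Omega_j \subseteq \Gamma(v_0; N_{j-1}) \subseteq \Gamma(v_0; N_{k-1})$ for each $j \in [1, k]$, and $\Omega_0 \subseteq \Gamma_1$ since the first condition of \Cref{gammakdefinition} at level $1$ explicitly requires $\Omega_0^c$ to hold. Thus for $k \geq 2$ we have $\Omega_0 \subseteq \Gamma_1 \subseteq \Gamma(v_0; N_{k-1})$, while for $k=1$ the event $\Gamma(v_0; N_0) = \Omega_0$ already contains $\Omega_0$. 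Consequently the two properties of \Cref{omegakdefinition} apply to $g_k$ on $\Gamma(v_0; N_{k-1})^c$, and combined with \Cref{estimatesktk} — which gives $(s_k, t_k) \in \mathcal{T}_{\varepsilon/2}$ for $N_k$ sufficiently large — all hypotheses of \Cref{estimaten2n} (with the $\varepsilon$ there equal to $\varepsilon/2$ here) are satisfied.

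Second, I would apply \Cref{estimaten2n} after conditioning on a $\sigma$-algebra $\mathcal{F}$ that determines $g_k$ and $H|_{\partial R_k}$ while leaving $H|_{\mathrm{int}(R_k)}$ uniform over height functions with the prescribed boundary. This produces the bad event $\Gamma^{\mathrm{sr}}$ (the $\Gamma$ of \Cref{estimaten2n}) satisfying $\mathbb{P}[\Gamma^{\mathrm{sr}} \mid \mathcal{F}] \mathbf{1}_{\Gamma(v_0; N_{k-1})^c} \leq C_2(\varepsilon/2, D) N_k^{-D}$, outside of which \eqref{g1nvlambda} coincides with \eqref{g1knkvlambdak} and a function $f$ satisfying the two properties of \Cref{estimaten2n} exists. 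On $\Gamma(v_0; N_{k-1})^c \cap (\Gamma^{\mathrm{sr}})^c$ this $f$ serves as $f_k$: the bounds \eqref{ikgikfestimate} are immediate from those produced by \Cref{estimaten2n}, and the $\mu$-confinement with $\mu = \varsigma_k^{\alpha} \lambda_k$ there upgrades to $\tfrac{\lambda_{k+1}}{8}$-confinement because $\tfrac{\lambda_{k+1}}{8} \geq \varsigma_k^{\alpha} \lambda_k = \mu$ holds by the definition $\lambda_{k+1} = \max\{8\varsigma_k^{\alpha}\lambda_k,(\log N_{k+1})^{-1-\alpha}\}$, and each of the three conditions in \Cref{lambdabounded} is monotone in $\lambda$ (larger $\lambda$ only weakens them). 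Therefore $\Gamma_k \cap \Gamma(v_0; N_{k-1})^c \subseteq \Gamma^{\mathrm{sr}} \cap \Gamma(v_0; N_{k-1})^c$, and taking expectations yields $\mathbb{P}[\Gamma_k \cap \Gamma(v_0; N_{k-1})^c] \leq C_2 N_k^{-D}$.

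The main obstacle is the conditioning in the second step. The boundary function $g_k = G_k|_{\partial \mathcal{B}}$ is obtained from $G_k(z) = 8 F_{k-1}(z/8)$, and $F_{k-1}$ is defined via the convolution in \Cref{psif1f2} at scale $\varsigma_{k-1}\lambda_{k-1}$ of $\Phi_{k-1}$ in a neighborhood of $\partial \mathcal{B}_{1/8}$; in the original coordinates of $R_{k-1}$ this corresponds to an annulus around $\partial R_k$ of width $\varsigma_{k-1}\lambda_{k-1} N_{k-1}$ reaching into both sides of $\partial R_k$. Hence the naive $\sigma$-algebra generated by $H|_{R \setminus \mathrm{int}(R_k)}$ does not fix $g_k$. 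To circumvent this one must condition on a slightly thicker collar, effectively shrinking the domain on which \Cref{estimaten2n} is applied by a thin annulus of width $\varsigma_{k-1}\lambda_{k-1} N_{k-1} \ll N_k$. Since the near-linearity and confinement hypotheses (and the constants $C_1, C_2$) of \Cref{estimaten2n} are robust to perturbations on this scale, the conclusion of that proposition — and therefore the probability bound claimed — is unaffected.
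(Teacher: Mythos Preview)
Your proof is essentially the paper's: show that on $\Gamma(v_0;N_{k-1})^c$ all $\Omega_j^c$ hold (via \Cref{omegakgammakprobability}) and $(s_k,t_k)\in\mathcal T_{\varepsilon/2}$ (via \Cref{estimatesktk}), then apply \Cref{estimaten2n} at level $k$ and match its output $f$ to the $f_k$ of \Cref{gammakdefinition} using $\mu=\varsigma_k^\alpha\lambda_k\le\tfrac{\lambda_{k+1}}8$ and the monotonicity of the confinement conditions in $\lambda$. The measurability concern you raise --- that $g_k$ depends on $H$ in a thin strip inside $R_k$ --- is a genuine subtlety the paper simply glosses over (it asserts that \Cref{estimaten2n} ``applies upon restricting'' to the good event, without discussing conditioning); your collar workaround is a correct resolution, and since the strip width $\varsigma_{k-1}\lambda_{k-1}N_{k-1}\ll N_k$ the hypotheses of \Cref{estimaten2n} survive the shrinkage.
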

	
	\begin{proof} 
		
		By \Cref{omegakgammakprobability}, we may assume that $N_k$ is sufficiently large so that $\Omega_k \subseteq \Gamma (v_0; N_{k - 1})$; thus, it suffices to bound $\mathbb{P} \big[ \Gamma_k \cap \Gamma (v_0; N_{k - 1})^c \cap \Omega_k^c \big]$. This will proceed through an application of \Cref{estimaten2n}, with the $N, \varepsilon, (s, t), h, g, \lambda$ there equal to $N_k, \frac{\varepsilon}{2}, (s_k, t_k), H |_{\partial R_k}, g_k, \lambda_k$ here, respectively. To verify that these parameters satisfy the assumptions of that proposition on $\Gamma (v_0; N_{k - 1})^c \cap \Omega_k^c$, first observe that \Cref{lambdak} yields $\varsigma_k^{15} \le \lambda_k \le (\log N_k)^{-1 - \alpha}$ for sufficiently large $N_k$. Additionally, on $\Gamma (v_0; N_{k - 1})^c$, \Cref{gke} implies that $G_k$ is the maximizer of $\mathcal{E}$ on $\mathcal{B}$ with boundary data $g_k$. Moreover, \eqref{sktk1} and the fact that $\nabla \mathcal{H} (N^{-1} v_0) \in \mathcal{T}_{\varepsilon}$ (recall \Cref{regularestimate}) together imply that $(s_k, t_k) \in \mathcal{T}_{\varepsilon / 2}$ on $\Gamma (v_0; N_{k - 1})^c$, for $N_k$ sufficiently large. Furthermore, on $\Omega_k^c$, the parameters $\big( N_k; \frac{\varepsilon}{2}; 1; \varsigma_k; \alpha; H |_{\partial R_k}; g_k \big)$ are $\lambda_k$-confined and $g_k$ is $\varsigma_k^{10}$-nearly linear with slope $(s_k, t_k)$ on $\partial \mathcal{B}$.  
		
		Thus \Cref{estimaten2n} applies upon restricting to $\Gamma (v_0; N_{k - 1})^c \cap \Omega_k^c$. Since $\varsigma_{k - 1}^{\alpha} \lambda_{k - 1} \le \frac{\lambda_k}{8}$ (recall \Cref{lambdak}), that proposition yields a constant $C = C (\varepsilon) > 1$ and an event $\widetilde{\Gamma}_k$ with $\mathbb{P}[\widetilde{\Gamma}_k] < C N_k^{-D}$, such that \eqref{g1knkvlambdak} and the second condition in \Cref{gammakdefinition} both hold on $\widetilde{\Gamma}_k^c \cap \Gamma (v_0; N_{k - 1})^c \cap \Omega_k^c$. Since we also have $\Gamma (v_0; N_{k - 1})^c \cap \Omega_k^c = \Gamma (v_0; N_{k - 1})^c \cap \bigcap_{j = 0}^k \Omega_j^c$ (as $\bigcup_{j = 0}^{k - 1} \Omega_j \subseteq \Gamma_{k - 1} \subseteq \Gamma (v_0; N_{k - 1})$), it follows that $\widetilde{\Gamma}_k^c \cap \Gamma (v_0; N_{k - 1})^c \cap \Omega_k^c \subseteq \bigcap_{j = 0}^k \Omega_j^c$. 
		
		Therefore, $\widetilde{\Gamma}_k^c \cap \Gamma (v_0; N_{k - 1})^c \cap \Omega_k^c$ satisfies the two conditions listed in \Cref{gammakdefinition}, and so $\Gamma_k \subseteq \widetilde{\Gamma}_k \cup \Gamma (v_0; N_{k - 1}) \cup \Omega_k$. It follows that $\mathbb{P} \big[ \Gamma_k \cap \Gamma (v_0; N_{k - 1})^c \cap \Omega_k^c \big] \le  \mathbb{P} \big[ \widetilde{\Gamma}_k \big] < C N_k^{-D}$, and so $\mathbb{P} \big[ \Gamma_k \cap \Gamma (v_0; N_{k - 1})^c \big] \le C N_k^{-D}$ by \Cref{omegakgammakprobability}. 
	\end{proof}

	Now we can establish \Cref{heightlocal1}. 
	
	\begin{proof}[Proof of \Cref{heightlocal1}]
		
		If $k \in [1, K]$ denotes the integer such that $M \in (N_{k + 1}, N_k]$, then define the event $\Gamma = \Gamma (v_0; M) = \Gamma (v_0; N_k)$ (from \Cref{gammakdefinition}) and the pair $(s, t) = \big( s (v_0; M), t (v_0; M) \big) = (s_k, t_k) = \nabla G_k (0, 0)$. 
		
		Let us first bound $\mathbb{P} [\Gamma]$. To that end, observe since \Cref{gammakdefinition} implies that $\Gamma (v_0; N_k) = \Omega_0 \cup \bigcup_{j = 1}^k \big( \Gamma_j \cap \Gamma (v_0; N_{j - 1})^c \big)$, \Cref{probabilityevent0} and \Cref{probabilitygammakgammav0nk1} together yield a constant $C_4 = C_4 (\varepsilon, D) > 1$ such that 
		\begin{flalign*} 
		\mathbb{P} \big[ \Gamma (v_0; N_k) \big] \le \displaystyle\sum_{j = 1}^k \mathbb{P} \big[ \Gamma_j \cap \Gamma (v_0; N_{j - 1})^c \big] + \mathbb{P} [\Omega_0] \le C_4 \displaystyle\sum_{j = 1}^k N_j^{-D} + C_4 \exp (-N^{1 / 2}),
		\end{flalign*} 
		
		\noindent and so $\mathbb{P} [\Gamma] \le C_2 M^{-D}$, for some constant $C_2 = C_2 (\varepsilon, D) > 1$. This verifies the first bound in \eqref{gammahnv0estimate}; the second follows from \eqref{sktk1} and the fact that $\Gamma (v_0; N_{k - 1}) \subseteq \Gamma (v_0; N_k)$.  
		
		It remains to verify \eqref{mhuhv0estimate}. To that end, set $c_0 = \frac{1}{18000}$ (as in \Cref{estimatesktk}); recall that $c = \frac{1}{20000}$; and observe since $\Gamma (v_0; N_{k - 1}) \subseteq \Gamma (v_0; N_k) = \Gamma$ that, for any vertex $u \in \mathcal{B}_M (v_0) \cap \mathbb{T}$ and $N_k$ sufficiently large, we have  
		\begin{flalign*}
		\Big|  M^{-1} & \big( H(u) - H(v_0) \big) - M^{-1} (u - v_0) \cdot (s, t) \Big| \textbf{1}_{\Gamma^c} \\ 
		& \le 64 \bigg| N_{k - 1}^{-1} \big( H(u) - H(v_0) \big) - \Big( G_{k - 1} \big( N_{k - 1}^{-1} (u - v_0) \big) - G_{k - 1} (0, 0) \Big) \bigg| \textbf{1}_{\Gamma^c} \\
		& \qquad + 64 \Big| G_{k - 1} \big( N_{k - 1}^{-1} (u - v_0) \big) - G_{k - 1} (0, 0) -  N_{k - 1}^{-1} (u - v_0) \cdot (s_k, t)  \Big| \textbf{1}_{\Gamma^c} \\
		& \le 128 \displaystyle\max_{w \in \mathbb{V}(R_{k - 1})} \Big| N_{k - 1}^{-1} H(w) - G_k \big( N_{k - 1}^{-1} (w - v_0)  \big) \Big| \textbf{1}_{\Gamma^c} + 8 \displaystyle\sup_{z \in \mathcal{B}_{1 / 8}} \big| \nabla G_{k - 1} (z) - (s_k, t_k) \big| \textbf{1}_{\Gamma^c} \\
		& \le 768 \lambda_{k - 1} + 8 (\log N_k)^{-1 - c_0} \le 768 C_3 (\log N_{k - 1})^{-1 - \alpha} + 8 (\log N_k)^{-1 - c_0} \le (\log M)^{-1 - c}.
		\end{flalign*} 
		
		\noindent Here, to deduce the first and second estimates we used the fact that $\frac{N_{k - 1}}{64} = N_{k + 1} < M \le N_k = \frac{N_{k - 1}}{8}$; to deduce the third we used \eqref{g1knkvlambdak}, the fact that $k \ge 2 (\log \log N)^2$ (since $M \le N \exp \big( -5 (\log \log N)^2 \big)$), and \eqref{sktkg1k1}; to deduce the fourth we used \eqref{kestimatelambda}; and to deduce the fifth we used the fact that $M$ is sufficiently large. This implies that $\Gamma$ and $(s, t)$ satisfy \eqref{gammahnv0estimate} and \eqref{mhuhv0estimate}.
	\end{proof}

	\section{The Global Law Under General Boundary Data} 
	
	\label{GlobalLaw}
	
	In this section we establish the global law \Cref{heightapproximate}, which holds under general boundary data subject to \Cref{regularestimate}. Its proof will closely follow that of Theorem 1.1 of \cite{VPDT}, except for an effective variant of Rademacher's theorem (provided by \Cref{twodimensionalapproximatelinear} below) that gives rise to the $(\log N)^{-c}$ type error bound. As in \cite{VPDT}, we will proceed by counting the number of height functions approximating some given $F$. To that end, we will select a triangulation of the domain $R$ such that $F$ is approximately linear on (the boundaries of) most triangles in this triangulation; estimate the number of free tilings on each triangle with given, nearly linear, boundary data; and then sum over all triangles and determine which height function $F$ is asymptotically most common. 
	
	We begin in \Cref{EstimateTriangle} by providing an effective estimate for the number of free tilings of a large triangle, whose boundary height function is approximately linear of some given slope $(s, t)$; this will proceed as in Sections 3 and 4 of \cite{VPDT}, where a similar count was given but effective error bounds were not provided. Next, the ``approximate linearity'' of $F$ on ``most triangles'' in the triangulation was justified in \cite{VPDT} through Rademacher's theorem. However, since this result does not immediately appear to provide an explicit error estimate, we establish a variant of this result in \Cref{AlmostLinearEstimate} with an effective error bound of order $(\log N)^{-c}$ that is sufficient for our purposes. Using these results, we then prove \Cref{heightapproximate} in \Cref{ProofApproximateGlobal}.

	\subsection{Estimates for the Number of Tilings of a Triangle}

	\label{EstimateTriangle}

	In this section we establish the following result, which is an effective analog of Corollary 4.2 of \cite{VPDT} that estimates the number of tilings of a triangle, whose boundary height function is nearly linear of a given slope. In what follows, we recall the set $\mathfrak{G} (h)$ from \Cref{gh} and the surface tension $\sigma$ from \eqref{lsigma}.

	\begin{prop}
		
		\label{numbersigmat}
		
		There exists a constant $C > 1$ such that the following holds. Fix an integer $N \in \mathbb{Z}_{\ge 1}$; a real number $\varsigma \in \big( N^{-1 / 75}, 1 \big)$; a pair $(s, t) \in \overline{\mathcal{T}}$; and a linear function $g: \mathbb{R}^2 \rightarrow \mathbb{R}$ of slope $(s, t)$. Let $T = T_N \subset \mathbb{T}$ denote the either one of the triangles $\big\{ (x, y) \in \mathbb{Z}_{\ge 0}^2: 0 \le x \le y \le N \big\}$ or $\big\{ (x, y) \in \mathbb{Z}_{\ge 0}^2: 0 \le y \le x \le N \big\}$, and let $h: \partial T \rightarrow \mathbb{Z}$ denote a boundary height function such that $\max_{z \in \partial T} \big| h(z) - g(z) \big| < \varsigma N$. If $\big| \mathfrak{G} (h)\big| \ne 0$, then 
		\begin{flalign}
		\label{ghestimate}
		\sigma (s, t) - C \varsigma^{1 / 3} \le 2 N^{-2} \log \big| \mathfrak{G} (h) \big| \le \sigma (s, t) + C \varsigma^{1 / 3}.
		\end{flalign}
		
		\noindent Moreover,  
		\begin{flalign}
		\label{ghestimatesum}
		\sigma (s, t) - C \varsigma^{1 / 3} \le 2 N^{-2} \log \left( \displaystyle\sum_{h} \big| \mathfrak{G} (h) \big| \right) \le \sigma (s, t) + C \varsigma^{1 / 3}.
		\end{flalign}
		
		\noindent where $h$ is summed over all $h: \partial T \rightarrow \mathbb{Z}$ such that $\max_{z \in \partial T} \big| h(z) - g(z) \big| < \varsigma N$.

	\end{prop}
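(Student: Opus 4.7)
The plan is to establish \eqref{ghestimate} by reducing to the effective torus count (stated later as \Cref{numbersigma}), which asserts that the number of lozenge tilings of an $N \times N$ torus with height function having average slope within $\varsigma'$ of $(s,t)$ is $\exp\!\bigl(N^2 \sigma(s,t) + O(N^2 (\varsigma')^{1/3})\bigr)$. Once \eqref{ghestimate} is proven, the bound \eqref{ghestimatesum} follows immediately because the number of boundary height functions $h$ on $\partial T$ with $\|h-g\|_\infty < \varsigma N$ is at most $(2\varsigma N + 1)^{|\partial T|} \le \exp(C N \log N)$, which is negligible compared to $\exp(N^2 \varsigma^{1/3})$ under the hypothesis $\varsigma > N^{-1/75}$, so it can be absorbed into the constant in the exponent.

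For the upper bound in \eqref{ghestimate}, I would subdivide $T$ into smaller congruent triangles $T_1, \ldots, T_K$ at a mesoscopic scale $M = \lfloor \varsigma^{a} N \rfloor$ for a suitable exponent $a \in (0,1)$. For any $H \in \mathfrak{G}(h)$, the restriction $H|_{\partial T_i}$ is a $1$-Lipschitz integer-valued function, and one has
\begin{flalign*}
\bigl|\mathfrak{G}(h)\bigr| \le \sum_{\{h_i\}} \prod_{i=1}^K \bigl|\mathfrak{G}(h_i; T_i)\bigr|,
\end{flalign*}
where $\{h_i\}$ runs over compatible boundary data on the $T_i$ restricting to $h$ on $\partial T$. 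Applying the torus count inductively on each $T_i$ (or a reference result for triangles at the smaller scale), each factor contributes approximately $\exp\!\bigl((M^2/2) \sigma(s_i, t_i)\bigr)$ where $(s_i,t_i)$ is the average slope of $h_i$; concavity of $\sigma$ (\Cref{concavesigmat}) together with the fact that the average slope over the full triangle is forced to be within $O(\varsigma)$ of $(s,t)$ then gives $\sum_i (M^2/2)\sigma(s_i,t_i) \le (N^2/2)\sigma(s,t) + O(N^2 \varsigma^{1/3})$ for the optimal choice of $a$, with the Hölder regularity of $\sigma$ on $\overline{\mathcal{T}}$ controlling deviations near $\partial \mathcal{T}$. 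The entropy from the choice of the $\{h_i\}$ is $\exp(O(NM/\varsigma \cdot \log M))$, which the same optimization absorbs.

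For the lower bound, I would explicitly construct many tilings of $T$ with boundary $h$, by (i) tiling a slightly inset sub-triangle $T' \subset T$ of sidelength $N - O(\varsigma N)$ using $(N/M)^2$ patches that are restrictions of random torus tilings of size $M$ with slope $(s,t)$, producing at least $\exp((N^2/2)\sigma(s,t) - O(N^2 \varsigma^{1/3}))$ configurations via \Cref{numbersigma}; and (ii) completing these tilings to ones with boundary $h$ by filling in the annular region between $\partial T'$ and $\partial T$ using the $1$-Lipschitz property of $h - g$ to interpolate. The key point is that linearity of $g$ makes the torus patches match the desired slope on average, while the flexibility in the interpolating region (of area $O(\varsigma N^2)$) is more than enough to accommodate any boundary defect of size $O(\varsigma N)$ without substantially decreasing the count.

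The main obstacle is calibrating the subdivision scale $M$ to obtain precisely the exponent $1/3$ in the error estimate, since the upper and lower bounds have competing error sources (seam entropy, Hölder modulus of $\sigma$, boundary interpolation defect) whose interplay determines the extractable power of $\varsigma$. This is essentially the content of Section 4 of \cite{VPDT}, but there the analysis is asymptotic and not quantitative; my task is to follow that framework while keeping every error term explicit. The choice $\varsigma > N^{-1/75}$ in the hypothesis, which forces $\varsigma^{1/3} > N^{-1/225}$ to dominate the $N^{-1}\log N$-type remainders, suggests that $M$ should be chosen so that the three competing errors are all comparable to $\varsigma^{1/3}$, yielding the claimed bound.
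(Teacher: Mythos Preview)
Your subdivision strategy is genuinely different from the paper's argument, and it runs into a circularity problem. In your upper bound, you write that ``each factor contributes approximately $\exp\bigl((M^2/2)\sigma(s_i,t_i)\bigr)$,'' but this is precisely the upper half of the inequality you are trying to prove, at scale $M$ rather than $N$. The torus count \Cref{numbersigma} does not apply to the triangles $T_i$; it applies to tori. So unless you have an independent base case for triangles (which you don't), the induction on scale never gets started. Your lower bound has a related issue: restrictions of torus tilings to square patches do not have compatible boundaries along the shared edges, so the patches cannot simply be concatenated, and arranging the interfaces to match while retaining the full entropy is itself the hard part of the problem.

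The paper sidesteps all of this with a single observation you are missing: the two triangles $T^{(1)}=\{0\le x\le y\le N\}$ and $T^{(2)}=\{0\le y\le x\le N\}$ together form the square $S_N=[0,N]^2$, and by the $180^\circ$ rotational symmetry $(x,y)\mapsto(N-x,N-y)$ their counts over $\varsigma$-nearly-linear boundary data are equal (equation \eqref{gt1gt2}). Hence it suffices to compare the \emph{product} of the two triangle counts with the torus count. For the lower bound, the paper restricts a uniformly random torus tiling of approximate slope $(s,t)$ to $S_N$, uses the concentration estimate \Cref{heightestimatep} to guarantee that the induced height function on the diagonal is $\varsigma$-nearly linear with high probability, and then splits along the diagonal (inequality \eqref{h1h2s}). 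For the upper bound, any pair of triangle tilings with compatible diagonal data glues to a single torus tiling in $\mathfrak{E}(s,t;\varsigma;P_N)$, giving \eqref{gh1gh2stpn} directly. The passage between ``some fixed boundary $h$'' and ``summed over all $h$'' is handled not by a crude entropy bound but by \Cref{perturbfunctionboundary}, which shows that any two $\varsigma$-nearly-linear boundary conditions on a convex domain give counts differing by at most $\exp\bigl(CN^2\varsigma^{1/2}|\log\varsigma|\bigr)$. No subdivision, no scale calibration; the $\varsigma^{1/3}$ emerges from the H\"older continuity of $\sigma$ and the $\varsigma^{1/2}|\log\varsigma|$ in \Cref{perturbfunctionboundary}.
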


	To establish \Cref{numbersigmat}, we first approximately enumerate tilings of an $N \times N$ torus. This result is stated as \Cref{numbersigma}; as in Sections 7 and 8 of \cite{VPDT}, its proof (to appear in \Cref{ProofstPn} below) will be largely facilitated by an exact determinantal identity for this specific count. We then use comparison and concentration estimates to asymptotically express counts of torus tilings in terms of counts of triangle free tilings whose boundary height functions approximately coincide, thereby establishing \Cref{numbersigmat}. 
	
	We begin by introducing some notation on torus tilings. To that end, for each $N \in \mathbb{Z}_{\ge 1}$, let $P_N$ denote the $N \times N$ discrete torus on the triangular lattice, on which we identify vertices $(x, y), (x', y') \in \mathbb{T}$ if and only if $N$ divides both $x - x'$ and $y - y'$. For any tiling $\mathscr{M}$ of $P_N$, let $\mathcal{N}_1 (\mathscr{M})$, $\mathcal{N}_2 (\mathscr{M})$, and $\mathcal{N}_3 (\mathscr{M})$ denote the numbers of type $1$, type $2$, and type $3$ lozenges in $\mathscr{M}$, respectively (recall \Cref{TilingsHeight}). Since $P_N$ has $2 N^2$ faces, we have $\mathcal{N}_1 (\mathscr{M}) + \mathcal{N}_2 (\mathscr{M}) + \mathcal{N}_3 (\mathscr{M}) = N^2$.
	
	For any $\omega \in \mathbb{R}_{> 0}$ and $(s, t) \in \overline{\mathcal{T}}$, let 
	\begin{flalign}
	\label{estomegapn} 
	\mathfrak{E} (s, t; \omega; P_N) = \bigg\{ \mathscr{M} \in \mathfrak{E} (P_N): \quad \displaystyle\frac{\mathcal{N}_1 (\mathscr{M})}{N^2} \in [s - \omega, s + \omega]; \quad \displaystyle\frac{\mathcal{N}_2 (\mathscr{M})}{N^2} \in [t - \omega, t + \omega] \bigg\}.
	\end{flalign} 
	
	Recalling the function $\sigma$ from \eqref{lsigma}, it was shown as Theorem 9.2 of \cite{VPDT} (see also Theorem 8 of \cite{OD}) that $N^{-2} \log \big| \mathfrak{E} (s, t; \omega; P_N) \big| \approx \sigma (s, t)$, for large $N$, small $\omega$, and any $(s, t) \in \overline{\mathcal{T}}$. The following proposition makes that result effective, by approximating $N^{-2} \log \big| \mathfrak{E} (s, t; \omega; P_N) \big|$ to within $N^{-1 / 75}$ if $\omega = N^{-1 / 70}$. Its proof will be given in \Cref{EstimatePn2} below, closely following that of Theorem 9.2 of \cite{VPDT}.
	
	\begin{prop}		
		\label{numbersigma}
		
		For sufficiently large $N$ and any $(s, t) \in \overline{\mathcal{T}}$, we have that
		\begin{flalign}
		\label{steestimate}
		\sigma (s, t) - N^{-1 / 75} \le N^{-2} \log \big| \mathfrak{E} (s, t; N^{-1 / 70}; P_N) \big| \le \sigma (s, t) + N^{-1 / 75}.
		\end{flalign}
		
	\end{prop}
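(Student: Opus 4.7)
The plan is to follow the proof of Theorem 9.2 of \cite{VPDT}, while tracking error terms quantitatively to obtain the polynomial rate $N^{-1/75}$. The starting point is the weighted torus partition function
$$Z_N(a,b,c) = \sum_{\mathscr{M}\in\mathfrak{E}(P_N)} a^{\mathcal{N}_1(\mathscr{M})}\, b^{\mathcal{N}_2(\mathscr{M})}\, c^{\mathcal{N}_3(\mathscr{M})}, \qquad a,b,c > 0.$$
By Kasteleyn's theorem on the torus, $Z_N(a,b,c)$ is a signed sum of four determinants (one per spin structure); each of these diagonalizes under the $(\mathbb{Z}/N\mathbb{Z})^2$ Fourier transform and therefore evaluates to an explicit product over pairs of $N$-th roots of unity. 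A Riemann-sum analysis of this product yields
$$N^{-2}\log Z_N(a,b,c) = F(a,b,c) + O\!\left(\tfrac{\log N}{N^2}\right),$$
where $F$ is the explicit double integral introduced in Section~8 of \cite{VPDT}, and the error is uniform on compact subsets of $\mathbb{R}_{>0}^3$.

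Next, I would use Legendre duality: for each $(s,t)$ in the interior $\mathcal{T}$, there is a unique (up to overall scaling) triple $(a,b,c) = (a_{s,t}, b_{s,t}, c_{s,t})$ at which $(s,t)$ matches the gradient of $F$, and at this critical point $\sigma(s,t) = F(a,b,c) - s\log a - t\log b - (1-s-t)\log c$. Writing
$$|\mathfrak{E}(s,t;\omega;P_N)| = a^{-sN^2}\,b^{-tN^2}\,c^{-(1-s-t)N^2}\,Z_N(a,b,c)\,\mathbb{P}_N^{(a,b,c)}\!\left[\bigl|\mathcal{N}_i/N^2 - (s,t)_i\bigr|\le\omega,\ i=1,2\right],$$
where $\mathbb{P}_N^{(a,b,c)}$ is the $(a,b,c)$-weighted Gibbs measure on $\mathfrak{E}(P_N)$, reduces the proposition (after taking logarithms) to bounding the probability on the right below by $N^{-C}$ for some absolute $C$. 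I would extract this local CLT by Fourier inversion, using that the characteristic function $\mathbb{E}_N^{(a,b,c)}[e^{i(\theta_1\mathcal{N}_1 + \theta_2\mathcal{N}_2)}]$ equals the ratio $Z_N(ae^{i\theta_1}, be^{i\theta_2}, c)/Z_N(a,b,c)$, so the same Kasteleyn computation gives a quadratic expansion of the log-characteristic function with a positive-definite Hessian on compact subsets of $\mathcal{T}$; since the window has lattice volume $(\omega N)^2 = N^{69/35}$ while the natural fluctuation scale of $(\mathcal{N}_1,\mathcal{N}_2)$ is of order $N$, the probability captured is at least a negative power of $N$, which is far better than what is needed.

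The main obstacle is uniformity as $(s,t)$ approaches the boundary $\partial\mathcal{T}$, where one of $(a,b,c)$ diverges and the Hessian of the Legendre transform degenerates. To handle this, I would split $\overline{\mathcal{T}}$ at distance $N^{-\eta}$ from $\partial\mathcal{T}$ for some small $\eta > 0$. On the interior region the two steps above apply with errors controlled by the Hessian, which grows only polynomially in $N^{\eta}$ and can thus be absorbed into the $N^{-1/75}$ tolerance when $\eta$ is chosen small. On the boundary collar, the surface tension is small by the small-argument expansion $L(\pi x) = -\pi x \log x + O(x)$ of the Lobachevsky function from \eqref{lsigma}, while tilings with approximately frozen slope consist of a deterministic background plus a controlled number of defects whose count can be estimated directly by elementary combinatorics. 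Combining the two regimes then yields \eqref{steestimate} uniformly on $\overline{\mathcal{T}}$.
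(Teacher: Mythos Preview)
Your plan follows the same skeleton as the paper: evaluate the weighted torus partition function $Z_N(a,b,c)$ via Kasteleyn's product formula, choose $(a,b,c)$ by Legendre duality so that the slope matches $(s,t)$, show that the weighted measure concentrates on $\mathfrak{E}(s,t;N^{-1/70};P_N)$, and treat a collar near $\partial\mathcal{T}$ separately by direct defect-counting (the paper invokes Lemma~3.5 of \cite{VPDT} for this last step). Two technical points diverge and are worth flagging. First, your asserted $O(N^{-2}\log N)$ error for $N^{-2}\log Z_N$ on compact sets is sharper than what a naive Riemann-sum analysis yields: in the liquid phase the curve $a+bz+cw=0$ meets the unit torus and individual factors in the product can be as small as $O(N^{-1})$, so extra care is needed near those singularities; the paper's Lemma~\ref{znzpnp} instead proves the weaker but fully uniform bound $|N^{-2}\log Z_N-\mathfrak{Z}|<CN^{-1/4}$, which already suffices for the target $N^{-1/75}$. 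Second, for the concentration step you propose a local CLT via the characteristic function $Z_N(ae^{i\theta_1},be^{i\theta_2},c)/Z_N(a,b,c)$, whereas the paper takes the more elementary route of bounding $\Var[\mathcal{N}_i]$ by differentiating $\log Z_N$ twice (Lemma~\ref{znzpnp2}) and then applying Chebyshev. Either argument gives $\mathbb{P}[\mathscr{M}\in\mathfrak{E}_K]\ge 1-K^{-c}$, and since only $N^{-2}\log\mathbb{P}\to 0$ is needed, both are more than adequate; the paper's version simply requires less analytic machinery.
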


	Given \Cref{numbersigma}, to establish \Cref{numbersigmat} it will be useful to compare the number of height functions with similar boundary data on a given domain. To that end, we have the following lemma, which appears as Proposition 3.6 of \cite{VPDT}; it essentially states that the number of tilings of a domain, whose boundary height function is nearly linear of given slope, is approximately independent of the specific boundary conditions. Although it was stated in \cite{VPDT} with the parameter $\varsigma$ below as a small constant, it can be quickly verified from the proofs there that one can in fact take $\varsigma > N^{-1 / 4}$, as indicated below.

	\begin{lem}[{\cite[Proposition 3.6]{VPDT}}]
		
		\label{perturbfunctionboundary}
		
		For any $\varepsilon > 0$, there exists a constant $C = C (\varepsilon) > 1$ such that the following holds. Fix an integer $N \in \mathbb{Z}_{\ge 1}$; a real number $\varsigma \in (N^{-1 / 4}, 1)$; and a simply-connected, convex domain $R \subset \mathbb{T}$ of area at least $\varepsilon N^2$ and diameter at most $\varepsilon^{-1} N$. Further let $h_1, h_2: \partial R \rightarrow \mathbb{Z}$ denote two boundary height functions on $R$, such that $\big| \mathfrak{G} (h_i) \big| \ne 0$ for each $i \in \{ 1, 2 \}$. If there exists a linear function $g: \mathbb{R}^2 \rightarrow \mathbb{R}$ such that $\max_{z \in \partial R} \big| h_i (z) - g (z) \big| < \varsigma N$ for each $i \in \{ 1, 2 \}$, then $N^{-2} \big| \log \big| \mathfrak{G} (h_1) \big| - \log \big| \mathfrak{G} (h_2) \big| \big| < C \big| \varsigma^{1 / 2} \log \varsigma \big|$. 
		
	\end{lem}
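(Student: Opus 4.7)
The plan is to compare $|\mathfrak{G}(h_1)|$ and $|\mathfrak{G}(h_2)|$ by introducing a slightly shrunken copy $R' \subset R$, decomposing each tiling according to its height values on $\partial R'$, and arguing that (a) the interior counts $|\mathfrak{G}_{R'}(k)|$ are common to both boundary data, while (b) the shell contributions between $\partial R$ and $\partial R'$ are comparable for $h_1$ and $h_2$.

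Step 1 (shell construction). Set $\delta = \varsigma^{1/2}$ and let $R'$ consist of those vertices of $R$ at graph distance at least $\delta N$ from $\partial R$; denote the shell $S = R \setminus R'$. By convexity of $R$ together with the area and diameter hypotheses, one has $|S| \le C_1(\varepsilon) \delta N^2$ while $R'$ is itself a convex domain of area at least $\varepsilon N^2 / 2$. The hypothesis $\max_{\partial R}|h_i - g| < \varsigma N$ and the choice $\delta N \ge \varsigma N$ ensure that an admissible extension of $h_i$ across $S$ exists, so that the decomposition below is nonempty.

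Step 2 (decomposition and internal concentration). For $k: \partial R' \to \mathbb{Z}$, let $M_i(k)$ denote the number of height functions on $S \cup \partial R'$ that restrict to $h_i$ on $\partial R$ and to $k$ on $\partial R'$. Then
\begin{flalign*}
|\mathfrak{G}(h_i)| = \sum_k M_i(k) \cdot |\mathfrak{G}_{R'}(k)|.
\end{flalign*}
The monotone coupling \Cref{monotoneheightcouple} applied between $h_i$ and the linear boundary data rounded to $\mathbb{Z}$ (together with the shift-invariance of $\mathfrak{G}$) yields $|\mathbb{E}[H(v)] - g(v)| \le \varsigma N$ for any $v \in \partial R'$, where $H$ is uniform on $\mathfrak{G}(h_i)$. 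The concentration inequality \Cref{hvuexpectation}, applied to each $v \in \partial R'$ with $M \le 2 \varepsilon^{-1} N$ and a union bound over the $O(N)$ such vertices, shows that the set $\mathcal{K}$ of $k$ satisfying $\max_{v \in \partial R'} |k(v) - g(v)| \le 2\varsigma N$ captures all but an $e^{-c_0 N}$ fraction of $|\mathfrak{G}(h_i)|$.

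Step 3 (comparing the shell counts). The heart of the argument is the two-sided estimate
\begin{flalign*}
e^{-C_2 \delta N^2 |\log \delta|} \le \frac{M_1(k)}{M_2(k)} \le e^{C_2 \delta N^2 |\log \delta|}, \qquad k \in \mathcal{K}.
\end{flalign*}
To obtain it, one first translates $h_1$ by an integer $c \in [0, 2\varsigma N]$ so that $h_1 + c \ge h_2$ pointwise on $\partial R$; on $\partial R'$ no shift of $k$ is needed since we are only counting. The monotone coupling \Cref{monotoneheightcouple}, applied to height functions on $S$ with prescribed values $h_1 + c$ versus $h_2$ on $\partial R$ and $k$ versus $k$ on $\partial R'$, injects the family counted by $M_2(k)$ into the family counted by a translated version of $M_1(k)$, up to a multiplicity equal to the number of integer ``sliding profiles'' across $S$ compatible with the shift by $c$. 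Since $|S| \le C_1 \delta N^2$ and each such profile is a Lipschitz integer function on $S$, this multiplicity is bounded by $3^{|S|} \le e^{C_2 \delta N^2}$, while accounting for the shift by $c$ contributes at most $O(\varsigma N^2) = O(\delta^2 N^2)$ to the log; the symmetric bound is proved identically.

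Step 4 (assembly). Summing over $k \in \mathcal{K}$ and dropping the negligible complement, the common factor $|\mathfrak{G}_{R'}(k)|$ cancels in the ratio, yielding
\begin{flalign*}
\big| \log |\mathfrak{G}(h_1)| - \log |\mathfrak{G}(h_2)| \big| \le C_2 \delta N^2 |\log \delta| + O(N) = \tfrac{C_2}{2} \varsigma^{1/2} N^2 |\log \varsigma| + O(N),
\end{flalign*}
and dividing by $N^2$ gives the claim. The main obstacle is Step 3: the exponent $\tfrac{1}{2}$ in $\varsigma^{1/2} |\log \varsigma|$ is not an artifact of crude counting but emerges from optimizing $\delta$. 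A thinner shell would tighten the per-face entropy bound $3^{|S|}$ but fail to absorb the $O(\varsigma N)$ discrepancy between $h_1$ and $h_2$ near $\partial R$ (one needs $\delta N \gtrsim \varsigma N$ simply to realize the extension), while a thicker shell loosens the comparison. The balance $\delta = \varsigma^{1/2}$ is what produces the stated rate, and implementing the monotone coupling inside the non-smooth shell geometry (where $\partial R$ is only convex, not regular) is the most delicate part.
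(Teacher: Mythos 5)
The paper cites this as Proposition 3.6 of \cite{VPDT} and provides no proof of its own (only a remark that the argument there extends to $\varsigma > N^{-1/4}$), so the comparison is against the original Cohn--Kenyon--Propp argument. Your proposal correctly identifies the overall shape of that argument: a shell of width $\Theta(\sqrt{\varsigma}\, N)$, decomposition of $|\mathfrak{G}(h_i)|$ according to the inner boundary data $k$ on $\partial R'$, and comparison of shell contributions with the interior counts $|\mathfrak{G}_{R'}(k)|$ canceling. However, two of your steps have genuine gaps as written.

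In Step 2, the claim $|\mathbb{E}[H(v)] - g(v)| \leq \varsigma N$ for $v \in \partial R'$ does not follow from \Cref{monotoneheightcouple}. That lemma compares the uniformly random height function with boundary $h_i$ to the uniformly random height function with boundary equal to the rounding of $g$; it does not assert that the latter stays within $O(\varsigma N)$ of the deterministic function $g$ in the interior, and indeed it does not. The correct concentration scale at distance $\delta N = \sqrt{\varsigma}\, N$ from $\partial R$ is $O(\delta N)$, not $O(\varsigma N)$: this follows from the extremal (minimum and maximum) height functions extending $h_i$, which deviate from $g$ by an amount proportional to the distance from $\partial R$ whenever the slope of $g$ is in the interior of $\overline{\mathcal{T}}$. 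So your concentration set $\mathcal{K}$ is an order of magnitude too small, and once it is corrected, the constant comparison between the concentration error and the shell width (both of order $\delta N$, with slope-dependent constants) becomes exactly the delicate point of the proof.

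In Step 3, the proposed injection ``from the family counted by $M_2(k)$ into the family counted by a translated version of $M_1(k)$'' via \Cref{monotoneheightcouple} is not a counting argument: that lemma produces a coupling of probability measures, not a bijection or injection between deterministic sets of height functions, and ``the number of integer sliding profiles across $S$ compatible with the shift by $c$'' is not a defined quantity. The two-sided ratio estimate you want is in fact trivial whenever both $M_1(k)$ and $M_2(k)$ are nonzero: each lies in $[1, 3^{2|S|}] \subseteq [1, e^{O(\delta N^2)}]$, so the ratio is automatically $e^{O(\delta N^2)}$, and the $|\log\delta|$ factor in your bound is unjustified and unneeded at that stage. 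The genuine content — which your proposal does not address — is that $M_1(k) > 0$ implies $M_2(k) > 0$ for the relevant $k$, i.e., that a shell extension with outer data $h_2$ and inner data $k$ exists whenever one with outer data $h_1$ does. This requires the Lipschitz slack across $S$ to absorb both the $O(\varsigma N)$ discrepancy between $h_1$ and $h_2$ on $\partial R$ and the $O(\delta N)$ deviation of $k$ from $g$ on $\partial R'$, and resolving it is where the actual work of the lemma lies.
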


	Next, observe that any lozenge tiling of the $N \times N$ torus $P_N$ gives rise to a free tiling $\mathscr{M}$ (recall the terminology from below \Cref{domainabc}) of the $N \times N$ square $S_N = [0, N] \times [0, N] \subset \mathbb{T}$, which is associated with a (unique) height function $H : \mathbb{V}(S_N) \rightarrow \mathbb{Z}$ such that $H(0, 0) = 0$. The following lemma, which is similar to Proposition 3.4 of \cite{VPDT}, provides a concentration estimate for this height function.

	\begin{lem}
		
	\label{heightestimatep}
	
	Fix an integer $N \in \mathbb{Z}_{\ge 1}$; a real number $\varsigma \in (0, 1)$; and a pair $(s, t) \in \overline{\mathcal{T}}$. Let $\mathscr{M} \in \mathfrak{E} (s, t; \varsigma; P_N)$ be sampled uniformly at random, and let $H: \mathbb{V}(S_N) \rightarrow \mathbb{Z}$ denote the height function on $S_N = [0, N] \times [0, N]$ associated with $\mathscr{M}$, such that $H(0, 0) = 0$. Then, for any $r \in \mathbb{R}_{> 0}$, we have  
	\begin{flalign*} 
	\displaystyle\max_{(x, y) \in [0, N]^2} \mathbb{P} \Big[ \big| H (x, y) - s x - ty \big| > 8 r \big( |x| + |y| \big)^{1 / 2} + 2 \varsigma N \Big] \le 2 e^{-2r^2}.
	\end{flalign*}

	\end{lem}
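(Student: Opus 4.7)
My plan is to split the deviation $H(x,y) - sx - ty$ into a deterministic bias $\mathbb{E}[H(x,y)] - sx - ty$ and a stochastic fluctuation $H(x,y) - \mathbb{E}[H(x,y)]$, controlling each separately and absorbing the bias into the $2\varsigma N$ term by the triangle inequality.

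To bound the bias, I would use translation invariance on the torus $P_N$ of the uniform distribution on $\mathfrak{E}(s,t;\varsigma;P_N)$. Setting $\bar s = \mathbb{E}[H(1,0)-H(0,0)]$ and $\bar t = \mathbb{E}[H(0,1)-H(0,0)]$, translation invariance of the measure yields $\mathbb{E}[H(x+1,y)-H(x,y)] = \bar s$ and $\mathbb{E}[H(x,y+1)-H(x,y)] = \bar t$ for every $(x,y)$, so telescoping along any monotone lattice path from $(0,0)$ to $(x,y)$ gives $\mathbb{E}[H(x,y)] = \bar s\, x + \bar t\, y$. To identify $\bar s$ and $\bar t$ with the prescribed slopes, I would sum $H(x+1,y)-H(x,y)$ over all $(x,y)\in [0,N)^2$ on the torus; by the height-function rule, this sum equals $\mathcal{N}_1(\mathscr{M})$. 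Taking expectations and invoking the defining inclusion $\mathscr{M}\in \mathfrak{E}(s,t;\varsigma;P_N)$ from \eqref{estomegapn} gives $N^2 \bar s = \mathbb{E}[\mathcal{N}_1(\mathscr{M})] \in [(s-\varsigma)N^2,(s+\varsigma)N^2]$, and hence $|\bar s - s|\le \varsigma$; an identical argument yields $|\bar t - t|\le \varsigma$. Combining, $|\mathbb{E}[H(x,y)] - sx - ty|\le \varsigma(|x|+|y|)\le 2\varsigma N$ on $[0,N]^2$.

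For the fluctuation, I would construct a Doob martingale and apply the Azuma-Hoeffding inequality, paralleling the proof of \Cref{walksexpectationnear}. Fix a monotone lattice path $(0,0)=v_0,v_1,\ldots,v_n=(x,y)$ in $S_N$ of length $n=|x|+|y|$, let $\mathcal{F}_i$ be the $\sigma$-algebra generated by $H(v_0),\ldots,H(v_i)$, and set $M_i = \mathbb{E}[H(x,y)\mid\mathcal{F}_i]$, so that $M_0=\mathbb{E}[H(x,y)]$ and $M_n=H(x,y)$. Granting the increment bound $|M_i-M_{i-1}|\le 4$ almost surely, Azuma-Hoeffding gives
\begin{flalign*}
\mathbb{P}\bigl[\,|H(x,y)-\mathbb{E}[H(x,y)]|>8r\sqrt{n}\,\bigr]\le 2\exp\!\Bigl(-\frac{(8r\sqrt n)^2}{2n\cdot 16}\Bigr)=2e^{-2r^2},
\end{flalign*}
which combined with the bias bound and the triangle inequality yields the stated estimate.

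The main obstacle is the increment bound $|M_i-M_{i-1}|\le 4$. I would establish it by a coupling argument analogous to the one used in the proof of \Cref{walksexpectationnear} and based on the monotonicity result \Cref{monotoneheightcouple}. Conditional on $\mathcal{F}_{i-1}$, the new datum $H(v_i)-H(v_{i-1})\in\{0,1\}$ takes at most two values; for the two possible values, I would use \Cref{monotoneheightcouple}, applied after fixing a large simply-connected neighborhood of the torus to bypass the absence of a global boundary, to produce a joint realization of the two continuations whose height functions differ pointwise by at most $2$. Consequently the two associated conditional expectations of $H(x,y)$ differ by at most $2$, and the standard martingale-increment decomposition
\begin{flalign*}
M_i-M_{i-1}=\bigl(H(v_i)-\mathbb{E}[H(v_i)\mid\mathcal{F}_{i-1}]\bigr)+\bigl(\mathbb{E}[H(x,y)-H(v_i)\mid\mathcal{F}_i]-\mathbb{E}[H(x,y)-H(v_i)\mid\mathcal{F}_{i-1}]\bigr)
\end{flalign*}
delivers $|M_i-M_{i-1}|\le 1+1+2=4$. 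The delicate point is handling the torus periodicity in the monotone coupling, which I would circumvent by first revealing the tiling on a large simply-connected region containing the path $v_0,\ldots,v_n$ and then applying \Cref{monotoneheightcouple} in that domain.
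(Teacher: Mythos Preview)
Your decomposition into bias plus fluctuation, and the treatment of the bias via translation invariance on $P_N$ (so that $\mathbb{E}[H(x,y)]=p_1x+p_2y$ with $|p_1-s|,|p_2-t|\le\varsigma$), matches the paper's argument exactly. For the fluctuation the paper does not rebuild the martingale; it simply invokes \Cref{hvuexpectation} and relegates to a footnote the assertion that the path-martingale proof of Proposition~22 of \cite{LSRT} extends to the uniform measure on $\mathfrak{E}(s,t;\varsigma;P_N)$. So at the level of strategy you are doing the same thing, only attempting to spell out what the paper leaves implicit.

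The place where your proposal does not go through is the increment bound. The workaround you describe---``first revealing the tiling on a large simply-connected region containing the path $v_0,\ldots,v_n$ and then applying \Cref{monotoneheightcouple} in that domain''---cannot be correct as written: if you reveal the tiling on a region containing the whole path you have already determined $H(x,y)$, so the martingale collapses. If instead you meant to reveal only the complement (or the boundary) of such a region, the conditional law inside is still not uniform on $\mathfrak{G}(\cdot)$, because the global tile-count window defining $\mathfrak{E}(s,t;\varsigma;P_N)$ couples the inside to the outside; hence \Cref{monotoneheightcouple} does not apply. The measure here is not Gibbs, and that is precisely the obstruction your sketch runs into. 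The paper does not resolve this in the text either---it merely asserts that the \cite{LSRT} bounded-difference step survives---so you are grappling with the same subtlety the paper hides in its footnote, but the specific coupling mechanism you propose does not handle it.
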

	
	\begin{proof}
		
		This bound will follow from a suitable application of the concentration estimate \Cref{hvuexpectation}, to which end we must first evaluate the expectation $\mathbb{E} \big[ H (x, y) \big] - \mathbb{E} \big[ H (0, 0) \big]$. So, for any $(x, y) \in \mathbb{V}(S_N)$, let $p_1 (x, y)$ and $p_2 (x, y)$ denote the probabilities that there is a type $1$ lozenge in $\mathscr{M}$ with center $\big( x + \frac{1}{2}, y \big)$ and that there is a type $2$ lozenge in $\mathscr{M}$ with center $\big( x, y + \frac{1}{2} \big)$, respectively. Then, $p_1 (x, y) = \mathbb{E} \big[ \mathbb{H} (x + 1, y) \big] - \mathbb{E} \big[ H(x, y) \big]$ and $p_2 (x, y) = \mathbb{E} \big[ H(x, y + 1)\big] - \mathbb{E} \big[ H(x, y) \big]$. 
		
		Next, since the uniform measure on $\mathfrak{E} (s, t; \varsigma; P_N)$ is translation-invariant, $p_1 = p_1 (x, y)$ and $p_2 = p_2 (x, y)$ are independent of $(x, y)$; so, $\mathbb{E} \big[ H (x, y) \big] - \mathbb{E} \big[ H (0, 0) \big] = p_1 x + p_2 y$, for any $(x, y) \in \mathbb{V}(S_N)$. Moreover, since $\mathscr{M} \in \mathfrak{E} (s, t; \varsigma; P_N)$, we have that $p_1 \in [s - \varsigma, s + \varsigma]$ and $p_2 \in [t - \varsigma, t + \varsigma]$, and thus $\big| \mathbb{E} \big[ H(x, y) \big] - sx - ty \big| \le \varsigma (x + y)$. Inserting this bound into \eqref{hxyh00m} then yields the lemma.\footnote{Although \Cref{hvuexpectation} was not stated to hold with respect to the uniform measure on $\mathfrak{E} (s, t; \varsigma; P_N)$, with $H(0, 0)$ conditioned to equal $0$, it can be quickly verified from the proof of Proposition 22 of \cite{LSRT} (very similar to that of \Cref{walksexpectationnear}) that it does.}
	\end{proof}

	Now we can establish \Cref{numbersigmat}. 
	
	\begin{proof}[Proof of \Cref{numbersigmat}]
		
		First observe that \eqref{ghestimate} and \eqref{ghestimatesum} are equivalent (if a larger value of $C$ is taken in \eqref{ghestimatesum}), due to \Cref{perturbfunctionboundary} and the fact that there are at most $(2 \varsigma N + 1)^{4N}$ possible boundary height functions $h: \partial T \rightarrow \mathbb{Z}$ for which $\max_{z \in \partial T} \big| h(z) - g(z) \big| < \varsigma N$. Thus, we only establish \eqref{ghestimatesum}. Again by \Cref{perturbfunctionboundary} (and the H\"{o}lder continuity of $\sigma$ stated in \Cref{concavesigmat}), we may assume that $g(0, 0), g(N, N) \in \mathbb{Z}$. As in the proof of Corollary 4.2 of \cite{VPDT}, we will prove \eqref{ghestimatesum} by comparing $\big| \mathfrak{G} (h) \big|$ to $\big| \mathfrak{E} (s, t; \omega; P_N) \big|$for suitable $\omega$. 
		
		To that end, define the two triangles $T^{(1)} = T_N^{(1)} = \big\{ (x, y) \in \mathbb{Z}_{\ge 0}^2: 0 \le x \le y \le N \big\} \subset \mathbb{T}$ and $T^{(2)} = T_N^{(2)} = \big\{ (x, y) \in \mathbb{Z}_{\ge 0}^2: 0 \le y \le x \le N \big\} \subset \mathbb{T}$. Then, fix an index $i \in \{ 1, 2 \}$; set $j = 3 - i = \{ 1, 2 \} \setminus \{ i \}$; and suppose that $h^{(i)}: \partial T^{(i)} \rightarrow \mathbb{Z}$ is a boundary height function satisfying $\max_{z \in \partial T^{(i)}} \big| h^{(i)} (z) - g(z) \big|< \varsigma N$. Then, the boundary height function $h^{(j)}: \partial T^{(j)} \rightarrow \mathbb{Z}$ defined by setting $h^{(j)} (x, y) = g(0, 0) + g (N, N) - h^{(i)} (N - x, N - y)$ for each $(x, y) \in \partial T^{(j)}$ also satisfies $\max_{z \in \partial T^{(j)}} \big| h^{(j)} (z) - g(z) \big|< \varsigma N$. Therefore,  
		\begin{flalign}
		\label{gt1gt2}
		\displaystyle\sum_{h^{(1)}} \Big| \mathfrak{G} \big( h^{(1)} \big) \Big| = \displaystyle\sum_{h^{(2)}} \Big| \mathfrak{G} \big( h^{(2)} \big) \Big|,
		\end{flalign}
		
		\noindent where, for each $i \in \{ 1, 2 \}$, $h^{(i)}: \partial T^{(i)} \rightarrow \mathbb{Z}$ is summed over all boundary height functions such that $\max_{z \in \partial T^{(i)}} \big| h^{(i)} (z) - g(z) \big| < \varsigma N$.
		
		Now let $\mathfrak{F} = \mathfrak{F} (s, t; \varsigma; N) \subset \mathfrak{E} (P_N)$ denote the subset of tilings $\mathscr{M}$ of $P_N$ such that 
		\begin{flalign*}
		\displaystyle\max_{x, y \in [0, N]} \big| H (x, y) - H(0, 0) - sx - ty \big| \le \displaystyle\frac{\varsigma N}{2},
		\end{flalign*}
		
		\noindent where $H: \mathbb{V}(S_N) \rightarrow \mathbb{Z}$ denotes the height function associated with $\mathscr{M}$ such that $H(0, 0) = 0$ and $S_N = [0, N] \times [0, N]$. Then for sufficiently large $N$ we have that 
		\begin{flalign*}
		\displaystyle\frac{\Big| \mathfrak{E} \big( s, t; \frac{\varsigma}{6}, P_N \big) \Big|}{2} \le |\mathfrak{F}| \le \big| \mathfrak{E} (s, t; \varsigma, P_N) \big|.
		\end{flalign*}
		
		\noindent Here, the first estimate follows from \Cref{heightestimatep} and the fact that $\varsigma > N^{- 1 / 75}$. The second follows from the definition \eqref{estomegapn} of $\mathfrak{E} (s, t; \omega; P_N)$ and the fact that $H (x + 1, y) - H(x, y) = 1$ (or $H (x, y + 1) - H(x, y) = 1$) if and only if $\big( x + \frac{1}{2}, y \big)$ is the center of a type $1$ lozenge (or $\big( x, y + \frac{1}{2} \big)$ is that of a type $2$ lozenge, respectively).
		
		Next, observe that any element of $\mathfrak{F}$ (after translating the associated height function, if necessary) gives rise to a pair of tilings $\mathscr{M}^{(1)}$ on $T^{(1)}$ and $\mathscr{M}^{(2)}$ on $T^{(2)}$, whose boundary height functions $h^{(i)}: \partial T^{(i)} \rightarrow \mathbb{Z}$ satisfy $\max_{z \in \partial T} \big| h^{(i)} (z) - g(z) \big| < \varsigma N$, for each $i \in \{ 1, 2 \}$. Thus,  
		\begin{flalign}
		\label{h1h2s}
		\displaystyle\sum_{h^{(1)}} \Big| \mathfrak{G} \big( h^{(1)} \big) \Big| \displaystyle\sum_{h^{(2)}} \Big| \mathfrak{G} \big( h^{(2)} \big) \Big| \ge |\mathfrak{F}| \ge \displaystyle\frac{\Big| \mathfrak{E} (s, t; \frac{\varsigma}{6}, P_N \big) \Big|}{2},
		\end{flalign}
		
		\noindent where $h^{(i)}: \partial T^{(i)} \rightarrow \mathbb{Z}$ is again summed over all functions for which $\max_{z \in \partial T} \big| h^{(i)} (z) - g(z) \big| < \varsigma N$, for each $i \in \{ 1, 2 \}$. Thus, \eqref{h1h2s}; \eqref{gt1gt2}; \Cref{numbersigma}; the fact that $\varsigma > N^{-1 / 75} > 6 N^{-1 / 70}$; and the existence of a constant $C > 1$ such that 
		\begin{flalign*}
		\displaystyle\max_{|s - s_0| \le \varsigma} \displaystyle\max_{|t - t_0| \le \varsigma} \sigma (s, t) - \sigma (s_0, t_0) \le C \varsigma^{1 / 2},
		\end{flalign*}
		
		\noindent holds for any $(s_0, t_0) \in \mathcal{T}$ and $\varsigma \in (0, 1)$ (recall \Cref{concavesigmat}) together imply the first bound in \eqref{ghestimatesum}. 
		
		To establish the second bound in \eqref{ghestimatesum}, fix some $\mathscr{M} \in \mathfrak{F}$. As mentioned above, $\mathscr{M}$ gives rise to a pair of tilings $\mathscr{M}^{(1)}$ on $T^{(1)}$ and $\mathscr{M}^{(2)}$ on $T^{(2)}$, whose boundary height functions $h^{(i)}: \partial T^{(i)} \rightarrow \mathbb{Z}$ satisfy $\max_{z \in \partial T} \big| h^{(i)} (z) - g(z) \big| < \varsigma N$, for each $i \in \{ 1, 2 \}$. For fixed such $h^{(1)}$ and $h^{(2)}$, any pair of tilings $\mathscr{M}^{(1)}$ and $\mathscr{M}^{(2)}$ on $T^{(1)}$ and $T^{(2)}$ with boundary height functions $h^{(1)}$ and $h^{(2)}$, respectively, gives rise to a unique tiling $\mathscr{M} \in \mathfrak{E} (s, t; \varsigma; P_N)$, and so 
		\begin{flalign}
		\label{gh1gh2stpn}
		\Big| \mathfrak{G} \big( h^{(1)} \big) \Big| \Big| \mathfrak{G} \big( h^{(2)} \big) \Big| \le \big| \mathfrak{E} (s, t; \varsigma; P_N) \big|. 
		\end{flalign}
		
		\noindent Therefore, the upper bound in \eqref{ghestimatesum} follows from \eqref{gh1gh2stpn}; \Cref{perturbfunctionboundary}; \eqref{gt1gt2}; the fact that there are at most $(2 \varsigma N + 1)^{4N}$ possibilities for each $h^{(i)}$ satisfying $\max_{z \in \partial T} \big| h^{(i)} (z) - g(z) \big| < \varsigma N$; \Cref{numbersigma}; the fact that $\varsigma > N^{- 1 / 75} > N^{-1 / 70}$; and the existence of a constant $C > 1$ such that 
		\begin{flalign*}
		\sigma (s_0, t_0) - \displaystyle\min_{|s - s_0| \le \varsigma} \displaystyle\min_{|t - t_0| \le \varsigma} \sigma (s, t) \le C \varsigma^{1 / 2},
		\end{flalign*}
		
		\noindent for any $(s_0, t_0) \in \mathcal{T}$ and $\varsigma \in (0, 1)$. This establishes \eqref{ghestimatesum} and therefore the proposition. 
	\end{proof}

\subsection{Triangulations With Approximately Linear Boundary Data}

\label{AlmostLinearEstimate}

In order to establish \Cref{heightapproximate}, we will estimate the number of tilings of $R$ whose height function approximates a given one $F: \mathbb{V}(R) \rightarrow \mathbb{Z}$. To that end, we first select a triangulation of $R$ so that $F$ is nearly linear on the boundaries of most triangles, and then apply \Cref{numbersigmat} to approximately count the number of tilings of each triangle. The existence of such a triangulation in \cite{VPDT} was a consequence of Rademacher's theorem. In this section we establish an effective variant of Rademacher's theorem that gives rise to explicit error estimates. 

To state this result, we first require the following definition that quantifies the approximate linearity of a function.

\begin{definition}
	
	\label{rlinear}
	
	Fix $a < b \in \mathbb{R}$ and $\varsigma \in \mathbb{R}_{> 0}$. A function $f: [a, b] \rightarrow \mathbb{R}$ is \textit{$\varsigma$-linear on $[a, b]$} if
	\begin{flalign*}
	\displaystyle\sup_{x \in [a, b]} \displaystyle\frac{\big|f(x) - f_{[a, b]} (x) \big|}{b - a} \le \varsigma, \quad \text{where} \quad f_{[a, b]} (x) = \displaystyle\frac{(b - x) f(a) + (x - a) f(b)}{b - a},
	\end{flalign*}
	
	\noindent is the linear function on $\mathbb{R}$ with $f_{[a, b]} (a) = f(a)$ and $f_{[a, b]} (b) = f(b)$. Moreover, if $T \subset \mathbb{R}^2$ is a triangle, then a function $f: \partial T \rightarrow \mathbb{R}$ is \textit{$\varsigma$-linear on $\partial T$} if is $\varsigma$-linear on each of the three sides of $\partial T$. 
	
\end{definition}

The following proposition now provides the effective variant of Rademacher's theorem that will be required for our purposes. 

\begin{prop}
	\label{twodimensionalapproximatelinear}
	
	There exists a constant $C > 1$ such that the following holds. Fix integers $n, v \in \mathbb{Z}_{\ge 1}$ with $C < v < 2v < n$, and set $N = 2^n$. Define the triangle $T_N = \big\{ (x, y) \in \mathbb{R}_{\ge 0}^2: 0 \le x \le y \le N  \big\} \subset \mathbb{R}^2$, and let $F: T_N \rightarrow \mathbb{R}$ denote an $1$-Lipschitz function. 
	
	Then, there exists an integer $m \in [v, 2v]$ with the following property. Set $M = 2^m$ and let $\{ \mathscr{F}_i \}_{i \in \mathcal{I}}$ denote the set of faces of $2^{n - m} \mathbb{T} = M^{-1} N \mathbb{T}$ that are entirely contained in $T_N$, for some index set $\mathcal{I}$. Then there exists a subset $\mathcal{J} \subseteq \mathcal{I}$, with $|\mathcal{J}| \ge |\mathcal{I}| - M^2 (\log M)^{- 1 / 10}$, such that $F$ is $(\log M)^{-1 / 10}$-linear on $\partial \mathscr{F}_j$, for each $j \in \mathcal{J}$.
\end{prop}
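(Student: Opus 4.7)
The plan is to prove Proposition 8.3 by an averaging (pigeonhole) argument over the $v$ candidate dyadic scales $m \in [v, 2v]$, combined with a Chebyshev bound that converts an integrated non-linearity estimate into a pointwise bound on most faces. The input is the $1$-Lipschitz hypothesis, which controls the Dirichlet energy of $F$ by $O(N^2)$ (weighted by area, this is $O(N^4)$), and the output will be quantified by the modest exponent $1/10$, leaving considerable slack.

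More concretely, for each $m \in [v, 2v]$ with $M = 2^m$ and $\ell = 2^{n-m} = N/M$, and each face $\mathscr{F}$ of $2^{n-m}\mathbb{T}$ contained in $T_N$, I will define a squared non-linearity
\begin{equation*}
\eta_m(\mathscr{F})^2 \;=\; \max_{[a,b]\subset \partial\mathscr{F}} \sup_{x \in [a,b]} \bigl| F(x) - F_{[a,b]}(x) \bigr|^2,
\end{equation*}
so that $\eta_m(\mathscr{F}) \le \varsigma\, \ell$ is precisely the condition that $F$ is $\varsigma$-linear on $\partial\mathscr{F}$ in the sense of Definition~8.2. The proof reduces to showing that some scale $m^\ast \in [v,2v]$ satisfies
\begin{equation*}
E_{m^\ast} \;:=\; \sum_{\mathscr{F}} \eta_{m^\ast}(\mathscr{F})^2 \;\le\; C\,N^2 \,v^{-1}\,\ell^2.
\end{equation*}

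To produce such an $m^\ast$, I first use a one-dimensional Poincar\'e inequality on each edge of length $\ell$, giving $\eta_m(\mathscr{F})^2 \le C\ell \int_{\partial\mathscr{F}} |\partial_\tau F - \overline{\partial_\tau F}|^2\, ds$. Summing over faces at a fixed scale $m$, and then over $m \in [v,2v]$, I invoke a dyadic near-orthogonality of oscillations of $\nabla F$ across scales (a discrete Littlewood–Paley / martingale-increment decomposition: the conditional expectations $\nabla F_m := \mathbb{E}[\nabla F \mid \mathcal{F}_m]$ satisfy $\sum_m \|\nabla F_{m+1} - \nabla F_m\|_{L^2}^2 \le \|\nabla F\|_{L^2}^2 \le CN^2$ by martingale orthogonality). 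Carefully translating edge oscillations into face-averaged gradient oscillations via a trace-type estimate yields
\begin{equation*}
\sum_{m=v}^{2v} \ell^{-2}\, E_m \;\le\; C\,N^2,
\end{equation*}
after which pigeonhole produces $m^\ast \in [v,2v]$ with $\ell^{-2} E_{m^\ast} \le C N^2/v$.

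With $m^\ast$ in hand, a face $\mathscr{F}$ is declared bad if $\eta_{m^\ast}(\mathscr{F}) > (\log M)^{-1/10} \ell$, and Chebyshev gives
\begin{equation*}
\#\{\text{bad faces}\} \;\le\; \frac{E_{m^\ast}}{(\log M)^{-1/5}\ell^2} \;\le\; \frac{C\,M^2 (\log M)^{1/5}}{v}.
\end{equation*}
Since $v \ge (\log M)/(2\log 2)$ (as $m^\ast \ge v$), the right side is $\le C' M^2 (\log M)^{-4/5}$, which is dominated by $M^2(\log M)^{-1/10}$ for $M$ large, giving the claim.

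The main obstacle is Step~2, the near-orthogonality across dyadic scales: non-linearity is naturally a one-dimensional quantity living on face boundaries, whereas the clean $L^2$-orthogonality is two-dimensional and lives in the interior. Bridging these requires either a direct martingale argument applied to $F$ restricted to one-dimensional sections (combined with a Fubini-type aggregation over horizontal/diagonal slices of $T_N$), or a trace-Poincar\'e inequality that relates edge oscillations to interior gradient oscillations at the correct scale. The technical care needed to sum these estimates without losing a factor of $v$ is where the $1$-Lipschitz hypothesis must be used sharply; everything else (Poincar\'e, pigeonhole, Chebyshev) is routine.
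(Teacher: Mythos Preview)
Your high-level strategy --- average over the $v$ candidate scales, use an $L^2$-type orthogonality of oscillations across dyadic scales, then apply Chebyshev --- is the right one, and indeed matches the paper. The one-dimensional alternative you mention at the end (restrict $F$ to lines in the three axis directions, run a martingale/second-difference argument on each line, then aggregate by Fubini) is exactly the route the paper takes: it proves a clean one-dimensional identity $\sum_k 2^{-k}\sum_i d_{i,k}^2 \le 1$ for the dyadic second differences $d_{i,k}$ of a $1$-Lipschitz function (Lemma~8.8), upgrades semilinearity to linearity (Lemma~8.7, Corollary~8.9), and then runs a three-layer pigeonhole over scales, shifts, and lines to locate a good $m$.

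Your primary route, by contrast, has a genuine gap. The claimed bound $\sum_m \ell^{-2} E_m \le C N^2$ is not justified and is dimensionally inconsistent with your final Chebyshev step: from $\ell^{-2}E_{m^\ast}\le C N^2/v$ you get $E_{m^\ast}/\ell^2 \le C N^2/v = C M^2\ell^2/v$, whereas the Chebyshev line silently uses $E_{m^\ast}/\ell^2 \le C M^2/v$ --- a discrepancy of $\ell^2$, which is enormous. The underlying issue is that the edges of faces at scale $m$ form a one-dimensional set of total length $\sim MN$, not a two-dimensional set of area $N^2$; the two-dimensional martingale increments $\nabla F_{m+1}-\nabla F_m$ live in the interior and are not directly comparable to the tangential-derivative oscillations on edges. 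No simple trace or Poincar\'e inequality bridges this without losing the factor you need. The paper avoids the problem entirely by never passing to two dimensions: the orthogonality is used line-by-line (where it is an exact identity), and the only two-dimensional content is a counting argument over the family of lines.
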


To establish \Cref{twodimensionalapproximatelinear}, we first introduce the following weaker version of $\varsigma$-linearity that will be a bit more convenient to analyze. 

\begin{definition} 
	
\label{semilineardefinition}

Fix $a, b \in \mathbb{R}$ with $a < b$. A function $f: [a, b] \rightarrow \mathbb{R}$ is \textit{$\varsigma$-semilinear on $[a, b]$} if
\begin{flalign*}
\displaystyle\frac{1}{b - a} \Bigg| f \bigg( \frac{a + b}{2} \bigg) - \bigg( \displaystyle\frac{f(a) + f(b)}{2} \bigg) \Bigg| \le \varsigma.
\end{flalign*}

\end{definition}

We next require a certain family of dyadic intervals. 

\begin{definition} 

\label{ikpqik} 

Fix $a, b \in \mathbb{R}$ with $a < b$. For any integers $i, k \in \mathbb{Z}_{\ge 0}$ such that $i \le 2^k$, set $p_{i, k} = p_{i, k; a, b} = a + i 2^{-k} (b- a)$. Further define the dyadic interval $Q_{i, k} = Q_{i, k; a, b} = [p_{i, k}, p_{i + 1, k}]$, for each $i \in [0, 2^k - 1]$.

\end{definition} 

Now we have the following lemma indicating that, if a $1$-Lipschitz function is $\varsigma$-semilinear on sufficiently many dyadic scales, then it is nearly $2 \varsigma$-linear.

\begin{lem} 
	
	\label{semilinearlinear}
	
	Let $m \in \mathbb{Z}_{\ge 0}$ denote an integer; $\varsigma \in \mathbb{R}_{> 0}$ denote a real number; and $f: [a, b] \rightarrow \mathbb{R}$ denote a $1$-Lipschitz function. If $f$ is $\varsigma$-semilinear on $Q_{i, k}$ for all integers $k \in [0, m]$ and $i \in [0, 2^k - 1]$, then $f$ is $(2 \varsigma + 2^{- m})$-linear on $[a, b]$. 
	
\end{lem}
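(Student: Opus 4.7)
The plan is to use a telescoping argument across dyadic scales. For each $k \in [0,m]$, let $L_k: [a,b] \to \mathbb{R}$ denote the piecewise linear function that interpolates $f$ at the dyadic points $\{p_{i,k}\}_{i=0}^{2^k}$ (using the notation of \Cref{ikpqik}). Observe that $L_0 = f_{[a,b]}$ is the overall linear interpolation from \Cref{rlinear}, and that for each $k$, the function $L_k$ is $1$-Lipschitz since its slope on each subinterval $Q_{i,k}$ equals $(b-a)^{-1} 2^k \bigl(f(p_{i+1,k}) - f(p_{i,k})\bigr)$, whose absolute value is bounded by $1$ because $f$ is.

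First I would bound $\sup_{x \in [a,b]}|L_{k+1}(x) - L_k(x)|$. The difference $L_{k+1} - L_k$ vanishes at every $p_{i,k}$ and is piecewise linear between them, so its supremum is attained at one of the midpoints $p_{2j+1, k+1}$, where it equals $\bigl| f(p_{2j+1,k+1}) - \tfrac{1}{2}\bigl(f(p_{j,k}) + f(p_{j+1,k})\bigr) \bigr|$. By the $\varsigma$-semilinearity hypothesis on $Q_{j,k}$ (whose length is $2^{-k}(b-a)$), this is at most $\varsigma \cdot 2^{-k}(b-a)$. Summing telescopically over $k \in [0, m-1]$ yields
\begin{flalign*}
\sup_{x \in [a,b]} \bigl| L_m(x) - L_0(x) \bigr| \le \sum_{k=0}^{m-1} \varsigma \cdot 2^{-k}(b-a) < 2\varsigma (b-a).
\end{flalign*}

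Next, I would estimate $|f(x) - L_m(x)|$ for arbitrary $x \in [a,b]$. Choose a dyadic point $p = p_{i,m}$ at distance at most $2^{-m-1}(b-a)$ from $x$. Since $L_m(p) = f(p)$, and both $f$ and $L_m$ are $1$-Lipschitz, the triangle inequality gives $|f(x) - L_m(x)| \le 2 \cdot 2^{-m-1}(b-a) = 2^{-m}(b-a)$.

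Combining the two estimates, $|f(x) - f_{[a,b]}(x)| \le |f(x) - L_m(x)| + |L_m(x) - L_0(x)| \le (2\varsigma + 2^{-m})(b-a)$ for every $x \in [a,b]$, which is exactly the $(2\varsigma + 2^{-m})$-linearity of $f$ on $[a,b]$ per \Cref{rlinear}. There is no real obstacle here; the only slightly delicate point is verifying that each $L_k$ is $1$-Lipschitz (so the error from discretization to an arbitrary point is controlled), which follows directly from the $1$-Lipschitz hypothesis on $f$.
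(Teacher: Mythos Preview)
Your proof is correct and follows essentially the same approach as the paper: both telescope across dyadic levels, using $\varsigma$-semilinearity to show the deviation from $f_{[a,b]}$ at level-$(k+1)$ dyadic points exceeds that at level-$k$ points by at most $\varsigma\cdot 2^{-k}(b-a)$, then sum the geometric series and invoke the $1$-Lipschitz property to pass from dyadic points to an arbitrary $x$. Your explicit introduction of the interpolants $L_k$ is a clean repackaging of the paper's induction on $\max_i|f(p_{i,k})-f_{[a,b]}(p_{i,k})|$, but the underlying mechanism is identical.
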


\begin{proof} 
	
	Observe that the function $f_{[a, b]} (x)$ from Definition \ref{rlinear} is $1$-Lipschitz since $f$ is $1$-Lipschitz. Let $x_0 \in [a, b]$ be such that $\big| f(x_0) - f_{[a, b]} (x_0) \big|$ is maximized; there exists some integer $h \in [0, 2^m - 1]$ such that $x_0 \in Q_{h, m}$. Setting $p = \frac{p_{h, m} + p_{h + 1, m}}{2} = p_{2h + 1, m + 1}$ (which is the midpoint of the interval $Q_{h, m}$), we deduce from the $1$-Lipschitz properties of $f$ and $f_{[a, b]}$ that 
	\begin{flalign}
	\label{fxfpij}
	\big| f(x_0) - f_{[a, b]} (x_0) \big| \le \big| f (p) - f_{[a, b]} (p) \big| + 2^{- m} (a - b). 
	\end{flalign}
	
	\noindent Since $f$ is $\varsigma$-semilinear on each $Q_{i, k}$, we obtain for each $k \in [0, m]$ and $i \in [0, 2^k - 1]$ that
	\begin{flalign*}
	\big| f (p_{2i + 1, k + 1}) - f_{[a, b]} (p_{2i + 1, k + 1}) \big| & \le \displaystyle\frac{1}{2} \Big( \big| f (p_{i, k}) - f_{[a, b]} (p_{i, k}) \big| + \big| f (p_{i + 1, k}) - f_{[a, b]} (p_{i + 1, k}) \big| \Big) \\
	& \qquad + \Bigg|  f (p_{2i + 1, k + 1}) - \bigg( \displaystyle\frac{f (p_{i, k}) + f (p_{i + 1, k})}{2} \bigg) \Bigg| \\
	& \le \displaystyle\max_{0 \le j \le 2^k - 1} \big| f (p_{j, k}) - f_{[a, b]} (p_{j, k}) \big|  + 2^{-k} (a - b) \varsigma,
	\end{flalign*}
	
	\noindent from which we deduce upon maximizing over $i$ that
	\begin{flalign*}
	\displaystyle\max_{0 \le i \le 2^{k + 1} - 1} \big| f(p_{i, k + 1}) - f_{[a, b]} (p_{i, k + 1}) \big| \le \displaystyle\max_{0 \le j \le 2^k - 1}  \big| f(p_{j, k}) - f_{[a, b]} (p_{j, k}) \big| + 2^{-k} (a - b) \varsigma.
	\end{flalign*}
	
	\noindent Summing over all $k \in [0, m]$ and using the fact that $p = p_{2h + 1, m + 1}$, we obtain that
	\begin{flalign*}
	\big| f (p) - f_{[a, b]} (p) \big| \le  (a - b) \varsigma \sum_{k = 0}^m 2^{-k} < 2 (a - b) \varsigma,
	\end{flalign*}
	
	\noindent which upon insertion into \eqref{fxfpij} yields the lemma. 
\end{proof}

Now, to establish \Cref{twodimensionalapproximatelinear}, we will restrict the function $F$ to lines parallel to the sides of $T_N$ and exhibit many small scales on which these $1$-Lipschitz, one-dimensional restrictions are approximately linear. In view of \Cref{semilinearlinear}, we would therefore like to show that one-dimensional Lipschitz functions are often semilinear on small scales. The following definition introduces sets on which a function $f$ is not linear or semilinear. 

\begin{definition}
	
	\label{notlinearsets} 
	
	Fix $a, b \in \mathbb{R}$ with $a < b$, and let $f: [a, b] \rightarrow \mathbb{R}$ denote a function. For any $k \in \mathbb{Z}_{\ge 1}$ and $\varsigma \in \mathbb{R}_{> 0}$, let $\mathcal{A} (\varsigma; k) = \mathcal{A}_f (\varsigma; k)$ denote the set of indices $i \in [0, 2^k - 1]$ for which $f$ is not $\varsigma$-linear on $Q_{i, k}$. Similarly, let $\mathcal{S} (\varsigma; k) = \mathcal{S}_f (\varsigma; k)$ denote the set of indices $i \in [0, 2^k - 1]$ for which that $f$ is not $\varsigma$-semilinear on $Q_{i, k}$. 
	
	Furthermore, for any $\varsigma, \vartheta \in \mathbb{R}_{> 0}$ and integers $n > m > 0$, let $\mathcal{X} (\varsigma; \vartheta; m, n) = \mathcal{X}_f (\varsigma, \vartheta; m, n)$ denote the set of indices $k \in [m, n - 1]$ for which $\big| \mathcal{A} (\varsigma; k) \big| \ge \vartheta 2^k$. Similarly, let $\mathcal{Y} (\varsigma; \vartheta; m, n) = \mathcal{Y}_f (\varsigma; \vartheta; m, n)$ denote the set of $k \in [m, n - 1]$ for which $\big| \mathcal{S} (\varsigma; k) \big| \ge \vartheta 2^k$.
	
\end{definition}

In particular, $\mathcal{X}_f$ (or $\mathcal{Y}_f$) denotes the set of scales in $k \in [m, n)$ for which $f$ is not $\varsigma$-linear (or $\varsigma$-semilinear, respectively) on a $\vartheta$-proportion of the dyadic intervals $Q_{i, k}$. In order to establish \Cref{twodimensionalapproximatelinear}, we will show that $|\mathcal{X}_f|$ is small for $1$-Lipschitz $f$. The following lemma bounds $|\mathcal{Y}_f|$; an estimate on $|\mathcal{X}_f|$ is then deduced as a corollary below.

\begin{lem}
	
	\label{ylambda} 
	
	If is $f$ is $1$-Lipschitz, then $\big| \mathcal{Y} (\varsigma, \vartheta; m, n) \big| \le (\varsigma^2 \vartheta)^{-1}$ for any integers $n > m > 0$. 
\end{lem}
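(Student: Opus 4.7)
The plan is to exploit an $H^1$-orthogonality between tent functions at different dyadic scales, which gives a Parseval-type identity that controls the sum of the semilinear defects uniformly in $k$.

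Concretely, I would first associate with $f$ a sequence of piecewise linear interpolants $L_k : [a,b] \to \mathbb{R}$, where $L_k$ agrees with $f$ at each dyadic point $p_{i,k}$ (for $0 \le i \le 2^k$) and is linear on every $Q_{i,k}$. On each $Q_{i,k}$, the difference $L_{k+1} - L_k$ is a ``tent'' vanishing at the endpoints and attaining at the midpoint $p_{2i+1,k+1}$ the value
\begin{equation*}
d_{i,k} \;=\; f(p_{2i+1,k+1}) - \tfrac{1}{2}\bigl(f(p_{i,k}) + f(p_{i+1,k})\bigr),
\end{equation*}
which is exactly the quantity appearing in \Cref{semilineardefinition}. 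Writing $\varphi_{i,k}$ for the normalized tent (height $1$) supported on $Q_{i,k}$, this gives $L_{k+1} - L_k = \sum_i d_{i,k} \varphi_{i,k}$.

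Next I would verify the key orthogonality: whenever $(i,k) \ne (j,\ell)$, the derivatives $\varphi_{i,k}'$ and $\varphi_{j,\ell}'$ are orthogonal in $L^2([a,b])$. For $k = \ell$ this is automatic since the supports are essentially disjoint, and for $k < \ell$ the support $Q_{j,\ell}$ sits inside some single $Q_{i',k}$, on which $\varphi_{i,k}'$ is constant while $\int \varphi_{j,\ell}' = 0$. One also checks that $L_m'$ (being piecewise constant on the $Q_{i,m}$) is orthogonal in $L^2$ to every $\varphi_{i,k}'$ with $k \ge m$. Thus $\|(L_n - L_m)'\|_{L^2}^2 = \sum_{k=m}^{n-1}\sum_i d_{i,k}^2 \|\varphi_{i,k}'\|_{L^2}^2$, and a direct calculation gives $\|\varphi_{i,k}'\|_{L^2}^2 = 4/\ell_k$, where $\ell_k = 2^{-k}(b-a)$.

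The $1$-Lipschitz hypothesis on $f$ forces $|L_n'|, |L_m'| \le 1$ a.e., hence $\|(L_n - L_m)'\|_{L^2}^2 \le 4(b-a)$. Combining with the Parseval identity above and using $\ell_k = 2^{-k}(b-a)$ yields
\begin{equation*}
\sum_{k=m}^{n-1} 2^k \sum_{i=0}^{2^k-1} d_{i,k}^2 \;\le\; (b-a)^2.
\end{equation*}
Finally, for $k \in \mathcal{Y}(\varsigma,\vartheta;m,n)$ there are at least $\vartheta 2^k$ indices $i$ with $|d_{i,k}| > \varsigma \ell_k$, so the inner sum is at least $\vartheta 2^k \cdot \varsigma^2 2^{-2k}(b-a)^2$, giving a contribution of at least $\varsigma^2 \vartheta (b-a)^2$ per such $k$. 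Comparing with the upper bound above yields $|\mathcal{Y}(\varsigma,\vartheta;m,n)| \le (\varsigma^2 \vartheta)^{-1}$, as desired. The only real content is the orthogonality of the Faber--Schauder-type tent system in $H^1$, which is standard; no serious obstacle is anticipated.
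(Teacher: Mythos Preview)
Your proposal is correct and is essentially the same argument as the paper's, just in different clothing: the paper defines $c_{i,k}$ to be the slope of your $L_k$ on $Q_{i,k}$ and proves the algebraic identity $2c_{i,k}^2 + 2d_{i,k}^2 = c_{2i,k+1}^2 + c_{2i+1,k+1}^2$, which summed over $i$ is exactly your Pythagorean relation $\|L_{k+1}'\|_{L^2}^2 = \|L_k'\|_{L^2}^2 + \|(L_{k+1}-L_k)'\|_{L^2}^2$ coming from the $H^1$-orthogonality of the tents. The paper then telescopes from $k=0$ (rather than $k=m$) and bounds $\sum_i c_{i,n}^2 \le 2^n$ using the Lipschitz condition, arriving at the same inequality and the same counting step you give at the end.
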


\begin{proof}
	
	By scaling, we may let $[a, b] = [0, 1]$. For each $k \in \mathbb{Z}_{\ge 1}$ and $i \in [0, 2^k - 1]$, denote 
	\begin{flalign*}
	c_{i, k} = 2^k \bigg( f \Big( \displaystyle\frac{i + 1}{2^k} \Big) - f \Big( \displaystyle\frac{i}{2^k} \Big) \bigg); \qquad d_{i, k} = 2^k \bigg( f \Big( \displaystyle\frac{i}{2^k} \Big) - 2 f \Big( \displaystyle\frac{2i + 1}{2^{k + 1}} \Big) + f \Big( \displaystyle\frac{i + 1}{2^k} \Big) \bigg).
	\end{flalign*}
	
	\noindent Then, $c_{2i + 1, k + 1} = c_{i, k}+ d_{i, k}$ and $c_{2i, k + 1} = c_{i, k} - d_{i, k}$, so that
	\begin{flalign}
	\label{aikcik}
	2 c_{i, k}^2 + 2 d_{i, k}^2 = c_{2i, k + 1}^2 + c_{2i + 1, k + 1}^2.
	\end{flalign}
	
	\noindent Repeated application of \eqref{aikcik} yields 
	\begin{flalign*}
	\displaystyle\sum_{k = 0}^{n - 1} 2^{n - k} \displaystyle\sum_{i = 0}^{2^k - 1} d_{i, k}^2 + 2^n c_{0, 0}^2 = \displaystyle\sum_{i = 0}^{2^n - 1} c_{i, n}^2 \le 2^n,
	\end{flalign*}
	
	\noindent where in the last inequality we used the fact that $c_{i, n} \le 1$ (since $f$ is $1$-Lipschitz). It follows that 
	\begin{flalign}
	\label{weightedsumbik}
	\displaystyle\sum_{k = 0}^{n - 1} 2^{-k} \displaystyle\sum_{i = 0}^{2^k - 1} d_{i, k}^2 \le 1. 
	\end{flalign}
	
	 Now, for each $k \in \mathbb{Z}_{\ge 1}$ and $\varsigma \in \mathbb{R}_{> 0}$, let $s_k = 2^{-k} \big| \mathcal{S}_f (\varsigma; k) \big|$. Then, since $d_{i, k} \ge \varsigma$ whenever $i \in \mathcal{S}_f (\varsigma, k)$, \eqref{weightedsumbik} implies that $\sum_{k = m}^{n - 1} s_k \le \sum_{k = 0}^{n - 1} s_k \le \varsigma^{-2}$. Hence, there are at most $(\varsigma^2 \vartheta)^{-1}$ indices $k \in [m, n - 1]$ for which $s_k \ge \vartheta$, and so we deduce that $\big| \mathcal{Y} (\varsigma, \vartheta; m, n) \big| \le (\varsigma^2 \vartheta)^{-1}$. 
\end{proof}

\begin{cor}
	
	\label{onedimensionallinearscales}
	
	If $f$ is $1$-Lipschitz, then $\big| \mathcal{X} (\varsigma; \vartheta; m, n ) \big| \le 1944 (\varsigma^5 \vartheta)^{-1}$ for any integers $n > m > 0$. 
	
\end{cor}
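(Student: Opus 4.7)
The plan is to deduce the corollary by reducing linearity failures at a given scale to semilinearity failures at nearby (smaller) dyadic scales, then applying the already-established bound of Lemma~\ref{ylambda} to $|\mathcal{Y}_f|$. The bridge between linearity and semilinearity is exactly Lemma~\ref{semilinearlinear}: if $f$ is $\varsigma'$-semilinear on every dyadic sub-interval of $[a,b]$ up to depth $p$, then $f$ is $(2\varsigma' + 2^{-p})$-linear on $[a,b]$. Taking the contrapositive applied not to $[a,b]$ but to the sub-interval $Q_{i,k}$ (which is permissible since both the $\varsigma$-linear and $\varsigma$-semilinear notions are normalized by interval length, hence scale-invariant), I would set $\varsigma' = \varsigma/4$ and $p = \lceil \log_2(2/\varsigma)\rceil$, so that $2\varsigma' + 2^{-p} \le \varsigma$.

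With these parameters, if $i \in \mathcal{A}_f(\varsigma; k)$, then there must exist $l \in [0,p]$ and $j \in [0, 2^l -1]$ such that $f$ fails to be $(\varsigma/4)$-semilinear on the dyadic sub-interval $Q_{j,l;\, p_{i,k},\, p_{i+1,k}}$ of $Q_{i,k}$, which in the original indexing is $Q_{i \cdot 2^l + j,\, k+l}$. In particular $i\cdot 2^l + j \in \mathcal{S}_f(\varsigma/4; k+l)$. Since distinct $i \in \mathcal{A}_f(\varsigma; k)$ give rise to indices lying in disjoint dyadic sub-intervals (as the $Q_{i,k}$'s themselves are disjoint), the number of $i$'s is bounded by the number of bad semilinearity indices at the lower scales, yielding
\begin{equation*}
|\mathcal{A}_f(\varsigma; k)| \;\le\; \sum_{l=0}^{p} \big|\mathcal{S}_f(\varsigma/4;\, k+l)\big|.
\end{equation*}

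Next I would combine this estimate with the $\ell^2$-type inequality extracted from the proof of Lemma~\ref{ylambda}, namely $\sum_{k\ge 0} 2^{-k} |\mathcal{S}_f(\varsigma'; k)| \le (2\varsigma')^{-2}$ (which follows because each bad index contributes at least $(2\varsigma')^2$ to the telescoping identity \eqref{weightedsumbik}). Specifically, since $k \in \mathcal{X}_f(\varsigma, \vartheta; m, n)$ requires $|\mathcal{A}_f(\varsigma; k)| \ge \vartheta 2^k$, summing $\mathbf{1}_{k\in \mathcal{X}_f}$ over $k \in [m, n-1]$ and substituting the display above gives
\begin{equation*}
|\mathcal{X}_f(\varsigma, \vartheta; m, n)| \;\le\; \frac{1}{\vartheta} \sum_{l=0}^{p} \sum_{k = m}^{n-1} 2^{-k}\,|\mathcal{S}_f(\varsigma/4;\, k+l)| \;=\; \frac{1}{\vartheta}\sum_{l=0}^{p} 2^{l} \sum_{k' \ge m+l} 2^{-k'}\,|\mathcal{S}_f(\varsigma/4; k')|,
\end{equation*}
after reindexing $k' = k+l$. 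The inner sum is at most $4/\varsigma^2$, so the whole expression is at most $\vartheta^{-1}\varsigma^{-2}\cdot 4(2^{p+1}-1) \le 32/(\varsigma^{3}\vartheta)$ upon inserting $2^{p} \le 4/\varsigma$. Since one may assume $\varsigma < 1$ (for $\varsigma \ge 2$ every $1$-Lipschitz $f$ is automatically $\varsigma$-linear on every interval so $\mathcal{X}_f = \emptyset$, and the intermediate range is absorbed into the constant), this is comfortably below $1944(\varsigma^5\vartheta)^{-1}$.

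There is no serious obstacle here: the only thing that needs genuine care is bookkeeping on the dyadic indexing when we pass from a sub-interval $Q_{i,k}$ to its own dyadic sub-sub-intervals, together with verifying that the normalized notions of $\varsigma$-linear and $\varsigma$-semilinear both transform correctly under this restriction (so that Lemma~\ref{semilinearlinear}, stated for $[a,b]$, applies verbatim to $Q_{i,k}$). The quantitative slack between the $\varsigma^{-3}$ bound this argument produces and the stated $\varsigma^{-5}$ bound means no extra optimization is required.
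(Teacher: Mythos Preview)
Your proof is correct and in fact sharper than the paper's. Both arguments begin the same way: apply Lemma~\ref{semilinearlinear} (in contrapositive) to each bad interval $Q_{i,k}$ to obtain $|\mathcal{A}(\varsigma;k)| \le \sum_{l=0}^{p} |\mathcal{S}(\varsigma';k+l)|$ with $p \asymp \log_2(1/\varsigma)$. The difference is in how this inequality is exploited. The paper, for each $k\in\mathcal{X}$, pigeonholes to find a single scale $j\in[k,k+p]$ with $|\mathcal{S}(\varsigma/3;j)| \gtrsim \varsigma^2\vartheta\, 2^{j}$, hence $j\in\mathcal{Y}(\varsigma/3;\,\varsigma^2\vartheta/36)$, and then invokes Lemma~\ref{ylambda} as a black box; the two factors of $p+1$ (from the pigeonhole and from the multiplicity of $k$'s mapping to a given $j$) and the reduced $\vartheta$-parameter produce the $\varsigma^{-5}$. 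You instead go directly to the weighted $\ell^1$ bound $\sum_{k\ge 0} 2^{-k}|\mathcal{S}(\varsigma';k)| \le (2\varsigma')^{-2}$ hidden inside the proof of Lemma~\ref{ylambda}, divide the displayed inequality by $\vartheta 2^{k}$, and sum over $k$; the geometric sum in $l$ costs only $2^{p+1}\asymp \varsigma^{-1}$, giving $O(\varsigma^{-3}\vartheta^{-1})$. Your route is both cleaner and two powers of $\varsigma$ stronger; the paper's version suffices for its purposes since the downstream application (Proposition~\ref{twodimensionalapproximatelinear}) only needs a polynomial bound in $\varsigma^{-1}$.
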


\begin{proof}
	
	We will proceed by first comparing $\big| \mathcal{X} (\varsigma; \vartheta; m, n) \big|$ to $\big| \mathcal{Y} (\varsigma; \vartheta; m, n) \big|$ (with slightly altered parameters), and then applying \Cref{ylambda}. Since the $1$-Lipschitz property of $f$ implies that it is $1$-linear on $[a, b]$, we may assume $\varsigma < 1$. Then, let $r \in \mathbb{Z}_{> 1}$ denote the integer for which $2^{-r} \le \frac{\varsigma}{3} < 2^{1 - r}$. 
	
	Now, suppose that $i \in \mathcal{A} (\varsigma; k)$, for some $k \in \mathbb{Z}_{\ge 1}$. Then, Lemma \ref{semilinearlinear} yields the existence of indices $j = j(i) \in [k, k + r]$ and $h = h(i) \in \big[ i 2^{j - k}, (i + 1) 2^{j - k} - 1 \big]$ such that $h \in \mathcal{S}_f \big( \frac{\varsigma}{3}, j \big)$; since the pair $(h, j)$ determines $i$, it follows that $\big| \mathcal{A} (\varsigma; k) \big| \le \sum_{j = k}^{k + r} \big| \mathcal{S} \big( \frac{\varsigma}{3}; j \big) \big|$. Thus, if $k \in \mathcal{X} (\varsigma; \vartheta; m, n)$, then 
	\begin{flalign*}
	\displaystyle\sum_{j = k}^{k + r} \Bigg|  \mathcal{S} \bigg( \frac{\varsigma}{3}; j \bigg) \Bigg| \ge \vartheta 2^k.
	\end{flalign*}
	
	\noindent This implies the existence of an index $j \in [k, k + r]$ for which 
	\begin{flalign}
	\label{sestimate}
	\Bigg| \mathcal{S} \bigg( \frac{\varsigma}{3}; j \bigg) \Bigg| \ge \displaystyle\frac{\vartheta 2^k}{r + 1} \ge \displaystyle\frac{\vartheta 2^{j - r}}{r + 1} \ge \displaystyle\frac{\varsigma^2 \vartheta 2^j}{36},
	\end{flalign}
	
	\noindent where in the third inequality we used the fact that $r + 1 \le 2^r \le 6 \varsigma^{-1}$.
	
	Thus, with any $k \in \mathcal{X} (\varsigma; \vartheta; m, n)$ we can associate an index $j = j(k) \in \mathcal{Y} \big( \frac{\varsigma}{3}; \frac{\varsigma^2 \vartheta}{36}; m, n + r \big)$ such that $k \le j \le k + r$. Since there are at most $r + 1$ such $j$ for any fixed $k$, we deduce that 
	\begin{flalign*}
	\big| \mathcal{X} (\varsigma; \vartheta; m, n) \big| \le (r + 1) \left| \mathcal{Y} \Big( \frac{\varsigma}{3}; \displaystyle\frac{\varsigma^2 \vartheta}{36}; m, n + r \Big) \right| \le (r + 1) \left( \displaystyle\frac{\varsigma^4 \vartheta}{324} \right)^{-1} \le 1944 (\varsigma^5 \vartheta)^{-1},
	\end{flalign*}
	
	\noindent where in the second inequality we used Lemma \ref{ylambda} to estimate $|\mathcal{Y}|$, and in the third we again used the fact that $r + 1 \le 2^r \le 6 \varsigma^{-1}$. This implies the lemma.
\end{proof}

Now we can establish \Cref{twodimensionalapproximatelinear}. 

\begin{proof}[Proof of \Cref{twodimensionalapproximatelinear}]
	
	Extend $F$ to an $1$-Lipschitz function on $\mathbb{R}^2$. Then, for any line $\ell \subset \mathbb{R}^2$ parallel to one of the three lines $\{ y = 0 \}$, $\{ x = 0 \}$, or $\{ y = x + 1 \}$, we define a function $f_{\ell}$ (which will essentially be the restriction of $F$ to $\ell$) as follows. First, if $\ell = \{ y = \eta \}$ for some $\eta \in \mathbb{R}$, then define $f_{\ell}: [0, N] \rightarrow \mathbb{R}$ by setting $f_{\ell} (z) = F (z, \eta)$ for each $z \in [0, N]$. Second, if $\ell = \{ x = \xi \}$ for some $\xi \in \mathbb{R}$, then define $f_{\ell}: [0, N] \rightarrow \mathbb{R}$ by setting $f_{\ell} (z) = F (\xi, z)$. Third, if $\ell = \{ y = x + \zeta \}$ for some $\zeta \in \mathbb{R}$, then define $f_{\ell}: [0, N \sqrt{2}] \rightarrow \mathbb{R}$ by setting $f_{\ell} (z) = F \big( \frac{z}{\sqrt{2}}, \zeta + \frac{z}{\sqrt{2}} \big)$. In either of these three cases, define the sets $\mathcal{X}_{\ell} = \mathcal{X}_{f_{\ell}} \big( v^{-1 / 9}; v^{-1 / 9}; v, 2v \big)$ and $\mathcal{A}_{\ell} (k) = \mathcal{A}_{f_{\ell}} \big( v^{-1 / 9}; k \big)$, for any $k \in [v, 2v)$, where we recall $\mathcal{X}$ and $\mathcal{A}$ from \Cref{notlinearsets}.
	
	Next, for each integer $j \in [0, 2^{2v}]$, define the lines 
	\begin{flalign*} 
	\ell_j^{(1)} = \{ y = j 2^{n - 2v} \}; \qquad \ell_j^{(2)} = \{ x = j 2^{n - 2v} \}; \qquad \ell_j^{(3)} = \{ y = x + j 2^{n - 2v} \}.
	\end{flalign*} 
	
	\noindent Then, for any fixed integer $k \in [v, 2v)$, all edges of $T_N \cap 2^{n - k} \mathbb{T}$ are among the $Q_{i, k}$-subintervals of the lines in the union
	\begin{flalign*} 
	\bigcup_{h = 0}^{2^k} \big( \ell_{2^{2v - k} h}^{(1)}  \cup \ell_{2^{2v - k} h}^{(2)} \cup \ell_{2^{2v - k} h}^{(3)} \big). 	
	\end{flalign*}

	Now set $\mathcal{X}_j^{(i)} = \mathcal{X}_{\ell_j^{(i)}}$ for each $i \in \{ 1, 2, 3\}$ and $j \in [0, 2^{2v}]$. Let us show that there exist an integer $m \in [v, 2v)$ and ``small'' integers $w_1, w_2, w_3 \in [0, 2^{2v - m}]$ such that
	\begin{flalign}
	\label{x1x2x3m} 
	m \notin \mathcal{X}_{2^{2v - m} h + w_1}^{(1)} \cup \mathcal{X}_{2^{2v - m} h + w_2}^{(2)} \cup \mathcal{X}_{2^{2v - m} h + w_3}^{(3)},
	\end{flalign} 
	
	\noindent holds for ``most'' $h \in [0, 2^m)$. 
	
	To that end, for any integers $i \in \{1, 2, 3 \}$; $k \in [v, 2v)$; and $j \in [0, 2^{2v - k})$, define the sets 
	\begin{flalign}
	\label{wjz}
	\begin{aligned} 
	\mathcal{W}_i (j; k) & = \Big\{ h \in [0, 2^k): k \in \mathcal{X}_{2^{2v - k} h + j}^{(i)} \Big\}; \quad  \mathfrak{J}_i (k) = \Big\{ j \in [0, 2^{2v - k}): \big| \mathcal{W}_i (j; k) \big| \ge v^{-1 / 9} 2^k \Big\}; \\
	& \qquad \qquad \qquad \qquad \quad \mathcal{Z}_i = \Big\{ k \in [v, 2v):  \big| \mathfrak{J}_i (k) \big| \ge v^{-1 / 9} 2^{2v - k} \Big\}. 
	\end{aligned} 
	\end{flalign} 
	
	We claim that $\mathcal{Z}_1 \cup \mathcal{Z}_2  \cup \mathcal{Z}_3 \ne [v, 2v)$; the integer $m$ in \eqref{x1x2x3m} will then be an element of $[v, 2v) \setminus (\mathcal{Z}_1 \cup \mathcal{Z}_2 \cup \mathcal{Z}_3)$. To that end, first observe that since $g_{\ell}$ is $1$-Lipschitz for any line $\ell$, setting $\varsigma = v^{-1 / 9} = \vartheta$ in \Cref{onedimensionallinearscales} and summing yields 
	\begin{flalign}
	\label{xsum}
	\displaystyle\sum_{i \in \{ 1, 2, 3 \}} \displaystyle\sum_{j = 0}^{2^{2v}} \big| \mathcal{X}_j^{(i)} \big| \le 5832 v^{2 / 3} (2^{2v} + 1). 
	\end{flalign}
	
	Then \eqref{wjz} implies that $| \mathcal{Z}_1| + | \mathcal{Z}_2 | + | \mathcal{Z}_3| \le 11664 v^{8 / 9} < v - 1$, for sufficiently large $v$. Indeed, there would otherwise exist at least $11664 v^{2 / 3} 2^{2v} > 5832 v^{2 /3 } (2^{2v} + 1)$ integer quadruples $(i, h, j, k) \in \{ 1, 2 , 3 \} \times [0, 2^k) \times [0, 2^{2v - k}) \times [v, 2v)$ such $k \in \mathcal{X}_{2^{2v - k} h + j}^{(i)}$, which would contradict \eqref{xsum}. So, for sufficiently large $v$ there exists an integer $m \in [v, 2v) \setminus (\mathcal{Z}_1 \cup \mathcal{Z}_2 \cup \mathcal{Z}_3)$, which therefore satisfies $\big| \mathfrak{J}_1 (m) \big| + \big| \mathfrak{J}_2 (m) \big| + \big| \mathfrak{J}_3 (m) \big| < 3 v^{-1 / 9} 2^{2v - m}$. So, if we define the interval $\mathcal{I}$ and sets $\mathcal{K}_i$ by
	\begin{flalign*}
	 \mathcal{I} = \big[ 0, 4 v^{-1 / 9} 2^{2v - m} \big] \cap \mathbb{Z}; \qquad \mathcal{K}_i = \mathcal{I} \setminus \big( \mathfrak{J}_i (m) \cap \mathcal{I} \big), \quad \text{for each $i \in \{ 1, 2 , 3 \}$},
	\end{flalign*}
	
	\noindent then there exist $w_1 \in \mathcal{K}_1$, $w_2 \in \mathcal{K}_2$, and $w_3 \in \mathcal{K}_3$. For this choice of $(w_1, w_2, w_3)$ and $m$, each $w_i \notin \mathfrak{J}_i (m)$, and so \eqref{x1x2x3m} holds for all but at most $3 v^{-1 / 9} 2^m$ indices $h \in [0, 2^m)$. 
	
	Next, set $\mathcal{W}_i = \mathcal{W}_i (w_i; m)$ for each $i \in \{ 1, 2, 3 \}$ and denote $\mathcal{W} = \mathcal{W}_1 \cup \mathcal{W}_2 \cup \mathcal{W}_3$. Then, \eqref{x1x2x3m} implies that $\big| \mathcal{A}_{\ell_{2^{2v - m} h + w_i}}^{(i)} (m) \big| \le v^{-1 / 9} 2^m$ holds, for any $i \in \{ 1, 2, 3 \}$ and $h \in [0, 2^m) \setminus \mathcal{W}$. This and the fact that $|\mathcal{W}| \le 3 v^{-1 / 9} 2^m$ together imply that, for all but $12 v^{-1 / 9} 2^{2m} = 12 v^{-1 / 9} M^2$ pairs $(h_1, h_2) \in [0, 2^m) \times [0, 2^m)$, $F$ is $v^{-1 / 9}$-linear on all three intervals 
	\begin{flalign}
	\label{intervalsf}
	\begin{aligned}
	&  \Big[ (2^{n - m} h_1 +  2^{n - 2v} w_1, 2^{n - m} h_2), \big(2^{n - m} h_1 + 2^{n - 2v} w_1, 2^{n - m} (h_2 + 1) \big) \Big]; \\
	&  \Big[  \big( 2^{n - m} h_1, 2^{n - m} (h_2 + 1) + 2^{n - 2v} w_2 \big), \big(2^{n - m} (h_1 + 1), 2^{n - m} (h_2 + 1) +  2^{n - 2v} w_2 \big)  \Big]; \\ 
	&  \Big[ (2^{n - m} h_1, 2^{n - m} h_2 + 2^{n - 2v} w_3), \big(2^{n - m} (h_1 + 1), 2^{n - m} (h_2 + 1) + 2^{n - 2v} w_3 \big) \Big]. \\
	\end{aligned} 
	\end{flalign} 
	
	\noindent We refer to \Cref{triangleslines} for a depiction of these intervals. 
	
	Now, observe that if $\varsigma, \rho \in \mathbb{R}_{> 0}$ are real numbers; $I = [a, b] \subset \mathbb{R}$ is some finite interval; $G: I \rightarrow \mathbb{R}$ is a $\varsigma$-linear function; and $H: I \rightarrow \mathbb{R}$ is a function satisfying $\sup_{z \in I} \big| G(z) - H (z) \big| \le \rho (b - a)$, then $H$ is $(\varsigma + 2 \rho)$-linear on $I$.  Combining this with the facts that $F$ is $1$-Lipschitz; that $F$ is $v^{-1 / 9}$-linear on the three intervals \eqref{intervalsf} for all but $12 v^{-1 / 9} M^2$ pairs $(h_1, h_2)$; and that $\max \big\{ |w_1|, |w_2|, |w_3| \big\} \le 4 v^{-1 / 9} 2^{2v - m}$, we deduce that $F$ is $9 v^{-1 / 9}$-linear on all three intervals 
	 \begin{flalign}
	 \label{t1boundary} 
	 \begin{aligned}	
	&  \Big[ (2^{n - m} h_1, 2^{n - m} h_2), \big(2^{n - m} h_1, 2^{n - m} (h_2 + 1) \big) \Big]; \\
	&  \Big[  \big( 2^{n - m} h_1, 2^{n - m} (h_2 + 1) \big), \big(2^{n - m} (h_1 + 1), 2^{n - m} (h_2 + 1) \big)  \Big]; \\ 
	&  \Big[ (2^{n - m} h_1, 2^{n - m} h_2), \big(2^{n - m} (h_1 + 1), 2^{n - m} (h_2 + 1) \big) \Big],
	\end{aligned} 
	\end{flalign}  
	
	\noindent for all but $12 v^{-1 / 9} M^2$ pairs $(h_1, h_2) \in [0, 2^m) \times [0, 2^m)$. Applying similar reasoning to the intervals 
	\begin{flalign} 
	\label{t2boundary} 
	\begin{aligned} 
	&  \Big[ (2^{n - m} h_1, 2^{n - m} h_2), \big(2^{n - m} (h_1 + 1), 2^{n - m} h_2 \big) \Big]; \\
	&  \Big[  \big( 2^{n - m} (h_1 + 1), 2^{n - m} h_2 \big), \big(2^{n - m} (h_1 + 1), 2^{n - m} (h_2 + 1) \big)  \Big]; \\ 
	&  \Big[ (2^{n - m} h_1, 2^{n - m} h_2), \big(2^{n - m} (h_1 + 1), 2^{n - m} (h_2 + 1) \big) \Big],
	\end{aligned} 
	\end{flalign} 
	
	\noindent and observing that the family of triples of intervals of types \eqref{t1boundary} and \eqref{t2boundary} include all boundaries of the faces of $T_N \cap 2^{n - m} \mathbb{T}$, and recalling that $2^m = M$ and $v \le m \le 2v$, we deduce for sufficiently large $v$ that there are at most $24 v^{-1 / 9} M^2 \le (\log M)^{-1 / 10} M^2$ faces $\mathscr{F}$ of $T_N \cap 2^{n - m} \mathbb{T}$ such that $F$ is not $(\log M)^{-1 / 10}$-linear on $\partial \mathscr{F}$. 
\end{proof}

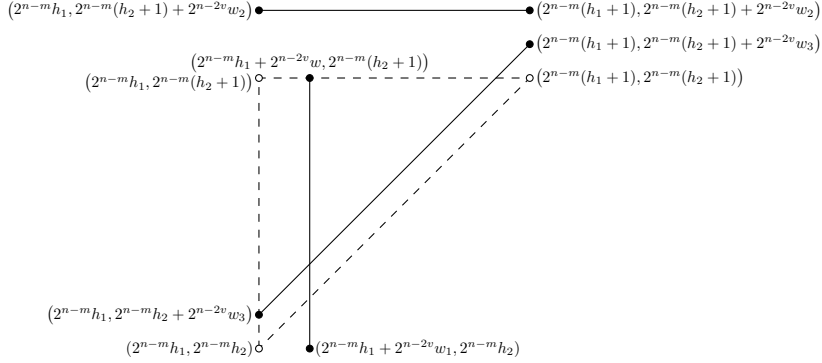
\begin{figure}

	\begin{center}

		\begin{tikzpicture}[
		>=stealth,
		auto,
		style={
			scale = .9
		}
		]

		\draw[-, black] (2.75, 0) -- (2.75, 4); 
		\draw[-, black] (2, 5) -- (6, 5);
		\draw[-, black] (2, .5) -- (6, 4.5);
		
		\draw[-, dashed, black] (2, 0) -- (2, 4) -- (6, 4) -- (2, 0);
		
		\filldraw[fill=white] (2, 0) circle[radius = .05] node[left, scale = .6]{$(2^{n - m} h_1, 2^{n - m} h_2)$};
		\filldraw[fill=white] (2, 4) circle[radius = .05] node[below = 2, left = 0, scale = .6]{$\big( 2^{n - m} h_1, 2^{n - m} (h_2 + 1) \big)$};
		\filldraw[fill=white] (6, 4) circle[radius = .05] node[right, scale = .6]{$\big( 2^{n - m} (h_1 + 1), 2^{n - m} (h_2 + 1) \big)$};

		\filldraw[fill=black] (2.75, 0) circle[radius = .05] node[right, scale = .6]{$( 2^{n - m} h_1 + 2^{n - 2v} w_1, 2^{n - m} h_2)$};
		\filldraw[fill=black] (2.75, 4) circle[radius = .05] node[above, scale = .6]{$\big( 2^{n - m} h_1 + 2^{n - 2v} w, 2^{n - m} (h_2 + 1) \big)$};
		\filldraw[fill=black] (2, 5) circle[radius = .05] node[left, scale = .6]{$\big( 2^{n - m} h_1, 2^{n - m} (h_2 + 1) + 2^{n - 2v} w_2 \big)$};
		\filldraw[fill=black] (6, 5) circle[radius = .05] node[right, scale = .6]{$\big( 2^{n - m} (h_1 + 1), 2^{n - m} (h_2 + 1) + 2^{n - 2v} w_2 \big)$};
		\filldraw[fill=black] (2, .5) circle[radius = .05] node[left, scale = .6]{$\big( 2^{n - m} h_1, 2^{n - m} h_2 + 2^{n - 2v} w_3 \big)$};
		\filldraw[fill=black] (6, 4.5) circle[radius = .05] node[right, scale = .6]{$\big( 2^{n - m} (h_1 + 1), 2^{n - m} (h_2 + 1) + 2^{n - 2v} w_3 \big)$};

		\end{tikzpicture}
		
	\end{center}

	\caption{\label{triangleslines}  The solid and dashed intervals above are those listed in \eqref{intervalsf} and \eqref{t1boundary}, respectively. The dashed ones form a face of $2^{n - m} \mathbb{T}$. }

\end{figure}

\subsection{Proof of \Cref{heightapproximate}}

\label{ProofApproximateGlobal}

In this section we establish \Cref{heightapproximate} by estimating the number of tilings of $R$ whose height functions approximate a given one $F$ (where we recall the notation from \Cref{regularestimate}). To that end, we begin with a lower bound on this quantity when $F = \mathcal{H}$ is the maximizer of $\mathcal{E}$ with boundary data $\mathfrak{h}$. 

In what follows, for any domain $R \subset \mathbb{T}$; boundary height function $h: \partial R \rightarrow \mathbb{Z}$; height function $F: R \rightarrow \mathbb{R}$; and real number $U \in \mathbb{R}_{> 0}$, we let $\mathfrak{G} (F; h; U)$ denote the set of height functions $H: \mathbb{V}(R) \rightarrow \mathbb{Z}$ satisfying $H |_{\partial R} = h$ and $\max_{v \in \mathbb{V}(R)} \big| H(v) - F(v) \big| \le U$. 

\begin{prop}
	
\label{tilingsh}

Adopt the notation of \Cref{heightapproximate}, and define $\mathcal{H}^{(N)}: R \rightarrow \mathbb{R}$ by setting $\mathcal{H}^{(N)} (v) = N \mathcal{H} (N^{-1} v)$ for each $v \in \mathbb{V}(R)$. Then, there exists a constant $C = C(\varepsilon) > 1$ such that
\begin{flalign*}
	N^{-2} \log \Big| \mathfrak{G} \big(\mathcal{H}^{(N)}; h; (\log N)^{-1 / 21} N \big) \Big| \ge  \mathcal{E} (\mathcal{H}) - C (\log N)^{-1 / 63}.
\end{flalign*}
\end{prop}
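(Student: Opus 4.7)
The plan is to lower-bound $|\mathfrak{G}(\mathcal{H}^{(N)}; h; (\log N)^{-1/21} N)|$ by partitioning $R$ into a mesoscopic triangulation of scale $N/M$ on which $\mathcal{H}^{(N)}$ is approximately linear on most triangle boundaries, applying \Cref{numbersigmat} independently on each triangle, and recognizing the resulting Riemann sum as a lower bound for $N^2 \mathcal{E}(\mathcal{H})$ via Jensen's inequality (using the concavity of $\sigma$ from \Cref{concavesigmat}).

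First I would apply \Cref{twodimensionalapproximatelinear} with $v = \lceil(\log N)^{10/21}\rceil$ to the $1$-Lipschitz function $\mathcal{H}^{(N)}$, extended by $1$-Lipschitz continuation to a lattice triangle of side $2^n \asymp N$ containing $R$. This yields an integer $m \in [v, 2v]$, a scale $M = 2^m$, a triangulation $\{\mathscr{F}_j\}_{j \in \mathcal{I}}$ by lattice triangles of side $N/M$, and a good subset $\mathcal{J}$ with $|\mathcal{I}\setminus\mathcal{J}|\le M^2\varsigma$ on which $\mathcal{H}^{(N)}$ is $\varsigma$-linear on $\partial\mathscr{F}_j$, where $\varsigma := (\log M)^{-1/10}$. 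The choice of $v$ gives $\varsigma^{1/3} = O((\log N)^{-1/63})$ and $N/M \ll (\log N)^{-1/21} N$. I would then fix a single integer-valued, $1$-Lipschitz boundary assignment $k$ on $\bigcup_j \partial\mathscr{F}_j \cap R$ with $k|_{\partial R} = h$, $|k - \mathcal{H}^{(N)}| = O(1)$ pointwise, and such that $\mathfrak{G}(k|_{\partial\mathscr{F}_j \cap R})$ is nonempty on every triangle.

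Because any height function $H: R \to \mathbb{Z}$ extending $k$ is $1$-Lipschitz on each diameter-$O(N/M)$ triangle and agrees with $\mathcal{H}^{(N)}$ to within $O(1)$ on triangulation vertices, we obtain $|H - \mathcal{H}^{(N)}| = O(N/M) \le (\log N)^{-1/21} N$ everywhere on $R$; consequently each such $H$ lies in $\mathfrak{G}(\mathcal{H}^{(N)}; h; (\log N)^{-1/21} N)$, and so
\[
|\mathfrak{G}(\mathcal{H}^{(N)}; h; (\log N)^{-1/21} N)| \;\ge\; \prod_{j \in \mathcal{I}} \bigl|\mathfrak{G}\bigl(k|_{\partial\mathscr{F}_j \cap R}\bigr)\bigr|.
\]
On each good full triangle, \eqref{ghestimate} gives $\log |\mathfrak{G}(k|_{\partial\mathscr{F}_j})| \ge \tfrac{1}{2}(N/M)^2(\sigma(s_j,t_j) - C\varsigma^{1/3})$, where $(s_j,t_j) = \nabla L_j$ for $L_j$ the affine function interpolating $\mathcal{H}^{(N)}$ at the three vertices of $\mathscr{F}_j$; on the $O(M^2 \varsigma + M)$ remaining bad and boundary-adjacent triangles I use the trivial bound $|\mathfrak{G}| \ge 1$. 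Summing the logarithms,
\[
\log |\mathfrak{G}(\mathcal{H}^{(N)}; h; (\log N)^{-1/21} N)| \;\ge\; \tfrac{1}{2}(N/M)^2\sum_{\substack{j \in \mathcal{J} \\ \mathscr{F}_j \subset R}} \sigma(s_j,t_j) \;-\; C N^2 \varsigma^{1/3}.
\]
To identify this Riemann sum with $N^2 \mathcal{E}(\mathcal{H})$, I would use the divergence theorem to express $(s_j,t_j)$ as the average of $\nabla \mathcal{H}$ over $N^{-1}\mathscr{F}_j$, up to an error of order $\varsigma$ coming from the $\varsigma N/M$ defect of $\mathcal{H}^{(N)}$ from $L_j$ on $\partial\mathscr{F}_j$; applying the H\"older continuity of $\sigma$ with exponent $1/2$ yields $\sigma(s_j,t_j) \ge \sigma\bigl(\overline{\nabla\mathcal{H}}_{N^{-1}\mathscr{F}_j}\bigr) - C\varsigma^{1/2}$, and Jensen's inequality for the concave $\sigma$ then gives $\sigma\bigl(\overline{\nabla\mathcal{H}}_{N^{-1}\mathscr{F}_j}\bigr) \ge |N^{-1}\mathscr{F}_j|^{-1}\int_{N^{-1}\mathscr{F}_j}\sigma(\nabla \mathcal{H})$. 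Summing, the total area covered by good interior triangles exceeds $|\mathfrak{R}| - O(\varsigma + M^{-1})$, so the Riemann sum is at least $N^2\mathcal{E}(\mathcal{H}) - CN^2(\varsigma^{1/2} + \varsigma + M^{-1})$; combining this with the error above yields the bound $\mathcal{E}(\mathcal{H}) - C(\log N)^{-1/63}$ as required.

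The main obstacle is the construction of the assignment $k$: one must produce a single integer-valued, $1$-Lipschitz height-function restriction on $\bigcup_j \partial\mathscr{F}_j \cap R$ that is simultaneously consistent with $h$ on $\partial R$, within $O(1)$ of $\mathcal{H}^{(N)}$ at every point, and admits at least one valid tiling extension on each $\mathscr{F}_j$. The natural approach is to round $\mathcal{H}^{(N)}$ to integers at triangulation vertices (with small adjustments near $\partial R$ to match $h$) and then $1$-Lipschitz-interpolate along edges; the nonemptiness of $\mathfrak{G}(k|_{\partial\mathscr{F}_j})$ can then be verified by exhibiting on each triangle an explicit tiling derived from a deterministic tiling of an ambient region of slope $(s_j,t_j)$, following the constructions in Section 3 of \cite{VPDT}.
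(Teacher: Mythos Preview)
Your approach is essentially the same as the paper's: triangulate via \Cref{twodimensionalapproximatelinear}, apply \Cref{numbersigmat} on each good triangle, and use concavity of $\sigma$ (Jensen) to recover $\mathcal{E}(\mathcal{H})$. The paper differs in two minor ways. First, it chooses $v$ so that $M = 2^m \in [N^{1/4}, 4N^{1/2}]$ (polynomial in $N$) rather than your subpolynomial $M \approx 2^{(\log N)^{10/21}}$; both produce the same error $\varsigma^{1/3} \asymp (\log N)^{-1/63}$, so this is immaterial. Second, and more usefully, the paper dissolves your ``main obstacle'' in one line: since $\mathcal{H} \in \Adm(\mathfrak{R}; \mathfrak{h})$ has $\nabla\mathcal{H} \in \overline{\mathcal{T}}$ a.e., the function $F(u) = \lfloor \mathcal{H}^{(N)}(u) \rfloor$ is already a valid height function on all of $R$, so $k := F|_{\bigcup_j \partial\mathscr{F}_j}$ is automatically an integer boundary height function with $\mathfrak{G}(k|_{\partial\mathscr{F}_j}) \ni F|_{\mathscr{F}_j}$ nonempty, and on the bad triangles and near $\partial R$ one simply uses $F$ itself. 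No rounding-at-vertices, edge interpolation, or explicit tiling construction from \cite{VPDT} is needed.
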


\begin{proof}
	
	For any real number $A > 0$, recall the triangle $T_A = \big\{ (x, y) \in \mathbb{R}_{\ge 0}^2 :  0 \le x \le y \le A \big\}$ from \Cref{twodimensionalapproximatelinear}. Since $R = \mathcal{B}_N \cap \mathbb{T}$, we have that $R \subset T_{4 N} - (N, N)$. Define $F: \mathbb{V}(R) \rightarrow \mathbb{Z}$ by setting $F(u) = \big\lfloor \mathcal{H}^{(N)} (u) \big\rfloor$ for each $u \in \mathbb{V}(R)$; then $F$ is a height function on $R$, since $\mathcal{H} \in \Adm (\mathfrak{R}; \mathfrak{h})$. Let $G: \mathbb{R}^2 \rightarrow \mathbb{R}$ denote a $1$-Lipschitz extension of $F$ to $\mathbb{R}^2$.
	
	Apply \Cref{twodimensionalapproximatelinear}, where the $f$ there is equal to the $G$ here, and the $n$ and $v$ there are such that the $N$ here satisfies $4 N \le 2^n < 8 N$ and $N^{1 / 4} \le 2^v < 2 N^{1 / 4}$. This yields an integer $m \in [v, 2v]$ such that the following holds for sufficiently large $N$. Letting $M = 2^m \in [N^{1 / 4}, 4 N^{1 / 2}]$ and $\big\{ \mathscr{F}_i \big\}_{\mathcal{I}}$ (for some index set $\mathcal{I}$) denote the faces of $2^{n - m} \mathbb{T}$ that are contained entirely inside $R$, there exists a subset $\mathcal{J} \subseteq \mathcal{I}$ such that $|\mathcal{J}| \ge \mathcal{I} - 2 M^2 (\log N)^{- 1 / 10}$ and such that $G$ is $2 (\log N)^{-1 / 10}$-linear on $\partial \mathscr{F}_j$ for each $j \in \mathcal{J}$ (recall $4 \log M \ge \log N$). For any $j \in \mathcal{J}$, define $g_j: \partial \mathscr{F}_j \rightarrow \mathbb{Z}$ by setting $g_j = G |_{\partial \mathscr{F}_j}$; here, we view $\partial \mathscr{F}_j \subset \mathbb{T}$ (instead of as a subset of $\mathbb{R}^2$). Further set $\rho = (\log N)^{-1 / 21}$. 
	
	We claim that any height function $H \in \mathfrak{G} (h)$ such that $H |_{\partial \mathscr{F}_j} = g_j$ for each $j \in \mathcal{J}$ satisfies $\max_{v \in \mathbb{V}(R)} \big| H(v) - \mathcal{H}^{(N)} (v) \big| \le \rho N$ for sufficiently large $N$. Indeed, for any $u \in \mathbb{V}(R)$, we have that $\big| H(v) - \mathcal{H}^{(N)} (v) \big| \le \big| H(u) - G (u) \big| + 2 |v - u|$, since $H$ and $G$ are $1$-Lipschitz. Thus, if $v$ is in the interior of some $\mathscr{F}_j$ with $j \in \mathcal{J}$, then $\big| H(v) - \mathcal{H}^{(N)} (v) \big| \le \big| H (u) - G (u) \big| + 2 |v - u| \le 32 N^{3 / 4} \le \rho N$, for any $u \in \partial \mathscr{F}_j$ (since then $H(u) = G(u)$ and the diameter of $\mathscr{F}_j$ is at most $16 N^{3 / 4}$).
	
	If instead $v$ is not in the interior of some $\mathscr{F}_j$ with $j \in \mathcal{J}$, then let $\mathfrak{B} = \mathcal{B}_{\rho N / 2} (v)$. Then, for sufficiently large $N$, $\mathfrak{B}$ either contains a vertex in $\partial R \cup \bigcup_{j \in \mathcal{J}} \partial \mathscr{F}_j$, or it contains at least $\rho^2 M^2$ faces of $2^{n - m} \mathbb{T}$ that are either not among $\{ \mathscr{F}_j \}_{j \in \mathcal{J}}$ or not entirely contained in $R$; indeed, this can be quickly deduced from the facts that the area of each such face is $\frac{N^2}{2M^2}$ and that of $\mathfrak{B}$ is $\frac{\pi \rho^2 N^2}{4}$. The facts that $|\mathcal{J}| \ge \mathcal{I} - 2 M^2 (\log N)^{- 1 / 10}$; that there are at most $12 M$ faces in $2^{n - m} \mathbb{T}$ that are not entirely contained in $R$ (since $R = \mathcal{B}_N \cap \mathbb{T}$); that $M \ge N^{1 / 4}$; and that $\rho = (\log N)^{-1 / 21}$ together imply that the latter cannot occur. So, there exists some $u \in \partial R \cup \bigcup_{j \in \mathcal{J}} \partial \mathscr{F}_j$ such that $2 |v - u| \le \rho N$, and we again obtain $\big| H(v) - \mathcal{H}^{(N)} (v) \big| \le \big| H(u) - G(u) \big| + \rho N = \rho N$.
	
	Hence, recalling the set $\mathfrak{G} (h)$ from \Cref{gh}, we have that
	\begin{flalign}
	\label{gproductestimate}
	\big| \mathfrak{G} \big( \mathcal{H}^{(N)}; h; \rho N \big) \big| \ge \displaystyle\prod_{j \in \mathcal{J}} \big| \mathfrak{G} (g_j) \big|.
	\end{flalign}
	
	For each $j \in \mathcal{J}$, let $(s_j, t_j) \in \overline{\mathcal{T}}$ denote the slope of the plane passing through $\big(v, F(v) \big) \in \mathbb{R}^3$, for each of the three corners $v$ of $\mathscr{F}_j$. Stated alternatively, $(s_j, t_j)$ is the unique pair such that $F(v) - F(u) = (s_j, t_j) \cdot (v - u)$, for any two of the three corners $v, u$ of the triangle $\mathscr{F}_j$. Then applying \eqref{ghestimate} with the $N$ and $\varsigma$ there equal to $\frac{N}{M}$ and $\rho = (\log N)^{-1 / 21}$ here, respectively, we deduce from \eqref{gproductestimate} that
	\begin{flalign}
	\label{gnhestimatesum}
	\begin{aligned}
	N^{-2} \log \big| \mathfrak{G} \big( \mathcal{H}^{(N)}; h; (\log N)^{-1 / 21} N \big) \big| & \ge (2 M^2)^{-1} \displaystyle\sum_{j \in \mathcal{J}} \sigma (s_j, t_j) - C_1 M^{-2} (\log N)^{-1 / 63} |\mathcal{J}|, \\
	& \ge (2 M^2)^{-1} \displaystyle\sum_{j \in \mathcal{J}} \sigma (s_j, t_j) - 8 C_1 (\log N)^{-1 / 63},
	\end{aligned} 
	\end{flalign}
	
	\noindent for some constant $C_1 > 1$. To deduce the second inequality in \eqref{gnhestimatesum}, we used the bound $|\mathcal{J}| \le |\mathcal{I}| \le 2 M^2 N^{-2} |R| \le 8 M^2$.

	Next, the facts that $\sigma$ is concave and uniformly H\"{o}lder continuous with exponent $\frac{1}{2}$ (recall \Cref{concavesigmat}); that the area of $N^{-1} \mathscr{F}_j$ is $(2 M^2)^{-1}$; and that $\mathcal{H}^{(N)}$ is $2 (\log N)^{-1 / 10}$-linear on each $\mathscr{F}_j$ together yields a constant $C_2 > 1$ such that, for any $j \in \mathcal{J}$, we have 
	\begin{flalign}
	\label{sigmasjtjestimate}
	2 M^2 \displaystyle\int_{N^{-1} \mathscr{F}_j} \sigma \big( \nabla \mathcal{H} (z) \big) dz \le \displaystyle\max_{s, t} \sigma (s, t) \le \sigma (s_j, t_j) + C_2 (\log N)^{-1 / 20},
	\end{flalign} 
	
	\noindent where in the second term in \eqref{sigmasjtjestimate} the maximum is taken over all pairs $(s, t) \in \overline{\mathcal{T}}$ such that $s_j - 2 (\log N)^{-1 / 10} \le s \le s_j + 2 (\log N)^{-1 / 10}$ and $t_j - 2 (\log N)^{-1 / 10} \le t \le t_j + 2 (\log N)^{-1 / 10}$. 
	
	The lemma now follows from inserting \eqref{sigmasjtjestimate} into \eqref{gnhestimatesum} and using the facts that $\sigma$ is uniformly bounded on $\overline{\mathcal{T}}$; that $|\mathcal{I}| - |\mathcal{J}| \le 2 M^2 (\log N)^{-1 / 10}$; and that $\big| \mathcal{B}_N \setminus \bigcup_{i \in \mathcal{I}} \mathscr{F}_i \big| \le \frac{6 N^2}{M} \le 6 N^{7 / 4}$ (which holds since $R = \mathcal{B}_N \cap \mathbb{T}$ and $M \ge N^{1 / 4}$).
\end{proof}

Next, we provide an upper bound on the size of $\mathfrak{G}$, although the precise form of this set will be slightly different from that in \Cref{tilingsh}. More specifically, for some finite domain $R \subset \mathbb{T}$; boundary height function $h: \partial R \rightarrow \mathbb{Z}$; integer $K \in \mathbb{Z}_{\ge 1}$; and function $F_K: K \mathbb{T} \rightarrow \mathbb{Z}$, let $\mathfrak{G}_K (F_K; h)$ denote the set of height functions $H: \mathbb{V}(R) \rightarrow \mathbb{Z}$ such that $H |_{\partial R} = h$ and $H |_{K \mathbb{T}} = F_K$. This is the set of height functions in $\mathfrak{G} (h)$ whose values on the rescaled lattice $K \mathbb{T}$ are determined by $F_K$. 

The following lemma upper bounds $\big| \mathfrak{G}_K (F_K; h) \big|$ essentially by $\mathcal{E} (\mathcal{H}_K)$, for some function $\mathcal{H}_K$.

\begin{prop}
	
	\label{tilingsh2}
	
	Adopt the notation of \Cref{heightapproximate}; let $F: \mathbb{T} \rightarrow \mathbb{Z}$ denote a height function on $R$ such that $F |_{\partial R} = h$; let $K = 2^k$ denote the integral power of $2$ such that $N^{1 / 8} \le K < 2 N^{1 / 8}$; and set $F_K = F |_{K \mathbb{T}}$. Then there exists a constant $C > 1$ and a function $\mathcal{H}_K \in \Adm (\mathfrak{R}; \mathfrak{h})$ such that $ \big| \mathcal{H}_K (N^{-1} v) - N^{-1} F (v) \big| < N^{-1 / 8}$ for each $v \in \mathbb{V}(R)$, and 
	\begin{flalign*}
	N^{-2} \log \big| \mathfrak{G}_K (F_K; h ) \big| \le  \mathcal{E} (\mathcal{H}_K) + C (\log N)^{-1 / 33}.
	\end{flalign*}
\end{prop}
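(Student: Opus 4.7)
The plan is to adapt the strategy used in \Cref{tilingsh} to produce an upper bound, by decomposing $R$ at an auxiliary scale $M$ strictly intermediate between $K$ and $N$ and applying the summed count \eqref{ghestimatesum} of \Cref{numbersigmat} triangle-by-triangle.

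First I would apply \Cref{twodimensionalapproximatelinear} to a $1$-Lipschitz extension of $F$ to a triangle $T_{2^n}$ containing $R$, with $v$ chosen so that $2^v \in [N^{1/3}, 2N^{1/3}]$. Letting $M = 2^m$ for the resulting $m \in [v, 2v]$, we have $N^{1/3} \le M \le N^{2/3}$; in particular $k \le n - m$, so $K$ divides $N/M$ and hence every vertex of $2^{n-m}\mathbb{T}$ lies in $K\mathbb{T}$. The theorem produces an index set $\mathcal{J}$ of $N/M$-triangles $\mathscr{F}_j$ contained in $R$, with $|\mathcal{I} \setminus \mathcal{J}| \le M^2 (\log M)^{-1/10}$, on whose boundaries $F$ is $(\log M)^{-1/10}$-linear. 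I would then define $\mathcal{H}_K \in \Adm(\mathfrak{R}; \mathfrak{h})$ as the continuous function that is linear on each $N^{-1}\mathscr{F}_j$ with corner values $N^{-1} F$, extended by linear interpolation to $\mathfrak{h}$ on the thin strip $\mathfrak{R} \setminus N^{-1}\bigcup_i \mathscr{F}_i$. The slope $(s_j, t_j)$ of $\mathcal{H}_K$ on $N^{-1}\mathscr{F}_j$ lies in $\overline{\mathcal{T}}$ because the corner values come from a height function, and since both $\mathcal{H}_K$ and $N^{-1} F(N \,\cdot\,)$ are $1$-Lipschitz and coincide at the corners of each $N^{-1}\mathscr{F}_j$ (diameter at most $2/M \le 2N^{-1/3}$), the approximation $|\mathcal{H}_K(N^{-1}v) - N^{-1} F(v)| < N^{-1/8}$ follows.

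The main step bounds, for each good $j \in \mathcal{J}$, the number of restrictions of elements of $\mathfrak{G}_K(F_K; h)$ to $\mathscr{F}_j$. Any such restriction has boundary height function $h_j := H|_{\partial \mathscr{F}_j}$ that agrees with $F$ at every vertex of $K\mathbb{T} \cap \partial \mathscr{F}_j$. Because these vertices subdivide each edge of $\partial \mathscr{F}_j$ (of length $N/M$) into $K$-segments on which $h_j$ and $F$ are both $1$-Lipschitz and coincide at endpoints, $|h_j - F| \le K/2$ pointwise. Combined with the $(\log M)^{-1/10}$-linearity of $F$, this shows that $h_j$ lies within $\varsigma (N/M)$ of the linear interpolant $g_j$ of slope $(s_j, t_j)$, for $\varsigma = (\log M)^{-1/10} + MK/(2N) = O((\log M)^{-1/10})$, where the last estimate uses $MK \le 2 N^{19/24}$. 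Applying \eqref{ghestimatesum} of \Cref{numbersigmat} to $\mathscr{F}_j$ with this $\varsigma$ (which exceeds $(N/M)^{-1/75}$) gives
\[
\log \bigl| \bigl\{ H|_{\mathscr{F}_j} : H \in \mathfrak{G}_K(F_K; h) \bigr\} \bigr| \le \tfrac{1}{2} (N/M)^2 \bigl( \sigma(s_j, t_j) + C \varsigma^{1/3} \bigr).
\]

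Summing, the contribution from bad triangles and from the boundary strip $R \setminus \bigcup_i \mathscr{F}_i$ (of area $O(N^2/M)$) is at most $O(N^2 (\log M)^{-1/10})$ via the trivial bound $|\mathfrak{G}_{\mathscr{F}_j}| \le \exp(O((N/M)^2))$; the good contribution telescopes via the identity $\mathcal{E}(\mathcal{H}_K) = (2M^2)^{-1} \sum_{i \in \mathcal{I}} \sigma(s_i, t_i)$ together with the boundedness of $\sigma$ on $\overline{\mathcal{T}}$ (\Cref{concavesigmat}), yielding
\[
N^{-2} \log |\mathfrak{G}_K(F_K; h)| \le \mathcal{E}(\mathcal{H}_K) + O \bigl( (\log M)^{-1/30} \bigr) \le \mathcal{E}(\mathcal{H}_K) + C (\log N)^{-1/33},
\]
where the last inequality uses $M \ge N^{1/3}$. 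The principal technical obstacle is controlling the near-linearity of the boundary height functions $h_j$: one must simultaneously exploit the approximate linearity of $F$ from \Cref{twodimensionalapproximatelinear} and the many interior $K\mathbb{T}$-pinning conditions on $\partial \mathscr{F}_j$, and the scale separation $K \ll N/M$ is precisely what keeps the pinning error $MK/N$ subdominant to the linearity error $(\log M)^{-1/10}$ in the estimate for $\varsigma$.
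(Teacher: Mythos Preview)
Your proposal is correct and follows essentially the same approach as the paper: apply \Cref{twodimensionalapproximatelinear} at an intermediate scale $M$, define $\mathcal{H}_K$ by linear interpolation of $N^{-1}F$ at the corners of the resulting triangles, use the $K\mathbb{T}$-pinning together with the $1$-Lipschitz property to show every $H\in\mathfrak{G}_K(F_K;h)$ is $\varsigma$-linear on each good $\partial\mathscr{F}_j$, and invoke the summed count \eqref{ghestimatesum}. The only difference is your choice $M\in[N^{1/3},4N^{2/3}]$ versus the paper's $M\in[N^{1/4},4N^{1/2}]$; both give the required scale separation $KM/N=o(1)$, and the resulting error exponents differ only in intermediate constants before being absorbed into $(\log N)^{-1/33}$.
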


\begin{proof}
	
	As in the proof of \Cref{tilingsh2}, let $G: \mathbb{R}^2 \rightarrow \mathbb{R}$ denote an arbitrary $1$-Lipschitz extension of $F$ to $\mathbb{R}^2$ and let $n \in \mathbb{Z}_{\ge 1}$ be such that $4 N \le 2^n < 8 N$.  Applying \Cref{twodimensionalapproximatelinear} then yields the existence of $m \in \mathbb{Z}_{\ge 1}$ such that $M = 2^m \in [N^{1 / 4}, 4 N^{1 / 2}]$ and the following holds for sufficiently large $N$. Letting $\big\{ \mathscr{F}_i \big\}_{\mathcal{I}}$ denote the faces of $2^{n - m} \mathbb{T}$ contained inside $R$, there exists a subset $\mathcal{J} \subseteq \mathcal{I}$ such that $|\mathcal{J}| \ge \mathcal{I} - 2 M^2 (\log N)^{- 1 / 10}$ and $G$ is $2 (\log N)^{-1 / 10}$-linear on $\partial \mathscr{F}_j$ for each $j \in \mathcal{J}$. 
	
	Set $\mathcal{H}_K (N^{-1} v) = N^{-1} F (v)$, for each $v \in (2^{n - m} \mathbb{T} \cap R) \cup \partial R$; extend $\mathcal{H}_K$ to the interior of each $N^{-1} \mathscr{F}_i$ by linearity for each $i \in \mathcal{I}$; and extend $\mathcal{H}_K$ arbitrarily (so that it remains admissible on $\mathfrak{R}$) to $\mathfrak{R} \setminus \bigcup_{i \in \mathcal{I}} N^{-1} \mathscr{F}_i$. Observe here that $\mathcal{H}_K (N^{-1} v)$ is not necessarily equal to $N^{-1} F(v)$ for $v \in K \mathbb{T} \cap R$, since $K < \frac{N}{M} = 2^{n - m}$.
	
	We claim that the proposition holds for this choice of $\mathcal{H}_K$. To show that it satisfies the first property listed there, observe since $F$ and $\mathcal{H}_K$ are $1$-Lipschitz that $\big| \mathcal{H}_K (N^{-1} v) - N^{-1} F_K (v) \big| \le \big| \mathcal{H}_K (N^{-1} u) - N^{-1} F (u) \big| + 2 N^{-1} |v - u|$ for any $v, u \in \mathbb{V}(R)$. Since any vertex $v \in \mathbb{V}(R)$ is of distance at most $16 M^{-1} N$ from either $\partial R$ or one of the three vertices of some $\mathcal{F}_i$, it follows that $\big| \mathcal{H}_K (N^{-1} v) - N^{-1} F_K (v) \big| \le 32 M^{-1} < N^{-1 / 8}$. This verifies the first condition on $\mathcal{H}_K$. 
	
	To establish the second, let $(s_j, t_j) \in \overline{\mathcal{T}}$ denote the slope of the plane passing through $\big( v, F (v) \big) \in \mathbb{R}^3$ at the three corners $v$ of $\mathscr{F}_j$, for each $j \in \mathcal{J}$ (as in the proof of \Cref{tilingsh}). Then observe for sufficiently large $N$ that any $H \in \mathfrak{G}_K (F_K; h)$ must be $(\log N)^{-1 / 11}$-nearly linear of slope $(s_j, t_j)$ on $\partial \mathscr{F}_j$ for each $j \in \mathcal{J}$, since $G$ is $(\log N)^{-1 / 10}$-nearly linear of slope $(s_j, t_j)$ on each $\partial \mathscr{F}_j$; since $H |_{K \mathbb{T}} = F_K = G |_{K \mathbb{T}}$; since $K \big( \frac{N}{M} \big)^{-1} \le 8 N^{-1 / 8}$; and since $H$ and $G$ are $1$-Lipschitz. Then, there exists some constant $C > 1$ such that  
	\begin{flalign}
	\label{gkfkh} 
	\begin{aligned} 
	N^{-2} \log \big| \mathfrak{G}_K (F_K; h) \big| & \le (2 M^2)^{-2} \displaystyle\sum_{j \in \mathcal{J}} \sigma (s_j, t_j) + 2 N^{-2} \Bigg| R \setminus \bigcup_{j \in \mathcal{J}} \mathscr{F}_j \Bigg| + C (\log N)^{-1 / 33}  \\
	 & \le \displaystyle\int_{\mathfrak{R}} \sigma \big( \nabla \mathcal{H}_K (z) \big) dz +  2 N^{-2} \Bigg| R \setminus \bigcup_{i \in \mathcal{I}} \mathscr{F}_i \Bigg| + 2 C (\log N)^{-1 / 33}  \\
	& \le \displaystyle\int_{\mathfrak{R}} \sigma \big( \nabla \mathcal{H}_K (z) \big) dz + 3 C (\log N)^{-1 / 33}. 
	\end{aligned} 
	\end{flalign}
	
	\noindent Here, to deduce the first bound, we used the second estimate in \eqref{ghestimatesum}; the fact that any $H \in \mathfrak{G}_K (F_K; h)$ is $(\log N)^{-1 / 11}$-nearly linear of slope $(s_j, t_j)$ on $\partial \mathscr{F}_j$, for each $j \in \mathcal{J}$; and the fact that any face of $R$ can be in one of three possible tiles. To deduce the second, we used the facts that $\mathcal{H}_K$ is linear of slope $(s_j, t_j)$ on $\mathscr{F}_j$ for each $j \in \mathcal{J}$; that $\sigma$ is nonnegative; and that that $|\mathcal{J}| \ge |\mathcal{I}| - 2 M^2 (\log N)^{-1 / 11}$. To deduce the third, we used the fact that $R = \mathcal{B}_N \cap \mathbb{T}$ (which implies that there are at most $12 M$ faces of $2^{n - m} \mathbb{T}$ that do not lie entirely inside $R$). The proposition now follows from \eqref{gkfkh}. 	
\end{proof}

Next, we require the following lemma, which provides an effective bound on the distance from a function $\mathcal{F}$ to the maximizer $\mathcal{H}$ of $\mathcal{E}$, given that $\mathcal{E} (\mathcal{F}) \approx \mathcal{E} (\mathcal{H})$.

\begin{lem}
	
	\label{nearmaximum1}

	For any $\varepsilon \in \big(0, \frac{1}{4} \big)$, there exists a constant $C = C (\varepsilon) > 1$ such that the following holds. Let $\mathfrak{R} \subset \mathbb{R}^2$ denote a simply-connected closed subset, and let $\mathfrak{h}: \partial \mathfrak{R} \rightarrow \mathbb{R}$ denote a function admitting an admissible extension to $\mathfrak{R}$. Also let $\mathcal{H} \in \Adm (\mathfrak{R}; \mathfrak{h})$ denote the maximizer of $\mathcal{E}$ on $\mathfrak{R}$ with boundary data $\mathfrak{h}$, and $\mathcal{F} \in \Adm (\mathfrak{R}; \mathfrak{h})$ denote another admissible extension of $\mathfrak{h}$ to $\mathfrak{R}$. 
	
	Let $\delta \in (0, 1)$; suppose that $\mathcal{E} (\mathcal{F}) \ge \mathcal{E} (\mathcal{H}) - \delta$; and let $z_0 \in \mathfrak{R}$ be a point such that $\mathcal{B}_{\varepsilon} (z_0) \subset \mathfrak{R}$ and $\nabla \mathcal{H} (w) \in \mathcal{T}_{\varepsilon}$, for each $w \in \mathcal{B_{\varepsilon}} (z_0)$. Then, 
	\begin{flalign*}
	 \displaystyle\sup_{w \in \mathcal{B}_{\varepsilon} (z_0)} \Big| \big( \mathcal{H} (w) - \mathcal{H} (z_0) \big) - \big( \mathcal{F} (w) - \mathcal{F} (z_0) \big)  \Big| < C \delta^{1 / 6}.
	\end{flalign*}

\end{lem}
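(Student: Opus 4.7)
The plan is to exploit the uniform concavity of the surface tension $\sigma$ on $\mathcal{T}_{\varepsilon/2}$ (Remark 2.2.3) to upgrade the near-maximality $\mathcal{E}(\mathcal{F}) \ge \mathcal{E}(\mathcal{H}) - \delta$ into an $L^2$ bound on $\nabla(\mathcal{F}-\mathcal{H})$ over $\mathcal{B}_\varepsilon(z_0)$, and then to convert this into the claimed $C^0$ bound via an elementary interpolation with the built-in Lipschitz bound. The subtlety is that $\nabla\mathcal{F}$ may lie on $\partial\mathcal{T}$, where $\sigma$ is only weakly concave, so uniform concavity cannot be applied directly to the pair $(\nabla\mathcal{H},\nabla\mathcal{F})$.

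To bypass this, I would introduce an auxiliary interpolant $\mathcal{G}_\tau = (1-\tau)\mathcal{H} + \tau\mathcal{F} \in \Adm(\mathfrak{R};\mathfrak{h})$ for a small $\tau = \tau(\varepsilon) \in (0,1)$ chosen so that $\tau\sqrt{2} < \varepsilon/2$; since $|\nabla\mathcal{F}-\nabla\mathcal{H}|\le\sqrt{2}$ on $\overline{\mathcal{T}}$, this forces $\nabla\mathcal{G}_\tau \in \mathcal{T}_{\varepsilon/2}$ throughout $\mathcal{B}_\varepsilon(z_0)$. Weak concavity of $\sigma$ and maximality of $\mathcal{H}$ give the two-sided bound $0 \le \mathcal{E}(\mathcal{H}) - \mathcal{E}(\mathcal{G}_\tau) \le \tau\delta$. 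Next, apply the same midpoint trick to the pair $(\mathcal{H},\mathcal{G}_\tau)$, whose gradients both lie in $\mathcal{T}_{\varepsilon/2}$ on $\mathcal{B}_\varepsilon(z_0)$: the uniform concavity constant $c_\varepsilon>0$ from Remark 2.2.3 yields, pointwise on $\mathcal{B}_\varepsilon(z_0)$,
\[
\sigma\!\left(\tfrac{\nabla\mathcal{H}+\nabla\mathcal{G}_\tau}{2}\right) \ge \tfrac{\sigma(\nabla\mathcal{H})+\sigma(\nabla\mathcal{G}_\tau)}{2} + c_\varepsilon\tau^2 \big|\nabla\mathcal{F}-\nabla\mathcal{H}\big|^2,
\]
while weak concavity elsewhere gives the analogous inequality without the last term. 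Integrating over $\mathfrak{R}$ and using $\mathcal{E}\big((\mathcal{H}+\mathcal{G}_\tau)/2\big) \le \mathcal{E}(\mathcal{H})$ (by maximality, since the interpolant lies in $\Adm(\mathfrak{R};\mathfrak{h})$), one gets
\[
c_\varepsilon\tau^2\!\int_{\mathcal{B}_\varepsilon(z_0)} |\nabla\mathcal{F}-\nabla\mathcal{H}|^2\,dz \;\le\; \tfrac{1}{2}\big(\mathcal{E}(\mathcal{H})-\mathcal{E}(\mathcal{G}_\tau)\big) \;\le\; \tfrac{\tau\delta}{2},
\]
and since $\tau$ depends only on $\varepsilon$, this simplifies to $\int_{\mathcal{B}_\varepsilon(z_0)} |\nabla(\mathcal{F}-\mathcal{H})|^2\,dz \le C_\varepsilon\delta$.

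To conclude, set $u = \mathcal{F}-\mathcal{H}$ and note $u$ is $2$-Lipschitz on $\mathfrak{R}$ (since $\overline{\mathcal{T}}\subset[0,1]^2$). For any $z\in\mathcal{B}_\varepsilon(z_0)$, set $D=|z-z_0|\le\varepsilon$ and $v=(z-z_0)/D$, and for a parameter $\rho\in(0,\varepsilon/4)$ consider the translated ball $y\mapsto y'=y+Dv$ on $\mathcal{B}_\rho(z_0)$. The Lipschitz bound gives $|u(y')-u(y)|\ge |u(z)-u(z_0)|-4\rho$, while Cauchy--Schwarz along the segment $[y,y']$ followed by Fubini over $\mathcal{B}_\rho(z_0)$ yields
\[
\int_{\mathcal{B}_\rho(z_0)} |u(y')-u(y)|\,dy \;\le\; \sqrt{\pi}\,\rho\,\Big(D\int_{\mathcal{B}_\varepsilon(z_0)}|\nabla u|^2\Big)^{\!1/2} \;\le\; C'\rho\,\delta^{1/2}.
\]
Combining and optimizing $\rho\sim\delta^{1/4}$ produces $|u(z)-u(z_0)|\le C_\varepsilon''\,\delta^{1/4}$, which is stronger than the $C\delta^{1/6}$ asserted.

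The only real obstacle is the one identified above, namely avoiding a direct appeal to uniform concavity at points where $\nabla\mathcal{F}$ may escape $\mathcal{T}_{\varepsilon/2}$; the $\mathcal{G}_\tau$-device resolves this while costing only the constant factor $\tau^2$ in the uniform concavity estimate, and everything else is routine.
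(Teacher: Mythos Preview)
Your argument is correct and in fact yields a stronger exponent ($\delta^{1/4}$ rather than $\delta^{1/6}$), but it proceeds differently from the paper in both halves. For the $L^2$ gradient bound, the paper applies uniform concavity directly to the midpoint $\tfrac{1}{2}(\mathcal{H}+\mathcal{F})$, invoking the one-sided inequality $\sigma(\tfrac{p+q}{2})\ge\tfrac{1}{2}(\sigma(p)+\sigma(q))+c|p-q|^2$ valid for $p\in\mathcal{T}_\varepsilon$, $q\in\overline{\mathcal{T}}$ (which holds for this particular $\sigma$ because its Hessian blows up at $\partial\mathcal{T}$); your $\mathcal{G}_\tau$ device avoids relying on that refinement at the price of a harmless factor $\tau^{-1}$ in the constant, and is arguably the more robust route. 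For the passage from the $L^2$ gradient bound to the pointwise bound, the paper integrates along a one-parameter family of elliptical arcs joining $z_0$ to $w$, selects a good arc by averaging, and then truncates near the endpoints (where the arclength weight $(\sin\theta)^2$ degenerates), which is what produces the $\delta^{1/6}$; your translation-and-average over $\mathcal{B}_\rho(z_0)$ is more elementary and loses less. Two small cleanups: your displayed Fubini bound should read $\sqrt{\pi}\,\rho\,D\,(\int_{\mathcal{B}_\varepsilon(z_0)}|\nabla u|^2)^{1/2}$ rather than with $D$ under the square root (this does not affect the conclusion since $D\le\varepsilon$), and the tube $\bigcup_s \mathcal{B}_\rho(z_0+sv)$ lies inside $\mathcal{B}_\varepsilon(z_0)$ only when $|z-z_0|\le\varepsilon-\rho$, so for $z$ in the remaining $\rho$-annulus you should first move to a nearby interior point via the $2$-Lipschitz bound (cost $O(\rho)=O(\delta^{1/4})$).
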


\begin{proof}

	Fix $w_0 \in \mathcal{B}_{\varepsilon} (z_0)$, and set $\omega = \frac{|w_0 - z_0|}{2}$. For notational convenience, let us assume that $z_0 = (0, 0)$ and that $w_0 = (2 \omega, 0)$. Recall from \Cref{concavesigmat} that $\sigma$ is uniformly concave on $\mathcal{T}_{\varepsilon}$, meaning that there exists a constant $c = c(\varepsilon) > 0$ such that 
	\begin{flalign*}
	\sigma \left( \displaystyle\frac{s_1 + s_2}{2}, \displaystyle\frac{t_1 + t_2 }{2} \right) \ge \displaystyle\frac{\sigma (s_1, t_1) + \sigma (s_2, t_2)}{2} + c \big( (s_1 - s_2)^2 + (t_1 - t_2)^2 \big),
	\end{flalign*}
	
	\noindent for any $(s_1, t_1) \in \mathcal{T}_{\varepsilon}$ and $(s_2, t_2) \in \overline{\mathcal{T}}$. Therefore, letting $\mathcal{G} = \frac{\mathcal{F} + \mathcal{H}}{2}$, it follows from integration that 
	\begin{flalign*}
	c \displaystyle\int_{\mathcal{B}_{\omega} (0, \omega)} \big| \nabla \mathcal{H} (z) - \nabla \mathcal{F} (z) \big|^2 dz & \le \displaystyle\int_{\mathfrak{R}} \bigg( \sigma \big( \nabla \mathcal{G} (z) \big) - \displaystyle\frac{1}{2} \Big( \sigma \big( \nabla \mathcal{H} (z) \big) + \sigma \big( \nabla \mathcal{F} (z) \big) \Big) \bigg) dz \\
	& = \mathcal{E} (\mathcal{G}) - \displaystyle\frac{\mathcal{E} (\mathcal{H}) + \mathcal{E} (\mathcal{F}) }{2} \le \displaystyle\frac{\delta}{2},
	\end{flalign*}
	
	\noindent where we have used the fact that $\mathcal{E} (\mathcal{G}) \le \mathcal{E} (\mathcal{H}) \le \mathcal{E} (\mathcal{F}) + \delta$. Hence, 
	\begin{flalign}
	\label{gradientfh}
	\displaystyle\int_{\mathcal{B}_{\omega} (0, \omega)} \big| \nabla \mathcal{H} (z) - \nabla \mathcal{F} (z) \big| dz \le \omega \left( \displaystyle\frac{\pi \delta}{2c} \right)^{1 / 2}.
	\end{flalign}
	
	Next, we will exhibit a path from $w_0$ to $z_0$ of length at most $\pi \omega$, along which $| \nabla \mathcal{H} - \nabla \mathcal{F}|$ is typically bounded above by a multiple of $\omega^{-1} \delta^{1 / 2}$. To that end, for each $\rho \in [-1, 1]$, let us define the function $\gamma_{\rho}: [0, \pi] \rightarrow \mathcal{B}_{\omega}$ by setting $\gamma_{\rho} (\theta) = (\omega \cos \theta + \omega, \rho \omega \sin \theta)$, for each $\theta \in [0, \pi]$, so in particular $\gamma_{\rho} (0) = w_0$ and $\gamma_{\rho} (\pi) = z_0$. Then, it follows from changing variables that 
	\begin{flalign*}
	\displaystyle\int_{\mathcal{B}_{\omega} (0, \omega)} \big| \nabla f (z) \big| dz = \omega^2 \displaystyle\int_{-1}^1 \displaystyle\int_0^{2 \pi} \Big| \nabla f \big( \gamma_{\rho} (\theta) \big) \Big| (\sin \theta)^2 d \theta d \rho,
	\end{flalign*}
	
	\noindent for any function $f: \mathcal{B}_{\omega} \rightarrow \mathbb{R}$. In particular, taking $f = \nabla \mathcal {H} - \nabla \mathcal{F}$ and applying \eqref{gradientfh} yields
	\begin{flalign*}
	\omega \displaystyle\int_{-1}^1 \displaystyle\int_0^{\pi} \Big| \nabla \mathcal{H} \big( \gamma_{\rho} (\theta) \big) - \nabla \mathcal{F} \big( \gamma_{\rho} (\theta) \big)  \Big| (\sin \theta)^2 d \theta d \rho \le \left( \displaystyle\frac{\pi \delta}{2c} \right)^{1 / 2}.
	\end{flalign*}
	
	\noindent Therefore, there exists some $\rho \in [-1, 1]$ such that $\nabla \mathcal{H} (z)$ and $\nabla \mathcal{F} (z)$ are defined for almost all $z \in \gamma_{\rho}$ and  
	\begin{flalign}
	\label{hfgammaestimate} 
	\omega \displaystyle\int_0^{\pi} \Big| \nabla \mathcal{H} \big( \gamma_{\rho} (\theta) \big) - \nabla \mathcal{F} \big( \gamma_{\rho} (\theta) \big) \Big| (\sin \theta)^2 d \theta \le \left( \displaystyle\frac{\pi \delta}{2c} \right)^{1 / 2}.
	\end{flalign}
	
	\noindent Thus, since $\mathcal{H}, \mathcal{F} \in \Adm (\mathfrak{R})$ are $1$-Lipschitz on $\mathcal{B}_{\omega} (0, \omega)$ and $\gamma_{\rho}$ is $1$-Lipschitz on $[0, \pi]$, we obtain 
	\begin{flalign*}
	\Big| \big( & \mathcal{H} (w_0) - \mathcal{H} (z_0) \big) - \big( \mathcal{F} (w_0) - \mathcal{F} (z_0) \big) \Big| \\
	& \le \bigg| \Big( \mathcal{H} \big( \gamma_{\rho} (\pi - \delta^{1 / 6}) \big) - \mathcal{H} \big( \gamma_{\rho} (\delta^{1 / 6}) \big) \Big) - \Big( \mathcal{F} \big( \gamma_{\rho} ( \pi - \delta^{1 / 6} ) \big) - \mathcal{F} \big( \gamma_{\rho} ( \delta^{1 / 6} ) \big) \Big) \bigg| + 4 \delta^{1 / 6} \\ 
	& \le \displaystyle\int_{\delta^{1 / 6}}^{\pi - \delta^{1 / 6}} \bigg| \Big( \nabla \mathcal{H} \big( \gamma_{\rho} (\theta) \big) - \nabla \mathcal{F} \big( \gamma_{\rho} (\theta) \big) \Big) \cdot \gamma_{\rho}' (\theta) \bigg| d \theta + 4 \delta^{1 / 6} \\
	& \le 4 \delta^{-1 / 3} \omega \displaystyle\int_{\delta^{1 / 6}}^{\pi - \delta^{1 / 6}} \Big| \nabla \mathcal{H} \big( \gamma_{\rho} (\theta) \big) - \nabla \mathcal{F} \big( \gamma_{\rho} (\theta) \big) \Big| (\sin \theta)^2 d \theta + 4 \delta^{1 / 6}  \le \big( 6 c^{-1 / 2} + 4 \big) \delta^{1 / 6},
	\end{flalign*}	
	
	\noindent where in the third bound we used the facts that $\big| \gamma_{\rho}' (\theta) \big| \le \omega$ and that $\sin \theta \ge \frac{\delta^{1 / 6}}{2}$ whenever $\theta \in [\delta^{1 / 6}, \pi - \delta^{1 / 6}]$, and in the fourth bound we used \eqref{hfgammaestimate}. 
\end{proof}

Now we can establish \Cref{heightapproximate}. 

\begin{proof}[Proof of \Cref{heightapproximate}]
	
	Let $\mathfrak{V} = \mathfrak{V} (h) = \mathfrak{V} (h; R)$ denote the set of height functions $H: \mathbb{V}(R) \rightarrow \mathbb{Z}$ such that $H |_{\partial R} = h$ and 
	\begin{flalign*}
	\displaystyle\max_{u \in \mathcal{B}_{\varepsilon N} (v_0)} \bigg| N^{-1} \big( H (u) - H(v_0) \big) - \Big( \mathcal{H} \big( N^{-1} u \big) - \mathcal{H} \big( N^{-1} v_0 \big) \Big) \bigg| > (\log N)^{-1 / 400}. 
	\end{flalign*}
	
	It suffices to estimate $\mathbb{P} [H \in \mathfrak{V}]$, to which end we will bound $|\mathfrak{V}|$ using \Cref{tilingsh2} and \Cref{nearmaximum1}. So, let $K$ be as in \Cref{tilingsh2}, and observe that $\big| K \mathbb{T} \cap R \big| \le 4 K^{-2} N^2$. Thus, the number of restrictions $H_K$ to $K \mathbb{T}$ of some height function $H: \mathbb{V}(R) \rightarrow \mathbb{Z}$ such that $F |_{\partial R} = h$ is at most $(2N)^{4 K^{-2} N^2} \le C_1 \exp ( N^{15 / 8})$, for some constant $C_1 = C_1 (\varepsilon) > 1$ (since $H$ must be $1$-Lipschitz). 
	
	It follows from \Cref{tilingsh2} that there exists a constant $C_2 = C_2 (\varepsilon)> 1$ such that
	\begin{flalign}
	\label{destimate}
	|\mathfrak{V}| \le \exp \Bigg( N^2 \bigg( \displaystyle\sup_{\mathcal{H}_K} \mathcal{E} (\mathcal{H}_K) + C_2 (\log N)^{-1 / 33} + C_2 N^{- 1 / 8} \bigg) \Bigg),
	\end{flalign} 
	
	\noindent where the supremum is taken over all $\mathcal{H}_K \in \Adm (\mathfrak{R}; \mathfrak{h})$ such that
	\begin{flalign}
	\label{hkyhkzhyhz}
	\begin{aligned}
	\displaystyle\max_{z \in \mathcal{B}_{\varepsilon} (N^{-1} v_0)} \Big| \big( \mathcal{H}_K (z) - \mathcal{H}_K (N^{-1} v_0) \big) - \big( \mathcal{H} (z) - \mathcal{H} (N^{-1} v_0) \big) \Big|  & > (\log N)^{-1 / 400} - 3 N^{-1 / 8}.
	\end{aligned}
	\end{flalign}
	
	For sufficiently large $N$, \Cref{nearmaximum1} implies that $\sup_{\mathcal{H}_K} \mathcal{E} (\mathcal{H}_K) \le \mathcal{E} (\mathcal{H}) - (\log N)^{- 1 / 66}$, where the supremum is again taken over all $\mathcal{H}_K \in \Adm (\mathfrak{R}; \mathfrak{h})$ satisfying \eqref{hkyhkzhyhz}. Inserting this into \eqref{destimate}, we obtain 
	\begin{flalign}
	\label{destimate2}
	|\mathfrak{V}| \le \exp \Big( N^2 \big( \mathcal{E} (\mathcal{H}) - (\log N)^{-1 / 65} \big) \Big).
	\end{flalign} 
	
	\noindent Moreover, since \Cref{tilingsh} yields a constant $C_3 = C_3 (\varepsilon) > 1$ such that
	\begin{flalign*}
	\Big| \mathfrak{G} \big( \mathcal{H}^{(N)}; h; (\log N)^{-1 / 21} N \big) \Big| \ge \exp \Big( N^2 \big( \mathcal{E} (\mathcal{H}) - C_3 (\log N)^{-1 / 63} \big) \Big),
	\end{flalign*}
	
	\noindent it follows from \eqref{destimate2} that 
	\begin{flalign}
	\label{probabilityhv}
	\mathbb{P} [H \in \mathfrak{V}] \le |\mathfrak{V}| \Big| \mathfrak{G} \big( \mathcal{H}^{(N)}; h; (\log N)^{-1 / 21} N \big) \Big|^{-1} \le \exp \Big( N^2 \big(  C_3 (\log N)^{-1 / 63} - (\log N)^{-1 / 65} \big) \Big).
	\end{flalign}
	
	\noindent The theorem now follows from the fact that the right side of \eqref{probabilityhv} is bounded by $C_4 e^{-N}$, for some constant $C_4 = C_4 (\varepsilon) > 1$.
\end{proof}

\section{The Global Law Without Facets} 

\label{GlobalEstimate2}

In this section we establish \Cref{estimateboundaryheight}, following Sections 6 and 7 of \cite{LTGDMLS}. As in that work, this will proceed by locally comparing a uniformly random height function on $R$ to one on a hexagon. So, we begin in \Cref{MonotonicityDomains} by introducing notation and recalling results from \cite{LTGDMLS,ARTP} on hexagonal tilings. Next, in \Cref{BoundaryProof} we establish \Cref{estimateboundaryheight} assuming a certain inductive statement given by \Cref{estimategammakm} below. We then establish \Cref{estimategammakm} in \Cref{EstimateHjk}.

\subsection{Results on Hexagonal Domains}

\label{MonotonicityDomains}

 In this section we recall certain estimates on tilings of $A \times B \times C$ hexagons (recall \Cref{domainabc}) due to \cite{LTGDMLS,ARTP}. To that end, recall that we seek an estimate on the rate of convergence of the height function associated with a uniformly random lozenge tiling of a domain to its global law. In the special case when the domain is an $A \times B \times C$ hexagon, essentially optimal error estimates were established in \cite{ARTP}. Before stating this result, we require the following definition that provides the limiting height profile associated with a typical tiling of such a hexagon.

\begin{definition} 

\label{habc}

Fix real numbers $a, b, c > 0$, and let $\mathfrak{X}_{a, b, c} \subset \mathbb{R}^2$ denote the hexagon with vertex set $\big\{ (0, 0), (a, 0), (0, b), (a + c, c), (c, b + c), (a + c, b + c)\big\}$. Further let $\mathfrak{L}_{a, b, c}$ denote the interior of the ellipse inscribed in the hexagon $\mathcal{X}_{a, b, c}$. 

Additionally define $\mathfrak{h}_{a, b, c}: \partial \mathfrak{X}_{a, b, c} \rightarrow \mathbb{R}^2$ by setting
\begin{flalign*} 
& \mathfrak{h}_{a, b, c} (x, y) = 0, \quad \text{if $0 \in \{ x, y \}$}; \qquad \qquad \mathfrak{h}_{a, b, c} (x, y) = y, \qquad \text{if $x = y + a$}; \\
& \mathfrak{h}_{a, b, c} (x, y) = x, \quad \text{if $b + x = y$}; \qquad \qquad \mathfrak{h}_{a, b, c} (x, y) = c, \qquad \text{if either $x = a + c$ or $y = b + c$},
\end{flalign*}

\noindent whenever $(x, y) \in \partial \mathfrak{X}_{a, b, c}$. In particular, $\mathfrak{h}_{a, b, c}$ denotes the unique normalized boundary height function (recall \Cref{TilingsHeight}) associated with a tiling of the $\lfloor aN \rfloor \times \lfloor b N \rfloor \times \lfloor cN \rfloor$ hexagon, such that $\mathfrak{h}_{a, b, c} (0, 0) = 0$; see the right side of \Cref{tilinghexagon} and \Cref{domainl} for a depiction. 

Let $\mathfrak{H}_{a, b, c}: \mathfrak{X}_{a, b, c} \rightarrow \mathbb{R}$ denote the maximizer of $\mathcal{E}$ on $\mathfrak{X}_{a, b, c}$ with boundary data $\mathfrak{h}_{a, b, c}$. 
	
\end{definition} 

\begin{rem} 
	
\label{habcderivatives} 

Fix $\vartheta > 0$. Then, it is known (see, for instance, Theorem 1.1 of \cite{STP}) that $\mathfrak{H}_{a, b, c} (z)$ is uniformly smooth on the domain $\big\{ z \in \mathfrak{L}_{a, b, c}: d (z, \partial \mathfrak{L}_{a, b, c}) > \vartheta \big\} \subset \mathfrak{X}_{a, b, c}$, for any triple $(a, b, c) \in \mathbb{R}_{> 0}^3$ such that $a + b + c = 1$ and $\min \{ a, b, c \} \ge \vartheta$. 
	
\end{rem}

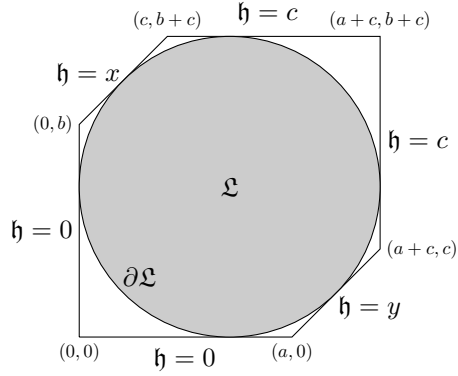
\begin{figure}

	\begin{center}

		\begin{tikzpicture}[
		>=stealth,
		auto,
		style={
			scale = 2
		}
		]
		
		\draw[-, black] (0, 0) -- (1.4142, 0) -- (2, .5858) -- (2, 2) -- (.5858, 2) -- (0, 1.4142) -- (0, 0);
		
		\filldraw[fill=white!60!gray] (1, 1)  circle [radius = 1] node[]{$\mathfrak{L}$};
		
		\draw (.4, .4)  circle [radius = 0] node[]{$\partial\mathfrak{L}$};
		
		\draw (.7, -.15)  circle [radius = 0] node[]{$\mathfrak{h} = 0$};
		\draw (-.25, .7)  circle [radius = 0] node[]{$\mathfrak{h} = 0$};
		
		\draw (1.25, 2.15)  circle [radius = 0] node[]{$\mathfrak{h} = c$};
		\draw (2.25, 1.3)  circle [radius = 0] node[]{$\mathfrak{h} = c$};

		\draw (.06, 1.75)  circle [radius = 0] node[]{$\mathfrak{h} = x$};
		\draw (1.93, .2)  circle [radius = 0] node[]{$\mathfrak{h} = y$};

		\draw (0, 0)  circle [radius = 0] node[below, scale = .7]{$(0, 0)$};
		\draw (1.4142, 0)  circle [radius = 0] node[below, scale = .7]{$(a, 0)$};
		\draw (2, .5858)  circle [radius = 0] node[right, scale = .7]{$(a + c, c)$};
		\draw (2, 2)  circle [radius = 0] node[above, scale = .7]{$(a + c, b + c)$};
		\draw (.5858, 2)  circle [radius = 0] node[above, scale = .7]{$(c, b + c)$};
		\draw (0, 1.4142)  circle [radius = 0] node[left, scale = .7]{$(0, b)$};
		
		\end{tikzpicture}
		
	\end{center}

	\caption{\label{domainl} Depicted above is a hexagon $\mathcal{X}_{a, b, c}$; its inscribed ellipse $\mathfrak{L} = \mathfrak{L}_{a, b, c}$ (shaded); and the associated boundary height function $\mathfrak{h} = \mathfrak{h}_{a, b, c}$. }

\end{figure}

The following lemma now states an effective global law for hexagons tilings, with error $N^{\delta - 1}$ for any $\delta > 0$. It originally appeared as Lemma 5.6 of \cite{ARTP}, although it was stated in its below form as equation (13) of \cite{LTGDMLS}. 

\begin{lem}[{\cite[Lemma 5.6]{ARTP}}]
	
\label{estimatedomainabc} 

For any fixed real numbers $\vartheta, \delta \in (0, 1)$ and $D > 0$, there exists a constant $C = C(\vartheta, \delta, D) > 1$ such that the following holds. Fix an integer $N \in \mathbb{Z}_{\ge 1}$ and real numbers $a, b, c \in (\vartheta, 1)$; set $A = \lfloor a N \rfloor$, $B = \lfloor b N \rfloor$, and $C = \lfloor cN \rfloor$. Abbreviate the $A \times B \times C$ hexagonal domain $\mathcal{X} = \mathcal{X}_{A, B, C}$, and let $h: \partial \mathcal{X} \rightarrow \mathbb{Z}$ denote the associated boundary height function with $h(0, 0) = 0$. Let $v \in \mathcal{X}$ be a vertex such that $N^{-1} v \in \mathfrak{L}_{a, b, c}$ and $d (N^{-1} v, \partial \mathfrak{L}_{a, b, c}) \ge \vartheta$. 

If $H: \mathcal{X} \rightarrow \mathbb{Z}$ denotes a uniformly random element of $\mathfrak{G} (h)$, then 
\begin{flalign*}
\mathbb{P} \Big[ \big| N^{-1} H (v) - \mathfrak{H}_{a, b, c} (N^{-1} v) \big| > N^{\delta - 1} \Big] < C N^{-D}.
\end{flalign*}
\end{lem}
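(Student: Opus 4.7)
The plan is to exploit the exact determinantal structure of uniformly random lozenge tilings of the $A \times B \times C$ hexagon. The associated non-intersecting path ensemble (see \Cref{NonIntersectingCorrelation}) is exactly solvable through the Lindstr\"{o}m-Gessel-Viennot lemma, and the set of type-$1$ lozenges along any horizontal section of $\mathcal{X}$ forms a discrete orthogonal polynomial ensemble for the Hahn weight. By \Cref{heightpaths}, the height function $H(v)$ at any $v = (x, y) \in \mathcal{X}$ equals an explicit linear function of $(x, y)$ plus or minus the counting function of this point process along a half-ray through $v$, so the task reduces to a concentration estimate for a linear statistic of a determinantal point process.

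First I would write down the correlation kernel $K = K_{A, B, C}$ as an explicit double contour integral, namely the Christoffel-Darboux kernel for the Hahn orthogonal polynomials, whose integrand involves the beta and gamma factors that parametrize the Hahn weight. Next I would perform a saddle point analysis of $K$ in the bulk: for $v$ with $N^{-1} v \in \mathfrak{L}_{a, b, c}$ and $d(N^{-1} v, \partial \mathfrak{L}_{a, b, c}) \ge \vartheta$, the associated action has two complex-conjugate critical points uniformly separated from the real axis, which simultaneously produces a precise asymptotic of the local density $K(u, u)$ and the off-diagonal estimate $|K(u_1, u_2)|^2 \le C|u_1 - u_2|^{-2}$ for $u_1 \ne u_2$ in the bulk. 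Integrating the density along the half-ray through $v$ and matching with the Kenyon-Okounkov parametrization of $\nabla \mathfrak{H}_{a, b, c}$ recorded in \cite{LSCE} yields a mean estimate of the form $\mathbb{E}\big[ H(v) \big] = N \mathfrak{H}_{a, b, c}(N^{-1} v) + O(N^{\delta / 3})$, while the off-diagonal bound combined with the determinantal variance identity gives $\Var\big( H(v) \big) = O(\log N)$, the logarithm being the familiar sine-kernel divergence over a half-ray of length $O(N)$.

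The main technical obstacle, and the step I would expect to require the most care, is upgrading the $O(\log N)$-variance bound to the pointwise probability statement with polynomial decay $N^{-D}$. Here the plan is to use that for a determinantal process the Laplace transform of a linear statistic has the closed form
\begin{flalign*}
\mathbb{E}\big[ e^{\tau H(v)} \big] = \det \big( 1 + (e^{\tau} - 1) K|_{\Lambda_v} \big),
\end{flalign*}
where $\Lambda_v$ is the half-ray restriction, and to expand $\log \det$ as a series in traces of powers of $K|_{\Lambda_v}$. Controlling each such trace by saddle point yields subgaussian-type cumulant bounds, and a Chernoff estimate then produces
\begin{flalign*}
\mathbb{P}\Big[ \big| H(v) - \mathbb{E}[H(v)] \big| > N^{\delta / 3} \Big] \le C \exp \big( - c N^{2 \delta / 3} / \log N \big),
\end{flalign*}
which dominates any polynomial $N^{-D}$. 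Combining with the mean estimate and dividing by $N$ gives the conclusion of \Cref{estimatedomainabc}. Throughout, the assumption $d(N^{-1} v, \partial \mathfrak{L}_{a, b, c}) \ge \vartheta$ is exactly what keeps the saddle points uniformly separated from the real axis, so that all implicit constants depend only on $(\vartheta, \delta, D)$.
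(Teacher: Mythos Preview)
The paper does not prove \Cref{estimatedomainabc} at all: it is quoted verbatim from Petrov \cite[Lemma~5.6]{ARTP} (as reformulated in \cite[equation~(13)]{LTGDMLS}) and used as a black box. So there is no ``paper's own proof'' to compare against.

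That said, your outline is essentially the route taken in the cited source. Petrov's argument in \cite{ARTP} does proceed via the determinantal structure of the hexagon tiling, writes the correlation kernel as a double contour integral, and performs a steepest-descent analysis in the bulk (where $d(N^{-1}v,\partial\mathfrak{L}_{a,b,c})\ge\vartheta$ guarantees the two complex-conjugate critical points stay uniformly away from the real axis). One remark on your concentration step: rather than the Laplace-transform route you sketch, Petrov obtains the polynomial-in-$N$ tail more directly by bounding moments of the centered linear statistic using the determinantal cumulant formulas; either method works here, though your cumulant-series expansion of $\log\det(1+(e^\tau-1)K|_{\Lambda_v})$ would need some care to justify convergence uniformly in $\tau$ on the relevant range. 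Overall your plan is sound and would reproduce the result, but since the present paper treats this lemma as imported, you need not supply a proof at all.
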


Next we require a way of ``locally comparing'' a tiling of a domain, whose associated global profile $\mathcal{H}: \mathfrak{R} \rightarrow \mathbb{R}$ exhibits no facets, to that of a hexagon. To that end, following \cite{LTGDMLS}, we will match the gradient and Hessian of $\mathcal{H}$ at any given point in $\mathfrak{R}$ to those of some hexagonal global profile $\mathfrak{H}_{a, b, c}$ at some point in $\mathfrak{L}_{a, b, c}$. 

More explicitly, we may express the gradient and Hessian of $\mathcal{H}$ at some fixed $z \in \mathfrak{R}$ by a \emph{local quadruple} of parameters $(s, t, \xi, \eta) \in \mathcal{T} \times \mathbb{R}^2$. Here, $(s, t) = \nabla \mathcal{H} (z) \in \mathcal{T}$; $\xi = \partial_x^2 \mathcal{H} (z)$; and $\eta = \partial_y^2 \mathcal{H} (z)$. Observe that the final component $\partial_x \partial_y \mathcal{H} (z) = \partial_y \partial_x \mathcal{H} (z)$ of the Hessian is determined by $(s, t, \xi, \eta)$ from the Euler-Lagrange equation \eqref{aijh} satisfied by $\mathcal{H}$ (recall \Cref{haijequations}).

The following lemma, which appears as Theorem 4.4 of \cite{LTGDMLS}, essentially states that the local quadruple $(s, t, \xi, \eta)$ associated with a point of any global profile $\mathcal{H}$ can be equated with that at some point in the inscribed ellipse $\mathfrak{L}_{a, b, 1 - a - b} \subset \mathfrak{X}$ of an $a \times b \times (1 - a - b)$ hexagon $\mathfrak{X} = \mathfrak{X}_{a, b, 1 - a - b}$. To state it, we first define the domain
\begin{flalign*}
\mathfrak{Y} = \big\{ (a, b, x, y) \in \mathbb{R}^4: \quad (a, b) \in \mathcal{T}; \quad (x, y) \in \mathfrak{L}_{a, b, 1 - a - b} \big\}.
\end{flalign*}

\noindent Here, we interpret the first two components $(a, b)$ of a quadruple $(a, b, x, y) \in \mathfrak{Y}$ as parameters defining the $a \times b \times (1 - a - b)$ hexagon $\mathfrak{X}_{a, b, 1 - a - b}$. The pair $(x, y)$ is then viewed as a point in the interior of the inscribed ellipse $\mathfrak{L}_{a, b, c}$. 

The following result appears at Theorem 4.4 of \cite{LTGDMLS} and essentially states that local quadruple appearing in a maximizer of $\mathcal{E}$ ``arises'' from an element of $\mathfrak{Y}$.

\begin{lem}[{\cite[Theorem 4.4]{LTGDMLS}}]
	
\label{abcderivative}

Define the map $\Psi: \mathfrak{Y} \rightarrow \mathcal{T} \times \mathbb{R}^2$ as follows. For any $(a, b, x, y) \in \mathfrak{Y}$, set $\Psi (a, b, x, y) = (s, t, \xi, \eta)$, where 
\begin{flalign*}
\nabla \mathfrak{H} (x, y) = (s, t); \qquad \partial_x^2 \mathfrak{H} (x, y) = \xi; \qquad \partial_y^2 \mathfrak{H} (x, y) = \eta,
\end{flalign*}

\noindent and we have abbreviated $\mathfrak{H} = \mathfrak{H}_{a, b, 1 - a - b}$. Then, $\Psi$ is a diffeomorphism. 

\end{lem}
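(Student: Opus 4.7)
The proof I would pursue exploits the explicit description of the hexagonal limit shape and a factorization of $\Psi$ that separates the gradient and Hessian information. To begin, note that both $\mathfrak{Y}$ and $\mathcal{T} \times \mathbb{R}^2$ are open subsets of $\mathbb{R}^4$, so it suffices to show that $\Psi$ is smooth, everywhere locally invertible, and globally bijective. Smoothness is immediate from \Cref{habcderivatives}: the height function $\mathfrak{H}_{a,b,c}$ is real-analytic in the interior of $\mathfrak{L}_{a,b,c}$, and smooth dependence on the hexagon parameters $(a,b)$ follows from the implicit function theorem applied to the uniformly elliptic Euler--Lagrange system \eqref{aijh} as the boundary data $\mathfrak{h}_{a,b,1-a-b}$ varies smoothly.

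The main step is to factor $\Psi = \Psi_2 \circ \Psi_1$, where $\Psi_1(a,b,x,y) = (a,b,s,t)$ records only the slope $(s,t) = \nabla \mathfrak{H}_{a,b,1-a-b}(x,y)$ and $\Psi_2(a,b,s,t) = (s,t,\xi,\eta)$ replaces the hexagon parameters by the Hessian entries. I would then show $\Psi_1$ is a diffeomorphism by invoking the classical Kenyon--Okounkov parameterization of the hexagonal limit shape: for each fixed $(a,b)$, the complex slope $w = e^{\pi \mathbf{i} s}\sin(\pi t)/\sin(\pi(1-s-t))$ satisfies the complex Burgers equation, solvable by characteristics on the hexagon. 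The resulting gradient map $(x,y) \mapsto (s,t)$ is a real-analytic bijection from $\mathfrak{L}_{a,b,1-a-b}$ onto $\mathcal{T}$ with nonvanishing Jacobian; smooth dependence on $(a,b)$ and the inverse function theorem then give that $\Psi_1$ is a diffeomorphism.

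Next, for $\Psi_2$ one observes that $s$ and $t$ are fixed by the map, so at each slope $(s,t) \in \mathcal{T}$ the task reduces to analyzing the auxiliary two-dimensional map $\Phi_{s,t}(a,b) := (\xi(a,b,s,t), \eta(a,b,s,t))$ on the appropriate slice of $\mathfrak{Y}$. The Euler--Lagrange relation \eqref{aijh} determines the mixed partial $\mu = \partial_x\partial_y\mathfrak{H}$ from $(s,t,\xi,\eta)$, so only the two independent second derivatives need be controlled. Using the closed form for $\mathfrak{H}_{a,b,c}$ arising from the complex slope, $\xi$ and $\eta$ can be computed explicitly, and I would combine this with the scaling covariance $\mathfrak{H}_{\lambda a,\lambda b,\lambda c}(\lambda x,\lambda y) = \lambda\,\mathfrak{H}_{a,b,c}(x,y)$ — which under the normalization $a+b+c=1$ cuts the symmetry down to a manageable one-parameter degeneracy — to compute the $2 \times 2$ Jacobian of $\Phi_{s,t}$ and verify it is everywhere nonzero. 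Global bijectivity of $\Phi_{s,t}$ is then obtained by verifying the boundary behavior: as $(a,b)$ approaches the boundary of its parameter region (one of the sides $a$, $b$, or $1-a-b$ degenerating to $0$), at least one of $|\xi|,|\eta|$ diverges, yielding properness; combined with local invertibility and connectedness, this forces $\Phi_{s,t}$ to be a diffeomorphism onto $\mathbb{R}^2$.

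The principal obstacle is the Hessian step. While the reduction to $\Phi_{s,t}$ is conceptually clean, the explicit evaluation of $\xi$ and $\eta$ in terms of $(a,b,s,t)$ is algebraically involved, and checking non-degeneracy of the Jacobian uniformly over $\mathfrak{Y}$ is not automatic from any abstract structural principle. If direct computation proves unwieldy, I would fall back on a topological argument: verify that $\Psi$ is a local diffeomorphism at one well-chosen base point (for example, the center of the symmetric hexagon $a=b=c=1/3$, where dihedral symmetry reduces the Jacobian to a scalar multiple of the identity), verify properness of $\Psi$ globally via boundary behavior of $\mathfrak{H}_{a,b,c}$ near $\partial\mathfrak{L}_{a,b,c}$ and near facets, and invoke a degree/covering argument to upgrade local to global. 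Either way, the content of the lemma lies in this final topological control rather than in the smooth-category calculus.
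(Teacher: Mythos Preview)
The paper does not prove this lemma: it is quoted verbatim as \cite[Theorem 4.4]{LTGDMLS} and used as a black box in \Cref{MonotonicityDomains} and \Cref{EstimateHjk}. There is therefore no in-paper proof to compare your proposal against; the authors simply import the statement from Laslier--Toninelli.

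That said, a few comments on your sketch as a standalone argument. The factorization $\Psi = \Psi_2 \circ \Psi_1$ is sensible, and the claim that for each fixed $(a,b)$ the gradient map $(x,y) \mapsto \nabla\mathfrak{H}_{a,b,1-a-b}(x,y)$ is a diffeomorphism $\mathfrak{L}_{a,b,1-a-b} \to \mathcal{T}$ is correct and classical (it follows from the explicit complex-Burgers solution). The genuine work, as you acknowledge, is in $\Psi_2$: showing that for each fixed slope $(s,t)$ the map $(a,b) \mapsto (\partial_x^2 \mathfrak{H}, \partial_y^2 \mathfrak{H})$ is a diffeomorphism onto $\mathbb{R}^2$. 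Your properness heuristic (``as a side degenerates, a second derivative blows up'') is plausible but not quite right as stated---one must also account for the constraint that the \emph{given} slope $(s,t)$ remains achievable in the liquid region as $(a,b)$ varies, which restricts the relevant boundary behavior. The actual proof in \cite{LTGDMLS} proceeds by writing down explicit formulas for the second derivatives via the complex slope and checking injectivity and surjectivity directly, which is closer to your ``direct computation'' branch than your topological fallback. If you intend to write this out, you should consult that reference rather than reconstruct it.
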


\subsection{Proof of \Cref{estimateboundaryheight}}

\label{BoundaryProof}

In this section we establish \Cref{estimateboundaryheight} assuming \Cref{estimategammakm} below. Since the proofs of the two estimates in \eqref{nhg1g2v} are very similar, we only detail that of the second. Following Sections 6 and 7 of \cite{LTGDMLS}, this will proceed by introducing a sequence of functions $\Phi_0 \ge \Phi_1 \ge \ldots \ge \Phi_J$ such that $\Phi_J \approx G_2$ and $N^{-1} H \le \Phi_0$ holds deterministically. Then, we will use the results from \Cref{MonotonicityDomains} to show that $N^{-1} H \le \Phi_j$ implies $N^{-1} H \le \Phi_{j + 1}$ with high probability, for each $1 \le j \le J - 1$. By induction, this will yield the upper bound $N^{-1} H \le \Phi_J \approx G_2$.  

To implement this procedure, let us abbreviate $g_2 = g$ and $G_2 = G$; we recall throughout that $\nabla G(z) \in \mathcal{T}_{\varepsilon}$ for each $z \in \mathcal{B}$, that is $G$ does not have any frozen regions in $\mathcal{B}$. Define $\Phi_0: \mathcal{B} \rightarrow \mathbb{R}$ by setting $\Phi_0 (z) = G(z) + 3$, for each $z \in \mathbb{R}$. Since $H$ and $G$ are both $1$-Lipschitz, the first assumption on $G$ listed in \Cref{estimateboundaryheight} yields $N^{-1} H (v) \le \Phi_0 (N^{-1} v)$ deterministically, for each $v \in \mathbb{V}(R)$ and sufficiently large $N$. 

Following equation (46) of \cite{LTGDMLS}, we also define $\omega \in \mathbb{R}$, $J \in \mathbb{Z}_{\ge 0}$, and $\Phi_j: \mathcal{B} \rightarrow \mathbb{R}$ for each $j \in [0, J]$ by setting
\begin{flalign}
\label{omegajfunction}
\omega = N^{-\alpha / 5}; \qquad J = \big\lfloor 3 \omega^{-1} \big\rfloor - 1; \qquad \Phi_j (z) = G(z) - j \omega + 3,
\end{flalign} 

\noindent for each $z \in \mathcal{B}$. Defining for any $j \in [0, J]$ and $v \in \mathbb{V}(R)$ the event
\begin{flalign}
\label{omegajv}
\Omega_j (v) = \big\{ N^{-1} H(v) > \Phi_j (N^{-1} v) \big\},
\end{flalign}

\noindent we have the following proposition, which is similar to equation (46) of \cite{LTGDMLS} and states that $N^{-1} H \le \Phi_j$ implies $N^{-1} H \le \Phi_{j + 1}$ with high probability. 

\begin{prop}
	
	\label{estimategammaj} 
	
	There exists a constant $C = C(\varepsilon, \alpha, \nu, D) > 1$ such that the following holds. For any integer $j \in [1, J]$ and vertex $v \in \mathbb{V}(R)$, we have that
	\begin{flalign*}
	\mathbb{P} \bigg[ \Omega_j (v) \cap \bigcap_{u \in \mathbb{V}(R)} \Omega_{j - 1} (u)^c  \bigg] \le C N^{-D}.
	\end{flalign*}
\end{prop}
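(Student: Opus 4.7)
My plan is to reduce \Cref{estimategammaj} to a pointwise statement via a union bound. For a fixed vertex $v \in R$, I will show that on the event $\bigcap_{u \in R} \Omega_{j-1}(u)^c$ (where the global upper bound $N^{-1} H \le \Phi_{j-1}$ holds on all of $R$), the pointwise improvement $N^{-1} H(v) \le \Phi_j(N^{-1} v)$ holds with probability at least $1 - C N^{-D-3}$. Since $|R| \le C N^2$, summing over $v \in R$ delivers the desired bound. The pointwise improvement will be packaged as an auxiliary lemma \Cref{estimategammakm} (to be stated in \Cref{EstimateHjk}), which will do the real work; \Cref{estimategammaj} itself will then follow in one line from \Cref{estimategammakm} together with a union bound.

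The pointwise improvement will be established by the Laslier--Toninelli hexagonal comparison method. Fix $v \in R$, set $z_0 = N^{-1} v$, and select a mesoscopic scale $M = N^{\beta}$ with $1 - \tfrac{\alpha}{10} < \beta < 1$. The third assumption of \Cref{estimateboundaryheight} gives $\|G - G(0,0)\|_{\mathcal{C}^2(\overline{\mathcal{B}})} \le (\log N)^{1-\nu}$ together with a $\mathcal{C}^{2,\alpha}$ bound of $(\log N)^{10}$, so on $\mathcal{B}_{M/N}(z_0)$ the function $G$ is captured by its second-order Taylor polynomial at $z_0$ with remainder of order $(M/N)^{2+\alpha}(\log N)^{10}$. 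Writing $(s,t,\xi,\eta)$ for the local quadruple of $G$ at $z_0$, I will invoke \Cref{abcderivative} to produce a unique $(a,b,x_0,y_0) \in \mathfrak{Y}$ whose corresponding hexagonal maximizer $\mathfrak{H}_{a,b,1-a-b}$ has matching gradient and pure second derivatives at $(x_0,y_0)$ (the mixed second derivative is automatically matched by the Euler--Lagrange equation \eqref{aijh}). Because $(s,t) \in \mathcal{T}_{\varepsilon/2}$ (by the second assumption of \Cref{estimateboundaryheight}), the inverse diffeomorphism $\Psi^{-1}$ keeps $(a,b,x_0,y_0)$ in a compact subset of $\mathfrak{Y}$, so the matched hexagon and interior point stay uniformly away from the arctic ellipse $\partial \mathfrak{L}_{a,b,c}$.

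Given this matching, I will position a scale-$M$ hexagonal domain $\mathcal{X} = \mathcal{X}_{\lfloor aM \rfloor, \lfloor bM \rfloor, \lfloor cM \rfloor}$ so that $v$ sits at the preimage of $(x_0,y_0)$ under the scaling. Using the global bound $N^{-1} H \le \Phi_{j-1}$ together with the local Taylor matching, I will build a boundary height function on $\partial \mathcal{X}$ that dominates $H|_{\partial \mathcal{X}}$ pointwise and whose interior tiling statistics are governed by $N \mathfrak{H}_{a,b,c}$ (up to an affine shift). Monotonicity (\Cref{monotoneheightcouple}) then couples $H|_{\mathcal{X}}$ below a uniformly random hexagonal tiling $\widetilde{H}$ with this dominating boundary data, and the hexagonal effective global law \Cref{estimatedomainabc} gives
\[
N^{-1}\widetilde{H}(v) \le \Phi_{j-1}(z_0) + O\bigl((M/N)^{2+\alpha}(\log N)^{10}\bigr) + O(M^{\delta-1} \cdot M/N)
\]
with probability $1 - C N^{-D-3}$ for any fixed $\delta > 0$. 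Choosing $\beta$ close enough to $1$ (depending on $\alpha$) makes both error terms smaller than $\tfrac{1}{2}\omega = \tfrac{1}{2} N^{-\alpha/5}$, which yields $N^{-1} H(v) \le \Phi_{j-1}(z_0) - \omega = \Phi_j(z_0)$.

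The main obstacle I foresee is the construction of a dominating boundary height function on $\partial \mathcal{X}$ that is simultaneously (i) an honest admissible boundary height function for a hexagonal tiling, (ii) pointwise larger than $H|_{\partial\mathcal{X}}$, and (iii) close enough to the target affine-plus-quadratic profile to let the hexagonal global law deliver the needed $N^{-\alpha/5}$ improvement. This calibration uses the $\mathcal{C}^{2,\alpha}$ estimate in an essential way, because it controls the cubic Taylor remainder uniformly on the mesoscopic boundary. Once this calibration is executed at a single scale in \Cref{estimategammakm}, the proof of \Cref{estimategammaj} reduces to a union bound over $v \in R$.
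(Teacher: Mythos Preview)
Your proposal has a genuine gap: the single hexagonal comparison you describe produces no downward improvement at all. The functions $\Phi_{j-1}$ and $\Phi_j$ differ only by the constant $\omega$, so their Hessians coincide with that of $G$. If you match the local quadruple of $G$ at $z_0$ to a hexagon via \Cref{abcderivative}, the hexagonal height function $\widetilde{H}$ at the center will satisfy $N^{-1}\widetilde{H}(v) \approx \Phi_{j-1}(z_0)$ up to the Taylor and fluctuation errors you list; these errors have indeterminate sign, so making them $o(\omega)$ yields only $N^{-1}H(v) \le \Phi_{j-1}(z_0) + o(\omega)$, not $\Phi_{j-1}(z_0) - \omega = \Phi_j(z_0)$. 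Nothing in your argument creates the required negative gain of size $\omega$.

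The paper supplies this missing mechanism by inserting a strictly concave barrier. It defines $\psi(x,y) = \Upsilon - e^{\zeta x} - e^{\zeta y}$ with $\zeta = (\log N)^{1-\nu/2}$ and intermediate comparison functions
\[
\varphi_{j;k}(z) = G(z) - (j+1)\omega + \Big(1-\tfrac{k}{N}\Big)\omega\,\psi(z) + 3\rho + 3,
\]
together with events $\Omega_{j;k}(v) = \{N^{-1}H(v) > \varphi_{j;k}(N^{-1}v)\}$. These interpolate between $\Phi_{j-1}$ and $\Phi_j$: one has $\varphi_{j;K} \le \Phi_j \le \Phi_{j-1} \le \varphi_{j;0}$ for a suitable $K$ of order $N$. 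The actual content of \Cref{estimategammakm} is that, conditionally on $N^{-1}H \le \varphi_{j;k-1}$ everywhere, one has $N^{-1}H(v) \le \varphi_{j;k}(N^{-1}v)$ with high probability. The gain per $k$-step is only $N^{-1}\omega\psi(z_0)$, which is tiny; it comes precisely from the strict concavity of $\psi$ via the key inequality (\Cref{functionhuwvz})
\[
\varphi(N^{-1}v) - \varphi(N^{-1}u) \ge \mathfrak{H}(z_0) - \mathfrak{H}(w_0) + \omega^2|w_0-z_0|^2,
\]
where the hexagon is chosen to match not $G$ but an auxiliary $\Theta$ absorbing the effect of $\psi$. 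The proof of \Cref{estimategammaj} is then a union bound over both $k \in [1,K]$ and $v \in R$, not just over $v$ as you write; the telescoping over $k$ is essential because the per-step gain is far smaller than $\omega$.
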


Assuming \Cref{estimategammaj}, we can quickly establish \Cref{estimateboundaryheight}.

\begin{proof}[Proof of \Cref{estimateboundaryheight}]
	
	Since $\Phi_J (z) \le G(z) + 2 \omega + \varkappa$ and $N^{-1} H (v) \le \Phi_0 (N^{-1} v)$ holds deterministically, a union bound and \Cref{estimategammaj} together yield a constant $C = C(\varepsilon, \alpha, \nu, D) > 1$ such that 
	\begin{flalign*}
	& \displaystyle\max_{v \in \mathbb{V}(R)} \mathbb{P} \big[ N^{-1} H(v) > G_2 (N^{-1} v) + 2 \omega \big]  \\
	& \qquad \le \displaystyle\max_{v \in \mathbb{V}(R)} \mathbb{P} \big[ \Omega_J (v) \big]  \le \displaystyle\sum_{j =  1}^J \mathbb{P} \Bigg[ \bigcup_{v \in \mathbb{V}(R)} \bigg( \Omega_j (v) \cap \bigcap_{u \in \mathbb{V}(R)} \Omega_{j - 1} (u)^c \bigg) \Bigg] \le C J |R| N^{-D - 3} \le 12 C N^{-D},
	\end{flalign*}
	
	\noindent since $J \le 3N$ and $|R| \le 4N^2$. Since $2 \omega = 2 N^{-\alpha/5}$, this establishes the second estimate in \eqref{nhg1g2v}; the proof of the former is entirely analogous and is therefore omitted. 
\end{proof}

\noindent We will establish \Cref{estimategammaj} also through an inductive procedure. To that end, following equation (50) of \cite{LTGDMLS}, define $\rho, \zeta \in \mathbb{R}$ and $\psi: \mathcal{B} \rightarrow \mathbb{R}$ by setting
\begin{flalign}
\label{rhozetapsi}
\rho = N^{-1 / 4}; \qquad \zeta = (\log N)^{1 - \nu / 2}; \qquad \psi (x, y) = \Upsilon - e^{\zeta x} - e^{\zeta y}, 
\end{flalign}

\noindent for each $(x, y) \in \mathcal{B}$, where $\Upsilon \in \mathbb{R}$ is chosen such that $\inf_{z \in \mathcal{B}} \psi (z) = 2$.

Now let $K \in \mathbb{Z}_{\ge 1}$ denote the largest integer for which $\big(1 - \frac{K}{N} \big) \sup_{z \in \mathcal{B}} \psi (z) \le \frac{1}{2}$. Following equation (52) of \cite{LTGDMLS}, define for any $j \in [0, J]$ and $k \in [0, K]$ the function $\varphi_{j; k}: \mathcal{B} \rightarrow \mathbb{R}$ by setting
\begin{flalign}
\label{functionkj}
\varphi_{j; k} (z) = G(z) - (j + 1) \omega +  \left( 1 - \frac{k}{N} \right) \omega \psi (z) + 3 \rho + 3, \quad \text{for each $z \in \mathcal{B}$.}
\end{flalign}

\noindent Similar to in \eqref{omegajv}, for any vertex $v \in \mathbb{V}(R)$ and integer pair $(j, k) \in [0, J] \times [0, K]$, we define the event 
\begin{flalign}
\label{omegakj}
\Omega_{j; k} (v) = \big\{ N^{-1} H (v) > \varphi_{j; k} (N^{-1} v) \big\}.
\end{flalign} 

\noindent The following proposition, which is analogous to Proposition 6.7 of \cite{LTGDMLS}, indicates that $N^{-1} H \ge \varphi_{j; k - 1}$ implies $N^{-1} H \ge \varphi_{j; k}$ with high probability. Its proof will be given in \Cref{EstimateHjk} below. 	
 
\begin{prop}

\label{estimategammakm} 

There exists a constant $C = C(\varepsilon, \alpha, \nu, D) > 1$ such that the following holds. For any pair $(j, k) \in [0, J] \times [1, K]$ and vertex $v \in \mathbb{V}(R)$, we have that
\begin{flalign*}
\mathbb{P} \bigg[ \Omega_{j; k} (v) \cap \bigcap_{u \in \mathbb{V}(R)} \Omega_{j; k - 1} (u)^c  \bigg] \le C N^{-D}.
\end{flalign*}
\end{prop}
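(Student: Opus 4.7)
The proof follows Proposition 6.7 of \cite{LTGDMLS}, performing a local comparison of the random tiling $H$ in a mesoscopic disk around $v$ with a uniformly random tiling of a hexagonal region whose limit shape matches a rescaled version of $\varphi_{j;k-1}$ at $N^{-1}v$. The strict concavity of $\psi = \Upsilon - e^{\zeta x} - e^{\zeta y}$ in \eqref{rhozetapsi} produces the pointwise gap $\varphi_{j;k-1} - \varphi_{j;k} = (\omega/N)\psi \ge 2\omega/N = 2N^{-1-\alpha/5}$, and the entire argument is tuned so that the total error incurred by the local hexagonal comparison is strictly smaller than this gap.

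Work on $R_v := \mathcal{B}_{rN}(v) \cap \mathbb{T}$ for a mesoscopic scale $r = r(N)$ to be balanced below, and condition on $H|_{\partial R_v}$; on the event $\bigcap_u \Omega_{j;k-1}(u)^c$, the deterministic upper bound $H \le N\varphi_{j;k-1}$ holds on $\partial R_v$. The gradient and diagonal Hessian of $\varphi_{j;k-1}$ at $N^{-1}v$ lie in the range of the diffeomorphism $\Psi$ of \Cref{abcderivative}---the hypothesis $\nabla G(N^{-1}v) \in \mathcal{T}_{\varepsilon/2}$ together with the bounds $|\nabla(\omega\psi)| = O(\omega\zeta) = o(1)$ and $\|\nabla^2(\omega\psi)\| = O(\omega\zeta^2) = O((\log N)^{2-\nu})$ ensure this---so there exist $(a, b, w) \in \mathfrak{Y}$ with the hexagonal profile $\mathfrak{H} := \mathfrak{H}_{a, b, 1-a-b}$ satisfying $\nabla\mathfrak{H}(w) = \nabla\varphi_{j;k-1}(N^{-1}v)$ and $\nabla^2\mathfrak{H}(w) = r\,\nabla^2\varphi_{j;k-1}(N^{-1}v)$. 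The factor $r$ in the Hessian matching reflects the discrepancy between the lattice scales $N$ and $rN$ at which the two profiles are normalized; together with the concavity of $\psi$ this places $w$ strictly inside $\mathfrak{L}_{a,b,1-a-b}$, so $\|\mathfrak{H}\|_{\mathcal{C}^3}$ near $w$ is uniformly bounded by \Cref{habcderivatives}.

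Taylor expanding $N\varphi_{j;k-1}(N^{-1}u)$ around $u = v$ via the $\mathcal{C}^{2,\alpha}$ hypothesis on $G$, and Taylor expanding $rN\,\mathfrak{H}(u/(rN))$ around $u = v$ via the $\mathcal{C}^3$ control of $\mathfrak{H}$, the matching just established forces agreement up to and including the diagonal second-order terms, leaving three error contributions on $\partial R_v$: a $\mathcal{C}^{2,\alpha}$-remainder of $\varphi_{j;k-1}$ of size $O((rN)^{1+\alpha}(\log N)^{10}/N^\alpha)$, a $\mathcal{C}^3$-remainder of $\mathfrak{H}$ of size $O(rN)$, and an off-diagonal Hessian mismatch of size $O((rN)^2 \omega\zeta^2/N)$. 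Choose $\tilde h := \lfloor rN\,\mathfrak{H}\rfloor + \mathrm{shift}$ on the boundary of a lattice hexagon $\mathcal{X}$ of size $\Theta(rN)$ embedding $R_v$, with $\mathrm{shift}$ absorbing the three errors and majorizing $H|_{\partial R_v}$; invoke \Cref{monotoneheightcouple} for a coupling of $H|_{R_v}$ with a uniformly random $\tilde H \in \mathfrak{G}(\tilde h)$ having $H \le \tilde H$ on $R_v$, and apply \Cref{estimatedomainabc} at scale $rN$ to get $|\tilde H(v) - rN\,\mathfrak{H}(w) - \mathrm{shift}| \le (rN)^{\delta'}$ with probability $\ge 1 - C(rN)^{-D-1}$, for any fixed small $\delta' > 0$. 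Dividing by $N$ yields $N^{-1}H(v) \le \varphi_{j;k-1}(N^{-1}v) + E$, where $E$ is the sum of $r^{1+\alpha}(\log N)^{10}$, $r/N \cdot \|\mathfrak{H}\|_{\mathcal{C}^3}$, $r^2\omega\zeta^2$, and $r^{\delta'}N^{\delta'-1}$. Choosing $r$ in a suitable polynomial range makes $E$ strictly smaller than $(\omega/N)\psi(N^{-1}v) \ge 2\omega/N$, giving $N^{-1}H(v) \le \varphi_{j;k}(N^{-1}v)$ on the desired high-probability event.

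The main obstacle is the simultaneous balancing of these four errors against the narrow gap $\omega\psi/N$: each alone is easy to beat, but they interact through the single parameter $r$, with the Taylor remainders degrading as $r$ grows and the hexagon global law degrading as $r$ shrinks. The precise choice of rate $\zeta = (\log N)^{1-\nu/2}$ in the barrier construction keeps $\omega\zeta^2$ only polylogarithmic; the hypothesized $\mathcal{C}^{2,\alpha}$-bound on $G$ (rather than merely $\mathcal{C}^2$) is what yields a polynomial-in-$r$ Taylor remainder; and the sharpness of \Cref{estimatedomainabc}, with error $N^{\delta-1}$ for arbitrarily small $\delta$, enables the hexagon-law error to decay faster than any fixed polynomial of $rN$. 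A secondary technical point is ensuring that the matching point $w$ stays bounded away from $\partial\mathfrak{L}_{a,b,1-a-b}$ uniformly in $N$; this holds because the rescaled target Hessian $r\,\nabla^2\varphi_{j;k-1}(N^{-1}v)$ ranges over a compact subset of strictly negative-definite matrices for all $(j, k, v)$ in the relevant range, which pulls back under $\Psi^{-1}$ to a compact subset of $\mathfrak{Y}$ away from $\partial\mathfrak{L}_{a,b,1-a-b}$ and thereby keeps $\|\mathfrak{H}\|_{\mathcal{C}^3}$ uniformly controlled as $N \to \infty$.
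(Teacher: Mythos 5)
There is a genuine gap, and it sits at the core of the argument. After the monotone coupling and the hexagon global law, what you actually need is that the function $\Delta(u) := \varphi_{j;k-1}(N^{-1}u) - r\mathfrak{H}\big(u/(rN)\big)$ has a strict quadratic maximum at $u = v$: the drop of $\Delta$ over distance $rN$ on the boundary has to exceed the narrow gap $\omega\psi(N^{-1}v)/N$. With your matching, $\nabla\Delta(v) = 0$, and your \emph{diagonal} Hessian matching $\nabla^2\mathfrak{H}(w) = r\nabla^2\varphi(N^{-1}v)$ kills the diagonal entries of $\nabla^2\Delta(v)$ exactly. What remains is the off-diagonal entry, which is the discrepancy $\partial_x\partial_y\varphi - r^{-1}\partial_x\partial_y\mathfrak{H}$ forced by the Euler--Lagrange constraint on $\mathfrak{H}$. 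But a matrix $\left(\begin{smallmatrix}0 & a\\ a & 0\end{smallmatrix}\right)$ has eigenvalues $\pm a$: it is indefinite. Along one of the diagonals $\Delta$ \emph{increases} quadratically from $v$, so the shift required to majorize $H|_{\partial R_v}$ exceeds $\Delta(v) + \text{const}\,|a|(rN)^2/N$, and you end up $\emph{above}$ rather than below $\varphi_{j;k}(N^{-1}v)$ at the center. You treat this mismatch as one more error to be dominated, but it has no definite sign and cannot be absorbed by the shift; it destroys the comparison.

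The paper's construction is designed precisely to avoid this. It does not match $\textbf{q}_\mathfrak{H}$ against $\textbf{q}_\varphi$ (scaled or otherwise). Instead it introduces the auxiliary function $\Theta$ in \eqref{thetaz}, which agrees with $\varphi$ to first order and whose full Hessian $\textbf{q}_\Theta$ satisfies the linearized Euler--Lagrange equation $\textbf{p}(\nabla\varphi)\cdot\textbf{q}_\Theta = 0$; one then matches $\textbf{q}_\mathfrak{H}(z_0) = \textbf{q}_\Theta(N^{-1}v)$ via \Cref{abcderivative} (this is consistent precisely because both sides satisfy the constraint). The residual Hessian mismatch is then $\textbf{H}_\varphi - \textbf{H}_\Theta = \big(\text{positive coefficient}\big)\cdot\textbf{H}_\psi$ by \eqref{hnvhthetanvequation}, and $\textbf{H}_\psi$ is negative definite with eigenvalues $\le -\zeta^2 e^{-\zeta}$. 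This \emph{is} the curvature gap $\omega^2|w_0-z_0|^2$ in \Cref{functionhuwvz}; it is not an error term to be beaten but the very mechanism that closes the induction. Everything in the barrier construction---the choice of $\psi$ as a linear combination of exponentials, the coefficient in front of $\psi$ in $\Theta$, the bound \eqref{pqgpqgestimate}---exists to make this residual a definite positive multiple of a definite negative matrix. Your proposal omits $\Theta$ entirely, and as a result the sign structure is lost. A secondary point: the paper works at the fixed scale $\rho = N^{-1/4}$ rather than a tunable $r$, and the hexagon is sized $\Theta(N/\zeta)$, not $\Theta(rN)$; no delicate balancing of $r$ against four error terms is needed once the curvature gap is in place.
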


Assuming \Cref{estimategammakm}, we now can establish \Cref{estimategammaj}.

\begin{proof}[Proof of \Cref{estimategammaj}]
	
	First observe that since $\big(1 - \frac{K}{N} \big) \sup_{z \in \mathcal{B}} \psi (z) \le \frac{1}{2} < 2 \le \inf_{z \in \mathcal{B}} \psi (z)$ and $\rho \le \frac{\omega}{6}$ for sufficiently large $N$, we have for any $j \in [1, J]$ and $z \in \mathcal{B}$ that 
	\begin{flalign} 
	\label{functionjk0k} 
	\varphi_{j; K} (z) \le \Phi_j (z) \le \Phi_{j - 1} (z) \le \varphi_{j; 0} (z). 
	\end{flalign}
	
	\noindent Therefore, \Cref{estimategammakm}, \eqref{functionjk0k}, and a union bound together yield a constant $C = C (\varepsilon, \alpha, \nu, D) > 1$ such that 
	\begin{flalign*}
	\displaystyle\max_{v \in \mathbb{V}(R)} \mathbb{P} \bigg[ \Omega_j (v) \cap \bigcap_{u \in \mathbb{V}(R)} \Omega_{j - 1} (u)^c \bigg] & \le \displaystyle\max_{v \in \mathbb{V}(R)} \mathbb{P} \bigg[ \Omega_{j; K} (v) \cap \bigcap_{u \in \mathbb{V}(R)} \Omega_{j; 0} (u)^c  \bigg] \\
	& \le \displaystyle\sum_{k =  1}^K \mathbb{P} \Bigg[ \bigcup_{v \in \mathbb{V}(R)} \bigg( \Omega_{j; k} (v) \cap \bigcap_{u \in \mathbb{V}(R)} \Omega_{j; k - 1} (u)^c \bigg) \Bigg] \\
	& \le C K |R| N^{-D - 3} \le 4 C N^{-D},
	\end{flalign*}
	
	\noindent where we have used the facts that $K \le N$ and $|R| \le 4N^2$. 
\end{proof}

\subsection{Proof of \Cref{estimategammakm}}

\label{EstimateHjk}

In this section we establish \Cref{estimategammakm}, following the proof of Proposition 6.7 given in Section 7 of \cite{LTGDMLS}. To that end, recall the notation of \Cref{BoundaryProof}, fix some pair $(j, k) \in [0, J] \times [1, K]$ and vertex $v \in \mathbb{V}(R)$, and define 
\begin{flalign}
\label{functionomegakappa} 
\varphi = \varphi_{j; k - 1}; \qquad \Omega = \bigcup_{u \in \mathbb{V}(R)} \Omega_{j; k - 1} (u); \qquad \kappa = \frac{N - k + 1}{N}.
\end{flalign} 

\noindent Throughout this section, we restrict to the event $\Omega^c$. 

\begin{rem} 
	
\label{zetakappadelta} 

By the third property listed in \Cref{estimateboundaryheight} and the definition \eqref{rhozetapsi} of $\zeta$ (which implies that $e^{\zeta}$ grows slower than any polynomial in $N$), for any $\delta > 0$ there exists a constant $C = C (\delta) > 1$ such that 
\begin{flalign}
\label{gzetadeltakappa} 
\big\| G - G(0, 0) \big\|_{\mathcal{C}^2 (\overline{\mathcal{B}})} \le (\log N)^{-\nu / 2} \zeta; \qquad \zeta \le e^{\zeta} \le C N^{\delta}; \qquad  \displaystyle\frac{1}{C N^{\delta}} \le 1 - \displaystyle\frac{K}{N} \le \kappa \le 1.
\end{flalign}

\end{rem} 

Now, for $N$ sufficiently large, observe that since $H$ and $G$ are $1$-Lipschitz; since $N^{-1} h(u) \le G(N^{-1} u) + \frac{4}{N} < G (N^{-1} u) + \rho$ for any $u \in \partial R$ (by the first assumption on $G$ listed in \Cref{estimateboundaryheight} and the fact that $G$ is $1$-Lipschitz); and since $(j + 1) \omega \le 3$ for each $z \in \mathcal{B}$ and $j \in [1, J]$, the bound
\begin{flalign}
\label{hnvrho} 
N^{-1} H(v) \le \varphi_{j; k} (N^{-1} v), \qquad \text{holds deterministically if $d (v, \partial R) \le \rho N$}.
\end{flalign} 

\noindent Thus, we will assume in what follows that $d(v, \partial R) > \rho N$, so $\mathcal{B}_{\rho N} (v) \subset \mathcal{B}_{\rho N} (v) \subset \mathbb{F}(R)$. 

In this case, let us briefly outline how we will establish \Cref{estimategammakm}; in the below, we recall the notation of \Cref{domainabc} and \Cref{habc}. As mentioned previously, this will proceed by locally comparing the tiling of $R$ around $v$ with a uniformly random tiling of a suitable $\lfloor aN \rfloor \times \lfloor bN \rfloor \times \lfloor cN \rfloor$ hexagon around some point $N z_0$. More specifically, we will exhibit a triple $(a, b, c) \in \mathbb{R}_{> 0}$ and a point $z_0 \in \mathfrak{L}_{a, b, c} \subset \mathfrak{X}_{a, b, c}$ such that $\varphi |_{\partial \mathcal{B}_{\rho} (N^{-1} v)} < \mathfrak{H}_{a, b, c} |_{\partial \mathcal{B}_{\rho} (z_0)}$ and $\varphi_{j; k} (N^{-1} v) > \mathfrak{H}_{a, b, c} (z_0)$ (after suitably globally shifting $\mathfrak{H}_{a, b, c}$, if necessary). Since the normalized height function $N^{-1} H$ is bounded above by $\varphi$, \Cref{monotoneheightcouple} and \Cref{estimatedomainabc} will then imply that $N^{-1} H(v) \le \mathfrak{H}_{a, b, c} (z_0) < \varphi_{j; k} (N^{-1} v)$ with high probability, thereby establishing \Cref{estimategammakm}.  

The triple $(a, b, c) \in \mathbb{R}_{> 0}^3$ and point $z_0 \in \mathfrak{L}_{a, b, c}$ will be determined (through \Cref{abcderivative}) by imposing $\nabla \mathfrak{H}_{a, b, c} (z_0) = \nabla \varphi (N^{-1} v)$ and fixing the second derivatives of $\mathfrak{H}_{a, b, c}$ at $z_0$ in a specific way. To make this more precise, we require some additional notation. 

For any pair $(s, t) \in \mathcal{T}$, function $F \in \mathcal{C}^2 (\mathcal{B})$, and point $z \in \mathcal{B}$, define the vectors $\textbf{p} (s, t) \in \mathbb{R}^4$ and $\textbf{q}_F (z) \in \mathbb{R}^4$ by setting
\begin{flalign}
\label{pq} 
\textbf{p} (s, t) = \big( \mathfrak{a}_{xx} (s, t), \mathfrak{a}_{xy} (s, t), \mathfrak{a}_{yx} (s, t), \mathfrak{a}_{yy} (s, t) \big); \quad \textbf{q}_F (z) = \big( \partial_x^2 F(z), \partial_x \partial_y F(z), \partial_y \partial_x F(z), \partial_y^2 F(z) \big),
\end{flalign} 

\noindent where we recall the functions $\mathfrak{a}_{jk}$ from \eqref{aijst}. Then the partial differential equation \eqref{aijh} is equivalent to stipulating that $\textbf{p} \big( \nabla F(z) \big) \cdot \textbf{q}_F (z) = 0$ holds for each $z \in \mathcal{B}$; in particular, this holds for $F = G$.

We next establish the below lemma, whose first part shows that $\varphi$ does not exhibit facets and whose second provides an estimate on $\textbf{p}$ and $\textbf{q}$ that will be useful for what follows. 

\begin{lem} 
	
	\label{pgradientestimateqg}
	
	There exists a constant $C_0 = C_0 (\varepsilon, \alpha, \nu) > 1$ such that the following two statements hold for $N > C_0$. 
	
	\begin{enumerate} 
		
	\item For any $z \in \mathcal{B}$, we have that $\nabla \varphi (z) \in \mathcal{T}_{\varepsilon / 2}$. 
	
	\item For any $z \in \mathcal{B}$, we have that
	\begin{flalign}
	\label{pqgpqgestimate}
	\left| \displaystyle\frac{\textbf{\emph{p}} \big( \nabla \varphi (z) \big) \cdot \textbf{\emph{q}}_G ( z)}{\textbf{\emph{p}} \big( \nabla \varphi (z) \big) \cdot \textbf{\emph{q}}_{\psi} ( z)} \right| < \kappa \omega (\log N)^{-\nu / 4}.
	\end{flalign}
	
	\end{enumerate} 
	
\end{lem}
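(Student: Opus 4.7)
The key structural observation is the decomposition $\varphi = G + \kappa\omega\psi + (\text{constant})$, which gives $\nabla\varphi = \nabla G + \kappa\omega\nabla\psi$ and $\textbf{q}_\varphi = \textbf{q}_G + \kappa\omega\textbf{q}_\psi$. Since the second hypothesis on $G$ in \Cref{estimateboundaryheight} guarantees $\nabla G(z) \in \mathcal{T}_\varepsilon \subset \mathcal{T}$ for each $z \in \mathcal{B}$, \Cref{haijequations} provides the Euler--Lagrange identity
\begin{equation*}
\textbf{p}\big(\nabla G(z)\big) \cdot \textbf{q}_G(z) = 0, \qquad \text{for each } z \in \mathcal{B},
\end{equation*}
which will drive the numerator estimate in part (2) via a first--order expansion in $\nabla\varphi - \nabla G$.

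For part (1), differentiating $\psi(x,y) = \Upsilon - e^{\zeta x} - e^{\zeta y}$ yields $|\nabla\psi(z)| \le 2\zeta e^\zeta$ on $\overline{\mathcal{B}}$. The estimate $e^\zeta \le CN^\delta$ from \eqref{gzetadeltakappa} (applied with $\delta = \alpha/20$), combined with $\zeta \le N^{\alpha/20}$ for $N$ large, yields $\kappa\omega\,\zeta e^\zeta \le CN^{-\alpha/10}$, so $|\nabla\varphi(z) - \nabla G(z)| \to 0$ uniformly in $z \in \mathcal{B}$ as $N \to \infty$. Since $\nabla G(z) \in \mathcal{T}_\varepsilon$, this forces $\nabla\varphi(z) \in \mathcal{T}_{\varepsilon/2}$ for $N > C_0(\varepsilon,\alpha,\nu)$, proving part (1).

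For part (2), subtracting the Euler--Lagrange identity gives
\begin{equation*}
\textbf{p}\big(\nabla\varphi(z)\big) \cdot \textbf{q}_G(z) = \big(\textbf{p}(\nabla\varphi(z)) - \textbf{p}(\nabla G(z))\big) \cdot \textbf{q}_G(z).
\end{equation*}
The function $\textbf{p}$ is smooth on $\overline{\mathcal{T}_{\varepsilon/2}}$ from the explicit expressions \eqref{aijst}, hence $C_\varepsilon$--Lipschitz there; using $|\nabla\psi(z)| \le \zeta(e^{\zeta x} + e^{\zeta y})$ together with $|\textbf{q}_G(z)| \le \|G\|_{\mathcal{C}^2(\overline{\mathcal{B}})} \le (\log N)^{1-\nu}$ (third hypothesis of \Cref{estimateboundaryheight}) we obtain
\begin{equation*}
\big|\textbf{p}(\nabla\varphi(z)) \cdot \textbf{q}_G(z)\big| \le C_\varepsilon\,\kappa\omega\,\zeta\,(e^{\zeta x} + e^{\zeta y})\,(\log N)^{1-\nu}.
\end{equation*}
For the denominator, direct computation gives $\textbf{q}_\psi(z) = (-\zeta^2 e^{\zeta x}, 0, 0, -\zeta^2 e^{\zeta y})$, so
\begin{equation*}
\big|\textbf{p}(\nabla\varphi(z)) \cdot \textbf{q}_\psi(z)\big| = \zeta^2\big(\mathfrak{a}_{xx}(\nabla\varphi(z))\,e^{\zeta x} + \mathfrak{a}_{yy}(\nabla\varphi(z))\,e^{\zeta y}\big).
\end{equation*}
A uniform lower bound $\mathfrak{a}_{xx}, \mathfrak{a}_{yy} \ge c_\varepsilon > 0$ on $\overline{\mathcal{T}_{\varepsilon/2}}$ (discussed below) bounds the denominator below by $c_\varepsilon\zeta^2(e^{\zeta x}+e^{\zeta y})$. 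The factor $(e^{\zeta x}+e^{\zeta y})$ cancels in the ratio to yield
\begin{equation*}
\left|\frac{\textbf{p}(\nabla\varphi)\cdot\textbf{q}_G}{\textbf{p}(\nabla\varphi)\cdot\textbf{q}_\psi}\right| \le \frac{C_\varepsilon}{c_\varepsilon}\cdot\frac{\kappa\omega\,(\log N)^{1-\nu}}{\zeta} = \frac{C_\varepsilon}{c_\varepsilon}\,\kappa\omega\,(\log N)^{-\nu/2},
\end{equation*}
using $\zeta = (\log N)^{1-\nu/2}$; this is less than $\kappa\omega(\log N)^{-\nu/4}$ for $N$ larger than some $C_0(\varepsilon,\alpha,\nu)$, proving part (2).

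The one delicate technical point is the uniform positive lower bound on $\mathfrak{a}_{xx}$ and $\mathfrak{a}_{yy}$ on $\overline{\mathcal{T}_{\varepsilon/2}}$. From the explicit formula $\mathfrak{a}_{xx}(s,t) = \cot(\pi(1-s-t)) + \cot(\pi s)$ and strict monotonicity of $\cot$ on $(0,\pi)$, this quantity vanishes only when $\pi s = \pi(s+t)$, i.e., when $t = 0$; since $\mathcal{T}_{\varepsilon/2}$ is bounded away from the line $\{t = 0\}$ and its closure is compact, the uniform bound follows. Equivalently, $(\mathfrak{a}_{jk})$ is (up to sign) the Hessian of the strictly concave $\sigma$ from \Cref{concavesigmat}, hence uniformly positive definite on $\overline{\mathcal{T}_{\varepsilon/2}}$.
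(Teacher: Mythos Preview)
Your proof is correct and follows essentially the same route as the paper's: both use the decomposition $\nabla\varphi = \nabla G + \kappa\omega\nabla\psi$ together with the Euler--Lagrange identity $\textbf{p}(\nabla G)\cdot\textbf{q}_G = 0$, the Lipschitz bound on $\textbf{p}$ over $\overline{\mathcal{T}_{\varepsilon/2}}$, and the explicit form $\textbf{q}_\psi = (-\zeta^2 e^{\zeta x},0,0,-\zeta^2 e^{\zeta y})$ to cancel the exponential factors in the ratio. The only cosmetic differences are that the paper tracks $\max\{e^{\zeta x},e^{\zeta y}\}$ where you track the sum, and that you supply an explicit justification (via the addition formula for $\cot$ or the concavity of $\sigma$) for the positivity of $\mathfrak{a}_{xx},\mathfrak{a}_{yy}$ on $\overline{\mathcal{T}_{\varepsilon/2}}$, which the paper simply asserts from \eqref{aijst}.
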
 

\begin{proof} 
	
	Fix $z = (x, y) \in \mathcal{B}$. From the definitions \eqref{functionkj} of $\varphi$ and \eqref{rhozetapsi} of $\psi$, we find that
	\begin{flalign}
	\label{gradientgestimatez}
	\big| \nabla \varphi (z)  - \nabla G (z) \big| = \kappa \omega \big| \nabla \psi (z) \big| \le 2 \kappa \omega \zeta \displaystyle\max\{ e^{\zeta x}, e^{\zeta y} \}.
	\end{flalign}
	
	 Next, from the second bound in \eqref{gzetadeltakappa}; the fact that $\max \big\{ x, y \big\} \le 1$; and the identity $\omega = N^{-\alpha / 5}$, we deduce the existence of a constant $C_1 = C_1 (\alpha, \nu) > 1$ with $2 \kappa \omega \zeta \displaystyle\max\{ e^{\zeta x}, e^{\zeta y} \} \le C_1 \omega^{1 / 2}$. Using the fact that $\nabla G (z) \in \mathcal{T}_{\varepsilon}$ and taking $N$ sufficiently large so that $C_1 \omega^{1 / 2} < \frac{\varepsilon}{2}$, we obtain $\nabla \varphi (z) \in \mathcal{T}_{\varepsilon / 2}$ from \eqref{gradientgestimatez}; this implies the first statement of the lemma.
	
	To deduce \eqref{pqgpqgestimate} observe that, since $\textbf{p} \big( \nabla G(z) \big) \cdot \textbf{q}_G (z) = 0$ and each $\mathfrak{a}_{ij}$ is uniformly Lipschitz on $\mathcal{T}_{\varepsilon / 2}$ (recall \eqref{aijst}), there exists a constant $C_2 = C_2 (\varepsilon, \alpha) > 1$ such that 
	\begin{flalign*}
	\Big| \textbf{p} \big( \nabla \varphi (z) \big) \cdot \textbf{q}_G (z)  \Big| = \bigg| \Big( \textbf{p} \big( \nabla \varphi (z) \big) - \textbf{p} \big( \nabla G (z) \big) \Big) \cdot \textbf{q}_G (z) \bigg| \le C_2 \big| \nabla \varphi (z) - \nabla G (z) \big| \big| \textbf{q}_G (z) \big|.
	\end{flalign*}
	
	\noindent Thus, since $\big| \textbf{q}_G (z) \big| \le 2 \| G - G (0, 0) \|_{\mathcal{C}^2 (\overline{\mathcal{B}})}$, it follows from \eqref{gradientgestimatez} and the first bound in \eqref{gzetadeltakappa} that
	\begin{flalign}
	\label{pestimategradient}
	\Big| \textbf{p} \big( \nabla \varphi (z) \big) \cdot \textbf{q}_G (z)  \Big| \le 4 C_2 \kappa \omega \zeta^2 (\log N)^{- \nu / 2} \displaystyle\max \big\{ e^{\zeta x}, e^{\zeta y} \big\}.
	\end{flalign}	
	
	Moreover, the fact that $\nabla \varphi (z) \in \mathcal{T}_{\varepsilon / 2}$ and the explicit form \eqref{aijst} for the $\mathfrak{a}_{jk}$ together yield a constant $c_1 = c_1 (\varepsilon, \alpha) > 0$ for which $\min \big\{ \mathfrak{a}_{xx} \big( \nabla \varphi (z) \big), \mathfrak{a}_{yy} \big( \nabla \varphi (z) \big) \big\} > c_1$. Thus, since $\textbf{q}_{\psi} (z) = \big( - \zeta^2 e^{\zeta x}, 0, 0, - \zeta^2 e^{\zeta y} \big)$, we deduce that
	\begin{flalign}
	\label{pestimategradientpsi}
	\Big| \textbf{p} \big( \nabla \varphi (z) \big) \cdot \textbf{q}_{\psi} (z)  \Big| \ge c_1 \zeta^2 \displaystyle\max \big\{ e^{\zeta x}, e^{\zeta y} \big\}.
	\end{flalign}
	
	\noindent Thus, \eqref{pqgpqgestimate} follows from \eqref{pestimategradient}, \eqref{pestimategradientpsi}, and the fact that $N$ is sufficiently large.
\end{proof}

Now, similar to in equation (72) of \cite{LTGDMLS}, define the vector $\textbf{q}_{\Theta} (z) \in \mathbb{R}^4$ (here, we view $\Theta$ as an index and do not necessarily require that it be a function in $\mathcal{C}^2 (\mathcal{B})$) by setting 
\begin{flalign} 
\label{thetaz} 
\textbf{q}_{\Theta} (z) =  \textbf{q}_G (z) - \displaystyle\frac{ \textbf{p} \big( \nabla \varphi (z) \big) \cdot \textbf{q}_G (z) }{\textbf{p} \big( \nabla \varphi (z) \big) \cdot \textbf{q}_{\psi} (z)} \textbf{q}_{\psi} (z), \qquad \text{for each $z \in \mathcal{B}$},
\end{flalign}

\noindent where the fact that $\textbf{p} \big( \nabla \varphi (z) \big) \cdot \textbf{q}_{\psi} (z) \ne 0$ for each $z \in \mathcal{B}$ was established as \eqref{pqgpqgestimate} (see also \eqref{pestimategradientpsi}). Observe that $\textbf{q}_{\Theta}$ satisfies the linear variant of \eqref{aijh} given by $\textbf{p} \big( \nabla \varphi (z) \big) \cdot \textbf{q}_{\Theta} (z) = 0$.

The lemma below establishes the existence of a triple $(a, b, c) \in \mathbb{R}_{> 0}^3$ and a point $z_0 \in \mathfrak{L}_{a, b, c}$ such that $\nabla \varphi (N^{-1} v) = \nabla \mathfrak{H}_{a, b, c} (z_0)$ and the second derivatives of $\mathfrak{H}_{a, b, c}$ at $z_0$ are given by $\textbf{q}_{\Theta} (N^{-1} v)$. 

\begin{lem} 
	
\label{abchabctheta} 

There exists a constant $C = C(\varepsilon, \alpha, \nu) > 1$; a triple $(a, b, c) \in \mathbb{R}_{> 0}^3$; and a point $z_0 \in \mathfrak{L}_{a, b, c}$ such that the following holds. Assuming $N > C$ and abbreviating $\mathfrak{H} = \mathfrak{H}_{a, b, c}$, we have 
\begin{flalign}
\label{abczetaestimates}
a + b + c = \displaystyle\frac{1}{\zeta}; \qquad \qquad \qquad \displaystyle\min \{ a, b, c \} > \displaystyle\frac{1}{C \zeta}; \qquad \qquad \quad d (z_0, \partial \mathfrak{L}_{a, b, c}) > \displaystyle\frac{1}{C \zeta},
\end{flalign}

\noindent and 
\begin{flalign}
\label{abczetaestimates1}
 \nabla \varphi (N^{-1} v) = \nabla \mathfrak{H} (z_0); \qquad \textbf{\emph{q}}_{\Theta} (N^{-1} v) = \textbf{\emph{q}}_{\mathfrak{H}} (z_0).
\end{flalign}

\end{lem}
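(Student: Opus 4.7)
The strategy is to construct $(a, b, c)$ and $z_0$ by matching the local quadruple -- gradient and diagonal Hessian entries -- of the hexagonal limit shape $\mathfrak{H}_{a, b, c}$ at $z_0$ with those of $\Theta$ and $\varphi$ at $N^{-1} v$, via the diffeomorphism $\Psi$ from \Cref{abcderivative} combined with an appropriate $\zeta$-rescaling.

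Concretely, I would introduce the target quadruple
\begin{flalign*}
(s_0, t_0) = \nabla \varphi(N^{-1} v), \qquad \xi_0 = \partial_x^2 \Theta(N^{-1} v), \qquad \eta_0 = \partial_y^2 \Theta(N^{-1} v).
\end{flalign*}
By the first part of \Cref{pgradientestimateqg}, $(s_0, t_0) \in \mathcal{T}_{\varepsilon / 2}$, and by the construction of $\Theta$ we have $\textbf{p}(s_0, t_0) \cdot \textbf{q}_\Theta(N^{-1} v) = 0$, which is precisely the nonlinear PDE \eqref{aijh} linearized at $(s_0, t_0)$ (so the off-diagonal entry of $\textbf{q}_\Theta(N^{-1}v)$ is determined by $\xi_0, \eta_0$ via the same algebraic relation that $\textbf{q}_{\mathfrak{H}}$ satisfies under $\nabla \mathfrak{H} = (s_0, t_0)$). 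Using the decomposition $\Theta = G - A \psi$ with $A(z)$ given by \eqref{thetaz}, the bound $|A| \le \kappa \omega (\log N)^{-\nu/4}$ from the second part of \Cref{pgradientestimateqg}, the global Hessian bound on $G$ from \eqref{gzetadeltakappa}, and the explicit exponential form of $\psi$ from \eqref{rhozetapsi} (including the derivatives of $A$ and $\psi$), I would estimate $\xi_0$ and $\eta_0$, showing that the rescaled pair $(\tilde \xi, \tilde \eta) := (\xi_0 / \zeta, \eta_0 / \zeta)$ is uniformly bounded in $N$ and lies in a fixed (independent of $N$) open region corresponding to nondegenerate concave Hessians of unit-scale hexagonal profiles.

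Given these estimates, the next step is to apply $\Psi^{-1}$ to $(s_0, t_0, \tilde \xi, \tilde \eta)$, yielding $(\tilde a, \tilde b, \tilde x_0, \tilde y_0) \in \mathfrak{Y}$, so that $\tilde a + \tilde b + \tilde c = 1$ (with $\tilde c = 1 - \tilde a - \tilde b$) and $(\tilde x_0, \tilde y_0) \in \mathfrak{L}_{\tilde a, \tilde b, \tilde c}$. Setting $(a, b, c) := \zeta^{-1} (\tilde a, \tilde b, \tilde c)$ and $z_0 := \zeta^{-1}(\tilde x_0, \tilde y_0)$ and invoking the scaling relation $\mathfrak{H}_{\lambda a, \lambda b, \lambda c}(\lambda z) = \lambda \mathfrak{H}_{a, b, c}(z)$ with $\lambda = 1/\zeta$ -- which preserves gradients and multiplies Hessians by $\zeta$ -- directly yields \eqref{abczetaestimates1}. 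The bounds on $\min\{a, b, c\}$ and on $d(z_0, \partial \mathfrak{L}_{a, b, c})$ in \eqref{abczetaestimates} follow from the continuity of $\Psi^{-1}$ applied to the compact subset in which $(s_0, t_0, \tilde \xi, \tilde \eta)$ lies, combined with the fact that $(s_0, t_0) \in \mathcal{T}_{\varepsilon/2}$ is bounded away from $\partial \mathcal{T}$ (which forces $(\tilde x_0, \tilde y_0)$ bounded away from $\partial \mathfrak{L}_{\tilde a, \tilde b, \tilde c}$, since the gradient of $\mathfrak{H}$ degenerates toward $\partial \mathcal{T}$ on the ellipse boundary).

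The main obstacle will be the estimation of $\xi_0, \eta_0$ in the second step. One must show that $\textbf{q}_\Theta(N^{-1}v)$ has the right qualitative structure -- correct signs and magnitudes of order $\zeta$, lying in the image of $\Psi$ away from its boundary -- despite the delicate interplay between $\textbf{q}_G$ and the correction term $\textbf{q}_{A \psi}$, whose expansion $\partial_x^2(A\psi) = (\partial_x^2 A)\psi + 2 (\partial_x A)(\partial_x \psi) + A \,\partial_x^2 \psi$ (and its $y$-analog) mixes pieces of very different orders in $\zeta$, $\omega$, and $e^{\zeta \cdot}$. Careful bookkeeping of these terms using the explicit formulas for $\psi$ and $A$, combined with the bounds of \Cref{pgradientestimateqg} and \eqref{gzetadeltakappa}, is what ensures that the rescaled quadruple remains in the interior of the image of $\Psi$ uniformly in $N$, making the inversion step legitimate.
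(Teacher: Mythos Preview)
Your approach is essentially the same as the paper's: bound the rescaled Hessian $\zeta^{-1}\textbf{q}_\Theta(N^{-1}v)$, apply $\Psi^{-1}$ at unit scale, then rescale by $\zeta^{-1}$ and read off the bounds in \eqref{abczetaestimates} from compactness. Two points where you are overcomplicating things, however.

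First, your ``main obstacle'' is much milder than you suggest. Since $\Psi$ is a diffeomorphism from $\mathfrak{Y}$ onto \emph{all} of $\mathcal{T}\times\mathbb{R}^2$, there is no sign or structural constraint on $(\tilde\xi,\tilde\eta)$; you only need $(s_0,t_0)\in\mathcal{T}_{\varepsilon/2}$ (which you have) and $|\tilde\xi|,|\tilde\eta|\le 1$. The paper gets the latter in one line: from $\Theta=G-A\psi$, the estimate $|A|\le\kappa\omega(\log N)^{-\nu/4}$, the bound $\|G-G(0,0)\|_{\mathcal{C}^2}\le(\log N)^{1-\nu}$, and $|\partial_i\partial_j\psi|\le\zeta^2 e^{\zeta}$ give $|\partial_i\partial_j\Theta|\le(\log N)^{1-\nu}+2\kappa\omega\zeta^2 e^{\zeta}\le\zeta$ directly. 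Then $\Psi^{-1}$ maps the compact set $\mathcal{T}_{\varepsilon/2}\times[-1,1]^2$ into a compact subset of $\mathfrak{Y}$, which yields the lower bounds on $\min\{a,b,c\}$ and $d(z_0,\partial\mathfrak{L}_{a,b,c})$ after rescaling.

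Second, your proposed expansion $\partial_x^2(A\psi)=(\partial_x^2 A)\psi+2(\partial_x A)(\partial_x\psi)+A\,\partial_x^2\psi$ is both unnecessary and potentially problematic: $A$ involves $\textbf{q}_G$, so $\partial_x A$ would require third derivatives of $G$, for which only $\mathcal{C}^{2,\alpha}$ control is assumed. The resolution (implicit in how the paper uses $\textbf{q}_\Theta$, and needed anyway for the identity $\textbf{H}_\varphi-\textbf{H}_\Theta=(A+\kappa\omega)\textbf{H}_\psi$ in the subsequent lemma) is to freeze $A$ at the value $A_0=A(N^{-1}v)$, so that $\textbf{q}_\Theta(N^{-1}v)=\textbf{q}_G(N^{-1}v)-A_0\,\textbf{q}_\psi(N^{-1}v)$ and the relation $\textbf{p}\big(\nabla\varphi(N^{-1}v)\big)\cdot\textbf{q}_\Theta(N^{-1}v)=0$ still holds. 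With that reading, no derivatives of $A$ enter and the crude bound above suffices.
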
 

\begin{proof} 
	
This will follow from a suitable application of \Cref{abcderivative} and rescaling by $\zeta^{-1}$. However, it will first be useful to estimate the second derivatives of $\Theta$. So, for sufficiently large $N$, observe that 
\begin{flalign}
	\label{derivativesthetaestimate} 
	\begin{aligned}
\displaystyle\sup_{z \in \mathcal{B}} \big| \textbf{q}_{\Theta} (z) \big| & \le 2 \big\| G - G (0, 0) \big\|_{\mathcal{C}^2 (\overline{\mathcal{B}})} + 2 \left|  \displaystyle\frac{ \textbf{p} \big( \nabla \varphi (z) \big) \cdot \textbf{q}_G (z) }{\textbf{p} \big( \nabla \varphi (z) \big) \cdot \textbf{q}_{\psi} (z)} \right| \displaystyle\sup_{z \in \mathcal{B}} \displaystyle\max_{i, j \in \{ x, y \}} \big| \partial_i \partial_j \psi (z) \big| \\
& \le 2 (\log N)^{1 - \nu} + 2 \kappa \omega \displaystyle\sup_{z \in \mathcal{B}} \displaystyle\max_{i, j \in \{ x, y \}} \big| \partial_i \partial_j \psi (z) \big| \\
& \le 2 (\log N)^{1 - \nu} + 4 \kappa \omega \zeta^2 e^{\zeta} \le (\log N)^{1 - \nu/2} = \zeta,
\end{aligned} 
\end{flalign}

\noindent where to deduce the second inequality we used the first bound in \eqref{gzetadeltakappa} and \eqref{pqgpqgestimate}; to deduce the third we used the explicit form \eqref{rhozetapsi} for $\psi$; and to deduce the fourth we used the second bound in \eqref{gzetadeltakappa} and the fact that $\omega = N^{-\alpha/5}$.

Now observe for any $z \in \mathfrak{X}_{a, b, c}$ and $i, j \in \{ x, y \}$ that   
\begin{flalign*}
\nabla \mathfrak{H}_{a, b, c} (z) = \nabla \mathfrak{H}_{\zeta a, \zeta b, \zeta c} ( \zeta z ); \qquad \partial_i \partial_j \mathfrak{H}_{a, b, c} (z) = \zeta \partial_i \partial_j \mathfrak{H}_{\zeta a, \zeta b, \zeta c} ( \zeta z ).
\end{flalign*}

\noindent Therefore, \eqref{derivativesthetaestimate}; the fact that $\nabla \varphi (N^{-1} v) \in \mathcal{T}_{\varepsilon / 2}$; and \Cref{abcderivative} (scaled by $\zeta^{-1}$) together imply the existence of a constant $C = C (\varepsilon, \alpha, \nu) > 1$, real numbers $a, b, c > 0$, and a point $z_0 \in \mathfrak{L}_{a, b, c}$ such that \eqref{abczetaestimates} and the identities 
\begin{flalign}
\label{derivativeshtheta1} 
\nabla \varphi (N^{-1} v) = \nabla \mathfrak{H} (z_0); \qquad q_{\Theta} (1; N^{-1} v) = \partial_x^2 \mathfrak{H} (z_0); \qquad \qquad q_{\Theta} (4; N^{-1} v) = \partial_y^2 \mathfrak{H} (z_0),
\end{flalign} 

\noindent all hold; here, we have abbreviated $\mathfrak{H} = \mathfrak{H}_{a, b, c}$ and set $\textbf{q}_{\Theta} (z) = \big( q_{\Theta} (1; z), q_{\Theta} (2; z), q_{\Theta} (3; z), q_{\Theta} (4; z) \big)$. It remains to establish the last identity in \eqref{abczetaestimates1}.

To that end, observe from the definition \eqref{thetaz} of $\Theta$ and the fact that $\mathfrak{H}$ is a maximizer of $\mathcal{E}$ that $\textbf{p} \big( \nabla \varphi (N^{-1} v) \big) \cdot \textbf{q}_{\Theta} (N^{-1} v) = 0 = \textbf{p} \big( \nabla \mathfrak{H} (z_0) \big) \cdot \textbf{q}_{\mathfrak{H}} (z_0)$. Therefore, it follows from \eqref{derivativeshtheta1} that $q_{\Theta} (2; z) = q_{\Theta} (3; z) =\partial_x \partial_y \mathfrak{H} (z_0) =\partial_y \partial_x \mathfrak{H} (z_0)$ (where the first equality follows from the definition \eqref{thetaz} of $\textbf{q}_{\Theta}$, and the fact that the second and third coordinates of $\textbf{q}_G (z)$ are equal, as are those of $\textbf{q}_{\psi} (z)$). Thus, $\textbf{q}_{\Theta} (N^{-1} v) = \textbf{q}_{\mathfrak{H}} (z_0)$. 
\end{proof} 

Under the notation of \Cref{abchabctheta}, we have the following lemma that is similar to equation (88) of \cite{LTGDMLS} and indicates that the relative height of $\varphi (N^{-1} v)$ with respect to $\varphi |_{\mathcal{B}_{\rho} (N^{-1} v)}$ is larger than that of $\mathfrak{H} (z_0)$ with respect to $\mathfrak{H} |_{\mathcal{B}_{\rho} (z_0)}$. 

\begin{lem}

\label{functionhuwvz}

Adopting the notation of \Cref{abchabctheta}, there exists a constant $C_0 = C_0 (\varepsilon, \alpha, \nu) > 1$ such that the following holds. Let $u \in \mathcal{B}_{\rho N} (v)$ denote a vertex of $R$, and set $w_0 = z_0 + N^{-1} (u - v) \in \mathcal{B}_{\rho} (z_0)$. If $N > C_0$, then   
\begin{flalign*}
\varphi (N^{-1} v) - \varphi (N^{-1} u) \ge \mathfrak{H} (z_0) - \mathfrak{H} (w_0) + \omega^2 |w_0 - z_0|^2 .
\end{flalign*}
\end{lem}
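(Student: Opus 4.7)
The plan is a second-order Taylor expansion of both $\varphi$ at $N^{-1}v$ and $\mathfrak{H}$ at $z_0$ in the common displacement $\delta := N^{-1}(u-v) = w_0 - z_0$, whose norm is at most $\rho = N^{-1/4}$. The $\mathcal{C}^{2,\alpha}$ bound on $G$ in the third hypothesis of \Cref{estimateboundaryheight} and the smoothness of $\psi$ make $\varphi$ of class $\mathcal{C}^{2,\alpha}$ on $\mathcal{B}_\rho(N^{-1}v)$, while the bound $d(z_0,\partial\mathfrak{L}_{a,b,c}) \gtrsim \zeta^{-1}$ in \eqref{abczetaestimates} together with \Cref{habcderivatives} makes $\mathfrak{H}$ smooth on $\mathcal{B}_\rho(z_0)$. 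The third-order Taylor remainders on each side are bounded by $|\delta|^3$ times a factor polynomial in $\zeta$ and $e^\zeta$; since $|\delta|^3 \le N^{-3/4}$ crushes any such factor, these remainders are much smaller than $\omega^2|\delta|^2 = N^{-2\alpha/5}|\delta|^2$ for large $N$.

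By the first identity in \eqref{abczetaestimates1} the first-order terms of the two expansions cancel, and by the second the Hessian of $\mathfrak{H}$ at $z_0$ coincides with $\nabla^2\Theta(N^{-1}v)$. The lemma therefore reduces to showing that the quadratic form $-\tfrac12\,\delta^T\bigl[\nabla^2\varphi(N^{-1}v) - \nabla^2\Theta(N^{-1}v)\bigr]\delta$ dominates $\omega^2|\delta|^2$ plus the cubic remainder. From the definitions of $\varphi$ and $\Theta$, $\varphi - \Theta$ equals an affine function plus $(\kappa\omega + \beta(z))\,\psi(z)$, where $\beta(z)$ is the ratio appearing in \eqref{thetaz}. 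By \eqref{pqgpqgestimate} one has $|\beta(N^{-1}v)| \le \kappa\omega(\log N)^{-\nu/4}$, and analogous bounds for $\nabla\beta$ and $\nabla^2\beta$ follow from differentiating that ratio and invoking the $\mathcal{C}^{2,\alpha}$ control on $G$ together with the quantitative lower bound on $|\textbf{p}(\nabla\varphi)\cdot\textbf{q}_\psi|$ derived within the proof of \Cref{pgradientestimateqg}. These bounds collectively give
\[
  \nabla^2\varphi(N^{-1}v) - \nabla^2\Theta(N^{-1}v) = \kappa\omega\,\nabla^2\psi(N^{-1}v)\bigl(1 + o(1)\bigr).
\]

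Since $\nabla^2\psi(x,y) = -\zeta^2\,\mathrm{diag}(e^{\zeta x},e^{\zeta y})$ is negative definite with $-\delta^T\nabla^2\psi\,\delta \ge \zeta^2 e^{-\zeta}|\delta|^2$ on $\mathcal{B}$, the quadratic term contributes at least $\tfrac14\kappa\omega\zeta^2 e^{-\zeta}|\delta|^2$. The choice of $K$ just above \eqref{functionkj} combined with the crude estimate $\sup_{\mathcal{B}}\psi \le 2e^\zeta + 2$ yields $\kappa \gtrsim e^{-\zeta}$, and with $\omega = N^{-\alpha/5}$ and $\zeta = (\log N)^{1-\nu/2}$ a comparison of exponents shows $\kappa\omega\zeta^2 e^{-\zeta} \gg \omega^2$ for $N$ large. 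This dominates both the $o(1)$ correction from $\beta$ and the cubic Taylor remainder, delivering the claimed inequality. The principal technical obstacle is the control of $\nabla\beta$ and $\nabla^2\beta$: these require differentiating the nonlinear ratio defining $\beta$, and the resulting bounds depend delicately on the $\mathcal{C}^{2,\alpha}$ bound on $G$ and on the non-degeneracy of the denominator $\textbf{p}(\nabla\varphi)\cdot\textbf{q}_\psi$ away from zero; once these are secured, the calibration of scales described above closes the argument.
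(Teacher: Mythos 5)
Your overall plan matches the paper's: Taylor-expand both sides, use \eqref{abczetaestimates1} to cancel the gradient terms and identify the Hessian of $\mathfrak{H}$ at $z_0$ with that of $\Theta$ at $N^{-1}v$, exploit the negative definiteness of $\textbf{H}_\psi$ with the lower bound $-\delta^T \textbf{H}_\psi \delta \ge \zeta^2 e^{-\zeta}|\delta|^2$, and close by the scale comparison $\kappa\omega\zeta^2 e^{-\zeta} \gg \omega^2$. That part is right.

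There is, however, a genuine gap where you identify the ``principal technical obstacle.'' You write $\nabla^2\varphi(N^{-1}v) - \nabla^2\Theta(N^{-1}v) = \kappa\omega\,\nabla^2\psi(N^{-1}v)(1+o(1))$ and propose to justify the $o(1)$ by bounding $\nabla\beta$ and $\nabla^2\beta$, where $\beta(z)$ is the ratio in \eqref{thetaz}. This cannot be done with the hypotheses at hand: $\beta$ is a rational expression in $\textbf{q}_G(z)$, i.e.\ in the \emph{second} derivatives of $G$, so $\nabla\beta$ requires third derivatives of $G$, which are unavailable since \Cref{estimateboundaryheight} only assumes $G \in \mathcal{C}^{2,\alpha}$. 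The step is also unnecessary. The content of the second identity in \eqref{abczetaestimates1} and of \eqref{hnvhthetanvequation} is that the Hessian matched to $\textbf{H}_\mathfrak{H}(z_0)$ is $\textbf{H}_G(N^{-1}v) - \beta(N^{-1}v)\textbf{H}_\psi(N^{-1}v)$, with $\beta$ evaluated at the single point $N^{-1}v$ (this is the only reading under which the identity $\textbf{p}(\nabla\varphi)\cdot\textbf{q}_\Theta = 0$ used in the proof of \Cref{abchabctheta} is valid). Combined with $\textbf{H}_\varphi(N^{-1}v) = \textbf{H}_G(N^{-1}v) + \kappa\omega\,\textbf{H}_\psi(N^{-1}v)$, the difference is \emph{exactly} $(\kappa\omega + \beta(N^{-1}v))\,\textbf{H}_\psi(N^{-1}v)$, with no $o(1)$ correction; the bound \eqref{pqgpqgestimate} is used only to show that $\kappa\omega + \beta(N^{-1}v) \ge \kappa\omega/2$. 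No derivative of $\beta$ ever appears.

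A secondary inaccuracy: you assert the Taylor remainders are $O(|\delta|^3)$, but since $G$ (hence $\varphi$) is only $\mathcal{C}^{2,\alpha}$, the correct remainder is $O(|\delta|^{2+\alpha})$ with a coefficient controlled by $\|\varphi\|_{\mathcal{C}^{2,\alpha}} \lesssim (\log N)^{10}$ and $\|\mathfrak{H}\|_{\mathcal{C}^{2,\alpha}(\overline{\mathcal{B}}_\rho(z_0))} \lesssim \zeta^{1+\alpha}$; the scale comparison $C|\delta|^{\alpha}(\log N)^{10} \ll \kappa\omega\zeta^2 e^{-\zeta}$ still works with $|\delta| \le \rho = N^{-1/4}$, but the exponent matters and should be stated.
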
 

\begin{proof}

	For any function $f \in \mathcal{C}^2 (\mathcal{B})$ and point $z \in \mathcal{B}$, denote the Hessian $\textbf{H}_f (z)$ of $f$ at $z$ by 
	\begin{flalign*} 
	\textbf{H}_f (z) = \left[ \begin{array}{cc} \partial_x^2 f(z) & \partial_x \partial_y f(z) \\ \partial_y \partial_x f(z) & \partial_y^2 f(z) \end{array} \right].
	\end{flalign*} 
	
	\noindent Then, a Taylor expansion and the identity $w_0 - z_0 = N^{-1} (u - v)$ yield a constant $C_1 > 0$ such that
	\begin{flalign}
	\label{huwvz}
	\begin{aligned}
	& \Big| \varphi (N^{-1} u) - \mathfrak{H} (w_0) - \big( \varphi (N^{-1} v) - \mathfrak{H} (z_0) \big) - \big( \nabla \varphi (N^{-1} v) - \nabla \mathfrak{H} (z_0) \big) \cdot (w_0 - z_0) \\
	& \quad - (w_0 - z_0) \cdot \big( \textbf{H}_{\varphi} (N^{-1} v) - \textbf{H}_{\mathfrak{H}} (z_0) \big) (w_0 - z_0) \Big| \\
	& \qquad \qquad \qquad \le C_1 |w_0 - z_0|^{2 + \alpha}  \big( \| \varphi - \varphi (0, 0) \|_{\mathcal{C}^{2, \alpha} (\overline{\mathcal{B}})} + \| \mathfrak{H} \|_{\mathcal{C}^{2, \alpha} (\overline{\mathcal{B}}_{\rho} (z_0))} \big).
	\end{aligned}
	\end{flalign}
	
	To estimate the right side of \eqref{huwvz}, observe that the fact that $\mathfrak{H}_{a, b, c} (z) = \zeta^{-1} \mathfrak{H}_{a \zeta, b \zeta, c \zeta} (\zeta z)$; \eqref{abczetaestimates}; and \Cref{habcderivatives} (with scaling by $\zeta$) together yield a constant $C_2 = C_2 (\varepsilon, \nu) > 1$ such that $\| \mathfrak{H} \|_{\mathcal{C}^{2, \alpha} (\overline{\mathcal{B}}_{\rho} (z_0))} \le C_2 \zeta^{1 + \alpha}$. Additionally, we have for sufficiently large $N$ that
	\begin{flalign}
	\label{estimatederivativesfunction1}  
	\begin{aligned}
	\| \varphi - \varphi (0, 0) \|_{\mathcal{C}^{2, \alpha} (\overline{\mathcal{B}})} & \le \| G - G (0,0) \|_{\mathcal{C}^{2, \alpha} (\overline{\mathcal{B}})} + \kappa \omega \| \psi - \psi (0, 0) \|_{\mathcal{C}^{2, \alpha} (\overline{\mathcal{B}})} \\
	& \le (\log N)^{10} + 2 \kappa \omega \zeta^3 e^{\zeta} \le 2 (\log N)^{10}.
	\end{aligned}
	\end{flalign}
	
	\noindent Here, to deduce the first estimate, we used \eqref{functionkj}; to deduce the second, we used the third assumption imposed on $G$ in \Cref{estimateboundaryheight} and the explicit form \eqref{rhozetapsi} for $\psi$; and to deduce the third we used the second bound in \eqref{gzetadeltakappa} and the fact that $\omega = N^{-\alpha / 5}$.
	
	Hence, from \eqref{huwvz}; \eqref{estimatederivativesfunction1}; \eqref{abczetaestimates1}; and the fact that $\zeta^{1 + \alpha} < (\log N)^{10}$, we deduce the existence of a constant $C_3 = C_3 (\varepsilon, \alpha, \nu) > 1$ such that 
	\begin{flalign}
	\label{huwvz2} 
	\begin{aligned}
	\Big| \varphi (N^{-1} u) - \mathfrak{H} (w_0) - \big( \varphi (N^{-1} v) - \mathfrak{H} (z_0) \big) - (w_0 - z_0) \cdot \big( \textbf{H}_{\varphi} ( & N^{-1} v) - \textbf{H}_{\Theta} (N^{-1} v) \big) (w_0 - z_0) \Big| \\
	& \qquad \le C_3 |w_0 - z_0|^{2 + \alpha} (\log N)^{10}.
	\end{aligned} 
	\end{flalign}
	
	\noindent Next, observe from the definitions \eqref{functionkj} and \eqref{thetaz} of $\varphi$ and $\Theta$, respectively, that
	\begin{flalign}
	\label{hnvhthetanvequation} 
	\textbf{H}_{\varphi} (N^{-1} v) - \textbf{H}_{\Theta} (N^{-1} v) & = \left( \displaystyle\frac{\textbf{p} \big( \nabla \varphi (N^{-1} v) \big) \cdot \textbf{q}_G ( N^{-1} v)}{\textbf{p} \big( \nabla \varphi (N^{-1} v) \big) \cdot \textbf{q}_{\psi} ( N^{-1} v)} + \kappa \omega \right) \textbf{H}_{\psi} (N^{-1} v).
	\end{flalign}
	
	\noindent Now since \eqref{pqgpqgestimate} implies for sufficiently large $N$ that 
	\begin{flalign*}
	 \displaystyle\frac{\textbf{p} \big( \nabla \varphi (N^{-1} v) \big) \cdot \textbf{q}_G ( N^{-1} v)}{\textbf{p} \big( \nabla \varphi (N^{-1} v) \big) \cdot \textbf{q}_{\psi} ( N^{-1} v)} + \kappa \omega \ge \kappa \omega \big( 1 -  (\log N)^{- \nu / 4} \big) > \displaystyle\frac{\kappa \omega}{2},
	\end{flalign*}
	
	\noindent and since the explicit form \eqref{rhozetapsi} for $\psi$ implies that $\textbf{H}_{\psi} (N^{-1} v)$ is negative definite with eigenvalues bounded above by $-\zeta^2 e^{-\zeta}$, we deduce from \eqref{huwvz2} and \eqref{hnvhthetanvequation} that
	\begin{flalign}
	\label{huwvz3} 
	\begin{aligned}
	 \varphi (N^{-1} u) - \mathfrak{H} (w_0) - \big( \varphi (N^{-1} v) - \mathfrak{H} (z_0) \big) & \le C_3 |w_0 - z_0|^{2 + \alpha} (\log N)^{10} - \displaystyle\frac{\kappa \omega \zeta^2 e^{-\zeta} |w_0 - z_0|^2}{2} \\
	 & \le - \displaystyle\frac{\kappa \omega \zeta^2 e^{-\zeta} |w_0 - z_0|^2}{4} < - \omega^2 |w_0 - z_0|^2,
	\end{aligned} 
	\end{flalign} 
	
	\noindent for sufficiently large $N$. Here, to deduce the second and third inequalities in \eqref{huwvz3} we used the second and third bounds in \eqref{gzetadeltakappa} and the facts that $|w_0 - z_0| \le \rho = N^{-1 / 4}$ and $\omega = N^{-\alpha / 5}$. 
	
	The lemma then follows from \eqref{huwvz3}. 
\end{proof}

Now we can establish \Cref{estimategammakm}. 

\begin{proof}[Proof of \Cref{estimategammakm}]
	
By \eqref{hnvrho}, we may assume that $d (v, \partial R) > \rho N$. Adopt the notation of \Cref{abchabctheta}, and denote the $aN \times bN \times cN$ hexagon from \Cref{domainabc} by $\mathcal{X} = N \mathfrak{X}_{a, b, c}$, where we assume for notational convenience that $aN, bN, cN, \frac{N}{\zeta} \in \mathbb{Z}_{\ge 1}$ and $N z_0 \in \mathbb{Z}^2$. Let $\chi: \partial \mathcal{X} \rightarrow \mathbb{Z}$ denote the boundary height function on $\partial \mathcal{X}$ determined by setting its value at $(0, 0)$ to be
\begin{flalign*} 
\chi (0, 0) =  N \varphi (N^{-1} v) - N \mathfrak{H} (z_0) - N \left\lfloor \frac{\omega^2 \rho^2}{2} \right\rfloor,
\end{flalign*} 

\noindent and setting $\chi (u) = N \mathfrak{h}_{a, b, c} (N^{-1} u) + \chi (0, 0)$ for any $u \in \partial \mathcal{X}$ (where we recall $\mathfrak{h}_{a, b, c}$ from \Cref{habc}). Let $\Xi: \mathcal{X} \rightarrow \mathbb{Z}$ denote a uniformly random height function in $\mathfrak{G} (\chi)$ (recall \Cref{gh}). On the event $\Omega^c = \bigcap_{u \in \mathbb{V}(R)} \Omega_{j; k - 1} (u)^c$, we will show that $H |_{\partial \mathcal{B}_{N \rho} (v)}$ is likely bounded above by $\Xi |_{\mathcal{B}_{N \rho} (z_0)}$. 

To that end, apply \Cref{estimatedomainabc}, with the $N$ there equal to $\frac{N}{\zeta}$ here and the $a, b, c$ there equal to $a \zeta$, $b \zeta$, $c \zeta$ here, respectively. Using \eqref{abczetaestimates}; the fact that $\mathfrak{H}_{a \zeta, b \zeta, c \zeta} (z) = \zeta \mathfrak{H} \big( \frac{z}{\zeta} \big)$; the second estimate in \eqref{gzetadeltakappa}; and a union bound, we deduce for any $\delta, D > 0$ that there exists a constant $C_1 = C_1 (\varepsilon, \alpha, \nu, \delta, D) > 1$ such that
\begin{flalign*}
\mathbb{P} \Bigg[ \max_{u \in \mathcal{B}_{\rho N} (N z_0) \cap \mathbb{T}} \bigg| N^{-1} \Xi (u) - \mathfrak{H} (N^{-1} u) - \varphi (N^{-1} v) + \mathfrak{H} (z_0) + \frac{ \omega^2 \rho^2}{2} \bigg| > N^{\delta - 1} \Bigg] < C_1 N^{-D - 2}.
\end{flalign*}

\noindent In particular, setting $u = Nz_0$ and $u \in \partial \mathcal{B}_{\rho} (Nz_0) \cap \mathbb{T}$, we obtain that
\begin{flalign}
\label{xiuhuestimate}
\mathbb{P} \left[  N^{-1} \Xi (Nz_0) > \varphi (N^{-1} v) - \frac{ \omega^2 \rho^2}{2} + N^{\delta - 1} \right] < C_1 N^{-D - 2},
\end{flalign}

\noindent and 
\begin{flalign}
\label{xiuhuestimate2}
\displaystyle\max_{u \in \partial \mathcal{B}_{\rho N} (N z_0) \cap \mathbb{T}} \mathbb{P} \Bigg[ N^{-1} \Xi (u) < \varphi (N^{-1} v) - \mathfrak{H} (z_0) + \mathfrak{H} (N^{-1} u)  - \frac{ \omega^2 \rho^2}{2} - N^{\delta - 1} \Bigg] < C_1 N^{-D - 2},
\end{flalign}

\noindent respectively. Now, by the third bound in \eqref{gzetadeltakappa} and the facts that $\rho = N^{-1 / 4}$ and $\omega = N^{-\alpha / 5} > N^{-1 / 5}$, we have for $\delta = \frac{1}{20}$ that $ \omega^2 \rho^2 > 4 N^{\delta - 1}$, if $N$ is sufficiently large. Thus, taking $\delta = \frac{1}{20}$ in \eqref{xiuhuestimate2} and then applying \Cref{functionhuwvz}, yields 
\begin{flalign}
\label{xiuhuestimate3}
\displaystyle\max_{u \in \partial \mathcal{B}_{\rho N} (N z_0) \cap \mathbb{T}} \mathbb{P} \Bigg[ N^{-1} \Xi (u) < \varphi \big( N^{-1} u + N^{-1} v - z_0 \big) + \frac{ \omega^2 \rho^2}{4} \Bigg] < C_1 N^{-D - 2},
\end{flalign}

\noindent and so $\Xi (u) |_{\mathcal{B}_{\rho N} (z_0)}$ is likely larger than $N \varphi |_{\mathcal{B}_{\rho} (v)}$. Since $N^{-1} H$ is bounded above by $\varphi$ on the event $\Omega^c = \bigcap_{u \in \mathbb{V}(R)} \Omega_{j; k - 1} (u)^c$, \eqref{xiuhuestimate3} implies that 
\begin{flalign}
\label{xiuhuestimate4}
\displaystyle\max_{u \in \partial \mathcal{B}_{\rho N} (N z_0)} \mathbb{P} \Bigg[ \Xi (u)  \textbf{1}_{\Omega^c} < H (u - Nz_0 + v) \textbf{1}_{\Omega^c}  \Bigg] < C_1 N^{-D - 2},
\end{flalign}

\noindent meaning that $H |_{\mathcal{B}_{N \rho} (v)}$ is likely bounded above by $\Xi |_{\partial \mathcal{B}_{\rho N} (N z_0)}$ on $\Omega^c$. 

 Thus, restricting to the tiling of $R$ to $\mathcal{B}_{\rho N} (v) \cap \mathbb{T}$; restricting the tiling of $\mathcal{X}$ to $\mathcal{B}_{\rho N} (Nz_0) \cap \mathbb{T}$; applying \Cref{monotoneheightcouple}; and using a union bound in \eqref{xiuhuestimate4} over all $u \in \partial \mathcal{B}_{\rho N} (Nz_0) \cap \mathbb{T}$, we deduce the existence of a coupling between $\Xi$ and $H$ such that $\mathbb{P} \big[ \Xi (Nz_0) \textbf{1}_{\Omega^c} < H (v) \textbf{1}_{\Omega^c}  \big] < 4 C_1 N^{-D}$. This, \eqref{xiuhuestimate}, the fact that $\omega^2 \rho^2 > 4 N^{\delta - 1}$, and a union bound together imply
\begin{flalign*}
\mathbb{P} \Bigg[ H (v)  \textbf{1}_{\Omega^c} > \bigg( \varphi (N^{-1} v) - \displaystyle\frac{ \omega^2 \rho^2}{4} \bigg)  \textbf{1}_{\Omega^c}  \Bigg] < 5 C_1 N^{-D},
\end{flalign*}

\noindent from which we deduce the proposition since $\varphi (N^{-1} v) - \varphi_{j; k} (N^{-1} v) = N^{-1} \omega \psi (N^{-1} v) < \frac{\omega^2 \rho^2}{4}$ for sufficiently large $N$. In the latter statement, the first equality follows from \eqref{functionkj} and the second estimate from the facts that $\psi (N^{-1} v) \le \Upsilon \le 2 e^{\zeta} + 2 \le 2 e^{\zeta + 1}$; that $\zeta = (\log N)^{1 - \nu / 2}$; that $\omega = N^{-\alpha / 5} > N^{-1 / 5}$; and that $\rho = N^{-1 / 4}$. 
\end{proof}

\appendix

\section{Proofs of \texorpdfstring{\Cref{derivativeshestimate}}{} and \texorpdfstring{\Cref{perturbationboundary}}{}} 

\label{ProofEstimateu}

In this section we establish \Cref{derivativeshestimate} and \Cref{perturbationboundary}. We begin in \Cref{EstimateEquation} by first recalling several known estimates about uniformly elliptic partial differential equations and then deducing \Cref{derivativeshestimate} as a consequence. We next prove \Cref{perturbationboundary} in \Cref{BoundaryPerturbationVariational}.

\subsection{Proof of \texorpdfstring{\Cref{derivativeshestimate}}{}} 

\label{EstimateEquation}

In this section we first recall several results from \cite{EDSO} about regularity properties of uniformly elliptic partial differential equations, and then use them to establish \Cref{derivativeshestimate}. 

To that end, let us fix an integer $d > 0$ and an open subset $\mathfrak{R} \subseteq \mathbb{R}^d$. Following Section 6.1 of \cite{EDSO}, we define variants of the norms from \Cref{LawGlobal2} where we now weight by the distance from a point to the boundary of $\mathfrak{R}$. First, for any $z, z' \in \mathfrak{R}$, set 
\begin{flalign*} 
d_z = d_z (\mathfrak{R}) = d(z, \partial \mathfrak{R}); \qquad d_{z, z'} = d_{z, z'} (\mathfrak{R}) = \min \{ d_z, d_{z'} \}.
\end{flalign*} 

\noindent Additionally, for any function $f \in \mathcal{C}^{k, \alpha} (\mathfrak{R})$; integers $k, m \in \mathbb{Z}_{\ge 0}$; and real number $\alpha \in (0, 1]$, set  
\begin{flalign*}
& [f]_k^{(m)} = [f]_{k, 0}^{(m)} = [f]_{k, 0; \mathfrak{R}}^{(m)} = \displaystyle\max_{\substack{\gamma \in \mathbb{Z}_{\ge 0}^d \\ |\gamma| = k}} \displaystyle\sup_{z \in \mathfrak{R}} d_z^{k + m} \big| \partial_{\gamma} f (z) \big|; \\
& [f]_{k, \alpha}^{(m)} = [f]_{k, \alpha; \mathfrak{R}}^{(m)} = \displaystyle\sup_{\substack{y, z \in \mathfrak{R} \\ y \ne z}} \displaystyle\sup_{\substack{\gamma \in \mathbb{Z}_{\ge 0}^d \\ |\gamma| = k}} d_{y, z}^{k + m + \alpha} \displaystyle\frac{\big| \partial_{\gamma} f(y) - \partial_{\gamma} f(z) \big|}{|y - z|^{\alpha}};\\
\| f \|_k^{(m)} = \| f \|_{k, 0}^{(m)} & = \| f \|_{k, 0; \mathfrak{R}}^{(m)} = \displaystyle\sum_{j = 0}^k [f]_{j, 0; \mathfrak{R}}^{(m)}; \qquad  \big\| f \|_{k, \alpha} = \big\| f \big\|_{k, \alpha; \mathfrak{R}}^{(m)}  = \| f \|_k^{(m)} + [f]_{k, \alpha}^{(m)},
\end{flalign*}

\noindent which give points closer to $\partial \mathfrak{R}$ ``smaller weight.'' Here, we recall from \Cref{LawGlobal2} that $\partial_{\gamma} = \prod_{i = 1}^d (\partial_i)^{\gamma_i}$ for any $d$-tuple $(\gamma_1, \gamma_2, \ldots , \gamma_d) \in \mathbb{Z}_{\ge 0}^d$. 

Observe (see equation (6.11) of \cite{EDSO}) that, for any $f, g \in \mathcal{C}^{0, \alpha} (\mathfrak{R})$, these norms satisfy 
\begin{flalign}
\label{fgestimatefg}
\| fg \|_{0, \alpha}^{(m)} \le \max \big\{ \| f \|_{0, \alpha}^{(m)} \| g \|_0, \| f \|_0 \| g \|_{0, \alpha}^{(m)} \big\}.
\end{flalign}

The following lemma, which appears as Theorem 6.2 of \cite{EDSO}, provides the \emph{interior Schauder estimates} for solutions to uniformly elliptic partial differential equations with H\"{o}lder continuous coefficients. Although these bounds hold in all dimensions $d \ge 2$, we only state them for $d = 2$. Throughout, we recall from \eqref{brzdefinition} that $\mathcal{B} = \mathcal{B}_1$ is a disk of radius $1$ centered at $(0, 0)$ (as such, it has smooth boundary, and so the results mentioned below from \cite{EDSO} will apply).

\begin{lem}[{\cite[Theorem 6.2]{EDSO}}]
	
	\label{aijuestimates}
	
	For fixed real numbers $B \in \mathbb{R}_{> 0}$ and $\alpha \in (0, 1)$, there exists a constant $C = C(B, \alpha) > 1$ such that the following holds. For each $j, k \in \{ x, y \}$, let $a_{jk}, b_j \in \mathcal{C}^{0, \alpha} (\mathcal{B})$ denote functions such that $a_{xy} = a_{yx}$ and
	\begin{flalign*}
	\displaystyle\max_{j, k \in \{ x, y \}} \| a_{jk} \|_{0, \alpha}^{(0)} < B; \qquad \displaystyle\max_{j \in \{ x, y \}} \| b_j \|_{0, \alpha}^{(1)} < B; \qquad \displaystyle\inf_{z \in \mathcal{B}} \displaystyle\sum_{j, k \in \{ x, y \}} a_{jk} (z) \xi_i \xi_j \ge B^{-1} |\xi|^2,
	\end{flalign*}
	
	\noindent all hold for any $\xi = (\xi_x, \xi_y) \in \mathbb{R}^2$. Let $g \in \mathcal{C}^{0, \alpha} (\mathcal{B})$ denote a function, and suppose that $F \in \mathcal{C}^{2, \alpha} (\mathcal{B})$ satisfies the elliptic partial differential equation 
	\begin{flalign}
	\label{aijubiuf}
	\displaystyle\sum_{j, k \in \{ x, y\}} a_{jk} (z) \partial_j \partial_k F (z) + \displaystyle\sum_{j \in \{ x, y \}} b_j (z) \partial_j F (z) = g, \quad \text{for each $z \in \mathcal{B}$.}
	\end{flalign}
	
	\noindent Then, $\| F \|_{2, \alpha}^{(0)} \le C \big( \| F \|_{0; \mathcal{B}} + \| g \|_{0, \alpha}^{(2)} \big)$.
	
\end{lem}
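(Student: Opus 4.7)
The statement is the classical interior Schauder estimate for linear, uniformly elliptic second-order equations with H\"older continuous coefficients and lower-order terms, so my plan is to follow the textbook argument (as in \cite{EDSO}) rather than invent something new: localize via freezing of coefficients, reduce to a constant-coefficient model problem, and then globalize using the weighted interior norms. In practice I would cite Theorem 6.2 of \cite{EDSO} directly, but here is how the argument goes.

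The starting point is the Schauder estimate for the constant-coefficient operator $L_0 = \sum a_{jk}^0 \partial_j \partial_k$ with $(a_{jk}^0)$ symmetric and satisfying the same ellipticity bound. By an affine change of variables one reduces to the Laplacian $\Delta$, and then the representation of $u$ as a Newtonian potential of $g$ plus a harmonic function yields, on concentric balls $\mathcal{B}_r \subset \mathcal{B}_R$, an estimate of the form
\begin{equation*}
[D^2 u]_{0,\alpha;\mathcal{B}_r} \le C \bigl( (R-r)^{-2-\alpha} \|u\|_{0;\mathcal{B}_R} + [g]_{0,\alpha;\mathcal{B}_R} \bigr),
\end{equation*}
with $C = C(B,\alpha)$. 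This is the piece I would take as a black box; it is Theorem 4.6 of \cite{EDSO}.

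The main step is the freezing-of-coefficients argument. Fix $z_0 \in \mathcal{B}$ and a radius $\rho \le d_{z_0}/2$, and on $\mathcal{B}_\rho(z_0)$ rewrite \eqref{aijubiuf} as
\begin{equation*}
L_0 F = g - \sum_{j,k} \bigl(a_{jk}(z)-a_{jk}(z_0)\bigr)\partial_j\partial_k F - \sum_j b_j(z) \partial_j F,
\end{equation*}
where $L_0$ has the (constant) coefficients $a_{jk}(z_0)$. Applying the constant-coefficient Schauder bound above on $\mathcal{B}_{\rho/2}(z_0) \subset \mathcal{B}_\rho(z_0)$ and using the H\"older continuity of the $a_{jk}$ to control the $\mathcal{C}^{0,\alpha}$-norm of $a_{jk}(z)-a_{jk}(z_0)$ on $\mathcal{B}_\rho(z_0)$ by $B \rho^{\alpha}$, together with \eqref{fgestimatefg}, gives (after multiplying by the appropriate power of $\rho \asymp d_{z_0}$ to pass to the interior norms) an inequality of the shape
\begin{equation*}
d_{z_0}^{2+\alpha}[D^2 F]_{0,\alpha;\mathcal{B}_{\rho/2}(z_0)} \le C\bigl(\rho^{\alpha} \|F\|_{2,\alpha;\mathcal{B}_\rho(z_0)}^{(0)} + \|F\|_{1;\mathcal{B}_\rho(z_0)} + \|g\|_{0,\alpha}^{(2)}\bigr).
\end{equation*}
Taking the supremum over $z_0 \in \mathcal{B}$ and choosing the ratio $\rho/d_{z_0}$ small enough lets us absorb the first term on the right into the left-hand side via the standard iteration lemma (Lemma 6.32 of \cite{EDSO}).

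The final step is to control the lower-order norm $\|F\|_{1;\mathcal{B}}$ by $\|F\|_{0;\mathcal{B}}$ plus an arbitrarily small multiple of $\|F\|_{2,\alpha}^{(0)}$, using the weighted interpolation inequalities for the $[\cdot]_{k,\alpha}^{(m)}$ seminorms (Lemma 6.32 of \cite{EDSO} again). Combining this with the freezing estimate and absorbing on the left produces the desired bound $\|F\|_{2,\alpha}^{(0)} \le C(\|F\|_0 + \|g\|_{0,\alpha}^{(2)})$. The only real obstacle is bookkeeping the weighted seminorms $\|\cdot\|_{k,\alpha}^{(m)}$ correctly through the localization and the $b_j \partial_j F$ term, which is why the weights $m=0,1,2$ in the hypotheses are calibrated exactly to make this work.
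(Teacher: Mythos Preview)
The paper does not prove this lemma at all: it is simply quoted verbatim as Theorem 6.2 of \cite{EDSO} and used as a black box. Your sketch correctly outlines the standard freezing-of-coefficients argument from that reference, so there is nothing to compare.
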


We will be interested in properties of solutions to certain families of non-linear, uniformly elliptic partial differential equations. The following result, which appears as Theorem 10.1 of \cite{EDSO}, provides a comparison principle for such solutions. 

\begin{lem}[{\cite[Theorem 10.1]{EDSO}}]
	
	\label{aijcomparison} 
	
	Fix a constant $B > 0$ and, for each $j, k \in \{ x, y \}$, let $a_{jk} \in \mathcal{C}^1 (\mathbb{R}^2)$ denote functions such that $a_{xy} = a_{yx}$ and
	\begin{flalign}
	\label{xijxikbsum}
	\displaystyle\inf_{z \in \mathbb{R}^2} \displaystyle\sum_{j, k \in \{x, y\}} a_{jk} (z) \xi_j \xi_k \ge B^{-1} |\xi|^2, \qquad \text{for any $\xi = (\xi_x, \xi_y) \in \mathbb{R}^2$.}
	\end{flalign}
	
	\noindent Moreover, for each $i \in \{ 1, 2 \}$, let $f_i: \partial \mathcal{B} \rightarrow \mathbb{R}$ denote a continuous function, and suppose that $F_i: \overline{\mathcal{B}} \rightarrow \mathbb{R}$ with $F_i \in \mathcal{C}^2 (\mathcal{B}) \cap \mathcal{C} (\overline{\mathcal{B}})$ is a solution to the partial differential equation
	\begin{flalign}
	\label{ajkfiequation1} 
	\displaystyle\sum_{j, k \in \{ x, y \}} a_{jk} \big( \nabla F (z) \big) \partial_j \partial_k F (z) = 0, \quad \text{for each $z \in \mathcal{B}$},
	\end{flalign}
	
	\noindent with boundary data $F_i |_{\partial \mathcal{B}} = f_i$. If $f_1 (z) \ge f_2 (z)$ for each $z \in \partial \mathcal{B}$, then $F_1 (z) \ge F_2 (z)$ for each $z \in \mathcal{B}$. 
	
\end{lem}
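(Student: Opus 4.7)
The plan is to argue by contradiction, assuming that $W = F_2 - F_1$ attains a strictly positive maximum $M$ at some interior point $z_0 \in \mathcal{B}$; such a point exists since $W \in \mathcal{C}(\overline{\mathcal{B}})$ and $W \le 0$ on $\partial \mathcal{B}$ by hypothesis. The key idea is to linearize the quasilinear operator $L[F] = \sum_{j,k} a_{jk}(\nabla F) \partial_j \partial_k F$ along the straight-line path $F_t = (1-t) F_1 + t F_2$ for $t \in [0, 1]$, and then invoke the strong maximum principle for the resulting linear, uniformly elliptic equation satisfied by $W$.

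Concretely, since $L[F_1] \equiv 0 \equiv L[F_2]$, writing $0 = L[F_2] - L[F_1] = \int_0^1 \frac{d}{dt} L[F_t]\,dt$ and differentiating under the integral yields
\begin{flalign*}
\widetilde{L} W (z) := \sum_{j, k \in \{x, y\}} \widetilde{a}_{jk}(z) \, \partial_j \partial_k W(z) + \sum_{j \in \{x, y\}} \widetilde{b}_j (z) \, \partial_j W(z) = 0, \qquad z \in \mathcal{B},
\end{flalign*}
where $\widetilde{a}_{jk}(z) = \int_0^1 a_{jk}(\nabla F_t(z))\,dt$ and $\widetilde{b}_j(z) = \int_0^1 \sum_{k, l} (\partial_{p_j} a_{kl})(\nabla F_t(z)) \, \partial_k \partial_l F_t(z) \, dt$; here $\partial_{p_j}$ denotes differentiation of $a_{kl}$ in its $j$-th argument. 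The symmetric matrix $(\widetilde{a}_{jk}(z))$, being a convex combination of the matrices $(a_{jk}(\nabla F_t(z)))$, inherits from \eqref{xijxikbsum} the same uniform ellipticity constant $B^{-1}$. Since $F_1, F_2 \in \mathcal{C}^2(\mathcal{B})$ and $a_{jk} \in \mathcal{C}^1(\mathbb{R}^2)$, both $\widetilde{a}_{jk}$ and $\widetilde{b}_j$ are continuous on $\mathcal{B}$, hence bounded on every compact subset of $\mathcal{B}$.

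Choose $r \in (0, 1)$ large enough that $z_0 \in \mathcal{B}_r$. On the closed disk $\overline{\mathcal{B}}_r \subset \mathcal{B}$, the operator $\widetilde{L}$ is linear, uniformly elliptic with no zero-order term, and has bounded coefficients; thus the strong maximum principle (e.g., Theorem 3.5 of \cite{EDSO}) applied to $\widetilde{L} W = 0$ forces $W$ to be constant on $\mathcal{B}_r$, since $W$ achieves its interior supremum $M$ at $z_0$. A standard continuation argument (exhausting $\mathcal{B}$ by an increasing family of such disks, or equivalently using the fact that the set $\{W = M\}$ is simultaneously open, by the strong maximum principle, and closed in $\mathcal{B}$) then yields $W \equiv M$ on the connected open set $\mathcal{B}$. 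By continuity of $W$ on $\overline{\mathcal{B}}$ this forces $W \equiv M > 0$ on $\partial \mathcal{B}$, contradicting the hypothesis $f_1 \ge f_2$.

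The main (and really only) subtlety is that we have no \emph{a priori} control on $D^2 F_i$ up to $\partial \mathcal{B}$, so the coefficients $\widetilde{b}_j$ of the linearized equation need not be bounded on $\mathcal{B}$; this is what prevents a direct application of the strong maximum principle on $\mathcal{B}$ itself and forces the exhaustion by interior disks $\mathcal{B}_r$ with $r \uparrow 1$. Once this technicality is handled, the remaining verifications, namely the symmetry and uniform ellipticity of $(\widetilde{a}_{jk})$ and the continuity of $\widetilde{b}_j$ on $\mathcal{B}$, are immediate from the hypotheses.
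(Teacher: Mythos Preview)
The paper does not prove this lemma; it is simply quoted as Theorem 10.1 of \cite{EDSO} (Gilbarg--Trudinger). Your proof is correct and is essentially the standard argument behind that theorem: linearize the quasilinear operator along the segment $F_t=(1-t)F_1+tF_2$ to obtain a linear uniformly elliptic equation $\widetilde{L}W=0$ for $W=F_2-F_1$ with no zero-order term, and then apply the maximum principle. Your handling of the possibly unbounded first-order coefficients near $\partial\mathcal{B}$ via the strong maximum principle on interior disks and a continuation argument is fine; an equivalent (and slightly quicker) route is to apply the weak maximum principle directly on the compactly contained open set $\{W>M/2\}\Subset\mathcal{B}$, on whose boundary $W=M/2$, yielding the same contradiction.
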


The following lemma provides interior regularity estimates for solutions to equations of the form \eqref{ajkfiequation1}. This result is known (for instance, see Chapter 3.6 of \cite{EDE}), but we  quickly outline its proof. 

\begin{lem} 
	
\label{aijgradientfestimate} 

For a fixed integer $m \in \mathbb{Z}_{> 1}$ and real numbers $r, B \in \mathbb{R}_{> 0}$, there exists a constant $C (r, B, m) > 1$ such that the following holds. For each $j, k \in \{ x, y \}$, let $a_{jk} \in \mathcal{C}^{m - 1} (\mathbb{R}^2)$ denote functions such that $a_{xy} = a_{yx}$ and
\begin{flalign}
\label{aijxiestimate}
\displaystyle\max_{j, k \in \{ x, y \}} \| a_{jk} \|_{\mathcal{C}^{m - 1} (\mathbb{R}^2)} \le B; \qquad \displaystyle\inf_{z \in \mathbb{R}^2} \displaystyle\sum_{j, k \in \{x, y\}} a_{jk} (z) \xi_j \xi_k \ge B^{-1} |\xi|^2,
\end{flalign}

\noindent for any $\xi = (\xi_x, \xi_y) \in \mathbb{R}^2$. Then for any continuous function $f \in \mathcal{C} (\partial \mathcal{B})$ such that $\| f \|_0 \le B$, there exists a unique $F \in \mathcal{C}^2 (\mathcal{B}) \cap \mathcal{C} (\overline{\mathcal{B}})$ satisfying the partial differential equation \eqref{ajkfiequation1} with boundary data $ F|_{\partial \mathcal{B}} = f$. Moreover, $F \in \mathcal{C}^m (\mathcal{B})$ and $\| F \|_{\mathcal{C}^m (\overline{\mathcal{B}}_{1 - r})} \le C$. 	
	
\end{lem}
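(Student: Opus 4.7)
The plan is to establish uniqueness via \Cref{aijcomparison}, obtain existence from standard theory for quasilinear uniformly elliptic equations, and then produce the interior $\mathcal{C}^m$ estimates through a Schauder bootstrap using \Cref{aijuestimates}.

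\emph{Uniqueness, existence, and boundedness.} If $F_1, F_2 \in \mathcal{C}^2(\mathcal{B}) \cap \mathcal{C}(\overline{\mathcal{B}})$ both solve \eqref{ajkfiequation1} with the same boundary data $f$, then applying \Cref{aijcomparison} twice (once with $(f_1,f_2)=(f,f)$ to get $F_1 \ge F_2$, once with the roles reversed) yields $F_1 = F_2$. Existence of a classical solution $F \in \mathcal{C}^2(\mathcal{B}) \cap \mathcal{C}(\overline{\mathcal{B}})$ follows from standard theory for quasilinear uniformly elliptic equations in non-divergence form whose top order coefficients depend only on $\nabla F$ (e.g.\ via the method of continuity combined with Krylov--Safonov interior H\"older estimates for the linearization; cf.\ Gilbarg--Trudinger, Chapter 17). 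Here one uses that the smoothness and uniform ellipticity in \eqref{aijxiestimate} are invariant under the continuity method, and that $a_{jk}$ is independent of $F$ itself, so that the needed a priori $\mathcal{C}^{1,\alpha}$ bounds follow from the non-divergence Harnack inequality applied to difference quotients. By \Cref{aijcomparison} applied with $f_1 = f$ and $f_2 \equiv \pm \|f\|_0$, we also obtain the global bound $\|F\|_{0;\mathcal{B}} \le \|f\|_0 \le B$.

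\emph{Initial interior H\"older estimate for the gradient.} The central input is a quantitative interior $\mathcal{C}^{1,\alpha}$ estimate: there exist $\alpha = \alpha(B) \in (0,1)$ and $C_1 = C_1(r,B) > 1$ such that
\begin{equation*}
\| F \|_{\mathcal{C}^{1,\alpha}(\overline{\mathcal{B}}_{1-r/4})} \le C_1.
\end{equation*}
This is obtained by applying the Krylov--Safonov Harnack inequality to each difference quotient $\tau_{h,l} F(z) = h^{-1}(F(z+he_l) - F(z))$, which satisfies a uniformly elliptic linear equation in non-divergence form with the same ellipticity bound $B$ as in \eqref{aijxiestimate}, together with the $L^\infty$ bound $\|F\|_{0;\mathcal{B}} \le B$ to control the oscillation. (In two dimensions one may alternatively invoke Morrey's theorem together with $W^{2,p}$ estimates for strong solutions.)

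\emph{Schauder bootstrap.} Once $\nabla F \in \mathcal{C}^{0,\alpha}(\overline{\mathcal{B}}_{1-r/4})$ with a quantitative bound, the functions $z \mapsto a_{jk}(\nabla F(z))$ belong to $\mathcal{C}^{0,\alpha}(\overline{\mathcal{B}}_{1-r/4})$ with norm controlled by $C_1$ and the $\mathcal{C}^1$-norm of $a_{jk}$ from \eqref{aijxiestimate}, using \eqref{fgestimatefg}. Rescaling \Cref{aijuestimates} (with $b_j \equiv 0$ and $g \equiv 0$) from $\mathcal{B}$ to a ball of radius $r/8$ about an arbitrary point of $\mathcal{B}_{1-3r/8}$, and covering $\overline{\mathcal{B}}_{1-r/2}$ by finitely many such balls, we obtain $\|F\|_{\mathcal{C}^{2,\alpha}(\overline{\mathcal{B}}_{1-r/2})} \le C_2(r,B)$. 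Now differentiate \eqref{ajkfiequation1} in a coordinate direction $e_l$: the function $u = \partial_l F$ satisfies
\begin{equation*}
\sum_{j,k \in \{x,y\}} a_{jk}(\nabla F) \, \partial_j \partial_k u + \sum_{i \in \{x,y\}} b_i \, \partial_i u = 0,
\qquad b_i = \sum_{j,k} (\partial_{p_i} a_{jk})(\nabla F) \, \partial_j \partial_k F,
\end{equation*}
which is of the form \eqref{aijubiuf}. By \eqref{fgestimatefg} and the bound on $\|F\|_{\mathcal{C}^{2,\alpha}(\overline{\mathcal{B}}_{1-r/2})}$, all coefficients of this linear equation lie in $\mathcal{C}^{0,\alpha}$ with norms controlled by $C_2$ and \eqref{aijxiestimate}. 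Applying \Cref{aijuestimates} again, on a slightly smaller ball, yields $u = \partial_l F \in \mathcal{C}^{2,\alpha}$, i.e.\ $F \in \mathcal{C}^{3,\alpha}$, with a quantitative estimate. Iterating this procedure $m-2$ times (each step shrinking the domain by a factor $(1-\delta)$ where the losses sum to at most $r/2$, and using that $a_{jk} \in \mathcal{C}^{m-1}$ is enough smoothness to carry through $m-2$ differentiations) produces the desired bound $\|F\|_{\mathcal{C}^m(\overline{\mathcal{B}}_{1-r})} \le C(r,B,m)$.

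\emph{Main obstacle.} The delicate step is the initial quantitative $\mathcal{C}^{1,\alpha}$ estimate, which requires non-trivial non-divergence form elliptic regularity (Krylov--Safonov). Once this is in hand, the rest is a routine Schauder bootstrap, and the existence theory for the quasilinear problem reduces (via the method of continuity) to the same a priori $\mathcal{C}^{1,\alpha}$ bound.
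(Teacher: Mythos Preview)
Your overall architecture---uniqueness from \Cref{aijcomparison}, existence from Gilbarg--Trudinger, then Schauder bootstrap---matches the paper exactly, and the bootstrap step (differentiate, obtain a linear equation for $\partial_l F$, apply \Cref{aijuestimates}, iterate) is the same. The discrepancy is in the initial quantitative $\mathcal{C}^{1,\alpha}$ bound, which you correctly identify as the crux.

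Your primary argument for that step has a gap. Writing out the equation for the difference quotient $u=\tau_{h,l}F$, one finds
\[
\sum_{j,k} a_{jk}\bigl(\nabla F(\cdot+he_l)\bigr)\,\partial_j\partial_k u \;+\; \sum_i b_i\,\partial_i u \;=\; 0,
\qquad b_i(z)=\sum_{j,k}\bigl(\partial_{p_i}a_{jk}\bigr)(\xi(z))\,\partial_j\partial_k F(z),
\]
so the first-order coefficients involve $D^2F$, which is not yet quantitatively controlled. Krylov--Safonov then only produces a bound depending on $\|D^2F\|_{L^\infty}$, which is circular. (Even the preliminary input $\sup_h\|\tau_{h,l}F\|_0<\infty$, i.e.\ an interior Lipschitz bound on $F$, needs a separate argument since $f$ is merely continuous.)

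The paper instead freezes the gradient in the coefficients: set $\widetilde a_{jk}(z)=a_{jk}\bigl(\nabla F(z)\bigr)$, so that $F$ solves the \emph{linear} non-divergence equation $\sum_{j,k}\widetilde a_{jk}\,\partial_j\partial_k F=0$ with bounded measurable, uniformly elliptic coefficients. In two dimensions there is a direct interior $\mathcal{C}^{1,\alpha}$ estimate for such equations (Theorem~12.4 of Gilbarg--Trudinger, via quasiconformal mapping theory), yielding $[F]_{1,\alpha}^{(0)}\le C_1(B)\,\|F\|_0\le C_1(B)\,B$ immediately. Your parenthetical ``Morrey's theorem in two dimensions'' is exactly this circle of ideas; making that the primary argument and dropping the difference-quotient route fixes the gap. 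For existence the paper likewise cites the two-dimensional Theorem~12.5 of Gilbarg--Trudinger, which accommodates merely continuous boundary data directly, rather than the general Chapter~17 machinery.
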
  

\begin{proof}[Proof (Outline)]
	
	For any $\alpha \in (0, 1)$, the existence of a function $F \in \mathcal{C}^{2, \alpha} (\mathcal{B})$ satisfying \eqref{ajkfiequation1} follows from Theorem 12.5 of \cite{EDSO}; its uniqueness is a consequence of the comparison principle \Cref{aijcomparison} for equations of the form \eqref{ajkfiequation1}.  Therefore, it remains to estimate $\| F \|_{\mathcal{C}^m (\overline{\mathcal{B}}_{1 - r})}$. Bounds of this type are known to follow from an initial estimate on $[F]_{\mathcal{C}^{1, \alpha} (\mathcal{B}_{1 - r})}$ and an inductive application of the Schauder estimate \Cref{aijuestimates}. Let us briefly outline this procedure. 
	
	To that end, for each $j, k \in \{ x, y \}$ define the measurable function $\widetilde{a}_{jk}: \mathcal{B} \rightarrow \mathbb{R}$ by setting $\widetilde{a}_{jk} (z) = a_{jk} \big( \nabla F (z) \big)$ for each $z \in \mathcal{B}$. Then, the $\widetilde{a}_{jk}$ satisfy $\| \widetilde{a}_{jk} \|_0 \le B$ and \eqref{xijxikbsum}. Since $F \in \mathcal{C}^2 (\mathcal{B})$ satisfies \eqref{ajkfiequation1}, it also satisfies the linear equation 
	\begin{flalign}
	\label{ajkfjkidentity}
	\sum_{j, k \in \{x, y \}} \widetilde{a}_{jk} (z) \partial_j \partial_k F (z) = 0, \quad \text{for each $z \in \mathcal{B}$}.
	\end{flalign}
	
	Therefore, Theorem 12.4 of \cite{EDSO} yields constants $\alpha = \alpha (B) > 0$ and $C_1 = C_1 (B) > 1$ such that $[F]_{1, \alpha}^{(0)} < C_1 \| F \|_0$. By the comparison principle \Cref{aijcomparison} for equations of the form \eqref{ajkfiequation1}, we have that $\| F \|_0 \le \| f \|_0 \le B$, and so $[F]_{1, \alpha}^{(0)} \le C_1 B$. 
	
	Now since $\| a_{jk} \|_{\mathcal{C}^1 (\mathbb{R}^2)} \le B$ and $\widetilde{a}_{jk} (z) = a_{jk} \big( \nabla F(z) \big)$, it follows that $\| \widetilde{a}_{jk} \|_{\alpha}^{(0)} = [ \widetilde{a}_{jk}]_{\alpha}^{(0)} + \| \widetilde{a}_{jk} \|_0 \le [F]_{1, \alpha}^{(0)} \| a_{jk}\|_{\mathcal{C}^1 (\mathbb{R}^2)} + B \le C_1 B^2 + B$. Together with the interior Schauder estimate \Cref{aijuestimates} and the fact that $\| F \|_0 \le B$, this yields a constant $C_2 = C_2 (B) > 1$ such that $\| F \|_{2, \alpha}^{(0)} \le C_2$; thus, $\| F \|_{\mathcal{C}^2 (\overline{\mathcal{B}}_{1 - r / m})} \le m^3 r^{-3} C_2$. 
	
	To bound the higher derivatives of $F$, one differentiates \eqref{ajkfjkidentity} with respect to some $i \in \{ x, y \}$. Denoting $G = \partial_i F$, this yields
	\begin{flalign}
	\label{gequationajk}
	\displaystyle\sum_{j, k \in \{ x, y \}} \widetilde{a}_{jk} (z) \partial_j \partial_k G (z) + \displaystyle\sum_{j \in \{ x, y \}} \widetilde{b}_j (z) \partial_j G (z) = - \displaystyle\sum_{\substack{j, k \in \{ x, y \} \\ j, k \ne i}} \widetilde{c}_{jk} (z) \partial_j \partial_k F(z),
	\end{flalign}
	
	\noindent for some coefficients $\{ \widetilde{b}_j \}$ and $\{ \widetilde{c}_{jk} \}$ obtained by taking (order $1$) derivatives of the $\big\{ \widetilde{a}_{ij} \}$; observe that \eqref{gequationajk} is an equation of the form \eqref{aijubiuf}. One can quickly verify using \eqref{fgestimatefg} that
	\begin{flalign*} 
	 \| \partial_i \widetilde{a}_{jk} \|_{0, \alpha; \mathcal{B}_{1 - r / m}}^{(0)} \le 2 \| a_{jk} \|_{\mathcal{C}^1 (\mathbb{R}^2)} \| F \|_{\mathcal{C}^2 (\overline{\mathcal{B}}_{1 - r / m})} + 2 \| a_{jk} \|_{\mathcal{C}^2 (\mathbb{R}^2)}  \| F \|_{\mathcal{C}^{2, \alpha} (\overline{\mathcal{B}}_{1 - r / m})},
	\end{flalign*} 
	
	\noindent for any $i, j, k \in \{ x, y \}$. So, the estimates 
	\begin{flalign} 
	\label{ajkderivativesfiderivatives} 
	\| a_{jk} \|_{\mathcal{C}^2 (\mathbb{R}^2)} \le B; \qquad \| F_i \|_{\mathcal{C}^{2, \alpha} (\overline{\mathcal{B}}_{1 - r / m})} < m^3 r^{-3} C_2,
	\end{flalign}
	
	\noindent yield a constant $C_3 = C_3 (r, B, m) > 1$ such that $\| \partial_i \widetilde{a}_{jk} \|_{0, \alpha; \mathcal{B}_{1 - r / m}}^{(0)} \le C_3$. Combining this with \eqref{gequationajk}, \Cref{aijuestimates}, the second bound in \eqref{ajkderivativesfiderivatives}, and \eqref{fgestimatefg}, we deduce the existence of a constant $C_4 = C_4 (r, B, m) > 1$ such that $\| G \|_{2, \alpha; \mathcal{B}_{1 - r / m}}^{(0)} \le C_4$. Recalling that $\partial_i F = G$ and ranging over $i \in \{ x, y \}$ then yields $\| F \|_{\mathcal{C}^{3, \alpha} (\overline{\mathcal{B}}_{1 - 2r / m})} < m^4 r^{-4} C_4 + \| F \|_0 \le m^4 r^{-4} C_4 + B$. 
	
	Repeating this procedure $m - 3$ additional times, we deduce the existence of a constant $C = C(r, B, m) > 1$ such that $\| F \|_{\mathcal{C}^{m, \alpha} (\overline{\mathcal{B}}_{1 - r})} \le C$. This implies the lemma.
\end{proof}

We can now deduce \Cref{derivativeshestimate}.

\begin{proof}[Proof of \Cref{derivativeshestimate}]
	
	By suitably shifting $\mathfrak{h}$, we may assume throughout this proof that $\mathcal{H} (0, 0) = 0$, and so $\| \mathfrak{h} \|_0 \le 1$ as $\mathcal{H}$ is $1$-Lipschitz on $\mathcal{B}$. Then since $\nabla \mathcal{H} (z) \in \mathcal{T}_{\varepsilon}$ for each $z \in \mathcal{B}$, \Cref{haijequations} implies that $\mathcal{H} \in \mathcal{C}^2 (\mathcal{B}) \cap \mathcal{C} (\overline{\mathcal{B}})$ satisfies the partial differential equation \eqref{aijh} (where we recall that the $\mathfrak{a}_{jk}$ there are defined by \eqref{aijst}). 
	
	Now, recall from \Cref{concavesigmat} that the function $\sigma$ from \eqref{lsigma} is uniformly concave and smooth on $\mathcal{T}_{\varepsilon / 2}$. Thus, since $\mathfrak{a}_{jk} = \partial_j \partial_k \sigma$ for each $j, k \in \{ x, y \}$, there exist a constant $B = B (\varepsilon) > 0$ and functions $a_{jk} \in \mathcal{C}^1 (\mathbb{R}^2)$ satisfying $a_{xy} = a_{yx}$ and the following three properties. 
	
	\begin{enumerate}
		\item  For any $j, k \in \{ x, y \}$ and $(s, t) \in \mathcal{T}_{\varepsilon / 2}$, we have that $a_{jk} (s, t) = \mathfrak{a}_{jk} (s, t)$. 
		
		\item For any $j, k \in \{ x, y \}$, we have that $\| a_{jk} \|_{\mathcal{C}^2 (\mathbb{R}^2)} < B$. 
		
		\item The bound in \eqref{xijxikbsum} holds. 
	
	\end{enumerate}
	
	Then the first of these properties, together with \Cref{haijequations} and the fact that $\nabla \mathcal{H} (z) \in \mathcal{T}_{\varepsilon}$ for each $z \in \mathcal{B}$, implies that $\mathcal{H}$ satisfies the equation \eqref{ajkfiequation1}. This, together with the $m = 2$ case of \Cref{aijgradientfestimate} and the second and third properties listed above, yields the lemma. 
\end{proof}

\subsection{Proof of \Cref{perturbationboundary}}

\label{BoundaryPerturbationVariational}

In this section we establish \Cref{perturbationboundary}. To that end, we first recall the following result, which appears as Theorem 4.3 of \cite{MCFARS} and provides an interior approximation for the gradient of a nearly linear (recall \Cref{linearst}) maximizer of $\mathcal{E}$.

\begin{prop}[{\cite[Theorem 4.3]{MCFARS}}]
	
	\label{interiorfunctional}
	
	For any fixed $\varepsilon \in \big( 0, \frac{1}{4} \big)$, there exists a $\delta = \delta (\varepsilon) > 0$ such that the following holds. Let $\mathfrak{h}: \partial \mathcal{B} \rightarrow \mathbb{R}$ denote a function admitting an admissible extension to $\mathcal{B}$, and let $\mathcal{H} \in \Adm (\mathcal{B}; \mathfrak{h})$ denote the maximizer of $\mathcal{E}$ on $\mathcal{B}$ with boundary data $\mathfrak{h}$. If there exists a pair $(s, t) \in \mathcal{T}_{\varepsilon}$ such that $\mathcal{H}$ is $\delta$-nearly linear of slope $(s, t)$ on $\mathcal{B}$, then $\sup_{z \in \mathcal{B}_{1 / 2}} \big| \nabla \mathcal{H} (z) - (s, t) \big| < \varepsilon$. 
	
\end{prop}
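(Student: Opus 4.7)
The plan is to linearize the Euler--Lagrange equation around the two maximizers and apply an interior Schauder estimate to the resulting equation for $W = \mathcal{H}_1 - \mathcal{H}_2$. The first step is to show that, after taking $\delta$ small, the gradients $\nabla \mathcal{H}_i$ stay uniformly inside $\mathcal{T}_{\varepsilon/2}$ on $\mathcal{B}_{1/2}$. Indeed, if $\Lambda$ is a linear function of slope $(s,t) \in \mathcal{T}_\varepsilon$ with $\sup_{\partial \mathcal{B}} |\mathfrak{h}_i - \Lambda| < \delta$, then $\Lambda$ is itself the maximizer of $\mathcal{E}$ on $\mathcal{B}$ with its own boundary data (its second derivatives vanish so \eqref{aijh} holds, and \Cref{haijequations} with \Cref{hemaximum} yields that $\Lambda$ is the unique maximizer). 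Applying \Cref{h1h2gamma} to the pair $(\mathcal{H}_i,\Lambda)$ shows that each $\mathcal{H}_i$ is $\delta$-nearly linear of slope $(s,t)$ on all of $\mathcal{B}$. Taking $\delta$ below the threshold of \Cref{interiorfunctional} (applied with $\varepsilon/2$ in place of $\varepsilon$) then gives $\nabla \mathcal{H}_i(z) \in \mathcal{T}_{\varepsilon/2}$ for every $z \in \mathcal{B}_{1/2}$ and $i \in \{1,2\}$. Rescaling $\mathcal{H}_i|_{\mathcal{B}_{1/2}}$ back to a maximizer on $\mathcal{B}$ via \Cref{estimatehrho} and applying \Cref{derivativeshestimate}---strengthened via \Cref{aijgradientfestimate} with $m=3$ to yield a full $\mathcal{C}^{2,\alpha}$-bound for some $\alpha = \alpha(\varepsilon) \in (0,1)$---then produces a constant $C = C(\varepsilon) > 1$ with $\|\mathcal{H}_i - \mathcal{H}_i(0,0)\|_{\mathcal{C}^{2,\alpha}(\overline{\mathcal{B}}_{1/3})} \le C$, which in particular establishes the second bound of \eqref{perturbationboundaryestimate}.

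To derive an equation for $W$, I would subtract the two Euler--Lagrange equations $\sum_{j,k} \mathfrak{a}_{jk}(\nabla \mathcal{H}_i)\,\partial_j \partial_k \mathcal{H}_i = 0$ and use the fundamental theorem of calculus to expand
\begin{flalign*}
\mathfrak{a}_{jk}(\nabla \mathcal{H}_1(z)) - \mathfrak{a}_{jk}(\nabla \mathcal{H}_2(z)) = \mathbf{c}_{jk}(z) \cdot \nabla W(z), \quad \mathbf{c}_{jk}(z) = \int_0^1 \nabla \mathfrak{a}_{jk}\bigl(\nabla \mathcal{H}_2(z) + \tau \nabla W(z)\bigr) d\tau.
\end{flalign*}
This recasts the difference of the two equations as a homogeneous linear equation
\begin{flalign*}
\sum_{j,k \in \{x,y\}} A_{jk}(z)\,\partial_j \partial_k W(z) + \sum_{\ell \in \{x,y\}} B_\ell(z)\,\partial_\ell W(z) = 0, \qquad z \in \mathcal{B}_{1/3},
\end{flalign*}
with $A_{jk}(z) = \mathfrak{a}_{jk}(\nabla \mathcal{H}_1(z))$ and $B_\ell(z) = \sum_{j,k} \bigl(\mathbf{c}_{jk}(z)\bigr)_\ell\,\partial_j \partial_k \mathcal{H}_2(z)$. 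Because $\nabla \mathcal{H}_1, \nabla \mathcal{H}_2 \in \mathcal{T}_{\varepsilon/2}$ on $\mathcal{B}_{1/2}$, because $\mathfrak{a}_{jk}$ is smooth on $\mathcal{T}_{\varepsilon/2}$ (\Cref{concavesigmat}), and because of the $\mathcal{C}^{2,\alpha}$-bound from the previous paragraph, the coefficients $A_{jk}$ and $B_\ell$ have $\mathcal{C}^{0,\alpha}(\overline{\mathcal{B}}_{1/3})$-norms bounded purely in terms of $\varepsilon$, and the leading matrix $(A_{jk})$ is uniformly elliptic on $\mathcal{B}_{1/3}$ with an ellipticity constant depending only on $\varepsilon$.

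The interior Schauder estimate \Cref{aijuestimates} (after rescaling $\mathcal{B}_{1/3}$ to $\mathcal{B}$) then yields $\|W\|_{\mathcal{C}^{2,\alpha}(\overline{\mathcal{B}}_{1/4})} \le C(\varepsilon)\,\|W\|_{0;\mathcal{B}_{1/3}}$. Finally \Cref{h1h2gamma} applied to the pair $(\mathcal{H}_1,\mathcal{H}_2)$ bounds $\|W\|_{0;\mathcal{B}} \le \max_{z \in \partial \mathcal{B}}|\mathfrak{h}_1(z) - \mathfrak{h}_2(z)|$, completing the first estimate in \eqref{perturbationboundaryestimate}. The main technical point, and the step I expect to require the most care, is ensuring that the coefficients $B_\ell$ have $\mathcal{C}^{0,\alpha}$-norms controlled purely in terms of $\varepsilon$: this is what forces the upgrade of \Cref{derivativeshestimate} from $\mathcal{C}^2$ to $\mathcal{C}^{2,\alpha}$ before linearizing, and one must further check that the weighted seminorms $\|B_\ell\|_{0,\alpha}^{(1)}$ required by \Cref{aijuestimates}---carrying a factor of the distance to the boundary of the rescaled domain---are finite with the correct dependence.
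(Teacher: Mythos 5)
Your argument does not prove Proposition \ref{interiorfunctional}; it proves (a close variant of) Proposition \ref{perturbationboundary}. Proposition \ref{interiorfunctional} is a statement about a \emph{single} maximizer $\mathcal{H}$: if $\mathcal{H}$ is $\delta$-nearly linear of slope $(s,t)$ on $\mathcal{B}$, then $\sup_{z \in \mathcal{B}_{1/2}} |\nabla \mathcal{H}(z) - (s,t)| < \varepsilon$. Your proposal instead introduces two maximizers $\mathcal{H}_1$, $\mathcal{H}_2$, sets $W = \mathcal{H}_1 - \mathcal{H}_2$, and derives the two estimates in \eqref{perturbationboundaryestimate}. You have addressed a different proposition.

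More critically, the argument is circular as written: you explicitly invoke Proposition \ref{interiorfunctional} (``Taking $\delta$ below the threshold of \Cref{interiorfunctional} \ldots then gives $\nabla \mathcal{H}_i(z) \in \mathcal{T}_{\varepsilon/2}$ \ldots'') as a step in the argument, so you are assuming the result you were asked to prove. That reliance is correct and appropriate if one is proving \Cref{perturbationboundary} --- which is exactly how the paper's proof of \Cref{perturbationboundary} in \Cref{BoundaryPerturbationVariational} proceeds, via a reduction through \Cref{interiorfunctional}, \Cref{h1h2gamma}, the uniformly elliptic formulation of \Cref{haijequations}, and interior Schauder/interpolation estimates --- but it cannot serve as a proof of \Cref{interiorfunctional} itself.

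Note also that Proposition \ref{interiorfunctional} is not proved in this paper at all: it is imported directly as Theorem 4.3 of \cite{MCFARS} (De Silva--Savin). An actual proof would have to engage with the compactness/flatness-improvement regularity argument there, which cannot be replaced by the linearization-plus-Schauder strategy you outline: that strategy already \emph{requires} knowing the gradient stays in $\mathcal{T}_{\varepsilon/2}$ on $\mathcal{B}_{1/2}$ (so that the Euler--Lagrange equation is uniformly elliptic there), and establishing exactly that interior nondegeneracy from the mere $\delta$-near-linearity of $\mathcal{H}$ is the content of \Cref{interiorfunctional}.
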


To establish \Cref{perturbationboundary}, we will first use \Cref{interiorfunctional} to show that $\nabla \mathcal{H}_i (z) \in \mathcal{T}_{\varepsilon / 2}$ for each $i \in \{ 1, 2 \}$ and $z \in \mathcal{B}_{1 / 2}$. Then \Cref{haijequations} will imply that $\mathcal{H}_1$ and $\mathcal{H}_2$ both satisfy the same uniformly elliptic, non-linear partial differential equation on $\mathcal{B}_{1 / 2}$. So, we must bound $\big| \nabla \mathcal{H}_1 (z) - \nabla \mathcal{H}_2 (z) \big|$ assuming that the $\mathcal{H}_i$ satisfy the same uniformly elliptic partial differential equation. This will be facilitated through the following proposition. 

\begin{prop}
	
	\label{ellipticperturbation} 
	
	For any $r \in (0, 1)$ and $B\in \mathbb{R}_{> 0}$, there exist constants $\delta = \delta (r, B) \in (0, 1)$ and $C = C (r, B) > 1$ such that the following holds. For each $j, k \in \{ x, y \}$ let $a_{jk} \in \mathcal{C}^4 (\mathbb{R}^2)$ denote functions satisfying $a_{xy} = a_{yx}$ and 
	\begin{flalign}
	\label{aijxiestimate2}
	\displaystyle\max_{j, k \in \{ x, y \}} \| a_{jk} \|_{\mathcal{C}^4 (\mathbb{R}^2)} \le B; \qquad \displaystyle\inf_{z \in \mathbb{R}^2} \displaystyle\sum_{j, k \in \{x, y\}} a_{jk} (z) \xi_j \xi_k \ge B^{-1} |\xi|^2,
	\end{flalign}
	
	\noindent for any $\xi = (\xi_x, \xi_y) \in \mathbb{R}^2$. For each $i \in \{ 1, 2 \}$, let $f_i \in \mathcal{C} (\partial \mathcal{B})$ denote a continuous function such that $\sup_{z \in \partial \mathcal{B}} \big| f_i (z) \big| \le B$. Additionally, for each $i \in \{ 1, 2 \}$, let $F_i \in \mathcal{C}^5 (\mathcal{B}) \cap \mathcal{C} (\overline{\mathcal{B}})$ denote the solution to the equation
	\begin{flalign}
	\label{ajkfiequation}
	\displaystyle\sum_{j, k \in \{ x, y \}} a_{jk} \big( \nabla F_i (x) \big) \partial_j \partial_k F_i (z) = 0, \quad \text{for each $z \in \mathcal{B}$},
	\end{flalign} 

	\noindent with boundary data $F_i |_{\partial \mathcal{B}} = f_i$. If $\sup_{z \in \partial \mathcal{B}} \big| f_1 (z) - f_2 (z) \big| < \delta$, then 
	\begin{flalign}
	\label{estimatef1f2elliptic}
	\sup_{z \in \mathcal{B}_{1 - r}} \big| \nabla F_1 (z) - \nabla F_2 (z) \big| \le C \sup_{z \in \partial \mathcal{B}} \big| f_1 (z) - f_2 (z) \big|. 
	\end{flalign}
	
\end{prop}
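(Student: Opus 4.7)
The plan is to derive a linear uniformly elliptic equation satisfied by $W = F_1 - F_2$, and then combine a comparison-principle bound on $\|W\|_0$ with interior Schauder estimates to bound $\|\nabla W\|_0$ on $\mathcal{B}_{1-r}$. First, applying the comparison principle Lemma \ref{aijcomparison} to the two pairs $(F_1, F_2 + \sup_{\partial\mathcal{B}}|f_1-f_2|)$ and $(F_1 + \sup_{\partial\mathcal{B}}|f_1-f_2|, F_2)$ yields the $C^0$ bound $\sup_{\mathcal{B}} |W| \le \sup_{\partial\mathcal{B}}|f_1 - f_2|$.

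Next, I would linearize the equation for $W$. Subtracting the equations for $F_1$ and $F_2$ and using the identity
\begin{flalign*}
a_{jk}(\nabla F_1)\partial_j\partial_k F_1 - a_{jk}(\nabla F_2)\partial_j\partial_k F_2 = a_{jk}(\nabla F_1)\partial_j\partial_k W + \big[a_{jk}(\nabla F_1) - a_{jk}(\nabla F_2)\big]\partial_j\partial_k F_2,
\end{flalign*}
together with the fundamental theorem of calculus
\begin{flalign*}
a_{jk}(\nabla F_1) - a_{jk}(\nabla F_2) = \sum_l \left( \int_0^1 (\partial_l a_{jk})\big((1-s)\nabla F_2 + s\nabla F_1\big)\,ds \right) \partial_l W,
\end{flalign*}
we find that $W$ satisfies on $\mathcal{B}$ a linear equation of the form
\begin{flalign*}
\sum_{j,k\in\{x,y\}} \tilde{a}_{jk}(z)\,\partial_j\partial_k W(z) + \sum_{l\in\{x,y\}} \tilde{b}_l(z)\,\partial_l W(z) = 0,
\end{flalign*}
where $\tilde{a}_{jk}(z) = a_{jk}(\nabla F_1(z))$ and each $\tilde{b}_l$ is a sum of products of averaged first derivatives of the $a_{jk}$ and second derivatives of $F_2$. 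Uniform ellipticity of the matrix $[\tilde{a}_{jk}]$ with constant $B^{-1}$ is inherited directly from \eqref{aijxiestimate2}.

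To apply the interior Schauder estimate Lemma \ref{aijuestimates}, the key input is uniform $\mathcal{C}^{0,\alpha}$ control of $\tilde{a}_{jk}$ and $\tilde{b}_l$ on a sub-ball. This is afforded by Lemma \ref{aijgradientfestimate} applied with $m = 4$, which provides a constant depending only on $r$ and $B$ bounding $\|F_i\|_{\mathcal{C}^4(\overline{\mathcal{B}}_{1 - r/4})}$; combining this with the regularity hypothesis \eqref{aijxiestimate2} on the $a_{jk}$ and the product bound \eqref{fgestimatefg} yields the desired $\mathcal{C}^{0,\alpha}$ control on $\tilde{a}_{jk}$ and $\tilde{b}_l$ over $\overline{\mathcal{B}}_{1-r/4}$. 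After rescaling $\mathcal{B}_{1-r/4}$ to $\mathcal{B}$ (which preserves the form of the linear elliptic equation), Lemma \ref{aijuestimates} applied with $g = 0$ gives $\|W\|_{\mathcal{C}^{2,\alpha}(\overline{\mathcal{B}}_{1-r/2})} \le C\|W\|_0 \le C\,\|f_1 - f_2\|_0$, which in particular yields \eqref{estimatef1f2elliptic} on $\mathcal{B}_{1-r}$. The main technical point to handle carefully is the bookkeeping across the nested balls $\mathcal{B}\supset\mathcal{B}_{1-r/4}\supset\mathcal{B}_{1-r/2}\supset\mathcal{B}_{1-r}$ needed to chain Lemma \ref{aijgradientfestimate} and Lemma \ref{aijuestimates}, and in verifying that the rescaled linearized equation still satisfies the hypotheses of Lemma \ref{aijuestimates}; the smallness parameter $\delta$ plays no essential role beyond permitting the statement to be phrased in the linear regime, since for $\|f_1 - f_2\|_0 \ge \delta$ the conclusion follows trivially from the uniform gradient bounds on the $F_i$ supplied by Lemma \ref{aijgradientfestimate}.
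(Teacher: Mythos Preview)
Your argument is correct and in fact more direct than the paper's. Both proofs begin with the comparison principle to get $\sup_{\mathcal{B}}|W| \le \varsigma := \sup_{\partial\mathcal{B}}|f_1-f_2|$ and both invoke Lemma~\ref{aijgradientfestimate} for interior $\mathcal{C}^m$ control of the $F_i$, but they diverge from there. The paper first uses the interpolation Lemma~\ref{fjderivativesestimate} (with $(j,k)=(1,5)$ and $(2,5)$) to obtain preliminary fractional bounds $\|\nabla W\|\lesssim\varsigma^{4/5}$ and $\|W\|_{\mathcal{C}^2}\lesssim\varsigma^{3/5}$; it then sets $g=\varsigma^{-1/2}W$, Taylor-expands $a_{jk}(\nabla F_1+\varsigma^{1/2}\nabla g)$ to second order, and applies Schauder to the resulting equation for $g$ with an $O(\varsigma^{1/2})$ right-hand side, finally interpolating once more to recover the gradient bound. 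This bootstrap is where the smallness of $\varsigma$ is genuinely used (one needs $1-C_7\varsigma^{1/2}>0$).

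By contrast, your exact linearization via the integral mean-value identity produces a homogeneous linear equation for $W$ with coefficients $\tilde a_{jk},\tilde b_l$ whose $\mathcal{C}^{0,\alpha}$ norms on $\mathcal{B}_{1-r/4}$ are controlled uniformly in $\varsigma$; a single application of Lemma~\ref{aijuestimates} with $g=0$ then gives the result, bypassing both interpolation steps and the perturbative expansion. Your observation that $\delta$ is inessential is therefore a real simplification, not just cosmetic. The paper's route is more hands-on about the nonlinear structure and makes the perturbative nature of the estimate explicit, but your approach is shorter and yields a slightly stronger conclusion.
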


To establish \Cref{ellipticperturbation}, we will use the following lemma, which is an interpolation estimate that bounds derivatives of a function in terms of its higher (and lower) derivatives. Although the proof of \Cref{fjderivativesestimate} is very similar to that of Lemma 6.32 of \cite{EDSO}, the precise form of the two statements are slightly different; so, we will provide its proof.  

\begin{lem} 
	
	\label{fjderivativesestimate} 
	
	For any integers $k > j > 0$ and real number $r \in (0, 1)$, there exists a constant $C = C(k, r) > 1$ such that the following holds. Let $F \in \mathcal{C}^k (\mathcal{B})$ denote a function, and suppose that $A, B \in \mathbb{R}_{> 0}$ are such that $A \ge  \| F \|_{0; \mathcal{B}_{1 - r / 2}}$ and $B \ge [F]_{k, 0; \mathcal{B}_{1 - r / 2}}$. If $r > C \big( \frac{A}{B} \big)^{1 / k}$, then 
	\begin{flalign}
	\label{gradientf1f2estimate1}
	[F]_{j, 0; \mathcal{B}_{1 - r}} \le C A^{1 - j / k} B^{j / k}. 
	\end{flalign}

\end{lem}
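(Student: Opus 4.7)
The plan is to establish the estimate as a standard Kolmogorov-type interpolation inequality for smooth functions on balls: first prove the base case $k = 2$, $j = 1$ by a one-variable Taylor expansion along line segments, and then iterate using the log-convexity of $[F]_{j, 0}$ in $j$ together with a careful nesting of shrinking balls.

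For the base case, fix $z \in \mathcal{B}_{1 - r}$ and a unit vector $e \in \mathbb{R}^2$. For any $\rho \in (0, r/2]$ the segment $\{ z + t e : |t| \le \rho \}$ lies in $\mathcal{B}_{1 - r/2}$, and a second-order Taylor expansion of $t \mapsto F(z + t e)$ gives
$$ \big| F(z + \rho e) - F(z - \rho e) - 2 \rho \, \partial_e F(z) \big| \le \rho^2 [F]_{2, 0; \mathcal{B}_{1 - r/2}} \le \rho^2 B, $$
whence $|\partial_e F(z)| \le \rho^{-1} A + \rho B / 2$. The choice $\rho = (2 A / B)^{1/2}$ yields $|\partial_e F(z)| \le \sqrt{2 A B}$, and it is admissible ($\rho \le r/2$) precisely under a hypothesis of the form $r \ge C (A/B)^{1/2}$, which is implied by $r > C (A/B)^{1/k}$ for $k \ge 2$ once one normalizes so that $A \le B$ (the conclusion being trivial otherwise). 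Hence $[F]_{1, 0; \mathcal{B}_{1 - r}} \le \sqrt{2 A B}$, which is the $k = 2$, $j = 1$ case.

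For the general case, apply the Taylor argument to each partial derivative $D^\alpha F$ with $|\alpha| = j - 1$ on a sequence of nested balls. Set $\rho_j = 1 - r + \tfrac{j}{2k} r$, so that $\rho_0 = 1 - r$, $\rho_k = 1 - r/2$, and $\rho_{j+1} - \rho_j = r/(2k)$. Taking the supremum over $\alpha$ and $e$ in the base-case inequality produces
$$ [F]_{j, 0; \mathcal{B}_{\rho_j}} \le C_1 [F]_{j - 1, 0; \mathcal{B}_{\rho_{j+1}}}^{1/2} [F]_{j + 1, 0; \mathcal{B}_{\rho_{j+1}}}^{1/2}, \qquad 1 \le j \le k - 1, $$
with $C_1 = C_1 (k, r) > 1$ absorbing the constant coming from the shrinkage by $r/(2k)$ in place of $r/2$. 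Writing $a_j = \log [F]_{j, 0; \mathcal{B}_{\rho_j}}$, this translates to the convexity inequality $2 a_j \le a_{j-1} + a_{j+1} + 2 \log C_1$, together with the endpoint bounds $a_0 \le \log A$ and $a_k \le \log B$ coming from the definitions of $A$ and $B$. Comparing $a_j$ with the chord through $(0, \log A)$ and $(k, \log B)$ after subtracting the quadratic correction $\tfrac{j (k - j)}{2} \log C_1$ yields $a_j \le (1 - j/k) \log A + (j/k) \log B + C_2 (k, r)$, which upon exponentiation is exactly \eqref{gradientf1f2estimate1}.

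The principal technical obstacle is the bookkeeping of the ball shrinkages and intermediate semi-norms: at each iteration the optimizing Taylor step $\rho \simeq ([F]_{j-1}/[F]_{j+1})^{1/2}$ must lie below $r/(2k)$ so that the segment remains inside the next larger ball. This is the reason for the hypothesis $r > C (A/B)^{1/k}$ in the lemma, which (after a preliminary crude bound on the intermediate $[F]_{j, 0}$ by $C(k) (A + B)$ obtained from the base case alone) is precisely what is needed to justify the sequence of optimizations uniformly across $j$; once this is arranged, the log-convexity computation above is routine.
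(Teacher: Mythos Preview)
Your base case $(j,k)=(1,2)$ is correct and essentially identical to the paper's argument: both integrate a bound on the second derivative along a short segment to control the first derivative, then optimize the segment length. The paper in fact only writes out this case and declares the general $(j,k)$ ``very similar,'' so any further discussion concerns territory the paper does not make explicit.

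There is, however, a genuine gap in your iteration. From the base case applied to $D^{\alpha}F$ with $|\alpha|=j-1$ you correctly obtain
\[
[F]_{j,0;\mathcal{B}_{\rho_j}}\le C_1\,[F]_{j-1,0;\mathcal{B}_{\rho_{j+1}}}^{1/2}\,[F]_{j+1,0;\mathcal{B}_{\rho_{j+1}}}^{1/2},
\]
but with your choice $\rho_j=1-r+\tfrac{j}{2k}r$ (increasing in $j$) this does \emph{not} yield $2a_j\le a_{j-1}+a_{j+1}+2\log C_1$ for $a_j=\log[F]_{j,0;\mathcal{B}_{\rho_j}}$. The term $[F]_{j-1,0;\mathcal{B}_{\rho_{j+1}}}$ lives on the larger ball $\mathcal{B}_{\rho_{j+1}}\supset\mathcal{B}_{\rho_{j-1}}$, so it is only \emph{bounded below} by $e^{a_{j-1}}$, not above; the inequality you need goes the wrong way. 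Reversing the nesting (making $\rho_j$ decreasing) just moves the problem to the $j+1$ term. This is why the textbook log-convexity argument is usually carried out either on all of $\mathbb{R}^n$ or with the distance-weighted interior seminorms $[\,\cdot\,]_{k,0}^{(m)}$ of Gilbarg--Trudinger Lemma~6.32 (which the paper cites), where the weight absorbs the shrinkage and lets one work on a single domain.

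Two straightforward repairs are available. One is to replace the multiplicative three-term bound by the additive form $[F]_{j}\le \epsilon\,[F]_{k}+C\epsilon^{-j/(k-j)}\,[F]_{0}$ on a fixed pair of balls, proved by downward induction on $k-j$, and then optimize in $\epsilon$. The other --- and this is the ``very similar'' route the paper presumably intends --- is to bypass iteration entirely: for each multi-index $\gamma$ with $|\gamma|=j$, use a finite-difference stencil of order $k$ along a segment of length $\asymp (A/B)^{1/k}$ (e.g.\ the odd/even combinations $F(z+te)\mp F(z-te)$, and more nodes for larger $k$) to isolate $\partial^{\gamma}F(z)$ up to an error $O(t^{k-j}B)$, and then optimize in $t$ exactly as in the base case. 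The hypothesis $r>C(A/B)^{1/k}$ is precisely what keeps the stencil inside $\mathcal{B}_{1-r/2}$.
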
 

\begin{proof}
	
	Since the proof of this lemma is similar to that of Lemma 6.32 of \cite{EDSO}, we only establish it in the case $(j, k) = (1, 2)$; the proof for general $k > j \ge 1$ is very similar. 
	
	To that end, set $t_0 = \big( \frac{A}{B} \big)^{1 / 2}$; let $r \in (2 t_0, 1)$; and assume to the contrary that there exists some $i \in \{ x, y \}$ and $z_0 \in \mathcal{B}_{1 - r}$ such that $\big| \partial_i F (z_0) \big| > 3 (AB)^{1 / 2}$. For notational convenience, we assume $i = x$ and $\partial_i F (z_0) > 3 (AB)^{1 / 2}$, as the proofs in the alternative cases $i = y$ or $\partial_i F (z_0) < - 3 (AB)^{1 / 2}$ are entirely analogous; further set $w = (1, 0) \in \mathbb{R}^2$. Then the facts that $[F]_{2, 0; \mathcal{B}_{1 - r / 2}} \le B$ and $r > 2 t_0$ together imply that $\big| \partial_x F (z_0 + t w) - \partial_x F (z_0) \big| \le Bt \le (AB)^{1 / 2}$ for each $t \in [0, t_0]$. 
	
	Thus, $\partial_x F (z_0 + t w) > 2 (AB)^{1 / 2}$ whenever $t \in [0, t_0]$, and so integration over $t$ yields $\big| F(z_0) - F(z_0 + t_0 w) \big| > 2 A$. This contradicts the fact that $\| F \|_{0; \mathcal{B}_{1 - r / 2}} \le A$, which verifies \eqref{gradientf1f2estimate1}.
\end{proof}

Now we can establish \Cref{ellipticperturbation}.

\begin{proof}[Proof of \Cref{ellipticperturbation}]
	
	Denoting 
	\begin{flalign*} 
	\varsigma = \sup_{z \in \partial \mathcal{B}} \big| f_1 (z) - f_2 (z) \big|,
	\end{flalign*} 
	
	\noindent we deduce from the comparison principle \Cref{aijcomparison} for partial differential equations of the form \eqref{ajkfiequation} (see also \Cref{h1h2gamma}) that 
	\begin{flalign}
	\label{f1f2estimate}
	\sup_{z \in \mathcal{B}} \big| F_1 (z) - F_2 (z) \big| < \varsigma; \qquad \displaystyle\max_{i \in \{ 1, 2 \}} \displaystyle\sup_{z \in \mathcal{B}} \big| F_i (z) \big| \le B,
	\end{flalign}
	
	\noindent where the second bound follows from the fact that $\big| f_i (z) \big| \le B$, for each $i \in \{1, 2 \}$ and $z \in \partial \mathcal{B}$. 
	
	Moreover, the $m = 5$ case of \Cref{aijgradientfestimate} yields a constant $C_1 = C_1 (r, B) > 1$ such that 
	\begin{flalign}
	\label{fi3bestimate}
	\displaystyle\max_{i \in \{ 1, 2 \}} \| F_i \|_{\mathcal{C}^5 (\overline{\mathcal{B}}_{1 - r / 8})} < C_1.
	\end{flalign}
	
	\noindent Thus \eqref{gradientf1f2estimate1} (with the $F$ there equal to $F_1 - F_2$ here; the $A$ there equal to $\varsigma$ here; the $B$ there equal to $C_1$ here; and the $(j, k)$ there equal to either $(1, 5)$ or $(2, 5)$), together with \eqref{f1f2estimate} and \eqref{fi3bestimate}, yields a constant $C_2 = C_2 (r, B) > 1$ such that
	\begin{flalign}
	\label{gradientf1f2estimate2}
	\begin{aligned}
	\displaystyle\sup_{z \in \mathcal{B}_{1 - r / 4}} \big| \nabla F_1 (z) - \nabla F_2 (z) \big| & \le C_2 \varsigma^{4 / 5}; \qquad \| F_1 - F_2 \|_{\mathcal{C}^2 (\overline{\mathcal{B}}_{1 - r / 4})} \le C_2 \varsigma^{3 / 5},
	\end{aligned}
	\end{flalign}
	
	\noindent whenever $r > C_2 \varsigma^{1 / 5}$. To improve the first bound in \eqref{gradientf1f2estimate2} to \eqref{estimatef1f2elliptic}, define $g: \mathcal{B} \rightarrow \mathbb{R}$ by setting
	\begin{flalign*} 
	g(z) = \varsigma^{-1 / 2} \big( F_2 (z) - F_1 (z) \big), \qquad \text{for each $z \in \overline{\mathcal{B}}$.}
	\end{flalign*} 
	
	\noindent Then the first bound in \eqref{f1f2estimate} and \eqref{gradientf1f2estimate2} together yield
	\begin{flalign}
	\label{estimateg1}
	\| g \|_{0; \mathcal{B}_{1 - r / 4}} \le \varsigma^{1 / 2}; \qquad \| g \|_{\mathcal{C}^1 (\overline{\mathcal{B}}_{1 - r / 4})} \le 2 C_2 \varsigma^{3 / 10}; \qquad \| g \|_{\mathcal{C}^2 (\overline{\mathcal{B}}_{1 - r / 4})} \le C_2 \varsigma^{1 / 10}. \qquad .
	\end{flalign}
	
	\noindent Next, setting $i = 2$ and $F_2 = F_1 + \varsigma^{1 / 2} g$ in \eqref{ajkfiequation}, we obtain 
	\begin{flalign}
	\label{2ajkfiequation}
	\displaystyle\sum_{1 \le j, k \le 2} a_{jk} \big( \nabla F_1 (z) + \varsigma^{1 / 2} \nabla g(z) \big) \big( \partial_j \partial_k F_1 (z) + \varsigma^{1 / 2} \partial_j \partial_k g (z) \big) = 0.
	\end{flalign}
	
	\noindent Then, subtracting the $i = 1$ case of \eqref{ajkfiequation} from \eqref{2ajkfiequation}, we deduce that
	\begin{flalign}
	\label{ghequationmunukappa}
	\begin{aligned} 
	& \displaystyle\sum_{j, k \in \{ x, y \} } \Big( \mu_{jk} (z) \partial_j \partial_k g (z) + \nu_{jk} (z) \kappa_{jk} (z) \cdot \nabla g(z) \Big) \\
	& \qquad \quad = - \varsigma^{1 / 2} \displaystyle\sum_{j, k \in \{ x, y \} 	} \Big(  \big( \nabla g(z) \cdot \kappa_{jk} (z) \big) \partial_j \partial_k g (z) + \nu_{jk} (z) h_{jk} (z) + \varsigma^{1 / 2} h_{jk} (z) \partial_j \partial_k g (z) \Big),
	\end{aligned} 
	\end{flalign} 
	
	\noindent where for each $j, k \in \{ x, y \}$, we have defined $\mu_{jk}, \nu_{jk}, h_{jk} \in \mathcal{C} (\mathcal{B})$ and $\kappa_{jk}: \mathcal{B} \rightarrow \mathbb{R}^2$ by setting
	\begin{flalign}
	\label{hdefinition}
	\begin{aligned}
	& \mu_{jk} (z) =  a_{jk} \big( \nabla F_1 (z) \big); \qquad \kappa_{jk} (z) = \nabla a_{jk} \big( \nabla F_1 (z) \big); \qquad \nu_{jk} (z) = \partial_j \partial_k F_1 (z);  \\
	& h_{jk} (z) = \varsigma^{-1} \Big( a_{jk} \big( \nabla F_1 (z) + \varsigma^{1 / 2} \nabla g (z) \big) - a_{jk} \big( \nabla F_1 (z) \big) - \varsigma^{1 / 2} \nabla g (z) \cdot \nabla a_{jk} \big( \nabla F_1 (z) \big) \Big), \\
	\end{aligned} 
	\end{flalign} 
	
	\noindent for each $z \in \mathcal{B}$. Now, observe that \eqref{fgestimatefg}, \eqref{fi3bestimate}, and the fact that $\| a_{jk} \|_{\mathcal{C}^4 (\mathbb{R}^2)} \le B$ together imply the existence of a constant $C_3 = C_3 (r, B) > 1$ such that 
	\begin{flalign}
	\label{munukappa} 
	\| \mu_{jk} \|_{\mathcal{C}^{0, 1 / 2} (\overline{\mathcal{B}}_{1 - r / 4})} \le C_3; \qquad \| \kappa _{jk} (z) \|_{\mathcal{C}^{0, 1 / 2} (\overline{\mathcal{B}}_{1 - r / 4})} \le C_3; \qquad \| \nu_{jk} (z) \|_{\mathcal{C}^{0, 1 / 2} (\overline{\mathcal{B}}_{1 - r / 4})} \le C_3.
	\end{flalign} 
	
	Since the $a_{jk}$ satisfy \eqref{aijxiestimate2}, it follows from \eqref{ghequationmunukappa}, \eqref{fgestimatefg}, and the $\alpha = \frac{1}{2}$ case of the interior Schauder estimate \Cref{aijuestimates} that there exists a constant $C_4 = C_4 (r, B) > 1$ such that
	\begin{flalign*}
	\| g \|_{2, 1 / 2; \mathcal{B}_{1 - r / 4}}^{(0)} & \le C_4 \| g \|_{0; \mathcal{B}_{1 - r / 4}} + C_4 \varsigma^{1 / 2} \displaystyle\max_{j, k \in \{ x, y \} } \Big\|  \big( \nabla g(z) \cdot \kappa_{jk} (z) \big) \partial_j \partial_k g (z) \Big\|_{0, 1 / 2; \mathcal{B}_{1 - r / 4}}^{(2)} \\
	& \qquad  + C_4 \varsigma^{1 / 2} \displaystyle\max_{j, k \in \{ x, y \} }  \big\| \nu_{jk} (z) h_{jk} (z) \big\|_{0, 1 / 2; \mathcal{B}_{1 - r / 4}}^{(2)} \\
	& \qquad + C_4 \varsigma  \displaystyle\max_{j, k \in \{ x, y \} } \big\| h_{jk} (z) \partial_j \partial_k g (z) \big\|_{0, 1 / 2; \mathcal{B}_{1 - r / 4}}^{(2)}.
	\end{flalign*} 
	
	\noindent Thus, by \eqref{fgestimatefg} and \eqref{munukappa}, there exists a constant $C_5 = C_5 (r, B) > 1$ such that
	\begin{flalign}
	\label{gestimategh}
	\begin{aligned}
	\| g \|_{2, 1 / 2; \mathcal{B}_{1 - r / 4}}^{(0)} & \le C_5 \varsigma^{1 / 2} \Big( \| g \|_{2, 1 / 2; \mathcal{B}_{1 - r / 4}}^{(2)} \| g \|_{\mathcal{C}^1 (\overline{\mathcal{B}}_{1 - r / 4})} + \| g \|_{1, 1 / 2; \mathcal{B}_{1 - r / 4}}^{(2)} \| g \|_{\mathcal{C}^2 (\overline{\mathcal{B}}_{1 - r / 4})} \\
	& \qquad \qquad \qquad + \| g \|_{\mathcal{C}^2 (\overline{\mathcal{B}}_{1 - r / 4})} \| g \|_{\mathcal{C}^1 (\overline{\mathcal{B}}_{1 - r / 4})} + \| h \|_{0; \mathcal{B}_{1 - r / 4}} + \| h \|_{0, 1 / 2; \mathcal{B}_{1 - r / 4}}^{(2)} \Big) \\
	& \quad + C_5 \varsigma \Big( \| h \|_{0; \mathcal{B}_{1 - r / 4}} \| g \|_{2, 1 / 2; \mathcal{B}_{1 - r / 4}}^{(2)} + \| h \|_{0, 1 / 2; \mathcal{B}_{1 - r / 4}}^{(2)} \| g \|_{\mathcal{C}^2 (\overline{\mathcal{B}}_{1 - r / 4})} \Big)+ C_5 \| g \|_{0; \mathcal{B}_{1 - r / 4}}.
	\end{aligned} 
	\end{flalign} 
	
	\noindent Now, \eqref{estimateg1} yields a constant $C_6 = C_6 (r, B) > 1$ such that 
	\begin{flalign}
	\label{ghestimates}
	\begin{aligned}
	\max \big\{ \| g \|_{\mathcal{C}^1 (\overline{\mathcal{B}}_{1 - r / 4})}, \| g \|_{1, 1 / 2; \mathcal{B}_{1 - r / 4}}^{(2)}, \| g & \|_{\mathcal{C}^2 (\overline{\mathcal{B}}_{1 - r / 4})}, \| h \|_{0; \mathcal{B}_{1 - r / 4}}, \| h \|_{0, 1 / 2; \mathcal{B}_{1 - r / 4}}^{(2)} \big\} \\
	& \quad \le \max \big\{ \| g \|_{\mathcal{C}^2 (\overline{\mathcal{B}}_{1 - r / 4})}, \| h \|_{\mathcal{C}^1 (\overline{\mathcal{B}}_{1 - r / 4})} \big\} <  C_6,
	\end{aligned}
	\end{flalign}
	
	\noindent where the bound on $\| h \|_{\mathcal{C}^1 (\overline{\mathcal{B}}_{1 - r / 4})}$ follows from differentiating the definition \eqref{hdefinition} of $h$; \eqref{fi3bestimate}; \eqref{estimateg1}; the fact that $\| a_{jk} \|_{\mathcal{C}^4 (\mathbb{R}^2)} \le B$ for each $j, k \in \{ x, y \}$; and a Taylor expansion.
	
	Inserting the first bound of \eqref{estimateg1}; \eqref{ghestimates}; and the fact that $\| g \|_{2, 1 / 2; \mathcal{B}_{1 - r / 4}}^{(0)} \ge \| g \|_{2, 1 / 2; \mathcal{B}_{1 - r / 4}}^{(2)}$ into \eqref{gestimategh} then yields a constant $C_7 = C_7 (r, B) > 1$ such that 
	\begin{flalign*}
	(1 - C_7 \varsigma^{1 / 2}) \| g \|_{2, 1 / 2; \mathcal{B}_{1 - r / 4}}^{(0)} \le C_7 \varsigma^{1 / 2},
	\end{flalign*}
	
	\noindent whenever $r > C_2 \varsigma^{1 / 5}$. Thus, if $\delta < \min \big\{ \frac{r^5}{C_2^5}, \frac{1}{4 C_7^2}, 1 \big\}$ and $\varsigma < \delta$, we obtain that
	\begin{flalign}
	\label{f1f2estimate2}
	[ F_1 - F_2 ]_{2, 0; \mathcal{B}_{1 - r / 2}} \le 64 r^{-3} \| F_1 - F_2 \|_{2, 1 / 2; \mathcal{B}_{1 - r / 4}}^{(0)} = 64 r^{-3} \varsigma^{1 / 2} \| g \|_{2, 1 / 2; \mathcal{B}_{1 - r / 4}}^{(0)} \le 128 C_7 r^{-3} \varsigma.
	\end{flalign}
	
	\noindent The proposition now follows from inserting \eqref{f1f2estimate2} and the first bound of \eqref{f1f2estimate} into the $(j, k) = (1, 2)$ case of \eqref{gradientf1f2estimate1}.
\end{proof}

We can now deduce \Cref{perturbationboundary}.

\begin{proof}[Proof of \Cref{perturbationboundary}]

	Since the function $\mathfrak{h}_i$ is $\delta$-linear with slope $(s, t)$ on $\partial \mathcal{B}$ for each $i \in \{ 1, 2 \}$, there exists a linear function $\Lambda_i: \overline{\mathcal{B}} \rightarrow \mathbb{R}$ of slope $(s, t)$ such that $\sup_{z \in \partial \mathcal{B}} \big| \mathfrak{h}_i (z) - \Lambda_i (z) \big| < \delta$. Thus, \Cref{h1h2gamma} implies that $\sup_{z \in \mathcal{B}} \big| \mathcal{H}_i (z) - \Lambda_i (z) \big| < \delta$. 
	
	We may assume throughout this proof that $\mathcal{H}_1 (0, 0) = 0 = \mathcal{H}_2 (0, 0)$; then, the fact that $\mathcal{H}_1$ and $\mathcal{H}_2$ are both $1$-Lipschitz on $\mathcal{B}$ implies that $\| \mathfrak{h}_1 \|_0, \| \mathfrak{h}_2 \|_0 \le 1$. Moreover, since $\Lambda_1$ and $\Lambda_2$ have the same slope $(s, t)$, it follows that $\big| \Lambda_1 (z) - \Lambda_2 (z) \big| = \big| \Lambda_1 (0, 0) - \Lambda_2 (0, 0) \big| \le 2 \delta$ for each $z \in \overline{\mathcal{B}}$. Denoting $\Lambda = \Lambda_1$, we then obtain $\sup_{z \in \mathcal{B}} \big| \mathcal{H}_i (z) - \Lambda (z) \big| < 3 \delta$ for each $i \in \{ 1, 2 \}$. 
	
	Now, for each $i \in \{1, 2 \}$, set 
	\begin{flalign*} 
	 F_i = \mathcal{H}_i |_{\overline{\mathcal{B}}_{1 / 2}}; \qquad f_i = \mathcal{H}_i |_{\partial \mathcal{B}_{1 / 2}}; \qquad  \varsigma = \sup_{z \in \partial \mathcal{B}} \big| \mathfrak{h}_1 (z) - \mathfrak{h}_2 (z) \big|,
	\end{flalign*} 
	
	\noindent so in particular $F_i \in \Adm (\mathcal{B}_{1 / 2}; f_i)$ is the maximizer of $\mathcal{E}$ on $\mathcal{B}_{1 / 2}$ with boundary data $f_i$. Then, \Cref{h1h2gamma} yields $\sup_{z \in \partial \mathcal{B}_{1 / 2}} \big| f_1 (z) - f_2 (z) \big| \le \sup_{z \in \mathcal{B}} \big| \mathcal{H}_1 (z) - \mathcal{H}_2 (z) \big| \le \varsigma$. 
	
	Next, assume that the $3 \delta$ here is less than the $\delta \big( \frac{\varepsilon}{2} \big)$ of \Cref{interiorfunctional}. Then, \Cref{interiorfunctional} and the fact that $\Lambda$ is of slope $(s, t) \in \mathcal{T}_{\varepsilon} \subset \mathcal{T}_{\varepsilon / 2}$ together imply that $\nabla F_i (z) = \nabla \mathcal{H}_i (z) \in \mathcal{T}_{\varepsilon / 2}$, for each $i \in \{1, 2\}$ and $z \in \mathcal{B}_{1 / 2}$. Then the second estimate in \eqref{perturbationboundaryestimate} follows from \Cref{derivativeshestimate} (and \Cref{estimatehrho}), applied to the $F_i$. 
	
	To establish the first, we will proceed as in the proof of \Cref{derivativeshestimate}. Specifically, \Cref{haijequations} implies that each $F_i \in \mathcal{C}^2 (\mathcal{B}_{1 / 2})$ satisfies \eqref{aijh} on $\mathcal{B}_{1 / 2}$ (where we recall that the $\mathfrak{a}_{jk}$ there are given by \eqref{aijst}), with boundary data $f_i$. Moreover, similarly to as indicated in the proof of \Cref{derivativeshestimate}, there exist a constant $B = B (\varepsilon) > 0$ and functions $a_{jk} \in \mathcal{C}^4 (\mathbb{R}^2)$ satisfying $a_{xy} = a_{yx}$ and the following three properties. 
	
	\begin{enumerate}
		\item  For any $j, k \in \{ x, y \}$ and $(s, t) \in \mathcal{T}_{\varepsilon / 4}$, we have that $a_{jk} (s, t) = \mathfrak{a}_{ij} (s, t)$. 
		
		\item For any $j, k \in \{ x, y \}$, we have that $\| a_{jk} \|_{\mathcal{C}^4 (\mathbb{R}^2)} < B$. 
		
		\item The bound \eqref{xijxikbsum} holds. 
		
	\end{enumerate}

	Then, $F_1$ and $F_2$ satisfy the equation \eqref{ajkfiequation} (for each $z \in \mathcal{B}_{1 / 2}$), with boundary data $f_1$ and $f_2$, respectively. Thus, for each $i \in \{ 1, 2 \}$, \Cref{aijgradientfestimate} implies that $F_i \in \mathcal{C}^5 (\mathcal{B}_{1 / 2})$ and \Cref{h1h2compare} implies that $\sup_{z \in \partial \mathcal{B}_{1 / 2}} \big| f_i (z) \big| \le \sup_{z \in \partial \mathcal{B}} \big| \mathfrak{h}_i (z) \big| \le 1$. So, \Cref{ellipticperturbation} (with \Cref{estimatehrho}) yields constants $C = C(\varepsilon) > 1$ and $\delta_0 = \delta_0 (\varepsilon) > 0$ such that 
	\begin{flalign*}
	\displaystyle\sup_{z \in \mathcal{B}_{1 / 4}} \big| \nabla F_1 (z) - \nabla F_2 (z) \big| \le C \displaystyle\sup_{z \in \partial \mathcal{B}_{1 / 2}} \big| f_1 (z) - f_2 (z) \big| \le C \varsigma,
	\end{flalign*}
	
	\noindent whenever $\varsigma < \delta_0$. Thus, we deduce the first bound in \eqref{perturbationboundaryestimate} by additionally imposing $\delta < \delta_0$.
\end{proof}

\section{Proof of \texorpdfstring{\Cref{euv1v2estimategradient}}{}}

\label{ProofGradientEstimateu}

In this section we establish \Cref{euv1v2estimategradient}. To that end, we begin in \Cref{EstimateLinear} by establishing a (likely known) boundary variant of the Cordes-Nirenberg estimate; this is a global $\mathcal{C}^{1, \alpha}$ bound on solutions to linear elliptic partial differential equations, whose coefficients are assumed to be nearly constant but not necessarily continuous. We then use this estimate to prove \Cref{euv1v2estimategradient} in \Cref{GlobalE}.

\subsection{A H\"{o}lder Estimate for Solutions of Elliptic Equations} 

\label{EstimateLinear}

In this section we establish the following lemma, which will be used in the proof of \Cref{euv1v2estimategradient} in \Cref{GlobalE} below. It can be viewed as a two-dimensional boundary variant of the Cordes-Nirenberg estimate (the interior version of which is given, for instance, by Theorem 5.21 of \cite{EPE}), as it bounds the $\mathcal{C}^{1, \alpha}$-norm of a solution to a linear, nearly constant coefficient, uniformly elliptic partial differential equation, for any $\alpha \in (0, 1)$. Since this statement essentially follows from the results in Chapter 12 of \cite{EDSO} on two-dimensional elliptic partial differential equations, we only outline its proof.

\begin{lem}
	
	\label{derivativefestimate1} 
	
	For any fixed real numbers $B > 1$ and $\alpha \in (0, 1)$, there exist constants $\delta = \delta (B, \alpha) > 0$ and $C = C (B, \alpha) > 1$ such that the following holds. For each $j, k \in \{ x, y \}$, let $A_{jk} \in \mathbb{R}$ denote real numbers such that 
	\begin{flalign}
	\label{ajkb} 
	\displaystyle\max_{j, k \in \{ x, y \}} |A_{jk}| < B; \qquad A_{xy} = A_{yx}; \qquad \sum_{j, k \in \{ x, y \}} A_{jk} \xi_j \xi_j \ge B^{-1} |\xi|^2, 
	\end{flalign} 
	
	\noindent for any $\xi = (\xi_x, \xi_y) \in \mathbb{R}^2$. Further fix measurable functions $a_{jk}: \mathcal{B} \rightarrow \mathbb{R}$ such that $a_{xy} = a_{yx}$ and $\sup_{z \in \mathcal{B}} \big| a_{jk} (z) - A_{jk} \big| \le \delta$, for each $j, k \in \{ x, y \}$; a real number $M \in \mathbb{R}_{> 1}$; and a function $\varphi \in \mathcal{C}^2 (\overline{\mathcal{B}})$ such that $\| \varphi \|_{\mathcal{C}^2 (\overline{\mathcal{B}})} \le M$. If $F \in \mathcal{C}^2 (\overline{\mathcal{B}})$ denotes a solution to the equation 
	\begin{flalign}
	\label{ajkfequationlinear}
	\displaystyle\sum_{j, k \in \{x, y \}} a_{jk} (z) \partial_j \partial_k F (z) = 0, \qquad \text{for each $z \in \mathcal{B}$},
	\end{flalign}  
	
	\noindent with boundary data $F |_{\partial \mathcal{B}} = \varphi |_{\partial \mathcal{B}}$, then $[F]_{1, \alpha; \mathcal{B}} \le CM$.
	
\end{lem}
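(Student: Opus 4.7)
The plan is to establish this boundary Cordes--Nirenberg-type estimate by reducing it to a perturbation of the Poisson problem on a smooth convex domain. I would proceed in three stages, together with a brief remark on the main obstacle. First, I would subtract off the extension: setting $G = F - \varphi$ produces a function vanishing on $\partial \mathcal{B}$ that satisfies the same type of equation with inhomogeneity $g = -\sum_{j,k} a_{jk}\, \partial_j \partial_k \varphi$, of $L^\infty$ norm at most $4BM$. Since $[\varphi]_{1,\alpha;\mathcal{B}} \le 2 \|\varphi\|_{\mathcal{C}^2(\overline{\mathcal{B}})} \le 2M$, it suffices to bound $[G]_{1,\alpha;\mathcal{B}}$. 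Next, I would normalize the leading coefficients by the linear change of variables $w = Tz$, where $T$ is the positive definite symmetric square root of $A^{-1}$; this turns the operator $\sum A_{jk} \partial_j \partial_k$ into $\Delta_w$ at the cost of replacing the disk $\mathcal{B}$ by a smooth ellipse $\mathcal{D}$. By \eqref{ajkb}, $\|T\|$ and $\|T^{-1}\|$ are controlled by $B$, so all relevant $\mathcal{C}^k$ and $\mathcal{C}^{k,\alpha}$ seminorms change only by multiplicative constants $C(B)$, and the transformed coefficients $\tilde a_{jk}$ still satisfy $|\tilde a_{jk}(w) - \delta_{jk}| \le C(B)\delta$.

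Second, in the normalized setting I would introduce a reference function $U$ solving the Poisson problem $\Delta U = g$ on $\mathcal{D}$ with $U|_{\partial \mathcal{D}} = 0$. Because $\partial \mathcal{D}$ is smooth and $g$ is bounded, the Calder\'{o}n--Zygmund $W^{2,p}$ theory for the Laplacian on a smooth domain (Theorem~9.13 and Corollary~9.18 of \cite{EDSO}) combined with Morrey's embedding yields, for any prescribed $\alpha \in (0,1)$ and corresponding large $p = p(\alpha)$,
\[
\|U\|_{W^{2,p}(\mathcal{D})} + [U]_{1,\alpha;\mathcal{D}} \le C(B,\alpha)\, M.
\]

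Third, and this is the main step, I would analyse the difference $W := G - U$, which vanishes on $\partial \mathcal{D}$ and satisfies
\[
\sum_{j,k} \tilde a_{jk}(z)\, \partial_j \partial_k W(z) = \sum_{j,k} \bigl(\delta_{jk} - \tilde a_{jk}(z)\bigr) \partial_j \partial_k U(z),
\]
whose right-hand side has $L^p$-norm at most $C(B)\delta\, \|U\|_{W^{2,p}(\mathcal{D})} \le C(B,\alpha)\,\delta M$. A boundary $W^{2,p}$ estimate for linear uniformly elliptic equations with coefficients $\delta$-close to the identity, obtained by freezing coefficients on a fine partition of $\mathcal{D}$ (with boundary charts straightening $\partial \mathcal{D}$) and iterating, then yields $\|W\|_{W^{2,p}(\mathcal{D})} \le C(B,\alpha)\,\delta M$, provided $\delta = \delta(B,\alpha)$ is chosen small enough for the perturbation contribution to be absorbed in the left-hand side. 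A final application of Morrey's embedding converts this into $[W]_{1,\alpha;\mathcal{D}} \le C(B,\alpha)\,\delta M$, which combined with the bound on $[U]_{1,\alpha;\mathcal{D}}$ gives $[G]_{1,\alpha;\mathcal{D}} \le C(B,\alpha)\,M$, as required.

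The main obstacle will be the boundary $W^{2,p}$ perturbation estimate in the third stage. Interior versions of this fact are classical --- the freezing/partition argument in Section~9.5 of \cite{EDSO} reduces matters to Calder\'{o}n--Zygmund theory for the Laplacian on small balls, and the geometric series closes because the coefficients are $\delta$-close to constant. Carrying this uniformly up to $\partial \mathcal{D}$ requires flattening the boundary by a smooth diffeomorphism (which is available because $\partial \mathcal{B}$ is analytic, hence $\partial \mathcal{D}$ is smooth) and invoking the half-space $W^{2,p}$ theory, with the constants in the iteration tracked carefully enough that the perturbation still closes when $\delta$ is small. This is precisely the two-dimensional machinery provided by Chapter~12 of \cite{EDSO}, which is why the author defers to it and only outlines the argument.
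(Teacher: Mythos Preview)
Your approach is correct and genuinely different from the paper's. The paper does not use $W^{2,p}$ theory at all: after the same reductions (subtracting $\varphi$, normalizing $A_{jk}$ to $\delta_{jk}$), it introduces the complex gradient $\mathcal{G}(z) = \partial_x G(z) - \textbf{i}\,\partial_y G(z)$, observes that near-Laplacian coefficients force $\mathcal{G}$ to be $(1+C\delta_0, C\delta_0)$-quasiconformal, and applies the two-dimensional quasiconformal H\"older estimate (Theorem~12.3 of \cite{EDSO}). Interior points are handled directly; boundary points are handled by flattening $\partial\mathcal{B}$ to a segment of $\mathbb{R}$ and extending $\mathcal{G}$ by odd/even reflection, which preserves quasiconformality almost everywhere and allows the same interior theorem to be reapplied.

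Your route---solve the Poisson problem for $U$, then close a perturbation inequality for $W = G - U$ via the global $W^{2,p}$ estimate $\|W\|_{W^{2,p}(\mathcal{D})} \le C_p\|\Delta W\|_{L^p(\mathcal{D})}$ for zero Dirichlet data on the smooth ellipse, absorbing the $C\delta\|D^2 W\|_{L^p}$ term---is a clean Cordes-type argument that is dimension-independent and uses only Chapter~9 of \cite{EDSO} plus Morrey. The paper's argument is intrinsically two-dimensional but yields the H\"older exponent more directly from the quasiconformal distortion, without passing through Sobolev embedding. One small correction: your closing sentence mislabels the machinery---Chapter~12 of \cite{EDSO} is the quasiconformal chapter the paper actually uses, whereas your argument lives in Chapter~9.
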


\begin{proof}[Proof (Outline)]
	
	Throughout this proof outline, we recall the bounded slope condition from equation (12.41) of \cite{EDSO} and the notion of a $(K, K')$-quasiconformal mapping from equation (12.2) of \cite{EDSO}. Let us fix a sufficiently small constant $\delta_0 = \delta_0 (\alpha) \in (0, 1)$, to be specified later, and suppose that $\delta < \delta_0$. By the linearity of \eqref{ajkfequationlinear}, we may also assume $M = \delta_0$ and, through a linear change of variables in $F$, we may further assume $A_{jk} = \textbf{1}_{j = k}$ for each $j, k \in \{ x, y \}$. 
	
	Then, it is quickly verified (see Remark (4) preceding Lemma 12.6 of \cite{EDSO}) that $\varphi |_{\partial \mathcal{B}}$ satisfies the bounded slope condition on $\partial \mathcal{B}$ with constant $4 \| \varphi \|_{\mathcal{C}^2 (\overline{\mathcal{B}})} \le 4 \delta_0$. Therefore, since $F$ satisfies \eqref{ajkfequationlinear} with boundary data $\varphi |_{\partial \mathcal{B}}$, Lemma 12.6 of \cite{EDSO} implies that $[F]_{1, 0; \mathcal{B}} \le 4 \delta_0$. Moreover, the maximum principle (see Theorem 9.1 of \cite{EDSO}) for equations of the form \eqref{ajkfequationlinear} yields $\| F \|_{0; \mathcal{B}} \le \| \varphi \|_0 \le \delta_0$. Hence, $\| F \|_{\mathcal{C}^1 (\overline{\mathcal{B}})} \le 5 \delta_0$. 
	
	Next, define $G: \overline{\mathcal{B}} \rightarrow \mathbb{R}$ by setting $G (z) = F (z) - \varphi (z)$, for each $z \in \overline{\mathcal{B}}$. Then 
	\begin{flalign}
	\label{gdelta0}
	\| G \|_{\mathcal{C}^1 (\overline{\mathcal{B}})} \le \| F \|_{\mathcal{C}^1 (\overline{\mathcal{B}})} + \| \varphi \|_{\mathcal{C}^1 (\overline{\mathcal{B}})} \le 6 \delta_0,
	\end{flalign} 
	
	\noindent and $G$ satisfies  
	\begin{flalign}
	\label{gequationlinear}
	\displaystyle\sum_{j, k \in \{ x, y \}} a_{jk} (z) \partial_j \partial_k G (z) = - \displaystyle\sum_{j, k \in \{ x, y \}} a_{jk} (z) \partial_j \partial_k \varphi (z), \qquad \text{for each $z \in \mathcal{B}$}.
	\end{flalign} 
	
	\noindent Let $\kappa = \kappa (\delta_0) \in \big( 0, \frac{1}{4} \big)$ denote a small constant (dependent only on $\delta_0$, and therefore only on $\alpha$), to be specified later. Since 
	\begin{flalign}
	\label{f1alphag} 
	[F]_{1, \alpha; \mathcal{B}} \le [G]_{1, \alpha; \mathcal{B}} + \| \varphi \|_{\mathcal{C}^2 (\overline{\mathcal{B}})} \le \displaystyle\sup_{z_0 \in \mathcal{B}} [G]_{1, \alpha; \mathcal{B} \cap \mathcal{B}_{\kappa} (z_0)} + \delta_0,
	\end{flalign} 
	
	\noindent it suffices to bound $[G]_{1, \alpha; \mathcal{B} \cap \mathcal{B}_{\kappa} (z_0)}$, for any $z_0 \in \mathcal{B}$. 
	
	Thus, fix some point $z_0 \in \mathcal{B}$, and first assume that $\mathcal{B}_{2 \kappa} (z_0) \subset \mathcal{B}$. As in the beginning of Section 12.2 of \cite{EDSO}, define the complex function $\mathcal{G}: \mathcal{B} \rightarrow \mathbb{C}$ by setting $\mathcal{G} (z) = \partial_x G (z) - \textbf{i} \partial_y G (z)$, for each $z \in \mathcal{B}$. Then, recall that $G$ satisfies \eqref{gequationlinear}; observe that the right side of \eqref{gequationlinear} is bounded by $4 (1 + \delta_0) \| \varphi \|_{\mathcal{C}^2 (\overline{\mathcal{B}})} \le 8 \delta_0$; and observe that both eigenvalues of the $2 \times 2$ matrix $\textbf{A} (z) = \big[ a_{jk} (z) \big]$ are in the interval $[1 - 3 \delta_0, 1 + 3 \delta_0]$ for each $z \in \mathcal{B}$, since $\big| a_{jk} (z) - \textbf{1}_{j = k} \big| \le \delta_0$ for each $j, k \in \{ x, y \}$. Therefore, it follows from the beginning of Section 12.2 of \cite{EDSO} (see equation (12.19) there and below, with the $(\gamma, \mu, \varepsilon)$ there equal to $(1 + 6 \delta_0, 16 \delta_0, \delta_0)$ here) that $\mathcal{G}$ is a $\big( 1 + 10 \delta_0, 65 \delta_0 \big)$-quasiconformal mapping, for $\delta_0$ sufficiently small.
	
	Therefore, since $\| \mathcal{G} \|_0 \le 2 \| G \|_{\mathcal{C}^1 (\overline{\mathcal{B}})} \le 12 \delta_0$ (recall \eqref{gdelta0}), Theorem 12.3 of \cite{EDSO} yields a constant $C_1 = C_1 (\delta_0) > 1$ such that $[G]_{1, \alpha_0; \mathcal{B}}^{(0)} \le C_1$, whenever $\alpha_0 \le 1 - 5 \delta_0^{1 / 2}$. Therefore, selecting $\delta_0 < \frac{(1 - \alpha)^2}{25}$ and recalling that $\mathcal{B}_{2 \kappa} (z_0) \subset \mathcal{B}$, we deduce that
	\begin{flalign}
	\label{g1estimate}
	[G]_{1, \alpha; \mathcal{B}_{\kappa} (z_0)} \le \kappa^{-1} [\mathcal{G}]_{0, \alpha; \mathcal{B}}^{(0)} \le \kappa^{-1} C_1. 
	\end{flalign}
	
	\noindent This bounds the right side of \eqref{f1alphag} if $\mathcal{B}_{2 \kappa} (z_0) \subset \mathcal{B}$. 
	
	So, let us assume instead that $d (z_0, \partial \mathcal{B}) \le 2 \kappa$, in which case we proceed as in Remark (4) following Theorem 12.3 of \cite{EDSO} (see also the end of Section 12.2 of \cite{EDSO}). More specifically, we first map $\mathcal{B}_{\kappa} (z_0) \cap \mathcal{B}$ to an open subset $\mathfrak{A}$ of the upper half-plane $\mathbb{H}$; next define $\mathcal{G}$ on $\mathfrak{A}$ and reflect it into the real axis boundary of $\mathfrak{A}$; and then apply Theorem 12.3 of \cite{EDSO} to this extension of $\mathcal{G}$. In what follows, we assume for notational convenience that $z_0 = (0, \omega - 1)$, for some $\omega \in \big( 0, 2 \kappa \big)$. 
	
	Then, for $\kappa$ sufficiently small, there exists a domain $\mathfrak{A} \subset \mathbb{H} \subset \mathbb{R}^2$ and a uniformly smooth diffeomorphism $\psi: \mathcal{B} \rightarrow \mathfrak{A}$ such that the following three conditions hold (see \Cref{bdomaina} for a depiction). First, $\psi (z_0) = \omega \textbf{i}$. Second, $\psi$ maps $\big\{ (x, y): |x| \le \kappa \big\} \cap \partial \mathcal{B}$ to the real interval $[ -\kappa, \kappa] \subset \mathbb{R}$. Third, 
	\begin{flalign*} 
	\psi \big( \mathcal{B}_{\kappa} (z_0) \cap \mathcal{B} \big) \subseteq \big\{ x + \textbf{i} y: x \in \big[ -\kappa, \kappa ], y \in [0, 2 \kappa] \big\} \subset \mathfrak{A}.
	\end{flalign*}
	
	\noindent Denoting the Jacobian of $\psi$ by  $\textbf{J} (z) = \big[ J_{ik} (z) \big]$, we may select $\mathfrak{A}$ and $\psi$ (satisfying the above three properties) so that $\sup_{z \in \mathcal{B}} \big| J_{ik} (z) - \textbf{1}_{i = k} \big| < \delta_0$ for each $i, k \in \{ 1, 2 \}$, if $\kappa = \kappa (\delta_0) \in \big( 0, \frac{1}{4} \big)$ is sufficiently small.
	
	Now, define $\widetilde{G}: \mathfrak{A} \rightarrow \mathbb{R}$ by setting $\widetilde{G} (z) = G \big( \psi^{-1} (z) \big)$ for each $z \in \mathfrak{A}$. Then, \eqref{gdelta0}; \eqref{gequationlinear}; the fact that $\big| J_{ik} - \textbf{1}_{i = k} \big| < \delta_0$; and the boundedness of $\| \psi^{-1} \|_{\mathcal{C}^2 (\overline{\mathfrak{A}})}$ together imply the existence a uniform constant $C_2 > 1$ and functions $\widetilde{a}_{jk}: \mathfrak{A} \rightarrow \mathbb{R}$ and $\widetilde{g}: \mathfrak{A} \rightarrow \mathbb{R}$ satisfying the following two properties. First,
	\begin{flalign}
	\label{equationga} 
	\displaystyle\sum_{j, k \in \{ x, y \}} \widetilde{a}_{jk} (z) \partial_j \partial_k \widetilde{G} (z) = \widetilde{g}, 
	\end{flalign}
	
	\noindent for each $z \in \mathfrak{A}$, with boundary data $\widetilde{G} |_{\partial \mathfrak{A}} = 0$. Second, for each $z \in \mathfrak{A}$, we have the bounds 
	\begin{flalign}
	\label{estimatesag}
	\displaystyle\max_{j, k \in \{ x, y \}} \big| \widetilde{a}_{jk} (z) - \textbf{1}_{j = k} \big| < C_2 \delta_0; \qquad \big| \widetilde{g} (z) \big| \le C_2 \delta_0.
	\end{flalign}
	
	Similarly to as above, define $\mathcal{G}: \mathfrak{A} \rightarrow \mathbb{C}$ by setting $\mathcal{G} (z) = \partial_x \widetilde{G} (z) - \textbf{i} \partial_y \widetilde{G} (z)$ for each $z \in \mathfrak{A}$. As previously, \eqref{equationga}; \eqref{estimatesag}; and the beginning of Section 12.2 of \cite{EDSO} together yield a uniform constant $C_3 > 1$ such that $\mathcal{G}$ is a $(1 + C_3 \delta_0, C_3 \delta_0)$-quasiconformal mapping on $\mathfrak{A}$. Moreover, by \eqref{gdelta0} and the boundedness of $\| \psi^{-1} \|_{\mathcal{C}^2 (\overline{\mathfrak{A}})}$, there exists a uniform constant $C_4 > 1$ such that $\| \mathcal{G} \|_{0; \mathfrak{A}} < C_4 \delta_0$. 
	
	Next, define $\widetilde{\mathfrak{A}} \subset \mathbb{C}$ by setting 
	\begin{flalign*} 
	\widetilde{\mathfrak{A}} = \mathfrak{A} \cup \mathcal{S}, \quad \text{where} \quad \mathcal{S} = \big\{ x + \textbf{i} y: x \in [ -\kappa, \kappa \big], y \in [-2 \kappa, 0] \big\}.
	\end{flalign*} 
	
	\noindent We refer to the right side of \Cref{bdomaina} for a depiction. Extend $\mathcal{G}$ to $\widetilde{\mathfrak{A}}$ by reflection, that is, we set $\mathcal{G} (x + \textbf{i} y) = - \partial_x \widetilde{G} (x - \textbf{i} y) - \textbf{i} \partial_y \widetilde{G} (x - \textbf{i} y)$ whenever $x + \textbf{i} y \in \mathcal{S}$; equivalently, $\Re \mathcal{G} (x + \textbf{i} y) = - \Re \mathcal{G} (x - \textbf{i} y)$ and $\Im \mathcal{G} (x + \textbf{i} y) = \Im \mathcal{G} (x - \textbf{i} y)$. Since $\widetilde{G} |_{\partial \mathfrak{A}} = 0$ and $G \in \mathcal{C}^2 (\overline{\mathcal{B}})$, it can quickly be deduced that $\mathcal{G}$ admits a Lipschitz extension to $[-\kappa, \kappa]$ (and therefore to $\widetilde{\mathfrak{A}}$). 
	
	Then, since $\mathcal{G}$ is $(1 + C_3 \delta_0, C_3 \delta_0)$-quasiconformal on $\mathfrak{A}$ and since $ \partial_x \Re \mathcal{G} (z) =  -\partial_x \Re \mathcal{G} (\overline{z})$; $\partial_y \Re \mathcal{G} (z) = \partial_y \Re \mathcal{G} (\overline{z})$; $\partial_x \Im \mathcal{G} (z) = \partial_x \Im \mathcal{G} (\overline{z})$; and $\partial_y \Im \mathcal{G} (z) =  -\partial_y \Im \mathcal{G} (\overline{z})$ all hold for any $z = x + \textbf{i} y \in \mathcal{S}$, it follows that $\mathcal{G}$ is $(1 + C_3 \delta_0, C_3 \delta_0)$-quasiconformal almost everywhere on $\widetilde{\mathfrak{A}}$. This, the fact that $\mathcal{G}$ is Lipschitz on $\widetilde{\mathfrak{A}}$, Theorem 12.3 of \cite{EDSO} (see also Appendix A1 of \cite{CQEM} for the case when $\mathcal{G}$ is not differentiable everywhere), and the fact that $\| \mathcal{G} \|_{0; \widetilde{\mathfrak{A}}} = \| \mathcal{G} \|_{0; \mathfrak{A}} < C_4 \delta_0$ then together yield a constant $C_5 = C_5 (\delta_0) > 1$ such that $[ \mathcal{G} ]_{0, \alpha_0; \widetilde{\mathfrak{A}}}^{(0)} \le C_5$ whenever $\alpha_0 \le 1 - (2 C_3 \delta_0)^{1 / 2}$. Selecting $\delta_0 < \frac{(1 - \alpha)^2}{4 C_3^2}$ and using the fact that $d \big( z_0, \widetilde{\mathfrak{A}} \big) > \kappa \sqrt{2}$, it follows that 
	\begin{flalign*}
	[\widetilde{G}]_{1, \alpha; \mathcal{B}_{\kappa} (z_0) \cap \mathfrak{A}} \le [ \mathcal{G} ]_{0, \alpha; \mathcal{B}_{\kappa} (z_0)} \le \kappa^{-1} [ \mathcal{G} ]_{0, \alpha; \widetilde{\mathfrak{A}}}^{(0)} \le \kappa^{-1} C_5.
	\end{flalign*} 
	
	\noindent This, with the uniform boundedness of $\| \psi^{-1} \|_{\mathcal{C}^2 (\overline{\mathfrak{A}})}$, yields a constant $C_6 = C_6 (\alpha) > 1$ such that $[G]_{1, \alpha; \mathcal{B}_{\kappa} (z_0) \cap \mathcal{B}} \le C_6$. This bound, \eqref{f1alphag}, and \eqref{g1estimate} together imply the lemma. 
\end{proof}

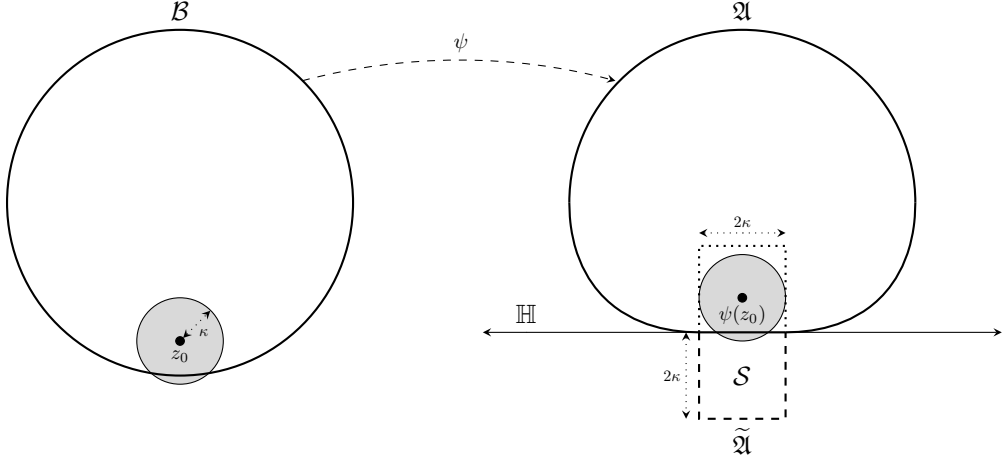
\begin{figure}[t]
	
	\begin{center}
		
		\begin{tikzpicture}[
		>=stealth,
		scale = 1.15
		]

		\filldraw[fill=white!70!gray] (3.5, .65) circle[radius = .5];
		\filldraw[fill=white!70!gray] (10, 1.15) circle[radius = .5];
		
		\draw[<->, dotted] (3.535, .685) -- (3.85, 1);
		
		\draw (3.775, .75) circle[radius = 0] node[scale = .6]{$\kappa$};
		
		\draw[-, black, thick] (3.5, 2.25) circle[radius = 2];

		\draw[-, black, thick] (10.5, .75) -- (10.65, .755179) -- (10.8, .771539) -- (10.95, .800633) -- (11.1, .844602) -- (11.25, .906534) -- (11.4, .991154) -- (11.475, 1.04424) -- (11.55, 1.10638) -- (11.625, 1.17971) -- (11.7, 1.26752) -- (11.775, 1.37532) -- (11.85, 1.51375) -- (11.91, 1.6634) -- (11.94, 1.76299) -- (11.97, 1.8985) -- (11.982, 1.97482) -- (11.988, 2.02385) -- (11.994, 2.08877) -- (12, 2.25);
		
		\draw[-, black, thick] (9.5, .75) -- (10.5, .75);
		
		\draw[thick] (8, 2.25) arc (180:0:2);
		
		\draw[-, black, thick] (8, 2.25) -- (8.006, 2.08877) -- (8.012, 2.02385) -- (8.018, 1.97482) -- (8.03, 1.8985) -- (8.06, 1.76299) -- (8.09, 1.6634) -- (8.15, 1.51375) -- (8.225, 1.37532) -- (8.3, 1.26752) -- (8.375, 1.17971) -- (8.45, 1.10638) -- (8.525, 1.04424) -- (8.6, .991154) -- (8.75, .906534) -- (8.9, .844602) -- (9.05, .800633) -- (9.2, .771539) -- (9.35, .755179) -- (9.5, .75); 
		
		\draw[dotted, thick] (9.5, .75) -- (9.5, 1.75) -- (10.5, 1.75) -- (10.5, .75);
		
		\draw[dashed, thick] (9.5, .75) -- (9.5, -.25) -- (10.5, -.25) -- (10.5, .75);
		
		\draw[<->, black] (7, .75) -- (13, .75);

		\draw[->, dashed] (4.91, 3.66) arc (105:75:7);

		\draw[<->, dotted] (9.35, .75) -- (9.35, -.25); 
		
		\draw[<->, dotted] (9.5, 1.9) -- (10.5, 1.9);

		\filldraw[fill = black] (3.5, 4.25) circle[radius = 0] node[above]{$\mathcal{B}$};
		
		\filldraw[fill = black] (10, 4.25) circle[radius = 0] node[above]{$\mathfrak{A}$};
		
		\filldraw[fill = black] (6.75, 4.1) circle[radius = 0] node[scale = .8]{$\psi$};
		
		\filldraw[fill = black] (10, -.25) circle[radius = 0] node[below]{$\widetilde{\mathfrak{A}}$};

		\filldraw[fill = black] (10, .25) circle[radius = 0] node[]{$\mathcal{S}$};
		
		\filldraw[fill = black] (7.5, .75) circle[radius = 0] node[above]{$\mathbb{H}$};

		\filldraw[fill = black] (3.5, .65) circle[radius = .05] node[below = 1, scale = .75]{$z_0$};
		
		\filldraw[fill = black] (10, 1.15) circle[radius = .05] node[below, scale = .75]{$\psi (z_0)$};
		
		\filldraw[fill = black] (9.35, .25) circle[radius = 0] node[left, scale = .6]{$2 \kappa$};
		\filldraw[fill = black] (10, 1.9) circle[radius = 0] node[above, scale = .6]{$2 \kappa$};

		\end{tikzpicture}
		
	\end{center}	
	
	\caption{\label{bdomaina} Under $\psi$, the disk $\mathcal{B}$ is mapped to the domain $\mathfrak{A}$. Then $\widetilde{\mathfrak{A}}$ is obtained from $\mathfrak{A}$ by augmenting the dashed square $\mathcal{S}$. } 
\end{figure} 

We conclude this section with the following analog of \Cref{derivativefestimate1} for solutions to a non-linear, elliptic partial differential equation of the form \eqref{ajkfiequation1}, whose coefficients $a_{jk}$ are nearly constant. 

\begin{cor}
	
	\label{uestimategradient} 
	
	For any fixed real numbers $B > 1$ and $\alpha \in (0, 1)$, there exist constants $\delta = \delta (B, \alpha) > 0$ and $C = C (B, \alpha) > 1$ such that the following holds. For each $j, k \in \{ x, y \}$, let $A_{jk} \in \mathbb{R}$ satisfy \eqref{ajkb}. Furthermore, fix measurable functions $a_{jk}: \mathbb{R}^2 \rightarrow \mathbb{R}$ such that $a_{xy} = a_{yx}$ and $\sup_{z \in \mathbb{R}^2} \big| a_{jk} (z) - A_{jk} \big| < \delta$ for each $j, k \in \{ x, y \}$.  

	Let $M \in \mathbb{R}_{> 1}$, and let $\varphi \in \mathcal{C}^2 (\overline{\mathcal{B}})$ denote a function such that $\| \varphi \|_{\mathcal{C}^2 (\overline{\mathcal{B}})} \le M$. If $F \in \mathcal{C}^2 (\overline{\mathcal{B}})$ satisfies equation \eqref{ajkfiequation1}, with boundary data $F |_{\partial \mathcal{B}} = \varphi |_{\partial \mathcal{B}}$, then $[F]_{1, \alpha; \mathcal{B}} < CM$.
	
\end{cor}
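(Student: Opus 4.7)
The plan is to reduce the nonlinear equation \eqref{ajkfiequation1} to the linear setting of \Cref{derivativefestimate1} by treating $F$ as a solution of a linear equation with measurable, nearly constant coefficients. Let $\delta = \delta(B,\alpha) > 0$ and $C = C(B,\alpha) > 1$ denote the constants produced by \Cref{derivativefestimate1}. I will show these same constants work for the corollary.

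Given $F \in \mathcal{C}^2(\overline{\mathcal{B}})$ satisfying \eqref{ajkfiequation1}, define measurable functions $\widetilde{a}_{jk} : \mathcal{B} \to \mathbb{R}$ by setting $\widetilde{a}_{jk}(z) = a_{jk}\bigl(\nabla F(z)\bigr)$ for each $z \in \mathcal{B}$. Since $\nabla F$ is continuous on $\mathcal{B}$ and each $a_{jk}$ is measurable, the $\widetilde{a}_{jk}$ are measurable; the symmetry $\widetilde{a}_{xy} = \widetilde{a}_{yx}$ follows from $a_{xy} = a_{yx}$; and the uniform bound $\sup_{w \in \mathbb{R}^2} |a_{jk}(w) - A_{jk}| < \delta$ immediately gives $\sup_{z \in \mathcal{B}} |\widetilde{a}_{jk}(z) - A_{jk}| < \delta$, regardless of the unknown $\nabla F$. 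Moreover, with this definition, \eqref{ajkfiequation1} is exactly the linear equation
\begin{flalign*}
\sum_{j,k \in \{x,y\}} \widetilde{a}_{jk}(z)\, \partial_j \partial_k F(z) = 0, \qquad \text{for each } z \in \mathcal{B},
\end{flalign*}
with boundary data $F|_{\partial \mathcal{B}} = \varphi|_{\partial \mathcal{B}}$.

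Thus the hypotheses of \Cref{derivativefestimate1} are satisfied by the coefficients $\widetilde{a}_{jk}$, the constants $A_{jk}$, the function $\varphi$, and the solution $F$. Applying that lemma directly yields $[F]_{1,\alpha;\mathcal{B}} \le CM$, which is the desired bound.

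The ``main obstacle'' here is essentially non-existent: the whole content is the observation that the uniform closeness $|a_{jk}(w) - A_{jk}| < \delta$ on \emph{all} of $\mathbb{R}^2$ decouples the closeness-to-constant property from any a priori knowledge of $\nabla F$, so the nonlinear problem reduces verbatim to the linear one already handled in \Cref{derivativefestimate1}. The only minor point to verify is the measurability of the composition $a_{jk} \circ \nabla F$, which is immediate from continuity of $\nabla F$.
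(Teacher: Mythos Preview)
Your proposal is correct and matches the paper's proof essentially verbatim: define $\widetilde{a}_{jk}(z) = a_{jk}(\nabla F(z))$, observe that the uniform bound on $\mathbb{R}^2$ gives $\sup_{z \in \mathcal{B}} |\widetilde{a}_{jk}(z) - A_{jk}| < \delta$ regardless of $\nabla F$, and apply \Cref{derivativefestimate1} to the resulting linear equation.
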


\begin{proof}
	
	As in the proof of \Cref{aijgradientfestimate}, define the measurable function $\widetilde{a}_{jk}: \mathcal{B} \rightarrow \mathbb{R}$ for any $j, k \in \{ x, y \}$ by setting $\widetilde{a}_{jk} (z) = a_{jk} \big( \nabla F (z) \big)$, for each $z \in \mathcal{B}$. Then $\sup_{z \in \mathcal{B}} \big| \widetilde{a}_{jk} (z) - A_{jk} \big| < \delta$, and $F \in  \mathcal{C}^2 (\overline{\mathcal{B}})$ satisfies the linear equation \eqref{ajkfjkidentity}, with boundary data $F |_{\partial \mathcal{B}} = \varphi |_{\partial \mathcal{B}}$. Thus the corollary follows from \Cref{derivativefestimate1}.
\end{proof}

\subsection{Global Gradient Estimates for Minimizers of \texorpdfstring{$\mathcal{E}$}{}}

\label{GlobalE}

In this section we establish \Cref{euv1v2estimategradient}. To that end, we begin with the following proposition that establishes the first bound in \eqref{derivativehestimates122}. 

\begin{prop}
	
	\label{gradientfe}
	
	Under the notation of \Cref{euv1v2estimategradient}, $\sup_{z \in \mathcal{B}_{1 / 8}} \big| \nabla \mathcal{H} (z) - (s, t) \big| < \lambda^{15 \theta / 16}$. 
	
\end{prop}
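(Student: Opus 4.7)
The plan is to compare $\mathcal{H}$ with the linear function $\Lambda(z) = (s,t) \cdot z + c$ (for a suitable constant $c$) by combining interior estimates from \Cref{perturbationboundary} with a boundary regularity argument exploiting the $\mathcal{C}^2$ extension $\varphi$ from assumption 3. First, from assumption 2 I would apply \Cref{perturbationboundary} on $\mathcal{B}$ to the pair $(g, \Lambda|_{\partial \mathcal{B}})$; this yields both $\sup_{z \in \mathcal{B}_{1/4}} |\nabla G(z) - (s,t)| \le C_\varepsilon \lambda^{2\theta}$ and $\|G - G(0,0)\|_{\mathcal{C}^2(\overline{\mathcal{B}}_{1/4})} \le C_\varepsilon$, while \Cref{h1h2gamma} gives $\sup_{z \in \mathcal{B}} |G(z) - \Lambda(z)| \le \lambda^{2\theta}$. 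Combining assumption 1 with \Cref{h1h2gamma} on $\mathcal{B}_{1/8}$, $\sup_{z \in \mathcal{B}_{1/8}} |\mathcal{H}(z) - G(z)| \le \lambda$, so setting $u = \mathcal{H} - \Lambda$ I obtain $\|u\|_{0;\mathcal{B}_{1/8}} \le \lambda + \lambda^{2\theta} \le 2\lambda^{2\theta}$ (for $\theta < 1/2$).

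Next, for the interior gradient estimate, I would rescale $\mathcal{B}_{1/8}$ to $\mathcal{B}$ using \Cref{estimatehrho} and apply \Cref{perturbationboundary} once more, now to $\mathcal{H}|_{\mathcal{B}_{1/8}}$ and $G|_{\mathcal{B}_{1/8}}$ (two maximizers of $\mathcal{E}$ whose boundary values on $\partial \mathcal{B}_{1/8}$ differ by at most $\lambda$ in supremum). This yields $|\nabla \mathcal{H}(z) - \nabla G(z)| \le C_\varepsilon \lambda$ on $\mathcal{B}_{1/32}$, so that (combining with the first step) $|\nabla \mathcal{H}(z) - (s,t)| \le C'_\varepsilon \lambda^{2\theta}$ on $\mathcal{B}_{1/32}$. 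A bootstrap using \Cref{interiorfunctional} (applied after rescaling to use the $\mathcal{C}^0$-smallness of $u$) then propagates $\nabla \mathcal{H}(z) \in \mathcal{T}_{\varepsilon/2}$ over a region large enough that the coefficients $\mathfrak{a}_{jk}(\nabla \mathcal{H}(z))$ appearing in \eqref{aijh} are uniformly close to the constants $\mathfrak{a}_{jk}(s,t)$.

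To push the gradient bound up to the boundary $\partial \mathcal{B}_{1/8}$, I would exploit assumption 3. Setting $w = \mathcal{H} - \varphi$, which vanishes on $\partial \mathcal{B}_{1/8}$, $w$ satisfies the inhomogeneous linear elliptic PDE $\sum_{j,k} \mathfrak{a}_{jk}(\nabla \mathcal{H}(z)) \partial_j \partial_k w(z) = -\sum_{j,k} \mathfrak{a}_{jk}(\nabla \mathcal{H}(z)) \partial_j \partial_k \varphi(z)$, whose right-hand side is bounded in supremum by $C \|\varphi - \varphi(0,0)\|_{\mathcal{C}^2} \le C\lambda^{2\theta - 1}$. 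Using the near-constancy of the coefficients established above, the boundary Cordes–Nirenberg estimate \Cref{derivativefestimate1} applies to yield a global $\mathcal{C}^{1,\alpha}$ bound of the form $[u]_{1,\alpha;\mathcal{B}_{1/8}} \lesssim \lambda^{2\theta - 1}$. Interpolating between $\|u\|_{0;\mathcal{B}_{1/8}} \le 2\lambda^{2\theta}$ and this $\mathcal{C}^{1,\alpha}$ bound via $\|\nabla u\|_0 \lesssim \|u\|_0^{\alpha/(1+\alpha)} [u]_{1,\alpha}^{1/(1+\alpha)}$ produces an exponent of the form $(2\theta(1+\alpha) - 1)/(1+\alpha)$.

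The hard part will be obtaining the precise exponent $15\theta/16$ and controlling the boundary. A single-pass interpolation of the above naïve form falls short when $\theta$ is small and $\alpha \le \theta/4$, so the argument likely requires an iterative refinement: the initial interior gradient bound is used to sharpen the sense in which the coefficients $\mathfrak{a}_{jk}(\nabla \mathcal{H}(z))$ are close to $\mathfrak{a}_{jk}(s,t)$ over \emph{all} of $\mathcal{B}_{1/8}$ (rather than just the interior), so that the small parameter $\delta(B,\alpha)$ required by \Cref{derivativefestimate1} is actually satisfied and the constant $M$ there can be taken essentially equal to $\lambda^{2\theta-1}$; feeding this improved $\mathcal{C}^{1,\alpha}$ estimate back into the interpolation then yields the target $\lambda^{15\theta/16}$. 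Quantifying this bootstrap -- in particular, verifying the coefficient closeness uniformly up to $\partial \mathcal{B}_{1/8}$ -- is the main technical hurdle.
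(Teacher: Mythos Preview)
Your proposal correctly identifies the main ingredients (the $\mathcal{C}^0$ closeness $\|\mathcal{H}-G\|_0\le\lambda$, the interior gradient bound on $G$ from \Cref{perturbationboundary}, and the boundary Cordes--Nirenberg estimate \Cref{derivativefestimate1}/\Cref{uestimategradient} exploiting assumption 3), but it contains a genuine circularity that you flag without resolving. To apply \Cref{derivativefestimate1} to the linear equation satisfied by $w=\mathcal{H}-\varphi$ you need the coefficients $\mathfrak{a}_{jk}(\nabla\mathcal{H}(z))$ to be $\delta$-close to constants on \emph{all} of $\mathcal{B}_{1/8}$, which is exactly the conclusion you are after. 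Your proposed bootstrap (interior $\Rightarrow$ coefficients near-constant $\Rightarrow$ global $\mathcal{C}^{1,\alpha}$ $\Rightarrow$ interpolate) cannot get started near the boundary, and as you note, with $\alpha\le\theta/4$ the interpolation exponent $2\theta-\tfrac{1}{1+\alpha}$ is actually negative for small $\theta$, so no finite number of iterations helps.

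The paper breaks this circularity with a single idea: it does \emph{not} work with $\mathcal{H}$ directly. Instead it fixes smooth coefficients $a_{jk}$ on all of $\mathbb{R}^2$ that (i) agree with $\mathfrak{a}_{jk}$ on a small ball $\mathcal{B}_\varpi(s,t)$ and (ii) satisfy $|a_{jk}-\mathfrak{a}_{jk}(s,t)|<\delta$ globally \emph{by construction}. It then lets $F$ solve $\sum a_{jk}(\nabla F)\partial_j\partial_k F=0$ on $\mathcal{B}_{1/8}$ with boundary data $\varphi|_{\partial\mathcal{B}_{1/8}}$. Now \Cref{uestimategradient} applies to $F$ immediately, with H\"older exponent $\alpha=1-\theta/20$ (close to $1$, not the $\alpha\le\theta/4$ from the statement), giving $[F]_{1,\alpha;\mathcal{B}_{1/8}}\lesssim\lambda^{2\theta-1}$. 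Since $G$ also solves this modified equation on $\mathcal{B}_{1/4}$ (because $\nabla G\in\mathcal{B}_\varpi(s,t)$ there), the comparison principle gives $\|F-G\|_0\le\lambda$. A direct integration argument (not a standard interpolation) then shows that if $|\nabla F(z_0)-(s,t)|>4\lambda^{19\theta/20}$ at some $z_0\in\mathcal{B}_{1/8-\nu}$ with $\nu=\lambda^{1-\theta}$, integrating over a segment of length $\nu$ produces a $\mathcal{C}^0$ increment of size $\lambda^{1-\theta/20}\gg\lambda$, contradicting $\|F-G\|_0\le\lambda$. The $\mathcal{C}^{1,\alpha}$ bound then carries this to the $\nu$-boundary layer, yielding $|\nabla F-(s,t)|<\lambda^{15\theta/16}$ on all of $\mathcal{B}_{1/8}$. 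A posteriori $\nabla F\in\mathcal{B}_\varpi(s,t)$, so $a_{jk}(\nabla F)=\mathfrak{a}_{jk}(\nabla F)$, $F$ solves the Euler--Lagrange equation, and uniqueness forces $F=\mathcal{H}$.
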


\begin{proof}
	
	Through a shift, we may suppose that $\varphi (0, 0) = 0$. Recall the functions $\mathfrak{a}_{jk}: \mathcal{T} \rightarrow \mathbb{R}$ from \eqref{aijst}, and define the real numbers $A_{jk} = \mathfrak{a}_{jk} (s, t)$ for each $j, k \in \{ x, y \}$. Then, since $(s, t) \in \mathcal{T}_{\varepsilon}$, there exists some $B = B (\varepsilon) \in \mathbb{R}_{> 1}$ such that \eqref{ajkb} holds. 
	
	Fix $\alpha = 1 - \frac{\theta}{20}$, and let $\delta_0 = \delta_0 (\varepsilon, \theta)$ and $C_0 = C_0 (\varepsilon, \theta)$ denote the constants $\delta (B, \alpha)$ and $C (B, \alpha)$ from \Cref{uestimategradient}, respectively; further fix $\gamma = \frac{\delta_0}{2}$. By the uniform continuity of the $\mathfrak{a}_{jk}$ on $\mathcal{T}_{\varepsilon / 2}$, there exists a constant $\varpi = \varpi (\varepsilon, \theta) \in \big( 0, \frac{\varepsilon}{2} \big)$ and functions $a_{jk} \in \mathcal{C}^2 (\mathbb{R}^2)$ for each $j, k \in \{ x, y \}$ such that $a_{jk} = a_{kj}$ and the following two properties hold. 
	
	\begin{enumerate} 
		
		\item For each $j, k \in \{ x, y \}$ and $z \in \mathcal{B}_{\varpi} (s, t)$, we have that $a_{jk} (z) = \mathfrak{a}_{jk} (z)$. 
		
		\item For each $j, k \in \{ x, y \}$, we have that $\sup_{z \in \mathbb{R}^2} \big| a_{jk} (z) - A_{jk} \big| < \gamma$. 
		
	\end{enumerate}  

	Now let us for the moment suppose that $\varphi \in \mathcal{C}^{2, \alpha} (\overline{\mathcal{B}}_{1 / 8})$. Under this assumption, Theorem 15.13 of \cite{EDSO} guarantees the existence of a solution $F \in \mathcal{C}^{2, \alpha} (\overline{\mathcal{B}}_{1 / 8})$ to the equation
	\begin{flalign}
	\label{2ajkfequation}
	\displaystyle\sum_{j, k \in \{ x, y \}} a_{jk} \big( \nabla F(z) \big) \partial_j \partial_k F(z) = 0,
	\end{flalign}
	
	\noindent for each $z \in \mathcal{B}_{1 / 8}$, with boundary data $F |_{\partial \mathcal{B}_{1 / 8}} = \mathfrak{h} = \varphi |_{\partial \mathcal{B}_{1 / 8}}$. We will show that $F = \mathcal{H}$ by showing that $\big| \nabla F(z) - (s, t) \big| < \varpi$, for each $z \in \mathcal{B}_{1 / 8}$. 
	
	To implement this, let us first verify that $G$ also satisfies the equation \eqref{2ajkfequation} on $\mathcal{B}_{1 / 4}$. To that end, the second assumption on $g$ listed in \Cref{euv1v2estimategradient} yields a function $\Lambda: \overline{\mathcal{B}} \rightarrow \mathbb{R}$ such that $\sup_{z \in \partial{B}} \big| g(z) - \Lambda (z) \big| < \lambda^{2 \theta}$. For sufficiently small $\delta$, this implies that $(g, \Lambda |_{\partial \mathcal{B}})$ satisfies the conditions of $(\mathfrak{h}_1, \mathfrak{h}_2)$ in \Cref{perturbationboundary}. That result then yields a constant $C_1 = C_1 (\varepsilon, \theta) > 1$ such that 
	\begin{flalign} 
	\label{gradientgiestimate1}
	\sup_{z \in \mathcal{B}_{1 / 4}} \big| \nabla G (z) - (s, t) \big| \le C_1 \sup_{z \in \partial \mathcal{B}} \big| g (z) - \Lambda (z) \big| < C_1 \lambda^{2 \theta}.
	\end{flalign}
	
	Hence, $\nabla G(z) \in \mathcal{B}_{\varpi} (s, t) \subset \mathcal{T}_{\varepsilon / 2}$ for each $z \in \mathcal{B}_{1 / 4}$ and sufficiently small $\delta$. Therefore, from the first property satisfied by the $a_{jk}$, we deduce that $\mathfrak{a}_{jk} \big( \nabla G (z) \big) = a_{jk} \big( \nabla G(z) \big)$ for each $z \in \mathcal{B}_{1 / 4}$; moreover, by \Cref{haijequations}, $G$ satisfies the equation \eqref{aijh} on $\mathcal{B}_{1 / 4}$. Together, these two facts imply that $G$ also satisfies \eqref{2ajkfequation} on $\mathcal{B}_{1 / 4}$. 
	
	Now, by the third property listed in \Cref{euv1v2estimategradient}; \Cref{uestimategradient}, whose $M$ there equals $\lambda^{2 \theta - 1}$ here; and scaling by $\frac{1}{8}$ (as in \Cref{estimatehrho}), there exists a constant $C_1 = C_1 (\varepsilon, \theta) > 1$ such that 
	\begin{flalign} 
	\label{f1bestimate}
	[F]_{1, \alpha; \mathcal{B}_{1 / 8}} < C_1 \lambda^{2 \theta - 1}.  
	\end{flalign}
	
	\noindent Denoting $\nu = \lambda^{1 - \theta}$, we claim for sufficiently small $\delta$ that
	\begin{flalign}
	\label{stgradientf}
	\displaystyle\sup_{z \in \mathcal{B}_{1  / 8 - \nu}} \big| \nabla F(z) - (s, t) \big| \le 4 \lambda^{19 \theta / 20}. 
	\end{flalign}
	
	The proof of \eqref{stgradientf} will be similar to that of \Cref{fjderivativesestimate}, with the $\mathcal{C}^0$ bound on $F$ there replaced by the proximity of $F$ to the nearly linear function $G$ here. Specifically, assume to the contrary that there exists some $z_0 \in \mathcal{B}_{1 / 8 - \nu}$ such that $\big| \nabla F (z_0) - (s, t) \big| > 4 \lambda^{19 \theta / 20}$. Then, either $\big| \partial_x F(z_0) - s \big| > 2 \lambda^{19 \theta / 20}$ or $\big| \partial_y F (z_0) - t \big| > 2 \lambda^{19 \theta / 20}$. Let us assume the former, and also that $\partial_x F(z_0) > s + 2 \lambda^{19 \theta / 20}$. Then \eqref{f1bestimate} (and the fact that $\alpha = 1 - \frac{\theta}{20}$) yields 
	\begin{flalign*} 
	\displaystyle\inf_{z \in \mathcal{B}_{\nu} (z_0)} \big( \partial_x F(z) - s \big) > 2 \lambda^{19 \theta / 20} - C_1 \nu^{\alpha} \lambda^{2 \theta - 1} > \lambda^{19 \theta / 20},
	\end{flalign*} 
	
	\noindent for sufficiently small $\delta$. Denoting $w = (1, 0)$, it follows from integration that
	\begin{flalign}
	\label{fz0nuw} 
	F(z_0 + \nu w) - F(z_0) - s \nu > \lambda^{19 \theta / 20} \nu = \lambda^{1 - \theta / 20}.
	\end{flalign}
	
	However, the first property satisfied by $G$ listed in \Cref{euv1v2estimategradient} and the comparison principle \Cref{aijcomparison} for equations of the type \eqref{ajkfiequation1} (see also \Cref{h1h2gamma}) together imply that $\sup_{z \in \mathcal{B}_{1 / 8}} \big| F(z) - G(z) \big| \le \lambda$. This, with the bound \eqref{gradientgiestimate1}, yields for any $z_0 \in \mathcal{B}_{1 - \nu}$ and sufficiently small $\delta$ that
	\begin{flalign*}
	\displaystyle\sup_{z \in \mathcal{B}_{\nu} (z_0)} \big| F(z) - F(z_0) - (s, t) \cdot (z - z_0) \big| & \le \displaystyle\sup_{z \in \mathcal{B}_{\nu} (z_0)} \big| G (z) - G (z_0) - (s, t) \cdot (z - z_0) \big| + 2 \lambda & \\
	& <  C_1 \lambda^{2 \theta} \nu + 2 \lambda < 3 \lambda,
	\end{flalign*}
	
	\noindent which contradicts \eqref{fz0nuw}. This establishes \eqref{stgradientf}. 
	
	Next we bound $\sup_{z \in \mathcal{B}_{1 / 8}} \big| \nabla F(z) - (s, t) \big|$. To that end, observe from \eqref{f1bestimate} and \eqref{stgradientf} that 
	\begin{flalign}
	\label{gradientfstestimatelambda}
	\begin{aligned} 
	\big| \nabla F(z) - (s, t) \big| & \le \displaystyle\sup_{z_0 \in \mathcal{B}_{1 / 8 - \nu}} \big| \nabla F(z_0) - (s, t) \big| + \displaystyle\inf_{z_0 \in \mathcal{B}_{1 / 8 - \nu}} \big| \nabla F(z) - \nabla F (z_0) \big|\\
	&  \le 4 \lambda^{19 \theta / 20} + \nu^{\alpha} [F]_{1, \alpha; \mathcal{B}_{1 / 8}} \le 4 \lambda^{19 \theta / 20} + C_1 \lambda^{2 \theta - 1} \nu^{1 - \theta / 20} < \lambda^{15 \theta / 16},
	\end{aligned}
	\end{flalign}
	
	\noindent  for any $z \in \mathcal{B}_{1 / 8}$ and sufficiently small $\delta$. Hence, $\nabla F(z) \in \mathcal{B}_{\varpi} (s, t)$ for each $z \in \mathcal{B}_{1 / 8}$, and so $a_{jk} \big( \nabla F(z) \big) = \mathfrak{a}_{jk} \big( \nabla F(z) \big)$ by the first property satisfied by the $a_{jk}$. Therefore, $F \in \mathcal{C}^2 (\mathcal{B}_{1 / 8})$ solves the Euler-Lagrange equations for the function $\sigma$ (from \eqref{lsigma}), which yields $F = \mathcal{H}$ by \Cref{haijequations}. Thus, the proposition in the case $\varphi \in \mathcal{C}^{2, \alpha} (\overline{\mathcal{B}}_{1 / 8})$ follows from \eqref{gradientfstestimatelambda}.\footnote{Observe that the condition $\varphi \in \mathcal{C}^{2, \alpha} (\overline{\mathcal{B}}_{1 / 8})$ was only stipulated in order to ensure that $F \in \mathcal{C}^2 (\overline{\mathcal{B}}_{1 / 8})$. The bounds above did not depend on $\| \varphi \|_{\mathcal{C}^{2, \alpha} (\overline{\mathcal{B}}_{1 / 8})}$; they only depended on $\| \varphi \|_{\mathcal{C}^2 (\overline{\mathcal{B}}_{1 / 8})} \le \lambda^{2 \theta - 1}$.}
	
	If instead $\varphi \notin \mathcal{C}^{2, \alpha} (\overline{\mathcal{B}}_{1 / 8})$, then let $\varphi_1, \varphi_2, \ldots \in \mathcal{C}^{2, \alpha} (\overline{\mathcal{B}}_{1 / 8})$ denote a sequence of functions converging to $\varphi$ in $\mathcal{C}^2 (\overline{\mathcal{B}}_{1 / 8})$. For each $j \ge 1$, let $\mathcal{H}_j \in \Adm (\mathcal{B}_{1 / 8})$ denote the maximizer of $\mathcal{E}$ on $\mathcal{B}_{1 / 8}$ with boundary data $\varphi_j |_{\partial \mathcal{B}_{1 / 8}}$. Then the above reasoning implies that $\sup_{z \in \mathcal{B}_{1 / 8}} \big| \nabla \mathcal{H}_j (z) - (s, t) \big| < \lambda^{15 \theta / 16}$, for sufficiently large $j$.
	
	Now let us use \Cref{perturbationboundary} to show that $\lim_{j \rightarrow \infty} \nabla \mathcal{H}_j (z) = \nabla \mathcal{H} (z)$, for each $z \in \mathcal{B}_{1 / 8}$. To that end, fix $z_0 \in \mathcal{B}_{1 / 8}$ and let $\nu \in \big( 0, \frac{1}{8} \big)$ be such that $\mathcal{B}_{\nu} (z_0) \subset \mathcal{B}_{1 / 8}$. Define the functions $\mathcal{H}^{(\nu)}: \overline{\mathcal{B}} \rightarrow \mathbb{R}$ and $\mathcal{H}_j^{(\nu)}: \overline{\mathcal{B}} \rightarrow \mathbb{R}$ by setting 
	\begin{flalign*} 
	\mathcal{H}^{(\nu)} (z) = \nu^{-1} \big( \mathcal{H} (z_0 + \nu z) - \mathcal{H} (z_0) \big); \qquad \mathcal{H}_j^{(\nu)} (z) = \nu^{-1} \big( \mathcal{H}_j (z_0 + \nu z) - \mathcal{H}_j (z_0) \big),
	\end{flalign*} 
	
	\noindent for each $z \in \overline{\mathcal{B}}$ and $j \ge 1$. Then, $\mathcal{H}^{(\nu)}$ and each $\mathcal{H}_j^{(\nu)}$ are maximizers of $\mathcal{E}$ on $\mathcal{B}$; furthermore, $\nabla \mathcal{H} (z_0 + \nu z) = \nabla \mathcal{H}^{(\nu)} (z)$ and $\nabla \mathcal{H}_j (z_0 + \nu z) = \nabla \mathcal{H}_j^{(\nu)} (z)$ for each $z \in \mathcal{B}$ and $j \ge 1$. So, the bound $\sup_{z \in \mathcal{B}_{1 / 8}} \big| \nabla \mathcal{H}_j (z) - (s, t) \big| < \lambda^{15 \theta / 16}$ implies $\sup_{z \in \mathcal{B}} \big| \nabla \mathcal{H}_j^{(\nu)} (z) - (s, t) \big| < \lambda^{15 \theta / 16}$ for sufficiently large $j$. In particular, $\mathcal{H}_j^{(\nu)}$ is $\lambda^{15 \theta / 16}$ linear with slope $(s, t)$ on $\mathcal{B}$. 
	
	Since \Cref{h1h2gamma} implies that $\sup_{z \in \partial \mathcal{B}_{\nu} (z_0)} \big| \mathcal{H} (z) - \mathcal{H}_j (z) \big| \le \sup_{z \in \overline{\mathcal{B}}_{1 / 8}} \big| \varphi_j (z) - \varphi (z) \big| \le \nu \lambda^{15 \theta / 16}$ for sufficiently large $j$, it follows that $\mathcal{H}^{(\nu)}$ is $2 \lambda^{15 \theta / 16}$-linear with slope $(s, t)$ on $\mathcal{B}$. Therefore, for sufficiently small $\delta$ and large $j$, \Cref{perturbationboundary} yields the existence of a constant $C_2 = C_2 (\varepsilon) > 0$ such that 
	\begin{flalign*}
	\displaystyle\lim_{j \rightarrow \infty} \big| \nabla \mathcal{H} (z_0) - \nabla \mathcal{H}_j (z_0) \big| & = \displaystyle\lim_{j \rightarrow \infty} \big| \nabla \mathcal{H}^{(\nu)} (0, 0) - \nabla \mathcal{H}_j^{(\nu)} (0, 0) \big| \\
	&  \le C_2 \displaystyle\lim_{j \rightarrow \infty} \displaystyle\sup_{z \in \partial \mathcal{B}} \big| \mathcal{H}^{(\nu)} (z) - \mathcal{H}_j^{(\nu)} (z) \big| \\
	& = C_2 \nu^{-1} \displaystyle\lim_{j \rightarrow \infty} \displaystyle\sup_{z \in \partial \mathcal{B}} \big| \mathcal{H} (z_0 + \nu z) - \mathcal{H}_j (z_0 + \nu z) \big| \\
	&  \le C_2 \nu^{-1} \displaystyle\lim_{j \rightarrow \infty} \displaystyle\sup_{z \in \partial \mathcal{B}_{1 / 8}} \big| \varphi (z) - \varphi_j (z) \big| = 0,
	\end{flalign*}
	
	\noindent where in the fourth statement we applied \Cref{h1h2gamma} and in the fifth we used the fact that the $\varphi_j$ converge to $\varphi$ in $\mathcal{C}^2 (\overline{\mathcal{B}}_{1 / 8})$. Thus $\lim_{j \rightarrow \infty} \nabla \mathcal{H}_j (z) = \nabla \mathcal{H} (z)$, for each $z \in \mathcal{B}_{1 / 8}$, so the proposition follows from the fact that $\sup_{z \in \mathcal{B}_{1 / 8}} \big| \nabla \mathcal{H}_j (z) - (s, t) \big| < \lambda^{15 \theta / 16}$ for sufficiently large $j$.
\end{proof}

Now we can establish \Cref{euv1v2estimategradient}. 

\begin{proof}[Proof of \Cref{euv1v2estimategradient}]
		
	By \Cref{gradientfe}, the first bound in \eqref{derivativehestimates122} holds, so it suffices to establish the second. To that end, we will assume throughout this proof that $\mathcal{H} (0, 0) = 0$. 
	
	For any $z_0 \in \mathcal{B}_{1/8 - \mu}$, define the rescaled function $\mathcal{H}^{(\mu)}: \overline{\mathcal{B}} \rightarrow \mathbb{R}$ by 
	\begin{flalign*}
	\mathcal{H}^{(\mu)} (z) = \mathcal{H}^{(\mu; z_0)} (z) = \mu^{-1} \big( \mathcal{H} (z_0 + \mu z) - \mathcal{H} (z_0) \big), \quad \text{for each $z \in \overline{\mathcal{B}}$.}
	\end{flalign*}
	
	\noindent Then, for each $z \in \mathcal{B}$; $j, k \in \{x, y \}$; and $\alpha \in (0, 1)$, we have that
	\begin{flalign}
	\label{xrgradientderivatives} 
	\nabla \mathcal{H}^{(\mu)} (z) = \nabla \mathcal{H} (z_0 + \mu z); \quad \big[ \mathcal{H}^{(\mu)} \big]_{2, \alpha; \mathcal{B}}= \mu^{1 + \alpha} [\mathcal{H}]_{2, \alpha; \mathcal{B}_{\mu} (z_0)}; \quad \big[ \mathcal{H}^{(\mu)} \big]_{4, \alpha; \mathcal{B}_{1/2}} = \displaystyle\frac{\mu^3}{8} [\mathcal{H}]_{4, \alpha; \mathcal{B}_{\mu/2} (z_0)}.
	\end{flalign}
	
	Next, by the scale-invariance of the variational principle (recall \Cref{estimatehrho}), the function $\mathcal{H}^{(\mu)}$ is a maximizer of $\mathcal{E}$ on $\mathcal{B}$. Therefore, \Cref{derivativeshestimate}, \eqref{xrgradientderivatives}, and the first bound in \eqref{derivativehestimates122} together yield a constant $C_1 = C_1 (\varepsilon) > 1$ such that $\big[ \mathcal{H}^{(\mu)} \big]_{1, \theta / 4; \mathcal{B}_{1 / 2}} \le \big\| \mathcal{H}^{(\mu)} \big\|_{\mathcal{C}^2 (\overline{\mathcal{B}}_{1 / 2})} \le C_1$, since $\mathcal{H}^{(\mu)} (0, 0) = 0$.  
	
	To bound the $\mathcal{C}^{2, \theta / 4}$-norm of $\mathcal{H}^{(\mu)}$, we will use the interior Schauder estimate \Cref{aijuestimates}. This will be facilitated by first removing the ``approximate slope'' of $\mathcal{H}^{(\mu)}$, so let us define the functions 
	\begin{flalign*} 
	F (z) = F^{(\mu; z_0)} (z) = \mathcal{H}^{(\mu)} (z) - (s, t) \cdot z; \qquad a_{jk} (z) = \mathfrak{a}_{jk} \big( \nabla \mathcal{H}^{(\mu)} (z) \big), 
	\end{flalign*} 
	
	\noindent for each $j, k \in \{ x, y \}$ and $z \in \mathcal{B}$. Observe that the first bound in \eqref{derivativehestimates122} implies that $\nabla \mathcal{H} (z) \in \mathcal{T}_{\varepsilon / 2}$, for each $z \in \mathcal{B}_{1 / 8}$ and sufficiently small $\delta$. Thus $\nabla \mathcal{H}^{(\mu)} (z) \in \mathcal{T}_{\varepsilon / 2}$ for each $z \in \mathcal{B}$ and so, by \Cref{haijequations}, $F$ satisfies the linear equation \eqref{aijubiuf}, with the $b_j$ and $g$ there equal to $0$. 
	
	Now, the fact that $\big[ \mathcal{H}^{(\mu)} \big]_{1, \theta / 4; \mathcal{B}_{1 / 2}} \le C_1$ and the uniform smoothness of $\mathfrak{a}_{jk}$ on $\mathcal{T}_{\varepsilon / 2}$ (recall \Cref{concavesigmat}) together imply the existence of a constant $C_2 = C_2 (\varepsilon) > 0$ such that $\| a_{jk} \|_{0, \theta / 4; \mathcal{B}_{1 / 2}} \le C_2$. Thus, the interior Schauder estimate \Cref{aijuestimates} yields a constant $C_3 = C_3 (\varepsilon, \theta) > 1$ such that 
	\begin{flalign} 
	\label{hmuestimatederivatives2} 
	\big[ \mathcal{H}^{(\mu)} \big]_{2, \theta / 4; \mathcal{B}_{1 / 4}} = [F]_{2, \theta / 4; \mathcal{B}_{1 / 4}}  \le \| F \|_{\mathcal{C}^{2, \theta / 4} (\overline{\mathcal{B}}_{1 / 4})} \le C_3 \| F \big\|_{0; \mathcal{B}_{1 / 2}}.
	\end{flalign}
	
	\noindent To bound the right side of \eqref{hmuestimatederivatives2}, observe since $F(0, 0) = 0$ that 
	\begin{flalign}
	\label{ffgradienthmuh}
	\| F \|_{0; \mathcal{B}_{1 / 2}} \le \| \nabla F \|_{0; \mathcal{B}} = \big\| \nabla \mathcal{H}^{(\mu)} - (s, t) \big\|_{0; \mathcal{B}} = \big\| \nabla \mathcal{H} - (s, t) \big\|_{0; \mathcal{B}_{\mu} (z_0)} \le \lambda^{15 \theta / 16},
	\end{flalign} 
	
	\noindent where the third and fourth statements of \eqref{ffgradienthmuh} follow from the first equality in \eqref{xrgradientderivatives} and the first bound in \eqref{derivativehestimates122}, respectively. Inserting \eqref{ffgradienthmuh} into \eqref{hmuestimatederivatives2}, using the second identity in \eqref{xrgradientderivatives}, and recalling that $\mathcal{H}$ is $1$-Lipschitz and that $\mu = \lambda^{1 + \theta / 8}$, we deduce for sufficiently small $\delta$ that 
	\begin{flalign*}
	\| \mathcal{H} \|_{\mathcal{C}^{2, \theta / 4} (\overline{\mathcal{B}}_{1/8 - \mu})} & \le [\mathcal{H}]_{2, \theta / 4; \mathcal{B}_{1/8 - \mu}} + 2 \\
	& \le \mu^{-1 - \theta / 4} \displaystyle\sup_{z_0 \in \mathcal{B}_{1/8 - \mu}} \big[ \mathcal{H}^{(\mu; z_0)} \big]_{2, \theta / 4; \mathcal{B}_{1 / 4}} + 2 \le C_3 \mu^{-1 - \theta / 4} \lambda^{15 \theta / 16} + 2 \le \lambda^{\theta / 2 - 1}.
	\end{flalign*}
	
	\noindent This yields the second statement of \eqref{derivativehestimates122} and therefore establishes the proposition. 
\end{proof}

\section{Proof of \texorpdfstring{\Cref{numbersigma}}{}} 

\label{ProofstPn}

In this section we establish \Cref{numbersigma}, whose proof will closely follow Sections 6, 7, and 8 of \cite{VPDT}. As in that work, this will proceed by comparing torus tilings of a given (approximate) tile type proportion with suitably weighted torus tilings. Thus, we begin in \Cref{PnEstimate} by recalling and asymptotically analyzing the partition function for this weighted count of torus tilings. Then, in \Cref{EstimatePn2} we provide a variance estimate for weighted tilings that enables one to compare them with unweighted tilings of a given approximate tile type proportion. Throughout this section, we recall the notation of \Cref{EstimateTriangle}.

\subsection{Enumerating Weighted Tilings of a Torus}

\label{PnEstimate}  

In this section we establish \Cref{znzpnp} below, which approximates the number of lozenge tilings of $P_N$ that are weighted in a certain way. To state that result, we first require some notation. 

Fix $a, b, c \in \mathbb{R}_{> 0}$, and define the \emph{weight} of a tiling $\mathscr{M} \in \mathfrak{E} (P_N)$ to be $w (\mathscr{M}) = w_{a, b, c} (\mathscr{M}) = a^{\mathcal{N}_1 (\mathscr{M})} b^{\mathcal{N}_2 (\mathscr{M})} c^{\mathcal{N}_2 (\mathscr{M})}$. Further define the measure $\nu = \nu_{a, b, c} = \nu_{a, b, c; N} \in \mathfrak{P} (P_N)$ that assigns a tiling $\mathscr{M} \in \mathfrak{E} (P_N)$ probability $\nu (\mathscr{M}) = Z^{-1} w (\mathscr{M})$, where we have denoted the \emph{partition function} $Z = Z_N (a, b, c) = \sum_{\mathscr{M} \in \mathfrak{E} (P_N)} w(\mathscr{M})$ to ensure that $\sum_{\mathscr{M} \in \mathfrak{E} (P_N)} \nu (\mathscr{M}) = 1$. 

The following lemma is a modification of Lemma 7.3 of \cite{VPDT} with explicit error estimates. In what follows, we set
\begin{flalign*}
\mathfrak{Z} = \mathfrak{Z} (a, b, c) = \displaystyle\frac{1}{(2 \pi \textbf{i})^2} \displaystyle\int_{\gamma} \displaystyle\int_{\gamma} \log (a + bz + cw) w^{-1} z^{-1} dw dz,
\end{flalign*}

\noindent where $\log$ denotes the principal branch of the logarithm, and the contour $\gamma$ for $w$ and $z$ is a positively oriented circle of radius $1$ centered at $0$. 

The following lemma is a variant of Lemma 7.3 of \cite{VPDT} with an error estimate of order $N^{-1 / 4}$. 

\begin{lem}
	
	\label{znzpnp}
	
	There exists a constant $C > 1$ such that, for any real numbers $a, b, c \in \mathbb{R}_{> 0}$ and integer $N \in \mathbb{Z}_{> 1}$, we have that $\big| N^{-2} \log Z_N (a, b, c) - \mathfrak{Z} (a, b, c) \big| < C N^{-1 / 4}$. 
	
\end{lem}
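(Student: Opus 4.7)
\medskip

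\noindent\textbf{Proof Plan.} My plan is to invoke the Kasteleyn formalism for dimer models on the torus to obtain an exact product formula for $Z_N(a,b,c)$, recognize its logarithm as a two-dimensional Riemann sum, reduce to a one-dimensional Riemann sum via a Jensen-type identity, and finally estimate the error carefully near the set where the integrand has a logarithmic singularity.

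More explicitly, the standard Kasteleyn calculation on the $N \times N$ torus (as in Section~7 of \cite{VPDT}) expresses $Z_N(a,b,c)$ as a signed linear combination of four products
\[
P_{\sigma,\tau} \;=\; \prod_{j=0}^{N-1}\prod_{k=0}^{N-1}\bigl(a + b\,\zeta_{j,\sigma} + c\,\zeta_{k,\tau}\bigr), \qquad \zeta_{j,\sigma}=e^{2\pi\mathbf{i}(j+\sigma/2)/N},
\]
so that $|Z_N| = \tfrac12|\!-\!P_{00}+P_{01}+P_{10}+P_{11}|$. Since each factor satisfies $|a+b\zeta+c\eta|\le a+b+c$, the four $|P_{\sigma,\tau}|$ are comparable, and a routine bound shows
\[
\bigl|N^{-2}\log Z_N - N^{-2}\log|P_{00}|\bigr| \;\le\; C\,N^{-2}\log N,
\]
so it suffices to control $N^{-2}\log|P_{00}|$. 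Next I will apply the elementary identity
\[
\prod_{k=0}^{N-1}\bigl(A + B\,e^{2\pi \mathbf{i} k/N}\bigr) \;=\; A^N - (-B)^N,
\]
proved by matching zeros of both sides as polynomials in $B$, to carry out the inner product in $k$. This reduces the problem to analyzing
\[
N^{-2}\log|P_{00}| \;=\; \frac{1}{N}\sum_{j=0}^{N-1} \Psi_N\!\bigl(a + b\,e^{2\pi\mathbf{i} j/N}\bigr),
\qquad \Psi_N(A) := \tfrac{1}{N}\log\bigl|A^N - (-c)^N\bigr|.
\]

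In parallel, I will use Jensen's formula $\tfrac{1}{2\pi}\int_0^{2\pi}\log|A+ce^{\mathbf{i}\phi}|\,d\phi = \log\max(|A|,c)$ to verify that the contour-integral definition of $\mathfrak Z$ simplifies to
\[
\mathfrak Z(a,b,c) \;=\; \int_0^1 \log\max\bigl(|a + b\,e^{2\pi\mathbf{i} x}|,\; c\bigr)\,dx.
\]
A direct computation then gives the pointwise bound
\[
\bigl|\Psi_N(A) - \log\max(|A|,c)\bigr| \;\le\; N^{-1}\bigl|\log\bigl|1 - q^N e^{\mathbf{i} N\phi}\bigr|\bigr|,
\qquad q=\min(|A|,c)/\max(|A|,c),\ \phi\in\mathbb{R},
\]
which is exponentially small in $N$ unless $q$ is within $O(N^{-1})$ of $1$, i.e.\ unless $|A|$ is within $O(N^{-1})$ of $c$.

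With these reductions in hand, the remaining task is to compare the one-dimensional Riemann sum $N^{-1}\sum_j g(j/N)$ with $\int_0^1 g$, where $g(x)=\log\max(|a+be^{2\pi\mathbf{i} x}|,c)$, and simultaneously handle the exceptional $j$'s where $|A_j|$ is close to $c$. The function $g$ is continuous and piecewise smooth, with at most finitely many non-differentiable points (where $|a+be^{2\pi\mathbf{i} x}|=c$); elsewhere it is Lipschitz with constant bounded uniformly in $(a,b,c)$ on any compact set bounded away from zero. I will split $[0,1]$ into a ``bulk'' region $U_\delta$ where $\bigl||a+be^{2\pi\mathbf{i} x}|-c\bigr|>\delta$ and a ``boundary'' region $V_\delta$ of Lebesgue measure $O(\delta)$. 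On $U_\delta$, $g$ has Lipschitz constant $O(\delta^{-1})$, yielding a Riemann sum error $O(\delta^{-1}N^{-1})$; on $V_\delta$ the integrand and summand contribute at most $O(\delta\log\delta^{-1})+O(N^{-1}\log\delta^{-1})$ from the $O(\delta N + 1)$ grid points inside. Combined with the Jensen-error contribution (comparable to the $V_\delta$ estimate), choosing $\delta = N^{-1/2}$ balances these terms and yields total error $O(N^{-1/2}\log N)\le CN^{-1/4}$ for $N$ large.

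The main obstacle will be the careful estimate near the critical curve $\{|a+be^{2\pi\mathbf{i} x}|=c\}$, which is where both the Jensen correction and the Lipschitz constant of $g$ blow up. The key input is the geometric fact that this curve is the zero set of a real-analytic function with uniformly bounded non-vanishing gradient (away from the tangency cases $b=a+c$, $a=b+c$, or $c=a+b$, each of which can be handled as a separate, easier degenerate case where $\mathfrak Z$ is smooth). This gradient lower bound produces the measure estimate $|V_\delta|=O(\delta)$ and the lattice-point count $O(\delta N)$ used above, and ultimately allows the bounds to be made uniform in $(a,b,c)$ ranging over any compact subset of $\mathbb{R}_{>0}^3$.
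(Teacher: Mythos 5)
Your 1D reduction (collapsing the inner product in $k$ via $\prod_{k}(A+Be^{2\pi\mathbf{i}k/N})=A^N-(-B)^N$ and matching against Jensen's formula for $\mathfrak Z$) is an elegant alternative to the paper's direct treatment of the double Riemann sum, and if it worked it would be a cleaner route. However, the opening reduction has a genuine gap. The triangle inequality $|a+b\zeta+c\eta|\le a+b+c$ only gives an upper bound on each $|P_{\sigma,\tau}|$; it does not make the four $|P_{\sigma,\tau}|$ comparable, and there is no lower bound on any individual $|P_{\sigma,\tau}|$. In fact, factors $a+b\zeta_{j,\sigma}+c\zeta_{k,\tau}$ can vanish (or be tiny) for some roots-of-unity choices, so $P_{00}$ can be zero (or have $N^{-2}\log|P_{00}|\to-\infty$) while $Z_N$ remains of the expected size. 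Your claimed bound $|N^{-2}\log Z_N - N^{-2}\log|P_{00}||\le CN^{-2}\log N$ therefore does not follow and is simply false in degenerate cases. What the paper actually uses is the two-sided inequality $\max_{\sigma,\tau}Z_{\sigma\tau}\le Z_N\le 2\max_{\sigma,\tau}Z_{\sigma\tau}$, and then shows that \emph{for the maximizing pair $(\sigma,\tau)$} the Riemann-sum analysis goes through, precisely because if the sampling grid for one $(\sigma,\tau)$ passes too close to the singular locus, the half-integer-shifted grid for another $(\sigma,\tau)$ avoids it (this is the content of \eqref{jj1ntheta}).

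The same issue resurfaces further down in your argument: after the 1D reduction, your summand $\Psi_N(A_j)=N^{-1}\log|A_j^N-(-c)^N|$ is $-\infty$ whenever $A_j^N=(-c)^N$, and your bulk/boundary decomposition with $|V_\delta|=O(\delta)$ controls only the \emph{number} of grid points near the critical curve $\{|a+be^{2\pi\mathbf{i}x}|=c\}$, not the magnitude of $\Psi_N$ at those points. To make the $V_\delta$ contribution $O(N^{-1}\log\delta^{-1})$ you need an a priori lower bound on $\min_j|A_j^N-(-c)^N|$, which requires exactly the sort of grid-shifting argument the paper uses (compare the integer and half-integer shifts, or compare $N$ with $N+1$ as in \Cref{znzpnp2}). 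Without some such mechanism, the single-grid 1D Riemann sum can genuinely blow up. A secondary concern: your "$|V_\delta|=O(\delta)$ from a uniformly non-vanishing gradient" claim is not uniform in $(a,b,c)$ — the relevant gradient scales like $ab\sin(2\pi x_0)/c$ and degenerates both in the tangency cases you flag and as $c\to 0$ — so the boundary-region estimate also needs to be tracked more carefully to obtain a constant independent of $(a,b,c)$.
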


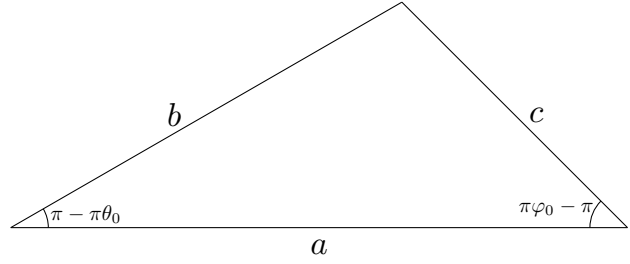
\begin{figure}

	\begin{center}

		\begin{tikzpicture}[
		>=stealth,
		auto,
		style={
			scale = 1
		}
		]

		\draw[-, black] (0, 0) -- (8.2, 0);
		\draw[-, black] (0, 0) -- (5.2, 3); 
		\draw[-, black] (5.2, 3) -- (8.2, 0);

		\draw[black] (.5, 0) arc (0:30:.5);
		\draw[black] (7.7, 0) arc (180:135:.5);
		
		\draw[] (4.1, 0) circle [radius = 0] node[below, scale = 1.25]{$a$};
		\draw[] (2.6, 1.5) circle [radius = 0] node[left = 5, scale = 1.25]{$b$};
		\draw[] (6.7, 1.5) circle [radius = 0] node[right = 1, scale = 1.25]{$c$};
		
		\draw[] (1, .4) circle [radius = 0] node[below, scale = .8]{$\pi - \pi \theta_0$}; 
		\draw[] (7.25, .47) circle [radius = 0] node[below, scale = .8]{$\pi \varphi_0 - \pi$};
		
		\end{tikzpicture}
		
	\end{center}

	\caption{\label{thetafigure} Depicted above are the angles $\theta_0$ and $\varphi_0$ from the proof of \Cref{znzpnp} (and \Cref{znzpnp2}).}

\end{figure}

\begin{proof}
	
	The proof will closely follow that of Lemma 7.3 of \cite{VPDT}. To that end, let us assume by scaling that $0 < c \le b \le a = 1$ and that $N$ is odd (the alternative case when $N$ is even is similar). Then, recall from equation (3) and Section 6 of \cite{OD} that 
	\begin{flalign}
	\label{zzij}
	Z_N (a, b, c) = \displaystyle\frac{Z_{00} + Z_{01} + Z_{10} - Z_{11}}{2}; \qquad \displaystyle\max_{i, j \in \{ 0, 1 \}} Z_{ij} \le Z_N (a, b, c) \le 2 \max_{i, j \in \{ 0, 1 \}} Z_{ij},
	\end{flalign}
	
	\noindent where $Z_{00} = Z_{00; N} (a, b, c)$, $Z_{01} = Z_{01; N} (a, b, c)$, $Z_{10} = Z_{10; N} (a, b, c)$, and $Z_{11} = Z_{11; N} (a, b, c)$ are defined (see Section 4.16 of \cite{OD}) by
	\begin{flalign}
	\label{zijdefinition} 
	\begin{aligned}
	& Z_{00} = \displaystyle\prod_{j = 0}^{N - 1} \displaystyle\prod_{k = 0}^{N - 1} \big( a + b  e^{2 \pi \textbf{i} j / N} + c e^{2 \pi \textbf{i} k /N} \big); \qquad Z_{10} = Z_{00; N} \big( a, b e^{\pi \textbf{i} / N}, c \big); \\
	& Z_{01} = Z_{00; N} \big( a, b, c e^{\pi \textbf{i} / N} \big); \qquad \qquad \qquad \qquad \quad Z_{11} = Z_{00; N} \big( a, b e^{\pi \textbf{i} / N}, c e^{\pi \textbf{i} / N} \big).
	\end{aligned} 
	\end{flalign}
	
	\noindent The second estimate in \eqref{zzij} and the definition \eqref{zijdefinition} of the $Z_{ij}$ together suggest that $N^{-2} \log Z \approx \mathfrak{Z} (a, b, c)$, except for the fact that terms in the products defining the $Z_{ij}$ might be singular. 
	
	To address this issue, define $\theta_0, \varphi_0 \in \big( \frac{1}{2}, \frac{3}{2} \big)$ as follows. If $a \ge b + c$, then set $\theta_0 = 1 = \varphi_0$. Otherwise, set $(\theta_0, \varphi_0)$ to be the unique pair in $\big( \frac{1}{2}, 1 \big) \times \big( 1, \frac{3}{2} \big)$ satisfying $a + b e^{\pi \textbf{i} \theta_0} + c e^{\pi \textbf{i} \varphi_0} = 0$; see \Cref{thetafigure}. Further set $\big( \widehat{\theta}_0, \widehat{\varphi}_0 \big) = (2 - \theta_0, 2 - \varphi_0) \in \big( 1, \frac{3}{2} \big) \times \big( \frac{1}{2}, 1 \big)$, which also satisfies $a + b e^{\pi \textbf{i} \widehat{\theta_0}} + c e^{\pi \textbf{i} \widehat{\varphi_0}} = 0$. The pairs $(\theta_0, \varphi_0)$ and $\big( \widehat{\theta}_0, \widehat{\varphi}_0 \big)$ correspond to the possibly singular terms in the products defining the $Z_{ij}$. 
	
	Let us first approximate the terms in these products that are not close to these singularities. To that end, one can quickly verify since $c \le b \le a = 1$ that
	\begin{flalign}
	\label{thetatheta0estimate}
	\big| a + b + e^{\pi \textbf{i} \theta} + c e^{\pi \textbf{i} \varphi} \big| > \delta^2, \qquad \text{if $\displaystyle\max \Big\{ |\theta - \theta_0|, \big| \theta - \widehat{\theta}_0 \big|\Big\} > 4 \delta$}.
	\end{flalign}
	
	\noindent Combining \eqref{thetatheta0estimate} for $\delta = N^{-2 / 3}$ with the bound 
	\begin{flalign*}
	\big| \log z - \log z' \big| \le |z - z'| \left( \displaystyle\frac{1}{|z|} + \displaystyle\frac{1}{|z'|} \right), \quad \text{for any $z, z' \in \mathbb{C}$},
	\end{flalign*}
	
	\noindent we obtain 
	\begin{flalign*}
	\Big| \log \big( a + b  e^{2 \pi \textbf{i} j / N} + c e^{2 \pi \textbf{i} k /N} \big) - \log (a + b e^{\pi \textbf{i} \theta} + c e^{\pi \textbf{i} \varphi}) \Big| \le \displaystyle\frac{4 \pi}{N^{1 / 3}},
	\end{flalign*} 
	
	\noindent if $\big| \frac{2j}{N} - \theta_0 \big|, \big| \frac{2j}{N} - \widehat{\theta}_0 \big| > 4 N^{-1 / 3}$; $| \theta - \theta_0|, \big| \theta - \widehat{\theta}_0 \big| > 4 N^{-1 / 3}$; and $\big| \frac{2j}{N} - \theta \big|, \big| \frac{2k}{N} - \varphi \big| \le \frac{2}{N}$ all hold. Integrating therefore yields 
	\begin{flalign}
	\label{abcjk}
	\left| 4 N^{-2} \log \big( a + b  e^{2 \pi \textbf{i} j / N} + c e^{2 \pi \textbf{i} k /N} \big) - \displaystyle\int_{\frac{2j}{N}}^{\frac{2j + 2}{N}} \displaystyle\int_{\frac{2k}{N}}^{\frac{2k + 2}{N}} \log \big( a + b e^{\pi \textbf{i} \theta } + c e^{\textbf{i} \pi \varphi} \big) d \theta d \varphi \right| < \displaystyle\frac{16 \pi}{N^{7 / 3}},
	\end{flalign}
	
	\noindent whenever $\big| \frac{2j}{N} - \theta_0 \big|, \big| \frac{2j}{N} - \widehat{\theta}_0 \big| > 4 N^{-1 / 3}$. An analogous statement holds if $\frac{2j}{N}$ is replaced by $\frac{2j + 1}{N}$ or $\frac{2k}{N}$ by $\frac{2k + 1}{N}$, which can be used to evaluate the other $Z_{ij}$ from \eqref{zijdefinition}. 
	
	Next we estimate the maximum possible values of the terms in the products defining the $Z_{ij}$ that are close to the singularities. To that end, observe that 
	\begin{flalign*}
	\displaystyle\max \Bigg\{ \displaystyle\min_{1 \le j \le N} \bigg\{ \Big| \frac{2j}{N} - \theta_0 \Big|, \Big| \frac{2j}{N} - \widehat{\theta}_0 \Big| \bigg\}, \displaystyle\min_{1 \le j \le N} \bigg\{ \Big| \frac{2 j + 1}{N} - \theta_0 \Big|, \Big| \frac{2j + 1}{N} - \widehat{\theta}_0 \Big| \bigg\} \Bigg\} \ge \frac{1}{2N},
	\end{flalign*} 
	
	\noindent and so it follows from \eqref{thetatheta0estimate} that 
	\begin{flalign}
	\label{jj1ntheta} 
	\Bigg| \displaystyle\max \bigg\{ \displaystyle\min_{1 \le j \le N}  \log \big| a + b e^{2 \pi \textbf{i} j / N} + c e^{2 \pi \textbf{i} k / N} \big|, \displaystyle\min_{1 \le j \le N} \log \big| a + b e^{\pi \textbf{i} (2j + 1)  / N} + c e^{2 \pi \textbf{i} k / N} \big| \bigg\} \Bigg| \le 2 \log (8N),
	\end{flalign} 
	
	\noindent for any integer $k \in [0, N]$. Additionally, integrating \eqref{thetatheta0estimate} yields 
	\begin{flalign}
	\label{thetatheta0}
	\begin{aligned}
	\displaystyle\max_{\omega_0 \in \{ \theta_0, \widehat{\theta_0} \}} \displaystyle\int_{|\theta - \omega_0| \le \frac{4}{N^{1 / 3}}} & \displaystyle\int_0^{2 \pi} \Big| \log \big( a + b e^{\pi \textbf{i} \theta } + c e^{\textbf{i} \varphi} \big) d \theta d \varphi \Big| \le \displaystyle\frac{16 \pi \log N}{N^{1 / 3}}.
	\end{aligned}
	\end{flalign} 

	\noindent Now we deduce the lemma from the second statement of \eqref{zzij}; \eqref{zijdefinition}; summing \eqref{abcjk} over all $j$ such that $\big| \frac{2j}{N} - \theta_0 \big|, \big| \frac{2j}{N} - \widehat{\theta}_0 \big| > 4 N^{-1 / 3}$ (or such that $\big| \frac{2j + 1}{N} - \widehat{\theta}_0 \big|, \big| \frac{2j + 1}{N} - \widehat{\theta}_0 \big| > 4 N^{-1 / 3}$, if the second element on the left side of \eqref{jj1ntheta} is larger); summing \eqref{jj1ntheta} over the remaining $j \in [1, N]$; and \eqref{thetatheta0}. 	
\end{proof}

\subsection{Variance Bounds and Proof of \Cref{numbersigma}}

\label{EstimatePn2}

In this section we establish \Cref{numbersigma}. To that end, we begin with the following lemma that estimates the expectations and variances of the $\mathcal{N}_i (\mathscr{M})$ enumerating the tile types of a weighted random lozenge tiling $\mathscr{M}$ as considered in \Cref{PnEstimate}. Variants of this result without the effective error bound of order $N^{-1 / 32}$ were established as Proposition 8.2 and Proposition 8.4 of \cite{VPDT}. Since the proof of the below lemma is similar to that of those results from \cite{VPDT} (with the analogous modifications as implemented in the derivation of \Cref{znzpnp}), we only outline it. 

In what follows, we define $\mathfrak{p}_a = \mathfrak{p}_a (a, b, c)$ and $\mathfrak{p}_b = \mathfrak{p}_b (a, b, c)$ by 
\begin{flalign}
\label{papb} 
& \mathfrak{p}_a = \displaystyle\frac{1}{(2 \pi \textbf{i})^2} \displaystyle\int_{\gamma} \displaystyle\int_{\gamma} \displaystyle\frac{a}{a + bz + cw} w^{-1} z^{-1} dz dw; \qquad \mathfrak{p}_b = \displaystyle\frac{1}{(2 \pi \textbf{i})^2} \displaystyle\int_{\gamma} \displaystyle\int_{\gamma} \displaystyle\frac{bz}{a + bz + cw} w^{-1} z^{-1} dz dw,
\end{flalign}

\noindent where the contour $\gamma$ for $w$ and $z$ is again a positively oriented circle of radius $1$ centered at $0$. 

\begin{lem}
	
	\label{znzpnp2} 
	
	There exists a constant $C > 1$ such that, for any $N \in \mathbb{Z}_{> 1}$ and $a, b, c \in \mathbb{R}_{> 0}$ such that $\max \{ a, b, c \} \le N^{1 / 32} \min \{ a, b, c \}$, the following two statements both (simultaneously) hold for at least one value of $K \in \{ N, N + 1 \}$. 
	
	\begin{enumerate} 
		
		\item Under the measure $\nu = \nu_{a, b, c; K}$, we have that
		\begin{flalign}
		\label{n1n2estimate}
		\Big| K^{-2} \mathbb{E} \big[ \mathcal{N}_1 (\mathscr{M}) \big]  -  \mathfrak{p}_a \Big| < C K^{-1 / 32}; \qquad \Big| K^{-2} \mathbb{E} \big[ \mathcal{N}_2 (\mathscr{M}) \big]  - \mathfrak{p}_b \Big| < C K^{-1 / 32}. 
		\end{flalign}
		
		\item Under the measure $\nu = \nu_{a, b, c; K}$, we have $K^{-4} \Var \big[ \mathcal{N}_i (\mathscr{M}) \big] < C K^{- 1 / 32}$ for each $i \in \{ 1, 2, 3 \}$. 
	\end{enumerate} 
\end{lem}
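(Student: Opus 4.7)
The plan is to derive all three claims from appropriate logarithmic derivatives of the partition function $Z_K(a,b,c)$, and then to estimate these derivatives by the same kind of Riemann-sum analysis that was used to prove \Cref{znzpnp}. Since $a\partial_a w_{a,b,c}(\mathscr{M}) = \mathcal{N}_1(\mathscr{M})w_{a,b,c}(\mathscr{M})$, the standard identities
\begin{flalign*}
\mathbb{E}\big[\mathcal{N}_1(\mathscr{M})\big] = a\partial_a \log Z_K; \qquad \Var\big[\mathcal{N}_1(\mathscr{M})\big] = (a\partial_a)^2 \log Z_K,
\end{flalign*}
hold, together with the analogous identities for $\mathcal{N}_2$ obtained by replacing $a\partial_a$ with $b\partial_b$; the variance of $\mathcal{N}_3 = K^2 - \mathcal{N}_1 - \mathcal{N}_2$ is then controlled via Cauchy--Schwarz from the bounds on $\Var[\mathcal{N}_1]$ and $\Var[\mathcal{N}_2]$. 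Differentiating the exact formula \eqref{zzij}--\eqref{zijdefinition} term-by-term (first reducing to the dominant $Z_{ij}$ using the second inequality in \eqref{zzij}) reduces the lemma to estimating discrete double sums of the form $\sum_{j,k} a/(a+be^{2\pi\textbf{i}j/K}+ce^{2\pi\textbf{i}k/K})$ for the expectations, and the corresponding sums with squared denominators for the variances, against the continuum quantities $\mathfrak{p}_a$, $\mathfrak{p}_b$ in \eqref{papb} (and their second-derivative analogs).

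The core difficulty, as in the proof of \Cref{znzpnp}, is the singular behavior of these integrands at the pairs $(\theta_0, \varphi_0)$ and $(\widehat{\theta}_0, \widehat{\varphi}_0)$ where the denominator vanishes. I would follow the same three-region scheme as in that proof: split the index set $[0, K-1]^2$ into a \emph{bulk} region on which both $|2j/K - \theta_0|$ and $|2j/K - \widehat{\theta}_0|$ exceed a small threshold $K^{-\alpha}$ (and analogously in the $k$ variable), and a \emph{singular} region consisting of the remaining $O(K^{2-2\alpha})$ indices. On the bulk region, the integrand is polynomially bounded in $K^\alpha$, and the Riemann-sum argument of \eqref{abcjk} yields an error of order $K^{-1+O(\alpha)}$ against the continuum integrals from \eqref{papb}, after the appropriate $K^{-2}$ normalization. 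On the singular region, the role of the dichotomy $K \in \{N, N+1\}$ is to select, by a pigeonhole argument in the spirit of the one leading to \eqref{jj1ntheta} (and using that the lattices $\{2j/N\}$ and $\{2j/(N+1)\}$ can coincide only at $0$, which is outside the range $(1/2, 3/2)$ of $\theta_0$ and $\widehat{\theta}_0$), a value of $K$ for which every lattice point is at distance at least $K^{-2}$ from both singular pairs; combined with the hypothesis $\max\{a,b,c\} \le K^{1/32}\min\{a,b,c\}$, which bounds the Jacobian of the linearization $(\theta, \varphi) \mapsto a + be^{\pi\textbf{i}\theta} + ce^{\pi\textbf{i}\varphi}$ near its zeros from below by a negative power of $K^{1/32}$, this produces the pointwise bound $|a+be^{\pi\textbf{i}\theta}+ce^{\pi\textbf{i}\varphi}|^{-1} \le O(K^{2+1/32})$ on the singular lattice. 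Summing this crude bound over the $O(K^{2-2\alpha})$ singular indices and optimizing in $\alpha$ yields the claimed $K^{-1/32}$ rate for the expectation.

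The main obstacle lies in the variance, whose integrand carries a squared denominator and so would, under the crude bound above, contribute $O(K^{6-2\alpha + 1/16})$ from the singular region, far exceeding the desired $O(K^{4-1/32})$ even after the $K^{-4}$ normalization. The resolution rests on two observations. First, the trivial deterministic bound $\mathcal{N}_i \in [0, K^2]$ yields the \emph{a priori} estimate $\Var[\mathcal{N}_i] \le K^4/4$, so only a slight logarithmic-type improvement is needed. Second, by linearization, $|a+be^{\pi\textbf{i}\theta}+ce^{\pi\textbf{i}\varphi}|^2$ is comparable, uniformly in the range of $(a,b,c)$ allowed by the hypothesis, to $|\theta-\theta_0|^2 + |\varphi-\varphi_0|^2$ near each singularity; summing the resulting $|\cdot|^{-2}$ over the lattice in polar coordinates therefore gives a \emph{logarithmic} (not polynomial) divergence in $K$, of order $K^2 \log K$, which is easily absorbed into $K^{4-1/32}$. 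Performing this local linearization uniformly in $(a,b,c)$, tracking the Riemann-sum errors across the matching scales, and verifying that the contributions from the parity-shifted sectors $Z_{01}, Z_{10}, Z_{11}$ behave identically are the main technical steps; the remainder of the argument is a mechanical adaptation of the scheme used to prove \Cref{znzpnp}.
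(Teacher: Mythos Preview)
Your overall architecture matches the paper's: logarithmic-derivative identities, the $Z_{ij}$ decomposition, a bulk/singular split of the double sum, and a pigeonhole between $K=N$ and $K=N+1$ to keep the lattice away from the zero $(\theta_0,\varphi_0)$ of $a+be^{\pi\mathbf{i}\theta}+ce^{\pi\mathbf{i}\varphi}$. The variance sketch, using the linearization $|a+be^{\pi\mathbf{i}\theta}+ce^{\pi\mathbf{i}\varphi}|\asymp K^{-1/32}\big(|\theta-\theta_0|+|\varphi-\varphi_0|\big)$ and a polar-coordinate sum, is the right idea and is essentially what the paper defers to \cite{VPDT}.

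There is, however, a genuine quantitative gap in your treatment of the \emph{expectation}. With the singular region defined as the $O(K^{2-2\alpha})$ lattice points within $K^{-\alpha}$ of the singularity and the crude pointwise bound $|a+be^{\pi\mathbf{i}\theta}+ce^{\pi\mathbf{i}\varphi}|^{-1}\le O(K^{2+1/32})$, the normalized singular contribution is $K^{-2}\cdot K^{2-2\alpha}\cdot K^{2+1/32}=K^{2-2\alpha+1/32}$, which forces $\alpha>1$; but your bulk Riemann-sum error $K^{-1+O(\alpha)}$ forces $\alpha<1/2$. No choice of $\alpha$ closes both. The fix is exactly the tool you already use for the variance: on the singular region, sum the linearized bound $K^{1/32}/r$ (rather than the worst-case value) over the lattice; in two dimensions this is $O(K^{2-\alpha+1/32})$ plus the single nearest-point term, both of which are harmless after the $K^{-2}$ normalization. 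The paper does the equivalent computation by splitting only on the $j$ variable and, for each $j$ near $\theta_0$, summing over $k$ using the linear bound \eqref{abcestimatelower} to obtain a logarithmic factor (see \eqref{jj01}).

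A second, smaller issue: the separation you extract from the $K\in\{N,N+1\}$ pigeonhole is too weak as stated. With only $r_{\min}\ge K^{-2}$, the single lattice point nearest $(\theta_0,\varphi_0)$ contributes $K^{1/16}\cdot r_{\min}^{-2}=K^{4+1/16}$ to the squared-denominator sum, which after normalization by $K^{-4}$ is $K^{1/16}\not\le K^{-1/32}$. The paper's pigeonhole actually yields $\min_j|j/K-\theta_0|>K^{-1-\mu}$ for $\mu=1/15$, crucially using that $\theta_0\in(1/2,1)$ is bounded away from $0$ and $1$ (so that if both the $N$- and $(N+1)$-lattices were $K^{-1-\mu}$-close to $\theta_0$, subtraction would force $\theta_0$ near $0$ or $1$). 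That stronger separation is exactly what is needed for the nearest-point term in the variance.
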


\begin{proof}[Proof (Outline)] 
	
	The proofs of the first and second statements of this lemma closely follow those of Proposition 8.2 and Proposition 8.4 of \cite{VPDT}, respectively. Therefore, we will only explain how to establish the first and omit the proof of the latter. Furthermore, since the verifications of the two bounds in \eqref{n1n2estimate} are entirely analogous, we will only discuss that of the former. Throughout this proof, we assume by scaling that $0 < c \le b \le a = 1$.
	
	Let us fix a constant $\mu \in (0, 1)$ (which we will later set to $\frac{1}{15}$) and first suppose that $b + c \ge a + 40 N^{- \mu}$. Recall the definitions of $(\theta_0, \varphi_0) \in \big( \frac{1}{2}, 1 \big) \times \big( 1, \frac{3}{2} \big)$ and $\big( \widehat{\theta}_0, \widehat{\varphi}_0 \big)  \in \big( 1, \frac{3}{2} \big) \times \big( \frac{1}{2}, 1 \big)$ from the proof of \Cref{znzpnp} (see \Cref{thetafigure}). The bounds $0 < c \le b \le a = 1$ and $b + c \ge 1 + 40 N^{- \mu}$ then imply that $b \ge \frac{1}{2}$, $c \ge 40 N^{- \mu}$, and $\theta_0 < 1 - 6 N^{- \mu}$, so one can quickly verify for sufficiently large $N$ and any $\theta, \varphi \in [0, 2 \pi]$ that
	\begin{flalign}
	\label{abcestimatelower} 
	\big| a + b e^{\pi \textbf{i} \theta} + c e^{\pi \textbf{i} \varphi} \big| \ge N^{-2 \mu} \min \Big\{ |\theta - \theta_0| + |\varphi - \varphi_0|, \big| \theta - \widehat{\theta}_0 \big| + \big| \varphi - \widehat{\varphi}_0 \big| \Big\}.
	\end{flalign}

	Now, we claim that $\min_{j \in \mathbb{Z}} \big| \theta_0 - \frac{j}{K} \big| > K^{-1 - \mu}$ holds for at least one $K \in \{ N, N + 1 \}$ when $N > 10^{1 / \mu}$. Indeed, assume to the contrary that this estimate is false for both $K \in \{ N, N + 1 \}$. Then, we must have that $\big| \frac{j}{N} - \theta_0 \big| < N^{- 1 - \mu}$ and $\big| \frac{j}{N + 1} - \theta_0 \big| < N^{- 1 - \mu}$ or that $\big| \frac{j}{N} - \theta_0 \big| < N^{- 1 - \mu}$ and $\big| \frac{j + 1}{N + 1} - \theta_0 \big| \le N^{-1 - \mu}$. In the former case, subtracting yields $\frac{j}{N} < 4 N^{-\mu}$, and so $\theta_0 < 5 N^{-\mu}$, which for $N > 10^{1 / \mu}$ contradicts the fact that $\theta_0 > \frac{1}{2}$. In the latter case, subtracting yields $\big| 1 - \frac{j}{N} \big| < 4 N^{-\mu}$, and so $1 - \theta_0 < 5 N^{-\mu}$, which contradicts the fact that $\theta_0 < 1 - 6 N^{-\mu}$. Thus, $\min_{j \in \mathbb{Z}} \big| \theta_0 - \frac{j}{K} \big| > K^{-1 - \mu}$ holds if $K = N$ or if $K = N + 1$; let us assume the former and also that $N$ is odd (the latter is so that the identities below match with those in the proof of \Cref{znzpnp}). 
	
	Next, recall the definitions of the $Z_{ij}$ from \eqref{zijdefinition} in the proof of \Cref{znzpnp}. Since $\mathbb{E} \big[ \mathcal{N}_1 (\mathscr{M}) \big] = \frac{a}{Z} \frac{\partial Z}{\partial a}$, it follows from the first identity in \eqref{zzij} that 
	\begin{flalign}
	\label{cijzij}
	\mathbb{E} \left[ \displaystyle\frac{\mathcal{N}_1 (\mathscr{M})}{N^2} \right] = \displaystyle\frac{1}{N^2} \left( c_{00} \displaystyle\frac{\partial \log Z_{00}}{\partial a} + c_{10} \displaystyle\frac{\partial \log Z_{10}}{\partial a} + c_{01} \displaystyle\frac{\partial \log Z_{01}}{\partial a} + c_{11} \displaystyle\frac{\partial \log Z_{11}}{\partial a} \right),
	\end{flalign} 
	
	\noindent where $c_{ij} = \frac{a Z_{ij}}{2Z}$ if $(i, j) \ne (1, 1)$ and $c_{11} = - \frac{a Z_{11}}{2Z}$. In particular, by \eqref{zzij} and the fact that each $Z_{ij}$ is nonnegative (due to \eqref{zijdefinition}), $c_{ij} \in [-1, 1]$ and $c_{00} + c_{01} + c_{10} + c_{11} = a$. So, it suffices to show that $\big| \frac{a}{N^2} \frac{\partial}{\partial a} \log Z_{ij} - \mathfrak{p}_a \big| < C N^{-1 / 32}$, for each $i, j \in \{ 0, 1 \}$ and some constant $C > 1$. Let us only verify this bound for $(i, j) = (0, 0)$, since the proofs in the other three cases are entirely analogous.
	
	To that end, we have from \eqref{zijdefinition} that
	\begin{flalign}
	\label{z00an}
	\displaystyle\frac{a}{N^2} \displaystyle\frac{\partial \log Z_{00}}{\partial a} = \displaystyle\frac{a}{N^2} \displaystyle\sum_{j = 1}^N \displaystyle\sum_{k = 1}^N \displaystyle\frac{1}{a + b e^{2 \pi \textbf{i} j / N} + c e^{2 \pi \textbf{i} k / N}}, 
	\end{flalign}
	
	\noindent which suggests that it should converge to $\mathfrak{p}_a$ as $N$ tends to $\infty$, except again that the denominators of certain summands can be singular. 
	
	Let us first address the summands that are not close to these singularities. To do this, let $\nu \in (0, 1)$ be a constant (which we will later set to $\frac{1}{5}$) and observe from \eqref{thetatheta0estimate} that if $\big| \frac{2j}{N} - \theta_0 \big| > 4 N^{- \nu}$ and $\big| \frac{2j}{N} - \widehat{\theta}_0 \big| > 4 N^{- \nu}$ then $\big| a + b e^{2 \pi \textbf{i} j / N} + c e^{2 \pi \textbf{i} k / N} \big| > N^{-2 \nu}$. So, it follows from \eqref{abcestimatelower} that, for sufficiently large $N$,
	\begin{flalign*}
	\Bigg| \displaystyle\frac{1}{a + b e^{2 \pi \textbf{i} j / N} + c e^{2 \pi \textbf{i} k / N}} - \displaystyle\frac{1}{a + b e^{\pi \textbf{i} \theta} + c e^{\pi \textbf{i} \varphi}} \Bigg| < \displaystyle\frac{4 \pi}{N^{1 - 4 \nu}},
	\end{flalign*}
	
	\noindent whenever $\big| \frac{2j}{N} - \theta \big|, \big| \frac{2k}{N} - \varphi \big|\le \frac{2}{N}$. Hence, if $\big| \frac{2j}{N} - \theta_0 \big| > 4 N^{- \nu}$ and $\big| \frac{2j}{N} - \widehat{\theta}_0\big| > 4 N^{- \nu}$, then 
	\begin{flalign}
	\label{estimateabcsum2}
	\Bigg| \displaystyle\frac{4}{N^2 \big(a + b e^{2 \pi \textbf{i} j / N} + c e^{2 \pi \textbf{i} k / N} \big)} - \displaystyle\int_{\frac{2j}{N}}^{\frac{2j + 2}{N}} \displaystyle\int_{\frac{2k}{N}}^{\frac{2k + 2}{N}} 	\displaystyle\frac{d \theta d \varphi}{a + b e^{\pi \textbf{i} \theta} + c e^{\pi \textbf{i} \varphi}} \Bigg| < \displaystyle\frac{16 \pi}{N^{3 - 4 \nu}}.
	\end{flalign}
	
	Next, let us estimate the $(j, k)$ terms for which $\frac{j}{N}$ is possibly close to either $\theta_0$ or $\widehat{\theta}_0$. To that end, we use \eqref{abcestimatelower} and the fact that $\max \big\{ \big| \frac{2 j}{N} - \theta_0 \big|, \big| \frac{2 j}{N} - \widehat{\theta_0} \big| \big\} > N^{-1 - \mu}$ to deduce that
	\begin{flalign}
	\label{jj01}
	\begin{aligned}
	\displaystyle\frac{1}{N^2} \Bigg| & \displaystyle\sum_{k = 1}^N \displaystyle\frac{1}{a  + b e^{2 \pi \textbf{i} j / N} + c e^{2 \pi \textbf{i} k /N}} \Bigg| \\
	& \le \displaystyle\frac{2}{N^2}  \displaystyle\sum_{k = 1}^N \Bigg( \displaystyle\frac{1}{N^{-1 - \mu} + N^{-2 \mu} \big|e^{2 \pi \textbf{i} k / N} - e^{\pi \textbf{i} \varphi_0} \big|} + \displaystyle\frac{1}{N^{- 1 - \mu} + N^{-2 \mu} \big| e^{2 \pi \textbf{i} k / N} -  e^{\pi \textbf{i} \widehat{\varphi}_0} \big|} \Bigg) \\
	&  \le \displaystyle\frac{2}{N^{1 - 2 \mu}} \displaystyle\sum_{k = 1}^N \Bigg( \displaystyle\frac{1}{N^{\mu} +  |2k - N \varphi_0 | } + \displaystyle\frac{1}{N^{\mu} + \big| 2k - N \widehat{\varphi}_0 \big| } \Bigg) \le \displaystyle\frac{16 \log N}{N^{1 - 2 \mu}}. 
	\end{aligned} 
	\end{flalign} 
	
	\noindent Additionally, one can quickly deduce from \eqref{abcestimatelower} that 
	\begin{flalign}
	\label{pajj0}
	\Bigg| \displaystyle\int_{|\theta - \theta_0| < 4 N^{- \nu}} \displaystyle\int_0^{2 \pi} \displaystyle\frac{d \theta d \varphi }{a + b e^{\pi \textbf{i} \theta} + c e^{\pi \textbf{i} \varphi}} \Bigg| + \Bigg| \displaystyle\int_{|\theta - \widehat{\theta_0}| < 4 N^{-\nu}} \displaystyle\int_0^{2 \pi} \displaystyle\frac{d \theta d \varphi }{a + b e^{\pi \textbf{i} \theta} + c e^{\pi \textbf{i} \varphi}} \Bigg| \le 256 N^{2 \mu - \nu},
	\end{flalign} 
	
	\noindent for instance by decomposing the integral over $(\theta, \varphi)$ into dyadic annuli centered at $(\theta_0, \varphi_0)$, $\big( \widehat{\theta_0}, \varphi_0 \big)$, $\big( \theta_0, \widehat{\varphi_0} \big)$, and $\big( \widehat{\theta_0}, \widehat{\varphi_0} \big)$.
	
	Then by \eqref{z00an}; summing \eqref{estimateabcsum2} over all $j$ such that $\big| \frac{2j}{N} - \theta_0 \big| > 4 N^{-\nu}$ and $\big| \frac{2j}{N} - \widehat{\theta}_0 \big| > 4 N^{-\nu}$; summing \eqref{jj01} over all remaining $j$; \eqref{pajj0}; and \eqref{papb}, we deduce that 
	\begin{flalign*} 
	\left| \displaystyle\frac{a}{N^2} \displaystyle\frac{\partial}{\partial a} \log Z_{00} - \mathfrak{p}_a \right| \le 256( N^{4 \nu - 1} + N^{2 \mu - \nu} \log N  + N^{2 \mu - \nu}) < N^{-1 / 16},
	\end{flalign*} 
	
	\noindent for sufficiently large $N$, upon setting $\nu = \frac{1}{5}$ and $\mu = \frac{1}{15}$. Similar reasoning provides analogous bounds on $\big| \frac{a}{N^2} \frac{\partial}{\partial a} \log Z_{ij} - \mathfrak{p}_a \big|$, for the remaining $(i, j) \in \{ 0, 1 \}$, and so the first statement of the lemma follows from \eqref{cijzij} if $b + c \ge a + 40 N^{-1 / 15}$. 
	
	Now suppose instead that $a \ge b + c + 40 N^{-1 / 15}$. Then set $a' = b + c + 40 N^{-1 / 15}$ and so, by the above, we have $K^{-2} \big| \mathbb{E} \big[ \mathcal{N}_1 (\mathscr{M}) \big] - \mathfrak{p}_{a'} \big| < K^{-1 / 16}$ under the measure $\nu' = \nu_{a', b, c; K}$ for at least one $K \in \{ N, N + 1 \}$. For this $K$, we have $\mathbb{E}_{\nu} \big[ \mathcal{N}_1 (\mathscr{M}) \big] \ge \mathbb{E}_{\nu'} \big[ \mathcal{N}_1 (\mathscr{M}) \big]$ since $\mathbb{E}_{\nu} \big[ \mathcal{N}_1 (\mathscr{M}) \big]$ is nondecreasing in $a$. Thus, $K^{-2} \mathbb{E}_{\nu} \big[ \mathcal{N}_1 (\mathscr{M}) \big] \ge \mathfrak{p}_{a'} - K^{-1 / 16}$, for sufficiently large $N$.
	
	Using the fact that $\mathfrak{p}_{a'} \ge 1 - C K^{-1 / 32}$ for some constant $C > 1$ (which can be deduced from the definition \eqref{papb} of $\mathfrak{p}_a$ and the facts that $a' = b + c + 40 N^{-1 / 15}$ and $c \ge N^{-1 / 32}$), it follows that $K^{-2} \mathbb{E}_{\nu} \big[ \mathcal{N}_1 (\mathscr{M}) \big] \ge 1 - 2C K^{-1 / 32}$. Thus, $\big| K^{-2} \mathbb{E}_{\nu} \big[ \mathcal{N}_1 (\mathscr{M}) \big] - \mathfrak{p}_a \big| \le 2 C K^{-1 / 32}$, since $1 - C K^{-1 / 32} \le \mathfrak{p}_{a'} \le \mathfrak{p}_a \le 1$. This yields the first part of the lemma; as mentioned previously, we omit the proof of the second since it closely follows that of Proposition 8.4 of \cite{VPDT}.
\end{proof}

Given \Cref{znzpnp} and \Cref{znzpnp2}, the proof of \Cref{numbersigma} closely follows that of Proposition 9.1 and Theorem 9.2 of \cite{VPDT} and so we will only outline it below. 

\begin{proof}[Proof of \Cref{numbersigma} (Outline)]
	
	Throughout this proof, for any $M \in \mathbb{Z}_{\ge 1}$ we abbreviate $\mathfrak{E}_M = \mathfrak{E} (s, t; M^{-1 / 70}; P_M)$. First observe that, if $(s, t) \notin \mathcal{T}_{7 N^{-1 / 32}}$, then Lemma 3.5 of \cite{VPDT} (with the $\varepsilon$ there equal to $7 N^{-1 / 32}$ here) implies that $N^{-2} \log |\mathfrak{E}_N| < C N^{-1 / 32} \log N$ for some constant $C > 1$. Thus, in this case, the proposition follows from the facts that that $\sigma$ is uniformly H\"{o}lder continuous of exponent $\frac{1}{2}$ on $\overline{\mathcal{T}}$ (recall \Cref{concavesigmat}) and that $\sigma (s, t) = 0$ for any $(s, t) \in \partial \mathcal{T}$. 
	
	Hence, we may assume that $(s, t) \in \mathcal{T}_{7 N^{-1 / 32}}$. Set $a, b, c \in \mathbb{R}_{> 0}$ so that $\max \{ a, b, c \} = 1$ and $(s, t) = (\mathfrak{p}_a, \mathfrak{p}_b)$; as indicated in Section 6.23 of \cite{OD} or Theorem 8.3 of \cite{VPDT}, such a choice is known to exist and can be quickly verified to satisfy $\min \{ a, b, c \} > N^{- 1 / 32}$. We will show for some $K \in \{ N, N + 1 \}$ that \eqref{steestimate} holds, with the $N$ there replaced by $K$ here. The verification that this implies the same bound for $K = N$ is omitted, as it is essentially given in the proof of Theorem 4.1 of \cite{VPDT}. 
	
	This will proceed by approximating $K^{-2} \log |\mathfrak{E}_K| \approx K^{-2} \log Z_K (a, b, c) - s \log a - t \log b - (1 - s - t) \log c$. To that end, the first and second parts of \Cref{znzpnp2}, together with a Markov estimate, imply for sufficiently large $N$ that $\mathbb{P} \big[ \mathscr{M} \notin \mathfrak{E}_K \big] \le K^{-1 / 400}$, if $\mathscr{M}$ is sampled under $\nu$; equivalently, 
	\begin{flalign*}
	( 1 - K^{-1 / 400})  Z_K (a, b, c)  \le \exp \left( \displaystyle\sum_{\mathscr{M} \in \mathfrak{E}_K} a^{\mathcal{N}_1 (\mathscr{M})} b^{\mathcal{N}_2 (\mathscr{M})} c^{\mathcal{N}_3 (\mathscr{M})} \right) \le Z_K (a, b, c).
	\end{flalign*}
	
	\noindent Since $\min \{ a, b, c \} > N^{-1 / 32}$, we have that
	\begin{flalign*}
	\Big| K^{-2} \log \big( a^{\mathcal{N}_1 (\mathscr{M})} b^{\mathcal{N}_2 (\mathscr{M})} c^{\mathcal{N}_3 (\mathscr{M})} \big) - \big( s \log a + t \log b + (1 - s - t) \log c \big) \Big| < K^{-1 / 70} \log K, 
	\end{flalign*} 
	
	\noindent for any $\mathscr{M} \in \mathfrak{E}_K$, and so  
	\begin{flalign*}
	& K^{-2} \log \big( (1 - K^{-1 / 400}) Z_K (a, b, c) \big) - \big( s \log a + t \log b + (1 - s - t) \log c\big) - K^{-1 / 70} \log K \\
	& \quad \le K^{-2} \log |\mathfrak{E}_K| \le K^{-2} \log Z_K (a, b, c) - \big( s \log a + t \log b + (1 - s - t) \log c\big) + K^{-1 / 70} \log K.
	\end{flalign*}
	
	\noindent Thus, \Cref{znzpnp} implies for sufficiently large $N$ that 
	\begin{flalign*}
	\bigg| K^{-2} \log |\mathfrak{E}_K| - \Big( \mathfrak{Z} (a, b, c) - ( s \log a + t \log b + (1 - s - t) \log c \big) \Big) \bigg| < K^{-1 / 75}.
	\end{flalign*}
	
	\noindent Hence, the proposition follows from the fact (shown in the proof of Proposition 9.1 of \cite{VPDT}) that $\sigma (s, t) = \mathfrak{Z} (a, b, c) - s \log a - t \log b + (s + t - 1) \log c$, if $a, b, c \in \mathbb{R}_{> 0}$ satisfy $(\mathfrak{p}_a, \mathfrak{p}_b) = (s, t)$. 
\end{proof}

\end{document}